\documentclass[twoside, reqno]{amsart}
\usepackage[english]{babel}
\usepackage[utf8]{inputenc}
\usepackage[
backend=biber,
style=alphabetic, 
giveninits=true 
]{biblatex}

    \usepackage{amsthm}
    \usepackage{thmtools}
     \usepackage{amsfonts}
    \usepackage{mathtools}
    \usepackage{amssymb}
    \usepackage{amsmath}
    \usepackage{tikz-cd}
    \usepackage[utf8]{inputenc}
    \usepackage{csquotes}
    \usepackage[headheight=14pt]{geometry} 
    \usepackage{appendix}
    \usepackage{url}
    \usepackage{hyperref}
    \usepackage{graphicx}
    \usepackage{xcolor}
    \usepackage{tikz}
    \usepackage{nameref}
    \usepackage{fancyhdr} 
    \usepackage{fancybox}
    \usepackage{enumitem} 
    \usepackage{trimclip}
    \usepackage{epigraph}
    \usepackage{faktor}
    \usepackage{ stmaryrd }
    \usepackage{comment}
    \usepackage{scalerel,stackengine}
    \usepackage{lscape}
    \usepackage{multirow}
    \usepackage{mathrsfs}
    \usepackage{calc}
    \usepackage{relsize}
    \usepackage[bbgreekl]{mathbbol}
\theoremstyle{plain}
\newtheorem{thm}{Theorem}[section]
\newtheorem{lemma}[thm]{Lemma}
\newtheorem{proposition}[thm]{Proposition}
\newtheorem{cor}[thm]{Corollary}

\newtheorem{notn}[thm]{Notation}

\newtheorem{thm solo}{Theorem}
\newtheorem{prop solo}[thm solo]{Proposition}
\newtheorem{cor solo}[thm solo]{Corollary}
\newtheorem{conj solo}[thm solo]{Conjecture}

\theoremstyle{definition}
\newtheorem{definition}[thm]{Definition}
\newtheorem{example}[thm]{Example}

\theoremstyle{remark}
\newtheorem{remark}[thm]{Remark}

\numberwithin{equation}{section}

\DeclareSymbolFontAlphabet{\mathbb}{AMSb}
\DeclareSymbolFontAlphabet{\mathbbl}{bbold}
\newcommand{\Prism}{{\mathlarger{\mathbbl{\Delta}}}}

\DeclareMathOperator{\ga}{\mathfrak{g}}

\DeclareMathOperator{\ma}{\mathfrak{m}}

\DeclareMathOperator{\ip}{\mathfrak{p}}

\DeclareMathOperator{\Xs}{\mathfrak{X}}

\DeclareMathOperator{\Ys}{\mathfrak{Y}}

\DeclareMathOperator{\Stab}{Stab}

\DeclareMathOperator{\Map}{Map}

\DeclareMathOperator{\tworlim}{2-\varinjlim}

\DeclareMathOperator{\id}{id}
\DeclareMathOperator{\Hom}{Hom}
\DeclareMathOperator{\Ext}{Ext}
\DeclareMathOperator{\RHom}{RHom}

\DeclareMathOperator{\Bun}{Bun}

\DeclareMathOperator{\HT}{HT}
\DeclareMathOperator{\GM}{GM}

\DeclareMathOperator{\BL}{BL}

\DeclareMathOperator{\Ker}{Ker}

\DeclareMathOperator{\Coker}{Coker}

\DeclareMathOperator{\Ab}{Ab}

\DeclareMathOperator{\Grp}{Grp}

\DeclareMathOperator{\Sht}{Sht}
\DeclareMathOperator{\Sch}{Sch}

\DeclareMathOperator{\Spec}{Spec}
\DeclareMathOperator{\Spf}{Spf}
\DeclareMathOperator{\Spa}{Spa}
\DeclareMathOperator{\Spd}{Spd}

\DeclareMathOperator{\Proj}{Proj}

\DeclareMathOperator{\Gr}{Gr}

\DeclareMathOperator{\GL}{GL}

\DeclareMathOperator{\Aut}{Aut}

\DeclareMathOperator{\Pic}{Pic}
\DeclareMathOperator{\BT}{BT}

\DeclareMathOperator{\Nil}{Nil}
\DeclareMathOperator{\Nilp}{Nilp}

\DeclareMathOperator{\adm}{adm}
\DeclareMathOperator{\Lie}{Lie}
\DeclareMathOperator{\Image}{Im}

\DeclareMathOperator{\End}{End}

\DeclareMathOperator{\QHom}{QHom}

\DeclareMathOperator{\Adic}{Adic}

\DeclareMathOperator{\Gal}{Gal}

\DeclareMathOperator{\ev}{ev}

\DeclareMathOperator{\rank}{rank}
\DeclareMathOperator{\height}{ht}

\DeclareMathOperator{\cts}{cts}

\DeclareMathOperator{\fppf}{fppf}

\DeclareMathOperator{\crys}{crys}

\DeclareMathOperator{\cyc}{cyc}
\DeclareMathOperator{\an}{an}

\DeclareMathOperator{\Loc}{Loc}

\DeclareMathOperator{\et}{\acute{e}t}
\DeclareMathOperator{\Et}{\acute{E}t}

\DeclareMathOperator{\proet}{pro\acute{e}t}

\DeclareMathOperator{\Zar}{Zar}

\DeclareMathOperator{\gr}{gr}

\DeclareMathOperator{\rig}{rig}
\DeclareMathOperator{\alg}{alg}

\DeclareMathOperator{\FF}{FF}
\DeclareMathOperator{\Perf}{Perf}
\DeclareMathOperator{\perf}{perf}
\DeclareMathOperator{\LSD}{LSD}

\DeclareMathOperator{\Sm}{Sm}
\DeclareMathOperator{\qcqs}{qcqs}

\DeclareMathOperator{\univ}{univ}
\DeclareMathOperator{\Isom}{Isom}
\DeclareMathOperator{\ttt}{tt}
\DeclareMathOperator{\ptt}{tpt}
\DeclareMathOperator{\tor}{tor}
\DeclareMathOperator{\tors}{tors}
\DeclareMathOperator{\torsion}{torsion}

\DeclareMathOperator{\qlog}{qlog}

\DeclareMathOperator{\Ainf}{\mathrm{A_{inf}}}

\DeclareMathOperator{\Acrys}{\mathrm{A_{crys}}}

\DeclareMathOperator{\Bcr+}{\mathrm{B_{crys}^+}}

\DeclareMathOperator{\BdeR}{\mathrm{B_{dR}}}

\DeclareMathOperator{\BdR+}{\mathrm{B_{dR}^+}}
\DeclareMathOperator{\OBdeR}{\mathcal{O}\mathbb{B}_{dR}}
\DeclareMathOperator{\OBdR+}{\mathcal{O}\mathbb{B}_{dR}^+}
\DeclareMathOperator{\OBHT}{\mathcal{O}\mathbb{B}_{HT}}
\DeclareMathOperator{\OC}{\mathcal{O}\mathbb{C}}
\newcommand{\cj}[1]{\overline{#1}}
\DeclareMathOperator{\RZ}{RZ}
\DeclareMathOperator{\Hdg}{Hdg}

\DeclareMathOperator{\kfl}{kfl}
\DeclareMathOperator{\Vect}{Vect}
\DeclareMathOperator{\DM}{DM}
\DeclareMathOperator{\qsyn}{qsyn}

\stackMath
\newcommand\widecheck[1]{%
\savestack{\tmpbox}{\stretchto{%
  \scaleto{%
    \scalerel*[\widthof{\ensuremath{#1}}]{\kern-.6pt\bigwedge\kern-.6pt}%
    {\rule[-\textheight/2]{1ex}{\textheight}}%WIDTH-LIMITED BIG WEDGE
  }{\textheight}% 
}{0.5ex}}%
\stackon[1pt]{#1}{\scalebox{-1}{\tmpbox}}%
}
\newcommand{\G}{\mathbb{G}}

\newcommand{\A}{\mathbb{A}}
\newcommand{\B}{\mathbb{B}}
\newcommand{\C}{\mathbb{C}}

\newcommand{\Pro}{\mathbb{P}}

\newcommand{\R}{\mathbb{R}}
\newcommand{\Z}{\mathbb{Z}}
\newcommand{\Q}{\mathbb{Q}}
\newcommand{\F}{\mathbb{F}}

\newcommand{\Lb}{\mathbb{L}}
\newcommand{\Mb}{\mathbb{M}}

\newcommand{\Sph}{\mathbb{S}}

\newcommand{\dR}{\mathrm{dR}}

\newcommand{\V}{\mathbb{V}}

\newcommand{\Es}{\mathcal{E}}

\newcommand{\As}{\mathcal{A}}
\newcommand{\Bs}{\mathcal{B}}
\newcommand{\Os}{\mathcal{O}}
\newcommand{\Fs}{\mathcal{F}}
\newcommand{\Gs}{\mathcal{G}}
\newcommand{\Hs}{\mathcal{H}}
\newcommand{\Ls}{\mathcal{L}}
\newcommand{\Ms}{\mathcal{M}}
\newcommand{\Ns}{\mathcal{N}}

\newcommand{\Us}{\mathcal{U}}

\newcommand{\Is}{\mathcal{I}}

\newcommand{\Ss}{\mathcal{S}}
\newcommand{\Yss}{\mathcal{Y}}

\newcommand{\BC}{\mathcal{BC}}
\newcommand{\Fl}{\mathcal{F}l}

\DeclareMathOperator{\PPic}{\mathbf{Pic}}

\DeclareMathOperator{\Fil}{Fil}

\newcommand{\sub}{\subseteq}
\newcommand{\fa }{\forall}

\newcommand{\restr}[1]{|_{#1}}

\DeclareFieldFormat[article,inbook,incollection,inproceedings,patent,thesis,unpublished, misc]{title}{#1\isdot}
\DeclareFieldFormat{pages}{#1}
\renewbibmacro*{volume+number+eid}{%
  \printfield{volume}
  \setunit*{\addnbspace}
  \printfield{number}
  \setunit{\addcomma\space}
  \printfield{eid}
}
\DeclareFieldFormat[article]{number}{\mkbibparens{#1}}
\renewbibmacro*{issue+date}{}% Remove date
\renewbibmacro*{note+pages}{% Add date after pages
  \printfield{note}
  \setunit{\bibpagespunct}
  \printfield{pages}
  \setunit{\addcomma\addspace}
  \usebibmacro{date}
  \newunit}

\NewBibliographyString{toappearin}
\NewBibliographyString{submittedto}
\DefineBibliographyStrings{english}{%
  toappearin  = {to appear in},
  submittedto = {submitted to},
}
\renewbibmacro{in:}{
\ifentrytype{article}
    {\addperiod
    \addspace
    \printfield{pubstate}
     \clearfield{pubstate}
  \printunit{\addspace}}
{\bibstring{in}\printunit{\addspace}}
}

 \title{A Dieudonné theory for analytic $p$-divisible groups and applications to Shimura varieties}
 \author{Lucas Gerth}
 \date{} 

\newcommand{\RNum}[1]{\uppercase\expandafter{\romannumeral #1\relax}}

\begin{document}

\maketitle
\setlength{\parskip}{0pt}
\setlength{\parindent}{15pt}
\setlist{topsep=5pt, leftmargin=*}

\begin{abstract}
We study families of analytic $p$-divisible groups over adic spaces $S$ defined over $\Q_p$. We prove an equivalence between such families and Hodge--Tate triples, generalizing a theorem of Fargues. For a perfectoid space $S$, we construct a functor associating to an analytic $p$-divisible group $\Gs \rightarrow S$ a coherent sheaf $\Es(\Gs)$ on the relative Fargues--Fontaine curve $X_S$. Restricting to analytic $p$-divisible groups admitting a Cartier dual, we obtain an equivalence of categories with local shtukas satisfying a minuscule condition, compatible with the prismatic Dieudonné theory of Anschütz--Le Bras. We conclude with applications to moduli spaces: we show that the local Shimura varieties of EL and PEL type of Scholze--Weinstein are moduli spaces of analytic $p$-divisible groups with extra structure, and we give a reinterpretation of the Hodge--Tate period map of Scholze in terms of topologically $p$-torsion subgroups of abelian varieties.
\end{abstract}

%\tableofcontents

\section{Introduction}
Let $p$ be a prime number. In their book, Rapoport and Zink \cite{rapoportzink96} constructed the so-called Rapoport--Zink spaces. These are $p$-adic formal schemes, classifying deformations of a fixed $p$-divisible group with extra structure. Using Huber’s theory of analytic adic spaces \cite{huber2013étale}, one can consider their generic fibers. Inspired by earlier work of Drinfeld, Rapoport--Zink envisioned certain towers of coverings of these generic fibers as local analogues of Shimura varieties, with their étale cohomology expected to reflect parts of the local Langlands correspondence. Scholze–Weinstein later formulated in \cite{SW20} a very general definition of local Shimura varieties as moduli spaces of $p$-adic shtukas, encompassing the earlier Rapoport–Zink examples. This formed the geometric framework in which the expected connections with the local Langlands correspondence were fully realized, notably in the work of Fargues–Scholze \cite{fargues2024geometrization}. 

In the global theory, a large class of Shimura varieties, the so-called varieties of PEL type, admit a description as moduli spaces of abelian varieties with extra structure. One might be tempted to ask for an analogous description of certain local Shimura varieties as parametrizing geometric objects. As Rapoport–Zink spaces are moduli spaces of $p$-divisible groups, it is natural to expect that their generic fibers should themselves be described as parametrizing suitable “generic fiber” objects attached to $p$-divisible groups. A target category for such a generic fiber functor was provided by Fargues in \cite{Farg19} with his notion of analytic $p$-divisible groups.

In this paper, we lay down the foundations for families of analytic $p$-divisible groups over adic spaces $S$ over $\Q_p$. We show that such groups are equivalent to Hodge--Tate triples over $S$, see Theorem \ref{Intro thm: extending Fargues' equivalence of categories}. Moreover, we define a Dieudonné functor with values in minuscule local shtukas, see Theorem \ref{intro thm: analytic Dieudonné theory in general} below, mirroring Dieudonné theory for classical $p$-divisible groups over (formal) schemes. We then use the above results to give a moduli-theoretic description of the local Shimura varieties of EL and PEL type in terms of analytic $p$-divisible groups with extra structure, see Theorem \ref{intro thm: Rapoport-Zink space in terms of analytic p-div groups}. At last, we show that the Hodge--Tate period map of Scholze \cite{Scholze2015torsion} on global Shimura varieties of PEL type can be reinterpreted as sending an abelian variety with extra structure to its topologically $p$-torsion subgroup, see Theorem \ref{intro thm: HT map reinterpreted}.

\subsection{Analytic $p$-divisible groups}
The main objects of study of this paper are relative analytic $p$-divisible groups $\Gs \rightarrow S$, where $S$ is an adic space over $\Q_p$. Following \cite[§2.1]{Farg19}, these are smooth families of commutative adic groups for which $[p]\colon \Gs \rightarrow \Gs$ is finite étale surjective and such that $\Gs$ is of topologically $p$-torsion. This means that we have an equality of $v$-sheaves $\Gs = \Gs\langle p^{\infty} \rangle$, where we define, following Heuer \cite[§2]{heuer2022geometric}
\begin{align} \Gs\langle p^{\infty} \rangle =  \Image(\underline{\Hom}(\underline{\Z_p},\Gs) \xrightarrow{\ev_1} \Gs).\end{align}
Important examples of analytic $p$-divisible groups are adic generic fibers $\mathfrak{G}_{\eta}$ of $p$-divisible groups over formal models $\Ss$ of $S$. More generally, we define a generic fiber functor on log $p$-divisible groups in the sense of Kato \cite{kato2023logarithmicdieudonnetheory}, see Proposition \ref{prop: generic fibers of log p-divisible groups}. Another interesting class of examples are the topologically $p$-torsion subgroups $A\langle p^{\infty} \rangle$ of abelian varieties $A$, viewed as adic spaces. More generally, we allow families of abeloid varieties, the rigid analytic analogue of complex tori \cite{lutkebohmert2016}.

As we extensively rely on the technique of reduction to the perfectoid case, we restrict our attention to a subcategory of adic spaces $S$ over $\Q_p$, the so-called good adic spaces (see Definition \ref{def: good adic spaces}), on which the structure sheaf is a $v$-sheaf. For example, $S$ could be a seminormal rigid space or a perfectoid space. There is also a meaningful definition of analytic $p$-divisible groups over arbitrary locally spatial diamonds, see Remark \ref{rmk: p-divisible groups over LSD}.

Such an analytic $p$-divisible group $\Gs$ is endowed with a functorial logarithm map $\log_{\Gs} \colon \Gs \rightarrow \Lie(\Gs)$, an étale surjective homomorphism of adic groups with kernel $\Gs[p^{\infty}]$. Our first theorem is the following equivalence, crucial for all our other applications.
\begin{thm}{(Theorem \ref{thm: extending Fargues' equivalence of categories})}\label{Intro thm: extending Fargues' equivalence of categories}
    Let $S$ be a good adic space over $\Q_p$. Then the following categories are canonically equivalent
    \begin{enumerate}
        \item The category of analytic $p$-divisible groups $\Gs \rightarrow S$, and
        \item The category of tuples $(\Lb,E,f)$ where $\Lb$ is a $\Z_p$-local system on $S_v$, $E$ is a vector bundle on $S_{\et}$ and $f\colon E\otimes_{\Os_{S}} \Os_{S_v} \rightarrow \Lb(-1)\otimes_{\Z_p} \Os_{S_v}$ is a morphism of $v$-vector bundles.
\end{enumerate}
If $\Gs$ and $(\Lb,E,f)$ correspond to each-other, we have $\Lb = T_p\Gs$, $E=\Lie(\Gs)$ and the map $f=f_{\Gs}$ fits in the following commutative diagram of group diamonds
% https://q.uiver.app/#q=WzAsMTAsWzIsMCwiXFxHcyJdLFszLDAsIlxcTGllKFxcR3MpXFxvdGltZXNfe1xcT3NfU31cXE9zX3tTX3Z9Il0sWzMsMSwiVF9wXFxHcygtMSlcXG90aW1lc197XFxaX3B9XFxPc197U192fSJdLFsyLDEsIlRfcFxcR3MoLTEpXFxvdGltZXNfe1xcWl9wfVxcR19tIFxcbGFuZ2xlIHBee1xcaW5mdHl9IFxccmFuZ2xlIl0sWzEsMCwiXFxHc1twXntcXGluZnR5fV0iXSxbMCwwLCIwIl0sWzQsMCwiMCJdLFsxLDEsIlRfcFxcR3MoLTEpXFxvdGltZXNfe1xcWl9wfVxcbXVfe3Bee1xcaW5mdHl9fSJdLFswLDEsIjAiXSxbNCwxLCIwLiJdLFswLDEsIlxcbG9nX3tcXEdzfSJdLFsxLDIsImZfe1xcR3N9Il0sWzMsMiwiXFxpZCBcXG90aW1lcyBcXGxvZyIsMl0sWzAsM10sWzQsMF0sWzUsNF0sWzEsNl0sWzQsNywiPSJdLFs4LDddLFs3LDNdLFsyLDldXQ==
\begin{equation}\label{intro eq: diagram for f}
\begin{tikzcd}
	0 & {\Gs[p^{\infty}]} & \Gs & {\Lie(\Gs)\otimes_{\Os_S}\!\Os_{S_v}} & 0 \\
	0 & {T_p\Gs(-1)\otimes_{\Z_p}\!\mu_{p^{\infty}}} & {T_p\Gs(-1)\otimes_{\Z_p}\!\G_m \langle p^{\infty} \rangle} & {T_p\Gs(-1)\otimes_{\Z_p}\!\Os_{S_v}} & {0.}
	\arrow[from=1-1, to=1-2]
	\arrow[from=1-2, to=1-3]
	\arrow["{=}", from=1-2, to=2-2]
	\arrow["{\log_{\Gs}}", from=1-3, to=1-4]
	\arrow[from=1-3, to=2-3]
	\arrow[from=1-4, to=1-5]
	\arrow["{f_{\Gs}}", from=1-4, to=2-4]
	\arrow[from=2-1, to=2-2]
	\arrow[from=2-2, to=2-3]
	\arrow["{\id \otimes \log}"', from=2-3, to=2-4]
	\arrow[from=2-4, to=2-5]
\end{tikzcd}
\end{equation}
\end{thm}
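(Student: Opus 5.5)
We plan to build both functors explicitly and then check that they are mutually inverse by reducing to the perfectoid case via $v$-descent, which is possible since $S$ is good (so $\Os_S$ is a $v$-sheaf and vector bundles on $S_{\et}$ are determined by their pullbacks to $S_v$ together with descent data).

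\emph{From groups to triples.} Given $\Gs\to S$, set $\Lb=T_p\Gs=\varprojlim\Gs[p^n]$. Each $\Gs[p^n]$ is finite étale since $[p]$ is finite étale surjective, so $\Lb$ is a $\Z_p$-local system, even on $S_{\et}$, hence on $S_v$. Set $E=\Lie(\Gs)$, a vector bundle because $\Gs$ is smooth. To define $f_{\Gs}$ we first construct the middle vertical arrow of \eqref{intro eq: diagram for f}. For a topologically $p$-torsion $x\in\Gs(T)$, choose (locally in the $v$-topology) a homomorphism $\phi\colon\underline{\Z_p}\to\Gs$ with $\phi(1)=x$. Cartier duality of the finite étale $\Gs[p^n]$ with $\mu_{p^n}$ gives the Weil-type pairing needed. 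Concretely, the extension $0\to T_p\Gs\to \widetilde{\Gs}\to\Gs\to 0$, where $\widetilde{\Gs}=\varprojlim_{[p]}\Gs$, pushed out along $T_p\Gs\to T_p\Gs\otimes\mu_{p^\infty}$-type maps, yields the map $\Gs\to T_p\Gs(-1)\otimes\G_m\langle p^\infty\rangle$. This map is the identity on $\Gs[p^\infty]=T_p\Gs\otimes\Q_p/\Z_p$ after twisting. Since $\log_{\Gs}$ is surjective with kernel $\Gs[p^\infty]$, the map descends uniquely to $f_{\Gs}\colon \Lie(\Gs)\otimes\Os_{S_v}\to T_p\Gs(-1)\otimes\Os_{S_v}$. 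We then check $\Os$-linearity. Additivity is clear, and linearity for the $\Os$-action follows by reducing to perfectoid $S$, where both sides are $v$-vector bundles and it suffices to compare derivatives at the identity.

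\emph{From triples to groups.} Given $(\Lb,E,f)$, define $\Gs$ as the fibre product
\[
\Gs = E\otimes_{\Os_S}\Os_{S_v}\times_{\Lb(-1)\otimes\Os_{S_v}}\bigl(\Lb(-1)\otimes_{\Z_p}\G_m\langle p^\infty\rangle\bigr),
\]
which is a priori a $v$-sheaf of groups. We must show that it is representable by a smooth adic group over $S$ with $[p]$ finite étale surjective and topologically $p$-torsion.

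This representability is the step I expect to be the main obstacle. My plan is as follows. The map $\Gs\to E$ (as an adic vector group) is a torsor under $\Lb\otimes\mu_{p^\infty}$ via the log-exact sequence, so it is étale-locally a disjoint union of copies of the open neighbourhood $U$ of $0$ on which $\log$ is an isomorphism. More precisely, restricted to a small ball $E^{\circ}\subset E$, $\log_{\G_m}$ has an inverse $\exp$, so $\Gs\times_E E^{\circ}\cong E^{\circ}\times \Lb\otimes\mu_{p^\infty}$ splits. Since $\Lb\otimes\mu_{p^\infty}$ is ind-finite étale over $S$, this yields representability by a smooth adic space. One then uses $[p^n]$ to spread out over all of $E$: every point of $E$ is $p^n$ times a point of $E^\circ$. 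Because $E$ is defined on $S_{\et}$ and the descent datum is effective for étale objects over good $S$, $\Gs$ descends to an adic space over $S$. Here the goodness hypothesis is essential: it guarantees that the resulting $\Gs$ has the correct $\Os$.

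\emph{Mutual inverseness.} Applied to $\Gs$, the fibre product recovers $\Gs$ by the five lemma on the diagram \eqref{intro eq: diagram for f}, since both rows are exact and $\Gs\to E$ and the constructed group agree over $E$ with the same kernel. Conversely, starting from a triple, $T_p$ of the fibre product is $\Lb$ (the torsion is $\Lb(-1)\otimes\mu_{p^\infty}\cong\Lb\otimes\Q_p/\Z_p$), and its Lie algebra is $E$ since $\log$ is a local isomorphism. Functoriality on morphisms follows because all constructions are canonical, and full faithfulness reduces to the corresponding statement for the triples via the fibre-product description.
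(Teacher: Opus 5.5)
\textbf{There is a genuine gap: the construction of $u_{\Gs}$.} Your representability argument for the fibre product is fine in outline; the paper simply cites Lemma 3.38 of Gerth's earlier work for it. The gap is the middle vertical arrow $u_{\Gs}\colon\Gs\to T_p\Gs(-1)\otimes_{\Z_p}\G_m\langle p^{\infty}\rangle$, which is the real content of the theorem and which you never actually construct.

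\begin{itemize}
\item Cartier duality of the $\Gs[p^n]$ only gives the Weil pairing on $\Gs[p^{\infty}]\times T_p\Gs^{\vee}(1)$.
\item Pushing out $0\to T_p\Gs\to\widetilde{\Gs}\to\Gs\to 0$ along a map out of $T_p\Gs$ yields another extension \emph{of} $\Gs$, not a homomorphism \emph{out of} $\Gs$.
\end{itemize}

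What you actually need is to extend a homomorphism $\Gs[p^{\infty}]\to M$, with $M=T_p\Gs(-1)\otimes\G_m\langle p^{\infty}\rangle$, across the logarithm sequence $0\to\Gs[p^{\infty}]\to\Gs\to\ga\to 0$. Such extensions are exactly the retractions of the pushout extension $0\to M\to P\to\ga\to 0$ of the logarithm sequence along $\Gs[p^{\infty}]\to M$. So they exist iff its class in $\Ext^1(\ga,M)$ vanishes, and they are unique when $\underline{\Hom}(\ga,M)=0$.

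Uniqueness is a Liouville argument: a homomorphism $\G_a^d\to\G_m\langle p^{\infty}\rangle$ is a bounded function on $\A^d_S$, hence constant. Existence is a genuine computation. The paper passes to strictly totally disconnected $S$ and trivializes $T_p\Gs$ and $\ga$. It then uses the Breen resolution to realize $\Ext^1_{\et}(\G_a,\G_m\langle p^{\infty}\rangle)$ inside $H^1_{\et}(\A^1_S,\G_m\langle p^{\infty}\rangle)$. Finally it kills the latter using $\Pic_{\et}(\A^1_S)=0$ and the fact that sections of $\cj{\G}_m=\G_m/\G_m\langle p^{\infty}\rangle$ over $\A^1_S$ come from $S$. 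Working with universal covers does not avoid this: a map $\widetilde{\Gs}\to T_p\Gs(-1)\otimes(\B_{\crys}^+)^{\varphi=p}$ extending the identity on $V_p\Gs$ needs the analogous vanishing.

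\textbf{The uniqueness statement is also used silently later in your argument.}
\begin{itemize}
\item It is what makes $u_{\Gs}$, and hence $f_{\Gs}$, natural in $\Gs$. You need this to have a functor at all, and again for full faithfulness.
\item It is what shows, in your ``conversely'' step, that the $f$ attached to the fibre product $F(\Lb,E,f)$ is the $f$ you started with. The projection $F(\Lb,E,f)\to\Lb(-1)\otimes\G_m\langle p^{\infty}\rangle$ restricts to the Weil pairing on torsion, and therefore \emph{is} $u_{F(\Lb,E,f)}$ by uniqueness. Computing $T_p$ and $\Lie$ of the fibre product alone does not give this.
\end{itemize}

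A minor point: étale vector bundles on a good $S$ do not satisfy effective $v$-descent; only $\Vect(S_{\et})\to\Vect(S_v)$ is fully faithful. This failure is why analytic $p$-divisible groups do not descend along $v$-covers of non-perfectoid good spaces, so your opening reduction should not be phrased that way.
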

When $S=\Spa(K)$ for a non-archimedean field $K$ with completed algebraic closure $C$, the second category is equivalent to that of tuplets $(\Lb,E,f)$ where $\Lb$ is a continuous finite free $\Z_p$-linear representation of $\Gal_K$, $E$ is a finite-dimensional $K$-vector space and $f\colon \Lb\otimes_{\Z_p} C \rightarrow E \otimes_K C$ is a $C$-linear Galois-equivariant map. The above result then recovers a theorem of Fargues \cite[Thm. 0.1]{Farg19}.

We briefly describe the proof of Theorem \ref{Intro thm: extending Fargues' equivalence of categories}. The main difficulty, starting with an analytic $p$-divisible group $\Gs$, resides in constructing the map $f_{\Gs}$. Our strategy is first to construct the middle vertical map $\Gs \rightarrow T_p\Gs(-1) \otimes_{\Z_p} \G_m\langle p^{\infty} \rangle$ in the diagram (\ref{intro eq: diagram for f}), and to obtain the map $f_{\Gs}$ as its derivative at the identity. This is possible thanks to the following result, which is of independant interest.
\begin{thm}{(Theorem \ref{thm: existence of analytic Weil pairings})}
    Let $S$ be a good adic space over $\Q_p$. Let $\Gs \rightarrow S$ be an analytic $p$-divisible group and consider its Tate module $T_p\Gs$, a $\Z_p$-local system on $S_v$. Then the natural Weil pairing $e$ between $\Gs[p^{\infty}]$ and $T_p\Gs^{\vee}(1)$ uniquely extends to a pairing $\widehat{e}$ of group diamonds, as in the following diagram
    % https://q.uiver.app/#q=WzAsNCxbMCwwLCJcXEdzW3Bee1xcaW5mdHl9XSBcXHRpbWVzIFRfcFxcR3Nee1xcdmVlfSgxKSJdLFsxLDAsIlxcbXVfe3Bee1xcaW5mdHl9fSJdLFsxLDEsIlxcR19tXFxsYW5nbGUgcF57XFxpbmZ0eX1cXHJhbmdsZS4iXSxbMCwxLCJcXEdzIFxcdGltZXMgVF9wXFxHc157XFx2ZWV9KDEpIl0sWzAsMSwiZSJdLFsxLDIsIiIsMCx7InN0eWxlIjp7InRhaWwiOnsibmFtZSI6Imhvb2siLCJzaWRlIjoidG9wIn19fV0sWzAsMywiIiwyLHsic3R5bGUiOnsidGFpbCI6eyJuYW1lIjoiaG9vayIsInNpZGUiOiJ0b3AifX19XSxbMywyLCJcXHdpZGVoYXR7ZX0iLDIseyJzdHlsZSI6eyJib2R5Ijp7Im5hbWUiOiJkYXNoZWQifX19XV0=
\[\begin{tikzcd}
	{\Gs[p^{\infty}] \times T_p\Gs^{\vee}(1)} & {\mu_{p^{\infty}}} \\
	{\Gs \times T_p\Gs^{\vee}(1)} & {\G_m\langle p^{\infty}\rangle.}
	\arrow["e", from=1-1, to=1-2]
	\arrow[hook, from=1-1, to=2-1]
	\arrow[hook, from=1-2, to=2-2]
	\arrow["{\widehat{e}}"', dashed, from=2-1, to=2-2]
\end{tikzcd}\]
\end{thm}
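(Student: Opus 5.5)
We reduce to constructing, for each local section $\lambda$ of $T_p\Gs^{\vee}(1)$, i.e.\ a homomorphism $\lambda\colon T_p\Gs \to \Z_p(1)$, an extension $\widehat{\lambda}\colon \Gs \to \G_m\langle p^{\infty}\rangle$ of the induced map $\Gs[p^\infty] = T_p\Gs\otimes \Q_p/\Z_p \to \mu_{p^\infty}$. Uniqueness comes first: two extensions differ by a homomorphism $\Gs/\Gs[p^\infty] \cong \Lie(\Gs)\otimes\Os_{S_v} \to \G_m\langle p^\infty\rangle$ (using the étale surjectivity of $\log_{\Gs}$). A homomorphism from a vector group to $\G_m\langle p^\infty\rangle$ must vanish. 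Indeed, the source is uniquely $p$-divisible, so its image lies in $\bigcap_n p^n\G_m\langle p^\infty\rangle$. Alternatively, composing with $\log$ gives a linear map to $\Os$, which is scaled by $p^n$, so the map factors through arbitrarily $p$-divisible elements, and its values lie in $1+p^n\mathfrak{m}$-type neighbourhoods for all $n$. Hence the map is trivial. Uniqueness makes the problem $v$-local on $S$, so the extensions glue and bilinearity follows from uniqueness. We may therefore assume $S=\Spa(R,R^+)$ is affinoid perfectoid and $T_p\Gs\cong\Z_p^h$ is trivial.

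For existence, we use the topological $p$-torsion property: $\Gs = \Image(\underline{\Hom}(\underline{\Z_p},\Gs)\xrightarrow{\ev_1}\Gs)$. We form the universal cover $\widetilde{\Gs} = \varprojlim_{[p]}\Gs$, which sits in an exact sequence $0\to T_p\Gs\to \widetilde{\Gs}\to\Gs\to 0$ ($v$-surjectivity uses that $[p]$ is finite étale surjective). It also satisfies $\widetilde{\Gs}\cong \widetilde{\Gs}[1/p]$-like behaviour, namely $\widetilde{\Gs}\cong \Lie(\Gs)\otimes \Os \times_{\Lie(\Gs)\otimes\Os} \ldots$. Concretely, $\log$ identifies $\widetilde{\Gs}$ as an extension of $\Lie(\Gs)\otimes\Os_{S_v}$ by $V_p\Gs=T_p\Gs[1/p]$. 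Similarly, $\widetilde{\G_m\langle p^\infty\rangle}$ is an extension of $\Os_{S_v}$ by $V_p\mu = \Q_p(1)$.

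The key step is to construct a map $\widetilde{\lambda}\colon\widetilde{\Gs}\to\widetilde{\G_m\langle p^\infty\rangle}$ extending $\lambda\otimes\Q_p\colon V_p\Gs\to\Q_p(1)$. Once this is done, it descends modulo $T_p$ to the required $\widehat{\lambda}$, and its restriction to torsion recovers $e$. Pushing out the extension $0\to V_p\Gs\to\widetilde{\Gs}\to \Lie\otimes\Os\to0$ along $\lambda$ yields an extension of $\Lie(\Gs)\otimes\Os_{S_v}$ by $\Q_p(1)$. We must compare it with a pullback of the universal extension $0\to\Q_p(1)\to\widetilde{\G_m\langle p^\infty\rangle}\to\Os\to0$ along some linear map $\Lie(\Gs)\otimes\Os\to\Os$. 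This amounts to the computation
\[\Ext^1_{v}(\Os_{S_v},\Q_p(1)) \cong \Os(S)\]
(Hom of vector group into $\Os$), generated by the universal cover of $\G_m\langle p^\infty\rangle$ (the $\BdR^+/\xi^2$-type / Fontaine extension). We expect this identification of extensions of a vector group by $\Q_p(1)$ on perfectoid $S$ to be the main obstacle. We would try to prove it via the exact sequence $0\to\Q_p(1)\to B^{+,\varphi=p}_{\mathrm{crys}}\to\Os\to0$ and the vanishing of $\Hom(\Os,\Q_p(1))$ and of $\Ext^1_v(\Os,\Os)$ contributions, using Heuer's results on $v$-cohomology of $\G_a$. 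Finally, we check that the resulting $\widehat{e}$ takes values in $\G_m\langle p^\infty\rangle$. This holds automatically, since $\Gs$ is topologically $p$-torsion and homomorphisms preserve topological $p$-torsion.
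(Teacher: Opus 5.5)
\textbf{There is a gap at the key step, and the uniqueness argument is wrong as written.}

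\textbf{Existence.} Your existence strategy is formally sound: pass to universal covers, build $\widetilde{\lambda}\colon\widetilde{\Gs}\to(\B_{\crys}^+)^{\varphi=p}$ restricting to $\lambda[\tfrac{1}{p}]$ on $V_p\Gs$, then divide by $T_p\Gs$ and $\Z_p(1)$. But it only relocates the difficulty. Everything rests on the claim that, over a strictly totally disconnected $S$, every extension of $\G_a$ by $\Q_p(1)$ is the pullback of $0\to\Q_p(1)\to(\B_{\crys}^+)^{\varphi=p}\to\G_a\to0$ along a unique scalar, and you leave this open.

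It cannot be imported from the paper. The identity $\Ext^1_{S_v}(E,\V)=\Hom_{S_v}(E,\V(-1)\otimes\Os_{S_v})$ appears there only as a consequence of Theorem~\ref{thm: a fargues thm for BC spaces}. That theorem's proof runs through Proposition~\ref{Prop: extension from tate module to the whole BC space}, which invokes the very extension result you are proving.

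Your sketch for the claim also targets the wrong terms. Applying $\Hom(\G_a,-)$ to the fundamental sequence, you need two things:
\begin{enumerate}
\item $\Hom(\G_a,(\B_{\crys}^+)^{\varphi=p})=0$, which is true by the Liouville argument below;
\item injectivity of $\Ext^1_v(\G_a,(\B_{\crys}^+)^{\varphi=p})\to\Ext^1_v(\G_a,\G_a)$.
\end{enumerate}
The group $\Hom(\G_a,\Q_p(1))$ plays no role. Also, $\Ext^1_v(\G_a,\G_a)$ has no reason to vanish: the $v$-cohomology of $\Os$ on $\A^1_S$ has a Hodge--Tate contribution $H^0(\A^1_S,\Omega^1)(-1)$. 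So results on the $v$-cohomology of $\G_a$ cannot give the missing injectivity, which is really a statement about multiplicative torsors on $\A^1_S$.

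That is exactly the nontrivial input of the paper. Over strictly totally disconnected $S$ one has $\Ext^1_{\et}(\G_a,\G_m\langle p^{\infty}\rangle)=0$; étale $\Ext$ suffices because the logarithm sequence is already exact étale-locally. The paper proves this via the Breen resolution, which reduces it to $H^1_{\et}(\A^1_S,\G_m\langle p^{\infty}\rangle)=0$. That in turn follows from $H^0(\A^1_S,\cj{\G}_m)=H^0(S,\cj{\G}_m)$ together with $\Pic_{\et}(\A^1_S)=0$ (Lemma~\ref{Lemma: Picard group of product in terms of one factor}). With this vanishing in hand, the detour through universal covers is unnecessary: the $\Hom$--$\Ext$ sequence of $0\to\Gs[p^{\infty}]\to\Gs\to\ga\to0$ with target $\G_m\langle p^{\infty}\rangle$ gives existence and uniqueness at once.

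\textbf{Uniqueness.} The fact you aim at is true, but neither of your arguments proves it.
\begin{enumerate}
\item Since $[p]$ is surjective on $\G_m\langle p^{\infty}\rangle$, the intersection $\bigcap_n p^n\G_m\langle p^{\infty}\rangle$ is all of $\G_m\langle p^{\infty}\rangle$. Purely algebraically there are many nonzero homomorphisms from a $\Q$-vector space such as $C$ into the divisible group $1+\ma_C$, so $p$-divisibility alone gives nothing.
\item Your second variant fails because dividing by $p^n$ makes sections $p$-adically larger, so no estimate near $1$ results.
\end{enumerate}
The vanishing of $\Hom(\ga,\G_m\langle p^{\infty}\rangle)$ is geometric. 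After localizing so that $\ga\cong\G_a^d$, full faithfulness of the diamond functor on good adic spaces turns such a homomorphism into an analytic map from $\A^d_S$ to the open unit disc around $1$. That is a bounded function on $\A^d_S$, hence constant; this is the argument used in the paper.
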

For groups of good reduction, such pairings were already considered by Tate in his original article \cite{Tat67}. A related construction also appears in the work of Deninger--Werner \cite{DeningerWerner2005}\cite{Deninger2005} in the context of the $p$-adic Simpson correspondence. For an algebraic curve $X$ over $\cj{\Q}_p$, they associate a continuous $\C_p$-linear representation of the étale fundamental group $\pi_1(X,x)$ to certain vector bundles on $X_{\C_p}$. When specified to the rank $1$ objects, this yields a group homomorphism
\begin{align}\label{intro eq: morphism of Deninger--Werner}
\alpha\colon \Pic_X^0(\C_p) \rightarrow \Hom_{\cts}(\pi_1(X,x),\C_p^{\times}).\end{align}
It was shown by Heuer \cite{heuer2022geometric} that this morphism can be geometrized.

Theorem \ref{Intro thm: extending Fargues' equivalence of categories} can be used to single out the analytic $p$-divisible groups $\Gs \rightarrow S$ that admit a Cartier dual. These are the groups whose associated map $f_{\Gs}$ extends to a short exact sequence of $v$-vector bundles
% https://q.uiver.app/#q=WzAsNSxbMSwwLCJcXExpZShcXEdzKVxcb3RpbWVzX3tcXE9zX1N9XFxPc197U192fSJdLFsyLDAsIlRfcFxcR3MoLTEpXFxvdGltZXNfe1xcWl9wfVxcT3Nfe1Nfdn0iXSxbMCwwLCIwIl0sWzMsMCwiXFxvbWVnYSBcXG90aW1lc197XFxPc19TfVxcT3Nfe1Nfdn0oLTEpIl0sWzQsMCwiMCwiXSxbMCwxLCJmIl0sWzIsMF0sWzEsM10sWzMsNF1d
\begin{equation}\begin{tikzcd}
	0 & {\Lie(\Gs)\otimes_{\Os_S}\Os_{S_v}} & {T_p\Gs(-1)\otimes_{\Z_p}\Os_{S_v}} & {\omega \otimes_{\Os_S}\Os_{S_v}(-1)} & {0,}
	\arrow[from=1-1, to=1-2]
	\arrow["{f_{\Gs}}", from=1-2, to=1-3]
	\arrow[from=1-3, to=1-4]
	\arrow[from=1-4, to=1-5]
\end{tikzcd}\end{equation}
for a vector bundle $\omega$ on $S_{\et}$. In that case, there is a natural Cartier dual $\Gs^D$, compatible with the usual Cartier duality on $p$-divisible groups upon taking adic generic fibers (Proposition \ref{prop: groups of good reduction are dualizable}).

With our terminology, the celebrated result of Scholze--Weinstein \cite[Thm. B]{scholze2013moduli} states that, for a complete algebraically closed field $C/\Q_p$, the adic generic fiber functor $(\cdot)_{\eta}$ realises an equivalence
\begin{align*}
    \left\{\text{\begin{tabular}{l} {\parbox{4.0cm}{$p$-divisible groups over $\Os_C$}}\end{tabular}}\right\}
         \xrightarrow{\cong} \left\{\text{\begin{tabular}{l} {\parbox{4.5cm}{dualizable analytic $p$-divisible groups over $\Spa(C,\Os_C)$}}\end{tabular}}\right\}.
    \end{align*}
For an arbitrary base $S$, the situation is different. Already for $S=\Spa(C,C^+)$, where $C^+ \sub C$ is an open and bounded valuation subring of rank $\geq 2$, the functor $(\cdot)_{\eta}$ is generally not fully faithful, see Example \ref{ex: generic fiber not fully faithful.}. Moreover, we show the following. Let $K$ be a $p$-adic field. We remind the reader that this consists in a complete discretely valued field extension $K$ of $\Q_p$ with perfect residue field.

\begin{proposition}{(Proposition \ref{prop: dualizable groups over smooth rig over padic field})}\label{intro prop: dualizable groups over smooth rig over padic field}
    Let $S$ be a smooth rigid space over a $p$-adic field $K$. Then the functor taking an analytic $p$-divisible group $\Gs$ to its Tate module $T_p\Gs$ yields an equivalence of categories
\begin{align*}\label{intro eq: classif over smrig over padic field}
    \left\{\text{\begin{tabular}{l} {\parbox{3.9cm}{$\Gs \rightarrow S$ dualizable analytic $p$-divisible groups}}\end{tabular}}\right\}
         \xrightarrow{\cong} \left\{\text{\begin{tabular}{l} {\parbox{5.7cm}{Hodge--Tate $\Z_p$-local systems $\Lb$ on $S$ with Hodge--Tate weights $\in \{0,1\}$}}\end{tabular}}\right\}.
    \end{align*}
\end{proposition}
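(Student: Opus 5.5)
By Theorem \ref{Intro thm: extending Fargues' equivalence of categories}, a dualizable analytic $p$-divisible group over $S$ is the same as a triple $(\Lb,E,f)$ in which $f$ fits into a short exact sequence of $v$-vector bundles
\[0\to E\otimes_{\Os_S}\Os_{S_v}\xrightarrow{f}\Lb(-1)\otimes_{\Z_p}\Os_{S_v}\to \omega\otimes_{\Os_S}\Os_{S_v}(-1)\to 0,\]
with $\omega$ a vector bundle on $S_{\et}$. The proof therefore reduces to one claim: a $\Z_p$-local system $\Lb$ on $S$ (smooth over the $p$-adic field $K$) admits such a filtration if and only if it is Hodge--Tate with weights in $\{0,1\}$, and in that case the filtration is unique and functorial in $\Lb$. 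Given the claim, the functor $\Gs\mapsto T_p\Gs$ is essentially surjective. It is also fully faithful, because a morphism of triples is determined by the map on $\Lb$: the map on $E$ is recovered from $E=\nu_*(\text{sub-bundle})$ and is forced by the filtration.

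For the forward direction, start from a filtration as above and twist by $(1)$. This exhibits $\Lb\otimes\widehat{\Os}$ on the pro-étale site as an extension of $\omega\otimes\widehat{\Os}$ by $E\otimes\widehat{\Os}(1)$. I would use Scholze's/Liu--Zhu's theory of the Hodge--Tate comparison. Pushing forward along $\nu\colon S_{\proet}\to S_{\et}$, we get $R^0\nu_*(\widehat\Os(j))=\Os_S$ for $j=0$ and $0$ otherwise, and $R^1\nu_*(\widehat\Os(j))$ vanishes for $j\neq 0,-1$; here $K$ discretely valued is used. From this, $\nu_*(\Lb\otimes\widehat\Os)\cong\omega$ and $\nu_*(\Lb\otimes\widehat\Os(-1))\cong E$. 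The Hodge--Tate condition asks that $\bigoplus_j\nu_*(\Lb\otimes\widehat\Os(j))\otimes\widehat\Os(-j)\to\Lb\otimes\widehat\Os$ be an isomorphism. To check this I would split the extension: the obstruction lies in $H^1$ of $\underline{\Hom}(\omega,E)\otimes\widehat\Os(1)$, and since $\nu_*$ of that sheaf kills weight $1$, one gets weight-$0$ and weight-$1$ pieces of ranks $\rank\omega$ and $\rank E$ summing to $\rank\Lb$. Equivalently, rank counting in Sen theory shows the Sen operator is semisimple with eigenvalues $0,1$.

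For the converse, take $\Lb$ Hodge--Tate with weights in $\{0,1\}$. The Hodge--Tate decomposition gives $\Lb\otimes\widehat\Os\cong(\omega\otimes\widehat\Os)\oplus(E\otimes\widehat\Os(1))$, with $E:=\nu_*(\Lb\otimes\widehat\Os(-1))$ and $\omega:=\nu_*(\Lb\otimes\widehat\Os)$ vector bundles on $S_{\et}$ (by Liu--Zhu, or by the relative Sen theory of Shimizu). After twisting by $(-1)$, the inclusion of the second summand is the desired $f$. Its cokernel is $\omega\otimes\widehat\Os(-1)$, so the sequence is short exact. Passing from $\widehat\Os$ on $S_{\proet}$ to $\Os_{S_v}$ uses that $S$ is good, i.e.\ seminormal since smooth. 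Uniqueness of $f$ comes from $\Hom(E\otimes\Os_{S_v},\omega\otimes\Os_{S_v}(-1))=H^0(S,\underline\Hom(E,\omega)\otimes\widehat\Os(-1))=0$. Functoriality in $\Lb$ follows by the same vanishing.

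I expect the main obstacle to be the forward direction's claim that any extension of the given shape is automatically Hodge--Tate. The key input is the cohomological vanishing $\nu_*\widehat\Os(j)=0$ for $j\neq0$ over a discretely valued base field, together with the fact that $R^1\nu_*\widehat\Os(1)\cong\Omega^1_S$ is nonzero, so a naive splitting argument fails. I would first try to handle this by reducing to classical points $x\in S$, where Sen theory over finite extensions of $K$ shows the fiber extension splits and has weights $0,1$. I would then conclude via Liu--Zhu's result that the Hodge--Tate property and the weights can be checked pointwise on a smooth rigid space.
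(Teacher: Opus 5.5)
Your reduction to the key claim matches the paper: a filtration $E\otimes\Os_{S_v}\hookrightarrow\Lb(-1)\otimes\Os_{S_v}\twoheadrightarrow\omega\otimes\Os_{S_v}(-1)$ with $E,\omega$ étale exists iff $\Lb$ is Hodge--Tate with weights in $\{0,1\}$. Your full faithfulness argument via $\nu_*\widehat{\Os}(-1)=0$ also matches the paper.

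The gap is in how you prove the key claim. You compute with $\widehat{\Os}$-twists, but the relative Hodge--Tate condition is defined through $\OBHT=\bigoplus_i\OC(i)$. Moreover, the comparison $D_{\HT}(\Lb)\otimes\OBHT\cong\Lb\otimes\OBHT$ only respects Higgs fields. Over a positive-dimensional $S$, the induced filtration on $\Lb\otimes\widehat{\Os}$ does not split in general; it splits iff $\theta_{\Lb}=0$ (remark after Proposition \ref{prop: characterization of HT ls}).

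Concretely, apply $\nu_*$ to $0\to E\otimes\widehat{\Os}(1)\to\Lb\otimes\widehat{\Os}\to\omega\otimes\widehat{\Os}\to 0$. You get $0\to\nu_*(\Lb\otimes\widehat{\Os})\to\omega\to R^1\nu_*(E\otimes\widehat{\Os}(1))=E\otimes\Omega^1_S$, and the last map is, up to sign, the Higgs field. So your identification $\nu_*(\Lb\otimes\widehat{\Os})\cong\omega$ is false in general. For $\Lb=T_p$ of the universal elliptic curve over a modular curve, that map is the Kodaira--Spencer isomorphism, so $\nu_*(\Lb\otimes\widehat{\Os})=0$.

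This breaks your converse direction:
\begin{itemize}
\item The decomposition $\Lb\otimes\widehat{\Os}\cong(\omega\otimes\widehat{\Os})\oplus(E\otimes\widehat{\Os}(1))$ does not exist.
\item $\omega:=\nu_*(\Lb\otimes\widehat{\Os})$ has the wrong rank; only your $E=\nu_*(\Lb\otimes\widehat{\Os}(-1))$ is correct.
\item The criterion you state in the forward direction, with $\widehat{\Os}$ in place of $\OC$, forces $\theta_{\Lb}=0$. With it, the proposition would fail for the same example.
\end{itemize}
So the real difficulty is in the converse, not the forward direction, and your pointwise detour does not address it.

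What you need is the $\OC$-computation, i.e.\ Proposition \ref{prop: characterization of HT ls} with $[a,b]=[0,1]$, which is what the paper invokes.

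\textbf{Forward direction.} Tensor the twisted filtration over $\widehat{\Os}$ with $\OC$. Since $\nu_*\OC=\Os_S$ and $R^m\nu_*(F\otimes\OC(n))=0$ for $n\neq 0$, you get $\gr^{-1}D_{\HT}(\Lb)\cong E$, $\gr^{0}D_{\HT}(\Lb)\cong\omega$, and $\gr^jD_{\HT}(\Lb)=0$ otherwise. A rank count then shows $\Lb$ is Hodge--Tate with weights in $\{0,1\}$, with no reduction to classical points.

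\textbf{Converse direction.} $\gr^{-1}D_{\HT}(\Lb)$ is a sub-Higgs bundle of $(D_{\HT}(\Lb),\theta_{\Lb})$ with zero Higgs field, because $\theta_{\Lb}$ lowers the degree. The functor $(F,\theta)\mapsto\gr^0(F\otimes\OBHT)^{\theta=0}$ is exact (Poincaré lemma, $\OC^{\Theta=0}=\widehat{\Os}$). It turns $0\to\gr^{-1}D_{\HT}(\Lb)\to D_{\HT}(\Lb)\to\gr^0D_{\HT}(\Lb)\to 0$ into a generally non-split exact sequence $0\to\gr^{-1}D_{\HT}(\Lb)\otimes\widehat{\Os}(1)\to\Lb\otimes\widehat{\Os}\to\gr^0D_{\HT}(\Lb)\otimes\widehat{\Os}\to 0$. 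Thus $\omega$ must be $\gr^0D_{\HT}(\Lb)$, and twisting by $(-1)$ gives the triple of a dualizable group.

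A minor slip: $R^1\nu_*\widehat{\Os}(j)$ is nonzero for $j\in\{0,1\}$, not $j\in\{0,-1\}$.
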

In contrast, by a well-known theorem of Breuil and Kisin, the category of $p$-divisible groups on $\Spf(\Os_K)$ is equivalent to the category of $\Z_p$-lattices in crystalline $\Gal_K$-representations with Hodge--Tate weights $\in\{0,1\}$. This shows that the functor $(\cdot)_{\eta}$ need not be essentially surjective in general. 

Still, the result of Scholze--Weinstein is enough to show that a dualizable analytic $p$-divisible group over $S=\Spa(A,A^+)$ aquires good reduction over the ring $A^{\circ}$ of powerbounded elements, $v$-locally on $S$, see Proposition \ref{prop: dualizable groups locally have good reduction on circ}. Moreover, we show, using prismatic Dieudonné theory (resp. log prismatic Dieudonné theory), that the adic generic fiber functor is fully faithful on $p$-divisible group (resp. log $p$-divisible groups) on a smooth (resp. semistable) formal model $\Ss$ of $S$ over $\Os_K$, for $K$ a $p$-adic field, see Proposition \ref{prop: good reduction versus crystalline local system} (resp. Proposition \ref{prop: semistable reduction versus semistable local system}). 

Other examples of dualizable analytic $p$-divisible groups are given by the topologically $p$-torsion Picard variety of smooth proper rigid spaces \cite{heuer2023diamantine}. Here we study it in a relative situation, obtaining the following result.
\begin{proposition}{(Corollary \ref{cor: ptop picard is dualizable})}\label{intro prop: ptop picard variety}
    Let $\pi\colon X \rightarrow S$ be a proper smooth morphism of seminormal rigid spaces over $K$, for some non-archimedean field $K/\Q_p$. 
    \begin{enumerate}
        \item The sheaf
    \[ \PPic_{X/S,\et}\langle p^{\infty} \rangle = R^1\pi_{\Et,*}\G_m\langle p^{\infty} \rangle\colon \Sm_{/S,\et} \rightarrow \Ab\]
    is representable by a smooth $S$-group. It is identified with the topologically $p$-torsion subsheaf of $\PPic_{X/S,\et} = R^1\pi_{\Et,*}\G_m$. 
    \item The group $\PPic_{X/S,\et}\langle p^{\infty} \rangle$ contains a maximal open analytic $p$-divisible subgroup $\Hs$, with 
    \[ \Lie(\Hs) = R^1\pi_{\et,*}\Os_X, \quad T_p\Hs = R^1\pi_{v,*}\Z_p(1)/\torsion,\]
    and the map $f_{\Hs}$ associated to $\Hs$ under Theorem \ref{Intro thm: extending Fargues' equivalence of categories} is the inclusion in the Hodge--Tate sequence for $X\rightarrow S$
    % https://q.uiver.app/#q=WzAsNSxbMCwwLCIwIl0sWzEsMCwiUl4xXFxwaV97XFxldCwqfVxcT3Nfe1h9IFxcb3RpbWVzX3tcXE9zX1N9XFxPc197U192fSJdLFsyLDAsIlJeMVxccGlfe3YsKn1cXFpfcCBcXG90aW1lc197XFxaX3B9XFxPc197U192fSJdLFszLDAsIlxccGlfKlxcT21lZ2Ffe1gvU31eMSBcXG90aW1lc197XFxPc19TfVxcT3Nfe1Nfdn0oLTEpIl0sWzQsMCwiMC4iXSxbMCwxXSxbMSwyXSxbMiwzXSxbMyw0XV0=
\begin{equation}\label{intro eq: contravariant Hodge--Tate sequence}\begin{tikzcd}
	0 & {R^1\pi_{\et,*}\Os_{X} \otimes_{\Os_S}\Os_{S_v}} & {R^1\pi_{v,*}\Z_p \otimes_{\Z_p}\Os_{S_v}} & {\pi_*\Omega_{X/S}^1 \otimes_{\Os_S}\Os_{S_v}(-1)} & {0.}
	\arrow[from=1-1, to=1-2]
	\arrow[from=1-2, to=1-3]
	\arrow[from=1-3, to=1-4]
	\arrow[from=1-4, to=1-5]
\end{tikzcd}\end{equation}
In particular, $\Hs$ is dualizable.
\item There exists a short exact sequence of $v$-sheaves, the multiplicative Hodge--Tate sequence
%  https://q.uiver.app/#q=WzAsNSxbMCwwLCIwIl0sWzEsMCwiXFxIcyJdLFsyLDAsIlJeMVxccGlfe3YsKn1cXFpfcCBcXG90aW1lc197XFxaX3B9XFxHX21cXGxhbmdsZSBwXntcXGluZnR5fSBcXHJhbmdsZSJdLFszLDAsIlxccGlfKlxcT21lZ2Ffe1gvU31eMVxcb3RpbWVzX3tcXE9zX1N9IFxcT3Nfe1Nfdn0oLTEpIl0sWzQsMCwiMC4iXSxbMCwxXSxbMSwyXSxbMiwzXSxbMyw0XV0=
\begin{equation}\label{intro eq: mult Hodge--Tate sequence}\begin{tikzcd}
	0 & \Hs & {R^1\pi_{v,*}\Z_p \otimes_{\Z_p}\G_m\langle p^{\infty} \rangle} & {\pi_*\Omega_{X/S}^1\otimes_{\Os_S} \Os_{S_v}(-1)} & {0.}
	\arrow[from=1-1, to=1-2]
	\arrow[from=1-2, to=1-3]
	\arrow[from=1-3, to=1-4]
	\arrow[from=1-4, to=1-5]
\end{tikzcd}\end{equation}
It lies over the sequence (\ref{intro eq: contravariant Hodge--Tate sequence}) above via the logarithm.
    \end{enumerate}
\end{proposition}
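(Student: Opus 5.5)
We will deduce everything from the absolute case of Heuer \cite{heuer2023diamantine} combined with the relative Hodge--Tate sequence and the equivalence of Theorem \ref{Intro thm: extending Fargues' equivalence of categories}. The plan is to start from the short exact sequence of $v$-sheaves
\[ 0 \to \G_m\langle p^{\infty}\rangle \to \G_m \to \overline{\G}_m \to 0 \]
together with the logarithm sequence $0\to \mu_{p^\infty}\to \G_m\langle p^\infty\rangle \xrightarrow{\log} \Os \to 0$ on $X_v$. We then apply $R\pi_{v,*}$ and compare with $R\pi_{\et,*}$. Since $\pi$ is proper smooth, the primitive comparison theorem and relative Hodge--Tate decomposition give that $R^1\pi_{v,*}\Os_{X_v}$ is an extension of $\pi_*\Omega^1_{X/S}\otimes\Os_{S_v}(-1)$ by $R^1\pi_{\et,*}\Os_X\otimes \Os_{S_v}$, and that $R^1\pi_{v,*}\Z_p$ is a $\Z_p$-local system up to torsion with $R^1\pi_{v,*}\Z_p\otimes\Os_{S_v}\cong R^1\pi_{v,*}\Os_{X_v}$.

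For (1), we pass to the étale site of smooth $S$-spaces. There $R^1\pi_{\Et,*}\Os=R^1\pi_{\et,*}\Os_X$ is a vector bundle by seminormality and base change for coherent cohomology. The étale long exact sequence of $\log$ then exhibits $R^1\pi_{\Et,*}\G_m\langle p^\infty\rangle$ as an extension of an open subgroup of $R^1\pi_{\et,*}\Os_X$ by $R^1\pi_{\Et,*}\mu_{p^\infty}$, which is étale over $S$ because $\pi$ is proper smooth. This extension is representable by a smooth group: we take the preimage under $\log$, étale-locally, where the vector bundle is representable. To identify it with $\PPic\langle p^\infty\rangle$, we use that $\pi_*\overline{\G}_m$ vanishes up to torsion-free constants (connected fibres may be assumed after Stein factorisation), together with Heuer's criterion that $\G_m\langle p^\infty\rangle$-torsors are exactly the topologically $p$-torsion line bundles.

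For (2), we define $\Hs$ as the union of the open subgroups that are topologically $p$-torsion and on which $[p]$ is finite étale surjective. Concretely, $\Hs$ is the preimage under $\log$ of the open locus where the extension by the étale part $R^1\pi_*\mu_{p^\infty}$ is a divisible $p$-power torsion group, i.e. we quotient out the non-divisible (torsion) part of $R^1\pi_*\Z_p(1)$. This gives $\Lie(\Hs)=R^1\pi_{\et,*}\Os_X$ and $T_p\Hs=R^1\pi_{v,*}\Z_p(1)/\torsion$. To identify $f_{\Hs}$, we use diagram (\ref{intro eq: diagram for f}): the middle map $\Hs\to T_p\Hs(-1)\otimes\G_m\langle p^\infty\rangle$ is the pullback map $R^1\pi_{\Et,*}\G_m\langle p^\infty\rangle\to R^1\pi_{v,*}\G_m\langle p^\infty\rangle$. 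By the Künneth-type identification $R^1\pi_{v,*}\G_m\langle p^\infty\rangle = R^1\pi_{v,*}\Z_p\otimes\G_m\langle p^\infty\rangle$, which follows from $\G_m\langle p^\infty\rangle = \varprojlim$-description via $\Hom(\Z_p,-)$ and the local system property, its derivative is the edge map $R^1\pi_{\et,*}\Os\to R^1\pi_{v,*}\Os$, i.e. the Hodge--Tate inclusion. Dualizability then follows from the definition, since the cokernel is $\pi_*\Omega^1\otimes\Os_{S_v}(-1)$, a vector bundle.

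For (3), we take the $v$-long exact sequence for $R\pi_{v,*}$ applied to the logarithm sequences and compare it with the étale one via the five lemma. The cokernel of $\Hs\to R^1\pi_{v,*}\Z_p\otimes\G_m\langle p^\infty\rangle$ maps isomorphically, via $\log$, to the cokernel of $f_{\Hs}$, because both have the same kernel $\mu_{p^\infty}$-part. Compatibility with (\ref{intro eq: contravariant Hodge--Tate sequence}) holds by construction. The main obstacle is the representability and base-change step in (1) over a seminormal non-smooth base, in particular controlling the torsion in $R^1\pi_{v,*}\Z_p$ and showing that the $v$- and étale $R^1$ agree on the topologically $p$-torsion part. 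There we first reduce to $S$ perfectoid via the good adic space formalism and Heuer's absolute results, then descend.
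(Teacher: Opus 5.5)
Your outline (log sequence, an étale part plus a vector-bundle part, the derivative of the comparison map being the Hodge--Tate inclusion, the snake lemma for (3)) has the same shape as the paper's argument. However, the step that carries (2) is false as stated.

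The ``K\"unneth-type'' identification $R^1\pi_{v,*}\G_m\langle p^{\infty}\rangle = R^1\pi_{v,*}\Z_p\otimes\G_m\langle p^{\infty}\rangle$ does not follow from any formal property of $\underline{\Hom}(\Z_p,-)$, because $R\pi_{v,*}$ does not commute with $-\otimes\G_m\langle p^{\infty}\rangle$. It also fails in general. Over a strictly totally disconnected $T$, the $p^\infty$-torsion of $R^1\pi_{v,*}\G_m\langle p^{\infty}\rangle$ is $H^1_{\et}(X_T,\Q_p/\Z_p(1))$. This is an extension of $H^2_v(X_T,\Z_p(1))_{\tors}$ by $H^1_v(X_T,\Q_p(1))/H^1_v(X_T,\Z_p(1))$, hence isomorphic to $(\Q_p/\Z_p)^r\oplus A$ with $A$ finite (think of $p$-torsion in the N\'eron--Severi group).

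The correct statement is Proposition~\ref{prop: mult variant of prim comp}: $R^1\pi_{v,*}\G_m\langle p^{\infty}\rangle\cong\underline{\Hom}(\Mb(1),\G_m\langle p^{\infty}\rangle)$, where $\Mb=\underline{\Hom}(R^1\pi_{\Et,*}\mu_{p^\infty},\Q_p/\Z_p)$ is lisse with torsion dual to $A$. Your formula holds only when $R^2\pi_{v,*}\Z_p$ is torsion-free.

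This also breaks your construction of $\Hs$. The non-divisible part comes from $R^2\pi_{v,*}\Z_p(1)_{\tors}$, not from torsion in $R^1\pi_{v,*}\Z_p(1)$, which is torsion-free. It also lies in $\ker\log$, so no preimage under $\log$ can remove it. This summand $A$, on which $[p]$ is not surjective, is exactly why the statement speaks of a \emph{maximal} analytic $p$-divisible subgroup.

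What you are missing is the paper's proof of the multiplicative comparison. Over strictly totally disconnected $T$, it proceeds as follows.
\begin{enumerate}
\item Take the long exact sequence for $\mu_{p^\infty}\to\G_m\langle p^{\infty}\rangle\to\G_a$.
\item Use the Primitive Comparison Theorem to identify the $\G_a$-term with the vector bundle $R^1\pi_{v,*}\Z_p\otimes\G_a$.
\item The connecting maps vanish, since there are no nonzero maps from a vector bundle to an étale group.
\item Split off $A$, since there are no nontrivial extensions of $\G_a$ by a finite group.
\item The key input: the complement is an analytic $p$-divisible group whose $f$ is the primitive comparison isomorphism. By Theorem~\ref{thm: extending Fargues' equivalence of categories}, its $u$ is therefore an isomorphism onto $T_p(-1)\otimes\G_m\langle p^{\infty}\rangle$.
\item Extensions of vector bundles by étale groups have no automorphisms, so this isomorphism descends to $S$.
\end{enumerate}
$\Hs$ is then the pullback, along $R^1\pi_{\Et,*}\to R^1\pi_{v,*}$, of the open subgroup $\underline{\Hom}(\Mb(1)/\tors,\G_m\langle p^{\infty}\rangle)$. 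The resulting cartesian square yields $f_{\Hs}$.

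Your identification in (1) with the topologically $p$-torsion subsheaf of $\PPic_{X/S,\et}$ is also only gestured at. ``$\pi_*\cj{\G}_m$ vanishes up to constants'' is not right, and invoking a torsor criterion of Heuer presupposes the relative claim. You need two facts:
\begin{enumerate}
\item[(i)] $\cj{\G}_m\to\pi_*\cj{\G}_m$ is an isomorphism (reduce to geometric points by approximation). This makes $\pi_*\G_m\to\pi_*\cj{\G}_m$ surjective, hence $R^1\pi_{\Et,*}\G_m\langle p^{\infty}\rangle\to\PPic_{X/S,\et}$ injective.
\item[(ii)] $\underline{\Hom}(\underline{\Z_p},R^1\pi_{\Et,*}\cj{\G}_m)=0$. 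Such maps factor through some $\underline{\Z/p^m\Z}$, while the target is uniquely $p$-divisible. This shows every topologically $p$-torsion section lifts.
\end{enumerate}
Finally, representability as a smooth rigid $S$-group cannot come from ``reducing to $S$ perfectoid and descending''. Smooth groups do not descend along $v$-covers of non-perfectoid bases. The paper instead uses the rigid-analytic direct image $R^1\pi^{\rig}_{\Et,*}$ and its comparison with the diamantine one.
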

The inclusion in the sequence (\ref{intro eq: mult Hodge--Tate sequence}) generalizes (the topologically $p$-torsion part of) the analytic Weil pairings (\ref{intro eq: morphism of Deninger--Werner}) of Deninger--Werner and Heuer. If $X=\As$ is a relative abeloid variety over $S$, we show in Corollary \ref{cor: HT sequences for abeloids} that the Cartier dual $\Hs^D$ of the group $\Hs$ in the statement is isomorphic to the topologically $p$-torsion subgroup $\As\langle p^{\infty}\rangle \sub \As$. In general, this justifies thinking of the group $\Hs^D$ as the “topologically $p$-torsion Albanese variety” of $X/S$. 

The above result is part of a more general comparison theorem for analytic $p$-divisible groups, Theorem \ref{Thm: comparison theorem for analytic p-divisible groups}. The latter shows that for arbitrary $n \geq 0$, under some assumptions, the higher direct images $R^n\pi_{\Et,*}\Gs$ are representable and are analytic $p$-divisible, up to some torsion. When $n\geq 2$, these groups are typically not dualizable and thus don't have good reduction, nor do they arise from abeloid varieties. For example, the map $f$ associated with the group $R^n\pi_{\Et,*}\G_m\langle p^{\infty} \rangle$ under Theorem \ref{Intro thm: extending Fargues' equivalence of categories} is the inclusion
\[ f\colon R^n\pi_{\et,*}\Os_X \otimes \Os_{S_v} = \Fil_{\HT}^n \hookrightarrow  R^n\pi_{v,*}\Z_p \otimes_{\Z_p} \Os_{S_v},\]
given by the Hodge--Tate filtration
\[ \Fil_{\HT}^n \sub \Fil_{\HT}^{n-1} \sub \ldots \sub \Fil_{\HT}^0 = R^n\pi_{v,*}\Z_p \otimes_{\Z_p} \Os_{S_v}\]
whose graded pieces are
\[ \Fil_{\HT}^i/\Fil_{\HT}^{i+1} = R^i\pi_{\et,*}\Omega_{X/S}^{n-i}\otimes_{\Os_S} \Os_{S_v}(i-n).\]

\subsection{Dieudonné theory}
Let $S$ be a perfectoid space over $\Q_p$ and let $X_S$ denote the relative Fargues--Fontaine curve \cite[Def. 13.5.1]{SW20}. We can then associate to an analytic $p$-divisible group $\Gs \rightarrow S$ a sheaf $\Es(\Gs)$ on $X_S$. This generalizes a construction of Scholze--Weinstein \cite[§5]{scholze2013moduli}, which is the case $S=\Spa(C)$.
\begin{thm}{(Theorem \ref{thm: vb on FF associated to an pdiv groups})}\label{intro thm: vb on FF associated to an pdiv groups}
    Let $S$ be a perfectoid space over $\Q_p$ and let $\Gs \rightarrow S$ be an analytic $p$-divisible group. There exists a functorially associated sheaf of $\Os_{X_S}$-modules $\Es(\Gs)$ on the relative Fargues--Fontaine curve $X_S$. It comes with a natural short exact sequence
        % https://q.uiver.app/#q=WzAsNSxbMCwwLCIwIl0sWzEsMCwiVF9wXFxHcyBcXG90aW1lc197XFxaX3B9IFxcT3Nfe1hfU30iXSxbMiwwLCJcXEVzKFxcR3MpIl0sWzMsMCwiaV8qXFxMaWUoXFxHcykiXSxbNCwwLCIwLiJdLFswLDFdLFsxLDJdLFsyLDNdLFszLDRdXQ==
\begin{equation}\label{intro eq: modification defining E(G)}\begin{tikzcd}
	0 & {T_p\Gs \otimes_{\Z_p} \Os_{X_S}} & {\Es(\Gs)} & {i_*\Lie(\Gs)} & {0.}
	\arrow[from=1-1, to=1-2]
	\arrow[from=1-2, to=1-3]
	\arrow[from=1-3, to=1-4]
	\arrow[from=1-4, to=1-5]
\end{tikzcd}\end{equation}
Moreover, $\Gs$ is dualizable if and only if $\Es(\Gs)$ is a vector bundle.
\end{thm}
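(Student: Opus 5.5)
The construction of $\Es(\Gs)$ will be a modification of the trivial bundle $T_p\Gs\otimes_{\Z_p}\Os_{X_S}$ along the Cartier divisor $i\colon S\hookrightarrow X_S$ given by the untilt $S$ itself. It will be determined by the map $f_{\Gs}$ of Theorem \ref{Intro thm: extending Fargues' equivalence of categories}. Since $S$ is perfectoid, $\Os_{S_v}=\Os_S$ and $T_p\Gs$ is a pro-étale $\Z_p$-local system on $S$, so $f_{\Gs}\colon \Lie(\Gs)\to T_p\Gs(-1)\otimes_{\Z_p}\Os_S$ is a map of vector bundles on $S$.

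Locally on $S=\Spa(R,R^+)$, let $\xi$ generate the kernel of $\theta\colon \Ainf(R^+)\to R^+$, and let $t$ be the period of $\Z_p(1)$, which vanishes to order one along the divisor $S$. On $X_S$ we have the exact sequence
\[0\to \Os_{X_S}\to \Os_{X_S}(1)\to i_*\Os_S(1)\to 0,\]
given by multiplication by $t$. Tensoring with $T_p\Gs(-1)$ gives
\[0\to T_p\Gs\otimes \Os_{X_S}\to T_p\Gs(-1)\otimes\Os_{X_S}(1)\to i_*(T_p\Gs\otimes \Os_S)\to 0,\]
in which the last term is identified with $i_*(T_p\Gs(-1)\otimes\Os_S)$ after the Tate twist. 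I then define $\Es(\Gs)$ as the fibre product (pullback)
\[\Es(\Gs):=T_p\Gs(-1)\otimes\Os_{X_S}(1)\times_{i_*(T_p\Gs(-1)\otimes \Os_S)} i_*\Lie(\Gs),\]
taken along $i_*f_{\Gs}$. The pullback of a surjection with kernel $T_p\Gs\otimes\Os_{X_S}$ is again a surjection with the same kernel, so the sequence (\ref{intro eq: modification defining E(G)}) is exact automatically. Functoriality in $\Gs$ follows from the functoriality of $f_{\Gs}$ and of $T_p$. Compatibility with base change and gluing is immediate because every ingredient is a sheaf on $X_S$ compatible with localisation on $S$. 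This should recover Scholze--Weinstein's construction when $S=\Spa(C)$.

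For the dualizability criterion, $\Es(\Gs)$ is an $\Os_{X_S}$-submodule of the vector bundle $\mathcal{V}:=T_p\Gs(-1)\otimes\Os_{X_S}(1)$. Its cokernel is
\[\mathcal{V}/\Es(\Gs)\cong i_*\Coker(f_{\Gs})=i_*\big((T_p\Gs(-1)\otimes\Os_S)/f_{\Gs}(\Lie\Gs)\big),\]
and the kernel of $i_*f_{\Gs}$ sits inside $\Es(\Gs)$ as a subsheaf killed by $\xi$. So if $f_{\Gs}$ is not injective, $\Es(\Gs)$ has torsion and is not a vector bundle. If $f_{\Gs}$ is injective with locally free cokernel $\omega(-1)$ on $S$, which is exactly dualizability, then $\Es(\Gs)$ is the kernel of a surjection from $\mathcal{V}$ onto $i_*$ of a vector bundle. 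Such a kernel is a vector bundle, since $i_*$ of a finite projective $R$-module has Tor-dimension $\le 1$ over $\Os_{X_S}$ because $S\subset X_S$ is a Cartier divisor.

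Conversely, suppose $\Es(\Gs)$ is a vector bundle. Then $0\to\Es(\Gs)\to\mathcal{V}\to i_*\Coker f_{\Gs}\to 0$ is a presentation of $i_*\Coker f_{\Gs}$. Restricting it to the divisor via $Li^*$ and using $\mathrm{Tor}_1^{\Os_{X_S}}(i_*M,\Os_S)=M(1)$, I expect to deduce that $f_{\Gs}$ is injective and that $\Coker f_{\Gs}$ has Tor-amplitude in a single degree over $\Os_S$, hence is finite projective. This is the main obstacle. Over a general perfectoid base, which may be non-noetherian with non-reduced fibres issues, "locally free" has to be argued via finite presentation plus fibrewise rank constancy. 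I would first prove the statement at geometric points $\Spa(C,C^+)$, where it is Scholze--Weinstein, and then use that a finitely presented module on a perfectoid space whose fibre ranks are locally constant is a vector bundle (Kedlaya--Liu). Rank constancy comes from $\Es(\Gs)$ being a vector bundle, whose rank at each point equals that of $\mathcal{V}$, together with the degree count $\deg\Es(\Gs)=\dim \Lie\Gs$.
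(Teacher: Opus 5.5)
Your proposal is correct. The sheaf you construct is the paper's: the proof of Proposition \ref{prop: minuscule sheaves vs minuscule BC spaces}, applied to $V_p\Gs\to\widetilde{\Gs}\to\Lie(\Gs)$, produces exactly the fibre product (\ref{eq: defining diagram for E(G)}). Exactness of (\ref{intro eq: modification defining E(G)}) is the same pullback observation.

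The routes differ in two places.
\begin{itemize}
\item \emph{Construction.} The paper obtains $\Es(\Gs)$ as the effective sheaf attached to the Banach--Colmez space $\widetilde{\Gs}$. This also gives $\tau_*\Es(\Gs)=\widetilde{\Gs}$ and functoriality in $\widetilde{\Gs}$, which your direct definition does not show but which this statement does not need.
\item \emph{Dualizability criterion.} Proposition \ref{prop: G dualizable iff E(G) vb} passes to completed stalks at $S$ via Beauville--Laszlo (\ref{eq: from modif to lattices}) and applies Lemma \ref{lemma: minuscule lattices vs flags}. That lemma is a projective-dimension-one plus pseudo-coherence argument over $\B_{\dR}^+(R)$ and $R$. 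You run the same homological argument directly on $X_S$, using only that $S\subset X_S$ is a Cartier divisor. This is more self-contained and never leaves the curve.
\end{itemize}

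For the converse, which you call the main obstacle, your $Li^*$ sketch already finishes the argument; the pointwise fallback is unnecessary.
\begin{itemize}
\item Locally on $X_S$, the presentation $0\to\Es(\Gs)\to\mathcal{V}\to i_*Q\to 0$, with $Q=\Coker f_{\Gs}$, shows that $i_*Q$ has projective dimension $\le 1$.
\item Hence $Li^*i_*Q\simeq[i^*\Es(\Gs)\to i^*\mathcal{V}]$ is a two-term complex of vector bundles on $S$. Its cohomology is $Q$ in degree $0$ and $Q(1)$ in degree $-1$; locally it splits as $Q\oplus Q[1]$, since $\xi$ is a non-zero-divisor killing $Q$.
\item Tor-amplitude in $[-1,0]$ then forces $\mathrm{Tor}_1^{\Os_S}(Q,-)=0$. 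Since $Q$ is finitely presented, it is finite projective.
\item Injectivity of $f_{\Gs}$ is your torsion argument.
\end{itemize}

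If you went through points instead, note two things. Scholze--Weinstein plays no role there, since pointwise injectivity is again the torsion argument. More importantly, you would have to justify that $\Es(\Gs)$ commutes with base change. Pullback does not preserve fibre products in general. Here it works only because $i_*\Lie(\Gs)$ is Tor-independent of $\Os_{X_{S'}}$ over $\Os_{X_S}$, as $\Lie(\Gs)$ is projective. The sheaf $i_*Q$ lacks this property exactly when $Q$ is not flat.

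Finally, a bookkeeping slip. The canonical sequence is $0\to\Q_p(1)\otimes\Os_{X_S}\to\Os_{X_S}(1)\to i_*\Os_S\to 0$, as in (\ref{eq: sequence for OFF(1)}). A period $t$ requires trivializing $\Z_p(1)$ on $S$, which is generally not possible even analytic-locally on $S$. Using the canonical form removes any choice and any gluing issue, and it gives directly the twists you end up with.
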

We show moreover that the sheaf $\Es(\Gs)$ is functorial in $\widetilde{\Gs}$, whereas the modification (\ref{intro eq: modification defining E(G)}) is not. Here we let $\widetilde{\Gs}$ denote the $p$-adic universal cover of $\Gs$
\[ \widetilde{\Gs} = \varprojlim_{[p]}\Gs,\] computed in the category of diamonds. When $S=\Spa(C)$, the group of global sections of $\widetilde{\Gs}$ is an example of an \textit{effective Banach--Colmez space} in the sense of \cite[§4]{Fontaine03}. We show in Proposition \ref{prop: minuscule sheaves vs minuscule BC spaces} that the association $\Gs \mapsto \Es(\Gs)$ factors through a fully faithful functor on a category of effective Banach--Colmez spaces over $S$. In particular, if $\Gs' \rightarrow S$ is another analytic $p$-divisible group, we have $\Es(\Gs) \cong \Es(\Gs')$ as $\Os_{X_S}$-modules if and only if $\widetilde{\Gs} \cong \widetilde{\Gs}'$ as $v$-sheaves of abelian groups. Coupled with Proposition \ref{intro prop: dRFF cohomology from ptop picard variety} and equation (\ref{intro eq: drFF versus dR}) below, this implies that, given two abeloid varieties $A,A'$, any isomorphism between their $p$-adic universal covers $\widetilde{A} \cong \widetilde{A}'$ yields an identification of their first de Rham homology. Such isomorphisms exist frequently between non-isomorphic (and even non-isogenous) abeloid varieties, by \cite[Thm. 1.5]{heuer2021proetaleuniformisationabelianvarieties}. We believe that this can be used to construct new interesting domains for de Rham period maps on global PEL Shimura varieties, outside of the good reduction locus. 
 
When restricting to dualizable groups, we obtain the following result, generalizing Fargues' theorem \cite[Thm. 14.1.1]{SW20} which is the case $S = \Spa(C)$.
\begin{thm}{(Theorem \ref{thm: equivalence for dualizable groups on perfd})}\label{intro thm: equivalence for dualizable groups on perfd}
Let $S$ be a perfectoid space over $\Q_p$, then there are canonical equivalences of exact categories between the following:
\begin{enumerate}
    \item Dualizable analytic $p$-divisible groups $\Gs \rightarrow S$,
    \item Pairs $(\Lb,E)$, where $\Lb$ is a $\Z_p$-local system on $S$ and $E \hookrightarrow\Lb(-1)\otimes_{\Z_p} \Os_{S_v}$ is a locally direct summand,
    \item Pairs $(\Lb,\Xi)$ consisting of a $\Z_p$-local system $\Lb$ and a minuscule $\B_{\dR}^+$-lattice
    \[ \Lb \otimes_{\Z_p} \B_{\dR}^+ \sub \Xi \sub \xi^{-1}(\Lb \otimes_{\Z_p}\B_{\dR}^+),\]
    \item Tuples $(\Lb,\Es,\alpha)$ consisting of a $\Z_p$-local system $\Lb$, a vector bundle $\Es$ on the Fargues--Fontaine curve $X_S$ and a minuscule modification $\alpha\colon \Lb\otimes_{\Z_p} \Os_{X_S} \dashrightarrow \Es$ (see Definition \ref{def: minuscule modification}), and
    \item Minuscule shtukas $(\Ms,\varphi_{\Ms})$ over $\mathcal{Y}_S= S \mathbin{\dot\times}\Spa(\Z_p)$ with one leg at $\varphi^{-1}(S)$ (see Definition \ref{def: minuscule shtuka}).
\end{enumerate}
\end{thm}
We note that the equivalences $(2)-(5)$ constitute the “easy” part of Fargues' theorem and are already implicitly contained in \cite{SW20}. The main difference with the case $S=\Spa(C)$ is that these objects are no longer equivalent to Breuil--Kisin--Fargues modules over $\Ainf$.

Using $v$-descent, we obtain a Dieudonné theory that applies to arbitrary good adic spaces $S$ as a base. For this, let $\Sht_{\min}$ be the prestack on perfectoid spaces sending $S$ to the groupoid of minuscule shtukas over $\Yss_S$ with a leg at $\varphi^{-1}(S)$. Using the descent result \cite[Prop 19.5.3]{SW20}, $\Sht_{\min}$ is easily seen to be a small $v$-stack, so that we can make sense of minuscule shtukas on good adic spaces.
\begin{definition}{(Definition \ref{def: analytic Dieudonné crystal})}
    Let $S$ be a good adic space over $\Q_p$. An analytic Dieudonné crystal over $S$ is a minuscule shtuka $(\Ms,\varphi_{\Ms})$ over $S$ such that the sheaves
    \[ \varphi_{\Ms}(\varphi^*\Ms)/\Ms \,\text{ and }\, \tfrac{1}{\varphi(\xi)}\Ms/\varphi_{\Ms}(\varphi^*\Ms)\]
    arise from vector bundles on $S_{\et}$.
\end{definition}
If $S$ is a perfectoid space, the condition is automatic, so that analytic Dieudonné crystals coincide with minuscule shtukas. We obtain the following version of Dieudonné theory.

\begin{thm}{(Theorem \ref{thm: analytic Dieudonné theory in general})}\label{intro thm: analytic Dieudonné theory in general}
Let $S$ be a good adic space over $\Q_p$, then there is a canonical equivalence of categories 
    \begin{align*}
 \left\{\text{\begin{tabular}{l} {\parbox{3.7cm}{dualizable analytic $p$-divisible groups $\Gs$ over $S$}}\end{tabular}}\right\}
         \xlongrightarrow{\cong} \left\{\text{\begin{tabular}{l} {\parbox{4.2cm}{analytic Dieudonné crystals $(\Ms,\varphi_{\Ms})$ over $S$}}\end{tabular}}\right\}.
    \end{align*}
The functor and its inverse are exact and are compatible with Cartier duality in the following sense:
\[ \Ms(\Gs^D) = \Ms(\Gs)^{\vee}\otimes \Ms(\G_m\langle p^{\infty}\rangle).\]
\end{thm}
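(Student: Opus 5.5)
The plan is to reduce to the perfectoid case by $v$-descent and then apply Theorem \ref{intro thm: equivalence for dualizable groups on perfd}. Both sides are $v$-stacks over good adic spaces. Minuscule shtukas form the small $v$-stack $\Sht_{\min}$, and analytic $p$-divisible groups satisfy $v$-descent. For the latter I will use Theorem \ref{Intro thm: extending Fargues' equivalence of categories}: an analytic $p$-divisible group is the same as a triple $(\Lb,E,f)$. Here $\Lb$ is a $v$-local system, and $f$ is a morphism of $v$-vector bundles, so both descend. The only non-$v$-local datum is the étale vector bundle $E$.

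\textbf{Step 1: the perfectoid case.} For $S$ perfectoid, compose the equivalences $(1)\Leftrightarrow(2)\Leftrightarrow(5)$ of Theorem \ref{intro thm: equivalence for dualizable groups on perfd}. Here $\Ms(\Gs)$ is the minuscule shtuka attached to $\Lb=T_p\Gs$ and the minuscule modification defined by the direct summand $f_{\Gs}(\Lie\Gs\otimes\Os_{S_v})\sub\Lb(-1)\otimes\Os_{S_v}$. I will check that the construction commutes with arbitrary base change of perfectoid spaces. This is clear, since $T_p$, $\Lie$ and $f_{\Gs}$ are compatible with pullback. It follows that the functor is a morphism of $v$-stacks on $\Perf$.

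\textbf{Step 2: descent to good $S$.} For general good $S$, choose a $v$-cover $\widetilde S\to S$ by a perfectoid space and define $\Ms(\Gs)$ by descent. This is well defined and fully faithful because morphisms on both sides are $v$-sheaves. For morphisms of groups, this uses Theorem \ref{Intro thm: extending Fargues' equivalence of categories} together with the fact that $\Hom(E,E')$ for étale vector bundles is a $v$-sheaf, since $\Os_S$ is a $v$-sheaf on good adic spaces. I then need to identify the essential image with the analytic Dieudonné crystals. Under the perfectoid dictionary, the two quotients $\varphi_{\Ms}(\varphi^*\Ms)/\Ms$ and $\tfrac{1}{\varphi(\xi)}\Ms/\varphi_{\Ms}(\varphi^*\Ms)$ correspond, up to Frobenius twist and Tate twist, to $\Lie(\Gs)\otimes\Os_{S_v}$ and to the cokernel $\omega\otimes\Os_{S_v}(-1)$ of $f_{\Gs}$. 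For dualizable $\Gs$ over $S$, both come from étale vector bundles ($\Lie\Gs$ and $\omega$), so $\Ms(\Gs)$ is a crystal.

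Conversely, given a crystal over $S$, pull it back to $\widetilde S$ and produce $(\Lb,E_v\hookrightarrow\Lb(-1)\otimes\Os_{S_v})$ there. By assumption, the $v$-bundle $E_v$ descends to an étale bundle $E$ on $S$, and the cokernel descends to an étale bundle $\omega$. Theorem \ref{Intro thm: extending Fargues' equivalence of categories} then gives a group $\Gs$ over $S$, and it is dualizable because its $f$ extends to the required short exact sequence.

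\textbf{Step 3: exactness and duality.} Exactness can be checked $v$-locally, so it follows from the perfectoid case, where the equivalences of Theorem \ref{intro thm: equivalence for dualizable groups on perfd} are exact. For Cartier duality, the triple of $\Gs^D$ is $(\Lb^\vee(1),\omega^\vee,\ldots)$, obtained by dualizing the short exact sequence and twisting. The shtuka $\Ms(\G_m\langle p^\infty\rangle)$ is $\Os$ with Frobenius twisted by $\varphi(\xi)$-type data, which corresponds to $\Z_p(1)$ with full Lie algebra. So the dual modification tensored with $\Ms(\G_m\langle p^{\infty}\rangle)$ matches $\Ms(\Gs^D)$ $v$-locally, and by descent the identification holds on $S$.

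\textbf{Main obstacle.} I expect the main difficulty to be Step 2's identification of the crystal condition with étale descent of $\Lie$ and $\omega$. This requires carefully matching the graded pieces of the shtuka at the leg $\varphi^{-1}(S)$ with $\Lie(\Gs)$ and $\omega(-1)$, including the Frobenius twist needed to view sheaves supported on the leg as $\Os_{S_v}$-modules. I will try to make this precise first over perfectoid $S$ via the $\B_{\dR}^+$-lattice description $(3)$.
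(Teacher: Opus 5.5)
Your proposal is correct and is essentially the paper's argument. The paper likewise uses Theorem \ref{thm: extending Fargues' equivalence of categories} to view a dualizable group over good $S$ as a compatible family of dualizable groups over perfectoid $T\to S$ whose $T\mapsto\Lie(\Gs_T)$ and $T\mapsto\omega_{\Gs_T^D}$ come from étale bundles on $S$, then applies Theorem \ref{thm: equivalence for dualizable groups on perfd}, and gets the duality formula from Lemma \ref{lemma: dual of vector bundles and shtukas} and Proposition \ref{prop: G dualizable iff E(G) vb}.

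One correction is needed in Step 2. As you note, the second quotient is canonically $\omega\otimes\Os_{S_v}(-1)$, and over non-perfectoid $S$ the property ``arises from an étale bundle'' is not invariant under Tate twists: for $\Q_p/\Z_p$ over $\Spa(\Q_p)$ one has $\omega=\Os_S$, but $\Os_{S_v}(-1)$ is not étale. So the condition to verify is the one in Definition \ref{def: analytic Dieudonné crystal}, namely that $E(\Ms^{\vee}\otimes\Os_{\Yss_S}\{1\})\cong\omega^{\vee}$ is étale, and this is exactly what your converse direction actually uses.
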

We connect our functors with $p$-adic cohomology theories. Let $Z$ be a proper smooth rigid space over a complete algebraically closed field $C/\Q_p$.
% , we have the $\BdR+$-cohomology $H_{\crys}^n(Z/\BdR+)$ of Bhatt--Morrow--Scholze \cite[§13]{Bhatt2018}, see also \cite{Guo2021cryscoh}\cite{bosco2023rationalpadichodgetheory}. It consists of finite free $\BdR+(C)$-modules and deforms de Rham cohomology, i.e. it comes with a canonical isomorphism
% \begin{align} 
% H_{\crys}^n(Z/\BdR+) \otimes_{\BdR+(C),\theta} C = H_{\dR}^n(Z/C).
% \end{align}
% The $\BdR+$-cohomology further spreads to the Fargues--Fontaine curve. 
There is a cohomology theory $H_{\FF}^n(Z)$ \cite{Le_Bras_vezzani_2023}\cite{bosco2023rationalpadichodgetheory}, the Fargues--Fontaine cohomology, taking values in vector bundles on the Fargues--Fontaine curve $X_C$ and lifting de Rham cohomology, in the sense that
\begin{align}\label{intro eq: drFF versus dR} H_{\FF}^n(Z) \otimes_{\Os_{X_C}} C = H_{\dR}^n(Z/C).\end{align}
Our input is that we can recover the first de Fargues--Fontaine cohomology group from the group $\PPic_{Z/C,\et}\langle p^{\infty} \rangle$. This should be reminiscent of the isomorphism between the first crystalline cohomology of an abelian variety $A$ over a perfect field $k$ of characteristic $p$ and the Dieudonné isocrystal $D(\breve{A}[p^{\infty}])$ of the $p$-divisible group of the dual abelian variety $\breve{A}$.
\begin{proposition}{(Proposition \ref{prop: dRFF cohomology from ptop picard variety})}\label{intro prop: dRFF cohomology from ptop picard variety}
    Let $\Hs \sub \PPic_{Z/C,\et}\langle p^{\infty} \rangle$ denote the maximal analytic $p$-divisible open subgroup of Proposition \ref{intro prop: ptop picard variety}. Then we have a canonical isomorphism of vector bundles on the Fargues--Fontaine curve $X_C$
        \begin{align} \Es(\Hs) = H_{\FF}^1(Z)\otimes_{\Os_{X_C}} \Os_{X_C}(1).\end{align}
\end{proposition}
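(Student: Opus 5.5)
The plan is to compare two minuscule modifications of the same trivial bundle on $X_C$ and check that they coincide. By Theorem \ref{intro thm: vb on FF associated to an pdiv groups}, $\Es(\Hs)$ sits in the short exact sequence
\[ 0 \to T_p\Hs\otimes_{\Z_p}\Os_{X_C} \to \Es(\Hs) \to i_*\Lie(\Hs) \to 0, \]
and Proposition \ref{intro prop: ptop picard variety} gives $T_p\Hs = H^1_{\et}(Z,\Z_p(1))/\torsion$ and $\Lie(\Hs) = H^1(Z,\Os_Z)$. Since $\Hs$ is dualizable, Theorem \ref{intro thm: equivalence for dualizable groups on perfd} says that $\Es(\Hs)$ is the vector bundle attached to the pair $(\Lb,\Xi)$. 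Here $\Lb = T_p\Hs$, and $\Xi$ is the minuscule $\B_{\dR}^+$-lattice determined by the subspace $f_{\Hs}(\Lie\Hs\otimes C) \sub \Lb(-1)\otimes C$. More precisely, $\Xi$ is the preimage in $\xi^{-1}\Lb\otimes\B_{\dR}^+$ of the image of $\Lie(\Hs)$ under the identification $\xi^{-1}\Lb\otimes \B_{\dR}^+/\Lb\otimes\B_{\dR}^+ = \Lb(-1)\otimes C$. By Proposition \ref{intro prop: ptop picard variety}, $f_{\Hs}$ is the first map of the Hodge--Tate sequence $0\to H^1(Z,\Os)\otimes C\to H^1_{\et}(Z,\Z_p)\otimes C \to H^0(Z,\Omega^1)(-1)\to 0$. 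So $\Es(\Hs)$ is completely described by $H^1_{\et}(Z,\Z_p)$ together with its Hodge--Tate filtration.

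On the other side, I would use the comparison for Fargues--Fontaine cohomology from \cite{Le_Bras_vezzani_2023, bosco2023rationalpadichodgetheory}. It gives a canonical isomorphism of modifications: $H^1_{\FF}(Z)$ and $H^1_{\et}(Z,\Q_p)\otimes\Os_{X_C}$ agree away from $\infty$, and at the completed stalk at $\infty$ we have $H^1_{\FF}(Z)^\wedge_\infty = H^1_{\mathrm{crys}}(Z/\B_{\dR}^+)$, the $\B_{\dR}^+$-cohomology of Bhatt--Morrow--Scholze. In degree one, the $\B_{\dR}^+$-lattice $H^1_{\mathrm{crys}}(Z/\B_{\dR}^+)$ inside $H^1_{\et}(Z,\Q_p)\otimes \B_{\dR}$ satisfies
\[ H^1_{\et}\otimes\B_{\dR}^+ \sub H^1_{\mathrm{crys}}(Z/\B_{\dR}^+)\sub \xi^{-1}H^1_{\et}\otimes\B_{\dR}^+. \]
This is the standard consequence of the degeneration of the Hodge--Tate spectral sequence, equivalently the fact that the relative position is governed by Hodge--Tate weights in $\{0,1\}$. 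Its image in $\xi^{-1}H^1_{\et}\otimes\B_{\dR}^+/H^1_{\et}\otimes\B_{\dR}^+ = H^1_{\et}(-1)\otimes C$ is exactly $\Fil^1_{\HT} = H^1(Z,\Os)\otimes C$, which is the compatibility of the $\B_{\dR}$-comparison with the Hodge--Tate filtration.

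Twisting by $\Os_{X_C}(1)$, i.e. by $t$ (which is $\xi$ up to a unit at $\infty$), turns $H^1_{\et}(Z,\Z_p)\otimes\Os_{X_C}$ into $T_p\Hs\otimes\Os_{X_C}$. The lattice $H^1_{\mathrm{crys}}\otimes t$ then lies between $T_p\Hs\otimes\B_{\dR}^+$ and $\xi^{-1}T_p\Hs\otimes\B_{\dR}^+$, and its image in the quotient equals $f_{\Hs}(\Lie\Hs\otimes C)$. These are the same modification data as $\Es(\Hs)$, so the equivalence $(3)\Leftrightarrow(4)$ of Theorem \ref{intro thm: equivalence for dualizable groups on perfd}, or simply Beauville--Laszlo gluing, yields the isomorphism. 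One point needs care: $T_p\Hs$ is $H^1_{\et}$ modulo torsion, but rationally nothing changes, and the modification only involves the $\Q_p$-bundle away from $\infty$ together with an integral lattice at $\infty$. On $X_C$, however, $T_p\Hs\otimes\Os_{X_C}$ only depends on $T_p\Hs[1/p]$, so the torsion is harmless.

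The main obstacle is the precise compatibility step. I need the identification of the lattice $H^1_{\mathrm{crys}}(Z/\B_{\dR}^+)$ with $\Xi$ to hold with the correct normalization of Tate twists and of the map $f_{\Hs}$, which is constructed via analytic Weil pairings and the logarithm. To settle it, I would show that the map $f_{\Hs}$ from Proposition \ref{intro prop: ptop picard variety}, coming from the multiplicative Hodge--Tate sequence, agrees with Scholze's Hodge--Tate map. I would then invoke the known fact that the Hodge--Tate filtration is the reduction of the $\B_{\dR}^+$-lattice, as in Bhatt--Morrow--Scholze. Finally, canonicity of the isomorphism follows because both constructions are functorial in $Z$ and are built from the same primitive comparison.
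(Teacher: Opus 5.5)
Your route is the paper's: realize both $\Es(\Hs)\otimes\Os_{X_C}(-1)$ and $H^1_{\FF}(Z)$ as minuscule modifications of $H^1_{\et}(Z,\Q_p)\otimes\Os_{X_C}$ at $\infty$, and show they coincide. The first is cut out by $f_{\Hs}$, which Corollary \ref{cor: ptop picard is dualizable} already identifies with the Hodge--Tate inclusion, so that half of your ``main obstacle'' costs nothing. The second is cut out by $H^1_{\crys}(Z/\B_{\dR}^+)$.

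\textbf{The lattice inclusion is backwards.} The correct relative position is
\[ \xi\,(H^1_{\et}(Z,\Q_p)\otimes\B_{\dR}^+)\sub H^1_{\crys}(Z/\B_{\dR}^+)\sub H^1_{\et}(Z,\Q_p)\otimes\B_{\dR}^+ . \]
What equals $\Fil^1_{\HT}=H^1(Z,\Os_Z)\otimes C$ is the image of $H^1_{\crys}$ in $H^1_{\et}\otimes\B_{\dR}^+/\xi=H^1_{\et}\otimes C$. This untwisted space is where $\Fil^1_{\HT}$ actually lives; your version places it in $H^1_{\et}(-1)\otimes C$.

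Two checks confirm this. For $Z=Z_0\otimes_K C$ one has $H^1_{\crys}=H^1_{\dR}(Z_0/K)\otimes_K\B_{\dR}^+$, whereas
\[ H^1_{\et}\otimes\B_{\dR}^+=\Fil^0(H^1_{\dR}\otimes_K\B_{\dR})=H^1_{\dR}\otimes_K\B_{\dR}^+ +\Fil^1H^1_{\dR}\otimes_K\xi^{-1}\B_{\dR}^+ \]
is strictly larger. Also, $\deg\Es(\Hs)=\dim H^1(Z,\Os_Z)$, so the statement requires $\deg H^1_{\FF}(Z)=-\dim H^0(Z,\Omega^1_{Z/C})$. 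Your inclusion would give $H^1_{\FF}(Z)$ positive degree.

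\textbf{Tensoring with $\Os_{X_C}(1)$ is not the Tate twist.} We have $\Q_p(1)\otimes\Os_{X_C}\sub\Os_{X_C}(1)$ with cokernel $i_*C$, so the completed stalk of $\Os_{X_C}(1)$ at $\infty$ is $\xi^{-1}(\Q_p(1)\otimes\B_{\dR}^+)$. Hence $H^1_{\et}(Z,\Z_p)\otimes\Os_{X_C}(1)$ is a modification of $T_p\Hs\otimes\Os_{X_C}$, not equal to it. The lattice to compare with $\Xi(\Hs)$ is $H^1_{\crys}\otimes\xi^{-1}(\Q_p(1)\otimes\B_{\dR}^+)$. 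With the correct inclusion, this does lie between $T_p\Hs\otimes\B_{\dR}^+$ and $\xi^{-1}T_p\Hs\otimes\B_{\dR}^+$. The lattice $H^1_{\crys}\otimes t=H^1_{\crys}(1)$ that you use is the completed stalk of $H^1_{\FF}(Z)\otimes\Q_p(1)\cong H^1_{\FF}(Z)$. So your computation as written yields the wrong bundle, and the two errors cancel only in your final sentence.

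\textbf{The key compatibility is the real content and is not standard.} The claim that $H^1_{\crys}(Z/\B_{\dR}^+)$ induces $\Fil^1_{\HT}$ cannot come from ``Hodge--Tate weights in $\{0,1\}$'', since $Z$ over $C$ carries no Galois action. Nor is it a direct citation from \cite{Bhatt2018}. The paper proves it as Proposition \ref{prop: recognize the fils induced by BB map}, in two steps.
\begin{enumerate}
\item Bosco's filtered comparison \cite[Thm. 7.4]{bosco2023rationalpadichodgetheory}, together with \cite{Guo2021cryscoh}, shows that $\xi(H^1_{\et}\otimes\B_{\dR}^+)$ cuts out the Hodge filtration on $H^1_{\dR}(Z/C)$. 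This gives the inclusions above.
\item Conrad--Gabber spreading out places $Z$ in a proper smooth family $Y\rightarrow S=\Spa(R)$ over a $p$-adic field. A lift $R\rightarrow\B_{\dR}^+$ of the point identifies $H^1_{\crys}(Z/\B_{\dR}^+)$ with the fiber of Scholze's lattice from Lemma \ref{lemma: B+ lattice of Scholze}. The identification is non-canonical, but step 1 and Lemma \ref{lemma: minuscule lattices vs flags} make it compatible with the étale lattice. Then Lemma \ref{lemma: BdR+-lattice induces HT filtration in arithmetic case}, which rests on the uniqueness in Proposition \ref{prop: characterization of HT ls}, together with Lemma \ref{lemma: comparison between the two HT filtrations}, shows this lattice induces the Hodge--Tate filtration.
\end{enumerate}
Without this argument, or a precise reference for it, the matching of the two modifications is not established.
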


\subsection{Applications to local and global Shimura varieties}
At last, we obtain applications to moduli spaces. By Theorem \ref{Intro thm: extending Fargues' equivalence of categories}, analytic $p$-divisible groups satisfy descent along $v$-covers of perfectoid spaces (but not $v$-covers of arbitrary good adic spaces, see Remark \ref{remark: an pdiv groups don't descent outside of perfd}). We can obtain a more precise description. For fixed integers $0\leq d \leq n$, consider the small $v$-stack $\Ns_{n,d}$ on $\Perf_{\Q_p}$ which sends a perfectoid space $S$ to the groupoid of dualizable analytic $p$-divisible groups $\Gs \rightarrow S$ of dimension $d$ and height $n$. It comes with a Newton locally closed stratification, indexed by the Kottwitz set $B(\GL_n)$
\[ \Ns_{n,d} = \coprod_{b \in B(\GL_n)} \Ns_{n,d}^b, \]
where $\Ns_{n,d}^b$ is the subfunctor consisting of those groups $\Gs$ such that pointwise on $S$, there exists an isomorphism $\Es(\Gs) \cong \Es^b$. It then follows from Theorem \ref{intro thm: equivalence for dualizable groups on perfd} that 
\begin{align}\label{intro eq: stack of an pdiv versus quotients of flags}
\Ns_{n,d} =  \Big[ \Fl_{n,n-d}/\GL_n(\Z_p)\Big],
\end{align}
where $\Fl_{n,n-d}$ is the flag variety parametrizing $d$-dimensional subbundles $E \hookrightarrow \Os^{\oplus n}$. Moreover, the Newton stratification on $\Ns_{n,d}$ is obtained from the Newton stratification on $\Fl_{n,n-d}$ of \cite[Thm. 1.11]{ScholzeCariani2017}. Using our Cartier duality, there is a good theory of polarizations on dualizable analytic $p$-divisible groups. From this, there are immediate variants of the stacks $\Ns_{n,d}$ of EL and PEL type, admitting presentations as quotients of flag varieties analog to (\ref{intro eq: stack of an pdiv versus quotients of flags}). 

We use our results to reinterpret the Hodge--Tate period map of Scholze \cite{Scholze2015torsion} on global PEL Shimura varieties. For simplicity, we consider the case of the modular curve in this introduction; the general case is done in the body of the paper. For $N \geq 0$, this is a Deligne--Munford stack $A_{\Gamma(N)}$ over $\Spec(\Q)$, parametrising elliptic curves with level $N$-structure. For $N$ big enough, $A_{\Gamma(N)}$ is a smooth quasi-projective curve over $\Spec(\Q)$, so that we may consider its analytification
\[ \As_{\Gamma(N)} = (A_{\Gamma(N)} \otimes_{\Q} \Q_p)^{\an},\]
a smooth rigid space over $\Spa(\Q_p)$. We may then consider the inverse limit, viewed as a diamond
\[ \As_{\Gamma(p^{\infty})} = \varprojlim_{n\geq 1} \As_{\Gamma(p^n)}.\]
In this situation, the Hodge--Tate period map
\begin{align} 
\pi_{\HT}\colon \As_{\Gamma(p^{\infty})} \rightarrow \Pro^1
\end{align}
is a morphism of adic spaces, defined on $C$-points for a complete algebraically closed field $C$ as sending a pair $(E,\gamma)$ consisting of an elliptic curve $E$ over $C$ together with a trivialization $\gamma$ of the Tate module to the Hodge--Tate filtration
% https://q.uiver.app/#q=WzAsNSxbMCwwLCIwIl0sWzEsMCwiXFxMaWUoRSkoMSkiXSxbMiwwLCJUX3BFXFxvdGltZXNfe1xcWl9wfUNcXG92ZXJzZXR7XFxnYW1tYX17XFxjb25nfSBDXjIiXSxbMywwLCJcXG9tZWdhX3tcXGJyZXZle0V9fSJdLFs0LDAsIjAuIl0sWzAsMV0sWzEsMl0sWzIsM10sWzMsNF1d
\begin{equation}\begin{tikzcd}
	0 & {\Lie(E)(1)} & {T_pE\otimes_{\Z_p}C\overset{\gamma}{\cong} C^2} & {\omega_{\breve{E}}} & {0.}
	\arrow[from=1-1, to=1-2]
	\arrow[from=1-2, to=1-3]
	\arrow[from=1-3, to=1-4]
	\arrow[from=1-4, to=1-5]
\end{tikzcd}\end{equation}
The Hodge--Tate map is $\GL_2(\Z_p)$-equivariant, where the latter groups acts on $\Pro_{\Q_p}^1$ through its natural action on $\Z_p^{\oplus 2} \sub C^{\oplus 2}$. Therefore, it descends to a morphism of small $v$-stacks
\begin{align}\label{intro eq: descended Hodge--Tate period map}
\pi_{\HT}\colon \As \rightarrow [\Pro_{\Q_p}^1/\GL_2(\Z_p)].
\end{align}
We then obtain the following.
\begin{thm}{(Theorem \ref{thm: HT map reinterpreted})}\label{intro thm: HT map reinterpreted}
    Under the isomorphism (\ref{intro eq: stack of an pdiv versus quotients of flags}), the Hodge--Tate period map (\ref{intro eq: descended Hodge--Tate period map}) sends an elliptic curve $E$ to its topologically $p$-torsion subgroup $E \langle p^{\infty}\rangle \sub E$. 
\end{thm}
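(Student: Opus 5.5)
Both sides are maps of small $v$-stacks $\As \rightarrow [\Pro^1_{\Q_p}/\GL_2(\Z_p)] \cong \Ns_{2,1}$, so I will compare them after pullback to the perfectoid cover $\As_{\Gamma(p^\infty)} \rightarrow \As$. There they become a $\GL_2(\Z_p)$-equivariant map $\As_{\Gamma(p^\infty)} \rightarrow \Pro^1$. Because $\Pro^1$ is a separated adic space, two such morphisms from the perfectoid (in particular reduced) diamond $\As_{\Gamma(p^\infty)}$ agree as soon as they agree on $\Spa(C,\Os_C)$-points. This holds provided the identification of functors is canonical enough to descend, i.e. the isomorphism (\ref{intro eq: stack of an pdiv versus quotients of flags}) is natural in $S$.

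First I make the isomorphism (\ref{intro eq: stack of an pdiv versus quotients of flags}) explicit using Theorem \ref{intro thm: equivalence for dualizable groups on perfd}. A dualizable group $\Gs$ over perfectoid $S$, together with a trivialization $\gamma\colon T_p\Gs\cong \Z_p^2$, goes to the subbundle
\[ f_{\Gs}\colon \Lie(\Gs)\otimes\Os_{S_v} \hookrightarrow T_p\Gs(-1)\otimes \Os_{S_v}\cong \Os^2 ,\]
using a fixed trivialization of $\Z_p(1)$ over $\Q_p^{\cyc}$, or absorbing the twist.

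Second, I apply this to $\Gs = E\langle p^\infty\rangle$ for the universal elliptic curve $E$ over $\As_{\Gamma(p^\infty)}$. This group is dualizable by Proposition \ref{intro prop: ptop picard variety} together with Corollary \ref{cor: HT sequences for abeloids}, which identifies $\As\langle p^\infty\rangle$ with $\Hs^D$ for $\Hs\subset\PPic_{\breve E/S}\langle p^\infty\rangle$, and uses $\breve E \cong E$. I also need $T_p(E\langle p^\infty\rangle)=T_pE$, which holds since $E\langle p^\infty\rangle[p^\infty]=E[p^\infty]$, and $\Lie(E\langle p^\infty\rangle)=\Lie(E)$, which holds because it is an open subgroup. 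Thus $\gamma$ is the level structure, and the point of $\Ns_{2,1}$ is the subspace $f_{E\langle p^\infty\rangle}$.

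The main step is to show that $f_{E\langle p^\infty\rangle}$, twisted by $(1)$, equals the Hodge--Tate inclusion $\Lie(E)(1)\hookrightarrow T_pE\otimes C$. Over a point $C$ this reduces to Fargues' case, but the map must be identified globally. I would dualize: by Corollary \ref{cor: HT sequences for abeloids}, $E\langle p^\infty\rangle$ is the Cartier dual of $\Hs$. Proposition \ref{intro prop: ptop picard variety}(2) says $f_{\Hs}$ is the first map of the Hodge--Tate sequence for $\breve E$. Cartier duality then exchanges $f_{\Gs}$ with the dual of the quotient map in the sequence for $\Gs^D$. So $f_{E\langle p^\infty\rangle}$ is the dual of $R^1\pi_*\Z_p\otimes\Os\rightarrow \pi_*\Omega^1(-1)$, which, via Poincaré duality $R^1\pi_*\Z_p(1)\cong T_pE$, is exactly Scholze's Hodge--Tate filtration $\Lie(E)(1)\hookrightarrow T_pE\otimes\Os$.

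The main obstacle I expect is matching the sign and twist conventions between the paper's Cartier duality, the Weil pairing, and Scholze's definition. These agree up to a universal automorphism, which I would check on a single Tate curve point, where everything is explicit. Once the $C$-points agree, $\GL_2(\Z_p)$-equivariance gives descent to the stated equality on $\As$.
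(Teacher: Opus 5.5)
Your reduction is correct and essentially the paper's. The object of $\Ns_{2,1}$ attached to $E\langle p^{\infty}\rangle$ is the tuple $(T_pE,\Lie(E),f_{E\langle p^{\infty}\rangle})$, and everything rests on the identity $f_{E\langle p^{\infty}\rangle}=(\HT_{E})^{\vee}(-1)$ for the universal elliptic curve. That identity is exactly Theorem~\ref{thm: two Hodge-Tate maps agree}, and the paper's proof of Theorem~\ref{thm: HT map reinterpreted} simply cites it (plus a polarization check in the PEL case).

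Where you differ is in deriving the identity by Cartier duality from Corollary~\ref{cor: HT sequences for abeloids}(3) and the description of $f_{\Hs}$ in Corollary~\ref{cor: ptop picard is dualizable}. This is admissible, since both results precede the theorem, but it runs backwards. Corollary~\ref{cor: HT sequences for abeloids}(3), and the dualizability of $E\langle p^{\infty}\rangle$ you take from it, are proved by showing that $E\langle p^{\infty}\rangle$ and $\PPic_{E,\et}\langle p^{\infty}\rangle^D$ correspond to the same tuple $(\Lie(E),T_pE,(\HT_E)^{\vee}(-1))$, i.e.\ by invoking Theorem~\ref{thm: two Hodge-Tate maps agree}. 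So the detour adds nothing over citing that theorem, and as an independent proof of the identity it would be circular. (Also, Corollary~\ref{cor: HT sequences for abeloids}(3) concerns the Picard group of $E$ itself, so $\breve{E}\cong E$ is not needed.)

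The genuine content is the pointwise comparison, and your own Step~0 already reduces you to it; you dismissed the pointwise route unnecessarily. Over $\Spa(C,\Os_C)$ one:
(i) shows dualizability, via good reduction or Raynaud uniformization;
(ii) obtains a $p$-divisible group $G$ over $\Os_C$ with generic fiber $E\langle p^{\infty}\rangle$ from Theorem~\ref{thm: Scholze Weinstein's result};
(iii) writes $f_{E\langle p^{\infty}\rangle}=\alpha_{G^D}[\tfrac{1}{p}]^{\vee}$ by Proposition~\ref{prop: generic fibers of log p-divisible groups};
(iv) identifies Fargues' map $\alpha_{G^D}[\tfrac{1}{p}]$ with $\HT_E(-1)$ by the argument of \cite[Prop. 4.15]{Scholze2013}.

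Your plan for the conventions would not work as stated. Nothing tells you the discrepancy is a constant ``universal automorphism'', and a computation at one Tate-curve point says nothing about the subbundle over the rest of the cover. It is, however, unnecessary, for two reasons:
(a) the isomorphism of Corollary~\ref{cor: HT sequences for abeloids}(3) is built by matching tuples, so it is compatible with the Weil-pairing identification $T_pE(-1)\cong(R^1\pi_{v,*}\Z_p)^{\vee}(-1)$ of Lemma~\ref{Lemma: finite Weil pairings}(2), which is also how the covariant Hodge--Tate sequence entering $\pi_{\HT}$ is defined;
(b) at level $\GL_2(\Z_p)$ the statement only concerns the isomorphism class of an object of $\Ns_{2,1}$, and signs and trivializations of Tate twists do not move the subspace $\Lie(E)(1)\subset T_pE\otimes\Os$.
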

We extend this statement to arbitrary PEL Shimura varieties over $\Q$. Theorem \ref{thm: HT map reinterpreted} can be used to reprove Rapoport--Zink's $p$-adic uniformization of PEL Shimura varieties \cite[Thm. 6.36]{rapoportzink96}. In future work, we plan to use this perspective to study new cases of the fiber product conjecture of Scholze \cite[Conj. 1.1]{zhang2023peltypeigusastackpadic}.

Finally, we discuss applications to local Shimura varieties. We fix a $p$-divisible group $G_0$ over $\cj{\F}_p$ of dimension $d$ and height $n$ and we consider the associated Rapoport--Zink space $\RZ_{G_0}$. This is a smooth formal scheme over $\Spf(\breve{\Z}_p)$, where $\breve{\Z}_p = W(\cj{\F}_p)$, representing the moduli problem
\[ R \in \Nilp_{\breve{\Z}_p} \longmapsto \{\, (G,\rho) \mid G \, \,p\text{-divisible group over }R,\, \rho\colon G \otimes_R R/p\dashrightarrow G_0 \otimes_{\cj{\F}_p}R/p \,\}/\cong,\]
where $\rho$ is a quasi-isogeny. We let $\Ms_{G_0} =(\RZ_{G_0})_{\eta}$ denote the adic generic fiber, a smooth rigid space. We fix a deformation $\mathfrak{G}_0 \rightarrow \Spf(\breve{\Z}_p)$ of $G_0$ and we let $\Gs_0 \rightarrow \Spa(\breve{\Q}_p)$ be its adic generic fiber. We are now ready to state our theorem.
\begin{thm}{(Theorem \ref{main thm: moduli of an pdiv vs local SV})}\label{intro thm: Rapoport-Zink space in terms of analytic p-div groups}
    The rigid space $\Ms_{G_0}$ has the following functor of points, for $S$ a good adic space over $\Spa(\breve{\Q}_p)$
    \begin{align*}
         \Ms_{G_0}(S) = \{\, (\Gs,\beta) \mid \, &\Gs \text{ dualizable analytic }p\text{-divisible group over }S,\\ &\beta\colon \widetilde{\Gs}  \xrightarrow{\cong} \widetilde{\Gs}_0 \times_{\Spa(\breve{\Q}_p)} S \,\}/\cong.
    \end{align*}
%     Under this isomorphism, the Grothendieck--Messing period map sends a pair $(\Gs,\beta)$ to 
%     % https://q.uiver.app/#q=WzAsNCxbMiwwLCJEKFxcR3MpIl0sWzMsMCwiXFxMaWUoXFxHcykuIl0sWzEsMCwiRChcXEdzXzBcXHRpbWVzX3tcXFNwYShcXEJyZXZle1xcUX1fcCl9IFMpIl0sWzAsMCwiRChcXEdzXzApXFxvdGltZXNfe1xcQnJldmV7XFxRfV9wfVxcT3NfUyJdLFswLDEsIiIsMCx7InN0eWxlIjp7ImhlYWQiOnsibmFtZSI6ImVwaSJ9fX1dLFszLDIsIj0iXSxbMiwwLCJcXGJldGFeey0xfSJdLFsyLDAsIlxcY29uZyIsMl1d
% \[\begin{tikzcd}
% 	{D(\Gs_0)\otimes_{\Breve{\Q}_p}\Os_S} & {D(\Gs_0\times_{\Spa(\Breve{\Q}_p)} S)} & {D(\Gs)} & {\Lie(\Gs).}
% 	\arrow["{=}", from=1-1, to=1-2]
% 	\arrow["{\beta^{-1}}", from=1-2, to=1-3]
% 	\arrow["\cong"', from=1-2, to=1-3]
% 	\arrow[two heads, from=1-3, to=1-4]
% \end{tikzcd}\]
\end{thm}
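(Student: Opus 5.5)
Both sides will be compared as $v$-sheaves on good adic spaces over $\Spa(\breve{\Q}_p)$, after reduction to perfectoid $S$. Since $\Ms_{G_0}$ is a smooth rigid space, its functor of points on good adic spaces is determined by its values on perfectoid spaces: it is a $v$-sheaf, and $\Os$ is a $v$-sheaf on good spaces. On the other side, pairs $(\Gs,\beta)$ are $v$-sheaves by Theorem \ref{Intro thm: extending Fargues' equivalence of categories}. Morphisms of analytic $p$-divisible groups descend, and so does $\beta$, being a map of $v$-sheaves. For objects we descend the Hodge--Tate triple $(\Lb,E,f)$; the point is that $E$ is determined as an étale vector bundle on $S$ by the Grothendieck--Messing period, as explained below. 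Granting this, the plan is to first identify both functors on $\Perf_{\breve{\Q}_p}$ and then extend to good adic spaces.

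On perfectoid $S$ I use the description of Scholze--Weinstein \cite{SW20}: $\Ms_{G_0}^{\diamond}(S)$ is the set of pairs $(\Lb,\Xi)$ of a $\Z_p$-local system and a minuscule $\BdR+$-lattice, together with an isomorphism between the associated modified bundle $\Es$ on $X_S$ and the constant bundle $\Es(\Gs_0)\times S$, i.e.\ the bundle $\Es^b$ attached to the isocrystal of $G_0$. By Theorem \ref{intro thm: equivalence for dualizable groups on perfd}, $(\Lb,\Xi,\alpha)$ is the same as a dualizable $\Gs$ with $\Es=\Es(\Gs)$. It remains to check that isomorphisms $\Es(\Gs)\cong\Es(\Gs_0\times S)$ correspond to isomorphisms $\widetilde{\Gs}\cong \widetilde{\Gs}_0\times S$. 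This is Proposition \ref{prop: minuscule sheaves vs minuscule BC spaces}: $\Gs\mapsto\Es(\Gs)$ factors fully faithfully through the universal cover, viewed as an effective Banach--Colmez space, via $\widetilde{\Gs}=H^0(X_S,\Es(\Gs))$ as $v$-sheaves. I also need to check that Scholze--Weinstein's rigidification, a quasi-isogeny modulo $p$, matches $\Es\cong\Es^b$. This is their identification \cite[§24]{SW20} of quasi-isogenies over $R/p$ with isomorphisms of universal covers, combined with $\widetilde{\Gs}=(\widetilde{G})_\eta$ for groups of good reduction. Moreover, $v$-locally on $S$ every dualizable $\Gs$ has good reduction over $A^\circ$ (Proposition \ref{prop: dualizable groups locally have good reduction on circ}), which lets me compare with the classical moduli problem.

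To pass to a good adic space $S$, I take a $v$-cover $\widetilde S\to S$ by perfectoids and descend. Given $S\to\Ms_{G_0}$, I get $(\Gs',\beta')$ on $\widetilde S$ with descent datum. The bundle $\Lie(\Gs')$ is the pullback of the Grothendieck--Messing quotient $D(\Gs_0)\otimes\Os_S\twoheadrightarrow \Lie$ along $S\to\Ms_{G_0}\to \Fl$, so it comes from an étale bundle on $S$. The triple therefore descends by Theorem \ref{Intro thm: extending Fargues' equivalence of categories}, and $\beta$ descends as a map of $v$-sheaves. Conversely, $(\Gs,\beta)$ on $S$ yields a compatible system of maps $\widetilde S\to\Ms_{G_0}$, hence a map $S\to\Ms_{G_0}$, because $\Ms_{G_0}$ is a rigid space and $S$ is good.

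I expect the main obstacle to be the compatibility of $\beta$ with Hodge--Tate and de Rham data. One must show that an abstract isomorphism $\widetilde{\Gs}\cong\widetilde{\Gs}_0\times S$ forces $\Lie(\Gs)$ to be the Grothendieck--Messing quotient of $D(\Gs_0)\otimes\Os_S$, so that the étale bundle structure on $E$ is recovered. I would try to do this using $\Lie\widetilde{\Gs}$ and the logarithm, combined with the exact sequence (\ref{intro eq: modification defining E(G)}) and the fiber at $\infty$ of $\Es(\Gs)\cong\Es(\Gs_0)$.
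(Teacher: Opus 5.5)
Your proposal is correct in outline, and over perfectoid bases it matches the proof of Theorem \ref{main thm: moduli of an pdiv vs local SV}. That proof uses three inputs: Scholze--Weinstein's identification of $(\RZ_{G_0})_\eta$ with $\Sht_{\GL_n,b,\mu_d,\GL_n(\Z_p)}$, Theorem \ref{thm: equivalence for dualizable groups on perfd} to pass from $(\Lb,\alpha)$ to a dualizable $\Gs$, and Proposition \ref{prop: minuscule sheaves vs minuscule BC spaces} to pass from $\Es(\Gs)\cong\Es(\Gs_0)$ to $\beta$. You use $\Es(\Gs_0)\cong\Es^b$ without saying so; this is Corollary \ref{cor: E(G) = E(D,phi) over padic field}. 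Your side remarks on matching the quasi-isogeny rigidification and on good reduction over $A^{\circ}$ are unnecessary once you cite the Scholze--Weinstein identification.

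Where you genuinely differ is the passage to arbitrary good $S$. The paper does not use representability of $(\RZ_{G_0})_\eta$ here. Instead, Proposition \ref{prop: properties of local moduli} shows intrinsically that the analytic moduli functor $\Ms_{\Gs_0,K}$ is a $v$-sheaf on good adic spaces. This works because the descended de Rham module is $D(\Gs_0)\otimes\Os_S$ ($D(\Gs)$ depends only on $\widetilde{\Gs}$), and a separate lemma says a $v$-bundle quotient of an étale bundle is étale. The paper then shows the analytic Grothendieck--Messing map is étale, so $\Ms_{\Gs_0,K}$ is a smooth rigid space. The comparison with $\Sht$ upgrades to an isomorphism of rigid spaces by full faithfulness of $(\cdot)^{\diamondsuit}$, and $\RZ_{G_0}$ enters only at the end (Remark \ref{remark: unwrapping iso with rapoport-zink space}).

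You instead use $(\RZ_{G_0})_\eta$ as the representing object. You then only need étaleness of $\Lie$ for points coming from a morphism $S\to\Ms_{G_0}$, which you read off from the universal quotient on $\Fl$. This is shorter for the present statement. The paper's route gives more: representability of $\Ms_{\Gs_0,K}$ for any dualizable $\Gs_0$ with de Rham $V_p\Gs_0$ and any level $K$, with no integral input.

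Two points need tightening.

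First, the compatibility you flag as the main obstacle is exactly the paper's de Rham module, and your plan for it works. For perfectoid $T$ over $S$ one has $D(\Gs_T)=i^*\Es(\Gs_T)$. Apply $i^*$ to the sequence $T_p\Gs_T\otimes\Os_{X_T}\to\Es(\Gs_T)\to i_*\Lie(\Gs_T)\to 0$ and to the isomorphism $\Es(\Gs_T)\cong\Es(\Gs_0\times_{\Spa(\breve{\Q}_p)}T)$. This gives $D(\Gs_0)\otimes\Os_T\cong D(\Gs_T)\twoheadrightarrow\Lie(\Gs_T)$, naturally in $T$. You then do not need to compare with the Rapoport--Zink period map at all. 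This family of quotients is itself a map $S^{\diamondsuit}\to\Fl^{\diamondsuit}$, which comes from a morphism $S\to\Fl$ by full faithfulness on good adic spaces, so $\Lie$ is étale.

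Second, the statement requires $\Gs$ to be dualizable over $S$, so $\omega$ must also be an étale bundle on $S$; you do not address this. It follows because $\omega$ is the kernel of the Hodge filtration surjection $D(\Gs_0)\otimes\Os_S\twoheadrightarrow\Lie(\Gs)$, which is a surjection of étale bundles once $\Lie(\Gs)$ is étale.
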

We prove a more general result, Theorem \ref{main thm: EL and PEL moduli}, yielding a description of the local Shimura varieties of EL and PEL type of Scholze--Weinstein \cite[§24.3]{SW20} as moduli spaces of analytic $p$-divisible groups with extra structure. This should be seen as a local counterpart to the description of global Shimura varieties of PEL type in terms of abelian varieties. We note that Theorem \ref{main thm: EL and PEL moduli} is independent of any integral data and valid for any level subgroup $K$. In contrast, the comparison with Rapoport--Zink spaces \cite[Cor. 24.4.5]{SW20} only applies to parahoric level subgroups and relies on additional choices to be formulated.

\subsection{Limitations and generalizations}
Throughout this paper, we work under the assumption that the base $S$ is a good adic space over $\Q_p$. The reasons for this are twofold: the first is for sheafiness concerns. We often consider smooth maps $\Gs \rightarrow S$, which produce non-sheafy adic spaces for general $S$. The second, deeper reason is that our approach fundamentally relies on perfectoid methods. Therefore, we classify geometric and linear algebraic objects on the diamond $S^{\diamondsuit}$ associated to the adic space $S$, which only remembers topological information. Hence, the formalism of diamonds seems unsuitable for generalizing the present theory. Here are some concrete issues that arise:
\begin{enumerate}
    \item Let $S$ be a non-reduced rigid space in characteristic $0$. Then we are not aware of a satisfactory notion of pro-étale vector bundles on $S$, receiving a functor from $\Z_p$-local systems and containing étale vector bundles as a full subcategory.
    \item Let $S$ be a characteristic $p$ perfectoid space. Then the passage from analytic $p$-divisible groups $\Gs \rightarrow S$ to $v$-sheaves of groups on $S$ factors through the perfection $\Gs \mapsto \Gs^{\perf}$ and thus forgets information. For radicial groups, i.e. those where $\Gs[p] \rightarrow S$ is radicial, the perfection coincides with the $p$-adic universal cover $\widetilde{\Gs}$.
\end{enumerate}

Still, we expect some of our results to extend beyond the class of good adic spaces over $\Q_p$. We give some examples below.
\begin{enumerate}
    \item Let $S$ be any perfectoid space over $\Spa(\Z_p)$, e.g. of characteristic $p$. There is still a well-defined notion of shtuka on $\Yss_S$ with a leg at $\varphi^{-1}(S)$, and we expect Theorem \ref{intro thm: analytic Dieudonné theory in general} to extend to this case. This would then realize a minuscule variant $\Sht_{\min}$ of the stack of local shtuka over $\Spd(\Z_p)$ \cite[Def. 11.12]{zhang2023peltypeigusastackpadic}\cite[Def. 4.16]{gleason2025meromorphicvectorbundlesfarguesfontaine} as a moduli stack of dualizable analytic $p$-divisible groups, and extend our reinterpretation of the Hodge--Tate period map (Theorem \ref{intro thm: HT map reinterpreted}) to the mixed characteristic case.
    \item In another direction, it would be interesting to investigate if an analog of Grothendieck--Messing deformation theory \cite{Messing1972} exists for analytic $p$-divisible groups over non-reduced rigid spaces.
    \item Yet another interesting question would be to obtain a stacky formulation of Theorem \ref{intro thm: analytic Dieudonné theory in general}, in the spirit of stacky prismatic Dieudonné theory \cite[§3.4]{guo2023frobeniusheightprismaticcohomology}\cite{Mondal2024}, which uses the formalism of Drinfeld \cite{Drinfeld2024} and Bhatt--Lurie \cite{bhatt2022absoluteprismaticcohomology}\cite{bhatt2022prismatizationpadicformalschemes}. This last direction seems especially promising, given the recent development in non-archimedean geometry \cite{anschutz2025analyticrhamstacksfarguesfontaine}, based on the theory of analytic stacks of Clausen and Scholze \cite{ClausenScholze2024}.
\end{enumerate}  
We plan to return to these questions in future work.

\subsection{Plan of the paper}
Section \ref{section: preliminaries} reviews important concepts in relative $p$-adic Hodge theory. In Section \ref{section: Analytic $p$-divisible groups}, we introduce the main objects of the paper, the analytic $p$-divisible groups. We prove the theorem relating analytic $p$-divisible groups with Hodge--Tate triples, and we define a category of effective Banach--Colmez spaces. In Section \ref{section: Examples of analytic $p$-divisible groups}, we discuss various constructions yielding examples of analytic $p$-divisible groups. Section \ref{section Dieudonne theory} presents the Dieudonné theory, starting with some reminders about the Fargues--Fontaine curve. We then compare it with earlier constructions and with $p$-adic cohomology theories. Finally, in Sections \ref{section: EL and PEL local Shimura varieties} and \ref{section: moduli stack of an pdiv groups}, we discuss the moduli-theoretic aspects.
    
\subsection*{Acknowledgments}
I thank Ben Heuer for many interesting discussions and helpful comments on the project at its various stages of completion. This project was carried out while I was a PhD student at Goethe Universität Frankfurt, and I am especially grateful to my advisor, Annette Werner, for excellent guidance and valuable feedback. I thank Daniel Kim for pointing out an inaccuracy in an earlier version. Finally, I thank Laurent Fargues, Sean Howe, Pol van Hoften, Johannes Anschütz, Mingjia Zhang, Ian Gleason, Konrad Zou and Abhinandan for helpful discussions.

This project was funded by Deutsche Forschungsgemeinschaft (DFG, German Research Foundation) through the Collaborative Research Centre TRR 326 \textit{Geometry and Arithmetic of Uniformized Structures} - Project-ID 444845124. This work was also partially supported by the Simons Foundation through the Simons Collaboration on Perfection in Algebra, Geometry, and Topology. 
\subsection*{Notations}
\begin{itemize}
    \item Given a non-archimedean field $K$ (always assumed to be complete), we denote by $\Os_K$ (resp. $\ma_K$) its valuation ring (resp. maximal ideal), i.e. the set of elements of valuation $\leq 1$ (resp. $<1$). A $p$-adic field is by definition a non-archimedean field extension $K$ of $\Q_p$ that is discretely valued and whose residue field is perfect.
    \item Throughout, we will work with analytic adic spaces in the sense of Huber \cite{huber2013étale}. Adic spaces will always implicitly be assumed to be sheafy. For a Huber ring $A$ with subring of power-bounded elements $A^{\circ}$, we write $\Spa(A)$ for $\Spa(A,A^{\circ})$. We fix a prime number $p$ and we only consider adic spaces $X$ living over $\Spa(\Z_p)$, i.e. we always assume $p\in \Os(X)$ to be locally topologically nilpotent. Whenever we consider a pair $(K,K^+)$, it will always be implicitly assumed that $K$ is a non-archimedean field and $K^+$ is an open and bounded valuation subring. Such a pair will sometimes be referred to as a non-archimedean field.
    \item A rigid space $X$ over $(K,K^+)$ is by definition an adic space that is locally of topologically finite type over $\Spa(K,K^+)$. We will work with perfectoid spaces in the sense of \cite{scholze2011perfectoid} and, for an adic space $X$, we denote by $\Perf_X$ the category of perfectoid spaces together with a map to $X$. We will actually use the more general definition used in \cite{SW20} to allow for perfectoid spaces living over a non-perfectoid base field, such as $\Q_p$. More generally, we will consider sousperfectoid adic spaces, that is, adic spaces $X$ that admit an open cover by affinoid spaces $\Spa(R,R^+)$, where $R$ is a sousperfectoid ring in the sense of \cite[§6.3]{Kedlaya2020Sheafiness}. Most adic spaces that we will encounter will be of one of the above types.
    \item Given an adic space $X$, we may associate to it its étale site $X_{\et}$ \cite[Def. 8.2.19]{kedlaya2015relative}. In general, this may contain non-sheafy adic spaces. However, for all types of adic spaces listed above, this does not happen. A map of affinoid adic spaces is called standard étale if it is a composition of finite étale maps and rational immersions. 
    \item We will use the theory of diamonds and their very fine topologies introduced in \cite{scholze2022etale}. In particular, for an analytic adic space $X$ over $\Spa(\Z_p)$, we have an associated diamond $X^{\diamondsuit}$, a sheaf on the category $\Perf_{\F_p}$ of perfectoid spaces of characteristic $p$. It comes with an identification of their étale sites $X_{\et} \cong X_{\et}^{\diamondsuit}$ \cite[Lemma 15.6]{scholze2022etale}. We define the $v$-site of $X$ to be $X_v = \LSD_{X^{\diamondsuit},v}$ the category of locally spatial diamonds with a map to $X^{\diamondsuit}$, equipped with the $v$-topology \cite[§8]{scholze2022etale}. Under the equivalence $\Perf_{\F_p,/\Spd(\Z_p)} \cong \Perf_{\Z_p}$ sending a map $S \rightarrow \Spd(\Z_p)$ to the corresponding untilt $S^{\sharp}$, the diamond $X^{\diamondsuit}$ can be seen as a sheaf on $\Perf_{\Z_p}$, the functor of points of $X$. We will often omit the diamond symbol and write $X$ in place of $X^{\diamondsuit}$.
    \item Whenever we work with a formal scheme $\Xs$, we will always assume it to be locally of the form $\Spf(R)$, where $R$ is an $I$-adically complete ring for a finitely generated ideal $I$ containing a power of $p$. A $p$-divisible group on $\Xs$ is an fpqc sheaf $G$ of abelian groups on the category $\Sch_{\Xs}$ of schemes over $\Xs$ such that the multiplication by $p$ map $[p]\colon G \rightarrow G$ is surjective, with kernel $G[p]$ representable by a finite flat formal group scheme over $\Xs$ and such that $G = \varinjlim G[p^n]$. We denote by $\BT(\Xs)$ the category of $p$-divisible groups over $\Xs$.
    \item The notation $(\cdot)^{\vee}$ will be reserved for the dual of objects of linear algebraic nature, such as local systems or vector bundles. 
\end{itemize}
\section{Preliminaries}\label{section: preliminaries}
\subsection{$p$-adic local systems}
We review local systems on adic spaces and diamonds, following \cite[§2]{MannWerner2022loc}.

\begin{definition}
    Let $Y$ be a locally spatial diamond. Let $\tau$ be the étale topology and $\Lambda = \Z/p^n\Z$, or let $\tau \in \{\et, v\}$ and $\Lambda \in \{\Z/p^n\Z,\Z_p,\Q_p\}$. Let $\underline{\Lambda}$ denote the associated locally profinite sheaf on $X_{\tau}$ and let $\Lb$ be a sheaf of $\underline{\Lambda}$-modules on $Y_{\tau}$.
    \begin{enumerate}
        \item $\Lb$ is a $\Lambda$-local system if it is finite locally free, i.e. there exists a $\tau$-cover $\{Y_i \rightarrow Y\}$ such that $\Lb\restr{Y_i}\cong \underline{\Lambda}^{\oplus r_i}$ for some $r_i\geq 0$.
        \item $\Lb$ is a lisse $\Lambda$-sheaf if there exists a $\tau$-cover $\{Y_i \rightarrow Y\}$ such that $\Lb\restr{Y_i} \cong \underline{M_i}$, for some finite $\Lambda$-modules $M_i$. 
    \end{enumerate} 
\end{definition}
By \cite[Prop. 3.7, Thm. 3.11]{MannWerner2022loc}, we have an equivalence
\begin{align}\label{eq: from etale ls to v ls}
\Loc_{\Z/p^n\Z}(Y_{\et}) \cong \Loc_{\Z/p^n\Z}(Y_{v}).\end{align}
Furthermore, by \cite[Prop. 3.5]{MannWerner2022loc}, there is an equivalence between $\Z_p$-local systems $\Lb$ on $Y_v$ and inverse systems $(\Lb_n)_{n \geq 1}$ of $\Z/p^n\Z$-local systems on $Y$ with $\Lb_{n+1}/p^n \cong \Lb_n$. We let $\Z_p(1) = \varprojlim_n \mu_{p^n}$ denote the Tate twist.

Let $X$ be an analytic adic space. In that case, we may also consider $\Z/p^n\Z$-local systems on $X_{\et}$, which are identified with étale local systems on its associated diamond, using the equivalence $X_{\et} \cong X_{\et}^{\diamondsuit}$ of \cite[Lemma 15.6]{scholze2022etale}. We will often identify a $\Z/p^n\Z$-local system on $X_{\et}$ with its extension to $X_v$ given by (\ref{eq: from etale ls to v ls}). We will also talk about $\Z_p$ (resp. $\Q_p$)-local systems on $X$ when considering such local systems on $X^{\diamondsuit}$. 

In \cite{scholze2013padicHodge}, Scholze defines, for any locally noetherian adic space, a pro-étale site $X_{\proet}$, which is sometimes called the flattened pro-étale site. By \cite[Prop. 8.2]{scholze2013padicHodge}, it follows that $\Z_p$-local systems on $X_{\proet}$ are also equivalent to local systems on $X_{v}^{\diamondsuit}$.

We will use the following result of Scholze on the finiteness of cohomology of local systems. Let $K$ be a non-archimedean field extension of $\Q_p$ and let $K^+\sub K$ be an open and bounded valuation subring. Let $C$ be a completed algebraic closure of $K$ and let $C^+$ the completion of the integral closure of $K^+$ in $C$.
\begin{proposition}{(\cite[Thm. 1.1]{scholze2013padicHodge})}\label{prop: cohomology of local system}
    \begin{enumerate}
        \item Let $X$ be a proper smooth rigid space over $(K,K^+)$. Let $\Lb$ be a $\Z_p$-local system on $X$ and $n\geq 0$. Then $H_v^n(X_C, \Lb)$ is a finite $\Z_p$-module and we have
        \[ H_v^n(X_C, \Lb) = \varprojlim_m H_{\et}^n(X_C, \Lb/p^m).\]
        \item More generally, let $\pi\colon X \rightarrow S$ be a proper smooth morphism of analytic adic spaces over $\Z_p$. Let $\Lb$ be a $\Z_p$-local system on $X$ and $n\geq 0$. Then $R^n\pi_{v,*}\Lb$ is a lisse $\Z_p$-sheaf and we have
        \[ R^n\pi_{v,*}\Lb =\varprojlim_m \nu^*R^n\pi_{\et,*}\Lb/p^m,\]
        where $\nu\colon S_v \rightarrow S_{\et}$ is the natural map of sites.
    \end{enumerate}
\end{proposition}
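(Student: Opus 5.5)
Since this is a citation of \cite[Thm. 1.1]{scholze2013padicHodge}, the plan is to reduce both parts to finiteness statements for $\F_p$-coefficients, and then pass to the limit over $m$ using the description of $\Z_p$-local systems as inverse systems $(\Lb/p^m)_m$ from \cite[Prop. 3.5]{MannWerner2022loc}.

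For (1), I would first treat torsion coefficients. Let $\Lb$ be an $\F_p$-local system on $X$. After a finite étale cover trivializing $\Lb$ and a Hochschild--Serre (Cartan--Leray) argument, it suffices to treat $\Lb=\F_p$. Here I would invoke the primitive comparison theorem of \emph{loc. cit.}: the map
\[ H^n_{\et}(X_C,\F_p)\otimes_{\F_p}\Os_C^+/p \longrightarrow H^n_{\et}(X_C,\Os^+_X/p) \]
is an almost isomorphism, and the right side is almost finitely generated over $\Os_C/p$. The latter follows from the Cartan--Serre style argument on a proper smooth rigid space: compare two affinoid covers, one relatively compact in the other, and use perfectoid (toric) charts where $\Os^+/p$ has almost vanishing higher cohomology. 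The comparison then forces $H^n_{\et}(X_C,\F_p)$ to be finite, and zero for $n>2\dim X$. By dévissage along $0\to \Lb/p^{m}\to\Lb/p^{m+1}\to \Lb/p\to 0$, each $H^n_{\et}(X_C,\Lb/p^m)$ is finite.

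To pass to $\Z_p$-coefficients, I would use that $R\Gamma_v(X_C,\Lb)=R\varprojlim_m R\Gamma_v(X_C,\Lb/p^m)$, since $\Lb=\varprojlim\Lb/p^m$ and the transition maps are surjective, so $v$-locally the system is Mittag-Leffler and the $\varprojlim^1$ of sheaves vanishes. Together with $H^n_v=H^n_{\et}$ for $\Z/p^m$-local systems (via (\ref{eq: from etale ls to v ls}) and \cite[Prop. 14.7]{scholze2022etale}), this gives a Milnor sequence. The $\varprojlim^1$ term of the finite groups $H^{n-1}_{\et}(X_C,\Lb/p^m)$ vanishes because a tower of finite groups is Mittag-Leffler. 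This yields the formula in (1). The limit is finitely generated over $\Z_p$ because its reduction mod $p$ injects into the finite group $H^n(X_C,\Lb/p)$, by the long exact sequence for multiplication by $p$.

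For (2), the key input is that $R^n\pi_{\et,*}(\Lb/p^m)$ is a $\Z/p^m$-local system (lisse sheaf) on $S_{\et}$. I would deduce this from proper base change for diamonds \cite[Thm. 19.2]{scholze2022etale} combined with smoothness. By proper base change, the stalks at geometric points are the groups from (1). Local constancy then comes from the fact that for proper smooth $\pi$ the complex $R\pi_*\F_p$ is perfect and compatible with base change. Alternatively, one can note that it is also ULA/constructible of constant fiber rank, hence locally constant by \cite[Prop. 24.?]{scholze2022etale}-type arguments using the primitive comparison in families. This local constancy is the main obstacle, and I would first try the families version of the almost-finiteness argument over a perfectoid base $\Spa(R,R^+)$. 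Once lisse, the torsion sheaves pull back to $v$-lisse sheaves $\nu^*R^n\pi_{\et,*}\Lb/p^m$, which agree with $R^n\pi_{v,*}(\Lb/p^m)$ by the comparison of étale and $v$-cohomology for étale sheaves. The same Milnor sequence argument, now performed on sheaves where the $\varprojlim^1$ of a tower of lisse sheaves with finite stalks vanishes $v$-locally, shows that $R^n\pi_{v,*}\Lb=\varprojlim_m\nu^*R^n\pi_{\et,*}\Lb/p^m$. It also shows that this sheaf is lisse, being an inverse limit of compatible finite lisse sheaves with locally constant, Mittag-Leffler transition maps.
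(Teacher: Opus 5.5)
Your argument for (1) is fine, and more hands-on than the paper's, which simply deduces (1) from (2). The genuine gap is in (2), at exactly the step you flag: local constancy of $R^n\pi_{\et,*}(\Lb/p)$.

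Neither of the two arguments you sketch for it works. If ``$R\pi_*\F_p$ is perfect'' means locally quasi-isomorphic to a bounded complex of finite locally constant sheaves, that is the statement to be proved. If it only means perfect stalks plus base change, it does not imply local constancy. Nor does ``constructible of constant fibre rank'': take $i$ a closed point and $j$ its open complement; then $j_!\F_p\oplus i_*\F_p$ has one-dimensional stalks everywhere but is not locally constant.

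The ULA route is unavailable because the ULA and cohomological-smoothness formalism for diamonds needs coefficients prime to $p$. Smooth morphisms of rigid spaces are not cohomologically smooth for $\F_p$-coefficients; already the closed unit disc over $C$ has infinite-dimensional $H^1_{\et}$ with $\F_p$-coefficients. So nothing makes $\F_p$ ULA over $S$.

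Local constancy is where relative $p$-adic Hodge theory genuinely enters. The paper quotes \cite[Thm. 10.5.1]{SW20} for rigid spaces over $(C,\Os_C)$, and in general \cite{heuer2024primitive}, which reduces local constancy to finite-dimensionality of the stalks. Your ``families version of the almost-finiteness argument over a perfectoid base'' points in the right direction, but as written it is only something to try, so (2) is not proved.

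Even stalk finiteness is not covered by your part (1) in the generality of (2), for two reasons.
\begin{enumerate}
\item Since $S$ is any analytic adic space over $\Z_p$, a geometric point $\Spa(C,C^+)$ may have $C$ of characteristic $p$. The fibre is then a proper smooth rigid space in characteristic $p$, to which Scholze's theorem (and hence your (1)) does not apply; the paper uses \cite[Thm. 3.9]{heuer2024primitive} there.
\item In general $C^+\neq\Os_C$. Before applying any result formulated over $\Os_C$, including your Cartan--Serre and primitive-comparison argument, you need \cite[Prop. 8.2.3(ii)]{huber2013étale}. It shows that $H^n_{\et}(X_s,\Lb/p)$ is unchanged when $C^+$ is replaced by $\Os_C$.
\end{enumerate}
The identification of stalks with fibre cohomology for $p$-torsion coefficients is itself \cite[Lemma 4.10]{heuer2024primitive}. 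Once these inputs are in place, your Milnor-sequence passage to $\Z_p$-coefficients is the standard d\'evissage the paper also invokes.
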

    \begin{proof}
    It is enough to show the second point. Let us set $\Lb_m = \Lb/p^m$, for all $m\geq 1$. By \cite[Cor. 5.5]{heuer2024primitive}, we have
    \[ R^n\pi_{v,*}\Lb_m =\nu^*R^n\pi_{\et,*}\Lb_m.\]
    As a first step, we show that $R^n\pi_{\et,*}\Lb_1$ is a $\F_p$-local system on $S$.
    This is due to Scholze--Weinstein \cite[Thm. 10.5.1]{SW20} when $X$ and $S$ are rigid spaces over $(C,\Os_C)$. The general case follows from \cite{heuer2024primitive}. Indeed, it is enough to show that the stalks of this sheaf are finite-dimensional $\F_p$-vector spaces. If $s\in S(C,C^+)$ is any geometric point, we have by \cite[Lemma 4.10]{heuer2024primitive} 
    \[ (R^n\pi_{\et,*}\Lb_1)_s = H_{\et}^n(X_s, \Lb_1). \]
    By \cite[Prop. 8.2.3(ii)]{huber2013étale}, this latter group is unchanged if we replace $C^+$ by $\Os_C$. The desired finite-dimensionality now follows from \cite[Thm. 1.1]{scholze2013padicHodge} if $C$ has characteristic $0$ and \cite[Thm. 3.9]{heuer2024primitive} in general.
    % and we reduce to this case. Let $X_0$ (resp. $S_0$) denote the base change of $X$ (resp. $S$) along the immersion $\Spa(K,\Os_K) \rightarrow \Spa(K,K^+)$. By \cite[Prop. 8.2.3(ii)]{huber2013étale}, the sheaves $R^n\pi_{\et,*}\Lb_m$ are overconvergent. Furthermore, their formations are easily seen to commute with base change along the immersion $S_0 \sub S$. Hence, we may assume that $K^+=\Os_K$. Next, let $\pi_C\colon X_C \rightarrow S_C$ denote the map obtained by base change and let $f\colon X_C \rightarrow X$ the natural map. Then by \cite[Cor. 16.10(ii)]{scholze2022etale}, we have
    % \[ f^*R^n\pi_{\et,*}\Lb_1 \cong R^n\pi_{C,\et,*}\Lb_1.\]
    % Hence it follows from the result of Scholze--Weinstein that $R^n\pi_{\et,*}\Lb_1$ is a $\F_p$-local system $v$-locally on $X$ and thus an $\F_p$-local system. 
    The statement of the proposition now follows from this case by a standard devissage argument, following e.g. the proof of \cite[Thm. 4.5]{heuer2024relative}.
\end{proof}

\subsection{Proétale vector bundles}
We briefly discuss vector bundles on adic spaces and diamonds for the fine topologies introduced by Scholze in \cite{scholze2022etale}. The references are \cite[§2]{MannWerner2022loc} and \cite{heuer2022gtorsors}.

Let $X$ be an adic space over $\Q_p$ that is either sousperfectoid or a rigid space over a non-archimedean field $(K,K^+)$. By \cite[Thm. 8.2.22(4)]{kedlaya2015relative}, vector bundles on $X_{\an}$ and $X_{\et}$ are equivalent, and if $X = \Spa(A,A^+)$ is affinoid, these are further equivalent to finite projective $A$-modules. To a vector bundle $E$ on $X$, we can associate its associated geometric vector bundle $E \otimes_{\Os_X} \G_a$, which is again sousperfectoid or a rigid space.

Let now $Y$ be a locally spatial diamond over $\Spd(\Z_p)$. We consider the corresponding untilted structure sheaf $\Os_{Y_v}$, sending a characteristic $p$ perfectoid space $S \rightarrow Y$ to $\Os(S^{\sharp})$, where $S^{\sharp}$ is the untilt of $S$ corresponding to the composition $S \rightarrow Y \rightarrow \Spd(\Z_p)$. There also is an obvious integral variant $\Os_{Y_v}^{+}$. These extend to sheaves on $Y_v$ and we denote by $\Os_{Y_{\et}}$, $\Os_{Y_{\et}}^+$ their restriction to the étale site of $Y$.

\begin{definition}
    Let $\tau \in \{\et, v\}$. A $\tau$-vector bundle on $Y$ is defined to be a finite locally free sheaf of $\Os_{Y_{\tau}}$-modules on $Y_{\tau}$. We denote by $\Vect(Y_{\tau})$ the category of $\tau$-vector bundles on $Y$.
\end{definition}

Let $\nu\colon Y_{v} \rightarrow Y_{\et}$ denote the natural map of sites. Then there is a natural functor
\begin{align}\label{eq: from etale to v vb} \Vect(Y_{\et}) \rightarrow \Vect(Y_{v}),\end{align}
sending a vector bundle $E$ on $Y_{\et}$ to
\[ \nu^{*}E \otimes_{\nu^*\Os_{Y_{\et}}}\Os_{Y_{v}} \in \Vect(Y_{v}).\]
We will often abbreviate and simply write $E \otimes_{\Os_{Y_{\et}}}\Os_{Y_{v}}$. By \cite[Prop. 2.15-2.16]{MannWerner2022loc}, the above functor is fully faithful. We will often identify vector bundles on $Y_{\et}$ with their extensions to $Y_v$. Moreover, if $Y$ is a perfectoid space, then the above functor defines an equivalence of categories
\begin{align}\label{eq: etale and v vb equivalent for perfectoids}
\Vect(Y_{\et}) \cong \Vect(Y_{v}).
\end{align}
This is \cite[Thm. 3.5.8]{kedlaya2019relative2} if $Y$ lives over $\Q_p$, and \cite[Lemma 17.1.8]{SW20} in general.

For any locally noetherian adic spaces $X$ over $\Q_p$, we may also consider vector bundles on the flattened pro-étale site $X_{\proet}$ of Scholze \cite{scholze2013padicHodge}, equipped with the completed structure sheaf $\Os_{X_{\proet}}$. By \cite[Prop. 2.3.2]{heuer2022gtorsors}, we further have an equivalence
\begin{align}
    \Vect(X_{\proet}) \cong \Vect(X_{v}^{\diamondsuit}).
\end{align}

In general, étale vector bundles on an adic space $X$ and on its associated diamond do not coincide. This leads to the following definition.
\begin{definition}\label{def: good adic spaces}
    Let $X$ be an adic space over $\Q_p$ that is either a sousperfectoid space or a rigid space over some non-archimedean field $(K,K^+)$. We say that $X$ is good if we have an isomorphism of sheaves 
\[ \Os_{X_{\et}} = \nu_*\Os_{X_v},\]
where $\nu\colon X_v \rightarrow X_{\et}$ is the canonical map of sites. We denote by $\Adic_{\Q_p}$ the category of good adic spaces over $\Q_p$.
\end{definition}
Examples of good adic spaces include perfectoid spaces and seminormal rigid spaces. By \cite[Prop. 2.4-2.5]{gerth2024}, the category of good adic spaces is closed under smooth maps and the diamond functor restricts to a fully faithful embedding
\begin{align}\label{eq: diamond functor ff on good adic spaces}
     (\cdot)^{\diamondsuit} \colon \Adic_{\Q_p} \rightarrow \LSD_{\Q_p}.
\end{align}
Moreover, it is clear by definition that for $X$ a good adic space, we have an equivalence of ringed sites $(X_{\et},\Os_{X_{\et}}) \cong (X_{\et}^{\diamondsuit}, \Os_{X_{\et}^{\diamondsuit}})$ and thus an equivalence
\begin{align}
    \Vect(X_{\et}) \cong \Vect(X_{\et}^{\diamondsuit}).
\end{align}

We now discuss the Hodge--Tate spectral sequence of Faltings \cite[III Thm. 4.1]{Fal88} and Scholze \cite[Thm. 1.6]{scholze2013padicHodge}. We will need the following relative variant, considered in \cite[§2.2]{ScholzeCariani2017}, \cite{ABBES_2024}, \cite[§7.1]{gaisin2022relativearminfcohomology} and \cite{heuer2024relative}.
\begin{thm}{(\cite{heuer2024relative})}\label{thm: relative linear HT sequence}
    Let $(K,K^+)$ be a non-archimedean field over $\Q_p$. Let $X \rightarrow S$ be a proper smooth morphism of reduced rigid spaces over $(K,K^+)$. Then for $\tau\in \{ \et,v\}$ and $n\geq 0$, the sheaf
    \[ R^n\pi_{\tau,*}\Os_{X_\tau}\colon S_{\tau} \rightarrow \Ab\]
    is a $\tau$-vector bundle on $S$ whose formation commutes with any morphism $g\colon S' \rightarrow S$, where $S'$ is a rigid space or a sousperfectoid space. We have a spectral sequence of $v$-vector bundles
    \begin{align} \mathbf{E}_2^{ij} = (R^i\pi_{\et,*}\Omega_{X/S}^j) \otimes_{\Os_{S_{\et}}} \Os_{S_v}(-j) \Rightarrow (R^{i+j}\pi_{v,*}\Q_p) \otimes_{\Q_p} \Os_{S_v}.\end{align}
    Moreover, this sequence degenerates on the second page.
\end{thm}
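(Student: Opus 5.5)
The plan is to reduce to the absolute case over a complete algebraically closed field and then globalize. Throughout we use perfectoid covers, $v$-descent of vector bundles, and the finiteness results of Proposition \ref{prop: cohomology of local system}. Since Theorem \ref{thm: relative linear HT sequence} is quoted from \cite{heuer2024relative}, it suffices to explain how the argument there goes.

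\textbf{Step 1: the $\tau$-vector bundle statement for $R^n\pi_{\tau,*}\Os_{X_\tau}$.}
\begin{itemize}
\item For $\tau=v$, work $v$-locally on $S$ and assume $S=\Spa(R,R^+)$ is affinoid perfectoid. The primitive comparison theorem (in the relative form of \cite{heuer2024primitive}) gives an almost isomorphism
\[ R\pi_{v,*}\Os^+_{X_v}/p \;\approx\; R\pi_{\et,*}(\Os^+/p)\;\approx\; R\pi_{\et,*}\F_p\otimes \Os^+_S/p.\]
\item Passing to the limit over $p^m$ and inverting $p$, Proposition \ref{prop: cohomology of local system} yields
\[ R^n\pi_{v,*}\Os_{X_v} = R^n\pi_{v,*}\Z_p\otimes_{\Z_p}\Os_{S_v}.\]
This is a $v$-vector bundle once we remove torsion, since the torsion in the lisse sheaf $R^n\pi_{v,*}\Z_p$ dies after inverting $p$.
\item Base change compatibility follows for the same reason: both sides are computed by lisse sheaves, whose formation commutes with pullback.
\item For $\tau=\et$, reducedness and properness give coherence of $R^n\pi_{\et,*}\Os_X$ by Kiehl's theorem. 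To get local freeness and base change, use the Hodge--Tate filtration constructed in Step 2. Each graded piece is a direct summand of a $v$-vector bundle whose rank is locally constant, because the $\Z_p$-local system has locally constant rank. Combined with goodness of reduced (hence seminormalizable) bases and cohomology and base change, this forces local freeness.
\end{itemize}

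\textbf{Step 2: the spectral sequence.}
\begin{itemize}
\item Following Scholze \cite[§3]{scholze2013padicHodge}, consider the Leray sequence for the composition $X_v\to X_{\et}\to S_{\et}$, together with the computation
\[ R^j\nu_*\Os_{X_v}=\Omega^j_{X/S}\otimes\Os(-j).\]
This computation holds relatively, since $X\to S$ is smooth and toric charts exist étale locally.
\item After base change to a perfectoid cover of $S$, this yields
\[ E_2^{ij}=R^i\pi_{\et,*}(\Omega^j_{X/S})\otimes\Os_{S_v}(-j)\;\Rightarrow\; R^{i+j}\pi_{v,*}\Os_{X_v}.\]
\item Identify the abutment with $R^{i+j}\pi_{v,*}\Q_p\otimes\Os_{S_v}$ using Step 1.
\end{itemize}

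\textbf{Step 3: degeneration (the main obstacle).} The relative statement does not follow formally from the absolute one.
\begin{itemize}
\item First, degeneration holds at every geometric point $\Spa(C,C^+)\to S$, by Scholze's theorem in the proper smooth case over $C$, using that Hodge numbers are bounded.
\item Since all terms are $v$-vector bundles whose formation commutes with base change, the differentials $d_r$ are maps of vector bundles that vanish on all fibers.
\item A morphism of $v$-vector bundles on a perfectoid space that vanishes at all points is zero, because perfectoid affinoids are reduced and uniform. Hence all $d_r$ vanish, and the sequence degenerates.
\end{itemize}
The delicate part is ensuring that the fiberwise ranks match, so that no torsion cokernels hide. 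We control this by counting ranks: the sum of the Hodge numbers equals the rank of the local system pointwise.
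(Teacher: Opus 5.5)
Your outline has a genuine gap, located in how Steps 1 and 2 feed into each other. The formula $R^j\nu_*\Os_{X_v}\cong\Omega^j_{X/S}\otimes\Os(-j)$ for $\nu\colon X_v\to X_{\et}$ is false when $S$ is a rigid space of positive dimension. Scholze's computation produces absolute differentials: when $S$ (hence $X$) is smooth over $C$, one gets $\Omega^1_{X/C}(-1)$, which is an extension of $\Omega^1_{X/S}(-1)$ by $\pi^*\Omega^1_{S/C}(-1)$, and étale-local toric charts do not change this.

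Relative differentials only appear after base change to a perfectoid $S'\to S$. But then the Leray sequence for $X'=X\times_SS'$ has $E_2$-terms $R^i\pi'_{\et,*}\Omega^j_{X'/S'}$. Identifying these with $(R^i\pi_{\et,*}\Omega^j_{X/S})\otimes\Os_{S'}$ is a coherent base change statement along maps that are in general not flat, and it is needed for every $j$. This is exactly the kind of statement the first part of the theorem asserts. Yet your Step 1 for $\tau=\et$ derives local freeness from the filtration of Step 2, so the argument is circular.

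That Step 1 argument is also flawed on its own terms. Graded pieces of a filtration are subquotients, not direct summands. Local constancy of $\rank R^n\pi_{v,*}\Q_p$ does not by itself make the individual pieces locally constant. What you need is:
\begin{itemize}
\item upper semicontinuity of each $s\mapsto\dim H^i(X_s,\Omega^j_{X_s})$;
\item the pointwise identity $\sum_{i+j=n}\dim H^i(X_s,\Omega^j_{X_s})=\rank R^n\pi_{v,*}\Q_p$, coming from pointwise degeneration;
\item a Grauert-type criterion on the reduced base.
\end{itemize}
Goodness is not available here, since reduced rigid spaces need not be good (goodness amounts to seminormality).

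Your Step 3 and the passage from perfectoid covers back to $S$ both tacitly use that the spectral sequence is natural in the base. This is needed so that its differentials specialize to the pointwise ones, and so that the sequence built on covers descends; you never state or justify it. That naturality, for arbitrary maps of bases including ones not over a fixed field (\cite[Thm.~5.1]{heuer2024relative}), is precisely what the paper's proof exploits.

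The paper does not reprove the first part, nor the existence and degeneration of the sequence. It cites Heuer for these under the hypothesis that $K$ is perfectoid. Its only genuine argument is the reduction for general $K$: base change to a completed algebraic closure $C$, note that the terms are already defined over $S$, and descend the differentials by applying naturality to the semilinear automorphisms $1\times\sigma$ of $S_C$, $\sigma\in\Gal_K$.

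Your outline never confronts the perfectoid-field hypothesis. A $v$-descent from perfectoid covers could replace this Galois descent, but only once the same naturality statement is in place.
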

\begin{proof}
    The first part is \cite[Thm. 3.18, Thm. 5.7.1]{heuer2024relative} for $\tau=\et$ and \cite[Corollary 4.6]{heuer2024relative} when $\tau=v$. The existence and degeneration of the spectral sequence is \cite[Cor. 5.12]{heuer2024relative} under the assumption that $K$ is perfectoid\footnote{In \cite{heuer2024relative}, Heuer works with Breuil--Kisin--Fargues twists $\Os_v\{-j\}$ (Definition 2.4 in \emph{loc. cit.}) instead of Tate twists $\Os_v(-j)$. However, these are canonically isomorphic, by adapting the proof of \cite[Prop. 6.7]{scholze2013padicHodge}.}. It is proven in \cite[Thm. 5.1]{heuer2024relative} that the sequence is natural in $S$, where we allow arbitrary morphisms of adic spaces between rigid spaces over different perfectoid fields. Let now $K$ be arbitrary and fix a completed algebraic closure $C$. The terms of the sequence for $X_C \rightarrow S_C$ are clearly defined over $S$. Using the above naturality property for the semilinear maps $(1\times\sigma) \colon S_C \rightarrow S_C$, where $\sigma \in \Gal_C$, we find that the maps in the spectral sequence over $C$ descend to $S$, as required
\end{proof}

\begin{example}\label{ex: HT sequence}
    Let $\pi\colon X \rightarrow S$ be a proper smooth morphism of reduced rigid spaces. Then for low degree $i+j=1$, the relative spectral sequence of Theorem \ref{thm: relative linear HT sequence} yields the following short exact sequence of $v$-vector bundles, the so-called Hodge--Tate sequence
% https://q.uiver.app/#q=WzAsNSxbMCwwLCIwIl0sWzEsMCwiKFJeMVxccGlfe1xcZXQsKn1cXE9zX1gpIFxcb3RpbWVzX3tcXE9zX3tTX3tcXGV0fX19IFxcT3Nfe1Nfdn0iXSxbMiwwLCIoUl4xXFxwaV97diwqfVxcdW5kZXJsaW5le1xcWl9wfSkgXFxvdGltZXNfe1xcdW5kZXJsaW5le1xcWl9wfX0gXFxPc197U192fSJdLFszLDAsIlxccGlfeyp9XFxPbWVnYV97WC9TfV4xXFxvdGltZXNfe1xcT3Nfe1Nfe1xcZXR9fX0gXFxPc197U192fSgtMSkiXSxbNCwwLCIwLiJdLFswLDFdLFsyLDMsIlxcSFRfe1gvU30iXSxbMyw0XSxbMSwyXV0=
\begin{equation}\label{eq: contravariant HT seq}
    \begin{tikzcd}
	0 & {(R^1\pi_{\et,*}\Os_X) \otimes_{\Os_{S}}\!\Os_{S_v}} & {(R^1\pi_{v,*}\underline{\Z_p}) \otimes\!\Os_{S_v}} & {\pi_{*}\Omega_{X/S}^1\otimes_{\Os_{S}} \!\Os_{S_v}(-1)} & {0.}
	\arrow[from=1-1, to=1-2]
	\arrow[from=1-2, to=1-3]
	\arrow["{\HT_{X/S}}", from=1-3, to=1-4]
	\arrow[from=1-4, to=1-5]
\end{tikzcd}
\end{equation}
\end{example}

We end this subsection by stating Scholze's Primitive Comparison Theorem.
\begin{thm}{(\cite[Thm. 1.3]{scholze2013padicHodge})}\label{thm: Primitive Comparison Theorem}
\begin{enumerate}
    \item Let $X$ be a proper smooth rigid space over $(K,K^+)$. Let $\Lb$ be a $\Z_p$-local system on $X$. Then there is a natural, Galois equivariant isomorphism
    \begin{align} H_v^n(X_C,\Lb)\otimes_{\Z_p} C \xrightarrow{\cong} H_v^n(X_C,\Lb \otimes_{\underline{\Z_p}}\Os_{X_v}). \end{align}
    \item Let $\pi\colon X \rightarrow S$ be a proper smooth map of analytic adic spaces over $\Z_p$, and let $\Lb$ be a $\Z_p$-local system on $X$. Then we have a natural isomorphism
    \begin{align}\label{eq: prim comp thm} (R^n\pi_{v,*}\Lb) \otimes_{\underline{\Z_p}}\Os_{S_v} \xrightarrow{\cong} R^n\pi_{v,*}(\Lb \otimes_{\underline{\Z_p}}\Os_{X_v}).\end{align}
\end{enumerate}
\end{thm}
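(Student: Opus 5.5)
The plan is to reduce part (1) to part (2) by taking $S=\Spa(C,C^+)$, so everything hinges on part (2). Part (2) is local on $S$ in the $v$-topology, and both sides commute with pullback: the left side by Proposition \ref{prop: cohomology of local system}(2), and the right side by proper base change for $v$-cohomology of $\Os^+/p$. So I would first reduce to the case where $S=\Spa(R,R^+)$ is an affinoid perfectoid space over $\Q_p$, and moreover strictly totally disconnected, so that on $S$ the lisse sheaves $R^n\pi_{v,*}\Lb/p^m$ are constant. The natural map in (\ref{eq: prim comp thm}) comes from adjunction together with the cup product $\Lb\otimes\Os\to\Lb\otimes\Os$. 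It then suffices to show that it is an isomorphism on sections over $S$, and compatibly over every strictly totally disconnected $S'\to S$.

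The key integral statement is an almost isomorphism
\[ H^n_v(X,\Lb/p)\otimes_{\F_p} R^+/p \;\xrightarrow{\;\approx\;}\; H^n_v(X,\Lb\otimes \Os^+_{X}/p),\]
together with the analogue modulo $p^m$. Over a geometric point this is exactly Scholze's primitive comparison theorem. In the relative setting I would deduce it as in \cite[§5]{heuer2024primitive}, whose statement \cite[Cor. 5.5]{heuer2024primitive} we already used above. That argument runs as follows. Over strictly totally disconnected $S$, the complex $R\Gamma(X,\Lb\otimes\Os^+/p)$ is almost perfect over $R^+/p$. Its formation commutes with passage to the connected components $\Spa(C,C^+)$ of $S$, where the point case applies. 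Combining these with the constancy of $R\pi_*\Lb/p$ gives the isomorphism.

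Next I would pass from mod $p$ to mod $p^m$ by dévissage, using the five lemma on the long exact sequences for $0\to\Lb/p^{m-1}\to\Lb/p^m\to\Lb/p\to0$. Then I would take the inverse limit over $m$. The Mittag-Leffler conditions hold because the groups $H^n(X,\Lb/p^m)$ are finite with limit $H^n(X,\Lb)$ by Proposition \ref{prop: cohomology of local system}. This yields an almost isomorphism $H^n(X,\Lb)\otimes_{\Z_p}R^+\approx H^n(X,\Lb\otimes\widehat{\Os}^+)$ up to bounded torsion. Inverting $p$ kills the almost zero and torsion discrepancies and gives (\ref{eq: prim comp thm}) on sections. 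Here I would use that $\Os^+_{X_v}$ is derived $p$-complete, so that $R\varprojlim$ computes cohomology of $\Os^+$.

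I expect the main obstacle to be the uniformity in the relative almost-finiteness step. In the point case this rests on Cartan–Serre-type finiteness for $\Os^+/p$ on proper $X$. For a strictly totally disconnected base, one must check that the almost-perfectness bounds on $R\Gamma(X,\Os^+/p)$ are uniform, and that taking the $v$-sheafification in $S$ does not introduce extra $\lim^1$ terms. For this I would rely on Heuer's relative results rather than reprove them, citing \cite{heuer2024primitive} for the mod $p$ relative primitive comparison.
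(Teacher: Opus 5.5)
Your proposal is correct and is essentially the paper's proof. You reduce (1) to (2) and use the relative mod-$p$ almost isomorphism $R^n\pi_{v,*}(\Lb/p)\otimes\Os^+_{S_v}/p\to R^n\pi_{v,*}(\Lb/p\otimes\Os^+_{X_v}/p)$, which is \cite[Thm. 1.3]{scholze2013padicHodge} for rigid spaces in characteristic $0$ and \cite[Thm. 4.3]{heuer2024primitive} in general. You then perform the dévissage in $m$, the passage to the limit and the inversion of $p$, which the paper delegates to the proof of \cite[Thm. 4.4]{heuer2024relative}.

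Two small points:
\begin{enumerate}
\item Over a strictly totally disconnected $S$, the groups $H^n_v(X_S,\Lb/p^m)=\Gamma(S,R^n\pi_{v,*}(\Lb/p^m))$ are groups of locally constant functions on $|S|$, not finite groups. Mittag-Leffler still holds, since the lisse sheaf $R^{n+1}\pi_{v,*}\Lb$ has bounded $p$-power torsion on the quasi-compact $S$.
\item Your tacit restriction to bases over $\Q_p$ costs nothing and appears to be forced. Over a characteristic-$p$ base, $\Lb\otimes_{\Z_p}\Os$ only sees $\Lb/p$. Consequently the map in (2) should fail to be surjective as soon as $R^{n+1}\pi_{v,*}\Lb$ has $p$-torsion. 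The paper only ever applies the theorem over $\Q_p$.
\end{enumerate}
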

\begin{proof}
    It is enough to show the second point. Let $\Lb_m = \Lb/p^m$. The crucial point is the isomorphism
    \[ (R^n\pi_{v,*}\Lb_1) \otimes_{\F_p}\Os_{S_v}^+/p \xrightarrow{\cong} R^n\pi_{v,*}(\Lb_1 \otimes_{\F_p}\Os_{X_v}^+/p),\]
    which is \cite[Thm. 1.3]{scholze2013padicHodge} if $X$ and $S$ are rigid spaces in characteristic $0$ and \cite[Thm. 4.3]{heuer2024primitive} in general. The statement of the theorem follows from this by devissage, arguing as in the proof of \cite[Thm. 4.4]{heuer2024relative}.
\end{proof}

\subsection{Hodge--Tate and de Rham local systems}
In this subsection, we recall the notion of de Rham and Hodge--Tate local systems, following \cite{scholze2013padicHodge}\cite{Liu2017}\cite{Shimizu_2018}.

We fix a smooth rigid space $X$ over a $p$-adic field $K$. We consider the pro-étale site $X_{\proet}$ of Scholze \cite[§3]{scholze2013padicHodge}, endowed with its completed structure sheaf $\Os_{X_{\proet}}$. We will shorten the notation and write $\Os_{X_{\et}}$ for the sheaf $\nu^{*}\Os_{X_{\et}}$ on $X_{\proet}$ and similarly for the integral variants, where $\nu\colon X_{\proet} \rightarrow X_{\et}$ is the natural map of sites. In \cite[§6]{scholze2013padicHodge}, Scholze further introduces the pro-étale de Rham period sheaves  $\B_{\dR}^+$ and $\B_{\dR} = \B_{\dR}^+[\xi^{-1}]$, where $\xi$ is a local generator of Fontaine's Theta map $\theta \colon \B_{\dR}^+ \rightarrow \Os_{X_{\proet}}$. We will endow $\B_{\dR}^+$ with its natural $\xi$-adic filtration and we also set
\[ \Fil^i\B_{\dR} = \xi^i\B_{\dR}^+, \quad \fa i \in \Z.\]
The map $\theta$ yields an isomorphism \cite[Prop. 6.7]{scholze2013padicHodge}
\[ \gr^i\B_{\dR} = \Os_{X_{\proet}}(i).\]
If $k$ is the residue field of $K$, there is an additional proétale sheaf $\OBdR+$ which is, roughly, a certain completion of $\Os_{X_{\et}} \otimes_{W(k)}\B_{\dR}^+$, see \cite[Def. 6.6]{bosco2023padicproetalecohomologydrinfeld} for a precise formula. The sheaf $\OBdR+$ comes equipped with a natural map $\theta\colon \OBdR+ \rightarrow \Os_{X_{\proet}}$ obtained by tensoring the theta map on $\B_{\dR}^+$ with the inclusion $\Os_{X_{\et}} \rightarrow  \Os_{X_{\proet}}$. We endow it with the filtration
\[ \Fil^i\OBdR+ = (\Ker(\theta))^i, \quad i\geq 0.\]
The sheaf $\OBdeR$ is defined as the completion of $\OBdR+[\xi^{-1}]$ with respect to the filtration
\[ \Fil^i = \sum_{j\in \Z}\Fil^{i+j}(\OBdR+) \xi^{-j}, \quad i\in \Z.\]

The sheaf $\OBdeR$ is naturally endowed with a $\B_{\dR}$-linear connection
\[ \nabla = d\otimes \id\colon \OBdeR \rightarrow \OBdeR \otimes_{\Os_{X_{\et}}} \Omega_{X/K}^1\]
which satisfies Griffiths transversality, i.e.
\[ \nabla(\Fil^i\OBdeR) \sub \Fil^{i-1}\OBdeR.\]

% The decisive property satisfied by the pair $(\OBdR+,\nabla)$ is the following $p$-adic version of the Poincaré Lemma.
% \begin{proposition}{(\cite
% [Cor. 6.13]{scholze2013padicHodge})}\label{prop: Poincaré lemma}
%     Let $X$ be a smooth rigid space over a $p$-adic field $K$, then there is an exact sequence on $X_{\proet}$
%     % https://q.uiver.app/#q=WzAsNixbMCwwLCIwIl0sWzEsMCwiXFxCX3tcXGRSfV4rIl0sWzIsMCwiXFxPQmRSKyJdLFszLDAsIlxcT0JkUitcXG90aW1lc197XFxPc197WF97XFxldH19fVxcT21lZ2Ffe1gvS31eMSJdLFs0LDAsIlxcT0JkUitcXG90aW1lc197XFxPc197WF97XFxldH19fVxcT21lZ2Ffe1gvS31eMiJdLFs1LDAsIlxcbGRvdHMiXSxbMCwxXSxbMyw0LCJcXG5hYmxhIl0sWzQsNSwiXFxuYWJsYSJdLFsyLDMsIlxcbmFibGEiXSxbMSwyXV0=
% \begin{equation}\begin{tikzcd}
% 	0 & {\B_{\dR}^+} & {\OBdR+} & {\OBdR+\otimes_{\Os_{X_{\et}}}\Omega_{X/K}^1} & {\OBdR+\otimes_{\Os_{X_{\et}}}\Omega_{X/K}^2} & \ldots
% 	\arrow[from=1-1, to=1-2]
% 	\arrow[from=1-2, to=1-3]
% 	\arrow["\nabla", from=1-3, to=1-4]
% 	\arrow["\nabla", from=1-4, to=1-5]
% 	\arrow["\nabla", from=1-5, to=1-6]
% \end{tikzcd}\end{equation} 
% Moreover, there exists an analog exact sequence for the pro-étale sheaves $\B_{\dR}$, $\OBdeR$. 
% \end{proposition}
% If $\nu\colon X_{\proet} \rightarrow X_{\et}$ is the map of sites, we have
% \begin{align}\label{eq: pushforward of period sheaves} 
% \nu_*\OBdR+ = \nu_*\OBdeR = \Os_{X_{\et}}
% \end{align}
% and pushing forward either of the exact sequence of Proposition \ref{prop: Poincaré lemma} along $\nu$ recovers the usual de Rham complex.

We now review de Rham local systems. Let $(\Es,\Fil^{\bullet},\nabla)$ be a filtered vector bundle with integrable connection, that is, $\Es$ is a vector bundle on $X_{\et}$ with a finite, separated and exhaustive filtration $\Fil^{\bullet}$ and an integrable connection 
\[ \nabla\colon \Es \rightarrow \Es\otimes_{\Os_{X_{\et}}}\Omega_{X/K}^1\]
satisfying Griffiths transversality with respect to $\Fil^{\bullet}$. Scholze considers the functor
\begin{align}\label{eq: from MIC to B+ modules}
(\Es,\Fil^{\bullet},\nabla) \mapsto \Fil^0(\Es \otimes_{\Os_{X_{\et}}} \OBdeR)^{\nabla=0},
\end{align}
where we take the tensor products of the connections and filtrations on the right-hand side. Scholze proves \cite[Thm. 7.6]{scholze2013padicHodge} that this formula yields a finite-locally free $\B_{\dR}^+$-module with same rank as $\Es$.
\begin{definition}{(\cite[Def. 8.3]{scholze2013padicHodge})}
    A $\Q_p$-local system $\V$ is called de Rham if there exists a filtered bundle with integrable connection $(\Es,\Fil^{\bullet},\nabla)$ and an isomorphism
\begin{align} \V\otimes_{\Q_p}\OBdeR = \Es \otimes_{\Os_{X_{\et}}} \OBdeR,\end{align}
compatible with filtrations and connections. Equivalently, by \cite[Thm. 7.6(i)]{scholze2013padicHodge}, there exists an isomorphism of $\B_{\dR}^+$-modules
\[ \V\otimes_{\Q_p}\B_{\dR}^+ = \Fil^0(\Es \otimes_{\Os_{X_{\et}}} \OBdeR)^{\nabla=0}.\]
\end{definition}

In the other direction, given a pro-étale $\Q_p$-local system $\V$ on $X$, Liu--Zhu \cite[§3.2]{Liu2017} attach to $\V$ a vector bundle on $X_{\et}$
\begin{align}\label{eq: DdR} D_{\dR}(\V) = \nu_*(\V \otimes_{\Q_p}\OBdeR),\end{align}
equipped with a filtration
\[ \Fil^iD_{\dR}(\V) = \nu_*(\V \otimes_{\Q_p}\Fil^i\OBdeR)\]
and a connection
\[ \nabla_{\V} = \nu_*(\nabla)\colon D_{\dR}(\V) \rightarrow D_{\dR}(\V)\otimes_{\Os_{X_{\et}}} \Omega_{X/K}^1\]
satisfying Griffiths transversality. By \cite[Thm. 7.6(i)]{scholze2013padicHodge}, if $\V$ is de Rham, then its associated tuplet is $(D_{\dR}(\Lb),\Fil^i, \nabla_{\V})$. From this, one can show that a $\Q_p$-local system $\V$ is de Rham if and only if $D_{\dR}(\V)$ and $\V$ have the same rank.

We record the following lemma which we will use later in Subsection \ref{subsection: Comparison with de Rham cohomology}.
\begin{lemma}{(\cite[Prop. 7.9]{scholze2013padicHodge})}\label{lemma: B+ lattice of Scholze}
    Let $\V$ be a de Rham $\Q_p$-local system on a smooth rigid space $X/K$ and define
    \[ \Xi = (D_{\dR}(\V) \otimes_{\Os_{X_{\et}}} \OBdR+)^{\nabla=0}.\]
    \begin{enumerate}
        \item The pro-étale sheaf $\Xi$ is a $\B_{\dR}^+$-lattice in $\V\otimes_{\Q_p} \B_{\dR}$ and comes with an isomorphism
        \[ \Xi\otimes_{\B_{\dR}^+,\theta}\Os_{X_{\proet}} \cong D_{\dR}(\V) \otimes_{\Os_{X_{\et}}} \Os_{X_{\proet}}.\]
        \item We have
    \begin{align*}
    \frac{(\V\otimes_{\Q_p} \B_{\dR}^+) \cap \xi^{i}\Xi}{(\V\otimes_{\Q_p} \B_{\dR}^+) \cap \xi^{i+1}\Xi} = \Fil^{-i}D_{\dR}(\V) \otimes_{\Os_{X_{\et}}} \Os_{X_{\proet}}(i)\end{align*}
    through the isomorphism 
    \[ \frac{\xi^i\Xi}{\xi^{i+1}\Xi} \cong D_{\dR}(\V) \otimes_{\Os_{X_{\et}}} \Os_{X_{\proet}}(i).\]
    \end{enumerate}
\end{lemma}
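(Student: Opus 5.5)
Both assertions will be verified locally on $X$. $\Xi$ and the two identifications are canonical, so they glue, and it suffices to work over an affinoid $U \sub X$ that is small, i.e. admits an étale map to a torus $\mathbb{T}^d_K$. On such $U$ I use the explicit description of $\OBdR+$: after pulling back to the perfectoid pro-étale cover $\widetilde{U} \rightarrow U$ adjoining $p$-power roots of the coordinates $T_1,\dots,T_d$, there is an isomorphism $\OBdR+\restr{\widetilde U} \cong \B_{\dR}^+[[X_1,\dots,X_d]]$. Here $X_i = T_i - [T_i^\flat]$, $\nabla$ is $\sum \partial/\partial X_i \otimes dT_i$, and $\theta$ sends every $X_i$ to $0$. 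This is the usual computation underlying the Poincaré lemma (\cite[Prop. 6.10, Cor. 6.13]{scholze2013padicHodge}).

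\textbf{Part (1).} The first step is the horizontal-sections computation. For the vector bundle $E = D_{\dR}(\V)$ with connection $\nabla_{\V}$, the horizontal sections of $E\otimes \OBdR+$ over $\widetilde U$ map isomorphically to $E \otimes_{\Os_{X_{\et}}} \B_{\dR}^+$-type data by setting all $X_i=0$. Concretely, a horizontal section is determined by its constant term via the Taylor formula $s = \sum_{\alpha} \frac{(-X)^\alpha}{\alpha!}\nabla^\alpha(e)$. This formula converges because $X_i \in \Ker\theta$ and $\OBdR+$ is complete for the $\Ker\theta$-adic filtration.

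It follows that $\Xi$ is locally free over $\B_{\dR}^+$ of rank $\rank E$. Reducing modulo $\xi$ gives the isomorphism $\Xi \otimes_{\B_{\dR}^+,\theta}\Os_{X_{\proet}} \cong E\otimes \Os_{X_{\proet}}$, since $\theta$ also kills the $X_i$. To see that $\Xi$ is a lattice in $\V\otimes\B_{\dR}$, I invert $\xi$ and use the de Rham comparison $\V\otimes\OBdeR = E\otimes\OBdeR$ together with the Poincaré lemma $(\OBdeR)^{\nabla=0}=\B_{\dR}$. Taking $\nabla$-invariants then gives $\V\otimes\B_{\dR} = (E\otimes\OBdeR)^{\nabla=0} \supseteq \Xi$, and $\Xi[\xi^{-1}]$ equals the left side by the same Taylor argument applied to $\OBdeR$.

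\textbf{Part (2).} Here I compare the two lattices $M := \V\otimes\B_{\dR}^+ = \Fil^0(E\otimes\OBdeR)^{\nabla=0}$ and $\Xi$. Using the filtration $\Fil^j\OBdeR = \sum_k \Fil^{j+k}\OBdR+\,\xi^{-k}$, I compute
\[ M \cap \xi^i\Xi = \Big(\sum_j \Fil^{j}E \otimes \xi^{i}\Fil^{-j-i}\OBdR+\Big)^{\nabla=0}\cap \xi^i\Xi ,\]
and then identify its image in $\xi^i\Xi/\xi^{i+1}\Xi \cong E\otimes\Os(i)$ with $\Fil^{-i}E\otimes\Os(i)$.

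To do this, I reduce via the Taylor map to constant terms. A horizontal section with constant term $e$ lies in $\Fil^0(E\otimes \OBdeR)$ exactly when each term $\frac{(-X)^\alpha}{\alpha!}\nabla^\alpha e$ does. By Griffiths transversality, and because $X^\alpha \in \Fil^{|\alpha|}$, this reduces to a condition on $e$ alone: writing $e = \xi^i e_0$ modulo $\xi^{i+1}$, it says $e_0 \in \Fil^{-i}E$ modulo higher order.

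\textbf{Main obstacle.} The hard part is making this graded identification exact. The filtration on $\Fil^0(E\otimes\OBdeR)$ mixes powers of $\xi$ with powers of the $X_i$, so one has to show that the graded pieces split as $\gr\OBdeR \cong \Os_{X_{\proet}}[X_1,\dots,X_d,\xi^{\pm1}]$ suitably graded. I would prove this via Scholze's computation $\gr^0\OBdeR = \Os[\tfrac{X_i}{\xi}]$ (\cite[Prop. 6.16]{scholze2013padicHodge}). Then I would take $\gr$ of the horizontal-sections functor, which commutes with $\gr$ because the Taylor map is strict for the filtrations.
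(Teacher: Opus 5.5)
Your Part (1) works. The Taylor formula identifies $\Xi\restr{\widetilde U}$ with $E\otimes_{\iota}\B_{\dR}^+$. Here $E=D_{\dR}(\V)$, and $\iota\colon\Os(U)\to\B_{\dR}^+(\widetilde U)$ is the composite of $\Os_U\to\Os\B_{\dR}^+\cong\B_{\dR}^+[[X_1,\dots,X_d]]$ with the specialization $\sigma\colon X_i\mapsto 0$. Two steps then give what is claimed. First, reduce mod $\xi$. Second, extend $E\otimes\Os\B_{\dR}^+=\Xi\otimes_{\B_{\dR}^+}\Os\B_{\dR}^+$ to $\Os\B_{\dR}$, and use $(\Os\B_{\dR})^{\nabla=0}=\B_{\dR}$ together with the de Rham comparison. (The paper gives no proof of this lemma; it only cites Scholze.)

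The gap is in Part (2), at the step you yourself call the main obstacle. Let $T\colon E\otimes_\iota\B_{\dR}\to(E\otimes\Os\B_{\dR})^{\nabla=0}$ be the Taylor isomorphism. What must be shown is
\[ M=\Fil^0(E\otimes\Os\B_{\dR})^{\nabla=0}=T\Big(\textstyle\sum_j\xi^{-j}\,\Fil^jE\otimes_\iota\B_{\dR}^+\Big). \]
Griffiths transversality together with $X^\alpha\in\Fil^{|\alpha|}$ gives only $\supseteq$.

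For $\subseteq$ you argue that a horizontal section lies in $\Fil^0$ ``exactly when each term does''. This is not justified: a convergent sum can lie in $\Fil^0$ while its summands do not, and the summands even depend on how $e$ is written as $\sum_k e_k\otimes b_k$. Your fallback, taking $\gr$ of horizontal sections ``because the Taylor map is strict'', assumes the strictness that has to be proved.

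The missing idea is a filtered left inverse to $T$:
\begin{itemize}
\item $\sigma$ extends to a map $\Os\B_{\dR}\to\B_{\dR}$ sending $X_i/\xi\mapsto 0$.
\item It maps $\Fil^n\Os\B_{\dR}$ into $\xi^n\B_{\dR}^+$ and restricts to $\iota$ on $\Os_U$, so $\id_E\otimes\sigma\colon E\otimes\Os\B_{\dR}\to E\otimes_\iota\B_{\dR}$ is filtered.
\item $(\id\otimes\sigma)\circ T=\id$, because $\sigma$ kills every term with $\alpha\neq 0$.
\end{itemize}
Hence $T$ is a filtered isomorphism, which gives $\subseteq$. No computation of $\gr^0\Os\B_{\dR}$ is needed. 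Alternatively, you could show $\gr T$ is injective by looking at its component of degree $0$ in the $X_i/\xi$, but that has to be argued, not assumed.

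Your ``$e_0\in\Fil^{-i}E$ modulo higher order'' also needs one more step. After shrinking $U$, split $E=\bigoplus_kG^k$ with $\Fil^jE=\bigoplus_{k\geq j}G^k$. Then $\Xi\cong\bigoplus_kG^k\otimes_\iota\B_{\dR}^+$ and $M\cong\bigoplus_k\xi^{-k}G^k\otimes_\iota\B_{\dR}^+$, so $M\cap\xi^i\Xi\cong\bigoplus_k\xi^{\max(i,-k)}G^k\otimes_\iota\B_{\dR}^+$. Its image in $\xi^i\Xi/\xi^{i+1}\Xi$ is $\bigoplus_{k\geq -i}G^k\otimes\Os_{X_{\proet}}(i)=\Fil^{-i}E\otimes\Os_{X_{\proet}}(i)$. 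This agrees with the canonical identification because $\theta\circ\sigma=\theta$ on $\Os\B_{\dR}^+$.
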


We now move on to the Hodge--Tate theory. Consider the following sheaf on $X_{\proet}$
\[ \OBHT = \gr^{\bullet}\OBdeR \cong \bigoplus_{i \in Z} \OC(i),\]
where we set
\[\OC = \gr^0\OBdeR.\]
The sheaf $\OBHT$ comes with a Higgs field
\[ \Theta = \gr^{\bullet}\nabla\colon \OBHT  \rightarrow \OBHT \otimes_{\Os_{X_{\et}}} \Omega_{X/K}^1\]
that is homogeneous of degree $-1$ with respect to the above grading. It thus restricts to a Higgs field
\[ \Theta\colon \OC \rightarrow \OC\otimes_{\Os_{X_{\et}}} \Omega_{X/K}^1(-1).\]
In \cite{Liu2017} and \cite{Hyodo1989}, the authors attach to a $\Q_p$-local system $\V$ on $X$ the coherent sheaf
\begin{align}\label{eq: DHT} D_{\HT}(\V) = \nu_*(\V\otimes_{\Q_p}\OBHT).\end{align}
It naturally comes with a grading 
\[ \gr^iD_{\HT}(\V) = \nu_*(\V\otimes_{\Q_p}\OC(i))\]
and a Higgs field
\[ \theta_{\V}\colon D_{\HT}(\V) \rightarrow D_{\HT}(\V)\otimes \Omega_{X/K}^1\]
that is homogeneous of degree $-1$ with respect to the above grading.
\begin{definition}{(\cite[Def. 5.6]{Shimizu_2018})}\label{def: HT local system}
A $\Q_p$-local system $\V$ is called Hodge--Tate if $D_{\HT}(\V)$ is a vector bundle of rank equal to $\rank_{\Q_p} \V$. Equivalently, by \cite[Thm. 5.5]{Shimizu_2018}, the natural map
\begin{align}\label{eq: HT comparison theorem} D_{\HT}(\V) \otimes_{\Os_{X_{\et}}} \OBHT \rightarrow \V \otimes_{\Q_p}\OBHT,\end{align}
is an isomorphism, which can be seen to be compatible with the grading and the Higgs fields, where we take the product Higgs field on the left-hand side
\[ \theta = \theta_{\V} \otimes \id + \id \otimes \Theta\colon D_{\HT}(\V) \otimes_{\Os_{X_{\et}}} \OBHT \rightarrow D_{\HT}(\V) \otimes_{\Os_{X_{\et}}} \OBHT \otimes_{\Os_{X_{\et}}} \Omega_{X/K}^1.\]
The weights of a Hodge--Tate local system $\V$ are defined to be the elements of the set
\[ \{\,i \in \Z \mid \gr^{-i}D_{\HT}(\V) \neq 0\,\}.\]
\end{definition}

Our input is the following characterization of Hodge--Tate local systems, which should be reminiscent of Hodge--Tate Galois representations.
\begin{proposition}\label{prop: characterization of HT ls}
    Let $\V$ be a $\Q_p$-local on a smooth rigid space $X/K$. Then $\V$ is Hodge--Tate, with weights $\in [a,b]$ for integers $a \leq b$, if and only if there exists a decreasing filtration
    \[ 0=\Fil^{-a+1} \sub \Fil^{-a} \sub \ldots \sub \Fil^{-b} =\V \otimes_{\Q_p}\Os_{X_{\proet}} \]
    by proétale sub-vector bundles such that
    \[ \Fil^{i}/\Fil^{i+1} \cong E_{i} \otimes_{\Os_{X_{\et}}}\Os_{X_{\proet}}(i+a+b),\quad \fa i,\]
    for étale vector bundles $E_i$ on $X$. Moreover, if these conditions hold, then $\Fil^{\bullet}$ and $E_{\bullet}$ are unique for this property and given by
    \[ \Fil_{\HT}^i = \Big(\bigoplus_{j\leq -i-a-b}\gr^{j}D_{\HT}(\V)\otimes \OC(-j)\Big)^{\theta=0}, \quad  E_i = \gr^{-i-a-b}D_{\HT}(\V).\]
\end{proposition}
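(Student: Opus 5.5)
The plan is to prove both directions through the Hodge--Tate comparison isomorphism (\ref{eq: HT comparison theorem}), together with the standard local description of $\OC$ and its cohomology. Locally on $X_{\proet}$, on an affinoid perfectoid cover of a toric chart with coordinates $T_1,\dots,T_d$, we have $\OC \cong \Os_{X_{\proet}}[V_1,\dots,V_d]$, a polynomial algebra. In these coordinates the Higgs field is $\Theta = \sum_k \partial/\partial V_k \otimes \mathrm{dlog}T_k(-1)$, up to a unit. This description gives the input I will use throughout (Hyodo, Liu--Zhu):
\begin{itemize}
\item the Higgs complex of $\OC$ resolves $\Os_{X_{\proet}}$, so in particular $\OC^{\Theta=0}=\Os_{X_{\proet}}$ and $\Theta$ is surjective onto the closed part;
\item $\nu_*\OC(n) = \Os_{X_{\et}}$ for $n=0$ and $0$ otherwise;
\item $R^1\nu_*\OC(n)=0$ for $n\neq 0$.
\end{itemize}
By the projection formula, the same statements hold after tensoring with an étale vector bundle $E$.

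\emph{Forward direction.} Assume $\V$ is Hodge--Tate with weights in $[a,b]$.
\begin{enumerate}
\item Take the degree-$0$ part of the isomorphism (\ref{eq: HT comparison theorem}). This gives a Higgs-equivariant isomorphism $\bigoplus_j \gr^jD_{\HT}(\V)\otimes\OC(-j) \cong \V\otimes_{\Q_p}\OC$.
\item Take $\theta$-invariants. Since $\OC^{\Theta=0}=\Os_{X_{\proet}}$, the right side becomes $\V\otimes\Os_{X_{\proet}}$.
\item Observe that $\theta_{\V}$ lowers the grading by one, so each partial sum $\bigoplus_{j\le m}$ is $\theta$-stable. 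This makes $\Fil^i_{\HT}$ as in the statement a decreasing filtration of $\V\otimes\Os_{X_{\proet}}$.
\item Identify the graded pieces. Project the invariants of $\bigoplus_{j\le m}$ onto the top summand $\gr^m\otimes\OC(-m)$. The image lies in its $\Theta$-kernel, which is $\gr^m D_{\HT}(\V)\otimes\Os_{X_{\proet}}(-m)$. Conversely, every such element lifts to an invariant: solve $\theta(\text{lower terms}) = -\theta_{\V}(\text{top term})$ degree by degree, integrating in the polynomial variables $V_k$. This works because $\theta_{\V}$ is nilpotent and the Higgs complex of $\OC$ is acyclic in positive degrees. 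The kernel of the projection is the invariant part of $\bigoplus_{j\le m-1}$.
\end{enumerate}
This gives $\Fil^i/\Fil^{i+1}\cong E_i\otimes\Os_{X_{\proet}}(i+a+b)$ with $E_i=\gr^{-i-a-b}D_{\HT}(\V)$. The filtration is exhaustive and vanishes at the indicated ends by the weight bounds.

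\emph{Converse.} Given such a filtration, I will compute $D_{\HT}(\V)=\nu_*(\V\otimes\OB_{\HT})$ one degree at a time. In degree $n$, the graded pieces are $E_i\otimes\OC(i+a+b+n)$. By the cohomology facts above, exactly one index $i$ has nonzero $\nu_*$, namely the one with $i+a+b+n=0$, and it contributes $E_i$. All other pieces have vanishing $\nu_*$ and $R^1\nu_*$. Running through the long exact sequences of $\nu_*$ for the filtration then yields $\gr^nD_{\HT}(\V)\cong E_{-n-a-b}$. Hence $D_{\HT}(\V)$ is a vector bundle of rank $\sum\rank E_i=\rank\V$, so $\V$ is Hodge--Tate, with weights in $[a,b]$ because the $E_i$ sit only in the allowed range.

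\emph{Uniqueness.} The converse computation identifies the $E_i$ canonically with the graded pieces of $D_{\HT}(\V)$. Running the forward argument then shows that any filtration with the stated property must equal $\Fil^\bullet_{\HT}$. Concretely, the filtration is recovered as the preimage of the partial gradings under the canonical comparison map.

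I expect the main obstacle to be the cohomological input. This means the vanishing $\nu_*\OC(n)=R^1\nu_*\OC(n)=0$ for $n\neq0$, and the surjectivity of the lifting in step 4. Both require a careful local computation with the polynomial description of $\OC$ and the Galois action of the toric tower, as in Hyodo and Liu--Zhu. I would import these results from those references rather than reprove them.
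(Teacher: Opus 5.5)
Your existence and converse arguments are the paper's. For existence, the paper sets $\Fil^i_{\HT}=\gr^0(D^i\otimes\OBHT)^{\theta=0}$ with $D^i=\bigoplus_{j\le -i-a-b}\gr^jD_{\HT}(\V)$. It gets the graded pieces from exactness of the functor $(E,\gr^\bullet,\theta)\mapsto \gr^0(E\otimes\OBHT)^{\theta=0}$, which comes from the Poincar\'e lemma; your degree-by-degree integration in the $V_k$ proves exactly this by hand. The converse is the same $\nu_*$/$R^1\nu_*$ bookkeeping, using $R^m\nu_*(E\otimes\OC(n))=0$ for $n\neq 0$.

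Uniqueness is where you differ, and your sketch needs one more step. Knowing $E_i\cong\gr^{-i-a-b}D_{\HT}(\V)$ does not by itself determine $\Fil^\bullet$, since two filtrations with isomorphic graded pieces need not coincide. So the sentence ``the filtration is recovered as the preimage of the partial gradings'' is the real claim, and it needs proof.

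One way to supply it:
\begin{itemize}
\item Run your converse computation on each $\Fil^i$, filtered by the $\Fil^k$ with $k\ge i$. This gives $\nu_*(\Fil^i\otimes\OBHT)=D^i$ inside $D_{\HT}(\V)$.
\item The comparison map is $\OBHT$-linear and compatible with Higgs fields, so it carries $D^i\otimes\OBHT$ into $\Fil^i\otimes\OBHT$.
\item Take degree-$0$ $\theta$-invariants, using $\OC^{\Theta=0}=\Os_{X_{\proet}}$ and local freeness of $\Fil^i$. This gives $\Fil^i_{\HT}\sub\Fil^i$.
\item Then $(\V\otimes\Os_{X_{\proet}})/\Fil^i_{\HT}\to(\V\otimes\Os_{X_{\proet}})/\Fil^i$ is a surjection of vector bundles of equal rank, hence an isomorphism, so $\Fil^i=\Fil^i_{\HT}$.
\end{itemize}

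The paper instead inducts on $i$. Assuming $\Fil^{i_0}=\Fil^{i_0}_{\HT}$, the composite
\[\Fil^{i_0+1}\to\Fil^{i_0}_{\HT}\to\gr^{-i_0-a-b}D_{\HT}(\V)\otimes\Os_{X_{\proet}}(i_0+a+b)\]
vanishes, because there are no nonzero maps $E\otimes\Os_{X_{\proet}}(n)\to F\otimes\Os_{X_{\proet}}(n')$ for $n\neq n'$. Hence $\Fil^{i_0+1}\sub\Fil^{i_0+1}_{\HT}$, and the symmetric argument gives equality.

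The paper's route needs only this Hom-vanishing. Yours gives an intrinsic description of any such filtration in one step, at the cost of redoing the $\nu_*$ computation for every $\Fil^i$.
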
 
\begin{definition}\label{def: HT filtration}
    Let $\V$ be a Hodge--Tate local system on $X$. We call the filtration $\Fil_{\HT}^{\bullet} \sub \V \otimes_{\Q_p} \Os_{X_{\proet}}$ constructed in Proposition \ref{prop: characterization of HT ls} the Hodge--Tate filtration.
\end{definition}
\begin{proof}
For clarity, let us denote by $\sigma$ the unique order-reversing permutation of $\{-b,\ldots,-a\}$, given by
\[ \sigma(i) = -i-a-b. \]
Assume that $\V$ is a Hodge--Tate local system and consider the vector bundles
\[ D^i = \bigoplus_{j\leq \sigma(i)} \gr^{j}D_{\HT}(\V).\]
Then since the Higgs field $\theta_{\V}$ on $D_{\HT}(\V)$ decreases the rank by $1$, we see that $\theta_{\V}(D^i) \sub D^{i+1} \sub D^i$, so that $(D^i,\theta_{\V})$ are sub-Higgs bundles of $(D_{\HT}(\V),\theta_{\V})$. Using Scholze's version of the Poincaré Lemma \cite[Cor. 6.13]{scholze2013padicHodge}, it can be seen that the analog functor to (\ref{eq: from MIC to B+ modules}), sending a graded Higgs bundle $(E,\gr^{\bullet},\theta)$ to
\[ \gr^0(E \otimes_{\Os_{X_{\et}}} \OBHT)^{\theta=0}\]
takes values in pro-étale vector bundles and is exact. From this, the sheaves
\[ \Fil_{\HT}^i =\gr^0(D^i \otimes_{\Os_{X_{\et}}} \OBHT)^{\theta=0}\]
are pro-étale vector bundles and the short exact sequences
% https://q.uiver.app/#q=WzAsNSxbMCwwLCIwIl0sWzEsMCwiKERee2krMX0sXFx0aGV0YV97XFxWfSkiXSxbMiwwLCIoRF57aX0sXFx0aGV0YV97XFxWfSkiXSxbMywwLCIoXFxncl57XFxzaWdtYShpKX1EX3tcXEhUfShcXFYpLDApIl0sWzQsMCwiMCJdLFswLDFdLFsxLDJdLFsyLDNdLFszLDRdXQ==
\[\begin{tikzcd}
	0 & {(D^{i+1},\theta_{\V})} & {(D^{i},\theta_{\V})} & {(\gr^{\sigma(i)}D_{\HT}(\V),0)} & 0
	\arrow[from=1-1, to=1-2]
	\arrow[from=1-2, to=1-3]
	\arrow[from=1-3, to=1-4]
	\arrow[from=1-4, to=1-5]
\end{tikzcd}\]
give rise to short exact sequences
% https://q.uiver.app/#q=WzAsNSxbMCwwLCIwIl0sWzEsMCwiXFxGaWxfe1xcSFR9XntpKzF9Il0sWzIsMCwiXFxGaWxfe1xcSFR9XntpfSJdLFszLDAsIlxcZ3Jee1xcc2lnbWEoaSl9RF97XFxIVH0oXFxWKVxcb3RpbWVzIFxcT3Nfe1hfe1xccHJvZXR9fSgtXFxzaWdtYShpKSkiXSxbNCwwLCIwLiJdLFswLDFdLFsxLDJdLFsyLDNdLFszLDRdXQ==
\[\begin{tikzcd}
	0 & {\Fil_{\HT}^{i+1}} & {\Fil_{\HT}^{i}} & {\gr^{\sigma(i)}D_{\HT}(\V)\otimes \Os_{X_{\proet}}(-\sigma(i))} & {0.}
	\arrow[from=1-1, to=1-2]
	\arrow[from=1-2, to=1-3]
	\arrow[from=1-3, to=1-4]
	\arrow[from=1-4, to=1-5]
\end{tikzcd}\]
This shows that $\Fil_{\HT}^{\bullet}$ meets the requirements of the statement.

Conversely, assume that we are given a filtration on $\V\otimes_{\Q_p} \Os_{X_{\proet}}$ as in the statement. We show that $E_i \cong \gr^{\sigma(i)}D_{\HT}(\V)$, which by a dimension count will imply that $\V$ is Hodge--Tate. For this, consider the exact sequence of proétale vector bundles
% https://q.uiver.app/#q=WzAsNSxbMCwwLCIwIl0sWzEsMCwiXFxGaWxee2krMX0iXSxbMiwwLCJcXEZpbF57aX0iXSxbMywwLCJFX2lcXG90aW1lc197XFxPc197WF97XFxldH19fVxcT3Nfe1hfe1xccHJvZXR9fSgtXFxzaWdtYShpKSkiXSxbNCwwLCIwLiJdLFswLDFdLFsxLDJdLFsyLDNdLFszLDRdXQ==
\[\begin{tikzcd}
	0 & {\Fil^{i+1}} & {\Fil^{i}} & {E_i\otimes_{\Os_{X_{\et}}}\Os_{X_{\proet}}(-\sigma(i))} & {0.}
	\arrow[from=1-1, to=1-2]
	\arrow[from=1-2, to=1-3]
	\arrow[from=1-3, to=1-4]
	\arrow[from=1-4, to=1-5]
\end{tikzcd}\]
After twisting and tensoring along $\Os_{X_{\proet}} \rightarrow \OC$, we get the exact sequence
% https://q.uiver.app/#q=WzAsNSxbMCwwLCIwIl0sWzEsMCwiXFxGaWxee2krMX1cXG90aW1lc1xcT0MoXFxzaWdtYShpKSkiXSxbMiwwLCJcXEZpbF57aX1cXG90aW1lc1xcT0MoXFxzaWdtYShpKSkiXSxbMywwLCJFX2lcXG90aW1lc197XFxPc197WF97XFxldH19fVxcT0MiXSxbNCwwLCIwLiJdLFswLDFdLFsxLDJdLFsyLDNdLFszLDRdXQ==
\[\begin{tikzcd}
	0 & {\Fil^{i+1}\otimes\OC(\sigma(i))} & {\Fil^{i}\otimes\OC(\sigma(i))} & {E_i\otimes_{\Os_{X_{\et}}}\OC} & {0.}
	\arrow[from=1-1, to=1-2]
	\arrow[from=1-2, to=1-3]
	\arrow[from=1-3, to=1-4]
	\arrow[from=1-4, to=1-5]
\end{tikzcd}\]
We have
\[ \nu_*(\Fil^i \otimes_{\Os_{X_{\proet}}} \OC(\sigma(i))) \sub \nu_*(\V \otimes_{\Q_p} \OC(\sigma(i))) = \gr^{\sigma(i)}D_{\HT}(\V)\]
and we claim that this inclusion is an equality. By decomposing the inclusion $\Fil^i \hookrightarrow \V \otimes_{\Q_p} \Os_{X_{\proet}}$ into short exact sequences, it follows 
from the fact that, by \cite[Prop. 6.16(ii)]{scholze2013padicHodge}, for any $m\geq 0$, $n\neq 0$, and any vector bundle $E$ on $X_{\et}$,
\begin{align*}R^m\nu_*(E\otimes_{\Os_{X_{\et}}} \OC(n)) = E\otimes_{\Os_{X_{\et}}}R^m\nu_*\OC(n) = 0.
\end{align*}
Using these formulas, we also see that, in the following long exact sequence obtained by applying $R\nu*$ to the above short exact sequence
% https://q.uiver.app/#q=WzAsNSxbMCwwLCIwIl0sWzEsMCwiXFxudV8qKFxcRmlsXntpKzF9XFxvdGltZXNfe1xcT3Nfe1hfe1xccHJvZXR9fX1cXE9DKFxcc2lnbWEoaSkpKSJdLFsyLDAsIlxcZ3Jee1xcc2lnbWEoaSl9RF97XFxIVH0oXFxWKSJdLFszLDAsIlxcbnVfKihFX2lcXG90aW1lc197XFxPc197WF97XFxldH19fVxcT0MpPUVfaSJdLFsxLDEsIlJeMVxcbnVfKihcXEZpbF57aSsxfVxcb3RpbWVzX3tcXE9zX3tYX3tcXHByb2V0fX19XFxPQyhcXHNpZ21hKGkpKSksIl0sWzAsMV0sWzEsMl0sWzIsM10sWzMsNF1d
\[\begin{tikzcd}
	0 & {\nu_*(\Fil^{i+1}\otimes_{\Os_{X_{\proet}}}\OC(\sigma(i)))} & {\gr^{\sigma(i)}D_{\HT}(\V)} & {\nu_*(E_i\otimes_{\Os_{X_{\et}}}\OC)=E_i} \\
	& {R^1\nu_*(\Fil^{i+1}\otimes_{\Os_{X_{\proet}}}\OC(\sigma(i))),}
	\arrow[from=1-1, to=1-2]
	\arrow[from=1-2, to=1-3]
	\arrow[from=1-3, to=1-4]
	\arrow[from=1-4, to=2-2]
\end{tikzcd}\]
the first and last terms vanish. This proves that $E_i \cong \gr^{\sigma(i)}D_{\HT}(\V)$. Let us now show that $\Fil^i = \Fil_{\HT}^i$. We argue inductively. For $i=-b$, this is clear, since both are equal to $\V \otimes_{\Q_p} \Os_{X_{\proet}}$. Assume that the statement holds for $-b \leq i \leq i_0$ and let us show that it holds for $i=i_0+1$. Observe that the following composition vanishes
% https://q.uiver.app/#q=WzAsMyxbMCwwLCJcXEZpbF57aV8wKzF9Il0sWzEsMCwiXFxGaWxee2lfMH09XFxGaWxfe1xcSFR9XntpfSJdLFsyLDAsIlxcZ3Jee1xcc2lnbWEoaV8wKX1EX3tcXEhUfShcXFYpXFxvdGltZXNcXE9DKC1cXHNpZ21hKGlfMCkpLiJdLFswLDFdLFsxLDJdXQ==
\[\begin{tikzcd}
	{\Fil^{i_0+1}} & {\Fil^{i_0}=\Fil_{\HT}^{i_0}} & {\gr^{\sigma(i_0)}D_{\HT}(\V)\otimes\OC(-\sigma(i_0)).}
	\arrow[from=1-1, to=1-2]
	\arrow[from=1-2, to=1-3]
\end{tikzcd}\]
Indeed, by decomposing $\Fil^{i_0+1}$ further, it follows from the fact that there are no non-zero maps of pro-étale vector bundles
\[ E_i\otimes \Os_{X_{\proet}}(-\sigma(i)) \rightarrow \gr^{\sigma(i_0)}D_{\HT}(\V)\otimes \Os_{X_{\proet}}(-\sigma(i_0)),\]
for $i> i_0$. This shows that $\Fil^{i_0+1} \sub \Fil_{\HT}^{i_0+1}$. As the argument is symmetric, we obtain that $\Fil^{i_0+1} = \Fil_{\HT}^{i_0+1}$. This shows that the filtration is unique and concludes the proof.
\end{proof}
\begin{remark}
Note that the Hodge--Tate filtration on $\V\otimes_{\Q_p}\Os_{X_{\proet}}$ of Proposition \ref{prop: characterization of HT ls} generally does not split, unlike in the case $X=\Spa(K)$. Indeed, it is easy to see that if $\V$ is a local system such that
\[ \V \otimes_{\Q_p} \Os_{X_{\proet}} = \bigoplus_{i\in \Z} F_i\otimes_{\Os_{X_{\et}}} \Os_{X_{\proet}}(i),\]
for étale vector bundles $F_i$, then necessarily
\[ F_i = \gr^{-i}D_{\HT}(\V)^{\theta_{\V}=0}.\]
By a dimension count, this is only possible if $\theta_{\V}=0$, since $\V$ is Hodge--Tate by Proposition \ref{prop: characterization of HT ls}. In fact, given a Hodge--Tate local system $\V$, one can show that the Hodge--tate filtration splits if and only if $\theta_{\V} =0$.
\end{remark}

We can recover the Hodge--Tate filtration on the relative $p$-adic cohomology of a smooth proper morphism from the Hodge--Tate spectral sequence.
\begin{lemma}\label{lemma: comparison between the two HT filtrations}
Let $\pi \colon X \rightarrow S$ be a proper smooth morphism of smooth rigid space over a $p$-adic field $K$. Then $R^n\pi_{\proet,*}\Q_p$ is a Hodge--Tate $\Q_p$-local system, and the Hodge--Tate filtration from Definition \ref{def: HT filtration} coincides with the filtration induced by the Hodge--Tate spectral sequence of Theorem \ref{thm: relative linear HT sequence}.
\end{lemma}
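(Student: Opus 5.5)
The plan is to reduce everything to the characterization of Hodge--Tate local systems in Proposition \ref{prop: characterization of HT ls}, feeding it the filtration coming from Theorem \ref{thm: relative linear HT sequence}. Set $\V = R^n\pi_{\proet,*}\Q_p$. By Proposition \ref{prop: cohomology of local system}, $R^n\pi_{v,*}\Z_p$ is a lisse $\Z_p$-sheaf, so $\V = R^n\pi_{v,*}\Z_p[1/p]$ is a $\Q_p$-local system on $S$. Via the equivalence $\Vect(S_{\proet}) \cong \Vect(S_v^{\diamondsuit})$ for the locally noetherian space $S$, we may regard $v$-vector bundles on $S$ as pro-étale vector bundles.

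Theorem \ref{thm: relative linear HT sequence} gives the relative Hodge--Tate spectral sequence with $\mathbf{E}_2^{ij} = R^i\pi_{\et,*}\Omega^j_{X/S}\otimes \Os_{S_v}(-j)$, converging to $\V\otimes_{\Q_p}\Os_{S_v}$ and degenerating at $\mathbf{E}_2$. The abutment therefore carries a finite decreasing filtration $F^\bullet$ by sub-vector bundles, namely the one induced by the spectral sequence, indexed by $i$. Its graded pieces are
\[ F^i/F^{i+1} \cong R^i\pi_{\et,*}\Omega^{n-i}_{X/S}\otimes_{\Os_S}\Os_{S_v}(i-n), \quad 0\le i\le n. \]
Each $R^i\pi_{\et,*}\Omega^{n-i}_{X/S}$ is an étale vector bundle, again by Theorem \ref{thm: relative linear HT sequence}, with $\Omega^{n-i}$ in place of $\Os_X$; I expect this to need a separate check, through the same reference to Heuer. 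The graded pieces are locally direct summands because all terms are vector bundles and the sequences are exact.

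Next I re-index to match Proposition \ref{prop: characterization of HT ls}. Take $[a,b]=[-n,0]$, i.e. $a=-n$ and $b=0$, which is the expected range of weights in the paper's convention. Then $i+a+b = i-n$, and the filtration runs from $\Fil^{0}=\V\otimes\Os$ down to $\Fil^{n+1}=0$. Setting $\Fil^{i} := F^{i}$ for $0\le i\le n$ gives
\[ \Fil^i/\Fil^{i+1} \cong E_i\otimes\Os_{S_{\proet}}(i+a+b), \qquad E_i=R^i\pi_{\et,*}\Omega^{n-i}_{X/S}, \]
exactly as the Proposition requires. Its "if" direction then shows $\V$ is Hodge--Tate with weights in $[-n,0]$, and its uniqueness clause yields $\Fil^\bullet_{\HT}=F^\bullet$.

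The main obstacle is the bookkeeping. The direction and sign conventions have to line up: the spectral sequence filtration must be decreasing with the twist $(i-n)$ sitting on the $i$-th graded piece. I also need the identifications of vector bundles on $S_v$, $S^{\diamondsuit}_v$ and $S_{\proet}$ to be compatible with the structure sheaves used in Proposition \ref{prop: characterization of HT ls}. If the convention instead puts twist $(-j)$ on the $j$-th piece, the same argument applies with $E_i$ indexed by $j=n-i$; only the identification of $[a,b]$ changes. The substantive inputs are the degeneration and the vector bundle statements, both of which are quoted results.
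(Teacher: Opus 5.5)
Your proposal is correct and follows essentially the same route as the paper. Degeneration of the relative Hodge--Tate spectral sequence gives a filtration $0=\Fil^{n+1}\subseteq\cdots\subseteq\Fil^0$ with graded pieces $R^i\pi_{\et,*}\Omega^{n-i}_{X/S}\otimes\Os(i-n)$, and Proposition~\ref{prop: characterization of HT ls} with $[a,b]=[-n,0]$ then yields both the Hodge--Tate property and, by uniqueness, $\Fil^\bullet=\Fil^\bullet_{\HT}$. The paper cites only the uniqueness clause and leaves implicit the two points you flag: that $R^n\pi_*\Q_p$ is a $\Q_p$-local system, and that the sheaves $R^i\pi_{\et,*}\Omega^{n-i}_{X/S}$ are étale vector bundles.
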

\begin{proof}
    Observe that as the relative Hodge--Tate spectral sequence degenerates, the induced filtration $\Fil^i\sub R^n\pi_{\proet,*}\Q_p \otimes \Os_v$ takes the form
    \[ 0 = \Fil^{n+1} \sub \Fil^n \sub \ldots \Fil^0=R^n\pi_{\proet,*}\Q_p \otimes \Os_v \]
    with associated graded
    \[ \Fil^i/\Fil^{i+1} \cong R^i\pi_{\et,*}\Omega_{X/S}^{n-i}\otimes_{\Os_{X_{\et}} }\Os_{X_{\proet}}(i-n).\]
    It follows from the uniqueness part of Proposition \ref{prop: characterization of HT ls} that $\Fil^{\bullet} = \Fil_{\HT}^{\bullet}$.
\end{proof}

For de Rham local systems, we can relate the $\B_{\dR}^+$-lattice of Lemma \ref{lemma: B+ lattice of Scholze} and the Hodge--Tate filtration.

\begin{lemma}\label{lemma: BdR+-lattice induces HT filtration in arithmetic case}
    Let $X/K$ be a smooth rigid space and let $\V$ be a de Rham $\Q_p$-local system on $X$ and assume that the filtration on $D_{\dR}(\V)$ has range $[a,b]$, for integers $a\leq b$. Let $\Xi \sub \V \otimes_{\Q_p}\B_{\dR}$ be the $\B_{\dR}^+$-lattice of Lemma \ref{lemma: B+ lattice of Scholze} and let $\Fil_{\HT}^{\bullet} \sub \V \otimes_{\Q_p} \Os_{X_{\proet}}$ be the Hodge--Tate filtration of Definition \ref{def: HT filtration}. Then we have
    \begin{align*} 
    \frac{\xi^i\Xi \cap  (\V \otimes_{\Q_p}\B_{\dR}^+)}{\xi^i\Xi \cap  \xi(\V \otimes_{\Q_p}\B_{\dR}^+)} =\Fil_{\HT}^{i-b-a} , \quad \fa i \in \Z.
    \end{align*}
\end{lemma}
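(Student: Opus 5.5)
The plan is to define a filtration $F^{j}$ on $\V\otimes_{\Q_p}\Os_{X_{\proet}}$ by the left-hand side of the displayed formula, namely
\[ F^{i-b-a} := \frac{\xi^i\Xi \cap (\V\otimes_{\Q_p}\B_{\dR}^+)}{\xi^i\Xi\cap \xi(\V\otimes_{\Q_p}\B_{\dR}^+)} \sub \frac{\V\otimes_{\Q_p}\B_{\dR}^+}{\xi(\V\otimes_{\Q_p}\B_{\dR}^+)} = \V\otimes_{\Q_p}\Os_{X_{\proet}}. \]
I will then check that $F^{\bullet}$ satisfies the hypotheses of Proposition \ref{prop: characterization of HT ls}. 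Its uniqueness statement then forces $F^\bullet=\Fil_{\HT}^\bullet$. This is the same strategy as in Lemma \ref{lemma: comparison between the two HT filtrations}. Along the way we also recover that a de Rham local system is Hodge--Tate.

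\textbf{Step 1 (range of the filtration).} Since $\Fil^\bullet D_{\dR}(\V)$ has range $[a,b]$, Lemma \ref{lemma: B+ lattice of Scholze}(2) shows that the relative position of $\Xi$ and $\Lambda:=\V\otimes\B_{\dR}^+$ is bounded: $\xi^{i}\Xi\sub \Lambda$ for $i\geq -a$, and $\Lambda\sub \xi^{i}\Xi$ for $i\leq -b$, with the appropriate sign conventions. Concretely, the graded pieces $\frac{\Lambda\cap\xi^i\Xi}{\Lambda\cap\xi^{i+1}\Xi}$ vanish outside $i\in[-b,-a]$. Hence $F^\bullet$ is a finite decreasing filtration, zero in high degree and everything in low degree. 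I will check that the indices match the normalization $0=F^{-a+1}\sub\cdots\sub F^{-b}=\V\otimes\Os$ of Proposition \ref{prop: characterization of HT ls}, after the shift $i\mapsto i-a-b$.

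\textbf{Step 2 (graded pieces).} The main computation is to identify $F^{j}/F^{j+1}$. Set $M_i=\Lambda\cap\xi^i\Xi$. Then
\[ F^{i-a-b}=M_i/(M_i\cap\xi\Lambda), \qquad M_i\cap \xi\Lambda=\xi M_{i-1}. \]
There is a snake-type comparison between the two filtrations $\{\xi^i\Xi\cap\Lambda\}$ and $\{\xi^i\Xi\cap\xi\Lambda\}$. It gives
\[ F^{i-a-b}/F^{i+1-a-b}\cong \frac{M_i}{M_{i+1}+\xi M_{i-1}}. \]
I would compute this locally, where by Step 1 and the strictness of filtrations one can pick bases adapted to both lattices. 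This is elementary divisor theory for $\B_{\dR}^+$-lattices, done pro-étale locally where $\xi$ is a nonzerodivisor and the pieces are vector bundles. From the adapted basis, this quotient is identified with the $\xi$-multiplication image of $\gr$ pieces. It should equal $\gr^{-i}D_{\dR}(\V)\otimes\Os_{X_{\proet}}(i)$, obtained from Lemma \ref{lemma: B+ lattice of Scholze}(2) by passing from $\Fil^{-i}$ to $\gr^{-i}$. Reindexing with $j=i-a-b$ gives $E_j\otimes\Os_{X_{\proet}}(j+a+b)$ with $E_j=\gr^{a+b-j}D_{\dR}(\V)$, an étale vector bundle. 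This is the required shape.

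\textbf{Step 3 (conclusion).} By Steps 1 and 2, Proposition \ref{prop: characterization of HT ls} applies. It shows $\V$ is Hodge--Tate and that its filtration is unique, so $F^\bullet=\Fil_{\HT}^\bullet$, which is the statement for all $i$. For $i$ outside the range, both sides are $0$ or everything, by Step 1.

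The main obstacle is Step 2. One must show the graded pieces are genuinely vector bundles of the form $E\otimes\Os(\text{twist})$ and pin down the twist correctly. This requires the local existence of a basis of $\Lambda$ simultaneously adapted to $\Xi$; pro-étale locally, such a basis should follow from Lemma \ref{lemma: B+ lattice of Scholze}(2), since the graded pieces there are locally free. I would first try to avoid bases altogether, using the exact sequences $0\to M_{i+1}\to M_i\to \Fil^{-i}D_{\dR}\otimes\Os(i)\to 0$ together with multiplication by $\xi$, and keeping careful track of twists.
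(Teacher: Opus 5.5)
Your route is the same as the paper's: define the filtration by the left-hand side, compute its graded pieces from Lemma \ref{lemma: B+ lattice of Scholze}(2), and conclude by the uniqueness part of Proposition \ref{prop: characterization of HT ls}. Step 2, which you flag as the main obstacle, is routine and needs no adapted bases. Put $\Lambda=\V\otimes_{\Q_p}\B_{\dR}^+$, $M_i=\Lambda\cap\xi^i\Xi$, and $L^i=M_i/\xi M_{i-1}$; this $L^i$ is exactly the left-hand side at $i$. Lemma \ref{lemma: B+ lattice of Scholze}(2) identifies $M_i/M_{i+1}$ with $\Fil^{-i}D_{\dR}(\V)\otimes\Os_{X_{\proet}}(i)$ inside $\xi^i\Xi/\xi^{i+1}\Xi\cong D_{\dR}(\V)\otimes\Os_{X_{\proet}}(i)$. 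Multiplication by $\xi$ maps $\xi^{i-1}\Xi/\xi^{i}\Xi$ isomorphically onto $\xi^{i}\Xi/\xi^{i+1}\Xi$, and under these identifications it is the identity of $D_{\dR}(\V)$ tensored with an isomorphism $\Os_{X_{\proet}}(i-1)\cong\Os_{X_{\proet}}(i)$. So the image of $\xi M_{i-1}$ is $\Fil^{-i+1}D_{\dR}(\V)\otimes\Os_{X_{\proet}}(i)$, and $L^i/L^{i+1}\cong M_i/(M_{i+1}+\xi M_{i-1})\cong\gr^{-i}D_{\dR}(\V)\otimes\Os_{X_{\proet}}(i)$. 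This is the paper's elementary manipulation.

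What fails as written is the index bookkeeping you postpone to Steps 1 and 3.
\begin{itemize}
\item From $\xi^{-a}\Xi\sub\Lambda\sub\xi^{-b}\Xi$ you get $L^i=\V\otimes\Os_{X_{\proet}}$ for $i\le -b$ and $L^i=0$ for $i\geq 1-a$.
\item Your $F^j=L^{j+a+b}$ is therefore everything for $j\le -a-2b$ and zero for $j\geq 1-2a-b$. Its graded pieces are $F^j/F^{j+1}\cong\gr^{-j-a-b}D_{\dR}(\V)\otimes\Os_{X_{\proet}}(j+a+b)$, not $\gr^{a+b-j}$.
\item So $F^\bullet$ has the normalization $0=F^{-a+1}\sub\cdots\sub F^{-b}$ only when $a+b=0$. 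Otherwise Proposition \ref{prop: characterization of HT ls} does not apply to it as you set it up.
\end{itemize}
The underlying reason is that the twists occurring in $\gr L^\bullet$ are exactly the $i\in[-b,-a]$. Hence the Hodge--Tate weights lie in $[-b,-a]$, not in $[a,b]$. Applied with that range, the Proposition gives a filtration indexed by $[a,b]$ whose $j$-th graded piece is twisted by $(j-a-b)$. Matching twists yields $L^i=\Fil_{\HT}^{i+a+b}$.

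Check this on $\V=\Q_p(1)$. Here $a=b=-1$ and $\Xi=\xi^{-1}\Lambda$, so $L^i$ is everything for $i\le 1$ and zero for $i\geq 2$. Since the weight is $1$, $\Fil_{\HT}^{-1}=\V\otimes\Os_{X_{\proet}}$ and $\Fil_{\HT}^0=0$, so $L^i=\Fil_{\HT}^{i-2}$. By contrast, $\Fil_{\HT}^{i+2}$ already vanishes at $i=0$.

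The exponent $i-b-a$ in the statement is correct only for a different normalization of $\Fil_{\HT}$. That normalization puts the twist $(j+a+b)$ on the $j$-th graded piece; equivalently, it runs the Proposition with a declared weight range containing $[-b,-a]$ whose endpoints sum to $a+b$. If you want that version, you must fix this normalization explicitly, because the Proposition's indexing depends on it. The paper's final line, $\Fil^{\bullet+a+b}=\Fil_{\HT}^\bullet$, makes the same slip. Its later use in Proposition \ref{prop: recognize the fils induced by BB map}(2), where $a=0$, $b=1$ and the lattice is said to induce $\Fil_{\HT}^1$, is the $i+a+b$ version.
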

\begin{proof}
From Lemma \ref{lemma: B+ lattice of Scholze}(3), we have the following inclusions
\[  \xi^{-a}\Xi \sub \V \otimes_{\Q_p} \B_{\dR}^+ \sub \xi^{-b}\Xi.\]
Hence, the equation
\[ \Fil^{i} \coloneqq \frac{\xi^{i}\Xi \cap  (\V \otimes_{\Q_p}\B_{\dR}^+)}{\xi^{i}\Xi \cap  \xi(\V \otimes_{\Q_p}\B_{\dR}^+)}\]
defines a separated and exhaustive filtration on $\V \otimes_{\Q_p} \Os_{X_{\proet}}$. Moreover, an elementary manipulation yields an isomorphism
\begin{align*} 
\Fil^{i}/\Fil^{i+1} &= \frac{\xi^i\Xi \cap (\V \otimes \B_{\dR}^+)}{\xi^{i+1}\Xi \cap (\V \otimes \B_{\dR}^+)+ \xi^i\Xi  \cap \xi(\V\otimes \B_{\dR}^+)}\\
&= \frac{\Fil^{-i}D_{\dR}(\V)}{\Fil^{-i+1}D_{\dR}(\V)}\otimes \Os_{X_{\proet}}(i) \\
&= \gr^{-i}D_{\dR}(\V)\otimes \Os_{X_{\proet}}(i),
\end{align*}
where the second equality follows from Lemma \ref{lemma: B+ lattice of Scholze}(2). By the uniqueness part of Proposition \ref{prop: characterization of HT ls}, we conclude that $\Fil^{\bullet+a+b}=\Fil_{\HT}^{\bullet}$, as required.
\end{proof}

\subsection{Smooth commutative adic groups}
We recall some basic constructions on smooth families of commutative adic groups, following \cite{heuer2022gtorsors},\cite{heuer2024curve}. In the following, we fix a base adic space $S$ over $\Q_p$ which is either sousperfectoid or a rigid space over a non-archimedean field $(K,K^+)$.

\begin{definition}
    A smooth $S$-group, or smooth relative group over $S$, is a group object $\Gs \rightarrow S$ in the category of adic spaces smooth over $S$.
\end{definition}
Given such a group $\Gs \rightarrow S$, its diamond $\Gs^{\diamondsuit}$ defines a sheaf of groups on $S_v$, which is simply the functor of points
\[ \Gs^{\diamondsuit}\colon \Perf_S \rightarrow \Grp, \quad Y \mapsto \Hom_S(Y,\Gs). \]
Moreover, $\Gs$ is again sousperfectoid or a rigid space, and is good as soon as $S$ is. In that case, by \cite[Prop. 2.2(2)]{gerth2024}, the functor $\Gs \mapsto \Gs^{\diamondsuit}$ defines a fully faithful functor from the category of smooth $S$-groups to the category of sheaves of groups on $S_v$. We will usually omit $(\cdot)^{\diamondsuit}$ from the notation and simply write $\Gs$ when considering the associated diamond.

Given a smooth $S$-group $\Gs$ with identity section $e\in \Gs(S)$, we may associate to $\Gs$ its Hodge bundle and its Lie algebra
\begin{align}
    \omega_{\Gs} = e^*\Omega_{\Gs/S}^1\, , \quad \Lie(\Gs) = \omega_{\Gs}^{\vee},
\end{align}
which are vector bundle on $S_{\et}$. We denote by $\ga \rightarrow S$ the geometric vector bundle defined by $\Lie(\Gs)$.

In this article, we will be primarly concerned with commutative adic groups. In this case, we have access to a fundamental invariant, the logarithm. We recall some basic facts about it, following \cite{heuer2024curve}. Given an abelian sheaf $\Fs$ on $S_v$, we define the topologically $p$-torsion subsheaf $\Fs\langle p^{\infty} \rangle \sub \Gs$ as the following image
\begin{align}\label{eq: ptop torsion subsheaf} \Fs\langle p^{\infty} \rangle \coloneqq  \Image (\underline{\Hom}(\underline{\Z_p},\Fs) \xlongrightarrow{\ev_1} \Fs ). \end{align}

\begin{proposition}{(\cite[Prop. 3.2.4]{heuer2024curve})}\label{prop: log}
    Let $\Gs \rightarrow S$ be a smooth commutative group. Then the sheaf $\Gs\langle p^{\infty} \rangle$ is representable by a canonically defined open subgroup of $\Gs$ and there is a natural homomorphism $\log_{\Gs}$ of smooth $S$-groups, fitting in an exact sequence
    % https://q.uiver.app/#q=WzAsNCxbMSwwLCJcXEdzW3Bee1xcaW5mdHl9XSJdLFsyLDAsIlxcR3MgXFxsYW5nbGUgcF57XFxpbmZ0eX0gXFxyYW5nbGUiXSxbMywwLCJcXGdhLiJdLFswLDAsIjAiXSxbMywwXSxbMSwyLCJcXGxvZ197XFxHc30iXSxbMCwxXV0=
\begin{equation}\label{logarithm exact sequence}\begin{tikzcd}
	0 & {\Gs[p^{\infty}]} & {\Gs \langle p^{\infty} \rangle} & {\ga.}
	\arrow[from=1-1, to=1-2]
	\arrow[from=1-2, to=1-3]
	\arrow["{\log_{\Gs}}", from=1-3, to=1-4]
\end{tikzcd}\end{equation}
Moreover, the map $\log_{\Gs}$ is étale if $[p]\colon \Gs \rightarrow \Gs$ is.
\end{proposition}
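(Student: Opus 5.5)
The plan is to work locally on $S$ and reduce to a statement about an open neighbourhood of the identity, where the group law can be linearized by a formal exponential. First I construct $\log_{\Gs}$ and only afterwards identify its kernel. Since $\Gs \rightarrow S$ is smooth, étale locally on $S$ there is an open neighbourhood $U$ of $e$ isomorphic to an open relative polydisc $\mathbb{B}^d_S$. On $U$ the group law is given by a formal group law $F$ over $\Os(S)$ with convergence conditions. For a formal group law in characteristic zero there is a unique formal logarithm $\log_F \in \Os(S)[[X]]\otimes\Q$, tangent to the identity, satisfying $\log_F(F(X,Y)) = \log_F(X)+\log_F(Y)$. I would show that it converges on a small enough ball $U_0 \subset U$ and there gives a homomorphism $U_0 \rightarrow \ga$ into the geometric Lie algebra. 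By uniqueness of $\log_F$, these local logarithms glue along étale localizations on $S$ and are independent of the chosen coordinates.

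Next I handle $\Gs\langle p^{\infty}\rangle$. A section $x$ lies in $\Gs\langle p^\infty\rangle$ if and only if it extends to a homomorphism $\Z_p \rightarrow \Gs$, $v$-locally. This is equivalent to $p^n x \rightarrow e$ in a suitable sense: more precisely, some $p^n x$ lies in the subgroup $U_0'\subset U_0$ on which $[p]$ is topologically nilpotent, i.e. on which $[p^n]$ contracts to $e$. For the representability claim I would define
\[ \Gs\langle p^\infty\rangle = \bigcup_{n} [p^n]^{-1}(U_0'), \]
which is an open subgroup, and then check that it agrees with the image of $\ev_1$. One inclusion holds because $\Z_p$-homomorphisms into $U_0'$ exist via $a\mapsto \exp(a\log x)$, or via limits of $p$-adically convergent sums $\sum \binom{a}{k}$ in the formal group. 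The other holds because a continuous map from $\underline{\Z_p}$ sends $p^n$ close to $0$, hence into $U_0'$ quasicompactly after $v$-localization. On $\Gs\langle p^\infty\rangle$ I then define
\[ \log_{\Gs}(x) = p^{-n}\log_F(p^n x) \quad\text{for any } n \text{ with } p^n x\in U_0', \]
which is well defined and a homomorphism by additivity of $\log_F$. Naturality in $\Gs$ follows from uniqueness of formal logarithms.

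For exactness, the kernel of $\log_F$ on a small enough ball $U_1$ is trivial, since there $\log_F$ is an isomorphism onto a ball with inverse given by the formal exponential. Hence $\log_{\Gs}(x)=0$ forces $p^n x$ into $\ker(\log_F|_{U_0'})$, which is torsion: multiplying further by $p$ pushes it into $U_1$, where the kernel vanishes. This gives $\Ker(\log_{\Gs}) = \Gs[p^\infty]$. The converse inclusion is clear, because torsion points lie in $\Gs\langle p^\infty\rangle$ and are killed by the additive target.

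For étaleness, note that $\log_{\Gs}$ has derivative the identity at $e$, so it is étale near $e$. If $[p]$ is étale, then by translation and the relation $\log_{\Gs}\circ[p^n] = p^n\log_{\Gs}$ it is étale everywhere on $\Gs\langle p^\infty\rangle$. Indeed, near any point $x$ the map $\log_{\Gs}$ factors as $[p^n]$ (étale) followed by $p^{-n}\log$ on a neighbourhood of $e$ (étale).

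The main obstacle is the convergence analysis. I need explicit radii, uniform over an affinoid sousperfectoid or rigid base, on which $\log_F$ converges, on which it is an isomorphism, and on which $[p]$ is contracting. For this I would follow the standard estimates for formal groups over $\Os(S)^+$ after rescaling coordinates so that $F$ has integral coefficients, using that $S$ is good so that functions on the polydisc are determined $v$-locally.
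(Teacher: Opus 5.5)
The paper gives no proof of this statement; it is quoted from \cite[Prop.~3.2.4]{heuer2024curve}, so there is no internal argument to compare yours with. Your outline is the standard proof and it is correct. Its ingredients are: the formal logarithm on a small ball around $e$; the description $\Gs\langle p^{\infty}\rangle=\bigcup_n[p^n]^{-1}(U_0')$; the formula $\log_{\Gs}=p^{-n}\log_F\circ[p^n]$; the kernel via injectivity of $\log_F$ on a smaller ball; and étaleness via the factorisation through $[p^n]$. What it buys over the citation is a self-contained construction. Several steps need more than you give them, though none of them breaks the argument.

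First, the polydisc neighbourhood of the identity section over a non-field base does not follow immediately from smoothness; it is a relative inverse function theorem. Start from an affinoid étale chart $f\colon V\rightarrow\mathbb{B}^d_S$ with $f\circ e=0$. You then have to spread the decomposition of $f^{-1}(0_S)$ into $e(S)$ and its complement out to some tube $\{|X|\le|\varpi|^m\}$. One way is to write the zero section, as a diamond, as the limit of these tubes, and use that étale sites of such cofiltered limits are $2$-colimits.

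Second, $U_0'$ must be an open subgroup with $[p](U_0')\subseteq U_0'$, so that your union is an increasing union of open subgroups. Rescale the coordinates so that $F$ has coefficients in a ring of definition of $\Os(S)$. Then the ball $\{|X|\le|p|\}$ works, since there $|F(x,y)|\le\max(|x|,|y|)$ and $|[p](x)|\le|p|\,|x|$.

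Third, if you build the maps $\underline{\Z_p}\rightarrow U_0'$ via the exponential, note what happens for $p=2$. The image $\log_F(\{|X|\le|2|\})$ is not contained in the convergence disc $|Y|<|2|$ of $\exp_F$; indeed, for $\widehat{\G}_m$ the point $-1$ lies in this ball and is killed by $\log$. So either shrink the ball or use the $p$-adic limits $[a]_F=\lim[k]_F$.

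Fourth, you also need $[p^n]^{-1}(U_0')\subseteq\Gs\langle p^{\infty}\rangle$, not just $U_0'\subseteq\Gs\langle p^{\infty}\rangle$. If $\phi(1)=p^nx$, extend $\phi$ to a homomorphism sending $1\mapsto x$ coset by coset on $\Z_p=\bigsqcup_{a_0=0}^{p^n-1}(a_0+p^n\Z_p)$.

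Finally, drop the appeal to goodness in your last sentence. It is not a hypothesis of this statement: the base may be a non-reduced rigid space, where morphisms are not detected on perfectoid test objects. It is also not needed. Every identity you use (additivity, well-definedness of $p^{-n}\log_F(p^nx)$, independence of coordinates, naturality) is an identity of power series over the $\Q$-algebra $\Os(S)$. Every containment of open subspaces can be checked on points.
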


\begin{example}\label{ex: Gm hat}
   Let $\Gs=\G_m$ over $S=\Spa(\Q_p)$, then
    \[ \G_m\langle p^{\infty} \rangle = 1 +\B_{< 1}^1 \sub \G_{m}\]
    is the open unit ball centered at $1$, with functor of points
    \[ \G_m\langle p^{\infty} \rangle(R,R^+) = 1+ R^{\circ\circ}.\]
    In that case, the logarithm is given by the usual power series
    \[ \log\colon \G_m\langle p^{\infty} \rangle \rightarrow \G_a, \quad 1+x \mapsto \sum_{n=1}^{\infty} \frac{(-1)^{n+1}}{n}x^n. \]
\end{example}

When the base $S$ is a rigid space, we have the following.
\begin{lemma}\label{lemma: p is etale over rigid spaces}
    Let $S$ be a rigid space over a non-archimedean field $(K,K^+)$ over $\Q_p$. Let $\Gs \rightarrow S$ be a smooth commutative adic group. Then $[p]\colon \Gs \rightarrow \Gs$ is étale.
\end{lemma}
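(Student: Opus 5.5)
The plan is to use the fibrewise criterion for étaleness together with the fact that multiplication by $p$ acts invertibly on Lie algebras in characteristic $0$. First, $[p]\colon \Gs \rightarrow \Gs$ is a morphism between adic spaces that are both smooth over $S$. Since $S$ is a rigid space, $\Gs$ is also a rigid space over $(K,K^+)$, locally of finite type. For a morphism $g\colon X \rightarrow Y$ of smooth $S$-spaces, étaleness can be tested by showing that the induced map $g^*\Omega_{Y/S}^1 \rightarrow \Omega_{X/S}^1$ is an isomorphism. Indeed, $\Omega_{X/Y}^1$ is the cokernel of this map, and in the rigid setting a morphism between smooth $S$-spaces is étale exactly when it induces an isomorphism on relative differentials (Huber's Jacobian criterion, \cite{huber2013étale}, §1.6). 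So we must show that $[p]^*\Omega_{\Gs/S}^1 \rightarrow \Omega_{\Gs/S}^1$ is an isomorphism.

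Next, use the group structure to reduce to the identity section. For a smooth $S$-group there is a canonical isomorphism $\Omega_{\Gs/S}^1 \cong \pi^*\omega_{\Gs}$ given by translation-invariant differentials. This comes from pulling back along the map $\Gs\times_S\Gs \rightarrow \Gs\times_S\Gs$, $(x,y)\mapsto (x,xy)$, and restricting to $y=e$. Because $[p]$ is a homomorphism, it commutes with translations: $[p]\circ t_x = t_{[p]x}\circ[p]$. It therefore carries invariant forms to invariant forms. Under the trivialization, the map on differentials becomes $\pi^*$ of the map $[p]^*\colon \omega_{\Gs} \rightarrow \omega_{\Gs}$ induced at the identity section, and this is dual to $d[p]_e\colon \Lie(\Gs) \rightarrow \Lie(\Gs)$. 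I expect checking this translation compatibility to be the main obstacle. I would verify it on functors of points, or equivalently by proving the pointwise statement: after pulling back along any point $x\colon T\rightarrow\Gs$, the map is identified with the one at $e$ via translation.

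Then compute $d[p]_e$. Multiplication $m\colon \Gs\times_S\Gs \rightarrow \Gs$ has differential at $(e,e)$ equal to addition $\Lie(\Gs)\oplus\Lie(\Gs) \rightarrow \Lie(\Gs)$; this is the standard fact obtained by restricting $m$ to $\Gs\times e$ and $e\times\Gs$. By induction, $d[n]_e = n\cdot \id$ on $\Lie(\Gs)$. Since $p$ is invertible in $\Os_S$ because $S$ lives over $\Q_p$, the map $d[p]_e = p$ is an automorphism of the vector bundle $\Lie(\Gs)$. Dually, $[p]^*$ is an isomorphism on $\omega_{\Gs}$, and hence on $\Omega^1_{\Gs/S}$.

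The argument is now complete: the induced map on relative differentials is an isomorphism between two locally free sheaves of the same rank, so $[p]$ is unramified and smooth of relative dimension $0$, hence étale. If one wants to avoid the differential criterion directly, an alternative route is the following. First show that $[p]$ is flat, by fibrewise flatness of a map between smooth spaces that is an isomorphism on tangent spaces at all points; then conclude étaleness from unramifiedness. Either way, the essential input is $d[p]=p\cdot\id$ together with $p\in\Os_S^\times$.
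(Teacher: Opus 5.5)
Your proof is correct, but it takes a different route from the paper.

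The paper argues stalk by stalk. By Huber's stalkwise characterisation of étale morphisms, it suffices to check $[p]$ on local rings. The paper then shows the local rings $\Os_{\Gs,x}$ are noetherian, by passing to an affinoid neighbourhood and to $\Spa(K,\Os_K)$, where one is dealing with classical Tate rigid spaces. This makes the fibrewise criterion for flatness available, which reduces everything to the case where $S$ is a single point $\Spa(K,\Os_K)$. There the paper simply quotes Fargues' lemma for rigid groups over a field.

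You instead work globally over $S$. The Jacobian criterion, the adic analogue of EGA IV 17.11.2, turns étaleness of $[p]$ into the statement that $[p]^*\Omega^1_{\Gs/S}\to\Omega^1_{\Gs/S}$ is an isomorphism. Invariant differentials, $d[p]_e=p\cdot\id$ and $p\in\Os_S^{\times}$ then give this.

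What each route buys:
\begin{itemize}
\item Your argument is the standard scheme-theoretic one and is self-contained; it effectively reproves the field case the paper imports.
\item It needs neither noetherianity of stalks nor a flatness-by-fibres step, so non-reduced bases and higher-rank points cost nothing extra.
\item Its price is reliance on Huber's differential calculus (the Jacobian criterion and base change for $\Omega^1$), so you should pin down the precise statement in Section 1.6 of Huber's book.
\item The paper's route uses only the stalk criterion and commutative algebra, and outsources the group-theoretic input.
\end{itemize}

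Two small remarks. For the translation step, the pointwise version you mention is the cleanest. Both sheaves are locally free of the same rank, so by Nakayama it suffices to check the fibre at each $x\in\Gs$. After base change to the completed residue field of $x$, the point becomes a section, and translation identifies the fibre map at $x$ with $d[p]_e$.

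Your closing ``alternative route'' via fibrewise flatness is essentially the paper's proof. There, noetherianity of the local rings is a genuine hypothesis that must be checked, as the paper does.
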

\begin{proof}
By \cite[Prop. 1.7.5]{huber2013étale}, we can check that $[p]\colon \Gs \rightarrow \Gs$ is étale on stalks. We claim that for $x\in \Gs$, the ring $\Os_{\Gs,x}$ is noetherian. Indeed, since $\Os_{\Gs,x}$ is insensitive to replacing $\Gs$ by an open affinoid neighborhood of $x$ and pulling back along $\Spa(K,\Os_K) \rightarrow \Spa(K,K^+)$, we may reduce to the case of $\Gs$ being a classical rigid space in the sense of Tate, in which case this is \cite[§4.1, Prop. 6]{Bosch2014rigid}. By the fiberwise criteria for flatness \cite[\href{https://stacks.math.columbia.edu/tag/039D}{Tag 039D}]{stacks-project}, we may thus assume that $S=\Spa(K,\Os_K)$, in which case this follows from \cite[Lemme 1]{Farg19}.
\end{proof}

\section{Analytic $p$-divisible groups}\label{section: Analytic $p$-divisible groups}
\subsection{Definitions}
In this subsection, we present the key concept of analytic $p$-divisible groups and discuss examples. We continue to denote by $S$ a fixed adic space over $\Q_p$ which is either sousperfectoid or a rigid space over a non-archimedean field $(K,K^+)$.
\begin{definition}{(\cite[§2.1, Def. 2.1]{Farg19}\cite[Def. 2.19]{heuer2022geometric})}
    Let $\Gs$ be a commutative smooth $S$-group. We say that $\Gs$ is analytic $p$-divisible if $\Gs=\Gs\langle p^{\infty} \rangle$ and $[p]\colon\Gs \rightarrow \Gs$ is finite étale and surjective.
\end{definition}
The systematic study of analytic $p$-divisible groups over a non-archimedean field was first initiated by Fargues in \cite{Farg19} and later extended to rigid analytic families in \cite{Farg22}. Prior to this, the idea is already present in work of Fontaine \cite{Fontaine03}, in which an abelian variety $A$ over a finite extension $K/\Q_p$ is assigned a rigid analytic open subgroup $A^{(p)} \sub A$ which, in our terminology, recovers the topologically $p$-torsion subgroup $A\langle p^{\infty} \rangle$.

\begin{example}\label{ex: examples of analytic p-divisible groups}
    \begin{enumerate}
        \item The group $\G_m\langle p^{\infty} \rangle$ from Example \ref{ex: Gm hat} is analytic $p$-divisible. 
        \item Any vector bundle $\ga \rightarrow S$ is an analytic $
        p$-divisible group. In that case, $[p]$ is even an isomorphism. On the other extreme, let $\Lb$ be a $\Z_p$-local system on $S_v$, then 
        \[ \Gs = \Lb[\tfrac{1}{p}]/\Lb = \varinjlim_n \tfrac{1}{p^n}\Lb/\Lb\]
        is representable by an analytic $p$-divisible group with étale structure map $\Gs \rightarrow S$. Any analytic $p$-divisible group $\Gs$ is built from these two examples, using that the logarithm sequence (\ref{logarithm exact sequence}) for $\Gs$ is right-exact, by Lemma \ref{lemma: log is right-exact for an pdiv} below.
        \item Let $A$ be an abeloid variety over $K$, the rigid analytic analog of a complex torus \cite{Bosch1991DegeneratingAV}. Then the topologically $p$-torsion subgroup $A\langle p^{\infty} \rangle \sub A$ is an analytic $p$-divisible rigid group. More generally, let $S$ be a rigid space and let $\As \rightarrow S$ be a relative abeloid variety, i.e. a proper smooth adic group with geometrically connected fibers. Then $\As\langle p^{\infty} \rangle \rightarrow S$ is an analytic $p$-divisible group. Indeed, $\As\langle p^{\infty} \rangle$ is of topologically $p$-torsion by \cite[Lemma 2.9]{heuer2022geometric}, and $[p]\colon \As \rightarrow \As$ is étale by Lemma \ref{lemma: p is etale over rigid spaces}. Since $\As \rightarrow S$ is proper, also $[p]\colon \As \rightarrow \As$ is proper and hence finite, by \cite[Prop. 1.5.5, Cor. 1.7.4]{huber2013étale}. Finally, by the valuative criterion for properness, surjectivity can be checked on $(K,\Os_K)$-rational points, where it follows from \cite[Lemma 6.3]{heuer2023padic}. 
        \item Let $\Ss$ be a flat formal scheme over $\Z_p$ that locally admits a finitely generated ideal of definition and with adic generic fiber $S$. Let $\mathfrak{G} \rightarrow \Ss$ be a $p$-divisible group, that is, an fpqc abelian sheaf on the category of schemes over $\Ss$ such that $[p]\colon \mathfrak{G} \rightarrow \mathfrak{G}$ is surjective, $\mathfrak{G}[p]$ is representable by a finite locally free formal scheme over $\Ss$ and $\mathfrak{G} = \varinjlim_n \mathfrak{G}[p^n]$. Consider the sheaf $\mathfrak{G}_{\eta}$ on $\Perf_S$, obtained as the sheafification for the analytic topology of the presheaf
        \[ (R,R^+)\in \Perf_S \mapsto \mathfrak{G}(R^+).\]
        If $\mathfrak{G}$ is representable by a formal scheme, then $\mathfrak{G}_{\eta}$ is simply the adic generic fiber. Then by \cite[Lemma 4.1.6]{graham2025padicfouriertheoryfamilies}, the sheaf $\mathfrak{G}_\eta$ is representable by an analytic $p$-divisible group. For example, the group $\G_m\langle p^{\infty} \rangle \rightarrow \Spa(\Q_p)$ arises as the generic fiber of $\mu_{p^{\infty}} = \Spf(\Z_p[[ X-1]]) \rightarrow \Spf(\Z_p)$.
    \end{enumerate}
\end{example}

\begin{lemma}{(\cite[Prop. 3.2.4]{heuer2024curve})}\label{lemma: log is right-exact for an pdiv}
    Let $\Gs \rightarrow S$ be an analytic $p$-divisible group. Then the logarithm sequence (\ref{logarithm exact sequence}) is right-exact and thus defines a short exact sequence
    % % https://q.uiver.app/#q=WzAsNSxbMSwwLCJcXEdzW3Bee1xcaW5mdHl9XSJdLFsyLDAsIlxcR3MiXSxbMywwLCJcXGdhIl0sWzAsMCwiMCJdLFs0LDAsIjAuIl0sWzMsMF0sWzEsMiwiXFxsb2dfe1xcR3N9Il0sWzAsMV0sWzIsNF1d
\begin{equation}\label{eq: logarithm exact sequence for pdiv groups}\begin{tikzcd}
	0 & {\Gs[p^{\infty}]} & \Gs & \ga & {0.}
	\arrow[from=1-1, to=1-2]
	\arrow[from=1-2, to=1-3]
	\arrow["{\log_{\Gs}}", from=1-3, to=1-4]
	\arrow[from=1-4, to=1-5]
\end{tikzcd}\end{equation}
More generally, this holds if $\Gs$ is a smooth commutative $S$-group such that there exists an open $S$-subgroup $U\sub \Gs$ with $[p]\colon U \rightarrow U$ étale and surjective. 
\end{lemma}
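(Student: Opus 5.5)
We have to show that $\log_{\Gs}\colon \Gs \rightarrow \ga$ is surjective as a map of sheaves on $S_v$; exactness on the left and in the middle is Proposition \ref{prop: log}. We prove the general statement. So let $\Gs$ be smooth commutative with an open subgroup $U \sub \Gs$ such that $[p]\colon U \rightarrow U$ is étale and surjective. The analytic $p$-divisible case is then the special case $U=\Gs$, since there $[p]$ is finite étale surjective.

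\textbf{Step 1 (an open neighbourhood of $0$ in the image).} By Proposition \ref{prop: log}, $\log_{\Gs}$ is étale near the identity, and its differential at $e$ is the identity on $\Lie(\Gs)$. The map $\log_{\Gs}$ is only defined on $\Gs\langle p^{\infty}\rangle$, so we first pass to $U\langle p^{\infty}\rangle$, which is open in $\Gs$ by Proposition \ref{prop: log}. By the local structure of étale maps of adic spaces, or more concretely by the formal inverse (exponential) series applied on small balls, we look for an open neighbourhood $V \sub \ga$ of the zero section, locally on $S$, over which $\log_{\Gs}$ admits a section $V \rightarrow \Gs$. Working locally on an affinoid $S=\Spa(A,A^+)$ with $\Lie(\Gs)$ free, I would take $V = p^{N}\B^d$ for some large $N$. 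An étale morphism of smooth $S$-spaces that is an isomorphism on tangent spaces along a section is a local isomorphism near that section. This gives a section $V \rightarrow \Gs\langle p^{\infty}\rangle$ of $\log_{\Gs}$.

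\textbf{Step 2 (scaling by $p$).} Let $x\in\ga(Y)$ for some $Y \in \Perf_S$. Since $Y$ is qc locally, we have $p^n x \in V(Y)$ for $n \gg 0$, so $p^n x = \log_{\Gs}(g)$ for some $g$ in the image of the section. After shrinking $V$, the element $g$ lies in $U \cap \Gs\langle p^{\infty}\rangle$. Since $[p]\colon U \rightarrow U$ is étale surjective, $[p^n]$ is a $v$-cover (indeed étale surjective). Hence, after passing to an étale cover $Y' \rightarrow Y$, there is $h\in U(Y')$ with $p^n h = g$.

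\textbf{Step 3 (correcting the lift).} We next need $h\in\Gs\langle p^{\infty}\rangle$. This holds because $p^nh$ is topologically $p$-torsion and $\Gs\langle p^{\infty}\rangle$ is open. Concretely, $p^{m}h \rightarrow 0$ follows from $p^{m+n}h\rightarrow 0$ together with the fact that the kernel of $[p^n]$ on $U$ is finite étale torsion; one checks this via the definition (\ref{eq: ptop torsion subsheaf}) using $\underline{\Hom}(\underline{\Z_p},-)$. Then $\log_{\Gs}(h) - x$ is killed by $p^n$ in the vector bundle $\ga$, which is uniquely $p$-divisible. So $\log_{\Gs}(h)=x$, and surjectivity holds $v$-locally, even étale locally.

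\textbf{Expected obstacle.} The main obstacle is Step 1 in this relative, non-noetherian (sousperfectoid) setting. There one must produce an honest local section of $\log_{\Gs}$ near zero, uniformly over the base. I would handle it through the étaleness of $\log_{\Gs}$ given by Proposition \ref{prop: log}: an étale map restricted to a neighbourhood of a section on which it is injective is an open immersion. Its image is therefore an open subset of $\ga$ containing the zero section, and it contains $p^N\B^d$ by quasi-compactness. Step 3, the topological $p$-torsion check, is routine but must be done with care.
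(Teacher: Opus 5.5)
Your proposal is correct in substance. Structurally it is the paper's argument written out on sections: the étale map $\log$ on $U\langle p^{\infty}\rangle$ has open image, $[p]$ is surjective, and $\ga$ is uniquely $p$-divisible.

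The paper packages this differently. It first shows that $[p]$ is surjective on $U\langle p^{\infty}\rangle$, by the Snake Lemma applied to $0\to U\langle p^{\infty}\rangle\to U\to U/U\langle p^{\infty}\rangle\to 0$, citing Heuer for the $p$-torsionfreeness of the quotient. It then notes that an open subgroup of $\ga$ on which $[p]$ is surjective must be all of $\ga$, which is your Step 2.

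Your Step 3 is a hands-on proof of that torsionfreeness, and it is fine, but the reason is more elementary than the one you give. Suppose $\phi\colon\underline{\Z_p}\to U$ satisfies $\phi(1)=p^nh$. Cover $\underline{\Z_p}$ by the finitely many clopen cosets of $p^n\Z_p$ and set $\psi(i+p^na)=ih+\phi(a)$ for $0\le i<p^n$. This is a homomorphism with $\psi(1)=h$. Your remark that the kernel of $[p^n]$ on $U$ is finite étale plays no role. In the general case $[p]$ on $U$ is only étale, so that kernel need not even be finite.

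The one point to fix is Step 1, which asks for more than you need and does not justify it.
\begin{itemize}
\item The hypothesis ``isomorphism on tangent spaces'' is automatic for an étale map. The real content is that $\log$ is injective on a neighbourhood of the zero section, uniformly over $S$, i.e.\ that nonzero sections of $\Gs[p^{\infty}]$ stay away from $e(S)$. You assert this but do not prove it.
\item It is also unnecessary. Étale maps of adic spaces are open, so $W=\log(U\langle p^{\infty}\rangle)$ is an open subgroup of $\ga$, and $U\langle p^{\infty}\rangle\to W$ is an étale cover.
\item Step 2 already passes to an étale cover of $Y$. So it suffices to lift $p^nx$ étale-locally along this cover, and no section of $\log$ is required.
\end{itemize}

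With that replacement your proof coincides with the paper's. The paper's ordering, which makes $U\langle p^{\infty}\rangle$ $p$-divisible first, simply avoids your lift-then-correct detour through Steps 2--3.
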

\begin{proof}
    Let $U$ be a subgroup as above. Then $[p]$ is also surjective on $\cj{U} = U/U\langle p^{\infty} \rangle$. But $\cj{U}$ is $p$-torsionfree, by \cite[Lemma 2.11]{heuer2022geometric}, hence it is uniquely $p$-divisible. By the Snake Lemma, $[p]$ is also surjective and étale on $U\langle p^{\infty} \rangle$. Hence $\log(U\langle p^{\infty} \rangle) \sub \ga$ is an open subgroup on which $[p]$ is surjective. The only such subgroup is $\ga$, so that $\log$ is surjective, as required.
\end{proof}

\begin{definition}\label{def: T_pG is Z_p local system}
    Let $\Gs$ be an analytic $p$-divisible group over $S$.
    \begin{enumerate}
        \item The dimension of $\Gs$ is the rank of the étale vector bundle $\Lie(\Gs)$.
        \item The height of $\Gs$ is the rank of the $\F_p$-local system $\Gs[p]$.
        \item The Tate module of $\Gs$ is
    \[ T_p\Gs \coloneqq \varprojlim_n \Gs[p^n],\]
    where the inverse limit is computed in the category of sheaves on $S_v$. It is easy to see that $T_p\Gs$ is a $\Z_p$-local system on $S_v$ of rank equal to the height of $\Gs$. We define the rational Tate module of $\Gs$ to be the $\Q_p$-local system
    \[ V_p\Gs = T_p\Gs[\tfrac{1}{p}].\]
    \end{enumerate}
\end{definition}

There is a good notion of isogenies of analytic $p$-divisible groups.
\begin{definition}
    Let $\Gs,\Gs'$ be analytic $p$-divisible groups over $S$.
    \begin{enumerate}
        \item An isogeny $\varphi\colon \Gs \rightarrow \Gs'$ is a finite étale surjective morphism of $S$-group.
        \item A quasi-isogeny $\varphi\colon \Gs \rightarrow \Gs'$ is a global section of the sheaf on $S_{\an}$
        \[ \underline{\Hom}_S(\Gs,\Gs')[\tfrac{1}{p}]\]
        such that, analytic-locally on $S$, $p^n\varphi$ is an isogeny for some $n\geq 0$.
    \end{enumerate}
\end{definition}
Given an isogeny $\varphi\colon \Gs \rightarrow \Gs'$, its kernel $\Ker(\varphi)$ is a finite étale $S$-group and we have an isomorphism of $v$-sheaves
\[ \Gs/\Ker(\varphi) \cong \Gs'.\]
Conversely, we will see later in Corollary \ref{cor: quotients of finite etale subgroups} that analytic $p$-divisible groups are stable under quotients by finite étale subgroups.

\subsection{Analytic Weil pairings}

Let $S$ be an adic space over $\Q_p$ which is either a sousperfectoid space or a rigid space over some non-archimedean field $(K,K^+)$. Given an analytic $p$-divisible group $\Gs \rightarrow S$, we may consider its twisted dual Tate module
\[ T_p\Gs^{\vee}(1) = \underline{\Hom}_{\underline{\Z_p}}(T_p\Gs,\underline{\Z_p}(1)).\]
By writing $\underline{\Z_p}(1) = \varprojlim_n \mu_{p^{n}}$, we get
\[ T_p\Gs^{\vee}(1) =\varprojlim_n \,\underline{\Hom}_{S_v}(\Gs[p^n],\mu_{p^n}) = \underline{\Hom}_{S_v}(\Gs[p^{\infty}],\mu_{p^{\infty}}).\]
This yields a morphism of $v$-sheaves, the \emph{Weil pairing}
\[ \Gs[p^{\infty}] \times T_p\Gs^{\vee}(1) \rightarrow \mu_{p^{\infty}}.\]
The main result of this subsection is the following.
\begin{thm}\label{thm: existence of analytic Weil pairings}
    Let $S$ be a good adic space over $\Q_p$ (cf. Definition \ref{def: good adic spaces}) and let $\Gs \rightarrow S$ be an analytic $p$-divisible group. Then the Weil pairing extends uniquely to a pairing of diamonds, natural in $S$ and $\Gs$
    \begin{align} e_{\Gs}\colon \Gs \times T_p\Gs^{\vee}(1) \rightarrow \G_m\langle p^{\infty} \rangle.\end{align}
    We call $e_{\Gs}$ the analytic Weil pairing for the group $\Gs$.
\end{thm}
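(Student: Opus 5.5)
Uniqueness is handled first, followed by construction via $p$-adic universal covers, reducing along the way to $S$ perfectoid and even to a product of points. \emph{Uniqueness.} Suppose $e,e'$ are two extensions. Their difference $\delta = e/e'$ is a pairing $\Gs\times T_p\Gs^{\vee}(1)\to \G_m\langle p^\infty\rangle$ that is trivial on $\Gs[p^\infty]\times T_p\Gs^\vee(1)$. By the short exact sequence of Lemma \ref{lemma: log is right-exact for an pdiv}, $\delta$ factors through $\ga\times T_p\Gs^\vee(1)$, giving a $\Z_p$-bilinear map $\ga\times T_p\Gs^\vee(1)\to\G_m\langle p^\infty\rangle$. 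Since $\ga$ is uniquely $p$-divisible, its image lands in the subgroup of infinitely $p$-divisible elements compatible with $\Z_p$-linearity. Using that $[p]$ on $\ga$ is an isomorphism, $\delta(x,\ell)=\delta(p^{-n}x,\ell)^{p^n}=\delta(p^{-n}x,p^n\ell)$. Letting $n\to\infty$ and using continuity in $\ell\in T_p\Gs^\vee(1)$ (which is $v$-locally $\underline{\Z_p}^h$) gives $\delta(x,\ell)=\delta(\cdot,0)=1$.

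\emph{Existence.} The key object is the universal cover $\widetilde{\Gs}=\varprojlim_{[p]}\Gs$. It sits in an exact sequence $0\to T_p\Gs\to \widetilde\Gs\to \Gs\to 0$ of $v$-sheaves, where surjectivity follows because $[p]$ is finite étale surjective. Similarly we have $0\to\Z_p(1)\to\widetilde{\G_m\langle p^\infty\rangle}\to\G_m\langle p^\infty\rangle\to 0$.

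The plan is to first produce a bilinear pairing $\widetilde\Gs\times T_p\Gs^\vee(1)\to\widetilde{\G_m\langle p^\infty\rangle}$ extending $\ell\colon T_p\Gs\to\Z_p(1)$, and then to descend it. Concretely, it suffices to show that every homomorphism $\ell\colon T_p\Gs\to\Z_p(1)$ (after $v$-localization) extends uniquely to a homomorphism $\widetilde\ell\colon\widetilde\Gs\to\widetilde{\G_m\langle p^\infty\rangle}$. Such an $\widetilde\ell$ maps $T_p\Gs$ into $\Z_p(1)$, so it induces $\Gs=\widetilde\Gs/T_p\Gs\to\G_m\langle p^\infty\rangle$. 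On $\Gs[p^\infty]=V_p\Gs/T_p\Gs$ this recovers $\ell\otimes\Q_p/\Z_p$, which is the Weil pairing. Naturality and gluing then follow from uniqueness.

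To produce $\widetilde\ell$, reduce $v$-locally to $S=\Spa(R,R^+)$ affinoid perfectoid with $T_p\Gs$ trivial. There, the extension class of $0\to T_p\Gs\to\widetilde\Gs\to\Gs\to0$, pushed out along $\ell$, must be shown to come from $\G_m\langle p^\infty\rangle$. Equivalently, we need the vanishing of the obstruction in $\Ext^1(\Gs,\Z_p(1))$ relative to $\Ext^1(\Gs,\widetilde{\G_m\langle p^\infty\rangle})$.

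A more hands-on route uses the logarithm. $\widetilde\Gs$ is an extension of $\ga$ by $V_p\Gs$, and $\widetilde{\G_m\langle p^\infty\rangle}$ is the Banach--Colmez space $\mathbb{B}^{\varphi=p}$, an extension of $\G_a$ by $\Q_p(1)$. We extend $\ell\otimes\Q_p\colon V_p\Gs\to\Q_p(1)$ by showing that $\Ext^1(\ga,\Q_p(1))$ computed on $S_v$ splits compatibly. Here we use that Hom from vector bundles to $\Q_p(1)$ vanishes, which gives uniqueness of the lift, while existence comes from an $\Ext$ computation of the form $H^1_v(S,\Os)$-type vanishing on affinoid perfectoids.

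The main obstacle is this existence step: showing that the pushout extension $\ell_*\widetilde\Gs$ is isomorphic to the pullback of $\mathbb{B}^{\varphi=p}$ along some linear map $\ga\to\G_a$. The way I would first try to prove it is to reduce further, via the pro-étale presentation of $\widetilde\Gs$ over products of points, to $S=\Spa(C,C^+)$. There, $\widetilde\Gs$ is a perfectoid open ball group (cf.\ Fargues, Scholze--Weinstein), and $\ell$ can be extended using Fargues' theorem for points; the extension is then spread out by the uniqueness already proved together with the fact that the structure sheaf is a $v$-sheaf, which is where goodness of $S$ enters. If that route fails, I would fall back on deriving the map from the Hodge--Tate computations on $R\Hom_{S_v}(\Gs,\G_m\langle p^\infty\rangle)$.
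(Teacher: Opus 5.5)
The existence step, which is the whole content of the theorem, is left open, and the routes you sketch for it do not work. Passing to universal covers only relocates the obstruction. Extending $\ell$ to $\widetilde{\ell}\colon \widetilde{\Gs}\to(\B_{\crys}^+)^{\varphi=p}$ is equivalent to showing that the pushout $\ell_*[\widetilde{\Gs}]\in\Ext^1(\ga,\Q_p(1))$ is the pullback of the class of $0\to\Q_p(1)\to(\B_{\crys}^+)^{\varphi=p}\to\G_a\to 0$ along some map $\ga\to\G_a$. This is a surjectivity statement, not a vanishing one: $\Ext^1(\G_a,\Q_p(1))\neq 0$, since the extension $(\B_{\crys}^+)^{\varphi=p}$ is non-split. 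It is governed by the cohomology of $\ga$ and $\ga\times_S\ga$, not by $H^1_v(S,\Os)=0$. In the paper it is a \emph{consequence} of the theorem (Proposition \ref{Prop: extension from tate module to the whole BC space} and the remark after Theorem \ref{thm: a fargues thm for BC spaces}).

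The reduction to points also fails, for three reasons. First, the geometric points of $S$ do not form a $v$-cover, so uniqueness plus $v$-descent cannot glue fibrewise extensions. Second, over a product of points a family of fibrewise homomorphisms comes from one over $S$ only under uniform bounds that you have no way to produce; this is exactly why the functor in Corollary \ref{cor: pdiv groups on product of points} is not full. Third, $\widetilde{\Gs}$ is not a perfectoid open ball in general: for $\Gs=\G_a$ one has $\widetilde{\Gs}=\G_a$, and even in the dualizable case the paper proves perfectoidness of $\widetilde{\Gs}$ using the pairing.

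What you are missing is a vanishing result that makes universal covers unnecessary. Work $v$-locally with $S$ strictly totally disconnected, and apply $\Hom(-,\G_m\langle p^\infty\rangle)$ to the logarithm sequence $0\to\Gs[p^\infty]\to\Gs\to\ga\to 0$ of Lemma \ref{lemma: log is right-exact for an pdiv}.
\begin{itemize}
\item The term $\Hom(\ga,\G_m\langle p^\infty\rangle)$ vanishes because bounded functions on $\A^1_S$ are constant.
\item By the Breen resolution, $\Ext^1_{\et}(\G_{a,S},\G_{m,S}\langle p^\infty\rangle)$ is a subgroup of $H^1_{\et}(\A^1_S,\G_m\langle p^\infty\rangle)$, which is zero. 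Indeed $\Pic_{\et}(\A^1_S)=0$, and the boundary map out of $H^0(\A^1_S,\cj{\G}_m)=H^0(S,\cj{\G}_m)$ factors through $H^1_{\et}(S,\G_m\langle p^\infty\rangle)=0$ (Lemma \ref{Lemma: Picard group of product in terms of one factor}).
\end{itemize}
Restriction then gives $\underline{\Hom}(\Gs,\G_m\langle p^\infty\rangle)\cong\underline{\Hom}(\Gs[p^\infty],\G_m\langle p^\infty\rangle)$, which yields existence and uniqueness at once (Proposition \ref{Prop: extension from p-torsion to the whole group}).

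Your uniqueness argument should also be replaced by the vanishing of $\underline{\Hom}(\ga,\G_m\langle p^\infty\rangle)$. As written, the limit $\delta(p^{-n}x,p^n\ell)\to\delta(\cdot,0)$ does not follow from continuity in $\ell$, because the first argument moves with $n$. The same manipulation would wrongly kill the pairing $(x,\ell)\mapsto x\ell$ with values in $\G_a$.
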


Analytic Weil pairings were already constructed by Tate for $p$-divisible groups over $\Os_K$ where $K$ is a $p$-adic field \cite[§4]{Tat67}. Outside the case of good reduction, Fontaine \cite[Prop. 1.1]{Fontaine03} constructs, for any abelian variety $A$ over a finite extension $K$ of $\Q_p$, a Galois-equivariant map
\[ A\langle p^{\infty} \rangle(\cj{K}) \rightarrow T_pA(-1) \otimes_{\Z_p} (1+\ma_{\C_p})= \Hom_{\cts, G_K}(T_pA^{\vee}(1),\G_m\langle p^{\infty} \rangle(\C_p)).\]
A similar pairing appears in the work of Deninger--Werner \cite{DeningerWerner2005} in the context of the $p$-adic Simpson correspondence: In their work, given a connected smooth projective curve $X$ over $\cj{\Q}_p$ with fixed base point $x\in X(\cj{\Q}_p)$, they establish a functor from the category of certain vector bundles on $X_{\C_p}$ to the category of continuous $\C_p$-linear representations of the fundamental group. When specified to the case of rank $1$, this yields a group homomorphism 
\[ \alpha\colon \PPic_X^0(\C_p) \rightarrow \Hom_{\cts}(\pi_1(X,x),\C_p^{\times}) = \Hom_{\cts}(TA^{\vee},\C_p^{\times}),\]
where $A = \PPic_X^0$ is the Picard variety and $TA = \prod_{\ell} T_{\ell}A$ is the Tate module. This morphism is studied further in \cite{Deninger2005}: Its domain is enlarged to $\PPic_X^{\tau}(\C_p)$, the line bundles with torsion Neron--Severi classes, and the construction of $\alpha$ is extended to proper smooth varieties $X$ over $\cj{\Q}_p$ verifying a certain good reduction assumption. It was later shown in \cite{heuer2022geometric} that the morphism $\alpha$ can be geometrized: For an arbitrary proper smooth rigid space $X$ over a complete algebraically closed field $C/\Q_p$, there is a topologically torsion Picard variety $\PPic_X^{\ttt}$. This is a rigid open subgroup of $\PPic_{X}$ with the property that, if $C=\C_p$,
\[ \PPic_X^{\ttt}(\C_p) = \PPic_X^{\tau}(\C_p),\] 
and that comes with a morphism of rigid groups
\[ \PPic_X^{\ttt} \rightarrow \underline{\Hom}_{\cts}(\pi_1(X,x),\G_m).\]

\begin{example}
    Let $\Gs = \G_m\langle p^{\infty} \rangle$ over $S=\Spa(\Q_p)$. In that case, the analytic Weil pairing is
    \[ \G_m\langle p^{\infty} \rangle \times \Z_p \rightarrow \G_m\langle p^{\infty} \rangle, \quad (1+z,a) \mapsto (1+z)^a = \sum_{n=0}^{\infty}\binom{a}{n}z^n.\]
    This is the natural $\Z_p$-module structure on $\G_m\langle p^{\infty} \rangle$.
\end{example}

\begin{remark}\label{rmk: p-divisible groups over LSD}
    One could define an analytic $p$-divisible group over an arbitrary locally spatial diamond $S$ over $\Q_p$ to be a group diamond $\Gs \rightarrow S$ such that $[p]\colon \Gs \rightarrow \Gs$ is finite étale surjective and such that $\Gs$ fits in a short exact sequence of $v$-sheaves
% https://q.uiver.app/#q=WzAsNSxbMCwwLCIwIl0sWzEsMCwiXFxHc1twXntcXGluZnR5fV0iXSxbMiwwLCJcXEdzIl0sWzMsMCwiXFxnYSJdLFs0LDAsIjAsIl0sWzAsMV0sWzEsMl0sWzMsNF0sWzIsM11d
\[\begin{tikzcd}
	0 & {\Gs[p^{\infty}]} & \Gs & \ga & {0,}
	\arrow[from=1-1, to=1-2]
	\arrow[from=1-2, to=1-3]
	\arrow[from=1-3, to=1-4]
	\arrow[from=1-4, to=1-5]
\end{tikzcd}\]
where $\ga = E\otimes_{\Os_{S_{\et}}} \Os_{S_v}$ for a vector bundle $E$ on $S_{\et}$. With this definition, Theorem \ref{thm: existence of analytic Weil pairings} and its proof extend without changes. To consider universal families over moduli stacks, it would be interesting to further extend this definition to small $v$-stacks. Due to the pathological behavior of the étale sites of small $v$-stacks in general, it would be preferable to restrict to “diamond stacks”, roughly those small $v$-stacks that admit a quasi-pro-étale surjection from a perfectoid space. We will not pursue this here.
\end{remark}

We now turn to the proof of Theorem \ref{thm: existence of analytic Weil pairings}. We start with some preparations.
    \begin{lemma}\label{Lemma: turn Hom into tensor}
Let $S$ be a good adic space over $\Q_p$.
    \begin{enumerate}
        \item Let $\Fs, \Gs$ be two sheaves of $\Z_p$-modules on $S_v$ and assume that evaluation at $1$ defines an isomorphism
        \[ \underline{\Hom}_{\Z}(\underline{\Z_p},\Gs) \xrightarrow{\cong}  \Gs. \]
        Then
    \[ \underline{\Hom}_{\Z}(\Fs,\Gs) = \underline{\Hom}_{\Z_p}(\Fs,\Gs).\]
    \item Let $\Lb$ be a $\Z_p$-local system on $S_v$ and let $\Fs$ be any $\Z_p$-module on $S_v$, then
    \[ \underline{\Hom}_{\Z_p}(\Lb,\Fs)= \Lb^{\vee} \otimes_{\Z_p} \Fs.\]
    \end{enumerate}
\end{lemma}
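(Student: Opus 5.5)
For (1), the idea is that a $\Z$-linear map $\phi\colon \Fs\to\Gs$ is automatically $\Z_p$-linear, because the $\Z_p$-module structure on $\Gs$ is determined by the hypothesis. Fix a test object $Y\in S_v$ and work over $Y$, so it suffices to show $\Hom_{\Z}(\Fs,\Gs)=\Hom_{\Z_p}(\Fs,\Gs)$ on global sections after every base change. The hypothesis says that every local section $g$ of $\Gs$ extends uniquely to a homomorphism $\underline{\Z_p}\to\Gs$ sending $1\mapsto g$, namely $a\mapsto a\cdot g$.

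Given a local section $x$ of $\Fs$, consider the two homomorphisms $\underline{\Z_p}\to\Gs$ defined by
\[ a\mapsto \phi(a x) \qquad\text{and}\qquad a\mapsto a\,\phi(x). \]
Both are $\Z$-linear maps of sheaves $\underline{\Z_p}\to\Gs$, since the first is the composite of $\Z$-linear maps $a\mapsto ax$ and $\phi$. Both send $1\mapsto\phi(x)$. Injectivity of $\ev_1$ on $\underline{\Hom}_{\Z}(\underline{\Z_p},\Gs)$ then forces them to coincide, so $\phi(ax)=a\phi(x)$ for local sections $a$ of $\underline{\Z_p}$, i.e.\ for arbitrary locally constant functions to $\Z_p$. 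The only care needed is that the argument is carried out for sections over arbitrary $Y'\to Y$, which is automatic since we are working with internal Hom sheaves.

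For (2), there is a natural map $\Lb^{\vee}\otimes_{\Z_p}\Fs\to\underline{\Hom}_{\Z_p}(\Lb,\Fs)$, given by $\lambda\otimes f\mapsto (l\mapsto\lambda(l)f)$. Whether it is an isomorphism is $v$-local, so we may pass to a $v$-cover trivialising $\Lb$, say $\Lb\cong\underline{\Z_p}^{\oplus r}$. The map is compatible with finite direct sums, which reduces us to $\Lb=\underline{\Z_p}$. Then both sides identify with $\Fs$: on the left trivially, and on the right because $\underline{\Hom}_{\Z_p}(\underline{\Z_p},\Fs)\to\Fs$, $\phi\mapsto\phi(1)$, is an isomorphism for any sheaf of $\underline{\Z_p}$-modules. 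Its inverse is $f\mapsto(a\mapsto af)$, and $\Z_p$-linearity gives injectivity. The tensor product here is the sheafified one, and the comparison map commutes with sheafification, so local bijectivity suffices.

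The only mildly delicate point is in (2): $\Lb$ is a local system on $S_v$, so $\underline{\Z_p}$ is the sheaf of continuous functions to $\Z_p$ and not a constant sheaf. Since $\underline{\Z_p}$ is the unit for $\otimes_{\Z_p}$ and the module structure is by definition over this sheaf of rings, nothing beyond the formal argument is needed. The goodness assumption on $S$ plays no role in either part; I expect it appears only so that the lemma can later be applied with $\Gs$ a smooth $S$-group such as $\G_m\langle p^{\infty}\rangle$.
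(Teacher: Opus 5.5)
Your proposal is correct and matches the paper's proof. Part (1) is the paper's remark that $\Gs$ is its own scalar coextension, made explicit through injectivity of $\ev_1$, and part (2) is the same $v$-local check of the natural map $\Lb^{\vee}\otimes_{\Z_p}\Fs\to\underline{\Hom}_{\Z_p}(\Lb,\Fs)$. (One small slip: local sections of $\underline{\Z_p}$ are continuous, not locally constant, functions to $\Z_p$, but your argument gives $\phi(ax)=a\phi(x)$ for all of them anyway.)
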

\begin{proof}
The first part formally follows from the fact that $\Gs$ is its own scalar co-extension. For the second isomorphism, there is a natural map from right to left which is an isomorphism $v$-locally, hence an isomorphism. 
\end{proof}

% \begin{proof}
%     It suffices to observe that each $\Gs[p^n]$ are finite étale $X$-groups fitting in short exact sequences
%     % https://q.uiver.app/#q=WzAsNSxbMCwwLCIwIl0sWzEsMCwiXFxHc1twXm5dIl0sWzIsMCwiXFxHc1twXntuK219XSJdLFszLDAsIlxcR3NbcF57bX1dIl0sWzQsMCwiMC4iXSxbMCwxXSxbMSwyXSxbMiwzLCJbcF5uXSJdLFszLDRdXQ==
% \[\begin{tikzcd}
% 	0 & {\Gs[p^n]} & {\Gs[p^{n+m}]} & {\Gs[p^{m}]} & {0.}
% 	\arrow[from=1-1, to=1-2]
% 	\arrow[from=1-2, to=1-3]
% 	\arrow["{[p^n]}", from=1-3, to=1-4]
% 	\arrow[from=1-4, to=1-5]
% \end{tikzcd}\]
% \end{proof}

Theorem \ref{thm: existence of analytic Weil pairings} now follows from the following proposition.
\begin{proposition}\label{Prop: extension from p-torsion to the whole group}
    Let $S$ be a good adic space over $\Q_p$. Let $\Gs$ be an analytic $p$-divisible $S$-group and $\Lb$ be a $\Z_p$-local system on $S_v$. Then there is an isomorphism of Hom-sheaves on $S_v$
    \[ \underline{\Hom}(\Gs[p^{\infty}],  \Lb\otimes_{\Z_p} \G_m\langle p^{\infty} \rangle) =  \underline{\Hom}(\Gs, \Lb  \otimes_{\Z_p} \G_m\langle p^{\infty} \rangle). \]
\end{proposition}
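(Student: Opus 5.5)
The plan is to use the logarithm sequence of Lemma \ref{lemma: log is right-exact for an pdiv} and apply $\underline{\Hom}(-,\Hs)$, where we abbreviate $\Hs = \Lb\otimes_{\Z_p}\G_m\langle p^{\infty}\rangle$. From $0\to\Gs[p^\infty]\to\Gs\to\ga\to 0$ we get an exact sequence of sheaves on $S_v$
\[ 0\to \underline{\Hom}(\ga,\Hs)\to \underline{\Hom}(\Gs,\Hs)\to \underline{\Hom}(\Gs[p^\infty],\Hs)\to \underline{\Ext}^1(\ga,\Hs). \]
The proposition follows once we show $\underline{\Hom}(\ga,\Hs)=0$ and $\underline{\Ext}^1(\ga,\Hs)=0$. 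Since $\Lb$ is $v$-locally trivial, both claims are $v$-local, so we may assume $\Lb=\underline{\Z_p}^{r}$ and reduce to $\Hs=\G_m\langle p^\infty\rangle$. Since $\ga$ is $v$-locally (even analytically locally) isomorphic to $\G_a^d$, we further reduce to $\ga=\G_a$ over a perfectoid (affinoid) base.

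For the Hom vanishing, we use the logarithm sequence $0\to\mu_{p^\infty}\to\G_m\langle p^\infty\rangle\to\G_a\to 0$. A homomorphism $\phi\colon\G_a\to\G_m\langle p^\infty\rangle$ composed with $\log$ gives an endomorphism of $\G_a$. Because $\G_a$ is uniquely $p$-divisible, its image meets $\mu_{p^\infty}$ only through maps $\G_a\to\mu_{p^\infty}$. These vanish because $\G_a$ is connected and divisible while $\mu_{p^\infty}$ is locally profinite-torsion. Hence $\phi$ is determined by $\log\circ\phi\in\underline{\Hom}(\G_a,\G_a)$. To conclude, we must show that a nonzero endomorphism of $\G_a$ does not lift through $\log$. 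The lift would have to be $x\mapsto\exp(cx)$, and $\exp$ does not converge on all of $\G_a$, since $\G_m\langle p^\infty\rangle$ is bounded (the open unit disc around $1$). Concretely, a morphism from $\G_a$ into the open unit disc is constant, by Liouville-type boundedness on the affine line over a perfectoid base. This uses \cite[Prop. 2.2]{gerth2024} to identify Homs of diamonds with Homs of adic spaces, which is where goodness of $S$ enters.

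For the $\underline{\Ext}^1$ vanishing, we again use the logarithm sequence for $\G_m\langle p^\infty\rangle$. It suffices to show that $\underline{\Ext}^1(\G_a,\mu_{p^\infty})\to\underline{\Ext}^1(\G_a,\G_m\langle p^\infty\rangle)\to\underline{\Ext}^1(\G_a,\G_a)$ has vanishing outer terms, or more robustly that the middle map is zero. Since $\G_a$ is a $\Q_p$-vector space sheaf, multiplication by $p$ is invertible on every $\underline{\Ext}^i(\G_a,-)$. On the other hand, $\mu_{p^\infty}$ is a colimit of $p^n$-torsion sheaves, so $\underline{\Ext}^i(\G_a,\mu_{p^n})$ is killed by $p^n$ and hence zero. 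Passing to the colimit requires some care, so instead we would use the isomorphism $\underline{\Ext}^i(\G_a,\mu_{p^\infty})\cong\underline{\Ext}^{i}(\G_a,\Q_p/\Z_p(1))$ and the sequence $0\to\Z_p(1)\to\Q_p(1)\to\Q_p/\Z_p(1)\to0$. Both $\underline{\Ext}^i(\G_a,\Z_p(1))$ and $\underline{\Ext}^i(\G_a,\Q_p(1))$ relate to homomorphisms from a connected vector group to profinite-type sheaves, and their low-degree vanishing follows from the known computation that $\underline{\Hom}(\G_a,\underline{\Z_p})=0$ and $\underline{\Ext}^1(\G_a,\underline{\Z_p})=0$ on perfectoid spaces. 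The term $\underline{\Ext}^1(\G_a,\G_a)$ needs separate treatment: rather than showing it vanishes, we only need that the connecting map $\underline{\Hom}(\Gs[p^\infty],\Hs)\to\underline{\Ext}^1(\ga,\Hs)$ is zero.

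For that, we argue directly. Given $\psi\colon\Gs[p^\infty]\to\Hs$, push out the extension $\Gs$ along $\psi$. The result is an extension of $\ga$ by $\Hs$ which, after composing with $\log$, is an extension of $\ga$ by $\Lb\otimes\G_a$ that is pulled back from $\Gs$ via a map killing $\Gs[p^\infty]$. Hence it is split, with splitting $\Gs/\Gs[p^\infty]=\ga\to\Lb\otimes\G_a$ induced by $\log\circ\psi$ extended $\Q_p$-linearly. Lifting this splitting back through $\log$ is where the main obstacle lies. We expect to handle it by $v$-locally writing $\Gs$ via Example \ref{ex: examples of analytic p-divisible groups}(4)-type local models, or by using $\widetilde{\Gs}=\varprojlim_{[p]}\Gs\cong\widetilde{\Gs[p^\infty]}\times\ga$. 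The idea is that $\psi$ extends to $\varprojlim_{[p]}\Gs[p^\infty]=V_p\Gs$ and then descends, using the vanishing of $\underline{\Hom}(\ga,\Hs)$ and of $\underline{\Ext}^1(\ga,\Z_p)$ established above. This Ext/extension step is the one we expect to be hardest.
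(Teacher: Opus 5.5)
The first half of your plan matches the paper. You apply $\underline{\Hom}(-,\Lb\otimes\G_m\langle p^{\infty}\rangle)$ to the logarithm sequence, and you kill $\underline{\Hom}(\ga,-)$ because $\A^1_S$ has no non-constant bounded functions. The gap is the vanishing of $\underline{\Ext}^1(\ga,\Lb\otimes\G_m\langle p^{\infty}\rangle)$, which is the whole content of the proposition, and the route you sketch cannot reach it.

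\textbf{Your first strategy fails because its outer terms do not vanish.}
\begin{itemize}
\item $\Ext^1(\G_a,\mu_{p^\infty})$ is \emph{not} zero. The logarithm sequence $0\to\mu_{p^\infty}\to\G_m\langle p^\infty\rangle\to\G_a\to 0$ is itself non-split, since a splitting would be a nonzero map $\G_a\to\G_m\langle p^\infty\rangle$, which you have just ruled out. This also shows that $\Ext^1(\G_a,-)$ does not commute with the colimit $\mu_{p^\infty}=\varinjlim\mu_{p^n}$.
\item Your identification $\Ext^1(\G_a,\mu_{p^\infty})\cong\Ext^1(\G_a,\Q_p(1))$ is correct, via $R\Hom(\G_a,\Z_p(1))=0$. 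But $\Ext^1(\G_a,\Q_p(1))\neq0$: the fundamental exact sequence $0\to\Q_p(1)\to(\B_{\crys}^+)^{\varphi=p}\to\G_a\to0$ is non-split, because $(\B_{\crys}^+)^{\varphi=p}=\varprojlim_{[p]}\G_m\langle p^\infty\rangle$ receives no nonzero map from $\G_a$.
\item The vanishing for $\underline{\Z_p}$ does not propagate to $\Q_p(1)=\varinjlim p^{-n}\Z_p(1)$.
\end{itemize}
From this sequence you would actually need the connecting map $\Hom(\G_a,\G_a)\to\Ext^1(\G_a,\mu_{p^\infty})$ to be surjective. That says every extension of $\G_a$ by $\mu_{p^\infty}$ is a pullback of $\G_m\langle p^\infty\rangle$ along a linear map, and you would also need control of the map to $\Ext^1(\G_a,\G_a)$. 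This is a rank-one case of the very classification this proposition is later used to prove.

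\textbf{Your fallback argument hits the same obstruction.}
\begin{itemize}
\item After pushing out along $\psi$ and then along $\log$, the extension splits trivially, because $\log\circ\psi=0$ on the torsion sheaf $\Gs[p^\infty]$. The obstruction to lifting that splitting is exactly the class of $\psi_*\Gs$ in $\Ext^1(\ga,\Lb\otimes\mu_{p^\infty})$, which you do not control.
\item The decomposition $\widetilde{\Gs}\cong\widetilde{\Gs[p^\infty]}\times\ga$ is false. In general $\widetilde{\Gs}$ is a non-split extension of $\ga$ by $V_p\Gs$; for example $\widetilde{\G_m\langle p^\infty\rangle}=(\B_{\crys}^+)^{\varphi=p}$.
\item Extending maps from $V_p\Gs$ to $\widetilde{\Gs}$ is Proposition \ref{Prop: extension from tate module to the whole BC space}, whose proof invokes the present statement, so that route is circular.
\end{itemize}

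\textbf{The missing idea is geometric input on $\A^1_S$.}
\begin{itemize}
\item The paper uses the Breen resolution to identify $\Ext^1_{\et}(\G_a,\G_m\langle p^\infty\rangle)$ with the primitive classes in $H^1_{\et}(\A^1_S,\G_m\langle p^\infty\rangle)$. This step needs $\Hom(\G_a^n,\G_m\langle p^\infty\rangle)=0$, which you do have.
\item It then shows $H^1_{\et}(\A^1_S,\G_m\langle p^\infty\rangle)=0$ for $S$ strictly totally disconnected, using the exact sequence with $\cj{\G}_m=\G_m/\G_m\langle p^\infty\rangle$.
\item In that sequence, the rigidity $H^0(\A^1_S,\cj{\G}_m)=H^0(S,\cj{\G}_m)$ makes the connecting map factor through $H^1_{\et}(S,\G_m\langle p^\infty\rangle)=0$. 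The next term vanishes because $\Pic_{\et}(\A^1_S)=0$ (Lemma \ref{Lemma: Picard group of product in terms of one factor}).
\end{itemize}
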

\begin{proof}
For clarity, we write $\otimes$ for $\otimes_{\Z_p}$. The claim is local on $S_v$, hence we may assume that $S$ is a strictly totally disconnected perfectoid space. Consider the exact sequence, obtained from the logarithm sequence (\ref{eq: logarithm exact sequence for pdiv groups})
% https://q.uiver.app/#q=WzAsNCxbMCwwLCJcXHVuZGVyc2V0ez0wfXtcXHVuZGVyYnJhY2V7XFxIb20oXFxnYSwgXFxMYlxcb3RpbWVzIFxcR19tXFxsYW5nbGUgcF57XFxpbmZ0eX0gXFxyYW5nbGUpfX0iXSxbMSwwLCJcXEhvbShcXEdzLCBcXExiXFxvdGltZXNcXEdfbVxcbGFuZ2xlIHBee1xcaW5mdHl9IFxccmFuZ2xlKSJdLFsyLDAsIlxcSG9tKFxcR3NbcF57XFxpbmZ0eX1dLCBcXExiXFxvdGltZXNcXEdfbVxcbGFuZ2xlIHBee1xcaW5mdHl9IFxccmFuZ2xlKSJdLFswLDEsIlxcRXh0X3tcXGV0fV4xKFxcZ2EsIFxcTGJcXG90aW1lc1xcR19tXFxsYW5nbGUgcF57XFxpbmZ0eX0gXFxyYW5nbGUpLiJdLFswLDFdLFsxLDJdLFsyLDMsIlxcZGVsdGEiXV0=
\[\begin{tikzcd}
	{\underset{=0}{\underbrace{\Hom(\ga, \Lb\otimes \G_m\langle p^{\infty} \rangle)}}} & {\Hom(\Gs, \Lb\otimes\G_m\langle p^{\infty} \rangle)} & {\Hom(\Gs[p^{\infty}], \Lb\otimes\G_m\langle p^{\infty} \rangle)} \\
	{\Ext_{\et}^1(\ga, \Lb\otimes\G_m\langle p^{\infty} \rangle).}
	\arrow[from=1-1, to=1-2]
	\arrow[from=1-2, to=1-3]
	\arrow["\delta", from=1-3, to=2-1]
\end{tikzcd}\]
The leftmost term is zero. Indeed, this may be checked $v$-locally, so that we may assume that $\ga$ and $\Lb$ are trivial. It now follows from the fact that any bounded analytic function on $\A_S^1$ is constant. To conclude, it remains to show that
\[ \Ext_{\et}^1(\ga, \Lb\otimes \G_m\langle p^{\infty} \rangle) =0.\]
As $S$ is strictly totally disconnected, $\Lb=\Z_p^{\oplus r}$ and $\ga = \G_a^{\oplus s}$ are trivial, and we may assume that $r=s=1$. We now use that, by the Breen resolution, we have an isomorphism \cite[Lemma 4.5]{Heuer2023MannWerner}
    \begin{align*}
          \Ext_{\et}^1(\G_{a,S} ,\G_{m,S}\langle p^{\infty} \rangle) = \Ker( H_{\et}^1(\A_S^1,\G_m\langle p^{\infty} \rangle) \xrightarrow{d} H_{\et}^1(\A_S^2,\G_m\langle p^{\infty} \rangle)),
    \end{align*}
    where $d = m^*-\pi_1^*-\pi_2^*$. It applies, as there are no non-constant bounded analytic functions on $\A_S^n$ and hence no nontrivial morphisms $\G_{a,S}^n \rightarrow \G_{m,S}\langle p^{\infty} \rangle$. Consider now the exact sequence
    % https://q.uiver.app/#q=WzAsNCxbMSwwLCJIXjAoXFxBX1NeMSxcXGNqe1xcR31fbSkiXSxbMiwwLCJIX3tcXGV0fV4xKFxcQV9TXjEsXFxHX21cXGxhbmdsZSBwXntcXGluZnR5fSBcXHJhbmdsZSkiXSxbMywwLCJIX3tcXGV0fV4xKFxcQV9TXjEsXFxHX20pLiJdLFswLDAsIkheMChcXEFfU14xLFxcR19tKSJdLFszLDBdLFsxLDJdLFswLDEsIlxcZGVsdGEiXV0=
\[\begin{tikzcd}
	{H^0(\A_S^1,\G_m)} & {H^0(\A_S^1,\cj{\G}_m)} & {H_{\et}^1(\A_S^1,\G_m\langle p^{\infty} \rangle)} & {H_{\et}^1(\A_S^1,\G_m).}
	\arrow[from=1-1, to=1-2]
	\arrow["\delta", from=1-2, to=1-3]
	\arrow[from=1-3, to=1-4]
\end{tikzcd}\]
By Lemma \ref{Lemma: Picard group of product in terms of one factor}(1) below, the map $\delta$ fits in the following commutative diagram
% https://q.uiver.app/#q=WzAsNCxbMCwwLCJIXjAoXFxBX1NeMSxcXGNqe1xcR31fbSkiXSxbMSwwLCJIX3tcXGV0fV4xKFxcQV9TXjEsXFxHX21cXGxhbmdsZSBwXntcXGluZnR5fSBcXHJhbmdsZSkiXSxbMCwxLCJIXjAoUyxcXGNqe1xcR31fbSkiXSxbMSwxLCJIX3tcXGV0fV4xKFMsXFxHX21cXGxhbmdsZSBwXntcXGluZnR5fSBcXHJhbmdsZSk9MCwiXSxbMCwxLCJcXGRlbHRhIl0sWzIsMCwiXFxjb25nIl0sWzMsMV0sWzIsMywiXFxkZWx0YSciLDJdXQ==
\[\begin{tikzcd}
	{H^0(\A_S^1,\cj{\G}_m)} & {H_{\et}^1(\A_S^1,\G_m\langle p^{\infty} \rangle)} \\
	{H^0(S,\cj{\G}_m)} & {H_{\et}^1(S,\G_m\langle p^{\infty} \rangle)=0,}
	\arrow["\delta", from=1-1, to=1-2]
	\arrow["\cong", from=2-1, to=1-1]
	\arrow["{\delta'}"', from=2-1, to=2-2]
	\arrow[from=2-2, to=1-2]
\end{tikzcd}\]
so that $\delta=0$. Moreover, Lemma \ref{Lemma: Picard group of product in terms of one factor}(2) also gives the vanishing of the term on the right of the above sequence. It follows that $H_{\et}^1(\A_S^1,\G_m\langle p^{\infty} \rangle)=0$, which gives the result.
\end{proof}

\begin{lemma}\label{Lemma: Picard group of product in terms of one factor}
    Let $S$ be a good affinoid adic space over $\Spa(K)$, where $K$ is a perfectoid field extension of $\Q_p$.
    \begin{enumerate}
        \item We have 
        \[ H^0(\B_S^1,\cj{\G}_m) = H^0(S,\cj{\G}_m) = H^0(\A_S^1,\cj{\G}_m).\]
        \item If $S$ is a totally disconnected perfectoid space, then
        \[\Pic_{\et}(\B_S^1) =0 = \Pic_{\et}(\A_S^1).\]
    \end{enumerate}
\end{lemma}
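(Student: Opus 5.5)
Here $\cj{\G}_m = \G_m/\G_m\langle p^{\infty}\rangle$. For (1), I will compare units up to the subgroup of topologically $p$-torsion units. Since $S$ is good, global functions on $\B_S^1$ and $\A_S^1$ are computed on the $v$-site. Affinoid-locally, $\Os(\B_S^1)=A\langle T\rangle$ and $\Os(\A_S^1)$ is the ring of entire power series $\sum a_nT^n$ with $|a_n|r^n\to 0$ for all $r$.

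Let $f=\sum a_nT^n$ be a section of $\cj{\G}_m$ over $\B_S^1$. It lifts locally on $\B_S^1$ to a unit, and my plan is to show that the unit can be written as $a_0\cdot(1+g)$, where $a_0\in \Os(S)^\times$ (after shrinking) and $g$ is topologically nilpotent. Then $1+g\in \G_m\langle p^\infty\rangle$, so the class of $f$ comes from $S$.

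The key input is the structure of units in $A\langle T\rangle$ for $A$ reduced (goodness gives reducedness). Pointwise on $S$, over a field $(k,k^+)$, a unit of $k\langle T\rangle$ has $|a_0|>|a_n|$ for $n\ge1$. I would like to deduce from this that $a_n/a_0$ is topologically nilpotent uniformly, but this is the delicate step. The bound I expect is only $|a_n(x)/a_0(x)|<1$ at each point, which gives powerboundedness of the $a_n/a_0$ in the uniform ring; it does not by itself give topological nilpotence. One must also check that $\sum_{n\geq1}(a_n/a_0)T^n$ is topologically nilpotent as a function on $\B_S^1$. I expect to handle this by working $v$-locally with $S=\Spa(R,R^+)$ perfectoid, where the condition $|g|<1$ at all points plus quasicompactness of $\B^1_S$ yields $|g|\le|\varpi|^{\epsilon}$. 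For the entire case $\A_S^1$, the pointwise statement is that a unit entire function is constant, and I would run the same argument on balls of every radius. Injectivity of the restriction from $S$ to $\B^1_S$ follows by pulling back along the zero section.

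For (2), let $S$ be totally disconnected perfectoid. A line bundle on $\B_S^1$ becomes trivial $v$-locally, and I want to reduce its triviality to a statement about $\G_m$ on the fibres. My plan is to use Kedlaya–Liu/Heuer-type results: for affinoid perfectoid $S$, $\B_S^1$ is sousperfectoid, and $\Pic$ of $\B^1_S$ should agree with $\Pic$ of $S$ through the pointwise Picard triviality of $\B^1_{k}$ over fields. This holds because $k\langle T\rangle$ is a PID. Since $S$ is totally disconnected, its Picard group vanishes, so I would argue by spreading out from the stalks: a trivialisation over $\B^1_{x}$ for each $x\in S$ extends to a neighbourhood, and total disconnectedness lets me glue. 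For $\A_S^1=\bigcup_r \B_S^1(r)$, I would write $\Pic(\A^1_S)$ via the Milnor sequence, where the $\varprojlim^1$ term is a $\varprojlim^1$ of unit groups. By (1) the unit groups are constant up to $\G_m\langle p^{\infty}\rangle$, which is Mittag-Leffler, so that term vanishes.

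I expect the spreading-out of trivialisations from fibres to be the main obstacle.
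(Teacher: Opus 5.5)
The gap is in part (1). You take a section of $\cj{\G}_m$ over $\B^1_S$, note that it lifts to units only locally on $\B^1_S$, and then apply the description of units of $A\langle T\rangle$. That description concerns global units on $\B^1_{S'}$ for opens $S'\subseteq S$. The local lifts, however, live on an arbitrary rational or \'etale cover of $\B^1_S$ (discs with holes, annuli, \dots), and units there are not of the form $a_0(1+g)$: on $\{|T|=1\}$ the unit $T$ has non-constant class in $\cj{\G}_m$. So shrinking $S$ never puts you in the situation you analyse.

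What your argument actually proves is only $\Os(\B^1_S)^\times=\Os(S)^\times\cdot\G_m\langle p^{\infty}\rangle(\B^1_S)$, i.e.\ that the image of $H^0(\B^1_S,\G_m)$ in $H^0(\B^1_S,\cj{\G}_m)$ comes from $S$. The substance of (1) is that the compatibility of the local lifts modulo $\G_m\langle p^{\infty}\rangle$ forces the class to be constant along the fibres. For strictly totally disconnected $S$, this is exactly what makes the connecting map $H^0(\A^1_S,\cj{\G}_m)\to H^1_{\et}(\A^1_S,\G_m\langle p^{\infty}\rangle)$ vanish in the proof of Proposition \ref{Prop: extension from p-torsion to the whole group}. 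It is the whole point of the lemma and does not follow from a computation of global units; the paper imports it from \cite[Lemma 6.6]{heuer2021line}. (The step you call delicate is harmless: for uniform $A$, $|h(x)|<1$ at every point gives spectral seminorm $<1$ by compactness of the Berkovich spectrum, hence topological nilpotence.)

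In (2), your argument for $\B^1_S$ is the paper's: pointwise triviality because $L\langle T\rangle$ is a PID, spreading out from the fibre (which the paper takes from \cite[Prop. 2.3]{heuer2021lineonpefd}), and splitting of covers of a totally disconnected space. Your Milnor sequence for $\A^1_S=\bigcup_r\B^1_S(r)$ is equivalent to the paper's \v{C}ech argument.

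However, the vanishing of $\varprojlim^1_r\Os(\B^1_S(r))^\times$ is not a Mittag-Leffler phenomenon. The images of $\G_m\langle p^{\infty}\rangle(\B^1_S(r'))$ in $\G_m\langle p^{\infty}\rangle(\B^1_S(r))$ strictly decrease as $r'\to\infty$: for instance $1+cT$ with $|c|r<1\le|c|r'$ is not in the image. They are not even dense. The vanishing holds for a different reason:
\begin{itemize}
\item Split $u\mapsto(u(0),u/u(0))$, which uses only your global-unit computation. The constant factor $\Os(S)^\times$ then contributes nothing.
\item For $u$ with $u(0)=1$ one has $|u-1|_s\le (s/r)\,|u-1|_r$ for $s\le r$.
\item Hence, for radii growing geometrically, the products $\prod_{r\ge s}u_r|_{\B^1_S(s)}$ converge in $\G_m\langle p^{\infty}\rangle(\B^1_S(s))$ and solve the telescoping equation.
\end{itemize}
The paper obtains this vanishing from \cite[Lemma 6.15]{heuer2021line}.
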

\begin{proof}
    \begin{enumerate}
        \item For $\B_S^1$, this is \cite[Lemma 6.6]{heuer2021line}. The result for $\A_S^1$ is derived by considering the open covering by closed balls of increasing radii.
        \item We start with $\B_S^1$. Let $\Ls$ be a line bundle on $\B_S^1$ and let $s\in S$ be any point with corresponding map $\Spa(L,L^+) \rightarrow S$. Since $L\langle T \rangle$ is a principal ideal domain, $\Pic_{\et}(\B_{(L,L^+)}^1) =0$, so that $\Ls\restr{\B_{s}^1}=0$. By \cite[Prop. 2.3]{heuer2021lineonpefd}, there exists an open neighborhood $s \in U \sub S$ such that $\Ls\restr{\B_U^1}$ is trivial. We can thus find an affinoid open covering $S = \bigcup_i U_i$ trivializing $\Ls$. We now use that $S$ is totally disconnected, so that this open cover splits. It follows that $\Ls$ is trivial, and we have proven that $\Pic_{\et}(\B_S^1)=0$. For $\A_S^1$, the \v{C}ech-to-Derived spectral sequence associated with the covering $\Us = \{\B_S^1(n)\}$ of $\A_S^1$ by closed balls yields
        \[ \Pic_{\et}(\A_S^1) = \check{H}^1(\Us,\G_m). \]
        By \cite[Lemma 6.15]{heuer2021line}, the group on the right vanishes, which concludes the proof.
    \end{enumerate}
\end{proof}

Proposition \ref{Prop: extension from p-torsion to the whole group} has the following immediate consequence.
\begin{cor}\label{Cor: extension of Weil pairing}
    Let $S$ be a good adic space over $\Q_p$ and let $\Gs \rightarrow S$ be an analytic $p$-divisible group. Then there is a natural isomorphism of $v$-sheaves
    \[ T_p\Gs^\vee(1)= \underline{\Hom}(\Gs,\G_m\langle p^{\infty} \rangle).\]
\end{cor}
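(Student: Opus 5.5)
The plan is to apply Proposition \ref{Prop: extension from p-torsion to the whole group} with the trivial local system $\Lb = \underline{\Z_p}$. It gives an isomorphism of $v$-sheaves
\[ \underline{\Hom}(\Gs[p^{\infty}],\G_m\langle p^{\infty}\rangle) \cong \underline{\Hom}(\Gs,\G_m\langle p^{\infty}\rangle), \]
given by restriction along $\Gs[p^{\infty}]\hookrightarrow \Gs$. It therefore remains to identify the left-hand side with $T_p\Gs^{\vee}(1) = \underline{\Hom}(\Gs[p^{\infty}],\mu_{p^{\infty}})$, as in the discussion preceding Theorem \ref{thm: existence of analytic Weil pairings}.

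For this identification, take a $v$-local section $\phi\colon \Gs[p^{\infty}]\to \G_m\langle p^{\infty}\rangle$. Since $\Gs[p^{\infty}] = \varinjlim_n \Gs[p^n]$, each $\phi|_{\Gs[p^n]}$ is killed by $p^n$, so it lands in $\G_m\langle p^{\infty}\rangle[p^n] = \mu_{p^n}$. Hence $\phi$ factors uniquely through $\mu_{p^{\infty}} = \G_m\langle p^{\infty}\rangle[p^{\infty}]$. This gives
\[ \underline{\Hom}(\Gs[p^{\infty}],\G_m\langle p^{\infty}\rangle) = \underline{\Hom}(\Gs[p^{\infty}],\mu_{p^{\infty}}) = \varprojlim_n \underline{\Hom}(\Gs[p^n],\mu_{p^n}) = T_p\Gs^{\vee}(1). \]
Composing with the isomorphism from the first paragraph yields the desired isomorphism. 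Under it, a section of $T_p\Gs^{\vee}(1)$ corresponds to the unique homomorphism $\Gs\to\G_m\langle p^{\infty}\rangle$ extending the corresponding Weil pairing map, which matches the description in Theorem \ref{thm: existence of analytic Weil pairings}.

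Naturality in $S$ and $\Gs$ should follow because restriction along $\Gs[p^{\infty}]\subset\Gs$ is natural and the isomorphism of Proposition \ref{Prop: extension from p-torsion to the whole group} is induced by it. One also needs to match the $\Z_p$-module structures. By Lemma \ref{Lemma: turn Hom into tensor}(1), $\Z$-linear and $\Z_p$-linear Hom-sheaves into $\G_m\langle p^{\infty}\rangle$ agree, since $\G_m\langle p^{\infty}\rangle$ carries its natural $\Z_p$-module structure, so no discrepancy arises. The only real input is Proposition \ref{Prop: extension from p-torsion to the whole group}. The remaining step that needs any care is checking that the $p^n$-torsion of $\G_m\langle p^{\infty}\rangle$ is exactly $\mu_{p^n}$ as $v$-sheaves, and this is immediate from the definition.
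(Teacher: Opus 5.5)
Your proposal is correct and is essentially the paper's argument: the corollary is stated as an immediate consequence of Proposition~\ref{Prop: extension from p-torsion to the whole group} with $\Lb = \underline{\Z_p}$, combined with the identification $T_p\Gs^{\vee}(1) = \underline{\Hom}(\Gs[p^{\infty}],\mu_{p^{\infty}})$ from the discussion preceding Theorem~\ref{thm: existence of analytic Weil pairings}. Your step showing that homomorphisms $\Gs[p^{\infty}] \to \G_m\langle p^{\infty}\rangle$ factor through $\mu_{p^{\infty}} = \G_m\langle p^{\infty}\rangle[p^{\infty}]$ just makes explicit what the paper leaves implicit.
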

\subsection{Analytic $p$-divisible groups and Hodge--Tate triples}
The goal of this subsection is to prove the following theorem.
\begin{thm}\label{thm: extending Fargues' equivalence of categories}
    Let $S$ be a good adic space over $\Q_p$ (cf. Definition \ref{def: good adic spaces}). Then there is an equivalence of categories between
    \begin{itemize}
        \item The category of analytic $p$-divisible groups $\Gs \rightarrow S$, and
        \item The category of tuples $(\Lb,E,f)$ where $\Lb$ is a $\Z_p$-local system on $S_v$, $E$ is a vector bundle on $S_{\et}$ and $f\colon E\otimes_{\Os_{S_{\et}}} \Os_{S_v} \rightarrow \Lb(-1)\otimes_{\Z_p} \Os_{S_v}$ is a morphism of $v$-vector bundles.
\end{itemize}
If $\Gs$ and $(\Lb,E,f)$ correspond to each-other, we have $\Lb = T_p\Gs$ and $E=\Lie(\Gs)$. The equivalence sends $(\Lb, E,f)$ to the pullback in group sheaves over $S_v$
% https://q.uiver.app/#q=WzAsNCxbMCwwLCJcXEdzIl0sWzEsMCwiRVxcb3RpbWVzX3tcXE9zX1N9XFxHX2EiXSxbMSwxLCJcXExiKC0xKVxcb3RpbWVzX3tcXFpfcH1cXEdfYS4iXSxbMCwxLCJcXExiKC0xKVxcb3RpbWVzX3tcXFpfcH1cXEdfbVxcbGFuZ2xlIHBee1xcaW5mdHl9IFxccmFuZ2xlIl0sWzAsMV0sWzEsMiwiZiJdLFszLDIsIlxcaWQgXFxvdGltZXMgXFxsb2ciLDJdLFswLDNdXQ==
\[\begin{tikzcd}
	\Gs & {E\otimes_{\Os_S}\G_a} \\
	{\Lb(-1)\otimes_{\Z_p}\G_m\langle p^{\infty} \rangle} & {\Lb(-1)\otimes_{\Z_p}\G_a.}
	\arrow[from=1-1, to=1-2]
	\arrow[from=1-1, to=2-1]
	\arrow["f", from=1-2, to=2-2]
	\arrow["{\id \otimes \log}"', from=2-1, to=2-2]
\end{tikzcd}\]
The equivalence is compatible with base-change along arbitrary morphisms $S' \rightarrow S$ of good adic spaces.
\end{thm}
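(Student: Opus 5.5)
We construct functors in both directions and check that they are mutually inverse. Given $\Gs$, set $\Lb = T_p\Gs$ and $E=\Lie(\Gs)$. By Corollary \ref{Cor: extension of Weil pairing} we have $T_p\Gs^\vee(1) = \underline{\Hom}(\Gs,\G_m\langle p^{\infty}\rangle)$. Combined with Lemma \ref{Lemma: turn Hom into tensor}(2), this turns the analytic Weil pairing $e_{\Gs}$ into a homomorphism of $v$-sheaves
\[ \widehat{e}\colon \Gs \rightarrow T_p\Gs(-1)\otimes_{\Z_p}\G_m\langle p^{\infty}\rangle .\]
On $\Gs[p^\infty]$ it restricts to the identity onto $T_p\Gs(-1)\otimes\mu_{p^\infty}=\Gs[p^\infty]$. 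Composing with $\id\otimes\log$ kills $\Gs[p^\infty]$. By the right-exact logarithm sequence of Lemma \ref{lemma: log is right-exact for an pdiv}, the composite therefore factors through $\log_{\Gs}$, which yields a map of $v$-sheaves $f_{\Gs}\colon \ga \rightarrow T_p\Gs(-1)\otimes\G_a$.

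Next we check that $f_{\Gs}$ is $\Os_{S_v}$-linear, so that it is a map of $v$-vector bundles. It is additive. $\Q_p$-linearity follows because $[p]$ is invertible on $\ga$. For full $\Os$-linearity we work $v$-locally on a strictly totally disconnected $S$, where both sides are trivial. A homomorphism $\G_a^s\rightarrow\G_a^r$ of group diamonds over a perfectoid base is then given by a matrix of additive analytic functions. These are linear because the only additive power series in characteristic $0$ are linear. Alternatively, $f$ agrees with the derivative of $\widehat e$ at the identity, and derivatives are linear. Naturality of $e_{\Gs}$ in $\Gs$ and $S$ gives functoriality and base change.

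For the inverse, given $(\Lb,E,f)$ we define $\Gs$ as the fibre product in the statement. We must show that $\Gs$ is representable by a smooth $S$-group which is analytic $p$-divisible; this is the main obstacle. The map $\id\otimes\log\colon \Lb(-1)\otimes\G_m\langle p^\infty\rangle\rightarrow \Lb(-1)\otimes\G_a$ is étale surjective with kernel $\Lb(-1)\otimes\mu_{p^\infty}$, an étale $p$-divisible torsion local system. Hence $\Gs\rightarrow E\otimes\G_a$ is a torsor under $\Lb(-1)\otimes\mu_{p^\infty}$, and in particular étale over the geometric vector bundle $E\otimes\G_a$.

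To obtain representability, we first work $v$-locally on perfectoid covers, where $\Lb$ is trivial and the fibre product is visibly a perfectoid-type adic space étale over a vector bundle. The difficulty is that $\Lb$ only lives on $S_v$ while we need an adic space over $S$. We handle this by writing $\Gs$ as an étale sheaf over the adic space $E\otimes\G_a$ given by a union of the finite étale covers $\Gs\times_{\Lb(-1)\otimes\mu_{p^\infty}}\ldots$, and descending finite étale covers along $v$-covers using (\ref{eq: from etale ls to v ls}) applied over $E\otimes\G_a$. This uses that $E\otimes\G_a$ is good, being smooth over $S$, so that étale spaces over it form a $v$-stack via the fully faithful diamond functor (\ref{eq: diamond functor ff on good adic spaces}). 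Concretely, the $\Lb(-1)\otimes\mu_{p^n}$-part gives finite étale covers of $\ga$, and we glue them. Once this is done, the group axioms, the equality $\Gs=\Gs\langle p^\infty\rangle$, and the fact that $[p]$ is finite étale surjective can all be checked $v$-locally. There they reduce to the corresponding properties of $\G_m\langle p^\infty\rangle$ and of $\G_a$.

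Finally we verify that the two constructions are mutually inverse. Starting from $\Gs$, the pair $(\widehat e,\log_{\Gs})$ gives a map from $\Gs$ to the fibre product. This map is an isomorphism of $v$-sheaves by the five lemma applied to the two logarithm sequences, since it is the identity on $\Gs[p^\infty]$ and the identity on $\ga$. It is therefore an isomorphism of adic groups by full faithfulness of the diamond functor. Conversely, starting from $(\Lb,E,f)$, we read off $T_p\Gs=\Lb$ and $\Lie\Gs=E$. The map $\widehat e$ attached to the resulting $\Gs$ is the projection to the second factor, by the uniqueness in Theorem \ref{thm: existence of analytic Weil pairings}. Hence the recovered map is $f$.
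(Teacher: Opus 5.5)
Your architecture is the paper's: $\widehat e$ is the map $u_{\Gs}$ of Definition \ref{def: u and f}, the inverse functor is the same fibre product, and your five-lemma comparison is Proposition \ref{Prop: diagram relating f and u}. The paper does not prove representability by hand. It takes two facts from \cite[Lemma 3.38]{gerth2024}: that the fibre product is representable by an analytic $p$-divisible group, and that its defining sequence is its logarithm sequence. Your substitute for the first fact is not correct as written. The map $\Gs\to E\otimes\G_a$ is in general not a union of finite étale covers of $E\otimes\G_a$, and no ``$\Lb(-1)\otimes\mu_{p^n}$-part'' is finite étale over all of $\ga$. Take $S=\Spa(\Q_p)$, $\Lb=\Z_p(1)$, $E=\Os_S$, $f=\id$. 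The fibre product is then $\log\colon \G_m\langle p^{\infty}\rangle\to\G_a$. If there were a $\mu_{p^n}$-subtorsor $T$ over all of $\A^1$, the map $t\mapsto t^{p^n}$ would descend to some $\sigma\colon\A^1\to\G_m\langle p^{\infty}\rangle$ with $\log\sigma(x)=p^nx$. That would be a bounded non-constant analytic function on $\A^1$, which does not exist. The torsor only reduces to finite level over small pieces, and producing those pieces is the actual content. Also, full faithfulness of (\ref{eq: diamond functor ff on good adic spaces}) only lets morphisms descend; it says nothing about descent of objects being effective.

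A working version follows the proof of Proposition \ref{prop: generic fibers of log p-divisible groups}(1).
\begin{itemize}
\item Locally on $S$, choose a small ball $W\subseteq\ga$ with $f(W)\subseteq\Lb(-1)\otimes_{\Z_p}D$, where $D\subseteq\G_a$ is the disc on which $\exp$ converges. This is possible because $f$ becomes bounded after a quasi-compact perfectoid $v$-cover.
\item The map $e\mapsto(\exp f(e),e)$ then identifies $W$ with an open subgroup $U\subseteq\Gs$.
\item Each $[p^n]^{-1}(U)\to U$ is a $v$-torsor under the finite étale group $\Lb(-1)\otimes\mu_{p^n}$. By $v$-descent of finite étale maps it is representable.
\item Finally $\Gs=\bigcup_n[p^n]^{-1}(U)$, since $\Gs$ is topologically $p$-torsion.
\end{itemize}
Alternatively, descend the separated étale map $\Gs\to\ga^{\diamondsuit}$ directly, using $v$-descent of separated étale maps together with $X_{\et}\cong X^{\diamondsuit}_{\et}$.

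In the converse direction you also tacitly use the second fact from the cited lemma: that $\log_{\Gs}$ of the fibre product is the projection $\mathrm{pr}_E$. Knowing $\widehat e=\mathrm{pr}_{\G_m}$ only gives $f_{\Gs}\circ\log_{\Gs}=f\circ\mathrm{pr}_E$, not $f_{\Gs}=f$. This is easily filled with your derivative description of $f_{\Gs}$. Differentiating $(\id\otimes\log)\circ\mathrm{pr}_{\G_m}=f\circ\mathrm{pr}_E$ at the identity gives $f_{\Gs}=f$ under $d\,\mathrm{pr}_E\colon\Lie(\Gs)\cong E$. Equivalently, two homomorphisms from a topologically $p$-torsion group to a vector group with the same derivative must agree: their difference vanishes near the identity, and the target is torsion-free.
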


\begin{remark}
\begin{enumerate}
    \item When $X=\Spa(K)$, for a non-archimedean field $K$ with completed algebraic closure $C$, Theorem \ref{thm: extending Fargues' equivalence of categories} yields an equivalence of categories between analytic $p$-divisible groups over $K$ and tuples $(\Lb, V,f)$, where $\Lb$ is a finite free $\Z_p$-linear continuous representation of $\Gal_K$, $V$ is a finite-dimensional $K$-vector space and $f\colon V \otimes_K C \rightarrow  \Lb(-1) \otimes_{\Z_p} C$ is a $C$-linear $\Gal_K$-equivariant morphism. This recovers a theorem of Fargues \cite[Théorème 0.1]{Farg19}. 
    \item If $S$ is a locally spatial diamond over $\Spd(\Q_p)$, the equivalence of categories of Theorem \ref{thm: extending Fargues' equivalence of categories} and its proof extend without changes to classify analytic $p$-divisible groups over $S$ (cf. Remark \ref{rmk: p-divisible groups over LSD}).
    \item We will show later in Theorem \ref{Thm: comparison theorem for analytic p-divisible groups} that the equivalence of Theorem \ref{thm: extending Fargues' equivalence of categories} is compatible with higher direct images along proper smooth morphisms of rigid spaces.
\end{enumerate}
\end{remark}
% \begin{remark}
%     Let $E/\Q_p$ be a finite field extension. For a good adic space $S$ over $\Spa(E)$, there is a notion of analytic $p$-divisible $S$-groups $\Gs$ with $\Os_E$-action, that is, $\Gs$ comes equipped with a ring morphism $\Os_E \rightarrow \End_S(\Gs)$. We will always assume that the Kottwitz condition is satisfied, i.e. the induced $\Os_E$-action on the Lie algebra coincides with scalar multiplication through $\Os_E \sub \Os(S)$. Then Theorem \ref{thm: extending Fargues' equivalence of categories} and its proof extend to an equivalence of categories between
%     \begin{itemize}
%         \item Analytic $p$-divisible $S$-groups $\Gs$ with $\Os_E$-action, and
%         \item tuples $(\Lb,V,f)$ where $\Lb$ is an $\Os_E$-local system on $S_v$, $V$ is a vector bundle on $S_{\et}$ and $f\colon V\otimes_{\Os_{S_{\et}}} \Os_{S_v} \rightarrow \Lb(-1)\otimes_{\Os_E} \Os_{S_v}$ is a morphism of $v$-vector bundles.
% \end{itemize}
% \end{remark}

We now turn to the proof of Theorem \ref{thm: extending Fargues' equivalence of categories}. We start with some definitions.
\begin{definition}\label{def: u and f}
    Let $\Gs \rightarrow S$ be an analytic $p$-divisible group.
    \begin{enumerate}
        \item We define the map
        \[ u_{\Gs}\colon \Gs \rightarrow \underline{\Hom}(T_p\Gs^{\vee}(1),\G_m\langle p^{\infty} \rangle) = T_p\Gs(-1)\otimes_{\Z_p}\G_m\langle p^{\infty} \rangle, \quad g \mapsto e(g,-)\]
        by currying the analytic Weil pairing 
        \[ e\colon \Gs \times T_p\Gs^{\vee}(1) \rightarrow \G_m\langle p^{\infty} \rangle \]
        of Theorem \ref{thm: existence of analytic Weil pairings}.
        \item We define
\[ f_{\Gs}\colon \ga \rightarrow \underline{\Hom}(T_p\Gs^{\vee}(1),\G_a) = T_p\Gs(-1)\otimes_{\Z_p}\G_a\] obtained by currying the following composition
\[ T_p\Gs^{\vee}(1) \xrightarrow{t \mapsto e(-,t)} \underline{\Hom}(\Gs,\G_m\langle p^{\infty} \rangle) \rightarrow\underline{\Hom}(\ga,\G_a), \] 
where the last map sends a group homomorphism to its derivative at the identity.
    \end{enumerate}
\end{definition}

The following proposition follows immediately from the definitions.
\begin{proposition}\label{Prop: diagram relating f and u}
    Let $\Gs \rightarrow S$ be an analytic $p$-divisible group. Then the morphism of $v$-sheaves $u_{\Gs},f_{\Gs}$ defined above fit in the following commutative diagram of sheaves on $S_v$ with exact rows
    % https://q.uiver.app/#q=WzAsMTAsWzIsMCwiXFxHcyJdLFszLDAsIlxcZ2EiXSxbMywxLCJUX3BcXEdzKC0xKVxcb3RpbWVzX3tcXFpfcH1cXEdfYSJdLFsyLDEsIlRfcFxcR3MoLTEpXFxvdGltZXNfe1xcWl9wfVxcR19tXFxsYW5nbGUgcF57XFxpbmZ0eX0gXFxyYW5nbGUiXSxbNCwwLCIwIl0sWzQsMSwiMC4iXSxbMSwwLCJcXEdzW3Bee1xcaW5mdHl9XSJdLFsxLDEsIlRfcFxcR3MoLTEpXFxvdGltZXNfe1xcWl9wfVxcbXVfe3Bee1xcaW5mdHl9fSJdLFswLDEsIjAiXSxbMCwwLCIwIl0sWzAsMSwiXFxsb2dfe1xcR3N9Il0sWzEsMiwiZl97XFxHc30iXSxbMywyLCJcXGlkIFxcb3RpbWVzIFxcbG9nIiwyXSxbMCwzLCJ1X3tcXEdzfSJdLFsxLDRdLFsyLDVdLFs2LDBdLFs3LDNdLFs4LDddLFs5LDZdLFs2LDcsIj0iLDJdXQ==
\begin{equation}\begin{tikzcd}
	0 & {\Gs[p^{\infty}]} & \Gs & \ga & 0 \\
	0 & {T_p\Gs(-1)\otimes_{\Z_p}\mu_{p^{\infty}}} & {T_p\Gs(-1)\otimes_{\Z_p}\G_m\langle p^{\infty} \rangle} & {T_p\Gs(-1)\otimes_{\Z_p}\G_a} & {0.}
	\arrow[from=1-1, to=1-2]
	\arrow[from=1-2, to=1-3]
	\arrow["{=}"', from=1-2, to=2-2]
	\arrow["{\log_{\Gs}}", from=1-3, to=1-4]
	\arrow["{u_{\Gs}}", from=1-3, to=2-3]
	\arrow[from=1-4, to=1-5]
	\arrow["{f_{\Gs}}", from=1-4, to=2-4]
	\arrow[from=2-1, to=2-2]
	\arrow[from=2-2, to=2-3]
	\arrow["{\id \otimes \log}"', from=2-3, to=2-4]
	\arrow[from=2-4, to=2-5]
\end{tikzcd}\end{equation}
\end{proposition}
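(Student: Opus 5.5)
The plan is to check the four conditions that make up the statement: the top row is exact, the bottom row is exact, the left square commutes, and the right square commutes.

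\emph{Top row.} This is exactly the logarithm sequence (\ref{eq: logarithm exact sequence for pdiv groups}), whose right-exactness is Lemma \ref{lemma: log is right-exact for an pdiv}.

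\emph{Bottom row.} I would first write down the logarithm sequence for $\G_m\langle p^{\infty}\rangle$ itself,
\[ 0 \rightarrow \mu_{p^{\infty}} \rightarrow \G_m\langle p^{\infty}\rangle \xrightarrow{\log} \G_a \rightarrow 0. \]
This is again an instance of Lemma \ref{lemma: log is right-exact for an pdiv}, applied to the analytic $p$-divisible group $\G_m\langle p^{\infty}\rangle$ (Example \ref{ex: examples of analytic p-divisible groups}(1)). Next I would tensor with the $\Z_p$-local system $T_p\Gs(-1)$. This is exact because $T_p\Gs(-1)$ is $v$-locally isomorphic to $\Z_p^{\oplus h}$, and exactness of a sequence of $v$-sheaves may be checked $v$-locally.

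\emph{Left square.} By Corollary \ref{Cor: extension of Weil pairing} and Theorem \ref{thm: existence of analytic Weil pairings}, the analytic Weil pairing $e$ restricts on $\Gs[p^{\infty}]$ to the classical Weil pairing with values in $\mu_{p^{\infty}}$. Hence $u_{\Gs}$ restricted to $\Gs[p^{\infty}]$ is the curried classical pairing
\[ \Gs[p^{\infty}] \rightarrow \underline{\Hom}(T_p\Gs^{\vee}(1),\mu_{p^{\infty}}) = T_p\Gs(-1)\otimes\mu_{p^{\infty}}. \]
I would then argue that this map is an isomorphism. Working $v$-locally, $T_p\Gs$ is trivial, so it suffices to check that the double-duality map $\Gs[p^n] \rightarrow \underline{\Hom}(\underline{\Hom}(\Gs[p^n],\mu_{p^n}),\mu_{p^n})$ is an isomorphism at each finite level. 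That holds because $\Gs[p^n]$ is $v$-locally a constant finite $\Z/p^n$-module.

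\emph{Right square.} For each section $t$ of $T_p\Gs^{\vee}(1)$, the homomorphism $e(-,t)\colon \Gs \rightarrow \G_m\langle p^{\infty}\rangle$ is compatible with the logarithms, by naturality of $\log$ (Proposition \ref{prop: log}):
\[ \log\circ e(-,t) = d\,e(-,t)\circ\log_{\Gs}, \]
where $d\,e(-,t)$ denotes the derivative at the identity. Currying in $t$ gives $(\id\otimes\log)\circ u_{\Gs} = f_{\Gs}\circ\log_{\Gs}$, which is commutativity of the right square.

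The only point I expect to need care is the naturality identity $\log_{\Gs'}\circ\phi = d\phi\circ\log_{\Gs}$ for a homomorphism $\phi$ of smooth commutative $S$-groups. I take this to be part of the functoriality asserted in Proposition \ref{prop: log}. If it is not explicitly available there, I would check it $v$-locally: since $\log_{\Gs}$ is surjective, the identity can be tested on the image of $\log_{\Gs}$. There the formula $\log_{\Gs}(g) = \lim_n p^{-n}\cdot(\text{derivative data of } g^{p^n})$ makes compatibility with homomorphisms clear.
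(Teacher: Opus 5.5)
Your proposal is correct and matches the paper, which simply says the proposition follows immediately from the definitions: the rows are the logarithm sequences of $\Gs$ and of $T_p\Gs(-1)\otimes_{\Z_p}\G_m\langle p^{\infty}\rangle$, the left square commutes because $e_{\Gs}$ extends the classical Weil pairing, and the right square commutes by naturality of the logarithm (Proposition \ref{prop: log}). Your fallback argument for that naturality is unnecessary, and as phrased it is not really an argument. Functoriality of $\log$ under homomorphisms of smooth commutative groups is part of the cited proposition, and the paper relies on it elsewhere.
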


\begin{proof}[Proof of Theorem \ref{thm: extending Fargues' equivalence of categories}]
    Let us call $F$ the functor in the statement of the theorem. Let $(\Lb,E,f)$ be a tuple as in the statement and let $\Gs = F(\Lb,E,f)$ denote the resulting fiber product. By construction, $\Gs$ fits in a short exact sequence of sheaves on $S_v$
    % https://q.uiver.app/#q=WzAsNSxbMiwwLCJcXEdzIl0sWzMsMCwiRVxcb3RpbWVzX3tcXE9zX1N9IFxcR19hIl0sWzQsMCwiMC4iXSxbMSwwLCJcXExiKC0xKVxcb3RpbWVzX3tcXFpfcH1cXG11X3twXntcXGluZnR5fX0iXSxbMCwwLCIwIl0sWzAsMV0sWzEsMl0sWzMsMF0sWzQsM11d
\[\begin{tikzcd}
	0 & {\Lb(-1)\otimes_{\Z_p}\mu_{p^{\infty}}} & \Gs & {E\otimes_{\Os_S} \G_a} & {0.}
	\arrow[from=1-1, to=1-2]
	\arrow[from=1-2, to=1-3]
	\arrow[from=1-3, to=1-4]
	\arrow[from=1-4, to=1-5]
\end{tikzcd}\]
Hence it follows from \cite[Lemma 3.38]{gerth2024} that $\Gs$ is representable by an analytic $p$-divisible group, so that the functor $F$ is well-defined. We construct a functor $F^{-1}$ in the reverse direction, that sends an analytic $p$-divisible group $\Gs$ to the tuple $(T_p\Gs, \Lie(\Gs),f_{\Gs})$. By construction, the map $f_{\Gs}$ is natural in $\Gs$, so that $F^{-1}$ extends to a functor. Proposition \ref{Prop: diagram relating f and u} shows that $F^{-1} \circ F=\id$. To conclude that $F^{-1} \circ F=\id$, we need to show that the short exact sequence in the definition of $\Gs = F(\Lb,E,f)$ agrees with the logarithm sequence for $\Gs$. This is \cite[Lemma 3.38]{gerth2024}, which concludes.
\end{proof}

\begin{remark}\label{remark: an pdiv groups don't descent outside of perfd}
    It follows from Theorem \ref{thm: extending Fargues' equivalence of categories} and (\ref{eq: etale and v vb equivalent for perfectoids}) that analytic $p$-divisible groups satisfy descent along $v$-covers $S' \rightarrow S$ of perfectoid spaces. We caution that this does not hold if $S$ and $S'$ are allowed to be arbitrary good adic spaces. Indeed, given a $p$-divisible group $\Gs' \rightarrow S'$ endowed with a descent data, the Lie algebra $\Lie(\Gs')$ descends to a $v$-vector bundle on $S$, rather than an étale vector bundle. Examples of non-effective descent data can be obtained by twisting a given $p$-divisible group $\Gs$ over $S$ by a $\Z_p$-local system $\Lb$ that is trivialized by $S'$.
\end{remark}

We now collect consequences of Theorem \ref{thm: extending Fargues' equivalence of categories}.

\begin{cor}\label{cor: quotients of finite etale subgroups}
    Let $\Gs \rightarrow S$ be an analytic $p$-divisible group and let $\Gamma\sub \Gs$ be an $S$-subgroup with $\Gamma\rightarrow S$ finite étale. Consider the quotient $\Gs/\Gamma$ as étale sheaves on the category $\LSD_{S,\et}$ of locally spatial diamonds over $S$. Then $\Gs/\Gamma$ is representable by an analytic $p$-divisible $S$-group.
\end{cor}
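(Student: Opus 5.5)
We will use Theorem \ref{thm: extending Fargues' equivalence of categories} to build a candidate group, and then identify it with $\Gs/\Gamma$. Since $\Gamma \to S$ is finite étale and $\Gs$ is of topologically $p$-torsion, $\Gamma$ is killed by $p^n$ étale-locally; when $S$ is quasicompact it is killed globally by some $p^n$, and the general case follows by gluing, because the quotient is a sheaf. So we may assume $\Gamma \sub \Gs[p^n] \sub \Gs[p^{\infty}]$. The logarithm sequence (\ref{eq: logarithm exact sequence for pdiv groups}) then gives
\[ 0 \to \Gs[p^{\infty}]/\Gamma \to \Gs/\Gamma \to \ga \to 0. \]
Next define $\Lb' \supseteq T_p\Gs$ as the $\Z_p$-local system $\Lb' = \varprojlim_m (\Gs[p^\infty]/\Gamma)[p^m]$. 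Equivalently, $\Lb'$ is the preimage of $\Gamma(1)\otimes$-twisted data in $V_p\Gs$: concretely, $\Lb'$ is the preimage of $\Gamma$ under $p^{-n}T_p\Gs \to p^{-n}T_p\Gs/T_p\Gs \cong \Gs[p^n]$, and multiplication by $p^n$ identifies this preimage with a lattice. This preimage is a lattice in $V_p\Gs$ with $T_p\Gs \sub \Lb'$, and it is a local system because $\Gamma$ is an $\F_p$-/$\Z/p^n$-local system by (\ref{eq: from etale ls to v ls}). Moreover $\Lb'[1/p]/\Lb' \cong \Gs[p^\infty]/\Gamma$.

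Set $E=\Lie(\Gs)$ and $f' \colon E\otimes\Os_{S_v} \to T_p\Gs(-1)\otimes\Os_{S_v} \to \Lb'(-1)\otimes \Os_{S_v}$, where the second map is induced by the inclusion. Let $\Gs' = F(\Lb',E,f')$ be the analytic $p$-divisible group produced by Theorem \ref{thm: extending Fargues' equivalence of categories}. By functoriality, the morphism of triples $(T_p\Gs,E,f_{\Gs}) \to (\Lb',E,f')$ induces a homomorphism $\phi\colon \Gs \to \Gs'$, namely the map of fiber products induced by $\id \otimes \G_m\langle p^\infty\rangle$ and $\id_E$. On the kernels of the logarithms, $\phi$ is $T_p\Gs(-1)\otimes\mu_{p^\infty} \to \Lb'(-1)\otimes\mu_{p^\infty}$, i.e.\ $V_p\Gs/T_p\Gs \to V_p\Gs/\Lb'$. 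This map is surjective with kernel $\Lb'/T_p\Gs \cong \Gamma$, by construction after untwisting. On the Lie quotient $\ga$, $\phi$ is the identity. By the snake lemma, $\phi$ is a surjection of $v$-sheaves with kernel $\Gamma$, so $\Gs/\Gamma \cong \Gs'$ as $v$-sheaves.

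The main obstacle is the comparison of topologies: the statement forms the quotient as an étale sheaf on $\LSD_{S,\et}$, whereas the argument above identifies it in $v$-sheaves. To close the gap, we will show that $\phi\colon\Gs\to\Gs'$ is surjective already étale-locally, i.e.\ that it is an étale $\Gamma$-torsor. First, $\phi$ is a morphism of smooth $S$-groups inducing an isomorphism on Lie algebras with finite étale kernel, hence étale. Next, $\Gs\times_{\Gs'}\Gs \cong \Gs\times_S\Gamma$ as $v$-sheaves, and this isomorphism holds as diamonds over $\Gs$. A surjective étale map is an étale cover, so the étale quotient agrees with the $v$-quotient and is represented by $\Gs'$. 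We also need to check that $\phi$ is surjective on points; this follows from the $v$-surjectivity, since the image of an étale map is open and is then all of $\Gs'$.
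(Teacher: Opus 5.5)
Your proposal is correct and follows essentially the paper's proof. As there, you enlarge $T_p\Gs$ to the preimage $\Lb'$ of $\Gamma$ in $V_p\Gs$, form $\Gs'$ from the triple $(\Lb',\Lie(\Gs),f')$ via Theorem~\ref{thm: extending Fargues' equivalence of categories}, and compare the two logarithm sequences with the snake lemma.

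The differences are minor. You get $\Gamma\sub\Gs[p^{\infty}]$ from étale-local constancy and the absence of prime-to-$p$ torsion rather than from Deligne's theorem, and you spell out the étale-versus-$v$ comparison that the paper handles in one line. One small correction: $\Gamma$ is a lisse $\Z/p^n\Z$-sheaf, not necessarily a local system, but local constancy is all your lattice argument needs.
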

\begin{proof}
    Let $m$ be the rank of the finite étale map $\Gamma \rightarrow S$, a locally constant function on $\vert S \vert$. Then $[m]$ annihilates $\Gamma$. Indeed, this can be checked locally, so that we may assume that $S=\Spa(R,R^+)$ is affinoid. Now we use that finite étale $S$-spaces are equivalent to finite étale $R$-algebras, so that $\Gamma$ corresponds to a finite étale group scheme over $\Spec(R)$ and it suffices to show an analogous statement in the algebraic case. This is a result of Deligne, see \cite[§1]{Tate1970}.

    Now, as $\Gs$ is a $\Z_p$-module, it is uniquely $\ell$-divisible, for any prime $\ell\neq p$. It follows that $m$ is a power of $p$, so that $\Gamma \sub \Gs[p^{\infty}]$. We let $\Lb$ be the preimage of $\Gamma$ under the surjection of $v$-sheaves
    \[ V_p\Gs \rightarrow V_p\Gs/T_p\Gs \cong \Gs[p^{\infty}].\]
    We claim that $\Lb$ is a $\Z_p$-local system with $\Lb[\tfrac{1}{p}] = V_p\Gs$. This can be checked $v$-locally, so we may assume that $S$ is strictly totally disconnected. In that case, $T_p\Gs \cong \Z_p^{\oplus n}$ is trivial and $\Gamma$ correspond to a subgroup of $\Z/p^r\Z^{\oplus n}$ for some $r\geq 1$. In that case, we have inclusions $\Z_p^{\oplus n} \sub \Lb \sub \tfrac{1}{p^r} \Z_p^{\oplus n}$ and the claim follows. We let $\Gs'$ be the analytic $p$-divisible group associated to the tuple
    \[ (\Lie(\Gs),\Lb, \Lie(\Gs) \xrightarrow{f_{\Gs}}T_p\Gs(-1) \otimes \Os_{S_v} = \Lb(-1) \otimes \Os_{S_v}).\]
    Then there is a natural map $\pi\colon \Gs \rightarrow \Gs'$ and we show that it fits in a short exact sequence of étale sheaves on $\LSD_{S,\et}$
    % https://q.uiver.app/#q=WzAsNSxbMSwwLCJcXEdhbW1hIl0sWzIsMCwiXFxHcyJdLFszLDAsIlxcR3MnIl0sWzAsMCwiMCJdLFs0LDAsIjAuIl0sWzMsMF0sWzEsMiwiXFxwaSJdLFswLDFdLFsyLDRdXQ==
\[\begin{tikzcd}
	0 & \Gamma & \Gs & {\Gs'} & {0,}
	\arrow[from=1-1, to=1-2]
	\arrow[from=1-2, to=1-3]
	\arrow["\pi", from=1-3, to=1-4]
	\arrow[from=1-4, to=1-5]
\end{tikzcd}\]
which would give us that $\Gs' = \Gs/\Gamma$. As $\Gamma \rightarrow S$ is étale, it is enough to establish an exact sequence as above for the $v$-topology.
Using the logarithm exact sequence (\ref{logarithm exact sequence}) and the Snake Lemma, it remains to show that the following sequence is exact
% https://q.uiver.app/#q=WzAsNSxbMSwwLCJcXEdhbW1hIl0sWzIsMCwiXFxHc1twXntcXGluZnR5fV0iXSxbMywwLCJcXEdzJ1twXntcXGluZnR5fV0iXSxbMCwwLCIwIl0sWzQsMCwiMC4iXSxbMywwXSxbMSwyLCJcXHBpIl0sWzAsMV0sWzIsNF1d
\[\begin{tikzcd}
	0 & \Gamma & {\Gs[p^{\infty}]} & {\Gs'[p^{\infty}]} & {0.}
	\arrow[from=1-1, to=1-2]
	\arrow[from=1-2, to=1-3]
	\arrow["\pi", from=1-3, to=1-4]
	\arrow[from=1-4, to=1-5]
\end{tikzcd}\]
This is clear, by writing $\Gs[p^{\infty}] = V_p\Gs/T_p\Gs$ and $\Gs'[p^{\infty}] = V_p\Gs/\Lb$. This concludes the proof.
\end{proof}

% We also have the following description of exact sequences of analytic $p$-divisible groups, whose proof is easy and left to the reader. Compare with the corresponding statements for $p$-divisible groups over $\Os_C$ \cite[Prop. 5.2.8(ii), Remark 5.2.9]{scholze2013moduli}.
% \begin{cor}
%     Let $\Gs_1 \rightarrow \Gs_2 \rightarrow \Gs_3$ be morphisms of analytic $p$-divisible groups over $S$. Then the sequence of sheaves on $S_v$
%     % https://q.uiver.app/#q=WzAsNSxbMCwwLCIwIl0sWzEsMCwiXFxHc18xIl0sWzIsMCwiXFxHc18yIl0sWzMsMCwiXFxHc18zIl0sWzQsMCwiMCJdLFswLDFdLFsxLDJdLFszLDRdLFsyLDNdXQ==
% \[\begin{tikzcd}
% 	0 & {\Gs_1} & {\Gs_2} & {\Gs_3} & 0
% 	\arrow[from=1-1, to=1-2]
% 	\arrow[from=1-2, to=1-3]
% 	\arrow[from=1-3, to=1-4]
% 	\arrow[from=1-4, to=1-5]
% \end{tikzcd}\]
% is exact if and only if
% % https://q.uiver.app/#q=WzAsNSxbMCwwLCIwIl0sWzEsMCwiVF9wXFxHc18xIl0sWzIsMCwiVF9wXFxHc18yIl0sWzMsMCwiVF9wXFxHc18zIl0sWzQsMCwiMCJdLFswLDFdLFsxLDJdLFszLDRdLFsyLDNdXQ==
% \[\begin{tikzcd}
% 	0 & {T_p\Gs_1} & {T_p\Gs_2} & {T_p\Gs_3} & 0
% 	\arrow[from=1-1, to=1-2]
% 	\arrow[from=1-2, to=1-3]
% 	\arrow[from=1-3, to=1-4]
% 	\arrow[from=1-4, to=1-5]
% \end{tikzcd}\]
% and
% % https://q.uiver.app/#q=WzAsNSxbMCwwLCIwIl0sWzEsMCwiXFxMaWUoXFxHc18xKSJdLFsyLDAsIlxcTGllKFxcR3NfMikiXSxbMywwLCJcXExpZShcXEdzXzMpIl0sWzQsMCwiMCJdLFswLDFdLFsxLDJdLFszLDRdLFsyLDNdXQ==
% \[\begin{tikzcd}
% 	0 & {\Lie(\Gs_1)} & {\Lie(\Gs_2)} & {\Lie(\Gs_3)} & 0
% 	\arrow[from=1-1, to=1-2]
% 	\arrow[from=1-2, to=1-3]
% 	\arrow[from=1-3, to=1-4]
% 	\arrow[from=1-4, to=1-5]
% \end{tikzcd}\]
% are exact.
% \end{cor}

\begin{cor}\label{cor: stack of pdiv gps is overconvergent}
    Fix a non-archimedean field $(K,K^+)$ over $\Q_p$. Given an analytic $p$-divisible group $G$ over $(K,\Os_K)$, there exists a unique analytic $p$-divisible group $G'$ over $(K,K^+)$ containing $G$ as the dense locus $G' \times_{\Spa(K,K^+)}\Spa(K,\Os_K)$. Moreover, the construction is compatible with taking $p^{\infty}$-torsion subgroups, Lie algebra and Tate modules, and $G'(K,K^+)=G(K,\Os_K)$. This extends to an equivalence of categories.
\end{cor}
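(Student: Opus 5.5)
The plan is to use Theorem \ref{thm: extending Fargues' equivalence of categories} to translate the statement into one about Hodge--Tate triples. Write $j\colon S_0=\Spa(K,\Os_K)\hookrightarrow S=\Spa(K,K^+)$ for the inclusion of the dense generic point. Both spaces are good, since they are seminormal rigid spaces of dimension $0$; indeed $\Os_{S_{\et}}=\nu_*\Os_{S_v}$ amounts to Galois descent for $\widehat{\cj{K}}$. By the theorem, analytic $p$-divisible groups over $S$, respectively $S_0$, are equivalent to triples $(\Lb,E,f)$, and this equivalence is compatible with base change along $j$. It therefore suffices to show that restriction along $j$ induces an equivalence between the categories of triples.

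I will treat the three ingredients of a triple separately.

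\textbf{Vector bundles.} Étale vector bundles on $S$ and on $S_0$ are both finite-dimensional $K$-vector spaces with continuous Galois action that descends, i.e. $K$-vector spaces. More precisely, they are finite projective modules over $\Os(S)=\Os(S_0)=K$, so restriction is an equivalence.

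\textbf{$\Z_p$-local systems.} For each $n$, $\Z/p^n$-local systems on $S_{\et}$ and on $(S_0)_{\et}$ are both equivalent to finite continuous $\Gal_K$-modules, because the étale site only depends on $K$ and not on $K^+$ (see \cite[Prop. 8.2.3(ii)]{huber2013étale}, or the fact that finite étale covers of $\Spa(K,K^+)$ are $\Spa(L,L^+)$ with $L/K$ finite). Passing to the inverse limit, via \cite[Prop. 3.5]{MannWerner2022loc}, gives the equivalence for $\Z_p$-local systems.

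\textbf{Morphisms $f$.} The remaining point is that $\Hom$ between $v$-vector bundles of the form $E\otimes\Os_{S_v}$ and $\Lb(-1)\otimes\Os_{S_v}$ agrees over $S$ and $S_0$. By the local-system step we can pass to the pro-étale Galois cover $\Spa(C,C^+)\to S$, with $C^+$ the completion of the integral closure of $K^+$. The morphism set then becomes $\Hom_{\Gal_K}$ of $C$-vector spaces over $H^0(\Spa(C,C^+),\Os)$, and since $\Os(\Spa(C,C^+))=C=\Os(\Spa(C,\Os_C))$, it agrees with the corresponding set over $S_0$. Equivalently, $\Os_{S_v}$ is insensitive to $K^+$ on global sections of the relevant covers.

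This gives an equivalence of categories of triples, and hence a unique extension $G'$ with $G'\times_S S_0\cong G$. Compatibility with Lie algebras, Tate modules and $p^\infty$-torsion is then immediate from the identifications $\Lb=T_p\Gs$, $E=\Lie(\Gs)$ and $\Gs[p^\infty]=T_p\Gs\otimes\Q_p/\Z_p$.

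The remaining claims are $G'(K,K^+)=G(K,\Os_K)$ and the density of $S_0$ in $G'$.

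\textbf{Rational points.} By the pullback description, $G'(K,K^+)$ is a fiber product of $E$, which is $K^+$-independent, and $(\Lb(-1)\otimes\G_m\langle p^\infty\rangle)(K,K^+)$. The latter is computed via $\Gal_K$-invariants of $\Lb(-1)\otimes(1+\ma_C)$, since points of $\G_m\langle p^\infty\rangle$ are $1+R^{\circ\circ}$ and these depend only on $R$, not on $R^+$. So the two point sets agree.

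\textbf{Density.} For density, note that $G'$ is smooth over $S$, and that $\Spa(K,\Os_K)\subset\Spa(K,K^+)$ is the generic point, with all other points being its specializations. For a smooth $S$-space every point specializes from a point over $S_0$, for example by the valuative criterion together with openness of smooth maps.

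I expect the main obstacle to be the step for the morphisms $f$: one must make sure that $v$-vector bundle morphisms, and the $v$-sheaf $\Os_{S_v}$, really do not see $K^+$. I would handle this by noting that the $v$-sheaf Homs are computed on perfectoid test objects $\Spa(R,R^+)$ over $S$, and that Hom sheaves of vector bundles satisfy $\Os(R,R^+)=R$. The difference between $S$ and $S_0$ then only affects which test objects are allowed, and every such object over $S$ is covered by a restriction argument to its "$R^+=R^\circ$" locus together with overconvergence.
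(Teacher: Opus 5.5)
Your proposal is correct and takes essentially the paper's route: extend the Hodge--Tate triple $(T_pG,\Lie(G),f_G)$ from $\Spa(K,\Os_K)$ to $\Spa(K,K^+)$ via Theorem \ref{thm: extending Fargues' equivalence of categories}, then get $G'(K,K^+)=G(K,\Os_K)$ because $G'$ is a fiber product of sheaves that do not see $K^+$. The paper calls this overconvergence of the fiber product, which is exactly what your $1+\ma_C$ Galois-invariants computation checks, and your explicit full-faithfulness and density arguments fill in what the paper calls clear (for density it is enough that every point of $G'$ specializes from its rank-one generization, which lies over the generic point).
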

\begin{proof}
    The Tate module $T_pG$ is identified with a continuous $\Z_p$-linear representation of $\Gal_K$ and is thus immediately seen to extend to a local system on $\Spa(K,K^+)$. Take $G'$ to be associated with the tuple $(T_pG,\Lie(G),f_G)$ over $(K,K^+)$, then $G$ and $G' \times_{\Spa(K,K^+)}\Spa(K,\Os_K)$ share the same invariant under the equivalence of Theorem \ref{thm: extending Fargues' equivalence of categories} and are therefore isomorphic. Moreover, $G'$ can be written explicitly as a fiber product of overconvergent sheaves, so that also $G'$ is overconvergent. Hence $G'(K,K^+)=G'(K,\Os_K) = G(K,\Os_K)$. The rest of the statement is clear.
\end{proof}
In contrast, it is not true that $p$-divisible groups over $K^+$ and $\Os_K$ are equivalent. Indeed, if $\BT(A)$ denote the category of $p$-divisible group over an adic ring $A$, there is an expression of $\BT(\Os_K)$ as the $2$-cartesian product \cite[Prop. 3.16]{zhang2023peltypeigusastackpadic}
 \begin{align}\label{eq: milnor square} \BT(\Os_K) \xrightarrow{\cong} \BT(K^+) \times_{\BT(\cj{K}^+)} \BT(k),\end{align}
 where $k$ is the residue field of $\Os_K$ and $\cj{K^+}$ is the image of 
 $K^+$ under the compostion $K^+ \rightarrow \Os_K \rightarrow k$.
% \begin{remark}

%  % In particular, the generic fiber functor
%  % \[ \{\,p\text{-divisible formal groups over} \Spf(K^+)\,\} \rightarrow \{\,\text{analytic } p\text{-divisible group over } \Spa(K,K^+)\,\} \]
%  % is no longer essentially surjective when $K^+\neq K$.
% \end{remark}
In the following result, we use the products of points of Gleason \cite[Def. 1.2]{Gleason2025}. Let $\{(K_i,R_i,\varpi_i)\}_i$ be a collection of perfectoid fields $K_i$ equipped with an open and bounded valuation subring $R_i$ and a pseudouniformizer $\varpi_i \in R_i$ such that $\varpi = (\varpi_i)_i$ defines an open ideal in $\prod_i R_i$. Then $A^+ = \prod_i R_i$ and $A=A^+[\tfrac{1}{\varpi}]$ form a perfectoid Tate pair. The space $\Spa(A,A^+)$ is totally disconnected in the sense of \cite[Def. 7.1]{scholze2022etale}, and even strictly totally disconnected if the fields $K_i$ are algebraically closed. Furthermore, the product of points form a basis of the $v$-topology on $\Perf$ \cite[Remark 1.3]{Gleason2025}.
\begin{cor}\label{cor: pdiv groups on product of points}
    Let $S=\Spa((\prod_i R_i)[\tfrac{1}{(\varpi_i)}],\prod_i R_i)$ be a product of points. Then the natural functor
\begin{align*}
    \left\{\text{\begin{tabular}{l} {\parbox{5.0cm}{analytic $p$-divisible groups  over $S$}}\end{tabular}}\right\}
         \longrightarrow \left\{\text{\begin{tabular}{l} {\parbox{5.1cm}{collections $(\Gs_i)_{i \in I}$ of analytic $p$-divisible groups of constant height and dimension over $(K_i,R_i)$}}\end{tabular}}\right\}.
    \end{align*}
    is faithful and essentially surjective.
\end{cor}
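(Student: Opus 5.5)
By Theorem \ref{thm: extending Fargues' equivalence of categories}, we will replace both sides by Hodge--Tate triples and compare triples over $S$ with collections of triples over the points $S_i=\Spa(K_i,R_i)$. The functor is restriction along the maps $S_i \rightarrow S$. Since the equivalence commutes with base change, it sends $\Gs$ to $(T_p\Gs|_{S_i},\Lie(\Gs)|_{S_i}, f_{\Gs}|_{S_i})$. The constant height and dimension condition is needed because $|S|$ is connected-ish only in a profinite sense: a triple over $S$ has ranks which are locally constant on $|S|$. Since $S$ is quasicompact, these ranks take finitely many values, so we will decompose $S$ into clopen pieces. The proof then reduces to three linear algebra statements over $S$, concerning local systems, vector bundles, and maps of $v$-bundles.

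\emph{Local systems.} $S$ is totally disconnected, and strictly totally disconnected after passing to a product of completed algebraic closures. A $\Z_p$-local system on $S$ is the inverse limit of $\Z/p^n$-local systems, that is, finite étale covers. Finite étale algebras over $A=(\prod R_i)[1/\varpi]$ of bounded rank correspond to collections of finite étale $K_i$-algebras of bounded rank. To see this, take the product of the integral closures $\prod R_i'$ and invert $\varpi$. Boundedness of the ranks is guaranteed by constant height. This gives essential surjectivity on $\Lb$ by passing to the limit over $n$. On morphisms, a map of local systems is determined by its values at all points, because the points $S_i$ are dense in $|S|$, or at least jointly surjective after taking $v$-locally a product of points. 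This gives faithfulness.

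\emph{Vector bundles.} Take $E_i=K_i^{d}$ after choosing bases, and set $E=A^{d}=(\prod R_i^d)[1/\varpi]$. For the maps $f_i\colon K_i^{d}\otimes C_i\rightarrow \Lb_i(-1)\otimes C_i$, we will choose lattices so that, after rescaling the bases of $E_i$ by powers of $\varpi_i$, each $f_i$ carries $R_i'^{d}$ into the integral lattice $\Lb_i(-1)\otimes R_i'$. The product of these maps then gives a bounded map $f$ over $S$, with $v$-descent handled by working on the strictly totally disconnected cover $\prod C_i$. The $\Gal$-equivariance holds componentwise, and so it holds for the product.

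Faithfulness follows because a morphism of triples over $S$ vanishing at every $S_i$ vanishes. Sections of $\Os_{S_v}$ and of $\Lb$ over products of points embed into the products of their values at the components, namely $A\subseteq\prod K_i$. The main obstacle is the essential surjectivity step for $\Lb$ together with $f$: it requires controlling integrality uniformly in $i$. Rescaling $E_i$ resolves the issue for $f$. For $\Lb$, we rely on the bounded rank description of finite étale covers of a product of points, which we expect to follow from Gleason's results on products of points. Fullness is not claimed and indeed fails, since a homomorphism given pointwise need not be bounded uniformly.
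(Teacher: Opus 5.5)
\textbf{Verdict: genuine gaps.} Your linear-algebra step is the paper's: rescale the bases of the $E_i$ so that each $f_i$ is integral, then take the product. Your faithfulness on Lie algebras via $A\subseteq\prod_i K_i$ is a more direct form of the paper's lattice argument. The Tate-module part, however, contains a false step and, in the generality you attempt, two unproved ones.

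\emph{The density claim is false.} You justify faithfulness on local systems by density of the points $S_i$ in $|S|$. Take $I=\mathbb{N}$, all $K_i=C$, $R_i=\Os_C$, $\varpi_i=p$, and $g=(p^{-1/i})_i\in A$ (bounded, since $pg\in A^+$). At every $S_i$ one has $|g|=p^{1/i}>1$. But for a non-principal ultrafilter $\mathcal{U}$, the valuation $a\mapsto\lim_{\mathcal{U}}|a_i|$ is a point of $S$ with $|g|=1$. So the rational open $\{|g|\le 1\}$ is non-empty and contains no $S_i$. Passing $v$-locally to a product of points does not help, since $S$ already is one.

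\emph{The correct faithfulness argument} is the paper's, via $\pi_0(S)=\beta I$. For algebraically closed $K_i$ the Tate modules are trivial, so a morphism of them is a continuous map $|S|\to\Z_p^{n'\times n}$. Such a map factors through $\pi_0(S)=\beta I$, in which $I$ is dense. Since the target is compact Hausdorff, continuous maps $\beta I\to\Z_p^{n'\times n}$ are the same as arbitrary maps $I\to\Z_p^{n'\times n}$. This also supplies the missing justification for your later claim that sections of $\Lb$ embed into the product of their values at the $S_i$.

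\emph{Two unproved steps for non-algebraically closed $K_i$.} In this case essential surjectivity is not established.
\begin{itemize}
\item Gluing the $\Lb_i$ needs $(\prod_i R_i')[1/\varpi]$ to be finite \'etale over $A$. This is not formal. It needs bounds uniform in $i$: on a power of $\varpi$ killing $R_i'/F_i$ for some finite free $F_i\subseteq R_i'$, and on the discriminants of the trace forms. That is a uniform version of almost purity, which you only say you expect.
\item Descending $f$ from the product $\tilde S$ of the $C_i$ is not settled by Galois-equivariance at each $i$. The cocycle condition lives on $\tilde S\times_S\tilde S$. Pulling back the open set above shows that the points lying over the $S_i$ are not dense there either, so ``componentwise'' proves nothing at the remaining points.
\end{itemize}
A cleaner route for the second step: $\Lb(-1)\otimes\Os_{S_v}$ is an \'etale bundle on the perfectoid $S$, i.e.\ a finite projective $A$-module. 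You can then glue $A$-linearly exactly as in the trivial case, using $A^N\subseteq\prod_i K_i^N$, with no descent at all.

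\emph{How the paper avoids all this.} Its proof takes $S$ strictly totally disconnected, i.e.\ the $K_i$ algebraically closed. Then by Mann--Werner every $\Z_p$-local system and \'etale vector bundle on $S$ is trivial, a triple is just an $A$-linear map $A^d\to\Z_p^n\otimes A$, and the proof is explicit linear algebra. Restricted to that case, your proposal becomes the paper's proof once the density claim is replaced by the $\pi_0(S)=\beta I$ argument.
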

To see that the functor is not full, take $\Gs= \Gs' = \G_a$ over $S$, and observe that
\[ \Hom(\Gs,\Gs') = A \neq \prod_{i\in I} K_i = \prod_{i\in I}\Hom(\Gs_i,\Gs_i').\]
In the next subsection, we introduce the class of dualizable analytic $p$-divisible groups and we show in Proposition \ref{prop: dualiazable pdiv gps over product of points} that the above functor restricts to an equivalence for dualizable groups.
\begin{proof}
    Let $\Gs,\Gs'$ be analytic $p$-divisible groups over $S$. By Theorem \ref{thm: extending Fargues' equivalence of categories}, these correspond to $A$-linear maps \[ f\colon A^d \rightarrow \Z_p^n\otimes A, \quad f'\colon A^{d'} \rightarrow \Z_p^{n'}\otimes A.\]
    Here, we use that $S$ is strictly totally disconnected, so that all étale vector bundles on $S$ are trivial, as well as all $\Z_p$-local systems on $S$, by \cite[Cor. 3.21]{MannWerner2022loc} and its proof. Then a homomorphism $h\colon \Gs \rightarrow \Gs'$ amounts to a pair $(a\colon A^d \rightarrow A^{d'}, b\colon \vert S \vert \rightarrow \Z_p^{n'\times n})$ such that the following diagram commutes
    % https://q.uiver.app/#q=WzAsNCxbMCwwLCJBXmQiXSxbMCwxLCJBXntkJ30iXSxbMSwxLCJcXFpfcF57bid9IFxcb3RpbWVzIEEuIl0sWzEsMCwiXFxaX3BebiBcXG90aW1lcyBBIl0sWzAsMSwiYSIsMl0sWzEsMiwiZiciLDJdLFswLDMsImYiXSxbMywyLCJiXFxvdGltZXMgXFxpZCJdXQ==
\[\begin{tikzcd}
	{A^d} & {\Z_p^n \otimes A} \\
	{A^{d'}} & {\Z_p^{n'} \otimes A.}
	\arrow["f", from=1-1, to=1-2]
	\arrow["a"', from=1-1, to=2-1]
	\arrow["{b\otimes \id}", from=1-2, to=2-2]
	\arrow["{f'}"', from=2-1, to=2-2]
\end{tikzcd}\]
Let $s_i\in S$ be the image of the unique closed point of $\Spa(K_i,R_i)$. Then the base change $\Gs_{s_i} \rightarrow \Gs_{s_i}'$ corresponds to the pair $(a_i\colon K_i \rightarrow K_i,b_i\colon \Z_p^n \rightarrow \Z_p^{n'})$, where $a_i$ is obtained from $a$ by scalar extension along the projections
\[ A = (\prod_i R_i)[\tfrac{1}{\varpi}] \rightarrow R_i[\tfrac{1}{\varpi}] = K_i\]
and $b_i =b(s_i)\in \Z_p^{n' \times n}$. As $\Z_p^{n' \times n}$ is totally disconnected, we have
\[ \Map_{\cts}(\vert S \vert, \Z_p^{n' \times n}) = \Map_{\cts}(\pi_0(S),\Z_p^{n' \times n}).\]
The space $\pi_0(S)$ is obtained as the Stone-\v{C}ech compactification of the set $I$. Hence, we further have
\[ \Map_{\cts}(\pi_0(S),\Z_p^{n'\times n}) = \Map(\coprod_{i\in I} \{s_i\},\Z_p^{n'\times n}).\]
It follows that the data of $b$ and the collections of $b_i\in \Z_p^{n' \times n}$ are equivalent. Let $(a,b)$ and $(\widetilde{a},\widetilde{b})$ be two such pairs that agree after base-change along $\Spa(K_i,R_i) \rightarrow S$, for each $i\in I$. Then we immediately see that $b=\widetilde{b}$. Moreover, let $E^+\sub A^d$, $E'^+\sub A^{d'}$ be $A^+$-lattices with $a(E^+),\widetilde{a}(E^+) \sub E'^+$. We recall that finite free $A^+$-modules are equivalent to collections of finite free modules of constant rank over $R_i$ \cite[Lemma 3.17]{zhang2023peltypeigusastackpadic}. From this, since $a,\widetilde{a}\colon E^+ \rightarrow E'^+$ agree after base change along $A^+ \rightarrow R_i$, we conclude that $a$ and $\widetilde{a}$ coincide on $E^+$ and thus everywhere, by linearity. This shows faithfulness.

% Similarly, given a collection $a_i\colon K_i^d \rightarrow K_i^d$, fix $R_i$-lattices $E_i^+\sub K_i^d$ and $a_i(E_i^+)\sub E_i'^+\sub K_i^d$. Then
% \[ a = (\prod_ia_i)[\tfrac{1}{\varpi}] \colon A^d= (\prod_i E_i^+)[\tfrac{1}{\varpi}] \rightarrow (\prod_i E_i'^+)[\tfrac{1}{\varpi}] = A^d\]
% is an $A$-linear map, and this construction provides an inverse to the previous construction.

It remains to establish essential surjectivity. Let $(\Gs_i)_i$ be a collection of analytic $p$-divisible groups of dimension $d$ and height $n$ over $(K_i,R_i)$. It amounts to a collection of $K_i$-linear maps $f_i\colon K_i^d \rightarrow \Z_p^n \otimes K_i$. Let us choose for each $i\in I$ an $R_i$-lattice $E_i^+ \sub K_i^d$. Up to scaling, we may assume that $f_i(E_i^+) \sub \Z_p^h \otimes R_i$, so that $f_i$ restricts to an $R_i$-linear map $f_i^+\colon E_i^+ \rightarrow \Z_p^n \otimes R_i$. Then we obtain an $A$-linear map
    \[ f = (\prod_{i\in I} f_i^+)[\tfrac{1}{\varpi}]\colon  E =(\prod_{i\in I}E_i^+)[\tfrac{1}{\varpi}]  \rightarrow (\Z_p^n \otimes \prod_i R_i)[\tfrac{1}{\varpi}] = \Z_p^n\otimes A.\]
    It is clear that the scalar extension of $(E,f)$ along $A\rightarrow K_i$
    recovers $(K_i^d,f_i)$, for each $i\in I$. Hence, if we let $\Gs$ denote the group corresponding to $(E,f)$ under the equivalence of Theorem \ref{thm: extending Fargues' equivalence of categories}, the functor in the statement sends $\Gs$ to the collection $(\Gs_i)_i$, which concludes the proof. 
\end{proof}

\subsection{Cartier duality}
In this subsection, we characterize which analytic $p$-divisible groups admit a Cartier dual. We then study the properties of dualizable groups and relate our Cartier dual with other constructions.
\begin{definition}\label{def: dualizable}
    Let $S/\Q_p$ be a good adic space and let $\Gs \rightarrow S$ be an analytic $p$-divisible group. We say that $\Gs$ is dualizable if the map $f_{\Gs}\colon \Lie(\Gs) \otimes \Os_{S_v}\rightarrow T_p\Gs(-1) \otimes \Os_{S_v}$ is injective and fits in a short exact sequence of $v$-vector bundles
    % https://q.uiver.app/#q=WzAsNSxbMSwwLCJcXExpZShcXEdzKVxcb3RpbWVzIFxcT3Nfe1Nfdn0iXSxbMiwwLCJUX3BcXEdzKC0xKVxcb3RpbWVzIFxcT3Nfe1Nfdn0iXSxbMywwLCJcXG9tZWdhXFxvdGltZXMgXFxPc197U192fSgtMSkiXSxbMCwwLCIwIl0sWzQsMCwiMCwiXSxbMywwXSxbMSwyLCJnIl0sWzAsMSwiZl97XFxHc30iXSxbMiw0XV0=
\begin{equation}\label{eq: HT filtration on Tate module}\begin{tikzcd}
	0 & {\Lie(\Gs)\otimes \Os_{S_v}} & {T_p\Gs(-1)\otimes \Os_{S_v}} & {\omega\otimes \Os_{S_v}(-1)} & {0,}
	\arrow[from=1-1, to=1-2]
	\arrow["{f_{\Gs}}", from=1-2, to=1-3]
	\arrow["g", from=1-3, to=1-4]
	\arrow[from=1-4, to=1-5]
\end{tikzcd}\end{equation}
for an étale vector bundle $\omega$ on $S$. We define the Cartier dual $\Gs^D$ to be the analytic $p$-divisible group attached to the tuple $(\omega^{\vee},T_p\Gs^{\vee}(1),g(1)^{\vee})$.
\end{definition}
Since the functor (\ref{eq: from etale to v vb}) is fully faithful, there is at most one (up to unique isomorphism) étale vector bundle $\omega$ satisfying the above condition, so that $\omega$ is well-defined. A fortiori, $\omega = \omega_{\Gs^D}$ is the Hodge bundle of the dual group $\Gs^D$.

One immediately sees that $(\cdot)^D$ extends to an exact functor and that there is a canonical isomorphism $\Gs = {\Gs^D}^D$. Thus $(\cdot)^D$ defines an antiequivalence on the category of dualizable analytic $p$-divisible groups with (quasi-)isogenies. Moreover, any analytic $p$-divisible group that is isogenous to a dualizable group is again dualizable.

\begin{example}\label{Examples: dualizable groups}
\begin{enumerate}
    \item The typical example of a non-dualizable $p$-divisible group is $\Gs= \G_a$ where the map $f_{\Gs}$ is zero.
    \item The group $\Gs=\G_m\langle p^{\infty} \rangle$ is dualizable: In this case, the map $f_{\Gs}$ is an isomorphism. The Cartier dual is the discrete group
    \[ \Gs^D = \Q_p/\Z_p.\]
    \item If $S$ is perfectoid, then by the equivalence (\ref{eq: etale and v vb equivalent for perfectoids}), $\Gs$ is dualizable if and only $f_{\Gs}$ is a locally direct summand. If $S$ is instead assumed to be a rigid space, the conditions on $f_{\Gs}$ are more restrictive. 

    % \item Let $G$ be a $p$-divisible group over $\Os_C$. Then by \cite[5.1.6(iii)]{scholze2013moduli}, its generic fiber $G_{\eta}$ is a dualizable $p$-divisible group and $\Gs_{\eta}^D = (G^D)_{\eta}$ (note that this is not formal). \footnote{This should generalize to families of $p$-divisible groups $\mathfrak{G} \rightarrow \Ss$, where $\Ss$ is a fairly general formal model of the good adic space $S$. We run into technicalities of infinite type formal schemes and there is no definition of the Hodge--Tate sequence at the level of formal schemes (as far as I know).}
    \end{enumerate}
\end{example}

\begin{remark}\label{remark: dualizability does not descend well}
    We note that dualizable analytic $p$-divisible groups satisfy descent along $v$-covers of perfectoid spaces. In contrast, if $S' \rightarrow S$ is a $v$-cover of arbitrary good adic spaces over $\Q_p$, there exist non-dualizable analytic $p$-divisible groups $\Gs \rightarrow S$ that become dualizable after base-change to $S'$, e.g. take $\Gs= \mu_{p^{\infty}}$ over $S=\Spa(\Q_p)$. Other examples can be obtained as follow \cite[§2]{Farg22}: let $\Lb$ be a continuous $\Z_p$-linear representation of $\Gal_K$ and let 
\[ V \subsetneq (\Lb(-1) \otimes_{\Z_p} C)^{\Gal_K}\]
be a strict sub-$K$-vector space. Then take the group $\Gs$ associated with the tuple $(\Lb, V, f)$, where $f$ is the natural map
\[ f\colon V \otimes_K C \hookrightarrow \Lb(-1)\otimes_{\Z_p} C.\]
\end{remark}

\begin{remark}
We can make the Cartier dual a bit more explicit. Let $K/\Q_p$ be a non-archimedean field with completed algebraic closure $C$ and let $\Gs$ be a dualizable $p$-divisible group over $\Spa(K)$. The image of a continuous character
\[ \chi \colon T_p\Gs \rightarrow C^{\times}\]
under the logarithm map $\Hom(T_p\Gs,\G_m)= \Hom(T_p\Gs,\G_m\langle p^{\infty} \rangle)\rightarrow \Hom(T_p\Gs, \G_a)$ can be interpreted as its derivative
\[ D\chi\colon  V_p\Gs \rightarrow C.\]
Precomposing with the map $f_{\Gs}$ determines an operator
\[ \theta\colon \Hom_{\cts}(V_p\Gs,C)\twoheadrightarrow \omega_{\Gs} \otimes_K C(-1).\]
The analytic Weil pairing $u_{\Gs^D}$ then yields an identification
\[ \Gs^D(C) = \{\,\chi\colon T_p\Gs \rightarrow C^{\times} \mid \theta(D\chi)=0  \,\}.\]
In particular, if $K$ is a $p$-adic field, or more generally if $C(-1)^{\Gal_K}=0$, taking Galois fixed points on each side of the above equality yields
\[ \Gs^D(K) = \Hom_{\Gal_K}(T_p\Gs,C^{\times}).\]
\end{remark}

% For example, let $E/\Q_p$ be a finite field extension and let $\Gs$ be the group associated to the tuple $(\Os_E, $. Then $\Gs^D$ is identified with the moduli of locally $E$-analytic characters of Schneider--Teitelbaum 
% \[ \Gs^D(C) = \Hom_{E-\an}(\Os_E,C^{\times}).\]

With our definition of the Cartier dual, we can now define polarizations.
\begin{definition}\label{def: polarization}
    Let $\Gs \rightarrow S$ be a dualizable analytic $p$-divisible group. A polarization (resp. quasi-polarization) is an isogeny (resp. a quasi-isogeny) $\lambda\colon \Gs \rightarrow \Gs^D$ such that $\lambda^D = -\lambda$ under the natural identification $(\Gs^D)^D=\Gs$. A polarization is called principal if $\lambda$ is an isomorphism.
\end{definition}
Using Theorem \ref{thm: extending Fargues' equivalence of categories}, one easily sees that a quasi-polarization is equivalent to the data of a perfect pairing of $\Q_p$-local systems
\[ V_p\Gs \times V_p\Gs \rightarrow \Q_p(1) \]
for which $\Lie(\Gs) \otimes \Os_v(1) \sub V_p\Gs\otimes_{\Q_p}\Os_S$ is its own orthogonal subspace. Moreover, the quasi-polarization is a principal polarization if and only if the pairing restricts to a perfect pairing on $T_p\Gs$.

% \begin{proposition}\label{cor: dualiz pdiv gps satisfy descent}
%     The prestack sending a perfectoid space $S\in \Perf_{\Q_p}$ to the groupoid of dualizable analytic $p$-divisible groups $\Gs \rightarrow S$ is a small $v$-stack.
% \end{proposition}
% \begin{proof}
% Let $S' \rightarrow S$ be a $v$-cover of perfectoid spaces. By Corollary \ref{cor: pdiv gps satisfy descent}, it is enough to show that if an $p$-divisible group $\Gs \rightarrow S$ becomes dualizable after base-change to $S'$, then it is already dualizable. Since $S$ and $S'$ are assumed to be perfectoid, dualizability is equivalent to $f_{\Gs}\colon \Lie(\Gs) \rightarrow T_p\Gs(-1) \otimes \Os_S$ being a locally direct summand, hence it is clear. Hence the prestack of dualizable groups is a $v$-stack and it remains to see that is is small. Let $S= \varprojlim_i S_i$ be an $\omega_1$-cofiltered inverse limit of affinoid perfectoid spaces $S_i$, and assume that $\Gs_i \rightarrow S_i$ is an analytic $p$-divisible group that becomes dualizable after base-change to $S$. Using that
% \[ S_{\et, \qcqs} = \tworlim S_{i,\et,\qcqs},\]
% we easily deduce that $f_{\Gs_i}$ is a locally direct summand up to increasing $i$, as soon as $f_{\Gs_{i,S}}$ is, which concludes.
% \end{proof}

We now collect some properties of dualizable analytic $p$-divisible groups. We can strengthen Corollary \ref{cor: pdiv groups on product of points} by restricting to dualizable $p$-divisible groups as follows.
\begin{proposition}\label{prop: dualiazable pdiv gps over product of points}
    Let $S=\Spa((\prod_i R_i)[\tfrac{1}{(\varpi_i)}],\prod_i R_i)$ be a product of points. Then the natural map
\begin{align*}
    \left\{\text{\begin{tabular}{l} {\parbox{3.4cm}{dualizable analytic $p$-divisible groups over $S$}}\end{tabular}}\right\}
         \xlongrightarrow{\cong} \left\{\text{\begin{tabular}{l} {\parbox{5.3cm}{collections $(\Gs_i)_i$ of dualizable analytic $p$-divisible groups of constant height and dimension over $(K_i,R_i)$}}\end{tabular}}\right\}.
    \end{align*}
    is an equivalence of categories.
\end{proposition}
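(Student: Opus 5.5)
We work through the Hodge--Tate triples of Theorem \ref{thm: extending Fargues' equivalence of categories}, as in the proof of Corollary \ref{cor: pdiv groups on product of points}. Since $S$ is strictly totally disconnected when the $K_i$ are algebraically closed, we first reduce to that case. In general $S$ is totally disconnected, and we can pass to the product of the completed algebraic closures with their valuation rings. This is a pro-étale cover, and both dualizability and the Hodge--Tate data descend along it. Alternatively, we can run the argument below directly, since étale vector bundles on $S$ are still trivial over the affinoid. On $S$ we have $\Gs \leftrightarrow (f\colon A^d \to \Z_p^n\otimes A)$, and by Example \ref{Examples: dualizable groups}(3) dualizability of $\Gs$ means that $f$ is injective with image a locally direct summand. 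The same criterion applies over each $(K_i,R_i)$, where it reduces to injectivity of $f_i$.

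Essential surjectivity is already given by Corollary \ref{cor: pdiv groups on product of points}, but we must arrange that the glued $f$ is a direct summand. Given injective $f_i\colon K_i^d \hookrightarrow K_i^n$, we take the lattice $E_i^+ := f_i^{-1}(R_i^n)$. This is a saturated sublattice, so $f_i^+\colon E_i^+ \to R_i^n$ is the inclusion of a direct summand, since $R_i$ is a valuation ring and the cokernel is torsion-free and finitely presented, hence free. The ranks are constant, so by \cite[Lemma 3.17]{zhang2023peltypeigusastackpadic} the products $\prod_i E_i^+\subset \prod_i R_i^n$ form a direct summand of $(A^+)^n$, with complement $\prod_i(\text{complement}_i)$. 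Inverting $\varpi$ shows that $f$ is a direct summand, so the group $\Gs$ built from $f$ is dualizable.

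The main obstacle is fullness, which fails for general groups (the $\G_a$ example). A morphism is a pair $(a,b)$ with $f'\circ a=(b\otimes\id)\circ f$. By the proof of Corollary \ref{cor: pdiv groups on product of points}, $b$ corresponds exactly to a family $(b_i)$, so only $a$ needs to be constructed. Since $f'$ is injective, $a$ is uniquely determined by $a=f'^{-1}\circ(b\otimes \id)\circ f$, provided $(b\otimes\id)(\Image f)\subset \Image f'$. The key point is that, $f'$ being a direct summand, this condition can be checked after projecting to the complement $Q'=\Z_p^{n'}\otimes A/\Image f'$, which is finite projective over $A$, where we must show that the composite $A^d\to Q'$ vanishes. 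We choose $A^+$-lattices as above, namely the saturated ones $\prod_i E_i^+$, so that the composite restricts to a map of finite free $A^+$-modules. Each component over $R_i$ vanishes because the $a_i$ exist. By \cite[Lemma 3.17]{zhang2023peltypeigusastackpadic}, a map of finite free $A^+$-modules is the product of its components, so it is zero. Then $a$ exists, and it restricts to $a_i$ by uniqueness.

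Faithfulness was established in Corollary \ref{cor: pdiv groups on product of points}. Hence the functor is an equivalence.
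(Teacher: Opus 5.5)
Your argument is correct in the setting the paper actually works in, namely all $K_i$ algebraically closed, so that $S$ is strictly totally disconnected. It is essentially the paper's proof. It uses the same ingredients: Hodge--Tate triples, the identification of $b$ with the family $(b_i)$ via $\pi_0(S)=\beta I$, and the saturated lattices $\prod_i f_i^{-1}(R_i^n)$, which are free by the direct-summand property and \cite[Lemma 3.17]{zhang2023peltypeigusastackpadic}.

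For fullness, the paper notes that each $a_i$ maps $f_i^{-1}(R_i^n)$ into $f_i'^{-1}(R_i^{n'})$, because the $b_i$ are integral, and then glues $a=(\prod_i a_i)[\tfrac{1}{\varpi}]$. Your variant defines $a$ through the splitting of $f'$. It is equally valid, and in fact it does not need the lattices at all: $A\hookrightarrow\prod_i K_i$ and $Q'$ is finite projective, so a map $A^d\to Q'$ that vanishes after each $-\otimes_A K_i$ is already zero.

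Your essential-surjectivity step is more careful than the paper, which only says that fullness is missing. With the arbitrary lattices of Corollary~\ref{cor: pdiv groups on product of points}, the glued $f$ need not be a direct summand. For example, take $I=\{1,2,\dots\}$, $\G_m\langle p^{\infty}\rangle$ on every factor, and $E_i^+=\varpi_i^{i}R_i$. After identifying $(\prod_i\varpi_i^{i}R_i)[\tfrac{1}{\varpi}]\cong A$, the glued $f$ is multiplication by the non-unit $(\varpi_i^{i})_i$ of $A$. So choosing the saturated lattices is genuinely needed.

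You should drop the opening reduction to algebraically closed $K_i$, because it has a real gap.
\begin{itemize}
\item You do not justify that the product of the completed algebraic closures is a pro-\'etale cover of $S$.
\item Even for a $v$-cover $S'\to S$, the equivalence does not descend formally. Comparing descent data would require the statement on $S'\times_S S'$, and that space is not a product of points.
\item Your fallback is also wrong. If the $K_i$ are not algebraically closed, $\Z_p$-local systems on $S$ need not be trivial, whatever happens to vector bundles. Then neither the description of groups by maps $A^d\to\Z_p^n\otimes A$ nor the identification $b\leftrightarrow(b_i)$ is available.
\end{itemize}
The paper's proof, through Corollary~\ref{cor: pdiv groups on product of points}, tacitly assumes strict total disconnectedness, and every application of the proposition uses algebraically closed $K_i$. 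State that hypothesis instead of trying to reduce to it.
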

\begin{proof}
    The only missing part is showing that the functor is full. Let $\Gs,\Gs'$ be two dualizable $p$-divisible groups of respective dimension $d,d'$ and height $n,n'$ over $S$, corresponding to $f\colon A^d\hookrightarrow \Z_p^n\otimes A$, $f'\colon A^{d'}\hookrightarrow \Z_p^{n'}\otimes A$. Let us denote by $\Gs_i, \Gs_i'$ their base change along $\Spa(K_i,R_i) \rightarrow S$. Let $h_i\colon \Gs_i \rightarrow \Gs_i'$ be a collection of morphisms, corresponding to pairs $(a_i,b_i)$, where $a_i\colon K_i^d \rightarrow K_i^{d'}$ and $b_i\colon \Z_p^n \rightarrow \Z_p^{n'}$ are linear maps fitting in the commutative diagrams
    % https://q.uiver.app/#q=WzAsNCxbMCwwLCJLX2leZCJdLFswLDEsIktfaV57ZCd9Il0sWzEsMCwiXFxaX3Beblxcb3RpbWVzIEtfaSJdLFsxLDEsIlxcWl9wXntuJ31cXG90aW1lcyBLX2kuIl0sWzAsMSwiYV9pIiwyXSxbMSwzLCJmX2knIiwyLHsic3R5bGUiOnsidGFpbCI6eyJuYW1lIjoiaG9vayIsInNpZGUiOiJ0b3AifX19XSxbMCwyLCJmX2kiLDAseyJzdHlsZSI6eyJ0YWlsIjp7Im5hbWUiOiJob29rIiwic2lkZSI6InRvcCJ9fX1dLFsyLDMsImJfaVxcb3RpbWVzIFxcaWQiXV0=
\[\begin{tikzcd}
	{K_i^d} & {\Z_p^n\otimes K_i} \\
	{K_i^{d'}} & {\Z_p^{n'}\otimes K_i.}
	\arrow["{f_i}", hook, from=1-1, to=1-2]
	\arrow["{a_i}"', from=1-1, to=2-1]
	\arrow["{b_i\otimes \id}", from=1-2, to=2-2]
	\arrow["{f_i'}"', hook, from=2-1, to=2-2]
\end{tikzcd}\]
By the proof of Corollary \ref{cor: pdiv groups on product of points}, we have
\[ \Map_{\cts}(\vert S \vert, \Z_p^{n' \times n}) = \Map(I,\Z_p^{n' \times n}),\]
so that the $b_i$ uniquely determine a continuous map $b\colon \vert S \vert \rightarrow \Z_p^{n \times n}$. Moreover, consider the $A^+$-submodules
\[ E^+ = f^{-1}(\Z_p^n \otimes A^+), \quad  E'^+=f'^{-1}(\Z_p^{n'} \otimes A^+).\]
Then as $f$ is a locally direct summands, $E^+$ is a $d$-dimensional free $A^+$-modules with $E^+[\tfrac{1}{\varpi}] = A^d$, and similarly for $E'^+$. We may write $E^+=\prod_i E_i^+$ and $E'^+=\prod_i E_i'^+$, where
\[ E_i^+ = f_i^{-1}(\Z_p^n \otimes R_i), \quad E_i^+ = f_i'^{-1}(\Z_p^{n'} \otimes R_i).\]
Then the maps $a_i$ restrict to $R_i$-linear maps $E_i^+ \rightarrow E_i^+$. We obtain a well-defined $A$-linear map
\[ a = (\prod_i a_i)[\tfrac{1}{\varpi}] \colon A^d = (\prod_i E_i^+)[\tfrac{1}{\varpi}] \rightarrow  (\prod_i E_i'^+)[\tfrac{1}{\varpi}] = A^d, \]
and we easily check that $f' \circ a = (b\otimes \id)\circ f$, since this holds after scalar extension along $A \rightarrow K_i$ for each $i$. We obtain a morphism $h\colon \Gs \rightarrow \Gs'$ that reduces to each $h_i\colon \Gs_{i} \rightarrow \Gs_i'$, as required.
\end{proof}

Let $S$ be a smooth rigid space over a $p$-adic field $K$. In this case, we obtain a characterization of dualizable $p$-divisible $S$-groups. We will use the notion of Hodge--Tate $\Z_p$-local system, which was recalled in Definition \ref{def: HT local system}.
\begin{proposition}\label{prop: dualizable groups over smooth rig over padic field}
    Let $S$ be a smooth rigid space over a $p$-adic field $K$. Then sending $\Gs$ to $T_p\Gs$ defines an equivalence of exact categories
    \begin{align}
    \left\{\text{\begin{tabular}{l} {\parbox{4.0cm}{$\Gs \rightarrow S$ dualizable analytic $p$-divisible groups}}\end{tabular}}\right\}
         \xrightarrow{\cong} \left\{\text{\begin{tabular}{l} {\parbox{5.7cm}{Hodge--Tate $\Z_p$-local systems $\Lb$ on $S$ with Hodge--Tate weights $\in \{0,1\}$}}\end{tabular}}\right\}.
    \end{align}
\end{proposition}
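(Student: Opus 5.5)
The plan is to pass through the equivalence of Theorem \ref{thm: extending Fargues' equivalence of categories}. A dualizable group $\Gs$ corresponds to a triple $(\Lb,E,f)$ together with a short exact sequence of $v$-vector bundles
\[ 0 \rightarrow E\otimes \Os_{S_v} \xrightarrow{f} \Lb(-1)\otimes_{\Z_p}\Os_{S_v} \rightarrow \omega\otimes\Os_{S_v}(-1)\rightarrow 0,\]
with $E,\omega$ étale vector bundles. Twisting by $(1)$ gives a two-step filtration $0\sub \Fil^1=E\otimes\Os_{S_v}(1)\sub \Fil^0=\Lb\otimes\Os_{S_v}$ with graded pieces $E\otimes\Os(1)$ and $\omega\otimes\Os(0)$. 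By Proposition \ref{prop: characterization of HT ls} with $[a,b]=[0,1]$, up to matching the indexing convention $\sigma(i)=-i-1$ and the sign of the twist, this is exactly the statement that $V_p\Gs=\Lb[\tfrac1p]$ is Hodge--Tate with weights in $\{0,1\}$. Here the proétale and $v$-vector bundles agree by \cite[Prop. 2.3.2]{heuer2022gtorsors}, and $S$ is smooth, hence good. So the functor $\Gs\mapsto T_p\Gs$ lands in the right category.

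For the converse direction, I start from a Hodge--Tate $\Lb$ with weights in $\{0,1\}$. Proposition \ref{prop: characterization of HT ls} produces a canonical Hodge--Tate filtration whose graded pieces are étale vector bundles $E_i$ twisted appropriately. Its sub-piece is $E\otimes \Os_{S_v}(1)\hookrightarrow \Lb\otimes \Os_{S_v}$ with $E=\gr^{-1}D_{\HT}$ (or $\gr^{\cdot}$ as dictated by the indexing), and its quotient is $\omega\otimes\Os_{S_v}$. Untwisting gives an injection $f\colon E\otimes\Os_{S_v}\to \Lb(-1)\otimes\Os_{S_v}$ with cokernel $\omega\otimes\Os_{S_v}(-1)$. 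The triple $(\Lb,E,f)$ then defines a dualizable group via Theorem \ref{thm: extending Fargues' equivalence of categories}. This gives essential surjectivity.

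For full faithfulness I compare morphisms. A map $T_p\Gs\to T_p\Gs'$ of local systems induces a map of Hodge--Tate filtrations, since these are functorial: they are built from $D_{\HT}$ and the Higgs field. Alternatively, the composite $E\otimes\Os_v(1)\to \Lb'\otimes\Os_v\to \omega'\otimes\Os_v$ is a map $E\otimes\Os_{S_v}(1)\to\omega'\otimes\Os_{S_v}$, which vanishes because $\nu_*(\Os_{S_v}(1))$-type terms vanish, by the same input \cite[Prop. 6.16]{scholze2013padicHodge} used in the proof of Proposition \ref{prop: characterization of HT ls}. Hence the map sends $E$ into $E'$, and we get a unique étale map $E\to E'$ by full faithfulness of (\ref{eq: from etale to v vb}). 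This shows the functor is fully faithful, and exactness in both directions follows since the Hodge--Tate filtration is exact in $\Lb$.

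The main obstacle is keeping the twists and indices straight. One has to check that the weights $\{0,1\}$ condition corresponds precisely to a filtration of length two with the Tate twist landing on the $\Lie$ piece, rather than the opposite convention. A further point needs care: $\Lb$ is a $\Z_p$-local system, while Proposition \ref{prop: characterization of HT ls} is stated for $\Q_p$-local systems. This causes no difficulty, because $\Lb\otimes\Os$ only depends on $\Lb[\tfrac1p]$.
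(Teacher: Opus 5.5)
Your proposal is correct and follows essentially the same route as the paper. Well-definedness and essential surjectivity come from Proposition~\ref{prop: characterization of HT ls} applied to the dualizability sequence twisted by $(1)$, with $\Lie(\Gs)=\gr^{-1}D_{\HT}(\Lb)$. Full faithfulness comes from the vanishing of the composite into $\omega_{\Gs'^D}\otimes\Os_{S_v}(-1)$, because $\nu_*$ of nonzero Tate twists of $\Os_{S_v}$ vanishes, followed by full faithfulness of (\ref{eq: from etale to v vb}).

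The one step you only assert is the exactness of the Hodge--Tate filtration in $\Lb$. The paper justifies it by reducing, via the logarithm sequence, to exactness on Lie algebras. It then shows that $D_{\HT}$ is exact on Hodge--Tate local systems, using the comparison isomorphism (\ref{eq: HT comparison theorem}), the projection formula and Scholze's computation of $R\nu_*\OC(n)$.
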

\begin{proof}
    By Proposition \ref{prop: characterization of HT ls} and the definition of dualizability, the Tate module $T_p\Gs$ of a dualizable group is Hodge--Tate with weights in $\{0,1\}$, so the functor is well-defined. Let us show that it is fully faithful. Let $\Gs,\Gs'$ be two dualizable analytic $p$-divisible $S$-groups. Then we have a short exact sequence
    % https://q.uiver.app/#q=WzAsNSxbMSwwLCJcXExpZShcXEdzKVxcb3RpbWVzIFxcT3NfdiJdLFsyLDAsIlRfcFxcR3MoLTEpXFxvdGltZXMgXFxPc192Il0sWzMsMCwiXFxvbWVnYV97XFxHc15EfVxcb3RpbWVzIFxcT3NfdigtMSkiXSxbMCwwLCIwIl0sWzQsMCwiMC4iXSxbMywwXSxbMSwyLCJmX3tcXEdzXkR9XntcXHZlZX0oLTEpIl0sWzAsMSwiZl97XFxHc30iXSxbMiw0XV0=
\[\begin{tikzcd}
	0 & {\Lie(\Gs)\otimes \Os_v} & {T_p\Gs(-1)\otimes \Os_v} & {\omega_{\Gs^D}\otimes \Os_v(-1)} & {0}
	\arrow[from=1-1, to=1-2]
	\arrow["{f_{\Gs}}", from=1-2, to=1-3]
	\arrow["{f_{\Gs^D}^{\vee}(-1)}", from=1-3, to=1-4]
	\arrow[from=1-4, to=1-5]
\end{tikzcd}\]
and similarly for $\Gs'$. A morphism $h\colon \Gs \rightarrow \Gs'$ amounts to a pair $(a,b)$, where $a\colon \Lie(\Gs) \rightarrow \Lie(\Gs')$ and $b\colon T_p\Gs \rightarrow T_p\Gs'$ are morphisms with $f'\circ a = (b(-1)\otimes \id) \circ f$. As $f,f'$ are injective, we see that $a$ is uniquely determined by $b$, which gives faithfulness. Conversely, given a morphism of local systems $b\colon T_p\Gs \rightarrow T_p\Gs'$, the following composition vanishes
% https://q.uiver.app/#q=WzAsNCxbMCwwLCJcXExpZShcXEdzKVxcb3RpbWVzIFxcT3NfdiJdLFsxLDAsIlRfcFxcR3MoLTEpXFxvdGltZXMgXFxPc192Il0sWzIsMCwiVF9wXFxHcycoLTEpXFxvdGltZXMgXFxPc192Il0sWzMsMCwiXFxvbWVnYV97XFxHcydeRH1cXG90aW1lcyBcXE9zX3YoLTEpLCJdLFsxLDIsImIoLTEpXFxvdGltZXNcXGlkIl0sWzAsMSwiIiwwLHsic3R5bGUiOnsidGFpbCI6eyJuYW1lIjoiaG9vayIsInNpZGUiOiJ0b3AifX19XSxbMiwzLCIiLDAseyJzdHlsZSI6eyJoZWFkIjp7Im5hbWUiOiJlcGkifX19XV0=
\[\begin{tikzcd}
	{\Lie(\Gs)\otimes \Os_v} & {T_p\Gs(-1)\otimes \Os_v} & {T_p\Gs'(-1)\otimes \Os_v} & {\omega_{\Gs'^D}\otimes \Os_v(-1).}
	\arrow[hook, from=1-1, to=1-2]
	\arrow["{b(-1)\otimes\id}", from=1-2, to=1-3]
	\arrow[two heads, from=1-3, to=1-4]
\end{tikzcd}\]
This follows from the fact that $\nu_*(\omega \otimes \Os_v(-1)) =0$ for any étale vector bundle $\omega$, by \cite[Cor. 6.19]{scholze2013padicHodge}. Hence $b(-1)\otimes \id$ restricts to a map of $v$-vector bundles
\[ a\colon \Lie(\Gs)\otimes \Os_v \rightarrow \Lie(\Gs')\otimes \Os_v\]
which arises under the fully faithful functor (\ref{eq: from etale ls to v ls}) from a uniquely determined map of étale vector bundles
\[ a\colon \Lie(\Gs)\rightarrow \Lie(\Gs').\]
This shows fully faithfulness. For essentially surjectivity, let $\Lb$ be a $\Z_p$-local system as in the statement. Consider the group $\Gs$ associated to the tuple
\[ (\Lb, \gr^{-1}D_{\HT}(\Lb), f),\]
where $\gr^{-1}D_{\HT}(\Lb)$ is the $(-1)$th graded piece of the Higgs bundle $D_{\HT}(\Lb)$ (see (\ref{eq: DHT})) and $f$ is the inclusion of the $(-1)$th Hodge--Tate filtration (see Definition \ref{def: HT filtration})
\[ f\colon \gr^{-1}D_{\HT}(\Lb)\otimes \Os_{S_v} \hookrightarrow \Lb(-1)\otimes_{\Z_p}\Os_{S_v}.\]
Then $f$ fits in a short exact sequence
% https://q.uiver.app/#q=WzAsNSxbMSwwLCJcXGdyXnstMX1EX3tcXEhUfShcXExiKVxcb3RpbWVzIFxcT3Nfe1Nfdn0iXSxbMiwwLCJcXExiKC0xKVxcb3RpbWVzIFxcT3NfdiJdLFszLDAsIlxcZ3JeezB9RF97XFxIVH0oXFxMYilcXG90aW1lcyBcXE9zX3tTX3Z9KC0xKSJdLFswLDAsIjAiXSxbNCwwLCIwLCJdLFszLDBdLFsxLDJdLFswLDEsImYiXSxbMiw0XV0=
\[\begin{tikzcd}
	0 & {\gr^{-1}D_{\HT}(\Lb)\otimes \Os_{S_v}} & {\Lb(-1)\otimes \Os_v} & {\gr^{0}D_{\HT}(\Lb)\otimes \Os_{S_v}(-1)} & {0,}
	\arrow[from=1-1, to=1-2]
	\arrow["f", from=1-2, to=1-3]
	\arrow[from=1-3, to=1-4]
	\arrow[from=1-4, to=1-5]
\end{tikzcd}\]
which shows that $\Gs$ is dualizable. It remains to show the exactness. It is clear that the functor $\Gs \mapsto T_p\Gs$ is exact. Conversely, given a short exact sequence $T_p\Gs' \rightarrow T_p\Gs \rightarrow T_p\Gs''$, we need to show that the resulting sequence of analytic $p$-divisible groups is exact. By the logarithm sequence, it remains only to show exactness on Lie algebras. By the above, it is enough to show that the functor $D_{\HT}(-)$ sends the above exact sequence to a strict exact sequence of graded vector bundles on $S_{\et}$. This follows from the isomorphism (\ref{eq: HT comparison theorem}), the projection formula and the fact that $R^1\nu_*\OBHT = \Os_{S_{\et}}$ \cite[Prop. 6.16(ii)]{scholze2013padicHodge}.

\end{proof}

One of the main examples of dualizable analytic $p$-divisible groups are groups of good reduction. We postpone the proof of the next result to Subsection \ref{subsection: generic fibers of log p-divisible groups}, where we will prove a more general statement in Proposition \ref{prop: generic fibers of log p-divisible groups}.

\begin{proposition}\label{prop: groups of good reduction are dualizable}
    Let $\Ss$ be a flat formal scheme over $\Z_p$ that locally admits a finitely generated ideal of definition, and assume that its adic generic fiber $S=\Ss_{\eta}$ is a good adic space. Let $\mathfrak{G} \rightarrow \Ss$ be a $p$-divisible group and let $\Gs =\mathfrak{G}_{\eta}$, an analytic $p$-divisible group over $S$ (see Example \ref{ex: examples of analytic p-divisible groups}(4)). Then $\Gs$ is dualizable and we have a natural isomorphism
    \[ \Gs^D = (\mathfrak{G}^D)_{\eta}.\]
\end{proposition}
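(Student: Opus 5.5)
The plan is to reduce to the perfectoid case, where dualizability is pointwise, and then use Cartier duality for $\mathfrak{G}$ to build the dual group from an explicit short exact sequence.

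\emph{Reduction.} Dualizability asks for a short exact sequence of $v$-vector bundles whose cokernel comes from an étale vector bundle $\omega$ on $S$. The natural candidate is $\omega = \omega_{\mathfrak{G}^D,\eta}$, the generic fiber of the Hodge bundle of $\mathfrak{G}^D$, which is an étale vector bundle on $S$. So I must construct a map $g\colon T_p\Gs(-1)\otimes\Os_{S_v}\rightarrow \omega\otimes\Os_{S_v}(-1)$ with kernel $f_{\Gs}(\Lie(\Gs)\otimes\Os_{S_v})$. I will define $g$ on the level of $v$-sheaves, naturally in $\Ss$. Exactness, and the fact that $g$ is compatible with $f_{\Gs}$, can then be checked $v$-locally. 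Since everything commutes with base change (Theorem \ref{thm: extending Fargues' equivalence of categories} and base change for $\mathfrak{G}$), I may pass to affinoid perfectoid $\Spa(R,R^+)$ mapping to $S$ with $R^+$ receiving $\Ss$. Using products of points and Proposition \ref{prop: dualiazable pdiv gps over product of points}, exactness reduces further to the case $\Spa(C,C^+)$ for algebraically closed $C$. By Corollary \ref{cor: stack of pdiv gps is overconvergent}, this in turn reduces to $(C,\Os_C)$.

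\emph{Construction of $g$.} I would define $g$ as the Hodge--Tate map of the dual group. Given $t\in T_p\Gs=T_p\mathfrak{G}_\eta$, viewed via Cartier duality as a homomorphism $\mathfrak{G}^D[p^\infty]\to\mu_{p^\infty}$ over $R^+$, pull back $dT/T$ to get an element of $\omega_{\mathfrak{G}^D}\otimes R^+$. This gives $\alpha_{\mathfrak{G}^D}\colon T_p\Gs\to \omega_{\mathfrak{G}^D}\otimes \Os_{S_v}$, and after twisting the desired $g$. The same recipe applied to $\mathfrak{G}$ and $t'\in T_p\mathfrak{G}^D$ gives a map whose dual is $\Lie(\mathfrak{G})\otimes\Os\to T_p\mathfrak{G}(-1)\otimes\Os$. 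I would then check that this dual map equals $f_{\Gs}$: by definition $f_{\Gs}$ is the derivative at the identity of the analytic Weil pairing $e(-,t')\colon\Gs\to\G_m\langle p^\infty\rangle$, and for good reduction the uniqueness in Theorem \ref{thm: existence of analytic Weil pairings} identifies $e(-,t')$ with the generic fiber of the homomorphism $\mathfrak{G}\to\mu_{p^\infty}$ given by $t'$ (Tate's construction). Its derivative is therefore the pullback of $dT/T$.

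\emph{Exactness.} Over $\Os_C$, exactness of $0\to\Lie(\mathfrak{G})\otimes C(1)\to T_p\mathfrak{G}\otimes C\to \omega_{\mathfrak{G}^D}\otimes C\to0$ is the Hodge--Tate sequence of Fargues and Scholze--Weinstein \cite[Prop. 5.1.5--5.1.6]{scholze2013moduli}. This step is the main obstacle: pointwise exactness on a product of points yields exactness over $S$ only if the ranks stay constant and the image of $f$ is locally a direct summand. I would argue this integrally on $\prod_i\Os_{C_i}$, using that the Hodge--Tate maps are defined over $R^+$ with cokernel killed by a uniform power of $p$ (Fargues: by $p^{1/(p-1)}$), so the sequence inverted at $\varpi$ is exact.

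\emph{Identification of the dual.} Definition \ref{def: dualizable} gives $\Gs^D$ from $(T_p\Gs^\vee(1),\omega^\vee,g(1)^\vee)$. Since $T_p\Gs^\vee(1)=T_p(\mathfrak{G}^D)_\eta$ by Cartier duality on finite levels, $\omega^\vee=\Lie(\mathfrak{G}^D)_\eta$, and $g(1)^\vee=f_{(\mathfrak{G}^D)_\eta}$ by the symmetric version of the computation above, Theorem \ref{thm: extending Fargues' equivalence of categories} yields $\Gs^D\cong(\mathfrak{G}^D)_\eta$ naturally.
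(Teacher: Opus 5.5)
Your proposal is correct and is essentially the paper's argument, which is given in the proof of Proposition~\ref{prop: generic fibers of log p-divisible groups}(2): identify $f_{\Gs}$ with the dual of the Hodge--Tate map of $\mathfrak{G}^D$ via uniqueness of the extended Weil pairing, prove exactness of the resulting Hodge--Tate sequence $v$-locally by reducing to $(C,\Os_C)$ and Fargues' theorem, and read off $\Gs^D\cong(\mathfrak{G}^D)_{\eta}$ from the tuple. The only divergence is how you globalize exactness: your appeal to Proposition~\ref{prop: dualiazable pdiv gps over product of points} in the reduction step is circular, since that result presupposes dualizability, and the obstacle you then work around with uniform $p^{1/(p-1)}$-bounds for both Hodge--Tate maps disappears if, as in the paper, you check exactness of this complex of vector bundles at every point of a strictly totally disconnected $S'$ by Nakayama, each such point being itself a map $\Spa(C,C^+)\rightarrow S$.
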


It is natural to ask when dualizable analytic $p$-divisible groups have good reduction. We recall the celebrated classification of $p$-divisible groups over $\Os_C$ of Scholze--Weinstein. In our terminology, it takes the following form.

\begin{thm}{(\cite[Thm. B]{scholze2013moduli})}\label{thm: Scholze Weinstein's result}
    Let $C$ be a complete algebraically closed field extension of $\Q_p$.
    \begin{enumerate}
        \item Let $\Gs$ be a dualizable analytic $p$-divisible group over $\Spa(C,\Os_C)$. Then $\Gs$ has good reduction, i.e. it comes from a $p$-divisible group over $\Os_C$. 
        \item The adic generic fiber functor on $p$-divisible groups over $\Os_C$ is fully faithful.
    \end{enumerate}
    In particular, the latter functor yields an equivalence
    \begin{align}
    \left\{\text{\begin{tabular}{l} {\parbox{4.0cm}{$p$-divisible groups over $\Os_C$}}\end{tabular}}\right\}
         \xrightarrow{\cong} \left\{\text{\begin{tabular}{l} {\parbox{4.5cm}{dualizable analytic $p$-divisible groups over $\Spa(C,\Os_C)$}}\end{tabular}}\right\}.
    \end{align}
\end{thm}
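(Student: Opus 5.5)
The plan is to reduce the statement to Scholze--Weinstein's linear-algebraic classification. Both sides become categories of pairs $(T,W)$, and the generic fiber functor is compared with these descriptions. Recall that \cite[Thm. B]{scholze2013moduli} gives an equivalence between $p$-divisible groups $G$ over $\Os_C$ and pairs $(T,W)$. Here $T$ is a finite free $\Z_p$-module and $W\sub T\otimes_{\Z_p}C(-1)$ is a $C$-subspace. The functor is $G\mapsto (T_pG,\Lie(G)\otimes C)$, with the inclusion given by the dual of the Hodge--Tate map $\alpha_{G^D}\colon T_pG^{\vee}(1)\otimes C\to \omega_G$.

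\textbf{Step 1: the analytic side.} Since $C$ is algebraically closed, every $\Z_p$-local system on $\Spa(C,\Os_C)_v$ is constant, given by a finite free $\Z_p$-module $T$. Every étale vector bundle is a finite-dimensional $C$-vector space $E$, and $\Os_{S_v}$ has global sections $C$. By Theorem \ref{thm: extending Fargues' equivalence of categories}, analytic $p$-divisible groups are then equivalent to triples $(T,E,f)$ with $f\colon E\to T(-1)\otimes_{\Z_p}C$ a $C$-linear map. By Definition \ref{def: dualizable} and Example \ref{Examples: dualizable groups}(3), dualizability means that $f$ is injective. Over a field the cokernel is then automatically a vector space, hence comes from an étale bundle. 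Thus dualizable groups are equivalent to pairs $(T,W)$ with $W=f(E)$. Morphisms match as well: a morphism is a pair $(a,b)$ with $f'a=(b\otimes\id)f$, and $a$ is determined by $b$ because $f$ is injective.

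\textbf{Step 2: compatibility of the two functors.} Let $G$ be a $p$-divisible group over $\Os_C$ and $\Gs=G_\eta$. Clearly $T_p\Gs=T_pG$ and $\Lie(\Gs)=\Lie(G)[\tfrac1p]$. It remains to show that $f_{\Gs}$ equals the inclusion of Scholze--Weinstein, that is, $f_{\Gs}^{\vee}(1)=\alpha_{G^D}$. Take $t\in T_pG^{\vee}(1)=\Hom(G[p^\infty],\mu_{p^\infty})=T_p(G^D)$. By Tate's theory, $t$ extends to a homomorphism of formal groups $G\to\mu_{p^\infty}=\widehat{\G}_m$. Its generic fiber is a map $\Gs\to\G_m\langle p^\infty\rangle$ extending $t$ on $\Gs[p^\infty]$. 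By the uniqueness in Proposition \ref{Prop: extension from p-torsion to the whole group}, this map is $e_{\Gs}(-,t)$. By Definition \ref{def: u and f}, $f_{\Gs}$ is obtained by differentiating this map at the identity. Pairing with $dT/T$, the result is the pullback of the invariant differential along $t$, which is exactly $\alpha_{G^D}(t)$. This identification of $f_{\Gs}$ with the Hodge--Tate map is the step I expect to be the main obstacle. It needs care with twists and signs, and with the fact that the extension of $t$ lives integrally.

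\textbf{Step 3: conclusion.} By Step 2, the composite of $(\cdot)_\eta$ with the equivalence of Step 1 is the Scholze--Weinstein functor $G\mapsto(T_pG,W)$, which is an equivalence. In particular every $G_\eta$ is dualizable, since its $f$ is injective. Because Step 1 is an equivalence, $(\cdot)_\eta$ is itself an equivalence onto dualizable groups. This gives both (1), essential surjectivity or good reduction, and (2), full faithfulness.
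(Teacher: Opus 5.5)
Your proposal is correct and matches how the paper treats this statement. The paper gives no separate proof; it presents the result as Scholze--Weinstein's Theorem B rewritten through Theorem \ref{thm: extending Fargues' equivalence of categories}, and the key identification $f_{\mathfrak{G}_\eta}=\alpha_{\mathfrak{G}^D}[\tfrac1p]^{\vee}$ is proved in Proposition \ref{prop: generic fibers of log p-divisible groups} by exactly your Step~2 argument (extend $t\in T_p(\mathfrak{G}^D)$ integrally, pass to generic fibers, differentiate). One cosmetic point: the integral extension of $t$ does not need Tate's theory. It is just Cartier duality over $\Os_C$, using $\mathfrak{G}^D[p^n](\Os_C)=\mathfrak{G}^D[p^n](C)$ by finiteness, and it gives a map of $p$-divisible groups rather than of formal groups when $\mathfrak{G}$ has an étale part.
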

We stress that both points in Theorem \ref{thm: Scholze Weinstein's result} need not hold over more general bases $S$. Below we give an example of the failure of fully faithfulness of $(\cdot)_{\eta}$ already over $(C,C^+)$, where $C^+ \sub C$ is an open and bounded valuation subring. Later, in Propositions \ref{prop: semistable reduction versus semistable local system} and \ref{prop: good reduction versus crystalline local system}, we will obtain many counter-examples to the first point.

\begin{example}\label{ex: generic fiber not fully faithful.}
We fix a complete algebraically closed field $C$ and a bounded open valuation subring $C^+ \sub C$ of finite Krull dimension. Let $s$ (resp. $u$) be the closed (resp. open) point of $\vert \Spa(C,C^+) \vert = \vert \Spf(C^+) \vert$, and let $k(s)$, $k(u)$ denote the residue fields of the corresponding valuations. Let $\Es \rightarrow \Spf(C^+)$ be a formal elliptic curve with $\Es_{k(s)}$ supersingular and $\Es_{k(u)}$ ordinary. For example, $s$ could be a higher rank point in the boundary of the supersingular locus inside the good reduction locus of the modular curve, viewed as an adic space. Let $\mathfrak{G} = \Es[p^{\infty}]$. Using the hypothesis and Corollary \ref{cor: stack of pdiv gps is overconvergent}, $\mathfrak{G}_{\eta} \cong \Q_p/\Z_p \oplus \G_m\langle p^{\infty} \rangle$. Hence, $\End(\mathfrak{G}_{\eta})$ identifies with the algebra of upper-triangular matrices with $\Z_p$-coefficients. On the other hand, there is a specialization map $\End(\mathfrak{G}) \hookrightarrow \End(\mathfrak{G}_{k(s)})$ whose target is the maximal order in a  quaternion division algebra over $\Q_p$. It follows that $\End(\mathfrak{G}) \neq \End(\mathfrak{G}_{\eta})$.
\end{example}
\begin{remark}
In the above example, the $p$-divisible group $\mathfrak{G}$ and its dual are not connected. The functor $(\cdot)_{\eta}$ often is fully faithful when restricted to connected $p$-divisible groups, see \cite[Thm. 6.1]{Farg19}\cite[Thm. 4.3]{Farg22}.
\end{remark}

Nevertheless, we obtain the following partial converse to Proposition \ref{prop: groups of good reduction are dualizable}.

\begin{proposition}\label{prop: dualizable groups locally have good reduction on circ}
Let $S= \Spa(A,A^+)$ be a good affinoid adic space over $\Q_p$.
\begin{enumerate}
    \item let $\Gs$ be a dualizable analytic $p$-divisible group over $S$. Then $v$-locally on $S$, $\Gs$ has good reduction over $A^{\circ}$.
    \item Let $f\colon \Gs \rightarrow \Hs$ be a morphism of dualizable analytic $p$-divisible groups over $S$. Then $v$-locally on $S$, $f$ has good reduction over $A^{\circ}$.
    \item The adic generic fiber functor from $p$-divisible groups over $A^+$ to analytic $p$-divisible groups over $S$ is faithful.
\end{enumerate}
\end{proposition}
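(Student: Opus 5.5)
We reduce all three parts to products of points, where Proposition \ref{prop: dualiazable pdiv gps over product of points} lets us work pointwise and then use Theorem \ref{thm: Scholze Weinstein's result}. For (1), we first pass to a $v$-cover of $S$ by a disjoint union of affinoid perfectoid products of points $S'=\Spa(B,B^+)$ with $B^+=\prod_i R_i$ and all $K_i=C_i$ algebraically closed. Such products of points form a basis of the $v$-topology. We may arrange that the map to $S$ factors through $\Spa(B,B^{\circ})$, since $A^{\circ}\to B^{\circ}$, so good reduction over $B^{\circ}$ gives good reduction over $A^{\circ}$ after base change. By Proposition \ref{prop: dualiazable pdiv gps over product of points}, $\Gs_{S'}$ amounts to a collection $(\Gs_i)_i$ of dualizable groups over $(C_i,R_i)$ of constant height $n$ and dimension $d$. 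Restricting to $(C_i,\Os_{C_i})$ via Corollary \ref{cor: stack of pdiv gps is overconvergent}, Theorem \ref{thm: Scholze Weinstein's result} gives $p$-divisible groups $G_i$ over $\Os_{C_i}$ with $(G_i)_\eta=\Gs_i$.

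The next step is to glue the $G_i$ to a $p$-divisible group over $B^{\circ}$. Here $B^{\circ}$ is the $\varpi$-adic completion of bounded elements, and $B^{\circ}$ contains (in fact equals, up to almost-ness) $\prod_i \Os_{C_i}$, because power-bounded elements of $B=(\prod R_i)[1/\varpi]$ are uniformly bounded coordinatewise. We then set $G=\prod_i G_i$ degreewise: $G[p^m]=\Spec(\prod_i \Os(G_i[p^m]))$. Each $\Os(G_i[p^m])$ is finite free of rank $p^{mn}$ over $\Os_{C_i}$, so the product is finite free over $\prod_i\Os_{C_i}$ (as in \cite[Lemma 3.17]{zhang2023peltypeigusastackpadic}), and the $p$-divisible group axioms hold factorwise. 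We must still check that $G_\eta\cong\Gs_{S'}$. By Proposition \ref{prop: groups of good reduction are dualizable}, $G_\eta$ is dualizable, and its restriction to each point is $\Gs_i$. The full faithfulness and equivalence of Proposition \ref{prop: dualiazable pdiv gps over product of points}, applied over $\Spa(B,B^{\circ})$, then gives the required isomorphism. The only subtlety is that we base change to $B^{\circ}$ rather than $B^+$; we handle this by working with the product of points $(C_i,\Os_{C_i})$ from the start.

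For (2), we use the same cover. The morphism $f$ gives maps $f_i\colon \Gs_i\to\Hs_i$, which lift uniquely to $G_i\to H_i$ by full faithfulness in Theorem \ref{thm: Scholze Weinstein's result}(2). Their product defines $G\to H$ over $\prod_i\Os_{C_i}$. Its generic fiber agrees with $f$ after every pointwise restriction, so it equals $f$ by the faithfulness in Corollary \ref{cor: pdiv groups on product of points}.

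For (3), let $g,h\colon\mathfrak G\to\mathfrak H$ be maps over $A^+$ with $g_\eta=h_\eta$. Then $g-h$ vanishes generically, and we must show that $g-h=0$. It suffices to do this on $p^m$-torsion: the coordinate rings $\Os(\mathfrak G[p^m])$ are finite locally free over $A^+$, and $A^+\subset A$ is $p$-torsionfree, so they inject into their rationalizations, which are the coordinate rings of $\Gs[p^m]$. Thus $(g-h)^*$, which becomes zero after inverting $p$, is already zero. This is the point I expect to need the most care: $\mathfrak G[p^m]$ is defined over $\Spf A^+$ rather than $\Spec$, and $\mathfrak G_\eta$ is a sheafification. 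The key fact is that the generic fiber of the finite flat $\mathfrak G[p^m]$ is exactly the finite étale $\Gs[p^m]$, with coordinate ring $\Os(\mathfrak G[p^m])[1/p]$.
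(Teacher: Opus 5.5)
Your treatment of (1) and (2) is essentially the paper's proof. You reduce to a product of points, where $A^{\circ}=\prod_i\Os_{C_i}$ holds on the nose, so the almost-ness hedge is unnecessary. You then apply Theorem~\ref{thm: Scholze Weinstein's result} pointwise, glue via the description of $p$-divisible groups over $\prod_i\Os_{C_i}$ as collections, and identify generic fibres and morphisms using Proposition~\ref{prop: dualiazable pdiv gps over product of points} and Corollary~\ref{cor: pdiv groups on product of points}. The only difference is that you spell out the gluing and comparison steps where the paper cites \cite{zhang2023peltypeigusastackpadic} and \cite{SW20}.

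For (3) you take a genuinely different route. The paper reduces (3) to a single point $(C,C^+)$. It gets faithfulness over $\Os_C$ from the full faithfulness in Theorem~\ref{thm: Scholze Weinstein's result}(2), then passes to $C^+$ by density of $\Spf(\Os_C)$ in $\Spf(C^+)$. This implicitly also uses that faithfulness can be checked after a $v$-cover, which rests on flatness of the coordinate rings and injectivity of $A^+\to B^+$.

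You instead argue directly. The coordinate rings of $\mathfrak{G}[p^m]$ are finite locally free over the $p$-torsionfree ring $A^+$, so they embed into their $p$-localizations, which are the coordinate rings of $\Gs[p^m]=\mathfrak{G}[p^m]_{\eta}$. Hence $g^*=h^*$ as maps $\Os(\mathfrak{H}[p^m])\to\Os(\mathfrak{G}[p^m])$ as soon as this holds after inverting $p$.

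Your argument is more elementary and more general. It needs no $v$-descent, no product-of-points reduction and no Scholze--Weinstein input, and it works over any $p$-torsionfree base. It is the same injectivity mechanism as the paper's density remark, applied to $A^+\subset A$ instead of $C^+\subset\Os_C$. What the paper's route buys is uniformity: all three parts follow from one reduction to a point.

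Two small corrections:
\begin{enumerate}
\item Write $g^*=h^*$ rather than ``$(g-h)^*$ is zero''. Pullback along the zero map is the counit followed by the structure map, not the zero map.
\item $\Spa(B,B^{\circ})\to S$ is not itself a $v$-cover when $A^+\neq A^{\circ}$, since its image lies in $\Spa(A,A^{\circ})$. What you actually use is that restricting dualizable groups from the product of points $(C_i,R_i)$ to $(C_i,\Os_{C_i})$ loses nothing. This follows from Corollary~\ref{cor: stack of pdiv gps is overconvergent} and Proposition~\ref{prop: dualiazable pdiv gps over product of points}.
\end{enumerate}
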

\begin{proof}
All statements can be checked $v$-locally on $S$. Hence we may assume that $S$ is a product of points, so that $A^+= \prod_i C_i^+$, where each $C_i^+$ is an open and bounded valuation subring in a complete algebraically closed field $C_i$. By the proof of \cite[Thm. 17.5.2]{SW20} (see also \cite[Cor. 3.18]{zhang2023peltypeigusastackpadic}), $p$-divisible groups over $A^+$ are equivalent to collections of $p$-divisible groups over $C_i^+$. By Proposition \ref{prop: dualiazable pdiv gps over product of points}, we are reduced to the case of a single point $(A,A^+)=(C,C^+)$. The first and second point now are consequence of Scholze--Weinstein's result, Theorem \ref{thm: Scholze Weinstein's result}, and the third point follows by combining it with the observation that $\Spf(\Os_C)$ is dense in $\Spf(C^+)$. 
% It remains to show the first point. By Scholze--Weinstein, $\Gs \restr{\Spa(C,\Os_C)}$ has a formal model $\mathfrak{G}_0$ over $\Os_C$ and it remains to show that any such $p$-divisible group is already defined over $C^+$. Let $k$ denote the residue field of $\Os_C$ and let $\cj{C^+} \sub k$ denote the image of $C^+$. Since $\cj{C^+}$ is a valuation subring of the algebraically closed field $k$, it must contain $\cj{\F}_p$. By the Dieudonné-Mannin classification, the special fiber $G_0 = \mathfrak{G}_0 \otimes_{\Os_C} k$ is already defined over $\cj{\F}_p$ up to isogeny. Hence we may find a $p$-divisible group $H$ over $\cj{C^+}$ and a quasi-isogeny $\rho \colon H\otimes k \dashrightarrow G_0$, which we may take to be an isogeny, up to replacing it by a multiple of $p$. Let $K_0= \Ker(\rho)$, a finite subgroup scheme of $H \otimes k$. By \cite[Lemma 8.10]{zhang2023peltypeigusastackpadic}\footnote{The referenced statement has the requirement that the ambient group $A$ is an abelian variety. However, the proof only uses that $A$ is proper, so that in our case, we may take $A=H[p^N]$ for $N \gg 0$.}, the group scheme $K_0$ extends to a finite flat subgroup scheme $K \sub H$ over $\cj{C^+}$. Then $G = H/K$ is a $p$-divisible group over $\cj{C^+}$ extending $G_0$. By glueing along the span $\cj{C^+} \leftarrow C^+ \rightarrow \Os_C$, see \cite[Lemma 3.14]{zhang2023peltypeigusastackpadic}, $G$ deforms to a $p$-divisible group $\mathfrak{G}$ over $C^+$ such that $\mathfrak{G} \otimes_{C^+} \Os_C = \mathfrak{G}_0$, as required.
\end{proof}

\begin{remark}
We leave the following statements, unknown to us, as an open question: Let $S=\Spa(A,A^+)$ be a good adic space and let $\Gs \rightarrow S$ be a dualizable analytic $p$-divisible group.
\begin{enumerate}
    \item Does $\Gs$ arises from a $p$-divisible group over $A^{\circ}$, quasi-pro-étale locally on $S$?
    \item Is there an admissible formal blowup $\Ss \rightarrow \Spf(A^+)$ such that $\Gs$ arises from a $p$-divisible group over $\Ss$?
\end{enumerate}
\end{remark}

\subsection{Effective Banach--Colmez spaces}\label{subsect: effective BC spaces}
In this subsection, we generalize the classification of analytic $p$-divisible groups of Theorem \ref{thm: extending Fargues' equivalence of categories} to effective Banach--Colmez spaces.

\begin{definition}\label{def: minuscule BC space}
    Let $S$ be a locally spatial diamond over $\Q_p$. An effective Banach--Colmez space over $S$ is a sheaf $\Fs$ of abelian group on $S_v$ such that there exists a short exact sequence of $v$-sheaves
        % https://q.uiver.app/#q=WzAsNSxbMCwwLCIwIl0sWzEsMCwiXFxWIl0sWzIsMCwiXFxGcyJdLFszLDAsIkUiXSxbNCwwLCIwLiJdLFswLDFdLFsxLDJdLFsyLDNdLFszLDRdXQ==
    \[\begin{tikzcd}
	0 & \V & \Fs & E & {0,}
	\arrow[from=1-1, to=1-2]
	\arrow[from=1-2, to=1-3]
	\arrow[from=1-3, to=1-4]
	\arrow[from=1-4, to=1-5]
\end{tikzcd}\]
where $\V$ is a $\Q_p$-local system on $S_v$ and $E$ is an étale vector bundle on $S$, viewed as a sheaf on $S_v$.
    % \begin{enumerate}
    %     \item 
    % % \item An minuscule Banach--Colmez space is called effective if it admits as a presentation as above where $\V$ admits a $\Z_p$-lattice, i.e there exists a $\Z_p$-local system $\Lb\sub \V$ with $\Lb[\tfrac{1}{p}] = \V$.
    % \end{enumerate}
\end{definition}
We stress that an effective Banach--Colmez space $\Fs$ does not intrinsically determine the short exact sequence $\V \rightarrow \Fs \rightarrow E$. We will call such a sequence a presentation of $\Fs$.

 The above definition is a relative variant of a construction of Colmez \cite{Colmez_2002} and Fontaine \cite{Fontaine03}. Those were later reworked in terms of coherent sheaves on the Fargues--Fontaine curve \cite{Le_Bras_2018}. In Subsection \ref{subsect: Analytic $p$-divisible groups and vector bundles}, we will relate our definition to the relative Fargues--Fontaine curve and the families of Banach--Colmez spaces in \cite{fargues2024geometrization}.

\begin{example}\label{ex: univ cover of unipotent units}
    Let $S=\Spd(\Q_p)$ and $\Fs = (\B_{\crys}^+)^{\varphi = p}$, where $\B_{\crys}^+$ is the crystalline period $v$-sheaf, defined by the formula 
    \[ S=\Spa(R,R^+) \in \Perf_{\Q_p} \longmapsto A_{\crys}(R^+)[\tfrac{1}{p}].\]
    Here $A_{\crys}(R^+)$ is the p-adically completed divided power envelope of the surjection $\Ainf(R^+) \rightarrow R^+/p$.
    %$p$-adic completion of the divided-power envelope of $(W(R^+),\Ker(\theta))$. 
    It comes with the exact sequence of $v$-sheaves
    % https://q.uiver.app/#q=WzAsNSxbMCwwLCIwIl0sWzEsMCwiXFxRX3AoMSkiXSxbMiwwLCIoXFxCX3tcXGNyeXN9XispXntcXHZhcnBoaSA9cH0iXSxbMywwLCJcXEdfYSJdLFs0LDAsIjAsIl0sWzAsMV0sWzEsMl0sWzIsMywiXFx0aGV0YSJdLFszLDRdXQ==
\begin{equation}\label{eq: fund eq padic HT}\begin{tikzcd}
	0 & {\Q_p(1)} & {(\B_{\crys}^+)^{\varphi =p}} & {\G_a} & {0.}
	\arrow[from=1-1, to=1-2]
	\arrow[from=1-2, to=1-3]
	\arrow["\theta", from=1-3, to=1-4]
	\arrow[from=1-4, to=1-5]
\end{tikzcd}\end{equation}
By \cite[Prop. II.2.2]{fargues2024geometrization}, there is an isomorphism
\[ (\B_{\crys}^+)^{\varphi =p} = \varprojlim_{[p]} \G_m\langle p^{\infty} \rangle, \]
under which the map $\theta$ is identified with the following composition
% https://q.uiver.app/#q=WzAsMyxbMCwwLCJcXHZhcnByb2psaW1fe1twXX1cXEdfbVxcbGFuZ2xlIHBee1xcaW5mdHl9IFxccmFuZ2xlIl0sWzEsMCwiXFxHX21cXGxhbmdsZSBwXntcXGluZnR5fSBcXHJhbmdsZSJdLFsyLDAsIlxcR19hLiJdLFsxLDIsIlxcbG9nIl0sWzAsMV1d
\[\begin{tikzcd}
	{\varprojlim_{[p]}\G_m\langle p^{\infty} \rangle} & {\G_m\langle p^{\infty} \rangle} & {\G_a.}
	\arrow[from=1-1, to=1-2]
	\arrow["\log", from=1-2, to=1-3]
\end{tikzcd}\]
\end{example}

As this example suggests, interesting examples of effective Banach--Colmez spaces are given by $p$-adic universal covers of analytic $p$-divisible groups. This was already observed in \cite{Farg19}.
\begin{lemma}\label{lemma: effective BC spaces vs univ cover}
    Let $\Gs$ be an analytic $p$-divisible group over a good adic space $S/\Q_p$ (or more generally a locally spatial diamond, using Remark \ref{rmk: p-divisible groups over LSD}). Then the $p$-adic universal cover
    \[ \widetilde{\Gs} = \varprojlim_{[p]} \Gs\]
    viewed as a group diamond over $S$, fits in a short exact sequence of $v$-sheaves
    % https://q.uiver.app/#q=WzAsNSxbMCwwLCIwIl0sWzEsMCwiVl9wXFxHcyJdLFsyLDAsIlxcd2lkZXRpbGRle1xcR3N9Il0sWzMsMCwiXFxnYSJdLFs0LDAsIjAuIl0sWzAsMV0sWzEsMl0sWzIsM10sWzMsNF1d
\begin{equation}\label{eq: inv limit of log}\begin{tikzcd}
	0 & {V_p\Gs} & {\widetilde{\Gs}} & \ga & {0.}
	\arrow[from=1-1, to=1-2]
	\arrow[from=1-2, to=1-3]
	\arrow[from=1-3, to=1-4]
	\arrow[from=1-4, to=1-5]
\end{tikzcd}\end{equation}
In particular, $\Gs$ determines an effective Banach--Colmez space $\widetilde{\Gs}$ with a presentation. Conversely, if $\V \rightarrow \Fs \rightarrow E$ is a presented effective Banach--Colmez space such that $\V$ admits a $\Z_p$-lattice $\Lb\sub \V$, then there exists a unique analytic $p$-divisible group $\Gs$ over $S$ such that $\Fs = \widetilde{\Gs}$ compatibly with the presentations and with $\Lb =T_p\Gs$. 
\end{lemma}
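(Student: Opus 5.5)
For the first part, I would pass to the inverse limit along $[p]$ in the logarithm sequence (\ref{eq: logarithm exact sequence for pdiv groups}), $0 \to \Gs[p^{\infty}] \to \Gs \to \ga \to 0$. Multiplication by $p$ is an isomorphism on $\ga$, since $\ga$ is a $\Q_p$-vector space, so $\varprojlim_{[p]} \ga = \ga$. Meanwhile $\varprojlim_{[p]} \Gs[p^{\infty}] = V_p\Gs$: $v$-locally, $\Gs[p^{\infty}] \cong (\Q_p/\Z_p)^n$, and this identification holds globally by the identification $\Gs[p^\infty] = V_p\Gs/T_p\Gs$.

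Left exactness of $\varprojlim$ then gives exactness of $0 \to V_p\Gs \to \widetilde{\Gs} \to \ga$. The only real point is surjectivity onto $\ga$, i.e.\ vanishing of $R^1\varprojlim$ of the transition system. I would argue this directly instead. Given a local section $x \in \ga(Y)$, Lemma \ref{lemma: log is right-exact for an pdiv} yields, after a $v$-cover, some $g_0 \in \Gs$ with $\log g_0 = x$. Since $[p]$ is finite étale surjective, we can successively choose $p$-th roots $g_{n+1}$ of $g_n$ after étale covers. The resulting tower of covers has a cofiltered limit, and the limit of a tower of finite étale surjections of perfectoid spaces is again a $v$-cover, so that step is fine. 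On that cover, the compatible sequence $(p^{-n}\ldots)$ gives a point of $\widetilde{\Gs}$. More precisely, the element $(g_n)$ maps to $\log g_0 = x$ under the projection followed by $\log$, after fixing the normalization that the map $\widetilde{\Gs} \to \ga$ is $(g_n) \mapsto \log g_0$. This proves the short exact sequence (\ref{eq: inv limit of log}), and hence that $\widetilde{\Gs}$ is a presented effective Banach--Colmez space. Representability of $\widetilde{\Gs}$ as a diamond is immediate as a limit of diamonds along affine (finite) maps.

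For the converse, let $\V \to \Fs \to E$ be a presentation and $\Lb \subseteq \V$ a $\Z_p$-lattice. I would set $\Gs := \Fs/\Lb$ as a $v$-sheaf. It sits in an exact sequence $0 \to \V/\Lb \to \Gs \to E \to 0$ with $\V/\Lb = \Lb \otimes \Q_p/\Z_p = \Lb(-1)\otimes \mu_{p^\infty}$ up to twist. By \cite[Lemma 3.38]{gerth2024}, which is exactly the input used in the proof of Theorem \ref{thm: extending Fargues' equivalence of categories}, such an extension is representable by an analytic $p$-divisible group whose logarithm sequence is this extension. It follows that $\Gs[p^\infty] = \V/\Lb$, $T_p\Gs = \Lb$ and $\Lie\Gs = E$.

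Applying $\varprojlim_{[p]}$ and comparing with the first part, the natural map $\Fs \to \widetilde{\Gs}$, $s \mapsto (p^{-n}s \bmod \Lb)_n$, is well defined because $\Fs$ is uniquely $p$-divisible: it is an extension of $\Q_p$-modules. This map is compatible with the presentations, so it is an isomorphism by the five lemma.

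For uniqueness, suppose $\Gs'$ has $\widetilde{\Gs'} \cong \Fs$ compatibly with presentations and $T_p\Gs' = \Lb$. Then $\Gs' = \widetilde{\Gs'}/T_p\Gs'$, because the projection $\widetilde{\Gs'} \to \Gs'$ is surjective with kernel $T_p\Gs'$; surjectivity follows by the same root-extraction argument as above. Hence $\Gs' \cong \Fs/\Lb = \Gs$.

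The main obstacle is the surjectivity, equivalently the $R^1\varprojlim$ vanishing, handled above by the tower of finite étale covers.
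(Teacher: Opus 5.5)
Your proposal is correct and follows essentially the same route as the paper. The paper also applies $\varprojlim_{[p]}$ to the logarithm sequence and gets surjectivity onto $\ga$ by citing repleteness of the $v$-topos; your tower of pro-finite-\'etale covers is exactly what that citation packages. For the converse, the paper likewise forms the pushout $\Fs/\Lb$ along $\V\to\V/\Lb$ and invokes \cite[Lemma 3.38]{gerth2024}, while your explicit uniqueness argument via $\Gs'\cong\widetilde{\Gs'}/T_p\Gs'$ fills in a step the paper leaves implicit.
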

\begin{proof}
    The displayed exact sequence arises from applying $\varprojlim_{[p]}$ to the logarithm exact sequence (\ref{eq: logarithm exact sequence for pdiv groups}) and using that the $v$-site is replete \cite[Lemma 2.6]{heuer2021lineonpefd}. Conversely, consider the following pushout in $v$-sheaves
    % https://q.uiver.app/#q=WzAsMTAsWzAsMCwiMCJdLFsxLDAsIlxcViJdLFsyLDAsIlxcRnMiXSxbMywwLCJFIl0sWzQsMCwiMCJdLFszLDEsIkUiXSxbMiwxLCJcXEdzIl0sWzEsMSwiXFxWL1xcTGIiXSxbMCwxLCIwIl0sWzQsMSwiMC4iXSxbMCwxXSxbMSwyXSxbMiwzXSxbMyw0XSxbMyw1LCI9Il0sWzIsNl0sWzYsNV0sWzEsN10sWzcsNl0sWzgsN10sWzUsOV0sWzYsMSwiIiwwLHsic3R5bGUiOnsibmFtZSI6ImNvcm5lciJ9fV1d
\[\begin{tikzcd}
	0 & \V & \Fs & E & 0 \\
	0 & {\V/\Lb} & \Gs & E & {0.}
	\arrow[from=1-1, to=1-2]
	\arrow[from=1-2, to=1-3]
	\arrow[from=1-2, to=2-2]
	\arrow[from=1-3, to=1-4]
	\arrow[from=1-3, to=2-3]
	\arrow[from=1-4, to=1-5]
	\arrow["{=}", from=1-4, to=2-4]
	\arrow[from=2-1, to=2-2]
	\arrow[from=2-2, to=2-3]
	\arrow["\lrcorner"{anchor=center, pos=0.125, rotate=180}, draw=none, from=2-3, to=1-2]
	\arrow[from=2-3, to=2-4]
	\arrow[from=2-4, to=2-5]
\end{tikzcd}\]
Then it follows from \cite[Lemma 3.38]{gerth2024} that $\Gs$ is representable by an analytic $p$-divisible group, and we immediately see that $\Fs = \widetilde{\Gs}$.
\end{proof}

\begin{lemma}
    Let $\Gs \rightarrow S$ be a dualizable analytic $p$-divisible group. Then for any map $S' \rightarrow S$ from a strictly totally disconnected perfectoid space $S'$, the pullback $\widetilde{\Gs}_{S'}$ is perfectoid. 
\end{lemma}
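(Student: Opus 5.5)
Since dualizability and the formation of $\widetilde{\Gs}$ commute with base change, we may replace $S$ by $S'$ and assume that $S=\Spa(R,R^+)$ is a strictly totally disconnected perfectoid space. Then every étale vector bundle and every $\Z_p$-local system on $S$ is trivial, by \cite[Cor. 3.21]{MannWerner2022loc}. By Theorem \ref{thm: extending Fargues' equivalence of categories}, $\Gs$ is therefore determined by an $R$-linear injection $f\colon R^d \hookrightarrow R^n$ (after fixing a trivialisation of $\Z_p(1)$) with locally free cokernel. On a strictly totally disconnected space, such a locally direct summand is actually a direct summand, and after a change of basis of $R^n$ we may hope to bring $f$ into a standard position.

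The key idea is to compare $\Gs$ with the model $\mu_{p^\infty}^{\oplus n}$. By Theorem \ref{thm: extending Fargues' equivalence of categories}, $\Gs$ is the pullback of $f\colon \G_a^d \to \G_a^n$ along $\log\colon \G_m\langle p^\infty\rangle^n \to \G_a^n$. Applying $\varprojlim_{[p]}$, which is exact on the $v$-site since it is replete, we obtain a cartesian square
\[ \widetilde{\Gs} = \widetilde{\G_m\langle p^\infty\rangle}^{\,n} \times_{\G_a^n} \G_a^d, \]
where the map $\widetilde{\G_m\langle p^\infty\rangle}^{\,n} \to \G_a^n$ is $\theta$ on $(\B_{\crys}^+)^{\varphi=p}$ by Example \ref{ex: univ cover of unipotent units}. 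Now $\widetilde{\G_m\langle p^\infty\rangle}$ over $S$ is the perfectoid open unit disc $\Spa(R\langle T^{1/p^\infty}\rangle)$-type space $\widetilde{\mu_{p^\infty}}_\eta$, which is representable by a perfectoid space over any perfectoid base.

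Choose a complement $R^n = f(R^d)\oplus Q$ with $Q \cong R^{n-d}$. Then $\widetilde{\Gs}$ is the closed subspace of $(\widetilde{\G_m\langle p^\infty\rangle})^n_S$ cut out by the vanishing of the $n-d$ analytic functions $\mathrm{pr}_Q\circ\theta$, i.e. the vanishing of $n-d$ $R$-linear combinations of the $\theta$-coordinates. This is the main obstacle: a Zariski-closed subset of a perfectoid space need not be perfectoid, only a diamond. To handle it, I would exploit the $\GL_n(\Z_p)$-action. Since $S$ is strictly totally disconnected, I would try to reduce, after an integral change of basis, to the case where $f(R^d)$ is the graph of a matrix over one set of coordinates. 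Then the equation becomes: the last $n-d$ values of $\theta$ are $R$-linear functions of the first $d$. Along these coordinates the projection $\widetilde{\Gs}\to(\widetilde{\G_m\langle p^\infty\rangle})^d$ should be a torsor, or at least a fibre product over $\G_a^{n-d}$, of $\Q_p(1)^{n-d}$, i.e. of copies of $\theta^{-1}(\text{point})\cong \underline{\Q_p(1)}$. A pro-finite-étale, indeed locally profinite, set times a perfectoid space is perfectoid, which concludes.

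The delicate point is that $\theta\colon \widetilde{\G_m\langle p^\infty\rangle}\to\G_a$ is surjective only $v$-locally. To show that it admits sections étale-locally over the perfectoid base $(\widetilde{\G_m\langle p^\infty\rangle})^d$, I would use that $(\B_{\crys}^+)^{\varphi=p}\to\G_a$ is a $\underline{\Q_p(1)}$-torsor, hence pro-étale locally split, and that pro-étale torsors under locally profinite groups over perfectoid spaces are representable by perfectoid spaces, as in \cite{scholze2022etale}. If this reduction to graph form fails globally, I would instead work locally on the connected components of $S$ and use that the choices spread out over clopen neighbourhoods.
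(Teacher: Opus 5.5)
Your argument works, but it takes a detour the paper avoids, and the reason you give for the detour is mistaken. A Zariski-closed subset of an affinoid perfectoid space \emph{is} affinoid perfectoid. If $\widetilde{W}=\Spa(B,B^+)$ is affinoid perfectoid and $J\subseteq B$ is an ideal, the locus $V(J)$ is represented by an affinoid perfectoid space, coming from the perfectoidization of a semiperfectoid quotient of $B^+$; see \cite[Thm. 1.12(2)]{BhattScholze2022prisms} and \cite[II.0.2]{fargues2024geometrization}, and over a perfectoid field already \cite{Scholze2015torsion}. This is exactly how the paper concludes. Your own description of $\widetilde{\Gs}$ as the vanishing locus in $(\widetilde{\G_m\langle p^{\infty}\rangle})^n_S$ of the functions $\mathrm{pr}_Q\circ\theta$ is essentially the whole proof. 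One covers $\G_m\langle p^{\infty}\rangle^n_S$ by affinoids $W$ and observes that the preimage $\widetilde W$ of $W$ in $(\widetilde{\G_m\langle p^{\infty}\rangle})^n_S$ (adjoin all $p$-power roots of the coordinates) is affinoid perfectoid. Then $\widetilde{\Gs}\cap\widetilde W=V(J)$, where $J$ is generated by the equations of the Zariski-closed immersion $u\colon\Gs\hookrightarrow T_p\Gs(-1)\otimes\G_m\langle p^{\infty}\rangle$ obtained from $f$ by pullback along the logarithm.

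That said, your detour does go through once a few details are supplied.
\begin{enumerate}
\item Each connected component of $S$ lies in some locus $\{\det_J f\neq 0\}$ for a $d\times d$ minor. Since open covers of $S$ split, these opens can be refined to a finite clopen partition.
\item On each piece, a permutation of coordinates puts $f$ in the form $x\mapsto (x,Mx)$. A permutation lies in $\GL_n(\Z_p)$, so it is compatible with $\theta$.
\item Then $\widetilde{\Gs}$ is the pullback of $\theta^{n-d}\colon(\widetilde{\G_m\langle p^{\infty}\rangle})^{n-d}\to\G_a^{n-d}$ along $M\circ\theta\colon Y=(\widetilde{\G_m\langle p^{\infty}\rangle})^{d}_S\to\G_a^{n-d}$. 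That is, it is a $v$-torsor under $\underline{\Q_p(1)}^{n-d}$ over the perfectoid space $Y$.
\end{enumerate}
What you need is not étale-local sections of $\theta$ but representability of such torsors. Over an affinoid perfectoid $V\subseteq Y$, the induced $\underline{\Q_p/\Z_p}$-torsor comes from a finite étale torsor under $p^{-k}\Z_p/\Z_p$ for some $k$. So the torsor is a disjoint union, indexed by $\Q_p/p^{-k}\Z_p$, of translates of a $\underline{p^{-k}\Z_p}$-torsor. That torsor is pro-finite-étale over $V$, hence affinoid perfectoid.

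The paper's route is shorter and needs neither a normal form for $f$ nor a torsor lemma, at the cost of invoking perfectoidization. Yours is more hands-on and exhibits $\widetilde{\Gs}$ explicitly as a $\underline{\Q_p(1)}^{n-d}$-torsor over a $d$-dimensional perfectoid open polydisc.
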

\begin{proof}
    We may assume that $S$ is strictly totally disconnected, so that $T_p\Gs$ is trivial and Tate twists are trivialized. We have a commutative diagram where both squares are cartesian
    % https://q.uiver.app/#q=WzAsNixbMSwwLCJcXEdzIl0sWzIsMCwiXFxnYSJdLFsyLDEsIlRfcFxcR3MoLTEpXFxvdGltZXNfe1xcWl9wfVxcR19hLiJdLFsxLDEsIlRfcFxcR3MoLTEpXFxvdGltZXNfe1xcWl9wfVxcR19tXFxsYW5nbGUgcF57XFxpbmZ0eX0gXFxyYW5nbGUiXSxbMCwwLCJcXHdpZGV0aWxkZXtcXEdzfSJdLFswLDEsIlRfcFxcR3MoLTEpXFxvdGltZXNfe1xcWl9wfVxcd2lkZXRpbGRle1xcR19tXFxsYW5nbGUgcF57XFxpbmZ0eX0gXFxyYW5nbGV9Il0sWzAsMSwiXFxsb2dfe1xcR3N9Il0sWzEsMiwiZiIsMCx7InN0eWxlIjp7InRhaWwiOnsibmFtZSI6Imhvb2siLCJzaWRlIjoidG9wIn19fV0sWzAsMywidSIsMix7InN0eWxlIjp7InRhaWwiOnsibmFtZSI6Imhvb2siLCJzaWRlIjoidG9wIn19fV0sWzMsMiwiXFxsb2ciLDJdLFs0LDBdLFs1LDMsIlxccGkiLDJdLFs0LDUsIiIsMSx7InN0eWxlIjp7InRhaWwiOnsibmFtZSI6Imhvb2siLCJzaWRlIjoidG9wIn19fV1d
\[\begin{tikzcd}
	{\widetilde{\Gs}} & \Gs & \ga \\
	{T_p\Gs(-1)\otimes_{\Z_p}\widetilde{\G_m\langle p^{\infty} \rangle}} & {T_p\Gs(-1)\otimes_{\Z_p}\G_m\langle p^{\infty} \rangle} & {T_p\Gs(-1)\otimes_{\Z_p}\G_a.}
	\arrow[from=1-1, to=1-2]
	\arrow[hook, from=1-1, to=2-1]
	\arrow["{\log_{\Gs}}", from=1-2, to=1-3]
	\arrow["u"', hook, from=1-2, to=2-2]
	\arrow["f", hook, from=1-3, to=2-3]
	\arrow["\pi"', from=2-1, to=2-2]
	\arrow["\log"', from=2-2, to=2-3]
\end{tikzcd}\]
Observe that $T_p\Gs(-1) \otimes \G_m\langle p^{\infty} \rangle \cong \G_m\langle p^{\infty} \rangle^{\oplus n}$ is a smooth $S$-space and $T_p\Gs(-1) \otimes \widetilde{\G_m\langle p^{\infty} \rangle} \cong (\widetilde{\G_m\langle p^{\infty} \rangle})^{\oplus n}$ is perfectoid. As $f$ is a locally direct summand, it follows that $f$ and hence also $u$ are Zariski-closed immersions of smooth $S$-spaces, in the sense of \cite[Def. IV.4.20]{fargues2024geometrization}. Let us fix an open affinoid subspace $W=\Spa(A,A^+)\sub T_p\Gs(-1)\otimes \G_m\langle p^{\infty} \rangle$ and write $u^{-1}(W) = V(I)$ for some ideal $I \sub A$. Let $\widetilde{W} = \pi^{-1}(W)$ be its preimage. Then if $s_i\in A^{\times}$ are the restriction of the variables $T_i\in \Os(\G_m\langle p^{\infty} \rangle^{\oplus n})$, $i=1,\ldots, n$
\[ \widetilde{W} = \Spa(A\langle s_i^{1/p^{\infty}}\rangle ,A^+\langle s_i^{1/p^{\infty}}\rangle) \]
is affinoid perfectoid and we let $J \sub A\langle s_i^{1/p^{\infty}}\rangle$ be the ideal generated by $I$. 
% The locus $V(J)\sub \vert \widetilde{W} \vert $ can be promoted to a diamond, by expressing it as an inverse limit of rational open neighborhoods
% \[V(J) = \bigcap_{f_1,\ldots, f_m\in I} U(f_1,\ldots, f_m).\]
% It is clear that the following commutative square of diamonds is cartesian 
% % https://q.uiver.app/#q=WzAsNCxbMSwwLCJWKEkpIl0sWzEsMSwiVy4iXSxbMCwwLCJWKEopIl0sWzAsMSwiXFx3aWRldGlsZGV7V30iXSxbMCwxLCJ1IiwwLHsic3R5bGUiOnsidGFpbCI6eyJuYW1lIjoiaG9vayIsInNpZGUiOiJ0b3AifX19XSxbMiwwXSxbMywxLCJcXHBpIiwyXSxbMiwzLCIiLDEseyJzdHlsZSI6eyJ0YWlsIjp7Im5hbWUiOiJob29rIiwic2lkZSI6InRvcCJ9fX1dXQ==
% \[\begin{tikzcd}
% 	{V(J)} & {V(I)} \\
% 	{\widetilde{W}} & {W.}
% 	\arrow[from=1-1, to=1-2]
% 	\arrow[hook, from=1-1, to=2-1]
% 	\arrow["u", hook, from=1-2, to=2-2]
% 	\arrow["\pi"', from=2-1, to=2-2]
% \end{tikzcd}\]
Then it follows from the existence of perfectoidization for semiperfectoid rings (\cite[Thm. 1.12(2)]{BhattScholze2022prisms}) that $\pi^{-1}(V(I)) = V(J)$ is representable by an affinoid perfectoid space, see \cite[II.0.2]{fargues2024geometrization}. This shows that $\widetilde{\Gs}$ has an open cover by affinoid perfectoid spaces, which concludes the proof.
\end{proof}

We now have an analogous result to Proposition \ref{Prop: extension from p-torsion to the whole group}.
\begin{proposition}\label{Prop: extension from tate module to the whole BC space}
    Let $S/\Q_p$ be a locally spatial diamond, let $\V \rightarrow \Fs \rightarrow E$ be an effective Banach--Colmez space over $S$ and let $\V'$ be a $\Q_p$-local system on $S$. Then we have an isomorphism
    \[ \underline{\Hom}_{\Q_p}(\V,  \V'\otimes_{\Q_p} \widetilde{\G_m\langle p^{\infty} \rangle}) =  \underline{\Hom}_{\Q_p}(\Fs, \V'\otimes_{\Q_p} \widetilde{\G_m\langle p^{\infty} \rangle}). \]
\end{proposition}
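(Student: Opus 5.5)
The plan mirrors the proof of Proposition \ref{Prop: extension from p-torsion to the whole group}. The claim is local on $S_v$, so I first reduce to $S$ a strictly totally disconnected perfectoid space. There $\V\cong\Q_p^{\oplus r}$, $\V'\cong \Q_p^{\oplus r'}$, $E\cong \Os^{\oplus s}$ (so $E$ as a $v$-sheaf is $\G_a^{\oplus s}$), and Tate twists are trivial. By additivity it then suffices to treat $\V'=\Q_p$, i.e. to compare $\underline{\Hom}_{\Q_p}(-,\widetilde{\G_m\langle p^{\infty}\rangle})$ on $\V$ and on $\Fs$. Write $\widetilde{\G}:=\widetilde{\G_m\langle p^{\infty}\rangle}=(\B_{\crys}^+)^{\varphi=p}$ as in Example \ref{ex: univ cover of unipotent units}.

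Next I apply $\underline{\Hom}_{\Q_p}(-,\widetilde{\G})$ to the presentation $0\to\V\to\Fs\to E\to 0$. This gives the exact sequence
\[ 0\to \Hom_{\Q_p}(E,\widetilde{\G})\to \Hom_{\Q_p}(\Fs,\widetilde{\G})\to \Hom_{\Q_p}(\V,\widetilde{\G})\to \Ext^1_{\Q_p}(E,\widetilde{\G}), \]
and it suffices to show that the two outer terms vanish, after sheafifying on perfectoid $S'$ over $S$. Because $E$ and $\widetilde{\G}$ are uniquely $p$-divisible, $\Q_p$-linear maps and extensions coincide with $\Z$-linear ones, by Lemma \ref{Lemma: turn Hom into tensor}(1) applied to $\Q_p$. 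So I must show $\Hom(\G_a,\widetilde{\G})=0$ and $\Ext^1(\G_a,\widetilde{\G})=0$ over strictly totally disconnected $S$.

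For the Hom term, a homomorphism $\G_a\to\widetilde{\G}=\varprojlim_{[p]}\G_m\langle p^{\infty}\rangle$ is a compatible system of homomorphisms $\G_a\to\G_m\langle p^\infty\rangle$. Each of these vanishes because bounded functions on $\A^1_S$ are constant, as in the proof of Proposition \ref{Prop: extension from p-torsion to the whole group}.

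For the Ext term I would use the presentation $0\to \Q_p(1)\to\widetilde{\G}\xrightarrow{\theta}\G_a\to0$ from (\ref{eq: fund eq padic HT}). It yields
\[ \Hom(\G_a,\G_a)\xrightarrow{\delta}\Ext^1(\G_a,\Q_p(1))\to\Ext^1(\G_a,\widetilde{\G})\to\Ext^1(\G_a,\G_a)\xrightarrow{\theta_*'} \Ext^2(\G_a,\Q_p(1)). \]
Alternatively, and I expect this to be cleaner, I can write $\widetilde{\G}=\varprojlim_{[p]}\G_m\langle p^\infty\rangle$ and use repleteness of the $v$-site. The Milnor sequence gives
\[ 0\to \varprojlim{}^1\Hom(\G_a,\G_m\langle p^\infty\rangle)\to \Ext^1(\G_a,\widetilde{\G})\to\varprojlim\Ext^1(\G_a,\G_m\langle p^\infty\rangle). \]
The $\varprojlim^1$ term vanishes because the Hom groups are $0$. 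The vanishing $\Ext^1_{\et}(\G_a,\G_m\langle p^\infty\rangle)=0$ was shown in the proof of Proposition \ref{Prop: extension from p-torsion to the whole group}, via the Breen resolution and Lemma \ref{Lemma: Picard group of product in terms of one factor}.

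\textbf{Main obstacle.} That earlier vanishing was for étale Ext, while here I need $v$-Ext on $S_v$. I would bridge this in one of two ways. One is to check that the analogue of Lemma \ref{Lemma: Picard group of product in terms of one factor} holds for $v$-cohomology: $H^1_v(\A^n_S,\G_m\langle p^\infty\rangle)$ agrees with $H^1_{\et}$, since $v$-line bundles on perfectoid/smooth spaces over a perfectoid base are étale (e.g.\ \cite{heuer2021lineonpefd}, \cite{heuer2022gtorsors}). The other is to remark that $\Ext^1$ classes are represented by extensions which, after pulling back along $\G_a$, are $\G_m\langle p^\infty\rangle$-torsors on $\A^1_S$, hence étale-locally trivial. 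With both outer terms zero, the restriction map is an isomorphism of sheaves.
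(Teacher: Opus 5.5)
\textbf{The gap: the group you try to kill is not zero.} Your reduction needs the outer term $\Ext^1(E,\widetilde{\G})$ to vanish, i.e.\ $\Ext^1_v(\G_a,\widetilde{\G})=0$ over a strictly totally disconnected $S$ (with $\widetilde{\G}=\widetilde{\G_m\langle p^{\infty}\rangle}$ as in your notation). This group is nonzero. To see it, run your own four-term sequence on $S=\Spd(C)$ with $\Fs=\widetilde{\G}$ and its presentation $0\to\Q_p(1)\to\widetilde{\G}\xrightarrow{\theta}\G_a\to 0$ from Example~\ref{ex: univ cover of unipotent units}.
\begin{itemize}
\item By Lemma~\ref{lemma: tau is fully faithful in most cases}(2), $\underline{\Hom}(\widetilde{\G},\widetilde{\G})=\tau_*\underline{\End}(\Os_{X_S}(1))=\underline{\Q_p}$.
\item On the other hand, $\underline{\Hom}_{\Q_p}(\Q_p(1),\widetilde{\G})=\widetilde{\G}(-1)$.
\item Restriction is the inclusion $\underline{\Q_p}\hookrightarrow\widetilde{\G}(-1)$, so the connecting map into $\Ext^1_v(\G_a,\widetilde{\G})$ has image $\G_a(-1)\neq 0$.
\end{itemize}
Concretely, $\BC$ applied to $0\to\Os_{X_S}(1)\otimes\Q_p(1)\to\Os_{X_S}(2)\to i_*\Os_S\to 0$ is a non-split extension of $\G_a$ by $\widetilde{\G}(1)$. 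A splitting would come, via the full faithfulness of $R\tau_*$ behind that lemma, from a nonzero map $i_*\Os_S\to\Os_{X_S}(2)$, and there is none.

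Through your Milnor sequence this forces $\Ext^1_v(\G_a,\G_m\langle p^{\infty}\rangle)\neq0$, so neither of your bridges can work. Both rest on a false principle: $v$-torsors on the non-perfectoid space $\A^1_S$ need not be étale-locally trivial. Already $H^1_v(\A^1_C,\Os)$ contains the Hodge--Tate term $H^0(\A^1_C,\Omega^1)(-1)\neq 0$, while $H^1_{\et}(\A^1_C,\Os)=0$. The étale vanishing from the proof of Proposition~\ref{Prop: extension from p-torsion to the whole group} does not transfer to the $v$-site.

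\textbf{Consequence for the statement itself.} The same example shows that the equality, read literally with plain sheaf Homs (as both you and the paper's proof read it), fails: for $\V'=\Q_p$ the two sides are $\widetilde{\G}(-1)$ and $\underline{\Q_p}$. So no argument aiming at surjectivity onto all of $\underline{\Hom}_{\Q_p}(\V,\V'\otimes\widetilde{\G})$ can succeed.

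The paper's route differs from yours in that it never tries to kill $\Ext^1_v(E,\widetilde{\G})$. After the same repleteness step, it identifies $\delta$ with $\varprojlim_{[p]}$ of the connecting map of the logarithm sequence $0\to\V/\Lb\to\Gs\to E\to 0$ for $\Gs=\Fs/\Lb$. That connecting map vanishes by Proposition~\ref{Prop: extension from p-torsion to the whole group}. This identification, however, only reaches maps with values in $\V'\otimes\Q_p(1)$. For such a map, choose lattices $\Lb\subset\V$ and $\Lb'\subset\V'$ and rescale so that $\Lb$ lands in $\Lb'(1)$. The map then induces $\V/\Lb\to\Lb'\otimes\mu_{p^{\infty}}$, which extends to $\Gs\to\Lb'\otimes\G_m\langle p^{\infty}\rangle$; applying $\varprojlim_{[p]}$ gives the extension to $\widetilde{\Gs}=\Fs$.

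What is provable is therefore the following:
\begin{itemize}
\item restriction is injective (your Hom-vanishing step is correct);
\item its image contains $\underline{\Hom}_{\Q_p}(\V,\V'\otimes\Q_p(1))$.
\end{itemize}
Equivalently, presentation-preserving maps $\Fs\to\V'\otimes\widetilde{\G}$ correspond exactly to maps $\V\to\V'(1)$. This is all the following corollary needs to build $e_{\Fs}$. The idea missing from your proposal is to kill the obstruction of these particular maps through Proposition~\ref{Prop: extension from p-torsion to the whole group}, rather than to kill the $\Ext^1$ group.
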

\begin{proof}
First observe that a Banach--Colmez space $\Fs$ is its own coextension of scalar
\[ \Fs = \underline{\Hom}_{\Z}(\Q_p,\Fs).\]
Indeed, this can be checked $v$-locally, so we may assume that $\V$ has a $\Z_p$-lattice, in which case $\Fs= \widetilde{\Gs}$ for an analytic $p$-divisible group $\Gs$, by Lemma \ref{lemma: effective BC spaces vs univ cover}. It now follows from the fact that, by Lemma \ref{Lemma: turn Hom into tensor}, we have $\Gs = \underline{\Hom}_{\Z}(\Z_p,\Gs)$.

From this, morphisms of Banach--Colmez spaces are automatically $\Q_p$-linear. We are thus reduced to working in the category of sheaves of abelian groups. Consider the exact sequence, where we write $\underline{\Ext}^i(-,-)$ for $\underline{\Ext}_{\Z}^i(-,-)$
% https://q.uiver.app/#q=WzAsNCxbMCwwLCIwIl0sWzEsMCwiXFx1bmRlcmxpbmV7XFxIb219KFxcRnMsIFxcVidcXG90aW1lcyBcXHdpZGV0aWxkZXtcXEdfbVxcbGFuZ2xlIHBee1xcaW5mdHl9IFxccmFuZ2xlfSkpIl0sWzIsMCwiXFx1bmRlcmxpbmV7XFxIb219KFxcViwgXFxWJ1xcb3RpbWVzIFxcd2lkZXRpbGRle1xcR19tXFxsYW5nbGUgcF57XFxpbmZ0eX0gXFxyYW5nbGV9KSJdLFszLDAsIlxcdW5kZXJsaW5le1xcRXh0fV92XjEoRSwgXFxWJ1xcb3RpbWVzIFxcd2lkZXRpbGRle1xcR19tXFxsYW5nbGUgcF57XFxpbmZ0eX0gXFxyYW5nbGV9KS4iXSxbMCwxXSxbMSwyXSxbMiwzLCJcXGRlbHRhIl1d
\[\begin{tikzcd}
	0 & {\underline{\Hom}(\Fs, \V'\otimes \widetilde{\G_m\langle p^{\infty} \rangle}))} & {\underline{\Hom}(\V, \V'\otimes \widetilde{\G_m\langle p^{\infty} \rangle})} & {\underline{\Ext}_v^1(E, \V'\otimes \widetilde{\G_m\langle p^{\infty} \rangle}).}
	\arrow[from=1-1, to=1-2]
	\arrow[from=1-2, to=1-3]
	\arrow["\delta", from=1-3, to=1-4]
\end{tikzcd}\]
The leftmost term is zero, since 
\[ \underline{\Hom}(E,\widetilde{\G_m \langle p^{\infty}\rangle}) = \varprojlim_{[p]}\underline{\Hom}(E,\G_m \langle p^{\infty}\rangle)=0.\]
It remains to show that $\delta=0$. This can be checked locally on $S_v$, hence we may assume that $S$ is strictly totally disconnected, $\V,\V'$ are trivial $\Q_p$-local systems and $E$ is a trivial vector bundle. We easily reduce to the case of $\V'=\Q_p$. It suffices now to show that, for any strictly totally disconnected space $S'$ over $S$ (that we just rename $S$), the connecting map
\[ \delta\colon \Hom(\V,\widetilde{\G_m\langle p^{\infty} \rangle}) \rightarrow \Ext_v^1(E,\widetilde{\G_m\langle p^{\infty} \rangle})  \]
is zero. Since the underlying topos of $S_v$ is replete, we have $\widetilde{\G_m\langle p^{\infty} \rangle} = R\varprojlim_{[p]} \G_m\langle p^{\infty} \rangle$ and
\[ R\Hom_v(E,\widetilde{\G_m\langle p^{\infty} \rangle}) = R\varprojlim_{[p]}R\Hom_v(E,\G_m\langle p^{\infty} \rangle), \]
which yields a spectral sequence
\[ E_2^{ij} = R^i\varprojlim_{[p]} \Ext_v^j(E,\G_m\langle p^{\infty} \rangle) \Rightarrow \Ext_v^{i+j}(E,\widetilde{\G_m\langle p^{\infty} \rangle}).\]
The terms of the system 
\[ (\ldots \xrightarrow{[p]}\Hom(E,\G_m\langle p^{\infty} \rangle)\xrightarrow{[p]}\Hom(E,\G_m\langle p^{\infty} \rangle)\xrightarrow{[p]}\ldots )\]
all vanish, so that it has no $R^1\lim$ term. It follows that
\[ \Ext_v^{1}(E,\widetilde{\G_m\langle p^{\infty} \rangle}) = \varprojlim_{[p]}  \Ext_v^1(E,\G_m\langle p^{\infty} \rangle).\]
Fix a $\Z_p$-latice $\Lb \sub \V$ and write $\Fs =\widetilde{\Gs}$ as the universal cover of an analytic $p$-divisible group $\Gs$ with Lie algebra $E$ and Tate module $\Lb$, using Lemma \ref{lemma: effective BC spaces vs univ cover}. There is a connecting homomorphism
\[ \delta'\colon \Hom(\V/\Lb,\G_m\langle p^{\infty} \rangle) \rightarrow \Ext_{v}^1(E,\G_m\langle p^{\infty} \rangle)  \]
arising from the logarithm sequence (\ref{eq: logarithm exact sequence for pdiv groups}) and one easily sees that 
\[ \delta = \varprojlim_{[p]}\delta' \in \Ext_v^{1}(E,\widetilde{\G_m\langle p^{\infty} \rangle}).\]
Now by Proposition \ref{Prop: extension from p-torsion to the whole group}, $\Hom(\V/\Lb, \G_m\langle p^{\infty} \rangle) = \Hom(\Gs, \G_m\langle p^{\infty} \rangle)$, such that $\delta'=0$. This shows that $\delta$ vanishes and concludes the proof.
\end{proof}

\begin{cor}
    Let $\V \rightarrow \Fs \rightarrow E$ be an effective Banach--Colmez space with a presentation. The perfect pairing
    \[ \V \times \V^{\vee}(1) \rightarrow \Q_p(1)\]
    admits a unique extension
    \[ e_{\Fs}\colon\Fs \times \V^{\vee}(1) \rightarrow (\B_{\crys}^+)^{\varphi =p}.\]
\end{cor}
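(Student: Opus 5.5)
The corollary should follow from Proposition \ref{Prop: extension from tate module to the whole BC space}, applied with $\V' = \V(-1)$, together with the identification $(\B_{\crys}^+)^{\varphi=p} = \widetilde{\G_m\langle p^{\infty}\rangle}$ of Example \ref{ex: univ cover of unipotent units}.

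The first step is to rewrite the Weil-type pairing as a map into a Hom-sheaf. Since $\V$ is a $\Q_p$-local system, Lemma \ref{Lemma: turn Hom into tensor}(2), applied after inverting $p$, gives
\[ \underline{\Hom}_{\Q_p}(\V^{\vee}(1), \widetilde{\G_m\langle p^{\infty}\rangle}) = \V(-1)\otimes_{\Q_p}\widetilde{\G_m\langle p^{\infty}\rangle}. \]
So a bi-additive pairing $\Fs \times \V^{\vee}(1) \rightarrow \widetilde{\G_m\langle p^{\infty}\rangle}$, which is $\Q_p$-linear in the second variable, is the same thing as a homomorphism
\[ \Fs \rightarrow \V(-1)\otimes_{\Q_p}\widetilde{\G_m\langle p^{\infty}\rangle}. \]
Linearity in the first variable is automatic: the proof of Proposition \ref{Prop: extension from tate module to the whole BC space} shows that $\Fs$ is its own coextension of scalars.

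The second step is to identify the given pairing on $\V$. Under the same currying, the perfect pairing $\V\times\V^{\vee}(1)\rightarrow \Q_p(1)$ corresponds to the canonical map
\[ \V \rightarrow \V(-1)\otimes_{\Q_p}\Q_p(1) \subset \V(-1)\otimes_{\Q_p}\widetilde{\G_m\langle p^{\infty}\rangle}. \]
Here the inclusion comes from $\Q_p(1) = V_p\G_m\langle p^{\infty}\rangle \hookrightarrow \widetilde{\G_m\langle p^{\infty}\rangle}$, which is the kernel in sequence (\ref{eq: inv limit of log}) for $\Gs=\G_m\langle p^{\infty}\rangle$. In the $\B_{\crys}^+$ picture this is the inclusion $\Q_p(1)\subset (\B_{\crys}^+)^{\varphi=p}$ of (\ref{eq: fund eq padic HT}).

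The third step applies Proposition \ref{Prop: extension from tate module to the whole BC space} with $\V'=\V(-1)$. Restriction along $\V\hookrightarrow\Fs$ gives an isomorphism
\[ \underline{\Hom}(\Fs,\V(-1)\otimes\widetilde{\G_m\langle p^{\infty}\rangle}) \xrightarrow{\cong} \underline{\Hom}(\V,\V(-1)\otimes\widetilde{\G_m\langle p^{\infty}\rangle}). \]
Taking global sections, the map from the second step lifts uniquely to $\Fs$. Uncurrying this lift gives $e_{\Fs}$, and both existence and uniqueness follow.

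The only point needing care is to check that the currying identifications are compatible with restriction to $\V$, and that the two descriptions of $\Q_p(1)\subset(\B_{\crys}^+)^{\varphi=p}$ agree. Both statements can be verified $v$-locally, where all local systems are trivial, so they reduce to the rank one case.
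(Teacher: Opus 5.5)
Your proposal is correct and follows the paper's argument. The paper's proof simply cites Proposition \ref{Prop: extension from tate module to the whole BC space} together with the identification $(\B_{\crys}^+)^{\varphi=p}=\varprojlim_{[p]}\G_m\langle p^{\infty}\rangle$ of Example \ref{ex: univ cover of unipotent units}; you have spelled out the implicit steps, namely currying with $\V'=\V(-1)$ and the fact that the isomorphism in that proposition is restriction along $\V\hookrightarrow\Fs$.
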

\begin{proof}
    This follows from Proposition \ref{Prop: extension from tate module to the whole BC space} and Example \ref{ex: univ cover of unipotent units}.
\end{proof}
\begin{definition}
    Let $\V \rightarrow \Fs \rightarrow E$ be an effective Banach--Colmez space with a presentation, we define
    \[ u_{\Fs}\colon \Fs \rightarrow \underline{\Hom}(\V^{\vee}(1), (\B_{\crys}^+)^{\varphi =p})\]
    from currying the pairing $e_{\Fs}$ of the previous corollary. By construction, it maps $\V$ to $\underline{\Hom}(\V^{\vee}(1), \Q_p(1))$. We let 
    \[ f_{\Fs}\colon E \rightarrow \underline{\Hom}(\V^{\vee}(1),\G_a)\]
    denote the map induced by passing to quotients.
\end{definition}
We immediately obtain the following.
\begin{proposition}
    Let $\V \rightarrow \Fs \rightarrow E$ be a presented effective Banach--Colmez space, then the maps $f_{\Fs}$ and $u_{\Fs}$ fit in the following commutative diagram
    % https://q.uiver.app/#q=WzAsMTAsWzEsMCwiXFxWIl0sWzIsMCwiXFxGcyJdLFszLDAsIkUiXSxbMCwwLCIwIl0sWzQsMCwiMCJdLFswLDEsIjAiXSxbMSwxLCJcXFYoLTEpXFxvdGltZXNfe1xcUV9wfSBcXFEoMSkiXSxbMiwxLCJcXFYoLTEpXFxvdGltZXNfe1xcUV9wfSAoXFxCX3tcXGNyeXN9XispXntcXHZhcnBoaSA9cH0iXSxbMywxLCJcXFYoLTEpXFxvdGltZXNfe1xcUV9wfSBcXEdfYSJdLFs0LDEsIjAuIl0sWzMsMF0sWzEsMl0sWzAsMV0sWzIsNF0sWzUsNl0sWzYsN10sWzEsNywidV97XFxGc30iXSxbMCw2LCI9IiwyXSxbNyw4XSxbMiw4LCJmX3tcXEZzfSJdLFs4LDldXQ==
\begin{equation}\label{eq: u f diagram for BC space}\begin{tikzcd}
	0 & \V & \Fs & E & 0 \\
	0 & {\V(-1)\otimes_{\Q_p} \Q(1)} & {\V(-1)\otimes_{\Q_p} (\B_{\crys}^+)^{\varphi =p}} & {\V(-1)\otimes_{\Q_p} \G_a} & {0.}
	\arrow[from=1-1, to=1-2]
	\arrow[from=1-2, to=1-3]
	\arrow["{=}"', from=1-2, to=2-2]
	\arrow[from=1-3, to=1-4]
	\arrow["{u_{\Fs}}", from=1-3, to=2-3]
	\arrow[from=1-4, to=1-5]
	\arrow["{f_{\Fs}}", from=1-4, to=2-4]
	\arrow[from=2-1, to=2-2]
	\arrow[from=2-2, to=2-3]
	\arrow[from=2-3, to=2-4]
	\arrow[from=2-4, to=2-5]
\end{tikzcd}\end{equation}
\end{proposition}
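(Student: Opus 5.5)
The plan is to check the two squares of diagram (\ref{eq: u f diagram for BC space}) separately, after first confirming that both rows are exact. The top row is the given presentation of $\Fs$. The bottom row is $\V(-1)\otimes_{\Q_p}(-)$ applied to the fundamental exact sequence (\ref{eq: fund eq padic HT}). This tensor product stays exact because $\V(-1)$ is $v$-locally a free $\Q_p$-local system, and exactness of $v$-sheaves can be checked $v$-locally.

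Next I would make the identifications in the bottom row explicit. By Lemma \ref{Lemma: turn Hom into tensor}(2), applied after rationalizing, we have
\[ \underline{\Hom}(\V^{\vee}(1),\Ms)=\V(-1)\otimes_{\Q_p}\Ms \]
for $\Ms\in\{\Q_p(1),(\B_{\crys}^+)^{\varphi=p},\G_a\}$. The bottom row is then $\underline{\Hom}(\V^{\vee}(1),-)$ applied to (\ref{eq: fund eq padic HT}).

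For the left square, I restrict $u_{\Fs}$ to $\V$. By construction, $e_{\Fs}$ restricted to $\V\times\V^{\vee}(1)$ is the perfect pairing with values in $\Q_p(1)\sub(\B_{\crys}^+)^{\varphi=p}$. Currying this pairing gives the canonical isomorphism $\V\cong\underline{\Hom}(\V^{\vee}(1),\Q_p(1))=\V(-1)\otimes\Q_p(1)$. So the left square commutes with the identity as the left vertical map.

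For the right square, $f_{\Fs}$ is by definition the map that $u_{\Fs}$ induces on cokernels. The left square shows that $u_{\Fs}$ carries $\V=\Ker(\Fs\to E)$ into $\V(-1)\otimes\Q_p(1)=\Ker(\theta)$. Since both rows are exact, $u_{\Fs}$ descends uniquely to a map $E\to\V(-1)\otimes\G_a$, and that map is $f_{\Fs}$. This gives commutativity of the right square.

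The only point needing care is that $\theta$ on the bottom row corresponds, under these identifications, to postcomposition $\underline{\Hom}(\V^{\vee}(1),(\B_{\crys}^+)^{\varphi=p})\to\underline{\Hom}(\V^{\vee}(1),\G_a)$. This holds by naturality of the isomorphism in Lemma \ref{Lemma: turn Hom into tensor}(2) in its second argument. I expect no real obstacle here: the statement is essentially a formal unwinding of the definitions.
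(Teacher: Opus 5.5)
Your proposal is correct and matches the paper, which states this proposition as immediate from the definitions without a separate proof. As you do, $u_{\Fs}$ is the curried form of $e_{\Fs}$ and restricts on $\V$ to the perfect pairing with values in $\Q_p(1)$, and $f_{\Fs}$ is defined as the map $u_{\Fs}$ induces on the quotients.
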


We now reach the following analog of Theorem \ref{thm: extending Fargues' equivalence of categories}.

\begin{thm}\label{thm: a fargues thm for BC spaces}
Let $S$ be a locally spatial diamond over $\Spd(\Q_p)$. The following categories are equivalent:
    \begin{enumerate}
        \item The category of presented effective Banach--Colmez spaces $\V \rightarrow \Fs \rightarrow E$, with presentation-preserving morphisms, and
        \item The category of tuples $(\V,E,f)$, where $\V$ is a $\Q_p$-local system on $S_v$, $E$ is a vector bundle over $S_{\et}$ and $f\colon E \otimes_{\Os_{S_{\et}}}\Os_{S_v} \rightarrow \V(-1)\otimes_{\underline{\Q_p}}\Os_{S_v}$ is a morphism of $v$-vector bundles.
    \end{enumerate}
    If $S$ is a strictly totally disconnected perfectoid space, these are further equivalent to
    \begin{itemize}
        \item[(3)] The category of analytic $p$-divisible groups and quasi-isogenies.
    \end{itemize}
\end{thm}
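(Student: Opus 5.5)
We follow the proof of Theorem \ref{thm: extending Fargues' equivalence of categories}, with $\Gs$, $\G_m\langle p^{\infty}\rangle$ and $\Lb$ replaced by $\Fs$, $(\B_{\crys}^+)^{\varphi=p}$ and $\V$.

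\textbf{From Banach--Colmez spaces to tuples.} To a presented effective Banach--Colmez space $\V \rightarrow \Fs \rightarrow E$ we attach the tuple $(\V,E,f_{\Fs})$. Here $f_{\Fs}$ is the map of the preceding definition. It is a map of sheaves $E \rightarrow \V(-1)\otimes\G_a$, and it is automatically $\Os_{S_v}$-linear after extending scalars to $\Os_{S_v}$. This linearity can be checked $v$-locally, so we may take $\V\supset\Lb$ with $\Lb$ a $\Z_p$-lattice. By Lemma \ref{lemma: effective BC spaces vs univ cover} we then have $\Fs=\widetilde{\Gs}$, and $f_{\Fs}$ agrees with $f_{\Gs}$, which is linear.

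Functoriality in presentation-preserving morphisms follows from the uniqueness statement of Proposition \ref{Prop: extension from tate module to the whole BC space}. A morphism $\Fs\rightarrow\Fs'$ induces $\V\rightarrow\V'$, and the two possible composites into $\V(-1)\otimes(\B_{\crys}^+)^{\varphi=p}$ agree because they agree on $\V$.

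\textbf{From tuples to Banach--Colmez spaces.} Given a tuple $(\V,E,f)$, we define $\Fs$ as the fibre product of $E\otimes\G_a\xrightarrow{f}\V(-1)\otimes\G_a$ with the $\theta$-map $\V(-1)\otimes(\B_{\crys}^+)^{\varphi=p}\rightarrow\V(-1)\otimes\G_a$. This uses the exact sequence (\ref{eq: fund eq padic HT}), tensored with $\V(-1)$. The resulting fibre product is an extension of $E$ by $\V(-1)\otimes\Q_p(1)=\V$, so it is a presented effective Banach--Colmez space.

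\textbf{The two functors are mutually inverse.} One composite is the identity because the diagram (\ref{eq: u f diagram for BC space}) exhibits $\Fs$ as exactly this fibre product: its left vertical map is an isomorphism, so the right square is cartesian. For the other composite, we must check that for $\Fs$ built from $(\V,E,f)$, the map $f_{\Fs}$ equals $f$. This is the main point. By the uniqueness in Proposition \ref{Prop: extension from tate module to the whole BC space}, the map $u_{\Fs}$ must be the tautological projection $\Fs\rightarrow\V(-1)\otimes(\B_{\crys}^+)^{\varphi=p}$, since both restrict to the identity on $\V$. Passing to the quotient by $\V$ then gives $f_{\Fs}=f$.

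This identification is the step I expect to be delicate. To settle it, I would reduce $v$-locally to the lattice case and invoke Theorem \ref{thm: extending Fargues' equivalence of categories}, which is \cite[Lemma 3.38]{gerth2024} applied to $\Gs$ and passed to $\varprojlim_{[p]}$. Full faithfulness on morphisms also follows from the cartesian description: a map of tuples induces a map of fibre products, and conversely by the functoriality argument above.

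\textbf{The strictly totally disconnected case.} When $S$ is strictly totally disconnected, every $\Q_p$-local system is trivial and so admits a $\Z_p$-lattice. The functor $\Gs\mapsto(\widetilde{\Gs},\ V_p\Gs\rightarrow\widetilde{\Gs}\rightarrow\Lie(\Gs))$ is therefore essentially surjective by Lemma \ref{lemma: effective BC spaces vs univ cover}.

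For full faithfulness with respect to quasi-isogenies, we pass through tuples. A quasi-isogeny $\Gs\dashrightarrow\Gs'$ is a section of $\underline{\Hom}(\Gs,\Gs')[\tfrac1p]$. By Theorem \ref{thm: extending Fargues' equivalence of categories}, a homomorphism is a pair $(a,b)$ with $b\colon T_p\Gs\rightarrow T_p\Gs'$ and $a$ compatible with $f$ and $f'$. Inverting $p$ turns these into pairs with $b\colon V_p\Gs\rightarrow V_p\Gs'$. Quasi-compactness of $|S|$ lets us clear denominators globally, since $b$ is a continuous map to matrices over $\Q_p$ and is locally bounded.

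Every morphism of tuples $(V_p\Gs,\Lie\Gs,f)\rightarrow(V_p\Gs',\Lie\Gs',f')$ then arises in this way. It is a quasi-isogeny exactly when it is invertible, which matches isomorphisms of presented Banach--Colmez spaces. If instead all morphisms are intended, one uses quasi-homomorphisms; either reading follows from the same computation.
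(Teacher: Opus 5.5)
\textbf{Gap: morphisms.} Your constructions on objects are fine. Your check that $f_{\Fs}=f$ for the fibre product, via the uniqueness in Proposition \ref{Prop: extension from tate module to the whole BC space}, is correct, but that step is not where the difficulty lies.

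A presentation-preserving morphism $\phi\colon \Fs\to\Fs'$ induces $b\colon \V\to\V'$ and a map $a\colon E\to E'$ on quotients. A priori $a$ is only a homomorphism of abelian $v$-sheaves. To obtain a morphism of tuples, you need $a$ to be $\Os$-linear and to come from a map of vector bundles on $S_{\et}$. Your functoriality argument only gives the compatibility $f'\circ a=(b(-1)\otimes\id)\circ f$, and this cannot force linearity when $f'$ is not injective.

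Take $\V=\V'=0$. Then $\Fs=E$ and $\Fs'=E'$, the morphisms in (1) are all group homomorphisms $E\to E'$ of $v$-sheaves, and the morphisms in (2) are maps of étale vector bundles. The theorem asserts that these coincide, and nothing in your proposal proves it. So ``conversely by the functoriality argument above'' in your full faithfulness step is unjustified. Part (3), which you route through tuples, inherits the same gap.

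\textbf{The fix.} This is exactly the point the paper singles out as the only non-trivial one; its own proof is otherwise just ``same arguments as Theorem \ref{thm: extending Fargues' equivalence of categories}''.
\begin{enumerate}
\item Linearity is $v$-local, so you may assume $S$ is perfectoid, hence good, and that $E,E'$ are free.
\item The diamond functor is fully faithful on good adic spaces. Hence $a$ comes from a homomorphism of smooth $S$-groups $E\otimes_{\Os_S}\G_a\to E'\otimes_{\Os_S}\G_a$.
\item The logarithm is natural and is the identity on vector groups, so $a=\Lie(a)$. In particular $a$ is $\Os$-linear.
\item Full faithfulness of $\Vect(S_{\et})\to\Vect(S_v)$ then shows that $a$ comes from a unique map of étale vector bundles.
\end{enumerate}
With this added, the rest of your argument goes through, including the reduction of (3) to tuples under either reading of ``quasi-isogenies'', with denominators cleared by quasi-compactness.
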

\begin{remark}
    In particular, given a $\Q_p$-local system $\V$ and a vector bundle $E$ on $S$, we obtain an isomorphism
    \begin{align} \Ext_{S_v}^1(E,\V) = \Hom_{S_v}(E,\V(-1) \otimes_{\Q_p} \Os_{S_v}).
    \end{align}
    This recovers a computation of Fontaine \cite[Prop. 3.12]{Fontaine03}, which is the case $S=\Spa(K)$ for a $p$-adic field $K$, and of Colmez \cite[Prop. 1.7]{Colmez_2002}, which is the case $S=\Spa(C)$.
\end{remark}
\begin{proof}
    This follows from the same arguments as in the proof of Theorem \ref{thm: extending Fargues' equivalence of categories}. The only non-trivial point is the following: Let $E,E'$ denote étale vector bundles over $S$, then any morphism of abelian sheaves $f\colon E\otimes_{\Os_{S_{\et}}}\Os_{S_v} \rightarrow E'\otimes_{\Os_{S_{\et}}}\Os_{S_v} $ is induced by a unique morphism of vector bundle on $S_{\et}$. As Scholze's diamond functor is fully faithful on good adic spaces, $f$ comes from a morphism of smooth $S$-groups $f\colon E \otimes_{\Os_{S_{\et}}} \G_a \rightarrow E' \otimes_{\Os_{S_{\et}}} \G_a$. It is then clear from the naturality of the logarithm that $f$ agrees with its derivative and is thus $\Os_S$-linear. 
\end{proof}

\begin{remark}
\begin{enumerate}
    \item We stress that in Theorem \ref{thm: a fargues thm for BC spaces}, we have to ask for morphisms of Banach--Colmez spaces to respect presentations. This is not automatic as when dealing with $p$-divisible groups. For example, let $C$ be an algebraically closed non-archimedean field over $\Q_p$ with residue field $k$ and let $G$ be a $p$-divisible group over $k$ that admits two non-isogenous lifts $\mathfrak{G},\mathfrak{G}'$ over $\Os_{C}$. Then by the crystalline nature of the $p$-adic universal covers, there exists an isomorphism $\widetilde{\mathfrak{G}} \cong \widetilde{\mathfrak{G}}'$. The
    generic fibers $\Gs$ and $\Gs'$ are analytic $p$-divisible groups over $\Spa(C)$ with an isomorphism $\varphi\colon \widetilde{\Gs} \xrightarrow{\cong} \widetilde{\Gs}'$. As $\Gs$ and $\Gs'$ are non-isogenous, $\varphi$ cannot respect the presentations. 
    \item It follows from Theorem \ref{thm: a fargues thm for BC spaces} that presented effective Banach--Colmez spaces satisfy descent along $v$-covers of perfectoid spaces. This does not hold if we do not ask for the presentations to be preserved. For example, let $\Ss$ be a $p$-adic formal scheme with generic fiber $S$ and let $G$ be a $p$-divisible group over $\Ss \otimes_{\Z_p} \F_p$. Then the universal cover $\widetilde{G}$ deforms canonically to a sheaf on $\Ss$ and its generic fiber $\Fs \rightarrow S$ locally admits a presentation $\V \rightarrow \Fs \rightarrow E$, but it need not admit a global presentation. This corresponds to the fact that there always exist local lifts $\mathfrak{G}$ of $G$ over affine opens $\Spf(R)\sub \Ss$ but not necessarily over the whole space $\Ss$.
\end{enumerate}    
\end{remark}
    
    There is again a notion of dualizable effective Banach--Colmez spaces, by extending
    Definition \ref{def: dualizable}.
    \begin{definition}\label{def: dualizable BC spaces}
        Let $\V \rightarrow \Fs \rightarrow E$ be an presented effective Banach--Colmez space over $S$. We say that it is dualizable if the associated map $f=f_{\Fs}$ fits in a short exact sequence of $v$-vector bundles
        % % https://q.uiver.app/#q=WzAsNSxbMCwwLCIwIl0sWzEsMCwiRVxcb3RpbWVzX3tcXE9zX3tTX3tcXGV0fX19XFxPc197U192fSJdLFsyLDAsIlxcVigtMSlcXG90aW1lc197XFxRX3B9XFxPc197U192fSJdLFszLDAsIlxcb21lZ2FcXG90aW1lc197XFxPc197U197XFxldH19fVxcT3Nfe1Nfdn0oLTEpIl0sWzQsMCwiMCwiXSxbMCwxXSxbMSwyLCJmIl0sWzIsMywiZyJdLFszLDRdXQ==
\begin{equation}\begin{tikzcd}
	0 & {E\otimes_{\Os_{S_{\et}}}\Os_{S_v}} & {\V(-1)\otimes_{\Q_p}\Os_{S_v}} & {\omega\otimes_{\Os_{S_{\et}}}\Os_{S_v}(-1)} & {0,}
	\arrow[from=1-1, to=1-2]
	\arrow["f", from=1-2, to=1-3]
	\arrow["g", from=1-3, to=1-4]
	\arrow[from=1-4, to=1-5]
\end{tikzcd}\end{equation}
for a vector bundle $\omega$ on $S_{\et}$. The Cartier dual $\Fs^D$ is the effective Banach--Colmez space associated to the tuple $(\omega^{\vee}, \V^{\vee}(1), g^{\vee}(1))$.
    \end{definition}
    \begin{remark}
        \begin{enumerate}
            \item This is compatible with analytic $p$-divisible groups, i.e. if $\Gs$ is an analytic $p$-divisible group, then $\Gs$ is dualizable if and only if $\widetilde{\Gs}$ is dualizable. 
            \item If $\V \rightarrow \Fs \rightarrow E$ is an effective Banach--Colmez space over a perfectoid space $S$, we will show later in Proposition \ref{prop: G dualizable iff E(G) vb}, exploiting the link with the Fargues--Fontaine curve, that the property of being dualizable only depends on the sheaf $\Fs$ and not on the presentation. We do not know if this holds true for a general base $S$.
            \item Let $\Fs$ be a dualizable Banach--Colmez space over an arbitrary base $S$. We will later obtain an explicit formula for $\Fs^D$ in Proposition \ref{prop: explicit formula for cartier dual}, which in particular does not depend on the choice of presentation. 
        \end{enumerate}
    \end{remark}

\section{Examples of analytic $p$-divisible groups}\label{section: Examples of analytic $p$-divisible groups}
We now discuss various constructions giving rise to interesting examples of analytic $p$-divisible groups.
\subsection{Generic fibers of log $p$-divisible groups}\label{subsection: generic fibers of log p-divisible groups}
In this subsection, we construct an adic generic fiber functor from Kato's category of log $p$-divisible groups \cite{kato2023logarithmicdieudonnetheory} to analytic $p$-divisible groups. We show that it is fully faithful in the semistable case, using log prismatic Dieudonné theory \cite{inoue2025logprismaticdieudonnetheory}\cite{würthen2023logprismaticdieudonnetheory} and the étale realization functor \cite{koshikawa2023logarithmicprismaticcohomologyii}\cite{du2024logprismaticfcrystalspurity}.

Throughout this subsection, $(\Xs,\Ms_{\Xs})$ will denote a fs log formal scheme over $\Z_p$, where $\Xs$ locally admits a finitely generated ideal of definition. Here, we endow $\Z_p$ with the standard log structure $\Q_p^{\times} \cap \Z_p \hookrightarrow \Z_p$. We will be mostly interested in the case where the structure map $(\Xs,\Ms_{\Xs}) \rightarrow \Spf(\Z_p)$ is strict, that is, the log structure on $\Xs$ is the pullback of the log structure on $\Z_p$.

We let $(\Xs,\Ms_{\Xs})_{\kfl}$ denote the Kummer log flat site \cite[§2]{Kato2021}, with underlying category the fs log formal schemes adic over $(\Xs,\Ms_{\Xs})$, and with the Grothendieck topology given by the Kummer log flat (kfl in short) coverings. We let $\Vect_{\kfl}(\Xs,\Ms_{\Xs})$ denote the category of vector bundles on $(\Xs,\Ms_{\Xs})_{\kfl}$.

For any formal scheme $\Ys$ over $\Xs$, we may endow $\Ys$ with the pullback log structure. This defines a morphism of sites 
\begin{align}
    u\colon (\Xs,\Ms_{\Xs})_{\kfl} \rightarrow \Xs_{\fppf},
\end{align}
where the target is the site consisting of formal schemes adic over $\Xs$ together with the fppf topology. By \cite[Lemma 2.15]{inoue2025logprismaticdieudonnetheory}, the resulting functor
\begin{align}
    u^*\colon \Vect(\Xs) \rightarrow \Vect_{\kfl}(\Xs,\Ms_{\Xs})
\end{align}
is fully faithful. A kfl vector bundle on $(\Xs,\Ms_{\Xs})$ is called classical if it is in the essential image of the above functor.

There is a functor taking an fs formal log scheme $(\Xs,\Ms_{\Xs})$ as above to a log adic space $(\Xs, \Ms_{\Xs})_{\eta}$ over $\Spa(\Q_p)$, extending the adic generic fiber functor on formal schemes of \cite[Prop. 2.2.2]{scholze2013moduli}, see \cite[Prop. 2.2.22]{DLLZ23}. We now assume that the log structure on $\Xs$ is the pullback of the standard log structure on $\Z_p$, such that the resulting log structure on $\Xs_{\eta}$ is trivial. We then obtain a morphism of sites $\Xs_{\eta, \an} \rightarrow (\Xs,\Ms_{\Xs})_{\Zar}$. More generally, given a Zariski sheaf $\Fs$ on adic log formal schemes over $(\Xs,\Ms_{\Xs})$, we define its adic generic fiber $\Fs_{\eta}$ as the sheafification for the analytic topology of the functor on complete Tate Huber pair $(A,A^+)$ over $\Xs_{\eta}$
\[ (A,A^+) \mapsto \varinjlim_{A_0 \sub A^+} \Fs(\Spf(A_0), \Ms_{A_0}),\]
where $A_0$ ranges over all rings of definition of $A$ contained in $A^+$ and we equip $\Spf(A_0)$ with the standard log structure, arising from the pre-log structure $A^{\times} \cap A_0 \hookrightarrow A_0$. If $\Fs$ is representable by an formal log scheme $(\Ys,\Ms_{\Ys})$ whose structure map to $(\Xs,\Ms_{\Xs})$ is strict, then $\Fs_{\eta}$ is representable by $\Ys_{\eta}$.

\begin{definition}{(\cite[(4.1)]{kato2023logarithmicdieudonnetheory})}
A log $p$-divisible group over $(\Xs, \Ms_{\Xs})$ is an abelian sheaf $\mathfrak{G}$ on $(\Xs, \Ms_{\Xs})_{\kfl}$ satisfying the following conditions
    \begin{enumerate}
        \item The map $[p]\colon \mathfrak{G} \rightarrow \mathfrak{G}$ is surjective,
        \item The sheaves $\mathfrak{G}[p^n]$ and their Cartier dual
        \begin{align*}
            \mathfrak{G}[p^n]^D = \underline{\Hom}_{(\Xs,\Ms_{\Xs})_{\kfl}}(\mathfrak{G}[p^n],\G_m)
            \end{align*}
        are representable by fs log schemes whose structure map to $(\Xs,\Ms_{\Xs})$ is Kummer log flat and whose underlying map of formal schemes is finite, and
        \item We have $\mathfrak{G} =  \varinjlim_n \mathfrak{G}[p^n]$.
    \end{enumerate}
We denote by $\BT_d(\Xs,\Ms_{\Xs})$ the category of log $p$-divisible groups\footnote{In \cite{kato2023logarithmicdieudonnetheory}, Kato defines more general categories of log $p$-divisible groups. Here, the $d$ stands for "dual- representable".}.
\end{definition}

\begin{example}
    \begin{enumerate}
        \item Let $\mathfrak{G}$ be a $p$-divisible group over $\Xs$. Then endowing each $\mathfrak{G}[p^n] \rightarrow \Xs$ with the pullback log structure defines a log $p$-divisible group on $(\Xs, \Ms_{\Xs})$. This produces a fully faithful functor
        \begin{align}
            \BT(\Xs) \hookrightarrow \BT_d(\Xs, \Ms_{\Xs}).
        \end{align}
        A log $p$-divisible group is called classical if it lies in the essential image of this functor.
        \item An non-classical example is given by the Kummer extension \cite[(1.8.1)]{kato2023logarithmicdieudonnetheory}: Let $K$ be a $p$-adic field and endow $\Spf(\Os_K)$ with the standard log structure. Then for any choice of uniformizer $\pi \in \Os_K$, this is a log $p$-divisible group $\mathfrak{G}_{\pi}$, fitting in an extension
        % https://q.uiver.app/#q=WzAsNSxbMCwwLCIwIl0sWzEsMCwiXFxtdV97cF57XFxpbmZ0eX19Il0sWzIsMCwiXFxtYXRoZnJha3tHfV97XFxwaX0iXSxbMywwLCJcXFFfcC9cXFpfcCJdLFs0LDAsIjAuIl0sWzAsMV0sWzIsM10sWzEsMl0sWzMsNF1d
\[\begin{tikzcd}
	0 & {\mu_{p^{\infty}}} & {\mathfrak{G}_{\pi}} & {\Q_p/\Z_p} & {0.}
	\arrow[from=1-1, to=1-2]
	\arrow[from=1-2, to=1-3]
	\arrow[from=1-3, to=1-4]
	\arrow[from=1-4, to=1-5]
\end{tikzcd}\]
It arises as the degeneration of the $p$-divisible group of a Tate elliptic curve over $K$ with $q$-invariant equal to $\pi$.
    \end{enumerate}
\end{example}

Given a log $p$-divisible group $\mathfrak{G}$ over $(\Xs,\Ms_{\Xs})$ we can associate to it the Tate module
    \[ T_p\mathfrak{G} = \varprojlim_n \mathfrak{G}[p^n], \]
viewed as a sheaf on $(\Xs,\Ms_{\Xs})_{\kfl}$, 
and its Lie algebra $\Lie(\mathfrak{G})$, a classical vector bundle, see \cite[Prop. 7.3]{kato2023logarithmicdieudonnetheory}. We let $\omega_{\mathfrak{G}} = \Lie(G)^{\vee}$ denote the linear dual.

We may now extend the Hodge--Tate map of Fargues \cite[§II.1]{Fargues2008} to the logarithmic setting.
\begin{definition}\label{def: HT map of fargues}
Let $\mathfrak{G} \in \BT_d(\Xs,\Ms_{\Xs})$. The Hodge--Tate map of $\mathfrak{G}$
\begin{align}\label{eq: HT map of fargues}
    \alpha_{\mathfrak{G}} \colon T_p\mathfrak{G} \otimes_{\underline{\Z_p}} \Os_{\Xs} \rightarrow \omega_{\mathfrak{G}^D}
\end{align}
is obtained by scalar extension from the following map: Given an fs log scheme $T$ over $\Xs$ and any section $f \in T_p\mathfrak{G}(T) = \Hom(\mathfrak{G}_T^D,\G_{m,T})$, we set
\[ \alpha_{\mathfrak{G}}(f) = \Lie(f) \in \Hom_{\Os_T}(\Lie(\mathfrak{G}_T^D),T) = \omega_{\mathfrak{G}^D}(T).\]
This is easily seen to define a morphism of sheaves, natural in $\mathfrak{G}$.
\end{definition}

\begin{proposition}\label{prop: generic fibers of log p-divisible groups}
    Let $(\Ss, \Ms_{\Ss})$ be an fs log formal scheme over $\Z_p$ such that $\Ss$ locally admits a finitely generated ideal of definition and is flat over $\Z_p$. Assume furthermore that the log structure is trivial or the pullback of the standard log structure on $\Z_p$, and that its adic generic fiber $S=\Ss_{\eta}$ is a rigid space or a sousperfectoid space. Let $\mathfrak{G} \in \BT_d(\Ss,\Ms_{\Ss})$.
    \begin{enumerate}
        \item The sheaf $\Gs =\mathfrak{G}_{\eta}$  is representable by an analytic $p$-divisible group.
        \item Assume that $S$ is good. Then $\Gs$ is dualizable and we have a natural isomorphism
    \[ \Gs^D = (\mathfrak{G}^D)_{\eta}.\]
    Moreover, the map corresponding to $\Gs$ under Theorem \ref{thm: extending Fargues' equivalence of categories} is
    \[ f_{\Gs} = \alpha_{\mathfrak{G}^D}[\tfrac{1}{p}]^{\vee}, \]
    %\colon \Lie(\Gs)\otimes_{\Os_{S_{\et}}} \Os_{S_v} \rightarrow (T_p\Gs^D)^{\vee} \otimes_{\Z_p} \Os_{S_v} \cong T_p\Gs(-1) \otimes_{\Z_p} \Os_{S_v},
    where $\alpha_{\mathfrak{G}^D}$ is the Hodge--Tate map of the Cartier dual $\mathfrak{G}^D$, cf. Definition \ref{def: HT map of fargues}.
    \end{enumerate} 
\end{proposition}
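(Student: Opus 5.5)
For part (1), I plan to use the exact-sequence representability criterion already invoked in Theorem \ref{thm: extending Fargues' equivalence of categories}, namely \cite[Lemma 3.38]{gerth2024}. Since the log structure on $\Ss$ is trivial or pulled back from $\Z_p$, the generic-fiber log structure is trivial. I will show first that each $\mathfrak{G}[p^n]_{\eta}$ is a finite étale $S$-group. On the generic fiber $p$ is invertible, so Kummer log flat covers of $p$-power degree become finite étale covers. Hence the kfl-finite group schemes $\mathfrak{G}[p^n]$ have finite étale generic fibers, and $T_p\Gs=\varprojlim \mathfrak{G}[p^n]_\eta$ is a $\Z_p$-local system. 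Next, I will form the formal completion along the identity. By \cite[Prop. 7.3]{kato2023logarithmicdieudonnetheory} the Lie algebra $\Lie(\mathfrak{G})$ is classical. As in the classical case \cite[Lemma 4.1.6]{graham2025padicfouriertheoryfamilies}, the logarithm $\mathfrak{G}_\eta \to \Lie(\mathfrak{G})[\tfrac1p]\otimes\G_a$ is defined $v$-locally. It is surjective with kernel $\mathfrak{G}[p^\infty]_\eta$, because $[p]$ is surjective and on the generic fiber multiplication by $p$ is invertible on the Lie algebra. This gives the sequence $0\to\Gs[p^\infty]\to\Gs\to\ga\to0$, and \cite[Lemma 3.38]{gerth2024} then gives representability. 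To reduce to the classical argument I would work kfl-locally: Kummer covers of $\Spf$ of rings with standard log structure become étale after inverting $p$, so $\mathfrak{G}_\eta$ can be computed after such a cover.

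For part (2), I will first identify $f_\Gs$ with $\alpha_{\mathfrak{G}^D}[\tfrac1p]^\vee$, and then deduce dualizability. By definition, $f_\Gs$ is obtained from $t\in T_p\Gs^\vee(1)=\underline{\Hom}(\Gs[p^\infty],\mu_{p^\infty})$ by extending along the analytic Weil pairing to $\Gs\to\G_m\langle p^\infty\rangle$ and then differentiating. I will identify $T_p\Gs^\vee(1)$ with $(T_p\mathfrak{G}^D)_\eta$ via the Cartier duality $\mathfrak{G}[p^n]^D=\mathfrak{G}^D[p^n]$. A section of $T_p\mathfrak{G}^D$ is a homomorphism $\mathfrak{G}\to\mu_{p^\infty}$. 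Its generic fiber $\Gs\to(\mu_{p^\infty})_\eta=\G_m\langle p^\infty\rangle$ restricts to $t$ on torsion, so by the uniqueness in Proposition \ref{Prop: extension from p-torsion to the whole group} it equals $e(-,t)$. Differentiating gives exactly $\Lie$ of this homomorphism, which is $\alpha_{\mathfrak{G}^D}(t)$. This yields $f_\Gs=\alpha_{\mathfrak{G}^D}[\tfrac1p]^\vee$ after identifying $\Lie(\Gs)=\Lie(\mathfrak{G})[\tfrac1p]$.

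The key remaining input is that the Hodge--Tate complex
\[ 0\to \omega_{\mathfrak{G}}[\tfrac1p]\otimes\Os_{S_v}(1)\xrightarrow{\alpha_{\mathfrak{G}}^\vee} T_p\mathfrak{G}\otimes\Os_{S_v}\xrightarrow{\alpha_{\mathfrak{G}^D}}\omega_{\mathfrak{G}^D}[\tfrac1p]\otimes\Os_{S_v}\to0\]
is exact after inverting $p$. I expect this to be the main obstacle. I would prove it $v$-locally: pass to a product of points $\Spa(C_i,C_i^+)$, and then, using overconvergence (Corollary \ref{cor: stack of pdiv gps is overconvergent}), to $\Spa(C,\Os_C)$. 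Over $\Os_C$ the log structure is pulled back from $\Z_p$. I then hope to reduce to the classical case, either by noting that over $\Os_C$ every log $p$-divisible group becomes classical after a Kummer cover, or via the log Hodge--Tate result of Kato. In the classical case, exactness after inverting $p$ is Fargues' theorem \cite[§II]{Fargues2008} (cf. \cite{scholze2013moduli}). The composite being zero and the ranks adding up to the height (dimension of $\mathfrak{G}$ plus dimension of $\mathfrak{G}^D$) then give exactness.

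Given this sequence, $f_\Gs$ is injective with cokernel $\omega_{\mathfrak{G}}[\tfrac1p]\otimes\Os_{S_v}(-1)$, after Tate-twisting and dualizing. Since $\omega_{\mathfrak{G}}[\tfrac1p]$ is étale, $\Gs$ is dualizable with $\omega=\omega_{\mathfrak{G}}[\tfrac1p]$. By Definition \ref{def: dualizable}, $\Gs^D$ then corresponds to $(T_p\Gs^\vee(1),\omega^\vee, g(1)^\vee)$, where $g$ is the twisted $\alpha_{\mathfrak{G}}$. Applying the formula already proved to $\mathfrak{G}^D$ shows that $(\mathfrak{G}^D)_\eta$ has the same triple. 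Theorem \ref{thm: extending Fargues' equivalence of categories} therefore gives $\Gs^D\cong(\mathfrak{G}^D)_\eta$ naturally.
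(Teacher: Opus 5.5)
The gap is in part (1). You justify that $\log$ is surjective with kernel $\mathfrak{G}[p^\infty]_\eta$ by saying ``$[p]$ is surjective and $p$ is invertible on the Lie algebra.'' That only shows the image $H\sub\ga$ satisfies $H=pH$. It shows neither that $H=\ga$ nor that $\Ker(\log)$ is torsion.

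The missing input is that $\log$ maps a neighborhood of the identity isomorphically onto an open lattice, i.e.\ $\Ker(\mathfrak{G}(A)\to\mathfrak{G}(A/p^2))\cong p^2\Lie(\mathfrak{G})(A)$. Given this, $H\supseteq p^{2-n}\Lie(\mathfrak{G})(A)$ for all $n$. Moreover, a high power of $[p]$ moves any point into that neighborhood, so $\log x=0$ forces $p^nx=0$.

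For a log $p$-divisible group this is the substantive point. The paper obtains it from \cite[Lemma 3.7]{inoue2025logprismaticdieudonnetheory}: strict-étale locally, $\mathfrak{G}[p^n]$ is an extension of a classical finite étale group by a classical connected finite flat group. Hence the kernel of reduction lives in a classical connected group, and \cite[Lemma 2.2.5]{Messing1972} applies.

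Your substitute, computing $\mathfrak{G}_\eta$ after a Kummer cover over which $\mathfrak{G}$ is classical, does not work as stated. A finite Kummer cover only makes finitely many $\mathfrak{G}[p^n]$ classical; for Kato's $\mathfrak{G}_\pi$, making $\mathfrak{G}_\pi[p^n]$ classical requires adjoining $\pi^{1/p^n}$. You also have no descent for $\mathfrak{G}_\eta$ along such covers before knowing representability.

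Once the local isomorphism is available, the paper finishes directly. The neighborhood $U$ is representable, its preimages $U_n$ under $[p^n]$ are torsors under the finite étale groups $\mathfrak{G}[p^n]_\eta$, and $\Gs=\bigcup_n U_n$. This produces a space representing $\mathfrak{G}_\eta$ itself. By contrast, \cite[Lemma 3.38]{gerth2024} takes a short exact sequence of $v$-sheaves as input, and that is not available a priori for $\mathfrak{G}_\eta$, which is an analytic sheafification on Huber pairs.

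Part (2) is essentially the paper's argument. Identifying $e(-,t)$ with the generic fiber of $t\colon\mathfrak{G}\to\mu_{p^\infty}$ by uniqueness is equivalent to the paper's construction of $u$ followed by Theorem~\ref{thm: extending Fargues' equivalence of categories}. Exactness is likewise tested on points $\Spa(C,\Os_C)$ by reducing to a classical group and applying Fargues' theorem. The reduction you only hope for is \cite[Prop.~3.19]{inoue2025logprismaticdieudonnetheory}: the map $\Spf(\Os_C)\to\Spf(R)$ factors through a log formal scheme over which $\mathfrak{G}$ becomes classical.

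Your complex also has the duals mixed up. It should read
\[0\to \Lie(\mathfrak{G})[\tfrac1p]\otimes\Os_{S_v}(1)\xrightarrow{\alpha_{\mathfrak{G}^D}^{\vee}(1)} T_p\mathfrak{G}\otimes\Os_{S_v}\xrightarrow{\alpha_{\mathfrak{G}}}\omega_{\mathfrak{G}^D}[\tfrac1p]\otimes\Os_{S_v}\to 0.\]
So the cokernel of $f_{\Gs}$ is $\omega_{\mathfrak{G}^D}[\tfrac1p]\otimes\Os_{S_v}(-1)$ and $\omega=\omega_{\mathfrak{G}^D}[\tfrac1p]$, not $\omega_{\mathfrak{G}}[\tfrac1p]$. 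As you wrote it, the ranks would only match when the height is twice the dimension.
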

\begin{proof}
\begin{enumerate}
    \item We may work locally and assume that $\Ss=\Spf(R)$ is affine, for a $p$-torsionfree  ring $R$ which is $I$-adically complete, for a finitely generated ideal $I$ containing a power of $p$. Let $A$ be any $I$-adically complete and topologized $R$-algebra $A$, and equip $\Spf(A)$ with the standard log structure. We claim there is a canonical isomorphism
    \[ \log\colon \Ker(\mathfrak{G}(A) \rightarrow \mathfrak{G}(A/p^2)) \cong p^2\Lie(\mathfrak{G})(A).\]
    We may assume that $A$ is discrete. In particular, $p^nA=0$ for some $n \geq 1$, so that $\Lie(\mathfrak{G})(A) = \Lie(\mathfrak{G}[p^n])(A)$. By \cite[Lemma 3.7]{inoue2025logprismaticdieudonnetheory}, up to replacing $A$ by a strict étale extension, $\mathfrak{G}[p^n]$ is an extension of a classical finite étale group $\mathfrak{G}[p^n]^{\et}$ by a classical finite flat connected group $\mathfrak{G}[p^n]^{\circ}$. It is enough to establish the above isomorphism with $\mathfrak{G}$ replaced with $\mathfrak{G}[p^n]^{\circ}$, which follows from \cite[Lemma 2.2.5]{Messing1972}.

    From this, we may argue as in the proof of \cite[Prop. 3.4.2(1)]{scholze2013moduli}. Namely, since any element of $\mathfrak{\Gs}(A)$ is sent into the kernel of reduction modulo $p^2$ by a sufficiently high power of the map $[p]$, the logarithm extends to a natural map
    \[ \log\colon \mathfrak{G}(A) \rightarrow \Lie(\mathfrak{G}(A)[\tfrac{1}{p}]\]
    and even to a left exact sequence of sheaves
    % https://q.uiver.app/#q=WzAsNCxbMCwwLCIwIl0sWzEsMCwiXFx2YXJpbmpsaW1fe24gXFxnZXEgMX0gXFxtYXRoZnJha3tHfVtwXm5dX3tcXGV0YX0iXSxbMiwwLCJcXEdzIl0sWzMsMCwiXFxMaWUoXFxtYXRoZnJha3tHfSkgXFxvdGltZXNfe1xcT3Nfe1xcU3N9fSBcXE9zX3tTfS4iXSxbMCwxXSxbMiwzLCJcXGxvZyJdLFsxLDJdXQ==
\[\begin{tikzcd}
	0 & {\varinjlim_{n \geq 1} \mathfrak{G}[p^n]_{\eta}} & \Gs & {\Lie(\mathfrak{G}) \otimes_{\Os_{\Ss}} \Os_{S}.}
	\arrow[from=1-1, to=1-2]
	\arrow[from=1-2, to=1-3]
	\arrow["\log", from=1-3, to=1-4]
\end{tikzcd}\]
By the above claim, there is a subsheaf $U \sub \Gs$ over which the logarithm restricts to an isomorphism to an open subgroup of $\Lie(\mathfrak{G}) \otimes_{\Os_{\Ss}} \Os_{S}$, so that $U$ is representable. Moreover, $\Gs$ is covered by the preimages $U_n$ of $U$ under multiplication by $[p^n]$. By \cite[Constr. 3.8]{inoue2025logprismaticdieudonnetheory}, the sheaves $\mathfrak{G}[p^n]_{\eta}$ are representable by finite étale adic groups over $S$, so that the torsors $[p^n]\colon U_n \rightarrow U$ are representable as well. It follows that $\mathfrak{G}_{\eta} = \bigcup_{n \geq 1}U_n$ is representable, of topologically $p$-torsion and with $[p]$ finite étale, as required. 
    
    \item Let $A$ be an $R$-algebra as above and assume furthermore that $A$ is $p$-torsionfree. We have a natural map
\[ T_p\mathfrak{G}^D(A) = \Hom_{\Spf(A)}(\mathfrak{G}_A,\mu_{p^{\infty},A}) \xrightarrow{(-)_{\eta}} \Hom_{Y}(\mathfrak{G}_{\eta,Y},\G_{m,Y}\langle p^{\infty} \rangle),\]
where $Y = \Spf(A)_{\eta}$. Upon passing to the generic fibers, this defines a morphism of $v$-sheaves on $\Perf_S$
\[ T_p(\mathfrak{G}^D)_{\eta} \rightarrow  \underline{\Hom}_S(\Gs,\G_{m}\langle p^{\infty} \rangle).\]
Moreover, post-composing the above morphism with the map 
\[ \underline{\Hom}_S(\Gs,\G_{m}\langle p^{\infty} \rangle) \xrightarrow{\Lie(-)} \underline{\Hom}_S(\Lie(\Gs),\Os_S) =  \omega_{\Gs}\]
yields the Hodge--Tate map 
\[ \alpha_{\mathfrak{G}^D}[\tfrac{1}{p}]\colon T_p(\mathfrak{G}^D)_{\eta} \rightarrow \omega_{\mathfrak{G}}[\tfrac{1}{p}]\]
from Definition \ref{def: HT map of fargues}. Hence, by exchanging the variables, we obtain a map
\[ u\colon \Gs \rightarrow T_p(\mathfrak{G}^D)_{\eta}^{\vee} \otimes_{\Z_p} \G_m\langle p^{\infty} \rangle = T_p\Gs(-1) \otimes_{\Z_p} \G_m\langle p^{\infty} \rangle,\]
fitting in the following diagram
% https://q.uiver.app/#q=WzAsNCxbMCwwLCJcXEdzIl0sWzAsMSwiVF9wXFxHcygtMSlcXG90aW1lc197XFxaX3B9XFxHX21cXGxhbmdsZSBwXntcXGluZnR5fSBcXHJhbmdsZSJdLFsxLDAsIlxcTGllKFxcR3MpIl0sWzEsMSwiVF9wXFxHcygtMSlcXG90aW1lc197XFxaX3B9XFxHX2EuIl0sWzAsMSwidSIsMl0sWzAsMiwiXFxsb2ciXSxbMiwzLCJcXGFscGhhX3tcXG1hdGhmcmFre0d9XkR9W1xcdGZyYWN7MX17cH1dXntcXHZlZX0iXSxbMSwzLCJcXGxvZyIsMl1d
\[\begin{tikzcd}
	\Gs & {\Lie(\Gs)} \\
	{T_p\Gs(-1)\otimes_{\Z_p}\G_m\langle p^{\infty} \rangle} & {T_p\Gs(-1)\otimes_{\Z_p}\G_a.}
	\arrow["\log", from=1-1, to=1-2]
	\arrow["u"', from=1-1, to=2-1]
	\arrow["{\alpha_{\mathfrak{G}^D}[\tfrac{1}{p}]^{\vee}}", from=1-2, to=2-2]
	\arrow["\log"', from=2-1, to=2-2]
\end{tikzcd}\]
We conclude, by Theorem \ref{thm: extending Fargues' equivalence of categories}, that $\alpha_{\mathfrak{G}^D}[\tfrac{1}{p}]^{\vee} = f_{\Gs}$ and in particular also that $u=u_{\Gs}$. To show that $\Gs$ is dualizable, with Cartier dual $\Gs^D = (\mathfrak{G}^D)_{\eta}$, it suffices to show that the sequence of $v$-vector bundles
% https://q.uiver.app/#q=WzAsNSxbMCwwLCIwIl0sWzEsMCwiXFxMaWUoXFxHcykiXSxbMiwwLCJUX3BcXEdzKC0xKVxcb3RpbWVzXFxPc197U192fSJdLFszLDAsIlxcb21lZ2Ffe1xcbWF0aGZyYWt7R31eRH1bXFx0ZnJhY3sxfXtwfV1cXG90aW1lc1xcT3Nfe1Nfdn0oLTEpIl0sWzQsMCwiMCJdLFswLDFdLFsxLDIsIlxcYWxwaGFfe1xcbWF0aGZyYWt7R31eRH1bXFxmcmFjezF9e3B9XV57XFx2ZWV9Il0sWzIsMywiXFxhbHBoYV97XFxtYXRoZnJha3tHfX1bXFxmcmFjezF9e3B9XSgtMSkiXSxbMyw0XV0=
\[\begin{tikzcd}
	0 & {\Lie(\Gs)} & {T_p\Gs(-1)\otimes\Os_{S_v}} & {\omega_{\mathfrak{G}^D}[\tfrac{1}{p}]\otimes\Os_{S_v}(-1)} & 0
	\arrow[from=1-1, to=1-2]
	\arrow["{\alpha_{\mathfrak{G}^D}[\frac{1}{p}]^{\vee}}", from=1-2, to=1-3]
	\arrow["{\alpha_{\mathfrak{G}}[\frac{1}{p}](-1)}", from=1-3, to=1-4]
	\arrow[from=1-4, to=1-5]
\end{tikzcd}\]
is exact. This can be tested on strictly totally disconnected perfectoid spaces $S'=\Spa(A,A^+)$ over $S$. There, we may further check exactness on points, by Nakayama's lemma, so we may assume that $S'=\Spa(C,C^+)$ and even that $C^+=\Os_C$. By definition of the adic generic fiber, since any open cover of $S'$ splits, the map $S' \rightarrow S$ arises from a map of log formal schemes $\Spf(\Os_C) \rightarrow \Spf(R)$. By \cite[Prop. 3.19]{inoue2025logprismaticdieudonnetheory}, this map factors through a map of fs formal log scheme $f\colon (\Ys,\Ms_{\Ys}) \rightarrow \Spf(R)$ such that $f^*\mathfrak{G}$ is classical. This allows us to reduce to the case $R=\Os_C$ and $\mathfrak{G}$ classical. In this case, the statement follows from \cite[Thm. II.1.1]{Fargues2008}.
\end{enumerate}
\end{proof}

We now focus on the case of smooth rigid spaces over $p$-adic fields that admit a semistable formal model. We will use the notion of semistable $\Z_p$-local systems of \cite[Def. 3.27]{du2024logprismaticfcrystalspurity}.
\begin{proposition}\label{prop: semistable reduction versus semistable local system}
    Let $X$ be a smooth rigid space over a $p$-adic field $K/\Q_p$, with a semistable formal model $\Xs$ over $\Os_K$. Equip $\Xs$ with the standard log structure $\Ms_{\Xs}$. Let $\Gs \rightarrow X$ be a dualizable analytic $p$-divisible group. 
    \begin{enumerate}
        \item The generic fiber functor is fully faithful on log $p$-divisible groups over $(\Xs,\Ms_{\Xs})$.
        \item If $\Gs = \mathfrak{G}_{\eta}$ for a log $p$-divisible group $\mathfrak{G}$ on $(\Xs,\Ms_{\Xs})$, then $T_p\Gs$ is a semistable local system. 
        \item Assume that $X=\Spa(K)$. Then if $T_p\Gs$ is a semistable representation of $\Gal_K$, $\Gs$ arises from a log $p$-divisible group over $\Os_K$.
    \end{enumerate}
\end{proposition}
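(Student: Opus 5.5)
The plan is to route all three statements through log prismatic Dieudonné theory (Inoue, Würthen) and the étale realization functor for log prismatic $F$-crystals (Koshikawa--Yao, Du--Liu--Moon--Shimizu). The key input I will take from these references is an equivalence, for $(\Xs,\Ms_{\Xs})$ semistable over $\Os_K$, between $\BT_d(\Xs,\Ms_{\Xs})$ and admissible (minuscule) log prismatic Dieudonné crystals. The second input is that the étale realization $T$ from completed log prismatic $F$-crystals on $(\Xs,\Ms_{\Xs})$ to $\Z_p$-local systems on $X$ is fully faithful, with essential image the semistable $\Z_p$-local systems in the sense of \cite[Def. 3.27]{du2024logprismaticfcrystalspurity}. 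Granting these, the argument consists of identifying compositions of functors.

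For (1), I will first check that the composite
\[\BT_d(\Xs,\Ms_{\Xs}) \rightarrow \{\text{log prismatic Dieudonné crystals}\} \xrightarrow{T} \Loc_{\Z_p}(X)\]
agrees, up to a Tate twist and/or dual depending on conventions, with $\mathfrak{G} \mapsto T_p(\mathfrak{G}_{\eta})$. This is a compatibility of the Dieudonné crystal with the Tate module over perfectoid (log) points. I expect to establish it by reducing to the classical case over $\Os_C$ via \cite[Prop. 3.19]{inoue2025logprismaticdieudonnetheory}, as in the proof of Proposition \ref{prop: generic fibers of log p-divisible groups}, and then quoting Anschütz--Le Bras. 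Full faithfulness of this composite follows from the two cited equivalences. Now let $h\colon \mathfrak{G}_{\eta} \rightarrow \mathfrak{G}'_{\eta}$ be a morphism. Then $T_ph$ comes from a unique $\mathfrak{h}\colon\mathfrak{G}\rightarrow\mathfrak{G}'$. By Proposition \ref{prop: generic fibers of log p-divisible groups} both generic fibers are dualizable, and the proof of Proposition \ref{prop: dualizable groups over smooth rig over padic field} (injectivity of $f_{\Gs}$ plus Theorem \ref{thm: extending Fargues' equivalence of categories}) shows that $\Hom(\Gs,\Gs')\rightarrow\Hom(T_p\Gs,T_p\Gs')$ is injective. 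Hence $\mathfrak{h}_{\eta}=h$. Faithfulness is immediate from the same factorization.

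Part (2) is then exactly the statement that $T_p\Gs$ lies in the image of $T$. For (3), take $X=\Spa(K)$ and $T_p\Gs$ semistable. The étale realization equivalence produces a log prismatic $F$-crystal $\Ms$ over $\Os_K$; in other words, $\Ms$ is a Breuil--Kisin type module with log structure. The Hodge--Tate weights of $T_p\Gs$ lie in $\{0,1\}$ by Proposition \ref{prop: dualizable groups over smooth rig over padic field}, and from this I must deduce that $\Ms$ is minuscule, i.e. that $\Ms$ is a Dieudonné crystal. The Dieudonné equivalence then gives $\mathfrak{G}\in\BT_d(\Os_K,\text{std})$ with $T_p\mathfrak{G}_{\eta}\cong T_p\Gs$. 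Since both $\mathfrak{G}_{\eta}$ and $\Gs$ are dualizable, the full faithfulness of $\Gs\mapsto T_p\Gs$ from Proposition \ref{prop: dualizable groups over smooth rig over padic field} yields $\mathfrak{G}_{\eta}\cong\Gs$.

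I expect the main obstacle to be the weight-to-minusculity step in (3): showing that Hodge--Tate weights in $\{0,1\}$ force the Frobenius of the associated log prismatic crystal to have height in $[0,1]$. I would try first to compare the Nygaard/Hodge filtration with the Hodge--Tate filtration, using the comparison $\Ms\otimes\B_{\dR}^+$ versus $T_p\Gs\otimes\B_{\dR}^+$ together with Lemma \ref{lemma: BdR+-lattice induces HT filtration in arithmetic case}. As a fallback, I would reduce to the known Kisin-type statement for semistable representations via the log Breuil--Kisin module over $\Os_K$. A secondary technical point is pinning down the twist conventions in identifying the étale realization with $T_p$.
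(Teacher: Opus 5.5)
Your proposal follows the paper's proof: Inoue's log prismatic Dieudonné antiequivalence, full faithfulness of restriction to analytic log prismatic $F$-crystals composed with the Du--Liu--Moon--Shimizu étale realization equivalence onto semistable local systems, and compatibility of this composite with $T_p$ (which the paper cites as \cite[Prop. 5.13]{inoue2025logprismaticdieudonnetheory} rather than re-deriving it). It then uses Proposition \ref{prop: dualizable groups over smooth rig over padic field} to pass from Tate modules back to groups. For the step you single out in (3), the paper extends the analytic crystal over $\Os_K$ by \cite[Cor. 3.9]{du2024logprismaticfcrystalspurity} and gets height in $[0,1]$ from \cite[Thm. 5.0.18]{du2023prismaticapproachvarphihat}, which is essentially your Kisin-type fallback; it also notes that (3) follows alternatively from \cite[Thm. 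B]{bertapelle2024logpdivisiblegroupssemistable}.
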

\begin{remark}
    It would be interesting to find a characterization of analytic $p$-divisible groups $\Gs$ arising from log $p$-divisible groups directly in terms of the geometry of $\Gs$. See \cite[Prop. 2.1]{Farg19} for the case $S=\Spa(K)$.
\end{remark}
\begin{proof}
We will use logarithmic prismatic Dieudonné theory \cite{würthen2023logprismaticdieudonnetheory}\cite{inoue2025logprismaticdieudonnetheory}. Consider the log prismatic site $(\Xs, \Ms_{\Xs})_{\Prism}$, cf \cite{koshikawa2022logarithmicprismaticcohomologyi}\cite{koshikawa2023logarithmicprismaticcohomologyii}, which comes with its structure sheaf $\Os_{\Prism}$ and ideal sheaf $\Is_{\Prism}$. Following \cite[Def. 3.3]{du2024logprismaticfcrystalspurity}, a prismatic $F$-crystal $(M,\varphi_M)$ on $(\Xs,\Ms_{\Xs})$ consists of a vector bundle $M$ on the log prismatic site $(\Xs, \Ms_{\Xs})_{\Prism}$, together with an $\Os_{\Prism}$-linear isomorphism 
\[ \varphi_M\colon \varphi^*M[\tfrac{1}{\Is_{\Prism}}] \xrightarrow{\cong} M[\tfrac{1}{\Is_{\Prism}}].\]
It is called of height $\in [a,b]$ if 
\[ \Is_{\Prism}^bM \sub \varphi_M(\varphi^*M) \sub \Is_{\Prism}^aM.\]
Analytic prismatic $F$-crystals are defined analogously as pairs $(M,\varphi_M)$, where $M$ is now a sheaf sending a log prism $(A,I,\delta_{\log})$ to a vector bundle on the locus $\Spec(A)\backslash V(p,I)$ rather than on the whole $\Spec(A)$. In \cite{inoue2025logprismaticdieudonnetheory}, Inoue constructs a log prismatic Dieudonné functor $\mathfrak{G} \mapsto (M_{\mathfrak{G}}, \varphi_{\mathfrak{G}})$, which takes the form of an antiequivalence of categories \cite[Thm. 5.14]{inoue2025logprismaticdieudonnetheory}
\begin{align*}
     \BT_d(\Xs,\Ms_{\Xs})
         \xlongrightarrow{\cong} \left\{\text{\begin{tabular}{l} {\parbox{4.2cm}{ prismatic $F$-crystals on $(\Xs,\Ms_{\Xs})$ of height $\in [0,1]$}}\end{tabular}}\right\},
    \end{align*}
On the other hand, restriction along open subsets defines a fully faithful functor \cite[Cor. 3.9]{du2024logprismaticfcrystalspurity}
\begin{align*}
     \left\{\text{\begin{tabular}{l} {\parbox{3.7cm}{ prismatic $F$-crystals on $(\Xs,\Ms_{\Xs})$}}\end{tabular}}\right\}
         \lhook\joinrel\rightarrow \left\{\text{\begin{tabular}{l} {\parbox{3.7cm}{ analytic prismatic $F$-crystals on $(\Xs,\Ms_{\Xs})$}}\end{tabular}}\right\}.
    \end{align*}
Finally, there is a (contravariant) étale realization functor \cite[Thm. 1.5]{du2024logprismaticfcrystalspurity}
\begin{align*}
 T_{\et}\colon
 \left\{\text{\begin{tabular}{l} {\parbox{3.7cm}{analytic prismatic $F$-crystals on $(\Xs,\Ms_{\Xs})$}}\end{tabular}}\right\}
         \xlongrightarrow{\cong} \left\{\text{\begin{tabular}{l} {\parbox{3.2cm}{semistable $\Z_p$-local systems on $X$}}\end{tabular}}\right\}.
    \end{align*}
It is compatible with logarithmic prismatic Dieudonné theory, in the following sense \cite[Prop. 5.13]{inoue2025logprismaticdieudonnetheory}
\[ T_{\et}(M_{\mathfrak{G}},\varphi_{\mathfrak{G}}) = T_p\mathfrak{G}_{\eta}.\]
We now turn to the proof. The functor taking a log $p$-divisible group to its Tate module factors as the composition of the log prismatic Dieudonné functor, the restriction from prismatic $F$-crystals to analytic prismatic $F$-crystals and the étale realization functor $T_{\et}$. Hence the first two points follow immediately from Proposition \ref{prop: dualizable groups over smooth rig over padic field}. It remains to show the third point. By \cite[Thm. 4.1]{du2024logprismaticfcrystalspurity}, the semistable Galois representation $T_p\Gs$ gives rise to a semistable $\Z_p$-local system on $\Spa(K)$ and hence to an analytic prismatic $F$-crystal $(M,\varphi_M)$. By \cite[Cor. 3.9]{du2024logprismaticfcrystalspurity}, $(M,\varphi_M)$ extends to a prismatic $F$-crystals, which has height $\in [0,1]$ by \cite[Thm. 5.0.18]{du2023prismaticapproachvarphihat}. Therefore, it is the log Dieudonné module of a log $p$-divisible group $\mathfrak{G}$. Then $T_p\Gs = T_p\mathfrak{G}_{\eta}$, and it follows from Proposition \ref{prop: dualizable groups over smooth rig over padic field} that $\Gs$ and $\mathfrak{G}_{\eta}$ are isomorphic. Alternatively, the third point follows from \cite[Thm. B]{bertapelle2024logpdivisiblegroupssemistable}.
\end{proof}

For rigid spaces of good reduction, we have the following analogous characterization. We use the notion of crystalline $\Z_p$-local systems of \cite{GuoReinecke2024}.
\begin{proposition}\label{prop: good reduction versus crystalline local system}
    Let $X$ be a smooth rigid space over a $p$-adic field $K/\Q_p$, with a smooth formal model $\Xs$ over $\Os_K$. Let $\Gs \rightarrow X$ be a dualizable analytic $p$-divisible group. 
    \begin{enumerate}
        \item The generic fiber functor is fully faithful on $p$-divisible groups over $\Xs$.
        \item If $\Gs = \mathfrak{G}_{\eta}$ for a $p$-divisible group $\mathfrak{G}$ on $\Xs$, then $T_p\Gs$ is a crystalline local system. 
        \item Assume that $X=\Spa(K)$. Then if $T_p\Gs$ is a crystalline representation of $\Gal_K$, $\Gs$ arises from a $p$-divisible group over $\Os_K$.
    \end{enumerate}
    \end{proposition}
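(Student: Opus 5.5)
The plan mirrors the proof of Proposition \ref{prop: semistable reduction versus semistable local system}, replacing log prismatic Dieudonné theory by the (non-log) prismatic Dieudonné theory of Anschütz--Le Bras and the étale realization of prismatic $F$-crystals by the Guo--Reinecke / Bhatt--Scholze comparison with crystalline local systems. The key inputs are three results. First, the antiequivalence between $\BT(\Xs)$ and prismatic $F$-crystals on $\Xs_{\Prism}$ of height $\in[0,1]$, valid for smooth (hence quasi-syntomic) $\Xs$ over $\Os_K$, due to Anschütz--Le Bras. Second, the fully faithful restriction from prismatic $F$-crystals to analytic prismatic $F$-crystals, which is purity for the locus $V(p,\Is_{\Prism})$ in the smooth case. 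Third, the equivalence of Guo--Reinecke between analytic prismatic $F$-crystals on $\Xs$ and crystalline $\Z_p$-local systems on $X$, given by the étale realization $T_{\et}$. We also need the compatibility $T_{\et}(M_{\mathfrak{G}},\varphi_{\mathfrak{G}})\cong T_p\mathfrak{G}_{\eta}$, possibly up to a dual or a Tate twist depending on conventions. This can be checked on perfectoid covers, where it reduces to the description of Tate modules via $\Ainf$-Breuil--Kisin--Fargues modules, as in Anschütz--Le Bras.

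With these in hand, the functor $\mathfrak{G}\mapsto T_p\mathfrak{G}_{\eta}$ factors as the composite of the three fully faithful functors above. This immediately gives point (2): $T_p\mathfrak{G}_{\eta}$ lies in the essential image of $T_{\et}$, hence is crystalline. It also shows that $\mathfrak{G}\mapsto T_p\mathfrak{G}_\eta$ is fully faithful. By Proposition \ref{prop: groups of good reduction are dualizable}, every $\mathfrak{G}_\eta$ is dualizable. By Proposition \ref{prop: dualizable groups over smooth rig over padic field}, the functor $\Gs\mapsto T_p\Gs$ is fully faithful on dualizable groups over the smooth rigid space $X$. Combining, a homomorphism $\mathfrak{G}_\eta\to\mathfrak{G}'_\eta$ is determined by, and determines, a map of Tate modules, which lifts uniquely to $\mathfrak{G}\to\mathfrak{G}'$. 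This gives point (1).

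For point (3), let $X=\Spa(K)$ and suppose $T_p\Gs$ is crystalline. By the Guo--Reinecke equivalence, $T_p\Gs$ comes from an analytic prismatic $F$-crystal on $\Spf(\Os_K)$. By purity this extends to a prismatic $F$-crystal $(M,\varphi_M)$. The main obstacle is to show that $(M,\varphi_M)$ has height $\in[0,1]$. I would deduce this from the Hodge--Tate weights: $\Gs$ is dualizable, so $T_p\Gs$ has Hodge--Tate weights in $\{0,1\}$ by Proposition \ref{prop: dualizable groups over smooth rig over padic field}. For crystalline representations, the height of the associated Breuil--Kisin module is bounded by the range of Hodge--Tate weights, by the theory of Kisin or the result of Du--Liu--Moon--Shimizu already cited. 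Hence $(M,\varphi_M)$ is the Dieudonné crystal of some $\mathfrak{G}\in\BT(\Os_K)$. Then $T_p\mathfrak{G}_\eta\cong T_p\Gs$, and both $\Gs$ and $\mathfrak{G}_\eta$ are dualizable, so Proposition \ref{prop: dualizable groups over smooth rig over padic field} gives $\Gs\cong\mathfrak{G}_\eta$. Alternatively, point (3) is Breuil--Kisin's classical theorem, which could simply be cited.
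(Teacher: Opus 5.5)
Your proposal is correct and is essentially the paper's proof: you factor $\mathfrak{G}\mapsto T_p\mathfrak{G}_\eta$ through the Ansch\"utz--Le Bras Dieudonn\'e functor, restriction to analytic prismatic $F$-crystals and the Guo--Reinecke \'etale realization, conclude (1) and (2) with Proposition~\ref{prop: dualizable groups over smooth rig over padic field}, and for (3) extend the analytic $F$-crystal over $\Spf(\Os_K)$ and bound its height by the Hodge--Tate weights. Two small points of precision: the Ansch\"utz--Le Bras antiequivalence is with \emph{admissible} prismatic Dieudonn\'e crystals, which the paper notes is automatic for smooth $\Xs$; and the paper gets the height bound in (3) from the restricted equivalence of Guo--Reinecke and Du (height $[0,1]$ versus Hodge--Tate weights $\{0,1\}$) rather than from Kisin's theory.
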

    \begin{remark}
    It is interesting to know also for higher dimensional $\Xs$ whether the condition that $T_p\Gs$ is crystalline implies that $\Gs$ has good reduction. For some negative and positive answers, see \cite[Example 3.36, Remark 3.38]{Du2024}.
\end{remark}
\begin{proof}
The proof is completely analogous to that of Proposition \ref{prop: semistable reduction versus semistable local system}. In place of log prismatic Dieudonné theory, we use the prismatic Dieudonné theory of Anschütz--Le bras \cite{anschutz2022prismaticdieudonnetheory}, which is an antiequivalence 
\cite[Thm. 4.74]{anschutz2022prismaticdieudonnetheory}
\begin{align*}
     \BT(\Xs)
         \xlongrightarrow{\cong} \left\{\text{\begin{tabular}{l} {\parbox{4.9cm}{admissible prismatic $F$-crystals on $\Xs$ of height $\in [0,1]$}}\end{tabular}}\right\}.
    \end{align*}
Here, admissibility is a technical condition automatically satisfied as soon as $\Xs$ is regular and admits a quasi-syntomic cover by perfectoids, by adapting the proof of \cite[Prop. 5.10]{anschutz2022prismaticdieudonnetheory}. Restriction along open subsets defines a fully faithful functor from the category of prismatic $F$-crystals into the category of analytic prismatic $F$-crystals \cite[Prop. 3.7]{GuoReinecke2024}. Finally, there is a contravariant étale realization functor \cite[Thm. A]{GuoReinecke2024} \cite[Thm. 3.29]{Du2024}
\begin{align*}
 \left\{\text{\begin{tabular}{l} {\parbox{4.4cm}{analytic prismatic $F$-crystals on $\Xs$ of height $\in [0,1]$}}\end{tabular}}\right\}
         \xlongrightarrow{\cong} \left\{\text{\begin{tabular}{l} {\parbox{5.0cm}{crystalline $\Z_p$-local systems on $X$ of Hodge--Tate weights $\in \{0,1\}$}}\end{tabular}}\right\},
    \end{align*}
compatible with the $p$-adic Tate module and Anschütz--Le Bras' functor \cite[Prop. 3.35]{Du2024}. The proof now follows in a completely analogous way as that of Proposition \ref{prop: semistable reduction versus semistable local system}, using that over $\Spf(\Os_K)$, analytic prismatic $F$-crystals uniquely extend to prismatic $F$-crystals \cite[Prop. 3.8]{GuoReinecke2024} (or \cite[Thm. 1.2]{BhattScholze2023}) for the third point.
\end{proof}

\subsection{Higher direct images}\label{subsect: Higher direct images}
In this subsection, we study the smooth proper higher direct images of analytic $p$-divisible groups, and we prove a comparison theorem. We then derive applications to the topologically $p$-torsion Picard varieties of rigid spaces.

Let $(K,K^+)$ be a non-archimedean field over $\Q_p$ and let $\pi\colon X \rightarrow S$ be a morphism of rigid spaces over $K$. It induces morphisms of sites
\begin{align}\label{eq: piEt} \pi_{\Et}\colon \LSD_{X,\et} \rightarrow \Perf_{S,\et}\, , \quad \pi_{v}\colon \LSD_{X,v} \rightarrow \Perf_{S,v}.\end{align}

\begin{definition}{(\cite[Def. 3.14]{gerth2024})}
    Let $\Gs \rightarrow X$ be a smooth commutative group. For $\tau \in \{\Et, v\}$ and $n \geq 0$, the diamantine higher direct images of $\Gs$ are defined to be the sheaves
    \begin{align} R^n\pi_{\tau,*}\Gs \colon \Perf_{S,\tau} \rightarrow \Ab.\end{align}
\end{definition}

We will use the following results from \cite{gerth2024}.
\begin{thm}{(\cite[Prop. 3.16, Prop. 3.39]{gerth2024})}
    Let $\pi\colon X \rightarrow S$ be a proper smooth morphism of seminormal rigid spaces over $K$ and let $\Gs$ be an analytic $p$-divisible $X$-group. Assume either that $S=\Spa(K,K^+)$ or that the Lie algebra $\ga$ is the pullback of a vector bundle on $S$. 
    \begin{enumerate}
        \item For $\tau \in \{\Et,v\}$, the sheaf $R^n\pi_{\tau,*}\ga$ is a $\tau$-vector bundle on $S$. We have an isomorphism of étale sheaves on $\Perf_S$
        \[ R^n\pi_{\Et,*}\ga =  (R^n\pi_{\et,*}\ga) \otimes_{\Os_{S_{\et}}} \G_a.\]
        If $S=\Spa(K,K^+)$ and is perfectoid, we further have
        \[ R^n\pi_{v,*}\ga = H_{v}^n(X,\ga)\otimes_K \G_a.\]
        \item For each $1 \leq m \leq \infty$, we have
        \[ R^n\pi_{v,*}\Gs[p^{m}] = R^n\pi_{\Et,*}\Gs[p^{m}]  = \nu^* R^n\pi_{\et,*}\Gs[p^m], \]
        where $\nu\colon \Perf_{S,v} \rightarrow S_{\et}$ is the natural morphism of topoi, and it is representable by an étale group over $S$.
        \item For $\tau \in \{\Et,v\}$, we have a short exact sequence of sheaves on $\Perf_{S,\tau}$
 % https://q.uiver.app/#q=WzAsNSxbMiwwLCJSXm5cXHBpX3tcXHRhdSwqfVxcR3MiXSxbMywwLCJSXm5cXHBpX3tcXHRhdSwqfVxcZ2EiXSxbMCwwLCIwIl0sWzQsMCwiMC4iXSxbMSwwLCJSXm5cXHBpX3tcXHRhdSwqfVxcR3NbcF57XFxpbmZ0eX1dIl0sWzAsMSwiXFxsb2dfKiJdLFsxLDNdLFs0LDBdLFsyLDRdXQ==
\begin{equation}\label{geometric cohomological log exact sequence}\begin{tikzcd}
	0 & {R^n\pi_{\tau,*}\Gs[p^{\infty}]} & {R^n\pi_{\tau,*}\Gs} & {R^n\pi_{\tau,*}\ga} & {0.}
	\arrow[from=1-1, to=1-2]
	\arrow[from=1-2, to=1-3]
	\arrow["{\log_*}", from=1-3, to=1-4]
	\arrow[from=1-4, to=1-5]
\end{tikzcd}\end{equation}
        \item Consider the morphism of sites
        \begin{align}\label{eq: piEtrig} \pi_{\Et}^{\rig}\colon \Sm_{/X,\et} \rightarrow \Sm_{/S,\et}.\end{align}
        Then the sheaf $R^n\pi_{\Et,*}^{\rig}\Gs$ is representable by a smooth $S$-group and there is a canonical isomorphism
    \begin{align}\label{eq: comparison iso between rigid and diamantine variant}
    (R^n\pi_{\Et,*}^{\rig}\Gs)^{\diamondsuit} = R^n\pi_{\Et,*}\Gs.\end{align}
    \end{enumerate}
\end{thm}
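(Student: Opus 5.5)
The plan is to reduce everything to the two basic pieces of an analytic $p$-divisible group: the Lie algebra $\ga$ and the torsion $\Gs[p^{\infty}]$. These are glued by the logarithm sequence (\ref{eq: logarithm exact sequence for pdiv groups}), and we push it forward along $\pi$.

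\textbf{Part (1).} By hypothesis $\ga = \pi^*E \otimes \G_a$ for a vector bundle $E$ on $S$. In the case $S=\Spa(K,K^+)$ this holds automatically after replacing $E$ by $H^0$-data, using that vector bundles are free there. The projection formula then gives $R^n\pi_{\tau,*}\ga = E\otimes R^n\pi_{\tau,*}\Os_X$, both for the small sites and for the big sites $\Perf_{S,\tau}$. Theorem \ref{thm: relative linear HT sequence} says that $R^n\pi_{\tau,*}\Os_{X_\tau}$ is a $\tau$-vector bundle whose formation commutes with arbitrary base change $S'\to S$ from sousperfectoid spaces. Evaluating on $S'\in\Perf_S$ therefore identifies the big-site pushforward $R^n\pi_{\Et,*}\ga$ with $(R^n\pi_{\et,*}\ga)\otimes\G_a$. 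For $S$ a perfectoid point, $v$-vector bundles are étale by (\ref{eq: etale and v vb equivalent for perfectoids}), which gives the last formula.

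\textbf{Part (2).} The groups $\Gs[p^m]$ are $\Z/p^m$-lisse, so this is Proposition \ref{prop: cohomology of local system} together with \cite[Cor.~5.5]{heuer2024primitive} and the equivalence (\ref{eq: from etale ls to v ls}). First treat $\F_p$-coefficients, where $R^n\pi_{\et,*}$ is an $\F_p$-local system by the argument in that proof, then use dévissage. The resulting lisse sheaf is represented by a finite étale $S$-group. For $m=\infty$, pass to the colimit, since cohomology on these qcqs sites commutes with filtered colimits. This yields an étale (ind-finite) $S$-group.

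\textbf{Part (3).} Apply $R\pi_{\tau,*}$ to (\ref{eq: logarithm exact sequence for pdiv groups}). Exactness of (\ref{geometric cohomological log exact sequence}) is equivalent to vanishing of the two connecting maps
\[
R^{n-1}\pi_{\tau,*}\ga\to R^n\pi_{\tau,*}\Gs[p^{\infty}] \quad\text{and}\quad R^{n}\pi_{\tau,*}\ga\to R^{n+1}\pi_{\tau,*}\Gs[p^{\infty}].
\]
By (1) and (2) each is a homomorphism from a geometric vector bundle over $S$ to an étale $S$-group. Any such homomorphism is zero: $v$-locally the source is $\G_a^r$, its fibers are connected and contain the zero section, and a map into an étale separated target is locally constant on fibers.

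\textbf{Part (4).} This is the main obstacle. By (3), $R^n\pi_{\Et,*}\Gs$ is an extension of the vector bundle $(R^n\pi_{\et,*}\ga)\otimes\G_a$ by the étale group $R^n\pi_{\Et,*}\Gs[p^{\infty}]$. I would invoke the representability of such extensions, \cite[Lemma 3.38]{gerth2024} (already used in the proof of Theorem \ref{thm: extending Fargues' equivalence of categories}), to get a smooth $S$-group $\Hs$ with $\Hs^{\diamondsuit}=R^n\pi_{\Et,*}\Gs$.

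It then remains to identify $\Hs$ with $R^n\pi^{\rig}_{\Et,*}\Gs$ on $\Sm_{/S,\et}$. Both sides are computed by étale sheafification of $T\mapsto H^n_{\et}(X\times_S T,\Gs)$. For $T$ smooth over $S$, which is good since $S$ is seminormal, the diamond functor is fully faithful (\ref{eq: diamond functor ff on good adic spaces}) and the étale sites agree. Running the same logarithm dévissage on $\Sm_{/S,\et}$ then produces a five-lemma comparison, using that étale cohomology of $\Gs[p^m]$ and coherent cohomology of $\ga$ are insensitive to passing to diamonds. The delicate point is the coherent term on non-perfectoid smooth $T$, where I would rely on goodness, $\Os_{T_{\et}}=\nu_*\Os_{T_v}$, and on the base-change statement of Theorem \ref{thm: relative linear HT sequence}.
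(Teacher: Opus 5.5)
\textbf{Gap in part (1).} You treat the hypothesis ``$S=\Spa(K,K^+)$'' as a special case of ``$\ga$ is pulled back from $S$'', on the grounds that vector bundles on $S$ are free. But $\ga$ is a vector bundle on $X$, not on $S$. The point-case hypothesis exists precisely so that $\Lie(\Gs)$ may be an \emph{arbitrary} bundle on $X$.

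For example, let $L$ be a non-trivial line bundle on a proper smooth curve $X/K$. Then $\Gs=L\otimes_{\Os_X}\G_a$ is an analytic $p$-divisible group (Example \ref{ex: examples of analytic p-divisible groups}(2)), and its Lie algebra $L$ is not of the form $\pi^*E$. In this case the projection formula plus Theorem \ref{thm: relative linear HT sequence}, which only concerns $\Os_X$, gives you nothing. You need separate inputs:
\begin{itemize}
\item for $\tau=\Et$: finiteness of $H^n_{\et}(X,\ga)$ for $X$ proper over a field, together with base change $H^n(X_T,\ga)\cong H^n(X,\ga)\otimes_K\Os(T)$ along affinoid perfectoid $T\to\Spa(K,K^+)$ (and, for part (4), along smooth $T$);
\item for $\tau=v$: a computation of $R\nu_*(\ga\otimes\Os_{X_v})$ via $R^j\nu_*\Os_{X_{T,v}}\cong\Omega^j_{X_T/T}(-j)$ over a perfectoid base. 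This gives a spectral sequence $H^i(X_T,\ga\otimes\Omega^j_{X_T/T})(-j)\Rightarrow H^{i+j}_v(X_T,\ga)$, which shows that $R^n\pi_{v,*}\ga$ is a $v$-vector bundle and yields the last formula of (1).
\end{itemize}
Your parts (3) and (4) take (1) as input: to know that the sources of the connecting maps are vector bundles, and for the coherent term in the comparison on $\Sm_{/S,\et}$. So the entire point case is currently unproven.

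\textbf{Part (4) is thinner than it looks.} Your claim that the diamond side is ``computed by étale sheafification of $T\mapsto H^n_{\et}(X\times_S T,\Gs)$'' is what has to be proved. $\Hs^{\diamondsuit}$ is a sheaf on perfectoid spaces, and its value on a smooth non-perfectoid $T$ is defined by $v$-descent, not by that formula. You need:
\begin{itemize}
\item an explicit map $R^n\pi^{\rig}_{\Et,*}\Gs\to\Hs$ on $\Sm_{/S,\et}$: pull a class on $X_T$ back to all perfectoid $T'\to T$ to get $T^{\diamondsuit}\to\Hs^{\diamondsuit}$, then use full faithfulness of $(\cdot)^{\diamondsuit}$ on good adic spaces;
\item the fact that $R^n\pi^{\rig}_{\Et,*}$ of $\ga$ and of $\Gs[p^\infty]$ are represented by the same vector bundle and étale group as on the perfectoid side.
\end{itemize}
Once you have both, your five-lemma argument applies. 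Also, before invoking \cite[Lemma 3.38]{gerth2024}, note that your étale-sheaf extension is actually a $v$-sheaf. This follows from the five lemma and $H^1_{\et}(T,A)=H^1_v(T,A)$ for the torsion étale kernel $A$.
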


We now have the following comparison theorem for analytic $p$-divisible groups.
\begin{thm}\label{Thm: comparison theorem for analytic p-divisible groups}
    Let $\pi\colon X \rightarrow S$ be a proper smooth morphism of seminormal rigid spaces over $(K,K^+)$. Let $\Gs \rightarrow X$ be an analytic $p$-divisible group. Assume that $S=\Spa(K,K^+)$ or that $\ga$ is the pullback of an étale vector bundle on $S$.
    \begin{enumerate}
        \item Define the lisse $\Z_p$-sheaf on $\Perf_{S,v}$
        \[ \Mb^n = \underline{\Hom}(R^n\pi_{v,*}\Gs[p^{\infty}],\Q_p/\Z_p).\]
        Then there exists a commutative diagram of sheaves on $\Perf_{S,\et}$ with exact rows
        % https://q.uiver.app/#q=WzAsMTAsWzIsMCwiUl57bn1cXHBpX3tcXEV0LCp9XFxHcyJdLFszLDAsIlJee259XFxwaV97XFxFdCwqfVxcZ2EiXSxbMywxLCJcXHVuZGVybGluZXtcXEhvbX0oXFxNYl5uKDEpLFxcR19hKSJdLFsyLDEsIlxcdW5kZXJsaW5le1xcSG9tfShcXE1iXm4oMSksXFxHX21cXGxhbmdsZSBwXntcXGluZnR5fSBcXHJhbmdsZSkiXSxbNCwwLCIwIl0sWzEsMCwiUl57bn1cXHBpX3tcXEV0LCp9XFxHc1twXntcXGluZnR5fV0iXSxbMSwxLCJcXHVuZGVybGluZXtcXEhvbX0oXFxNYl5uKDEpLFxcbXVfe3Bee1xcaW5mdHl9fSkiXSxbMCwxLCIwIl0sWzAsMCwiMCJdLFs0LDEsIjAuIl0sWzAsMV0sWzEsMl0sWzAsM10sWzMsMl0sWzEsNF0sWzUsMF0sWzYsM10sWzcsNl0sWzgsNV0sWzIsOV0sWzUsNiwiXFxjb25nIiwyXV0=
\begin{equation}\label{eq: main diagram comparison theorem}\begin{tikzcd}
	0 & {R^{n}\pi_{\Et,*}\Gs[p^{\infty}]} & {R^{n}\pi_{\Et,*}\Gs} & {R^{n}\pi_{\Et,*}\ga} & 0 \\
	0 & {\underline{\Hom}(\Mb^n(1),\mu_{p^{\infty}})} & {\underline{\Hom}(\Mb^n(1),\G_m\langle p^{\infty} \rangle)} & {\underline{\Hom}(\Mb^n(1),\G_a)} & {0.}
	\arrow[from=1-1, to=1-2]
	\arrow[from=1-2, to=1-3]
	\arrow["\cong"', from=1-2, to=2-2]
	\arrow[from=1-3, to=1-4]
	\arrow[from=1-3, to=2-3]
	\arrow[from=1-4, to=1-5]
	\arrow[from=1-4, to=2-4]
	\arrow[from=2-1, to=2-2]
	\arrow[from=2-2, to=2-3]
	\arrow[from=2-3, to=2-4]
	\arrow[from=2-4, to=2-5]
\end{tikzcd}\end{equation}
\item The smooth $S$-group $R^{n}\pi_{\Et,*}\Gs$ contains a maximal open analytic $p$-divisible subgroup $\Hs \sub R^{n}\pi_{\Et,*}\Gs$. The tuple $(E,\Lb,f)$ associated with $\Hs$ under Theorem \ref{thm: extending Fargues' equivalence of categories} is
\[ E=R^n\pi_{\et,*}\ga, \quad \Lb = R^n\pi_{v,*}T_p\Gs/\torsion,\]
and
% https://q.uiver.app/#q=WzAsNCxbMCwwLCJmXFxjb2xvbiBSXm5cXHBpX3tcXEV0LCp9XFxnYSJdLFsxLDAsIlJeblxccGlfe3YsKn1cXGdhIl0sWzIsMCwiUl5uXFxwaV97diwqfShUX3BcXEdzKC0xKSBcXG90aW1lcyBcXEdfYSkiXSxbMywwLCIoUl5uXFxwaV97diwqfVRfcFxcR3MpKC0xKSBcXG90aW1lcyBcXEdfYSwiXSxbMCwxXSxbMSwyXSxbMiwzLCJcXGNvbmciXV0=
\[\begin{tikzcd}
	{f\colon R^n\pi_{\Et,*}\ga} & {R^n\pi_{v,*}\ga} & {R^n\pi_{v,*}(T_p\Gs(-1) \otimes \G_a)} & {(R^n\pi_{v,*}T_p\Gs)(-1) \otimes \G_a,}
	\arrow[from=1-1, to=1-2]
	\arrow[from=1-2, to=1-3]
	\arrow["\cong", from=1-3, to=1-4]
\end{tikzcd}\]
where the middle map is induced by $f_{\Gs}$ and the last isomorphism comes from Scholze's Primitive Comparison Theorem, cf Theorem \ref{thm: Primitive Comparison Theorem}(2).
\item Assume that $R^{n+1}\pi_{v,*}T_p\Gs$ is torsionfree. Then $\Hs = R^n\pi_{\Et,*}\Gs$, i.e. $R^n\pi_{\Et,*}\Gs$ is analytic $p$-divisible.
    \end{enumerate}
\end{thm}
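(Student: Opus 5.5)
The plan is to apply the cohomological machinery of the cited theorem from \cite{gerth2024} to the analytic Weil pairing diagram of Proposition \ref{Prop: diagram relating f and u}, and then read off the tuple of Theorem \ref{thm: extending Fargues' equivalence of categories}.

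For (1), the top row of (\ref{eq: main diagram comparison theorem}) is the exact sequence (\ref{geometric cohomological log exact sequence}) for $\tau=\Et$. To build the vertical maps, I would apply $R^n\pi_{\Et,*}$ to the diagram of Proposition \ref{Prop: diagram relating f and u}. The key point is that $R^n\pi_{\Et,*}(T_p\Gs(-1)\otimes_{\Z_p} \G_m\langle p^{\infty}\rangle)$ should map to $\underline{\Hom}(\Mb^n(1),\G_m\langle p^\infty\rangle)$. One way to see this is to use the cup product with the pairing $e_{\Gs}$ from Theorem \ref{thm: existence of analytic Weil pairings}. A cleaner route is to first define the map on $p^\infty$-torsion. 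There, $R^n\pi_{\Et,*}\Gs[p^\infty]$ is an étale $\Z_p$-module sheaf, torsion and of cofinite type by the cited theorem and Proposition \ref{prop: cohomology of local system}. Pontryagin duality then gives an isomorphism $R^n\pi_{\Et,*}\Gs[p^\infty]\cong \underline{\Hom}(\Mb^n,\Q_p/\Z_p)=\underline{\Hom}(\Mb^n(1),\mu_{p^\infty})$.

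I would then extend this to $R^n\pi_{\Et,*}\Gs$ by the same argument as Proposition \ref{Prop: extension from p-torsion to the whole group}. Applied to the sheaf $R^n\pi_{\Et,*}\Gs$, which is an extension of the vector group $R^n\pi_{\Et,*}\ga$ by $R^n\pi_{\Et,*}\Gs[p^\infty]$, that argument needs two inputs: the vanishing of $\Hom(\ga',\Lb\otimes\G_m\langle p^\infty\rangle)$, and the vanishing of $\Ext^1_{\et}(\ga',\Lb\otimes\G_m\langle p^\infty\rangle)$ on strictly totally disconnected bases. These give a unique extension of the torsion map to the middle term, and the right vertical map is induced on quotients. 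Since $\Mb^n$ is only lisse, I would first pass to $\Mb^n/\torsion$, a local system, noting that homomorphisms from torsion into $\G_m\langle p^\infty\rangle$ are controlled by $\mu_{p^\infty}$.

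For (2), I would take $\Lb=(\Mb^n)^{\vee}$ modulo torsion. I would then identify it with $R^n\pi_{v,*}T_p\Gs/\torsion$ using $R^n\pi_{v,*}\Gs[p^m]$ and a $\varprojlim$ argument. The required $\lim^1$ vanishing holds by finiteness, as in Proposition \ref{prop: cohomology of local system}. The group $\Hs$ is then defined as the pullback from Theorem \ref{thm: extending Fargues' equivalence of categories} of the tuple $(R^n\pi_{\et,*}\ga,\Lb,f)$, where $f$ is the right vertical map. The diagram of (1) provides a natural map $R^n\pi_{\Et,*}\Gs\to \Lb(-1)\otimes\G_m\langle p^\infty\rangle$. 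Its restriction to the open subgroup on which the torsion part is $p$-divisible, that is, where $R^n\pi_{\Et,*}\Gs[p^\infty]$ is replaced by its maximal divisible quotient $\Lb(-1)\otimes\mu_{p^\infty}$, gives the comparison. The open subgroup is the preimage of the image of $\log$ from the divisible part, which is open because $\log$ is étale. Maximality holds because any analytic $p$-divisible subgroup has divisible torsion.

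The main obstacle is identifying $f$ with the composite through the Primitive Comparison Theorem. For this I would derive the diagram of (1) from Proposition \ref{Prop: diagram relating f and u} by functoriality of $R^n\pi_{v,*}$, and use Theorem \ref{thm: Primitive Comparison Theorem}(2) together with $R^n\pi_{v,*}(T_p\Gs(-1)\otimes\G_m\langle p^\infty\rangle)$. Checking compatibility with the Pontryagin-duality identification on torsion then forces equality by the uniqueness in Theorem \ref{thm: extending Fargues' equivalence of categories}. For (3), torsion-freeness of $R^{n+1}\pi_{v,*}T_p\Gs$ means $R^n\pi_{v,*}T_p\Gs\otimes\Q_p/\Z_p\to R^n\pi_{v,*}\Gs[p^\infty]$ is an isomorphism. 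Hence the torsion is already divisible and $\Hs=R^n\pi_{\Et,*}\Gs$.
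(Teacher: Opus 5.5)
Your route to (1) is sound and differs from the paper's. You extend the Pontryagin identification $R^n\pi_{\Et,*}\Gs[p^{\infty}]\cong\underline{\Hom}(\Mb^n(1),\mu_{p^{\infty}})$ to all of $R^n\pi_{\Et,*}\Gs$ using the $\Hom$/$\Ext^1$-vanishing of Proposition \ref{Prop: extension from p-torsion to the whole group}, plus $\Ext^1(\G_a,A)=0$ for the bounded finite part. This does produce a commutative diagram with exact rows, and your argument for (3) is fine.

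The gap is in (2), at exactly the step you call the main obstacle. By construction, your right vertical map is the derivative of the unique extension of the torsion identification. So it is, tautologically, the map $f_{\Hs}$ of the subgroup $\Hs$. The content of (2) is to compute this map as the composite through $f_{\Gs}$ and the Primitive Comparison isomorphism, and you give no argument for that.

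Applying $R^n\pi_{v,*}$ to the diagram of Proposition \ref{Prop: diagram relating f and u} gives a map into $P=R^n\pi_{v,*}(T_p\Gs(-1)\otimes\G_m\langle p^{\infty}\rangle)$, not into $\underline{\Hom}(\Mb^n(1),\G_m\langle p^{\infty}\rangle)$. Uniqueness of extensions from torsion only compares maps with a common target. What you need is an isomorphism $P\cong\underline{\Hom}(\Mb^n(1),\G_m\langle p^{\infty}\rangle)$ that is Pontryagin duality on torsion \emph{and} the Primitive Comparison isomorphism on the vector-group quotients. This is not formal. Even once $P$ is known to be an extension of a vector group by torsion, the unique extension of the torsion map has as derivative the intrinsic $f$-map of the $p$-divisible part of $P$. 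You would still have to show that this $f$-map is the Primitive Comparison isomorphism.

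That statement is the paper's Proposition \ref{prop: mult variant of prim comp}, a multiplicative Primitive Comparison Theorem, and it is the missing idea. The paper's argument runs as follows:
\begin{enumerate}
\item Apply $R\pi_{v,*}$ to $\Lb\otimes(\mu_{p^{\infty}}\to\G_m\langle p^{\infty}\rangle\to\G_a)$.
\item By Theorem \ref{thm: Primitive Comparison Theorem}(2) the $\G_a$-terms are vector bundles. A vector bundle admits no nonzero map to an étale group, so the connecting maps vanish.
\item The resulting short exact sequence is representable by \cite[Lemma 3.38]{gerth2024}.
\item Over a strictly totally disconnected base, split off the finite part. What remains is an analytic $p$-divisible group whose $f$-map is the Primitive Comparison isomorphism, so its $u$-map is an isomorphism.
\item An extension of a vector bundle by an étale group has no automorphisms, so this isomorphism is unique and descends to $S$.
\end{enumerate}
The paper then \emph{defines} the middle vertical map as $R^n\pi_{v,*}(u_{\Gs})$ followed by this isomorphism. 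With that definition, (1) and the formula for $f$ in (2) come out simultaneously.

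One smaller correction: $\Hs$ is the preimage of the open and closed subgroup $\underline{\Hom}(\Mb^n(1)/\torsion,\G_m\langle p^{\infty}\rangle)$. The divisible part of the torsion is a canonical subgroup, not a ``maximal divisible quotient''. Likewise, there is no natural projection of $R^n\pi_{\Et,*}\Gs$ onto $\Lb(-1)\otimes\G_m\langle p^{\infty}\rangle$.
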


For the proof, we start with the following multiplicative variant of Scholze's Primitive Comparison Theorem.
\begin{proposition}\label{prop: mult variant of prim comp}
    Let $\pi \colon X \rightarrow S$ be a proper smooth morphism of seminormal rigid spaces over $(K,K^+)$. Let $\Lb$ be a $\Z_p$-local system on $X_v$ and let $\V = \Lb[\tfrac{1}{p}]$.
    \begin{enumerate}
        \item Let us define the following sheaf on $\Perf_S$
    \[ \Mb^n = \underline{\Hom}(R^n\pi_{\Et,*}(\V/\Lb), \Q_p/\Z_p).\]
    Then $\Mb^n$ is a lisse $\Z_p$-sheaf with
    \[ \Mb^n/\tors = (R^n\pi_{v,*}\Lb)^{\vee}.\]
        \item There is a natural isomorphism of sheaves on $S_v$
    \[ R^n\pi_{v,*}(\Lb\otimes_{\Z_p} \G_m\langle p^{\infty} \rangle)= \underline{\Hom}(\Mb^n,\G_m\langle p^{\infty} \rangle), \]
    which lives over Scholze's Primitive Comparison Theorem isomorphism (\ref{eq: prim comp thm}) via the logarithm. 
    \item If $R^n\pi_{v,*}\Lb$ and $R^{n+1}\pi_{v,*}\Lb$ are torsion-free, the isomorphism of the previous point becomes
    \[ R^n\pi_{v,*}(\Lb\otimes_{\Z_p} \G_m\langle p^{\infty} \rangle)= (R^n\pi_{v,*}\Lb)\otimes_{\Z_p} \G_m\langle p^{\infty} \rangle. \]
    %  \item It always holds that
    % \[ R^n\pi_{v,*}(\V\otimes_{\Q_p} \B^{\varphi =p})= (R^n\pi_{v,*}\V)\otimes_{\Q_p}  \B^{\varphi =p}. \]
    \end{enumerate}
\end{proposition}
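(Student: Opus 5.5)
The plan is to reduce everything to the logarithm sequence for $\Lb\otimes_{\Z_p}\G_m\langle p^{\infty}\rangle$ and to the torsion-level comparison results stated earlier.

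\textbf{Part (1).} By Proposition \ref{prop: cohomology of local system}, each $R^n\pi_{v,*}(\Lb/p^m)=\nu^*R^n\pi_{\et,*}(\Lb/p^m)$ is an $\Z/p^m$-local system. Taking the colimit, $R^n\pi_{\Et,*}(\V/\Lb)=\varinjlim_m R^n\pi_{\Et,*}(\Lb/p^m)$ commutes with the filtered colimit because $\pi$ is qcqs. This colimit is $v$-locally of the form $(\Q_p/\Z_p)^a\oplus T$ with $T$ finite, so its Pontryagin dual $\Mb^n$ is a lisse $\Z_p$-sheaf. To identify the torsion-free quotient, I would use the long exact sequence for $0\to\Lb\to\V\to\V/\Lb\to 0$:
\[ 0\to R^n\pi_{v,*}\Lb\otimes\Q_p/\Z_p\to R^n\pi_{v,*}(\V/\Lb)\to R^{n+1}\pi_{v,*}\Lb[p^\infty]\to 0. \]
Dualizing gives $\Mb^n/\tors=(R^n\pi_{v,*}\Lb)^{\vee}$.

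\textbf{Part (2).} Apply $R\pi_{v,*}$ to the exact sequence
\[ 0\to\V/\Lb\otimes\mu_{p^\infty}\to\Lb\otimes\G_m\langle p^\infty\rangle\to\Lb\otimes\G_a\to0. \]
The strategy mirrors Theorem \ref{thm: extending Fargues' equivalence of categories}.

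First, build a comparison map. Work $v$-locally on a strictly totally disconnected $S$, and use the evaluation pairing $R^n\pi_{v,*}(\V/\Lb)(1)\times\Mb^n\to\mu_{p^\infty}$. By Proposition \ref{Prop: extension from p-torsion to the whole group}, applied to suitable analytic $p$-divisible groups, this extends to a pairing $R^n\pi_{v,*}(\Lb\otimes\G_m\langle p^\infty\rangle)\times\Mb^n\to\G_m\langle p^\infty\rangle$. Concretely, I would define the map by cup product: a class together with an element of $\Mb^n$ gives a class in $H^n$ with $\G_m\langle p^\infty\rangle$ coefficients, which is then pushed to degree zero via a trace. Defining this map canonically is the main obstacle. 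A cleaner option is to define it on each torsion level and pass to the limit using the rigidity $\Hom(\ga,\G_m\langle p^\infty\rangle)=0$.

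Next, compare the two long exact sequences. On the target side we have $0\to\underline{\Hom}(\Mb^n,\mu_{p^\infty})\to\underline{\Hom}(\Mb^n,\G_m\langle p^\infty\rangle)\to\underline{\Hom}(\Mb^n,\G_a)\to0$, which is exact because $\Mb^n$ is $v$-locally $\Z_p^a\oplus$ finite. On the source side the terms are as follows:
\begin{itemize}
\item $R^n\pi_{v,*}(\V/\Lb\otimes\mu_{p^\infty})=\underline{\Hom}(\Mb^n,\mu_{p^\infty})$ by Pontryagin duality;
\item $R^n\pi_{v,*}(\Lb\otimes\G_a)=(R^n\pi_{v,*}\Lb)\otimes\G_a=\underline{\Hom}(\Mb^n,\G_a)$, by the Primitive Comparison Theorem \ref{thm: Primitive Comparison Theorem} and part (1).
\end{itemize}
The connecting maps $R^{n-1}\pi_{v,*}(\Lb\otimes\G_a)\to R^n\pi_{v,*}(\V/\Lb\otimes\mu_{p^\infty})$ land in a torsion sheaf coming from a divisible vector bundle. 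I expect them to vanish, since the map $R^{n-1}(\Lb\otimes\G_m\langle p^\infty\rangle)\to R^{n-1}(\Lb\otimes\G_a)$ is surjective in every degree. This follows inductively because both sides are identified with the $\underline{\Hom}(\Mb^{n-1},-)$ sequence, which is right exact.

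With the connecting maps zero, the five lemma gives the isomorphism. Compatibility with the logarithm holds by construction.

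\textbf{Part (3).} If $R^n\pi_{v,*}\Lb$ and $R^{n+1}\pi_{v,*}\Lb$ are torsion-free, the sequence in part (1) shows that $\Mb^n=(R^n\pi_{v,*}\Lb)^{\vee}$ is a $\Z_p$-local system. Lemma \ref{Lemma: turn Hom into tensor}(2) then gives $\underline{\Hom}(\Mb^n,\G_m\langle p^\infty\rangle)=(R^n\pi_{v,*}\Lb)\otimes\G_m\langle p^\infty\rangle$.
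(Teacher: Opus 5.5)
Your parts (1) and (3) are fine and essentially match the paper. Part (2) has a genuine gap, first in the vanishing of the connecting maps. Your inductive argument is circular. Your five-lemma argument identifies $R^{m}\pi_{v,*}(\Lb\otimes\G_m\langle p^{\infty}\rangle)$ with $\underline{\Hom}(\Mb^{m},\G_m\langle p^{\infty}\rangle)$, but that identification already requires $R^{m}\pi_{v,*}(\Lb\otimes\G_m\langle p^{\infty}\rangle)\to R^{m}\pi_{v,*}(\Lb\otimes\G_a)$ to be surjective. That surjectivity is exactly the vanishing of the connecting map $R^{m}\pi_{v,*}(\Lb\otimes\G_a)\to R^{m+1}\pi_{v,*}(\Lb\otimes\mu_{p^\infty})$ you want to deduce from it, so the induction never gets started. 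Divisibility of the source does not rescue this: the target contains the divisible sheaf $(\Q_p/\Z_p)^r$, and $\Q_p\to\Q_p/\Z_p$ is a nonzero map from a divisible group to a torsion one.

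The missing idea is geometric. Over a strictly totally disconnected $T\to S$, the source $R^m\pi_{v,*}(\Lb\otimes\G_a)\cong(R^m\pi_{v,*}\Lb)\otimes\G_a$ is a vector bundle by the Primitive Comparison Theorem \ref{thm: Primitive Comparison Theorem}. The target $R^{m+1}\pi_{v,*}(\Lb\otimes\mu_{p^\infty})$ is representable by an étale $T$-group. There are no nonzero homomorphisms from a vector bundle to an étale group, since the former has connected fibres and a zero section. This kills all connecting maps at once, with no induction. (Minor point: the kernel of the logarithm is $\Lb\otimes\mu_{p^\infty}\cong(\V/\Lb)(1)$, not $\V/\Lb\otimes\mu_{p^\infty}$, which is zero.)

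Second, you never construct the middle comparison map. Proposition \ref{Prop: extension from p-torsion to the whole group} cannot be invoked before you know that $R^n\pi_{v,*}(\Lb\otimes\G_m\langle p^\infty\rangle)$ is, up to a finite group, an analytic $p$-divisible group. The paper proceeds in the opposite order. Once the short exact sequence
\[0\to R^n\pi_{v,*}(\Lb\otimes\mu_{p^\infty})\to R^n\pi_{v,*}(\Lb\otimes\G_m\langle p^\infty\rangle)\to (R^n\pi_{v,*}\Lb)\otimes\G_a\to 0\]
is known over $T$, the construction goes as follows.
\begin{enumerate}
\item \cite[Lemma 3.38]{gerth2024} gives representability of the middle term by a smooth $T$-group.
\item There is no nontrivial extension of $\G_a$ by the finite group $A=H^{n+1}_v(X_T,\Lb)_{\tors}$, so the middle term splits as $\Hs\oplus A$. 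Here $\Hs$ is analytic $p$-divisible and $\Mb^n=T_p\Hs^\vee\oplus A^\vee$.
\item The map $f_{\Hs}$ is the Primitive Comparison isomorphism. Hence, by Theorem \ref{thm: extending Fargues' equivalence of categories}, $u_{\Hs}$ is an isomorphism $\Hs\cong T_p\Hs\otimes\G_m\langle p^\infty\rangle$ (twists trivialized).
\item Combining $u_{\Hs}$ with the identification on $A$ gives the isomorphism $\phi$.
\end{enumerate}
What makes $\phi$ canonical, and lets it descend from $T$ to $S$, is that an extension of a vector bundle by an étale group has no automorphisms inducing the identity on both ends. This again follows from $\Hom(\text{vector bundle},\text{étale group})=0$, so $\phi$ is the unique map compatible with the outer identifications. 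This is the rigidity needed here, not $\Hom(\ga,\G_m\langle p^\infty\rangle)=0$ as you suggest.
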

\begin{proof}
\begin{enumerate}
    \item We may work $v$-locally and we fix a map $T \rightarrow S$ from a strictly totally disconnected perfectoid space $T$. Then the restriction
    \[ R^n\pi_{\Et,*}(\V/\Lb)\restr{\Perf_T}\]
    is entirely determined by the group $H_{\et}^n(X_T,\V/\Lb)$. This fits in a short exact sequence
    % https://q.uiver.app/#q=WzAsNSxbMSwwLCJIX3ZebihYX1QsXFxWKS9IX3ZebihYX1QsXFxMYikiXSxbMiwwLCJIX3tcXGV0fV5uKFhfVCxcXFYvXFxMYikiXSxbMywwLCJIX3Zee24rMX0oWF9ULFxcTGIpX3tcXHRvcn0iXSxbMCwwLCIwIl0sWzQsMCwiMC4iXSxbMywwXSxbMCwxXSxbMiw0XSxbMSwyXV0=
\[\begin{tikzcd}
	0 & {H_v^n(X_T,\V)/H_v^n(X_T,\Lb)} & {H_{\et}^n(X_T,\V/\Lb)} & {H_v^{n+1}(X_T,\Lb)_{\tor}} & {0.}
	\arrow[from=1-1, to=1-2]
	\arrow[from=1-2, to=1-3]
	\arrow[from=1-3, to=1-4]
	\arrow[from=1-4, to=1-5]
\end{tikzcd}\]
It follows that 
\[ H_{\et}(X_T,\V/\Lb) \cong (\Q_p/\Z_p)^{\oplus r} \oplus A,\]
for a finite group $A$ of $p$-power torsion, so that
\[ \Mb^n = \Hom(H_{\et}(X_T,\V/\Lb), \Q_p/\Z_p) \cong \Z_p^{\oplus r} \oplus A^{\vee},\]
where $A^{\vee} \cong A$ denotes the Pontryagin dual of $A$. Hence $\Mb^n$ is lisse. Moreover
\[ \Mb^n/\tors = \Hom(H_{v}^n(X_T,\Lb)\otimes_{\Z_p}\Q_p/\Z_p, \Q_p/\Z_p) = \Hom( H_{v}^n(X_T,\Lb),\End(\Q_p/\Z_p)) = H_{v}^n(X_T,\Lb)^{\vee}.\]
\item We consider the following long exact sequence on $\Perf_{T,v}$
% https://q.uiver.app/#q=WzAsNSxbMiwwLCJSXm5cXHBpX3t2LCp9KFxcTGIgXFxvdGltZXMgXFxHX21cXGxhbmdsZSBwXntcXGluZnR5fSBcXHJhbmdsZSkiXSxbMywwLCJSXm5cXHBpX3t2LCp9KFxcTGIgXFxvdGltZXMgXFxHX2EpIl0sWzAsMCwiXFxsZG90cyJdLFsxLDAsIlJeblxccGlfe3YsKn0oXFxMYiBcXG90aW1lcyBcXG11X3twXntcXGluZnR5fX0pIl0sWzQsMCwiXFxsZG90cyJdLFswLDFdLFszLDBdLFsyLDMsIlxcZGVsdGEiXSxbMSw0LCJcXGRlbHRhIl1d
\[\begin{tikzcd}
	\ldots & {R^n\pi_{v,*}(\Lb \otimes \mu_{p^{\infty}})} & {R^n\pi_{v,*}(\Lb \otimes \G_m\langle p^{\infty} \rangle)} & {R^n\pi_{v,*}(\Lb \otimes \G_a)} & \ldots
	\arrow["\delta", from=1-1, to=1-2]
	\arrow[from=1-2, to=1-3]
	\arrow[from=1-3, to=1-4]
	\arrow["\delta", from=1-4, to=1-5]
\end{tikzcd}\]
By Scholze's Primitive Comparison Theorem, Theorem \ref{thm: Primitive Comparison Theorem}(2), we have an isomorphism
\[ \varphi\colon R^n\pi_{v,*}(\Lb \otimes \G_a) \cong (R^n\pi_{v,*}\Lb) \otimes \G_a,\]
which shows that this sheaf is representable by a vector bundle on $T$. There are no non-zero maps from a vector bundle over $T$ to an étale $T$-group, so that $\delta =0$. From the resulting short exact sequence and \cite[Lemma 3.38]{gerth2024}, it follows that $R^n\pi_{v,*}(\Lb \otimes \G_m\langle p^{\infty} \rangle)$ is representable by a smooth $T$-group. We may assume that $\Q_p^{\cyc}\sub \Os(T)$ and we fix a system of $p$th power roots of unity to trivialize the Tate twists. As
\[ R^n\pi_{v,*}(\Lb \otimes \mu_{p^{\infty}}) = H_{\et}^n(X_T, \V/\Lb) \cong (\Q_p/\Z_p)^r\oplus A\]
and there are no non-trivial extension of $\G_a$ by the finite group $A$, it follows that
\[ R^n\pi_{v,*}(\Lb \otimes \G_m\langle p^{\infty} \rangle) = \Hs \oplus A, \]
where $\Hs$ is an analytic $p$-divisible group with
\[ \Mb^n = T_p\Hs^{\vee} \oplus A^{\vee}.\]
Moreover, the associated map $f_{\Hs}$ is the isomorphism $\varphi$ given by the Primitive Comparison Theorem. It follows that also $u_{\Hs}$ defines an isomorphism 
\[ u_{\Hs}\colon \Hs \xrightarrow{\cong} T_p\Hs \otimes \G_m\langle p^{\infty} \rangle.\]
Altogether, we obtain the isomorphism
\begin{align*} \phi\colon R^n\pi_{v,*}(\Lb \otimes \G_m\langle p^{\infty} \rangle) &= \Hs \oplus A \\
&\cong \underline{\Hom}(T_p\Hs^{\vee},\G_m\langle p^{\infty} \rangle) \oplus \underline{\Hom}(A^{\vee},\G_m\langle p^{\infty} \rangle) \\
&= \underline{\Hom}(\Mb^n,\G_m\langle p^{\infty} \rangle).\end{align*}
Since an extension of a vector bundle by an étale group has no automorphism, the constructed isomorphism $\phi$ is uniquely determined by the commutativity of the following diagram of sheaves on $\Perf_T$
% https://q.uiver.app/#q=WzAsMTAsWzIsMCwiUl5uXFxwaV97diwqfShcXExiIFxcb3RpbWVzIFxcR19tXFxsYW5nbGUgcF57XFxpbmZ0eX0gXFxyYW5nbGUpIl0sWzMsMCwiUl5uXFxwaV97diwqfShcXExiIFxcb3RpbWVzIFxcR19hKSJdLFsxLDAsIlJeblxccGlfe3YsKn0oXFxMYiBcXG90aW1lcyBcXG11X3twXntcXGluZnR5fX0pIl0sWzAsMCwiMCJdLFs0LDAsIjAiXSxbMywxLCJcXHVuZGVybGluZXtcXEhvbX0oXFxNYl5uLFxcR19hKSJdLFsyLDEsIlxcdW5kZXJsaW5le1xcSG9tfShcXE1iXm4sXFxHX21cXGxhbmdsZSBwXntcXGluZnR5fSBcXHJhbmdsZSkiXSxbMSwxLCJcXHVuZGVybGluZXtcXEhvbX0oXFxNYl5uLFxcbXVfe3Bee1xcaW5mdHl9fSkiXSxbNCwxLCIwLiJdLFswLDEsIjAiXSxbMCwxXSxbMiwwXSxbMywyXSxbMSw0XSxbMSw1LCJcXGNvbmciXSxbMCw2LCJcXGNvbmciXSxbMiw3LCJcXGNvbmciXSxbOSw3XSxbNyw2XSxbNiw1XSxbNSw4XSxbMSw1LCJcXHZhcnBoaSIsMl0sWzAsNiwiXFxwaGkiLDJdXQ==
\[\begin{tikzcd}
	0 & {R^n\pi_{v,*}(\Lb \otimes \mu_{p^{\infty}})} & {R^n\pi_{v,*}(\Lb \otimes \G_m\langle p^{\infty} \rangle)} & {R^n\pi_{v,*}(\Lb \otimes \G_a)} & 0 \\
	0 & {\underline{\Hom}(\Mb^n,\mu_{p^{\infty}})} & {\underline{\Hom}(\Mb^n,\G_m\langle p^{\infty} \rangle)} & {\underline{\Hom}(\Mb^n,\G_a)} & {0.}
	\arrow[from=1-1, to=1-2]
	\arrow[from=1-2, to=1-3]
	\arrow["\cong", from=1-2, to=2-2]
	\arrow[from=1-3, to=1-4]
	\arrow["\cong", from=1-3, to=2-3]
	\arrow["\phi"', from=1-3, to=2-3]
	\arrow[from=1-4, to=1-5]
	\arrow["\cong", from=1-4, to=2-4]
	\arrow["\varphi"', from=1-4, to=2-4]
	\arrow[from=2-1, to=2-2]
	\arrow[from=2-2, to=2-3]
	\arrow[from=2-3, to=2-4]
	\arrow[from=2-4, to=2-5]
\end{tikzcd}\]
It follows that the isomorphism $\phi$ descends to an isomorphism of sheaves on all of $\Perf_S$, as required.
\item This easily follows from the first and second points.
\end{enumerate}
\end{proof}

\begin{proof}[Proof of Theorem \ref{Thm: comparison theorem for analytic p-divisible groups}]
Let us define a map
    % https://q.uiver.app/#q=WzAsNCxbMCwwLCJ1XFxjb2xvbiBSXm5cXHBpX3tcXEV0LCp9XFxHcyJdLFsxLDAsIlJeblxccGlfe3YsKn1cXEdzIl0sWzIsMCwiUl5uXFxwaV97diwqfShUX3BcXEdzKC0xKSBcXG90aW1lcyBcXEdfbVxcbGFuZ2xlIHBee1xcaW5mdHl9IFxccmFuZ2xlKSJdLFszLDAsIlxcdW5kZXJsaW5le1xcSG9tfShcXE1iXm4oMSksXFxHX21cXGxhbmdsZSBwXntcXGluZnR5fSBcXHJhbmdsZSksIl0sWzAsMV0sWzEsMl0sWzIsMywiXFxjb25nIl1d
\[\begin{tikzcd}
	{u\colon R^n\pi_{\Et,*}\Gs} & {R^n\pi_{v,*}\Gs} & {R^n\pi_{v,*}(T_p\Gs(-1) \otimes \G_m\langle p^{\infty} \rangle)} & {\underline{\Hom}(\Mb^n(1),\G_m\langle p^{\infty} \rangle),}
	\arrow[from=1-1, to=1-2]
	\arrow[from=1-2, to=1-3]
	\arrow["\cong", from=1-3, to=1-4]
\end{tikzcd}\]
where the middle map is the map induced by $u_{\Gs}$ on cohomology and the last map is the isomorphism of Proposition \ref{prop: mult variant of prim comp}. Then it restricts to an isomorphism
% https://q.uiver.app/#q=WzAsNCxbMCwwLCJSXm5cXHBpX3tcXEV0LCp9XFxHc1twXntcXGluZnR5fV0iXSxbMSwwLCJSXm5cXHBpX3t2LCp9XFxHc1twXntcXGluZnR5fV0iXSxbMiwwLCJSXm5cXHBpX3t2LCp9KFRfcFxcR3MoLTEpIFxcb3RpbWVzIFxcbXVfe3Bee1xcaW5mdHl9fSkiXSxbMywwLCJcXHVuZGVybGluZXtcXEhvbX0oXFxNYl5uKDEpLFxcbXVfe3Bee1xcaW5mdHl9fSksIl0sWzAsMSwiXFxjb25nIl0sWzEsMiwiXFxjb25nIl0sWzIsMywiXFxjb25nIl1d
\[\begin{tikzcd}
	{R^n\pi_{\Et,*}\Gs[p^{\infty}]} & {R^n\pi_{v,*}\Gs[p^{\infty}]} & {R^n\pi_{v,*}(T_p\Gs(-1) \otimes \mu_{p^{\infty}})} & {\underline{\Hom}(\Mb^n(1),\mu_{p^{\infty}}),}
	\arrow["\cong", from=1-1, to=1-2]
	\arrow["\cong", from=1-2, to=1-3]
	\arrow["\cong", from=1-3, to=1-4]
\end{tikzcd}\]
Hence $u$ fits in the following commutative diagram of sheaves on $\Perf_{S,\et}$ with exact rows
% https://q.uiver.app/#q=WzAsMTAsWzIsMCwiUl57bn1cXHBpX3tcXEV0LCp9XFxHcyJdLFszLDAsIlJee259XFxwaV97XFxFdCwqfVxcZ2EiXSxbMywxLCJcXHVuZGVybGluZXtcXEhvbX0oXFxNYl5uKDEpLFxcR19hKSJdLFsyLDEsIlxcdW5kZXJsaW5le1xcSG9tfShcXE1iXm4oMSksXFxHX21cXGxhbmdsZSBwXntcXGluZnR5fSBcXHJhbmdsZSkiXSxbMSwwLCJSXntufVxccGlfe1xcRXQsKn1cXEdzW3Bee1xcaW5mdHl9XSJdLFsxLDEsIlxcdW5kZXJsaW5le1xcSG9tfShcXE1iXm4oMSksXFxtdV97cF57XFxpbmZ0eX19KSJdLFs0LDAsIjAiXSxbNCwxLCIwLiJdLFswLDAsIjAiXSxbMCwxLCIwIl0sWzAsMV0sWzEsMiwiZiJdLFswLDMsInUiXSxbMywyXSxbMiw3XSxbMSw2XSxbNCwwXSxbNSwzXSxbOCw0XSxbOSw1XSxbNCw1LCJcXGNvbmciLDJdXQ==
\[\begin{tikzcd}
	0 & {R^{n}\pi_{\Et,*}\Gs[p^{\infty}]} & {R^{n}\pi_{\Et,*}\Gs} & {R^{n}\pi_{\Et,*}\ga} & 0 \\
	0 & {\underline{\Hom}(\Mb^n(1),\mu_{p^{\infty}})} & {\underline{\Hom}(\Mb^n(1),\G_m\langle p^{\infty} \rangle)} & {\underline{\Hom}(\Mb^n(1),\G_a)} & {0.}
	\arrow[from=1-1, to=1-2]
	\arrow[from=1-2, to=1-3]
	\arrow["\cong"', from=1-2, to=2-2]
	\arrow[from=1-3, to=1-4]
	\arrow["u", from=1-3, to=2-3]
	\arrow[from=1-4, to=1-5]
	\arrow["f", from=1-4, to=2-4]
	\arrow[from=2-1, to=2-2]
	\arrow[from=2-2, to=2-3]
	\arrow[from=2-3, to=2-4]
	\arrow[from=2-4, to=2-5]
\end{tikzcd}\]
where $f$ is the map in the statement of the theorem. It follows that the right square is cartesian. As $R^n\pi_{\Et,*}\Gs$ is a pullback square of $v$-sheaves, it thus naturally is a $v$-sheaf as well. It then follows from \cite[Lemma 3.38]{gerth2024} that it is representable by an admissible locally $p$-divisible group. There is a maximal analytic $p$-divisible subgroup $\Hs$ corresponding to the pullback along $u$ of the open subspace
% https://q.uiver.app/#q=WzAsMyxbMiwwLCJcXHVuZGVybGluZXtcXEhvbX0oXFxNYl5uKDEpLFxcR19tXFxsYW5nbGUgcF57XFxpbmZ0eX0gXFxyYW5nbGUpLiJdLFsxLDAsIlxcdW5kZXJsaW5le1xcSG9tfShcXE1iXm4oMSkvXFx0b3IsXFxHX21cXGxhbmdsZSBwXntcXGluZnR5fSBcXHJhbmdsZSkiXSxbMCwwLCJSXm5cXHBpX3t2LCp9VF9wXFxHcy9cXHRvcigtMSkgXFxvdGltZXNcXEdfbVxcbGFuZ2xlIHBee1xcaW5mdHl9IFxccmFuZ2xlIl0sWzEsMCwiIiwyLHsic3R5bGUiOnsidGFpbCI6eyJuYW1lIjoiaG9vayIsInNpZGUiOiJ0b3AifX19XSxbMiwxLCJcXGNvbmciXV0=
\[\begin{tikzcd}
	{R^n\pi_{v,*}T_p\Gs/\tor(-1) \otimes\G_m\langle p^{\infty} \rangle} & {\underline{\Hom}(\Mb^n(1)/\tor,\G_m\langle p^{\infty} \rangle)} & {\underline{\Hom}(\Mb^n(1),\G_m\langle p^{\infty} \rangle).}
	\arrow["\cong", from=1-1, to=1-2]
	\arrow[hook, from=1-2, to=1-3]
\end{tikzcd}\]
The rest of the statement is clear.
\end{proof}

As a particular case, we may study the topologically $p$-torsion Picard variety of \cite{heuer2023diamantine} in a relative setting.

\begin{cor}\label{cor: ptop picard is dualizable}
Let $\pi \colon X \rightarrow S$ be a proper smooth map of seminormal rigid spaces over $(K,K^+)$. Consider the Picard sheaf
 \[ \PPic_{X/S,\et} = R^1\pi_{\Et,*}\G_m\colon \Perf_S \rightarrow \Ab.\]
 \begin{enumerate}
     \item The topologically $p$-torsion subsheaf $\PPic_{X/S,\et}\langle p^{\infty} \rangle$ (cf (\ref{eq: ptop torsion subsheaf})) of the Picard sheaf is representable by a smooth $S$-group and is naturally isomorphic to $R^1\pi_{\Et,*}\G_m\langle p^{\infty} \rangle$.
\item Let $\Mb = \Hom(R^1\pi_{\Et,*}\mu_{p^{\infty}},\Q_p/\Z_p)$, then there is a natural short exact sequence
% https://q.uiver.app/#q=WzAsNSxbMCwwLCIwIl0sWzEsMCwiXFxQUGljX3tYL1N9XFxsYW5nbGUgcF57XFxpbmZ0eX0gXFxyYW5nbGUiXSxbMiwwLCJcXHVuZGVybGluZXtcXEhvbX0oXFxNYigxKSxcXEdfbVxcbGFuZ2xlIHBee1xcaW5mdHl9IFxccmFuZ2xlKSJdLFszLDAsIlxccGlfeyp9XFxPbWVnYV97WC9TfV4xXFxvdGltZXNfe1xcT3Nfe1Nfe1xcZXR9fX0gXFxHX2EoLTEpIl0sWzQsMCwiMC4iXSxbMCwxXSxbMiwzXSxbMyw0XSxbMSwyXV0=
\begin{equation}\label{eq: mult HT sequence}\begin{tikzcd}
	0 & {\PPic_{X/S}\langle p^{\infty} \rangle} & {\underline{\Hom}(\Mb(1),\G_m\langle p^{\infty} \rangle)} & {\pi_{*}\Omega_{X/S}^1\otimes_{\Os_{S_{\et}}} \G_a(-1)} & {0.}
	\arrow[from=1-1, to=1-2]
	\arrow[from=1-2, to=1-3]
	\arrow[from=1-3, to=1-4]
	\arrow[from=1-4, to=1-5]
\end{tikzcd}\end{equation}
It lies over the Hodge--Tate sequence (\ref{eq: contravariant HT seq}) via the logarithm.

\item The maximal analytic $p$-divisible subgroup $\Hs \sub \PPic_{X/S,\et}\langle p^{\infty} \rangle$ of Theorem \ref{Thm: comparison theorem for analytic p-divisible groups}(2) is dualizable. 
 \end{enumerate} 
\end{cor}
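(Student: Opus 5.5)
The plan is to apply Theorem \ref{Thm: comparison theorem for analytic p-divisible groups} and Proposition \ref{prop: mult variant of prim comp} to the analytic $p$-divisible group $\Gs=\G_m\langle p^{\infty}\rangle$ on $X$ with $n=1$. Here $T_p\Gs=\Z_p(1)$, and $\ga=\Os_X$ is the pullback of $\Os_S$, so the hypotheses of the theorem hold.

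For (1), I would compare $R^1\pi_{\Et,*}\G_m\langle p^{\infty}\rangle$ with $\PPic_{X/S,\et}$ through the long exact sequence attached to $0\to\G_m\langle p^{\infty}\rangle\to\G_m\to\cj{\G}_m\to 0$. The term $\pi_{\Et,*}\cj{\G}_m$ is étale-locally $\pi_*\G_m/\pi_*\G_m\langle p^{\infty}\rangle$. This uses that $\pi_*\Os_X$ is finite étale over $\Os_S$, by properness and seminormality. The cokernel of $\pi_*\G_m\to\pi_*\cj{\G}_m$ should therefore vanish, which gives injectivity of $R^1\pi_{\Et,*}\G_m\langle p^{\infty}\rangle\to\PPic_{X/S,\et}$. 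The source is a smooth $S$-group by part (4) of the theorem quoted from \cite{gerth2024}, and it is topologically $p$-torsion by the log exact sequence (\ref{geometric cohomological log exact sequence}). Hence it lands in $\PPic\langle p^{\infty}\rangle$.

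For the converse inclusion, I would use that the image of $\PPic\langle p^{\infty}\rangle$ in $R^1\pi_{\Et,*}\cj{\G}_m$ is topologically $p$-torsion. On the other hand, $\cj{\G}_m$-cohomology is $p$-torsionfree in the appropriate sense (cf.\ \cite[Lemma 2.11]{heuer2022geometric}), so this image vanishes.

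For (2), I would take the diagram (\ref{eq: main diagram comparison theorem}) with $\Mb=\Mb^1$. The cokernel of the middle vertical map $u$ equals the cokernel of $f$, by the cartesian right square established in the proof. By Theorem \ref{Thm: comparison theorem for analytic p-divisible groups}(2), $f$ is $R^1\pi_{\et,*}\Os_X\otimes\G_a\to R^1\pi_{v,*}\Z_p\otimes\G_a$. Up to the twist cancellation coming from $T_p\Gs(-1)=\Z_p$ and the torsion-free part of $\Mb$, this is the first map of the Hodge--Tate sequence (\ref{eq: contravariant HT seq}). Its cokernel is therefore $\pi_*\Omega^1_{X/S}\otimes\G_a(-1)$, and injectivity of $u$ follows from injectivity of $f$. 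The main obstacle is the torsion of $\Mb$: $\underline{\Hom}(\Mb(1),\G_a)$ only sees $\Mb/\tors=(R^1\pi_{v,*}\Z_p(1))^{\vee}$. This is consistent, since the torsion part only contributes finite étale pieces to both $u$ and the kernel. I would check exactness of the sequence after pullback to strictly totally disconnected $T$, where everything splits as in the proof of Proposition \ref{prop: mult variant of prim comp}. Compatibility with the logarithm holds by construction.

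For (3), I would restrict to $\Hs$. By Theorem \ref{Thm: comparison theorem for analytic p-divisible groups}(2), $f_\Hs$ is the Hodge--Tate injection $R^1\pi_{\et,*}\Os_X\otimes\Os_{S_v}\hookrightarrow (R^1\pi_{v,*}\Z_p(1)/\tors)(-1)\otimes\Os_{S_v}$. Its cokernel is $\pi_*\Omega^1_{X/S}\otimes\Os_{S_v}(-1)$ by Theorem \ref{thm: relative linear HT sequence}, and $\pi_*\Omega^1_{X/S}$ is an étale vector bundle. Definition \ref{def: dualizable} is then satisfied with $\omega=\pi_*\Omega^1_{X/S}$.
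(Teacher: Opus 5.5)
Your treatment of (2) and (3) follows the paper. Part (2) comes from the cartesian right square in (\ref{eq: main diagram comparison theorem}) for $\G_m\langle p^{\infty}\rangle$ with $n=1$, once $f$ is recognized as the first map of the Hodge--Tate sequence. Part (3) is then immediate from Theorem \ref{thm: relative linear HT sequence}. Part (2) as stated, however, relies on (1), and (1) has a genuine gap at exactly the step you treat as routine.

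By the long exact sequence for $0\to\G_m\langle p^{\infty}\rangle\to\G_m\to\cj{\G}_m\to 0$, your assertion that $\pi_{\Et,*}\cj{\G}_m$ is étale-locally $\pi_*\G_m/\pi_*\G_m\langle p^{\infty}\rangle$ is \emph{equivalent} to the injectivity of $R^1\pi_{\Et,*}\G_m\langle p^{\infty}\rangle\to\PPic_{X/S,\et}$, which is what you want to deduce from it. Finite étaleness of $\pi_*\Os_X$ cannot supply it. That fact computes $\pi_*\G_m$ and $\pi_*\G_m\langle p^{\infty}\rangle$, but $\pi_*$ is only left exact, and it says nothing about sections of the quotient sheaf $\cj{\G}_m$ on $X\times_S Y$, which are only locally represented by units. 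Your Stein-factorization reduction correctly handles disconnected fibres. After it, you still need that, for geometrically connected fibres, every section of $\cj{\G}_m$ on $X\times_S Y$ comes from $Y$. This is a real statement about proper rigid spaces, and it is Lemma \ref{lemma: technical lemma on Gmbar}(1). The paper proves it over a geometric point $\Spa(C,C^+)$ using \cite[Lemma 4.11]{heuer2023diamantine}. It then spreads the result to étale neighbourhoods by an approximation argument, writing the geometric point as a cofiltered limit of étale neighbourhoods along which sections of $\cj{\G}_m$ commute.

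Your converse inclusion has a smaller gap of the same kind. Being $p$-torsionfree does not by itself force topologically $p$-torsion sections to vanish: $\G_a$ is $p$-torsionfree and equals its own topologically $p$-torsion subsheaf. Also, \cite[Lemma 2.11]{heuer2022geometric} concerns $U/U\langle p^{\infty}\rangle$ for a smooth group $U$, not the higher direct image $R^1\pi_{\Et,*}\cj{\G}_m$. What you need is a discreteness input: every morphism $\underline{\Z_p}\times S\to R^1\pi_{\Et,*}\cj{\G}_m$ factors through some $\underline{\Z/p^m\Z}\times S$ \cite[Cor. 3.21(1)]{gerth2024}. Combined with the unique $p$-divisibility of $R^1\pi_{\Et,*}\cj{\G}_m$ \cite[Lemma 2.16]{heuer2021line}, this gives $\underline{\Hom}(\Z_p,R^1\pi_{\Et,*}\cj{\G}_m)=0$, which is Lemma \ref{lemma: technical lemma on Gmbar}(2).
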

\begin{remark}\label{remark: link with pairing of DW}
There is a canonical map 
\[ (R^1\pi_{v,*}\Z_p)^{\vee} \hookrightarrow \Mb, \]
arising from the decomposition of the lisse sheaf $\Mb = \Mb_{\tors} \oplus \Mb/\tors$ and Proposition \ref{prop: mult variant of prim comp}(1). Together with the first map in the sequence (\ref{eq: mult HT sequence}), this yields a natural pairing
\[ \PPic_{X/S,\et}\langle p^{\infty} \rangle\times R^1\pi_{v,*}\Z_p(1)^{\vee}\rightarrow \G_m\langle p^{\infty} \rangle.\]
This generalizes (the topologically $p$-torsion part of) the analytic Weil pairings of Deninger--Werner \cite{DeningerWerner2005}\cite{Deninger2005} and Heuer \cite[§4.1]{heuer2022geometric}.
\end{remark}
\begin{proof}
Let us set
\[  \PPic_{X/S,\et}^{\ptt}= R^1\pi_{\Et,*}\G_m\langle p^{\infty}\rangle.\]
The representability of $\PPic_{X/S,\et}^{\ptt}$ follows from Theorem \ref{Thm: comparison theorem for analytic p-divisible groups}(4). Let us show that the natural map 
    \[ \PPic_{X/S,\et}^{\ptt} \rightarrow \PPic_{X/S,\et}\]
    identifies the former with the topologically $p$-torsion subsheaf of the latter. Define the quotient sheaf $\cj{\G}_m = \G_m/\G_m\langle p^{\infty} \rangle$, computed in the étale topology on $\Adic_X$. Then the above morphism fits in a long exact sequence of sheaves on $\Perf_S$
    % https://q.uiver.app/#q=WzAsNyxbMiwwLCJcXHBpX3sqfVxcY2p7XFxHfV9tIl0sWzMsMCwiXFxQUGljX3tYL1MsXFxldH1ee1xccHR0fSJdLFs0LDAsIlxcUFBpY197WC9TLFxcZXR9Il0sWzUsMCwiUl4xXFxwaV97XFxFdCwqfVxcY2p7XFxHfV9tIl0sWzEsMCwiXFxwaV97Kn1cXEdfbSJdLFs2LDAsIlxcbGRvdHMiXSxbMCwwLCJcXGxkb3RzIl0sWzAsMV0sWzEsMl0sWzIsM10sWzQsMF0sWzMsNV0sWzYsNF1d
\[\begin{tikzcd}
	\ldots & {\pi_{*}\G_m} & {\pi_{*}\cj{\G}_m} & {\PPic_{X/S,\et}^{\ptt}} & {\PPic_{X/S,\et}} & {R^1\pi_{\Et,*}\cj{\G}_m} & \ldots
	\arrow[from=1-1, to=1-2]
	\arrow[from=1-2, to=1-3]
	\arrow[from=1-3, to=1-4]
	\arrow[from=1-4, to=1-5]
	\arrow[from=1-5, to=1-6]
	\arrow[from=1-6, to=1-7]
\end{tikzcd}\]
By Lemma \ref{lemma: technical lemma on Gmbar}(1) below, the natural map
\[ \cj{\G}_{m} \rightarrow \pi_{*}\cj{\G}_m\]
is an isomorphism. It follows that in the following commutative diagram
% https://q.uiver.app/#q=WzAsNCxbMSwwLCJcXHBpX3sqfVxcY2p7XFxHfV9tIl0sWzAsMCwiXFxwaV97Kn1cXEdfbSJdLFswLDEsIlxcR197bX0iXSxbMSwxLCJcXGNqe1xcR31fe219Il0sWzEsMF0sWzIsMV0sWzIsMywiIiwyLHsic3R5bGUiOnsiaGVhZCI6eyJuYW1lIjoiZXBpIn19fV0sWzMsMCwiXFxjb25nIiwyXV0=
\[\begin{tikzcd}
	{\pi_{*}\G_m} & {\pi_{*}\cj{\G}_m} \\
	{\G_{m}} & {\cj{\G}_{m}}
	\arrow[from=1-1, to=1-2]
	\arrow[from=2-1, to=1-1]
	\arrow[two heads, from=2-1, to=2-2]
	\arrow["\cong"', from=2-2, to=1-2]
\end{tikzcd}\]
the diagonal is a surjection of étale sheaves. Hence also $\pi_*\G_m \twoheadrightarrow \pi_*\cj{\G}_m$ is surjective, so that $\PPic_{X/S,\et}^{\ptt} \hookrightarrow \PPic_{X/S,\et}$ is injective. We now apply the left exact functor $\underline{\Hom}(\Z_p,-)$ to the above sequence. By Lemma \ref{lemma: technical lemma on Gmbar}(2) below, 
\[ \underline{\Hom}(\Z_p,R^1\pi_{\Et,*}\cj{\G}_m)=0,\]
hence we end up with the following commutative diagram
% https://q.uiver.app/#q=WzAsMyxbMCwwLCJcXFBQaWNfe1gvUyxcXGV0fV57XFxwdHR9Il0sWzEsMCwiXFx1bmRlcmxpbmV7XFxIb219KFxcWl9wLFxcUFBpY197WC9TLFxcZXR9KSJdLFsxLDEsIlxcUFBpY197WC9TLFxcZXR9LiJdLFswLDEsIlxcY29uZyJdLFsxLDIsIlxcZXZfMSJdLFswLDIsIiIsMix7InN0eWxlIjp7InRhaWwiOnsibmFtZSI6Imhvb2siLCJzaWRlIjoidG9wIn19fV1d
\[\begin{tikzcd}
	{\PPic_{X/S,\et}^{\ptt}} & {\underline{\Hom}(\Z_p,\PPic_{X/S,\et})} \\
	& {\PPic_{X/S,\et}.}
	\arrow["\cong", from=1-1, to=1-2]
	\arrow[hook, from=1-1, to=2-2]
	\arrow["{\ev_1}", from=1-2, to=2-2]
\end{tikzcd}\]
From this, the vertical map is injective, showing that $\PPic_{X/S,\et}^{\ptt}$ identifies with the topologically $p$-torsion subsheaf of $\PPic_{X/S,\et}$, as required.

It remains to see that the maximal analytic $p$-divisible subgroup $\Hs \sub \PPic_{X/S,\et}^{\ptt}$ is dualizable. By Theorem \ref{Thm: comparison theorem for analytic p-divisible groups}, the map $f_{\Hs}$ is the following composition
\[ R^1\pi_{\Et,*}\G_a \rightarrow R^1\pi_{v,*}\G_a \cong (R^1\pi_{v,*}\Z_p) \otimes \G_a.\]
This is the inclusion in the relative Hodge--Tate sequence (\ref{eq: contravariant HT seq})
% https://q.uiver.app/#q=WzAsNSxbMCwwLCIwIl0sWzEsMCwiKFJeMVxccGlfe1xcZXQsKn1cXEdfYSlcXG90aW1lc197XFxPc197XFxldH19IFxcT3NfdiJdLFsyLDAsIihSXjFcXHBpX3t2LCp9XFxaX3ApIFxcb3RpbWVzIFxcT3NfdiJdLFszLDAsIlxccGlfKlxcT21lZ2Ffe1gvU31eMVxcb3RpbWVzX3tcXE9zX3tcXGV0fX0gXFxPc192KC0xKSJdLFs0LDAsIjAuIl0sWzAsMV0sWzEsMl0sWzIsM10sWzMsNF1d
\[\begin{tikzcd}
	0 & {(R^1\pi_{\et,*}\G_a)\otimes_{\Os_{\et}} \Os_v} & {(R^1\pi_{v,*}\Z_p) \otimes \Os_v} & {\pi_*\Omega_{X/S}^1\otimes_{\Os_{\et}} \Os_v(-1)} & {0.}
	\arrow[from=1-1, to=1-2]
	\arrow[from=1-2, to=1-3]
	\arrow[from=1-3, to=1-4]
	\arrow[from=1-4, to=1-5]
\end{tikzcd}\]
This shows that $\Hs$ is dualizable, as required. 
\end{proof}

\begin{lemma}\label{lemma: technical lemma on Gmbar}
Let $\pi\colon X \rightarrow S$ be a proper smooth map of seminormal rigid space, and consider the sheaf $\cj{\G}_m = \G_m/\G_m\langle p^{\infty} \rangle$ on $\Adic_{\Q_p,\et}$.
    \begin{enumerate}
        \item The natural map of sheaves on $\Perf_{S,\et}$
        \[ \cj{\G}_{m} \rightarrow \pi_*\cj{\G}_{m}\]
        is an isomorphism.
        \item We have
        \[ \underline{\Hom}(\Z_p, R^n\pi_{\Et,*}\cj{\G}_{m}) =0, \quad \fa n \geq 0.\]
    \end{enumerate}
\end{lemma}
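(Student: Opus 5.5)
Both statements can be checked on stalks of the relevant sheaves on $\Perf_{S,\et}$. So we fix a map $T=\Spa(C,C^+)\rightarrow S$ from a geometric point, with $C$ algebraically closed perfectoid, or more generally a strictly totally disconnected $T$, and we work with $X_T$. The key structural input is that $\cj{\G}_m=\G_m/\G_m\langle p^{\infty}\rangle$ is a ``discrete'' sheaf. We plan to use the short exact sequence
\[ 0\rightarrow \G_m\langle p^\infty\rangle \rightarrow \G_m \rightarrow \cj{\G}_m \rightarrow 0\]
together with the description of $\cj{\G}_m$ via the sequence $\cj{\Os}^{\times} := \Os^{\times}/(1+\Os^{\circ\circ})$, up to the value group. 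Concretely, $\cj{\G}_m$ sits in an extension of $|\G_m|$ (the sheaf of absolute values, valued in $\Gamma\otimes$ something) by the sheaf $\cj{\Os}^{\times,+}=\Os^{+,\times}/(1+\Os^{\circ\circ})$, which is the multiplicative group of the residue field sheaf. Both pieces are locally constant along the fibres of a proper smooth morphism.

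For (1) we would proceed as follows. A section of $\pi_*\cj{\G}_m$ over $T$ is, étale-locally on $X_T$, represented by invertible functions $f_i$ on a cover, with $f_i/f_j\in\G_m\langle p^\infty\rangle$ on overlaps. The absolute value $|f_i|$ glues to a continuous function on $|X_T|$ that is locally constant in the value group. We then show that it is constant on connected fibres. Since $X\rightarrow S$ is proper and smooth, its Stein factorisation is finite étale, so after an étale base change on $T$ we may assume geometrically connected fibres. Next, the residues $\cj{f_i}$ glue to an invertible function on the special fibre, or more precisely a unit of $\Os^+/\Os^{\circ\circ}$ on $X_T$. By properness this is a global section of $\pi_*(\Os^{+}/\Os^{\circ\circ})^\times$. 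We then need $\pi_*\Os_X^+/\Os^{\circ\circ} =\Os_T^+/\Os_T^{\circ\circ}$ almost, which follows from $\pi_*\Os_X=\Os_S$ (geometrically connected, proper, reduced fibres) combined with the primitive comparison theorem, Theorem \ref{thm: Primitive Comparison Theorem}, in degree $0$ with $\Lb=\Z_p$. This identifies the residue part, and combining the two pieces gives the isomorphism $\cj{\G}_m\cong\pi_*\cj{\G}_m$. Here we should cite existing results of Heuer on $\cj{\G}_m$ for proper smooth rigid spaces (e.g.\ from \cite{heuer2021line}, \cite{heuer2023diamantine}), which prove exactly this statement over a point. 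Our task would then be to spread that out using stalks.

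For (2) the point is that $R^n\pi_{\Et,*}\cj{\G}_m$ has no nontrivial topologically $p$-torsion sections, because $\cj{\G}_m$ is built from sheaves that are ``$p$-adically discrete''. We would use Heuer's computation that $H^n_{\et}(X_T,\cj{\G}_m)$ agrees with cohomology of a sheaf pulled back from the special fibre / a locally constant sheaf of $\Q$-vector spaces times torsion prime to $p$. In particular, $\cj{\G}_m$ is uniquely $p$-divisible up to prime-to-$p$ torsion by \cite[Lemma 2.11]{heuer2022geometric} ($p$-torsionfree and $p$-divisible), hence so are its higher direct images, modulo showing $R^n\pi_*$ commutes with $\varinjlim$/$\otimes\Z[1/p]$. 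The remaining step is that a homomorphism $\underline{\Z_p}\rightarrow\Fs$ into a sheaf $\Fs$ that is uniquely $p$-divisible and ``discrete'' (its sections over strictly totally disconnected $T$ are locally constant on $\pi_0$) must vanish. Indeed $\Hom_{\cts}(\Z_p,\Fs(T))$ factors through a discrete quotient $\Z/p^k$, which is killed by uniqueness of $p$-division.

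The main obstacle is this discreteness claim, i.e.\ that $R^n\pi_{\Et,*}\cj{\G}_m$ evaluated on $\Z_p\times T$ equals continuous maps $|\Z_p|\rightarrow H^n(X_T,\cj{\G}_m)$ with discrete target. We would first establish this via base change to profinite sets, using that $\cj{\G}_m$ on $X_{T\times\Z_p}$ is $\varinjlim$ of its values on finite levels, following Heuer's argument for $\cj{\G}_m$ being ``locally constant in families''.
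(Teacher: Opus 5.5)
Your outline matches the paper's. In (1) the case of a geometric point is quoted from Heuer \cite[Lemma 4.11]{heuer2023diamantine} and then spread out. In (2) unique $p$-divisibility of $R^n\pi_{\Et,*}\cj{\G}_m$ is combined with the fact that maps out of $\underline{\Z_p}$ factor through some $\underline{\Z/p^m\Z}$.

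The gap is in (1). The spreading-out step, which is the only thing the paper actually has to argue, is replaced by ``check on stalks, so fix $T=\Spa(C,C^+)\to S$ and work with $X_T$''. On $\Perf_{S,\et}$ the stalk of $\pi_*\cj{\G}_m$ at a geometric point is $\varinjlim_U \cj{\G}_m(X\times_S U)$ over étale neighbourhoods $U$, not $\cj{\G}_m(X_T)$. Evaluation at geometric points is not conservative in general: the sheaf of locally constant $\Z_p$-valued functions and $\underline{\Z_p}$ agree on every $\Spa(C,C^+)$ but differ on $\underline{\Z_p}\times\Spa(C,C^+)$. The missing input is that sections of $\cj{\G}_m$ commute with the relevant cofiltered limits, namely $\cj{\G}_m(Y_{\cj{s}})=\varinjlim_i\cj{\G}_m(Y_{U_i})$ and likewise for $Y_{\cj{s}}\times_C X_{\cj{s}}=\varprojlim_i Y_{U_i}\times_{U_i}X_{U_i}$. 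The paper takes this from \cite[Lemma 2.20, Prop. 4.1]{heuer2022gtorsors}. Since the paper fixes $Y$ and approximates the point of $S$, it also needs the point case for $Y_{\cj{s}}$, which is only a locally spatial diamond over $C$; it gets this from the $v$-sheaf property of $\cj{\G}_m$ \cite[Lemma 2.17]{heuer2021line}. You invoke exactly this kind of limit argument in (2), but (1) needs it too.

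Several other steps should be corrected or dropped.
\begin{itemize}
\item Your self-contained sketch of the point case does not work. Local sections of $\cj{\G}_m$ do not have locally constant absolute value: take the coordinate on an annulus. The primitive comparison theorem controls $H^0$ only rationally, or integrally up to almost zero modules, so it gives no exact statement about units of $\Os^+/\Os^{\circ\circ}$. Just use the citation.
\item The Stein factorisation is not a reduction. For $X=S\sqcup S$ the map in (1) is the diagonal $\cj{\G}_m\to\cj{\G}_m\times\cj{\G}_m$. So geometrically connected fibres are a tacit hypothesis of the lemma, not something you can arrange.
\item In (2), unique $p$-divisibility of $R^n\pi_{\Et,*}\cj{\G}_m$ is formal. $[p]$ is an automorphism of the sheaf $\cj{\G}_m$: it is surjective since it is so on $\G_m$, and injective by \cite[Lemma 2.11]{heuer2022geometric}. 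Hence it is an automorphism of every $R^n\pi_{\Et,*}\cj{\G}_m$, with no colimit argument or computation of $H^n(X_T,\cj{\G}_m)$ needed.
\item The discreteness you single out as the main obstacle is what the paper imports from \cite[Cor. 3.21(1)]{gerth2024}. It is needed for maps $\underline{\Z_p}\times Y\to R^n\pi_{\Et,*}\cj{\G}_m$ with arbitrary $Y$. This is a priori only an étale sheaf, so testing on strictly totally disconnected $T$ requires further justification.
\end{itemize}
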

\begin{proof}
    \begin{enumerate}
        \item Let $Y$ be an affinoid perfectoid space over $S$. We need to show that the natural map
        \[ \phi\colon \cj{\G}_m(Y) \rightarrow \cj{\G}_m(Y \times_S X)\]
        is an isomorphism. If $S=\Spa(C,C^+)$ is a geometric point, this is \cite[Lemma 4.11]{heuer2023diamantine}. Moreover, by \cite[Lemma 2.17]{heuer2021line}, $\cj{\G}_m$ is a $v$-sheaf. It thus admits a unique extension to $\LSD_{C,v}$ and the map $\phi$ is an isomorphism, for any locally spatial diamond $Y$ over $\Spa(C,C^+)$. We now reduce to the case $S=\Spa(C,C^+)$ using the approximation property, cf \cite[§2.2]{heuer2022gtorsors}\cite[§3.2]{gerth2024}. Let $s\in S$ with corresponding geometric point $\cj{s}\colon \Spa(C,C^+) \rightarrow S$. We have an inverse limit of diamonds
        \[ \Spa(C,C^+) = \varprojlim_i U_i,\]
        where the inverse system ranges over all affinoid adic spaces $U_i$ étale over $S$ together with a lift $u_i$ of $\cj{s}$. The diamond $Y_{\cj{s}}$ 
        can thus be written as the inverse limit
        \[ Y_{\cj{s}} = \varprojlim_i Y_{U_i},\]
        and similarly, we have
        \[ Y_{\cj{s}}\times_C X_{\cj{s}} = \varprojlim_i Y_{U_i}\times_{U_i} X_{U_i}.\]
        By \cite[Lemma 2.20, Prop. 4.1]{heuer2022gtorsors}, we therefore have
        \[ \cj{\G}_m(Y_{\cj{s}}) = \varinjlim_i \cj{\G}_m(Y_{U_i}), \quad \cj{\G}_m(Y_{\cj{s}}\times_C X_{\cj{s}}) = \varinjlim_i \cj{\G}_m(Y_{U_i}\times_{U_i} X_{U_i}).\]
        From this and the case of a geometric point, the map $\phi$ can easily be seen to be injective and surjective étale locally on $S$, and thus an isomorphism.
        
        \item We use that, by \cite[Cor. 3.21(1)]{gerth2024}, any morphism $\varphi \colon \underline{\Z_p} \times S \rightarrow R^n\pi_{\Et,*}\cj{\G}_m$ factors through $\underline{\Z/p^m\Z} \times S \rightarrow R^n\pi_{\Et,*}\cj{\G}_m$ for some $m$. But any such morphism is zero, since $R^n\pi_{\Et,*}\cj{\G}_m$ is uniquely $p$-divisible, by \cite[Lemma 2.16]{heuer2021line}. This concludes the proof.
    \end{enumerate}
\end{proof}

\subsection{Topologically $p$-torsion subgroups of abeloid varieties}\label{Subsect: Example: abeloid varieties}

We now discuss relative abeloid varieties $\As \rightarrow S$ for a seminormal rigid space $S$. We study the covariant Hodge--Tate sequence and prove that the topologically $p$-torsion subgroup $\As\langle p^{\infty} \rangle$ of a relative abeloid variety $\As$ is a dualizable analytic $p$-divisible group, with Cartier dual $\PPic_{\As/S,\et}\langle p^{\infty} \rangle$. 

We fix a non-archimedean field $(K,K^+)$ over $\Q_p$. Let $S$ be a seminormal rigid space over $K$ and let $\As \rightarrow S$ be a relative abeloid variety. We recall that this consists in a proper smooth commutative adic $S$-group with geometrically connected fibers. Consider the Picard sheaves
\begin{align}
    &\PPic_{\As/S,\et}\coloneqq R^1\pi_{\Et,*}\G_m\colon \Perf_{S,\et} \rightarrow \Ab,\\
    &\PPic_{\As/S,\et}^{\rig}\coloneqq R^1\pi_{\Et,*}^{\rig}\G_m\colon \Sm_{/S,\et} \rightarrow \Ab,
\end{align}
where $\pi_{\Et}$, $\pi_{\Et}^{\rig}$ are defined at (\ref{eq: piEt}) and (\ref{eq: piEtrig}) respectively. The translation-invariant Picard sheaves are defined as
\begin{align} \PPic_{\As/S,\et}^{\tau} = \Ker(\PPic_{\As/S,\et} \xrightarrow{m^*-p_1^*-p_2^*} \PPic_{\As\times_S \As/S,\et})\end{align}
and similarly for $\PPic_{\As/S,\et}^{\tau,\rig}$. 
\begin{definition}
    If there exists a relative abeloid variety  representing the functor $\PPic_{\As/S,\et}^{\tau,\rig}$, it is called the \emph{dual abeloid variety} and denoted $\breve{A}$. 
\end{definition}
By \cite[Prop. 3.16(4)]{gerth2024}, if the dual abeloid variety exists, there is a an isomorphism
\[ (\PPic_{\As/S,\et}^{\tau,\rig})^{\diamondsuit} =\PPic_{\As/S,\et}^{\tau}.\]
In fact, \emph{loc. cit.} gives a meaning to the left term even if $\PPic_{\As/S,\et}^{\tau,\rig}$ is not representable, such that the above equality holds true in general.

The dual abeloid variety was constructed in many cases by Lütkebohmert \cite[§6]{Bosch1991DegeneratingAV}. This includes:
\begin{itemize}
    \item Abeloid varieties $A$ over a non-archimedean field $(K,\Os_K)$.
    \item Analytification of abelian varieties, i.e. $(\As\rightarrow S) = (A^{\alg} \rightarrow S^{\alg})^{\an}$ for an abelian scheme $A^{\alg}$ over a $K$-variety $S^{\alg}$. 
    \item Relative abeloid varieties of good reduction, i.e. $(\As\rightarrow S) = (\mathfrak{A} \rightarrow \Ss)_{\eta}$, for a formal abelian scheme $\mathfrak{A}$ over a formal model $\Ss$ of $S$ of topologically finite type over $\Spf(\Os_K)$.
    \item Relative abeloid varieties $\As \rightarrow S$ over a rigid space over $(K,\Os_K)$ that admit a Raynaud uniformization.
\end{itemize}
Here, a relative abeloid variety $\As \rightarrow S$ is said to admit a Raynaud uniformization if we can realize it as a quotient $\As = E/M$, where $E$ is a locally split extension of a relative abeloid $B \rightarrow S$ of good reduction by a torus $T$, and $M \sub E$ is a $\Z$-lattice. This is summarized in a diagram as follow
% https://q.uiver.app/#q=WzAsNSxbMCwxLCJUIl0sWzEsMSwiRSJdLFsyLDEsIkIuIl0sWzEsMiwiQSJdLFsxLDAsIk0iXSxbMCwxXSxbMSwyXSxbMSwzXSxbNCwxXV0=
\[\begin{tikzcd}
	& M \\
	T & E & {B.} \\
	& \As
	\arrow[from=1-2, to=2-2]
	\arrow[from=2-1, to=2-2]
	\arrow[from=2-2, to=2-3]
	\arrow[from=2-2, to=3-2]
\end{tikzcd}\]

\begin{definition}
    Let $n\geq 1$, we define a Weil pairing
    \[ e_n\colon \As[p^n] \times \PPic_{\As/S,\et}[p^n] \rightarrow \mu_{p^n,S}\]
    via the usual procedure: Given $Y \in \Perf_S$ and a pair 
    \[ a\in \As[p^n](Y), \quad \Ls \in \PPic_{\As/S, \et}[p^n](Y),\] 
    we have non-canonical isomorphisms
    \begin{align*}
        \Os_Y &\cong t_a^*\Ls^{\otimes p^n}\otimes \Ls^{\otimes -p^n}\\
        &= (t_a^*\Ls\otimes \Ls^{-1})^{\otimes p^n}\\
        &= t_{[p^n]a}^*\Ls\otimes \Ls^{-1} \cong \Os_Y,
    \end{align*}
    where $t_a$ denotes translation by $a$. Then the composition defines an element $\zeta \in \G_m(Y)$ which can be shown to be independant of the choices above and to satisfy $\zeta^{p^n}=1$. We then let
    \[ e_n(x,\Ls) = \zeta \in \mu_{p^n}(Y).\]
\end{definition}

The Weil pairing satisfies the following usual properties.
\begin{lemma}\label{Lemma: finite Weil pairings}
    Let $\pi\colon \As \rightarrow S$ be a relative abeloid variety over a seminormal rigid space $S$.
    \begin{enumerate}
        \item For any $n\geq 1$ the Weil pairing
    \[ e_n\colon \As[p^n]\times \PPic_{\As/S,\et}[p^n] \rightarrow \mu_{p^n}\]
    is a perfect pairing, inducing an isomorphism of sheaves on $\Perf_S$
    \[ \PPic_{\As/S,\et}[p^n] \cong \underline{\Hom}(\As[p^n],\mu_{p^n}).\]
    \item After taking the limit, the Weil pairing induces an isomorphism
    \[ R^1\pi_{v,*}\Z_p(1) = T_p\PPic_{\As/S,\et} \cong T_p\As^{\vee}(1).\]
    More generally, for any $m\geq 0$,
    \[ R^m\pi_{v,*}\Z_p(1) = \bigwedge^m T_p\As^{\vee}(1).\]
    \end{enumerate}
\end{lemma}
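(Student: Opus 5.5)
The plan is to reduce everything to geometric points, where classical statements about abeloid varieties are available. All assertions are statements about sheaves on $\Perf_S$, or about maps between them. The sheaves $\As[p^n]$ and $\PPic_{\As/S,\et}[p^n]$ are étale-local objects: the first is finite étale over $S$, and the second is identified with $R^1\pi_{\Et,*}\mu_{p^n}$ via the Kummer sequence. Hence it suffices to check that $e_n$ is perfect on stalks at geometric points $\Spa(C,C^+)\to S$. For this reduction I would use that the formation of $R^1\pi_{\Et,*}\mu_{p^n}$ commutes with base change, by the representability statement of Theorem \ref{Thm: comparison theorem for analytic p-divisible groups}(2) preceding it, equivalently by \cite[Lemma 4.10]{heuer2024primitive}. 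I would also use that \cite[Prop. 8.2.3(ii)]{huber2013étale} lets us replace $C^+$ by $\Os_C$.

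First I identify $\PPic_{\As/S,\et}[p^n]$ with $R^1\pi_{\Et,*}\mu_{p^n}$. Kummer theory gives a surjection $R^1\pi_{\Et,*}\mu_{p^n}\to\PPic[p^n]$ whose kernel is a quotient of $\pi_*\G_m/p^n$. That kernel vanishes because $\pi_*\G_m=\G_m$, since $\As\to S$ is proper with geometrically connected fibers, and $\G_m$ is $p$-divisible étale locally.

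Over a point $S=\Spa(C)$, the group $H^1_{\et}(A,\mu_{p^n})$ classifies $\mu_{p^n}$-torsors, that is, isogenies with kernel dual to a finite étale group. I would identify it with $\Hom(\pi_1(A)^{ab}/p^n,\mu_{p^n})=\Hom(A[p^n],\mu_{p^n})$, using that every finite étale cover of an abeloid is an abeloid isogeny (Lütkebohmert), together with the standard verification that this identification agrees with the Weil pairing construction. This step is the main obstacle. The cleanest route is to show that the map $\PPic[p^n]\to\underline{\Hom}(\As[p^n],\mu_{p^n})$ is injective, using the classical argument: if $e_n(-,\Ls)=1$, then $\Ls$ descends along $[p^n]$ to a trivial bundle. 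Surjectivity then follows by comparing orders, since $\#H^1_{\et}(A,\Z/p^n)=p^{2gn}=\#A[p^n]$ by \cite{scholze2013padicHodge}-style finiteness plus the known cohomology of abeloids (Künneth/$\bigwedge$ structure from the Raynaud uniformization or Lütkebohmert).

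Part (2) follows by passing to $\varprojlim_n$. Proposition \ref{prop: cohomology of local system}(2) gives $R^1\pi_{v,*}\Z_p(1)=\varprojlim\nu^*R^1\pi_{\et,*}\mu_{p^n}=T_p\PPic_{\As/S,\et}$, and the compatibility of $e_n$ with the transition maps yields $T_p\As^\vee(1)$.

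For higher $m$, cup product gives a map $\bigwedge^m R^1\pi_{v,*}\Z_p\to R^m\pi_{v,*}\Z_p$. I would check that it is an isomorphism on geometric fibers, where $H^*(A,\Z_p)$ is the exterior algebra on $H^1$. This holds by comparison with the complex/Raynaud-uniformized case, or by using that $[N]^*$ acts by $N^m$ on $H^m$ together with a rank count via the Hodge–Tate decomposition of Theorem \ref{thm: relative linear HT sequence}. Torsion-freeness follows inductively.
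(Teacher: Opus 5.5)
Your part (1) and the case $m=1$ of part (2) follow the paper's route. Kummer identifies $\PPic_{\As/S,\et}[p^n]$ with $R^1\pi_{\Et,*}\mu_{p^n}$, both sides are finite étale, so one checks at geometric points $\Spa(C,\Os_C)$, and the limit comes from Proposition \ref{prop: cohomology of local system}(2). The paper simply cites \cite[\S4.2]{heuer2022geometric} for the point case. You sketch it instead via Lütkebohmert's description of finite étale covers of abeloids, and that is fine. Your injectivity step should read: if $e_n(-,\Ls)$ is trivial, then the descent datum on $[p^n]^*\Ls\cong\Os_A$ is the trivial one, so $\Ls\cong\Os_A$.

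The genuine gap is the statement for $m\geq 2$. It amounts to cup product inducing $\bigwedge^\bullet H^1_{\et}(A,\Z_p)\cong H^\bullet_{\et}(A,\Z_p)$ for an abeloid $A$ over $C$, and none of your justifications proves this.
\begin{itemize}
\item ``Comparison with the complex/Raynaud-uniformized case'' is not an argument for $p$-adic étale cohomology of a possibly non-algebraic abeloid.
\item That $[N]^*$ acts by $N^m$ on $H^m$ is a consequence of the exterior-algebra structure, not an independent input.
\item A Hodge--Tate rank count only gives $\dim_{\Q_p}H^m(A,\Q_p)$, and it already presupposes $\dim H^i(A,\Os_A)=\binom{g}{i}$, itself a Künneth-type fact. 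It does not show that $\bigwedge^m H^1\to H^m$ is injective. Being rational, it cannot see torsion, so ``torsion-freeness follows inductively'' has nothing behind it.
\end{itemize}

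The missing ingredient is the Künneth formula for $\As\times_S\As\to S$ with torsion coefficients, which is what the paper invokes (\cite[Thm.~5.6]{heuer2024primitive}). Via the group law, it makes $H^\bullet_{\et}(A,\F_p)$ a finite-dimensional graded-commutative Hopf algebra. Since $\dim H^1=2g$ by part (1) and $H^r=0$ for $r>2g$, the classical Hopf-algebra argument for abelian varieties gives $H^\bullet_{\et}(A,\F_p)=\bigwedge^\bullet H^1_{\et}(A,\F_p)$.

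To pass to $\Z_p$-coefficients, use that $H^1(A,\Z_p)$ is free of rank $2g$. The sequences $0\to H^m(A,\Z_p)/p\to H^m(A,\F_p)\to H^{m+1}(A,\Z_p)[p]\to 0$ then force every $H^m(A,\Z_p)$ to be torsion-free. By Nakayama, $\bigwedge^m H^1(A,\Z_p)\to H^m(A,\Z_p)$ is then an isomorphism. The relative statement follows from Proposition \ref{prop: cohomology of local system}(2) exactly as for $m=1$.
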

\begin{proof}
We need to show that the map
\[ \PPic_{\As/S,\et}[p^n] \rightarrow \underline{\Hom}(\As[p^n],\mu_{p^n,S})\]
is an isomorphism. Note that by the Kummer sequence,
\[ \PPic_{\As/S,\et}[n] \cong R^1\pi_{\Et,*}\mu_n,\]
which is representable by a finite étale $S$-group. Since also the Hom sheaf is representable by a finite étale group, we can check pointwise on $S$ whether this map is an isomorphism. This reduces us to the case $S=\Spa(C,C^+)$, for a complete algebraically closed field. There it is well known, see for example \cite[§4.2]{heuer2022geometric}. Hence, we obtained an isomorphism
\[ R^1\pi_{\Et,*}\mu_{p^n} \cong \underline{\Hom}(\As[p^n],\mu_{p^n,S}).\]
More generally, by the Künneth formula, see e.g \cite[Thm. 5.6]{heuer2024primitive}
\[ R^m\pi_{\Et,*}\mu_{p^n} \cong \bigwedge^m\underline{\Hom}(\As[p^n],\mu_{p^n,S}).\]
It now follows from Proposition \ref{prop: cohomology of local system}(2)  that
\[ R^m\pi_{v,*}\Z_p(1) = \varprojlim_n R^m\pi_{\Et,*}\mu_p^n =  \varprojlim_n \bigwedge^m\underline{\Hom}(\As[p^n],\mu_{p^n,S}) = \bigwedge T_p\As^{\vee}(1).\]
Finally, we know from Theorem \ref{Thm: comparison theorem for analytic p-divisible groups}(2) that
\[ T_p\PPic_{\As,\et} = R^1\pi_{v,*}\Z_p(1).\]
\end{proof}

We can now state our result of the topologically $p$-torsion subgroup of abeloid varieties.
\begin{thm}\label{thm: two Hodge-Tate maps agree}
    Let $\pi\colon\As \rightarrow S$ be a relative abeloid variety over a seminormal rigid space $S$ over $(K,K^+)$. Consider the topologically $p$-torsion subsheaf $\As\langle p^{\infty} \rangle$, an analytic $p$-divisible group by Example \ref{ex: examples of analytic p-divisible groups}(3). Let 
    \[ f_{\As\langle p^{\infty} \rangle}\colon \Lie(\As)\otimes \Os_v \rightarrow T_p\As(-1)\otimes \Os_v\]
    be the map of Definition \ref{def: u and f}. Let
    \[ \HT_{\As/S}\colon R^1\pi_{v,*}\Z_p \otimes \Os_v \rightarrow \pi_*\Omega_{\As/S}^1\]
    be the Hodge--Tate map of $\As$ defined at (\ref{eq: contravariant HT seq}). Then
    \[ f_{\As\langle p^{\infty} \rangle} = (\HT_{\As/S})^{\vee}(-1) \]
    through the identification
    $T_p\As(-1) \cong (R^1\pi_{v,*}\Z_p)^{\vee}(-1)$ of Lemma \ref{Lemma: finite Weil pairings}(2).
\end{thm}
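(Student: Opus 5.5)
We compare both maps through the analytic Weil pairing, using the topologically $p$-torsion Picard group as an intermediary. Write $\Hs \sub \PPic_{\As/S,\et}\langle p^{\infty}\rangle$ for the maximal analytic $p$-divisible subgroup of Corollary \ref{cor: ptop picard is dualizable}. By that corollary and Theorem \ref{Thm: comparison theorem for analytic p-divisible groups}(2), $f_{\Hs}$ is the first map of the Hodge--Tate sequence (\ref{eq: contravariant HT seq}) for $\As/S$. Consequently $\HT_{\As/S}$ is, up to the twist, the cokernel map $g$ of $f_{\Hs}$, i.e.\ the map $f_{\Hs^D}^{\vee}(-1)$ in the dualizability sequence. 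The statement therefore reduces to producing a natural isomorphism $\Hs^D \cong \As\langle p^{\infty}\rangle$ whose induced map on Tate modules is the Weil pairing identification of Lemma \ref{Lemma: finite Weil pairings}(2).

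\textbf{Step 1: a homomorphism $\As\langle p^{\infty}\rangle \to \Hs^D$.} We first construct a pairing $\As\langle p^{\infty}\rangle \times \Hs \to \G_m\langle p^{\infty}\rangle$ extending the finite Weil pairings $e_n$. By Corollary \ref{Cor: extension of Weil pairing}, this is the same as a map $\Hs \to \underline{\Hom}(\As\langle p^{\infty}\rangle,\G_m\langle p^{\infty}\rangle) = T_p\As^{\vee}(1)\otimes \G_m\langle p^{\infty}\rangle$. There are two candidate constructions. Geometrically, a line bundle $\Ls\in\PPic^{\tau}$ gives the $\G_m$-extension of $\As$ by translation-invariance, hence a character of $T_p\As$. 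Formally, one can use the multiplicative Hodge--Tate sequence (\ref{eq: mult HT sequence}), whose middle term is $\underline{\Hom}(\Mb(1),\G_m\langle p^{\infty}\rangle)$, together with $\Mb/\tors = (R^1\pi_{v,*}\Z_p)^{\vee} = T_p\As$ from Lemma \ref{Lemma: finite Weil pairings}(2) and Proposition \ref{prop: mult variant of prim comp}(1). For abelian varieties $R^2\pi_{v,*}\Z_p$ is torsion-free, so $\Mb$ is torsion-free and $\Hs$ is the whole of $\PPic\langle p^{\infty}\rangle$. The formal route therefore yields an injection $u\colon \Hs \hookrightarrow T_p\As(-1)^{\vee}(1)\otimes\G_m\langle p^{\infty}\rangle$, and this is exactly the map $u_{\Hs}$ of Definition \ref{def: u and f}.

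\textbf{Step 2: identify the Tate module map with $e_n$.} We check that on $p^n$-torsion the resulting pairing $\As[p^n]\times\Hs[p^n]\to\mu_{p^n}$ is $e_n$. Both pairings arise from the Kummer description $\PPic[p^n]=R^1\pi_{\Et,*}\mu_{p^n}$ and $R^1\pi_{\Et,*}\mu_{p^n}=\underline{\Hom}(\As[p^n],\mu_{p^n})$. This comparison is finite étale, so it can be checked on geometric points, where it is the classical compatibility between the Weil pairing and the Kummer/torsor description. Granting it, the uniqueness in Theorem \ref{thm: existence of analytic Weil pairings} shows that the analytic Weil pairing $e_{\As\langle p^{\infty}\rangle}$ and the pairing of Step 1 agree, since both extend $e_n$.

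\textbf{Step 3: conclude.} By uniqueness of the extension, the two Hodge--Tate triples are now linked by the transpose. Differentiating $e_{\As\langle p^\infty\rangle}$ in the $\As$-variable gives $f_{\As\langle p^{\infty}\rangle}$. Differentiating the same pairing in the $\Hs$-variable gives $f_{\Hs}$, which is the inclusion $R^1\pi_{\et,*}\Os\otimes\Os_v\hookrightarrow R^1\pi_{v,*}\Z_p\otimes\Os_v$. From the pairing one obtains the identity $g\circ f_{\As\langle p^\infty\rangle}$-orthogonality: the composite of $f_{\As\langle p^{\infty}\rangle}$ with $f_{\Hs}^{\vee}$ vanishes. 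Hence $f_{\As\langle p^\infty\rangle}$ factors through $(\HT_{\As/S})^{\vee}(-1)$, and a rank count together with injectivity turns this factorization into an equality. Alternatively, having $\Hs^D\cong\As\langle p^{\infty}\rangle$ compatibly with Tate modules, the equality follows directly from $f_{\Hs^D}=g(1)^{\vee}$ in Definition \ref{def: dualizable}.

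The main obstacle is the vanishing in Step 3. The key point is that the derivative of the pairing in both variables is the trivial bilinear form $\Lie(\As)\times\Lie(\Hs)\to \Os$, which should follow from $\Hom(\G_a,\G_m\langle p^\infty\rangle)=0$ applied to the restriction of $e$ to a formal neighbourhood. The injectivity needed for the rank count is also non-trivial. It should follow from dualizability of $\As\langle p^{\infty}\rangle$, checked on geometric points via Raynaud uniformization, or else from Theorem \ref{thm: Scholze Weinstein's result} after a $v$-local reduction.
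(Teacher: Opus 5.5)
There is a genuine gap, and it sits exactly where you locate the ``main obstacle''. The bi-additive pairing $\As\langle p^{\infty}\rangle\times\Hs\to\G_m\langle p^{\infty}\rangle$ extending the $e_n$, on which Steps 1--3 rest, does not exist.

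By Corollary \ref{Cor: extension of Weil pairing}, $\underline{\Hom}(\As\langle p^{\infty}\rangle,\G_m\langle p^{\infty}\rangle)=T_p\As^{\vee}(1)$ is a $\Z_p$-local system. It is not $T_p\As^{\vee}(1)\otimes\G_m\langle p^{\infty}\rangle$ as you write. Any homomorphism from $\Hs$, on which $[p]$ is surjective, to a $\Z_p$-local system lands in $\bigcap_n p^n T_p\As^{\vee}(1)=0$. So such a pairing is identically zero.

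What you actually have are two a priori unrelated analytic Weil pairings: $e_{\As\langle p^{\infty}\rangle}$ on $\As\langle p^{\infty}\rangle\times T_p\Hs$, and $e_{\Hs}$ on $\Hs\times T_p\As$. The latter is your $u_{\Hs}$ from the multiplicative Hodge--Tate sequence. Differentiating the first gives $f_{\As\langle p^{\infty}\rangle}$ and the second gives $f_{\Hs}$, but nothing links the two derivatives. Step 2's ``both extend $e_n$, hence agree'' compares pairings with different domains.

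The orthogonality $f_{\Hs}^{\vee}\circ f_{\As\langle p^{\infty}\rangle}=0$ (suitably twisted) in Step 3 is therefore unproven. It is not a formal consequence of $\Hom(\G_a,\G_m\langle p^{\infty}\rangle)=0$. It asserts that the geometrically defined map $\Lie(\As)\to T_p\As(-1)\otimes\Os_v$ lands in the annihilator of the cohomologically defined $R^1\pi_{\et,*}\Os_{\As}\otimes\Os_v$. That is a genuine Hodge--Tate comparison, essentially equivalent to the duality $\As\langle p^{\infty}\rangle^D\cong\PPic_{\As/S,\et}\langle p^{\infty}\rangle$. The paper derives that duality \emph{from} this theorem in Corollary \ref{cor: HT sequences for abeloids}, so your alternative route through $\Hs^D\cong\As\langle p^{\infty}\rangle$ presupposes what is to be proved.

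Even granting orthogonality and injectivity, your rank count only shows that the images coincide. That is, $f_{\As\langle p^{\infty}\rangle}=(\HT_{\As/S})^{\vee}(-1)\circ\psi$ for some isomorphism $\psi\colon\Lie(\As)\to(\pi_*\Omega^1_{\As/S})^{\vee}$. The statement claims $\psi$ is the canonical identification, and your argument is blind to this; for instance, it cannot rule out rescaling by a constant in $\Q_p^{\times}$. The same applies to the Lie map of an isomorphism $\As\langle p^{\infty}\rangle\cong\Hs^D$: it is determined by the Tate-module map, but still has to be compared with the canonical identification.

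The paper supplies this comparison from outside, as follows.
\begin{enumerate}
\item It reduces to $S=\Spa(C,\Os_C)$, via $v$-descent, products of points (Corollary \ref{cor: pdiv groups on product of points}) and overconvergence.
\item It proves dualizability of $\As\langle p^{\infty}\rangle$ through Raynaud uniformization. Theorem \ref{thm: Scholze Weinstein's result} presupposes dualizability, so it cannot supply the injectivity you need.
\item It uses Theorem \ref{thm: Scholze Weinstein's result} to get good reduction $G$, and Proposition \ref{prop: generic fibers of log p-divisible groups} to write $f_{\As\langle p^{\infty}\rangle}=\alpha_{G^D}[\tfrac{1}{p}]^{\vee}$.
\item It invokes Scholze's identification of Fargues' map $\alpha_{G^D}[\tfrac{1}{p}]$ with $\HT_{\As/S}(-1)$, which pins down both the images and the maps.
\end{enumerate}
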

This can be seen as a generalization of \cite[Prop. 4.15]{Scholze2013} and \cite[Prop. III.3.1]{Scholze2015torsion}, which is the case of abelian varieties over $S=\Spa(C)$.
\begin{proof}
    The equality in the statement can be checked after base-change along a $v$-cover $\widetilde{S} \rightarrow S$, where $\widetilde{S}$ is a disjoint union of strictly totally disconnected perfectoid spaces, that we may take to be products of points. On $\widetilde{S}$, the source and target of $f_{\As\langle p^{\infty} \rangle}$ are étale vector bundles. By Corollary \ref{cor: pdiv groups on product of points}, the claimed equality may be checked on geometric points $\Spa(C,C^+)$. As vector bundles are overconvergent, we may thus assume that $S=\Spa(C,\Os_C)$, for a complete algebraically closed field $C/\Q_p$.

    First, we show that $\As\langle p^{\infty} \rangle$ is dualizable. If $\As$ has good reduction, i.e. $\As = \mathfrak{A}_{\eta}$ for a formal abelian variety $\mathfrak{A}$, then by \cite[Prop. 2.14(iv)]{heuer2022geometric}, $\As\langle p^{\infty} \rangle = \mathfrak{A}[p^{\infty}]_{\eta}$, so that $\As\langle p^{\infty} \rangle$ is dualizable, by Proposition \ref{prop: groups of good reduction are dualizable}. In general, consider the Raynaud uniformization of $\As$ \cite[Thm. 1]{Lutkebohmert1995}
    % https://q.uiver.app/#q=WzAsNSxbMSwwLCJNIl0sWzEsMSwiRSJdLFsxLDIsIlxcQXMiXSxbMCwxLCJUIl0sWzIsMSwiQi4iXSxbMCwxXSxbMSwyXSxbMywxXSxbMSw0XV0=
\[\begin{tikzcd}
	& M \\
	T & E & {B.} \\
	& \As
	\arrow[from=1-2, to=2-2]
	\arrow[from=2-1, to=2-2]
	\arrow[from=2-2, to=2-3]
	\arrow[from=2-2, to=3-2]
\end{tikzcd}\]
Then one easily checks that 
\[ \As\langle p^{\infty} \rangle = E\langle p^{\infty} \rangle \oplus M[\tfrac{1}{p}]/M\]
and that we have an exact sequence of analytic $p$-divisible groups
% https://q.uiver.app/#q=WzAsNSxbMiwwLCJFXFxsYW5nbGUgcF57XFxpbmZ0eX0gXFxyYW5nbGUiXSxbMSwwLCJUXFxsYW5nbGUgcF57XFxpbmZ0eX0gXFxyYW5nbGUiXSxbMywwLCJCXFxsYW5nbGUgcF57XFxpbmZ0eX0gXFxyYW5nbGUiXSxbMCwwLCIwIl0sWzQsMCwiMC4iXSxbMSwwXSxbMCwyXSxbMywxXSxbMiw0XV0=
\[\begin{tikzcd}
	0 & {T\langle p^{\infty} \rangle} & {E\langle p^{\infty} \rangle} & {B\langle p^{\infty} \rangle} & {0.}
	\arrow[from=1-1, to=1-2]
	\arrow[from=1-2, to=1-3]
	\arrow[from=1-3, to=1-4]
	\arrow[from=1-4, to=1-5]
\end{tikzcd}\]
The groups $T\langle p^{\infty} \rangle \cong \G_m\langle p^{\infty} \rangle^{\oplus n}$ and $B\langle p^{\infty} \rangle$ are dualizable. By considering the following commutative diagram
% https://q.uiver.app/#q=WzAsMTAsWzIsMCwiXFxMaWUoRSkiXSxbMSwwLCJcXExpZShUKSJdLFszLDAsIlxcTGllKEIpIl0sWzAsMCwiMCJdLFs0LDAsIjAiXSxbMSwxLCJUX3BUKC0xKVxcb3RpbWVzXFxHX2EiXSxbMCwxLCIwIl0sWzIsMSwiVF9wRSgtMSlcXG90aW1lc1xcR19hIl0sWzMsMSwiVF9wQigtMSlcXG90aW1lc1xcR19hIl0sWzQsMSwiMCwiXSxbMSwwXSxbMCwyXSxbMywxXSxbMiw0XSxbMSw1LCJmX3tUXFxsYW5nbGUgcF57XFxpbmZ0eX0gXFxyYW5nbGV9IiwyXSxbNiw1XSxbMCw3LCJmX3tFXFxsYW5nbGUgcF57XFxpbmZ0eX0gXFxyYW5nbGV9IiwyXSxbMiw4LCJmX3tCXFxsYW5nbGUgcF57XFxpbmZ0eX0gXFxyYW5nbGV9IiwyLHsic3R5bGUiOnsidGFpbCI6eyJuYW1lIjoiaG9vayIsInNpZGUiOiJ0b3AifX19XSxbNSw3XSxbNyw4XSxbOCw5XSxbMSw1LCJcXGNvbmciXV0=
\[\begin{tikzcd}
	0 & {\Lie(T)} & {\Lie(E)} & {\Lie(B)} & 0 \\
	0 & {T_pT(-1)\otimes\G_a} & {T_pE(-1)\otimes\G_a} & {T_pB(-1)\otimes\G_a} & {0,}
	\arrow[from=1-1, to=1-2]
	\arrow[from=1-2, to=1-3]
	\arrow["{f_{T\langle p^{\infty} \rangle}}"', from=1-2, to=2-2]
	\arrow["\cong", from=1-2, to=2-2]
	\arrow[from=1-3, to=1-4]
	\arrow["{f_{E\langle p^{\infty} \rangle}}"', from=1-3, to=2-3]
	\arrow[from=1-4, to=1-5]
	\arrow["{f_{B\langle p^{\infty} \rangle}}"', hook, from=1-4, to=2-4]
	\arrow[from=2-1, to=2-2]
	\arrow[from=2-2, to=2-3]
	\arrow[from=2-3, to=2-4]
	\arrow[from=2-4, to=2-5]
\end{tikzcd}\]
we conclude that also $f_{E\langle p^{\infty} \rangle}$ is injective. Hence $E\langle p^{\infty} \rangle$ and thus also $\As\langle p^{\infty} \rangle$ are dualizable.

By Scholze--Weinstein's result, Theorem \ref{thm: Scholze Weinstein's result}, the group $\As\langle p^{\infty} \rangle \rightarrow \Spa(C)$ has good reduction $G \rightarrow \Spf(\Os_C)$, and we know from Proposition \ref{prop: generic fibers of log p-divisible groups} that
\[ f_{\As\langle p^{\infty} \rangle} = \alpha_{G^D}[\tfrac{1}{p}]^{\vee},\]
where $\alpha_{G^D}$ is the Hodge Tate map (\ref{eq: HT map of fargues}). Hence it remains to see that 
\[ \alpha_{G^D}[\tfrac{1}{p}]=\HT_{\As/S}(-1).\]
This is proven in \cite[Prop. 4.15]{Scholze2013} in case where $\As$ is an abelian variety of good reduction, and the same proof applies verbatim here. This concludes the proof.
\end{proof}

\begin{cor}
\label{cor: HT sequences for abeloids}
    Let $A \rightarrow S$ be a relative abeloid variety over a seminormal rigid space $S$ over $(K,K^+)$. 
    \begin{enumerate}
        \item There is a natural exact sequence of $v$-vector bundles on $\Perf_{S}$, the (covariant) Hodge--Tate sequence
    % https://q.uiver.app/#q=WzAsNSxbMCwwLCIwIl0sWzEsMCwiXFxMaWUoXFxBcylcXG90aW1lc197XFxPc197XFxldH19XFxPc192Il0sWzIsMCwiVF9wXFxBcygtMSkgXFxvdGltZXNfe1xcWl9wfSBcXE9zX3YiXSxbMywwLCIoUl4xXFxwaV97XFxldCwqfVxcT3Nfe1xcQXN9KV57XFx2ZWV9IFxcb3RpbWVzX3tcXE9zX3tcXGV0fX0gXFxPc192KC0xKSJdLFs0LDAsIjAuIl0sWzAsMV0sWzIsM10sWzMsNF0sWzEsMl1d
\begin{equation}\label{eq: covariant HT sequence}\begin{tikzcd}
	0 & {\Lie(\As)\otimes_{\Os_{\et}}\Os_v} & {T_p\As(-1) \otimes_{\Z_p} \Os_v} & {(R^1\pi_{\et,*}\Os_{\As})^{\vee} \otimes_{\Os_{\et}} \Os_v(-1)} & {0.}
	\arrow[from=1-1, to=1-2]
	\arrow[from=1-2, to=1-3]
	\arrow[from=1-3, to=1-4]
	\arrow[from=1-4, to=1-5]
\end{tikzcd}\end{equation}
% \item Under the Weil pairing
% \[ T_p\As \times T_p\breve{\As} \rightarrow \Z_p(1),\]
% the orthogonal complement of $\Lie(A)$ is $\Lie(\breve{\As})$.  
\item There is a short exact sequence of $v$-sheaves on $\Perf_S$, the multiplicative Hodge--Tate sequence
% https://q.uiver.app/#q=WzAsNSxbMCwwLCIwIl0sWzEsMCwiXFxBc1xcbGFuZ2xlIHBee1xcaW5mdHl9IFxccmFuZ2xlIl0sWzIsMCwiVF9wXFxBcygtMSkgXFxvdGltZXNfe1xcWl9wfSBcXEdfbVxcbGFuZ2xlIHBee1xcaW5mdHl9IFxccmFuZ2xlIl0sWzMsMCwiKFJeMVxccGlfe1xcZXQsKn1cXE9zX0EpXntcXHZlZX1cXG90aW1lc197XFxPc197XFxldH19XFxPc192KC0xKSJdLFs0LDAsIjAuIl0sWzAsMV0sWzEsMl0sWzIsM10sWzMsNF1d
\begin{equation}\label{eq: mult HT sequence}\begin{tikzcd}
	0 & {\As\langle p^{\infty} \rangle} & {T_p\As(-1) \otimes_{\Z_p} \G_m\langle p^{\infty} \rangle} & {(R^1\pi_{\et,*}\Os_A)^{\vee}\otimes_{\Os_{\et}}\Os_v(-1)} & {0.}
	\arrow[from=1-1, to=1-2]
	\arrow[from=1-2, to=1-3]
	\arrow[from=1-3, to=1-4]
	\arrow[from=1-4, to=1-5]
\end{tikzcd}\end{equation}
The logarithm defines a natural transformation from this sequence to the Hodge--Tate sequence above.
\item The analytic $p$-divisible group $\As\langle p^{\infty} \rangle$ is dualizable, and its Cartier dual is 
\[ \As\langle p^{\infty} \rangle^D= \PPic_{\As/S,\et}\langle p^{\infty} \rangle,\]
the topologically $p$-torsion Picard variety of Corollary \ref{cor: ptop picard is dualizable}.
    \end{enumerate}
\end{cor}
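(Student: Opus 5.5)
The plan is to deduce all three points from Theorem \ref{thm: two Hodge-Tate maps agree}, the relative Hodge--Tate sequence of Example \ref{ex: HT sequence}, and the description of the Picard side in Corollary \ref{cor: ptop picard is dualizable}.

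For (1), dualize the Hodge--Tate sequence (\ref{eq: contravariant HT seq}) for $\pi\colon\As\to S$ and twist by $(-1)$. This gives an exact sequence
\[0\to (\pi_*\Omega^1_{\As/S})^\vee\otimes\Os_v \to (R^1\pi_{v,*}\Z_p)^\vee(-1)\otimes\Os_v\to (R^1\pi_{\et,*}\Os_\As)^\vee\otimes\Os_v(-1)\to 0.\]
Since $\As$ is a group, $\pi_*\Omega^1_{\As/S}=\omega_\As$, so the first term is $\Lie(\As)\otimes\Os_v$. By Lemma \ref{Lemma: finite Weil pairings}(2), the middle term is $T_p\As(-1)\otimes\Os_v$. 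By Theorem \ref{thm: two Hodge-Tate maps agree}, the first map is $f_{\As\langle p^\infty\rangle}$. This yields (\ref{eq: covariant HT sequence}). It also shows that $f_{\As\langle p^\infty\rangle}$ is injective with cokernel of the form $\omega\otimes\Os_v(-1)$, where $\omega=(R^1\pi_{\et,*}\Os_\As)^\vee$ is an étale vector bundle by Theorem \ref{thm: relative linear HT sequence}. Hence $\As\langle p^\infty\rangle$ is dualizable in the sense of Definition \ref{def: dualizable}, which is the first half of (3).

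For (2), use Proposition \ref{Prop: diagram relating f and u}. The map $u=u_{\As\langle p^\infty\rangle}$ is the identity on $p^\infty$-torsion and lies over the injection $f$ via the logarithms. A snake lemma argument then shows that $u$ is injective. Its cokernel maps isomorphically, via $\id\otimes\log$, onto $\Coker(f)=(R^1\pi_{\et,*}\Os_\As)^\vee\otimes\Os_v(-1)$, using that the row sequences in that diagram are short exact. Composing $\id\otimes\log$ with the projection from (1) gives the third map, and compatibility with the linear sequence holds by construction.

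For the rest of (3), the dual $\As\langle p^\infty\rangle^D$ is by definition the group attached to the tuple $(\omega^\vee,T_p\As^\vee(1),g(1)^\vee)$. Here $\omega^\vee=R^1\pi_{\et,*}\Os_\As$, $T_p\As^\vee(1)=R^1\pi_{v,*}\Z_p(1)$ by Lemma \ref{Lemma: finite Weil pairings}(2), and $g(1)^\vee$ is, up to the twist, the inclusion in the Hodge--Tate sequence (\ref{eq: contravariant HT seq}). By Corollary \ref{cor: ptop picard is dualizable} and Theorem \ref{Thm: comparison theorem for analytic p-divisible groups}(2), the maximal analytic $p$-divisible subgroup $\Hs\subseteq\PPic_{\As/S,\et}\langle p^\infty\rangle$ has the same tuple, because $R^1\pi_{v,*}\Z_p$ is torsionfree. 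Theorem \ref{thm: extending Fargues' equivalence of categories} then gives $\As\langle p^\infty\rangle^D\cong\Hs$.

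It remains to show $\Hs=\PPic_{\As/S,\et}\langle p^\infty\rangle$. By Theorem \ref{Thm: comparison theorem for analytic p-divisible groups}(3), it suffices that $R^2\pi_{v,*}\Z_p$ is torsionfree, which holds because it equals $\bigwedge^2 T_p\As^\vee$ by Lemma \ref{Lemma: finite Weil pairings}(2). The main care point, and the step I expect to be most delicate, is the matching of Tate twists and signs between $g(1)^\vee$ and the Hodge--Tate inclusion; it should follow directly from the identification in Theorem \ref{thm: two Hodge-Tate maps agree}.
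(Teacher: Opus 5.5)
Your proposal is correct and follows essentially the paper's argument. You dualize the relative Hodge--Tate sequence and identify its first map with $f_{\As\langle p^\infty\rangle}$ via Theorem \ref{thm: two Hodge-Tate maps agree}, apply the snake lemma to the diagram of Proposition \ref{Prop: diagram relating f and u}, and match tuples under Theorem \ref{thm: extending Fargues' equivalence of categories}; in addition, your check that $\Hs$ is all of $\PPic_{\As/S,\et}\langle p^\infty\rangle$, via Theorem \ref{Thm: comparison theorem for analytic p-divisible groups}(3) and the torsion-freeness of $R^2\pi_{v,*}\Z_p=\bigwedge^2 T_p\As^\vee$, makes explicit a point the paper's proof leaves implicit.
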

\begin{proof}
The Hodge--Tate sequence is defined to be the short exact sequence obtained by applying the functor $(\cdot)^{\vee}(-1)$ to the sequence (\ref{eq: contravariant HT seq})
% https://q.uiver.app/#q=WzAsNSxbMCwwLCIwIl0sWzEsMCwiKFJeMVxccGlfe1xcZXQsKn1cXE9zX3tcXEFzfSkgXFxvdGltZXMgXFxPc192Il0sWzIsMCwiKFJeMVxccGlfe3YsKn1cXFpfcClcXG90aW1lcyBcXE9zX3YiXSxbMywwLCJcXHBpX3sqfVxcT21lZ2Ffe1xcQXMvU31eMVxcb3RpbWVzIFxcT3NfdigtMSkiXSxbNCwwLCIwLCJdLFswLDFdLFsyLDMsIlxcSFRfe1xcQXMvU30iXSxbMyw0XSxbMSwyXV0=
\[\begin{tikzcd}
	0 & {(R^1\pi_{\et,*}\Os_{\As}) \otimes \Os_v} & {(R^1\pi_{v,*}\Z_p)\otimes \Os_v} & {\pi_{*}\Omega_{\As/S}^1\otimes \Os_v(-1)} & {0,}
	\arrow[from=1-1, to=1-2]
	\arrow[from=1-2, to=1-3]
	\arrow["{\HT_{\As/S}}", from=1-3, to=1-4]
	\arrow[from=1-4, to=1-5]
\end{tikzcd}\]
together with the identification of Lemma \ref{Lemma: finite Weil pairings}(2) 
\[ (R^1\pi_{v,*}\Z_p)^{\vee} = T_p\As(-1).\]
The first map of the multiplicative Hodge--Tate sequence is defined to be the map
\[u_{\As\langle p^{\infty} \rangle} \colon \As\langle p^{\infty} \rangle \rightarrow T_p\As(-1)\otimes \G_m\langle p^{\infty} \rangle\]
of Definition \ref{def: u and f}. By Proposition \ref{Prop: diagram relating f and u} and Theorem \ref{thm: two Hodge-Tate maps agree}, we have a commutative diagram of $v$-sheaves with exact rows
% https://q.uiver.app/#q=WzAsMTAsWzIsMCwiXFxBc1xcbGFuZ2xlIHBee1xcaW5mdHl9IFxccmFuZ2xlIl0sWzIsMSwiVF9wXFxBcygtMSlcXG90aW1lcyBcXEdfbVxcbGFuZ2xlIHBee1xcaW5mdHl9IFxccmFuZ2xlIl0sWzMsMCwiXFxMaWUoXFxBcykiXSxbMywxLCJUX3BcXEFzKC0xKVxcb3RpbWVzIFxcR19hIl0sWzEsMCwiXFxBc1twXntcXGluZnR5fV0iXSxbNCwwLCIwIl0sWzAsMCwiMCJdLFswLDEsIjAiXSxbMSwxLCJUX3BcXEFzKC0xKVxcb3RpbWVzIFxcbXVfe3Bee1xcaW5mdHl9fSJdLFs0LDEsIjAuIl0sWzAsMSwidV97XFxBc1xcbGFuZ2xlIHBee1xcaW5mdHl9IFxccmFuZ2xlfSIsMl0sWzAsMiwiXFxsb2ciXSxbMiwzLCIoXFxIVF97XFxBcy9TfSlee1xcdmVlfSgtMSkiXSxbMSwzXSxbNCwwXSxbMiw1XSxbNiw0XSxbNyw4XSxbNCw4LCI9IiwyXSxbOCwxXSxbMyw5XV0=
\[\begin{tikzcd}
	0 & {\As[p^{\infty}]} & {\As\langle p^{\infty} \rangle} & {\Lie(\As)} & 0 \\
	0 & {T_p\As(-1)\otimes \mu_{p^{\infty}}} & {T_p\As(-1)\otimes \G_m\langle p^{\infty} \rangle} & {T_p\As(-1)\otimes \G_a} & {0.}
	\arrow[from=1-1, to=1-2]
	\arrow[from=1-2, to=1-3]
	\arrow["{=}"', from=1-2, to=2-2]
	\arrow["\log", from=1-3, to=1-4]
	\arrow["{u_{\As\langle p^{\infty} \rangle}}"', from=1-3, to=2-3]
	\arrow[from=1-4, to=1-5]
	\arrow["{(\HT_{\As/S})^{\vee}(-1)}", from=1-4, to=2-4]
	\arrow[from=2-1, to=2-2]
	\arrow[from=2-2, to=2-3]
	\arrow[from=2-3, to=2-4]
	\arrow[from=2-4, to=2-5]
\end{tikzcd}\]
It follows by the Snake Lemma that $u_{\As\langle p^{\infty} \rangle}$ is injective with cokernel $(R^n\pi_{\et,*}\Os_A)^{\vee}\otimes\Os_v(-1)$. Hence $\As\langle p^{\infty} \rangle$ is dualizable, as required. Finally, by definition of Cartier dual, Definition \ref{def: dualizable}, we see that $\As\langle p^{\infty} \rangle$ and $\PPic_{\As/S,\et}\As\langle p^{\infty} \rangle^D$ match to the same tuple
\[ (\Lie(\As),T_p\As, f_{\As\langle p^{\infty} \rangle} = (\HT_{\As/S})^{\vee}(-1))\]
under the equivalence of Theorem \ref{thm: extending Fargues' equivalence of categories}. Hence they are canonically isomorphic, which concludes the proof.
\end{proof}

\section{Dieudonné theory}\label{section Dieudonne theory}
In the course of this section, given a perfectoid space $S/\Q_p$, we construct an equivalence $\Gs \mapsto \Ms(\Gs)$ between dualizable analytic $p$-divisible group over $S$ and certain local shtukas over $S$ in the sense of \cite{SW20}. We then use this to define a $v$-vector bundle $D(\Gs)$ functorially associated with $\Gs$, the de Rham module, and we relate our functors to previous constructions.

\subsection{Reminders about the Fargues--Fontaine curve}\label{subsection: Recollections on the Fargues--Fontaine curve}
We briefly survey some basic constructions surrounding the Fargues--Fontaine curve and its vector bundles. Readers who are familiar with this theory are invited to skip this subsection.

Let $S$ be a perfectoid space of characteristic $p$. We let $\Yss_{S}$ denote the adic space $S \mathbin{\dot\times} \Spa(\Z_p)$, where we use the notation of \cite[§11.2]{SW20}. This is an analytic adic space over $\Z_p$, constructed via glueing from the affinoid perfectoid case $S=\Spa(A,A^+)$, in which case it is explicitely given by
\[ \Yss_{S} = \Spa(W(A^+))\backslash  \{[\varpi] =0\},\]
for any pseudouniformizer $\varpi \in A^+$. There is a continuous function, depending on $\varpi$ \cite[§12.2]{SW20}
\[ \kappa\colon \vert \Yss_{S} \vert\rightarrow [0,\infty[.\]
For any interval $I \sub [0,\infty[$, we let $\Yss_{S,I}$ denote the interior of $\kappa^{-1}(I)$. The space $\Yss_{S,]0,\infty[}$ is independant of $\varpi$, agrees with $S \mathbin{\dot\times} \Spa(\Q_p)$ and has the following description in the affinoid case
\[  \Yss_{S,]0,\infty[} = \Spa(W(A^{+}))\backslash \{p[\varpi] =0\}.\]
By \cite[Prop. II.1.1]{fargues2024geometrization}, the spaces $\Yss_{S}$ and $\Yss_{S,]0,\infty[}$ are sousperfectoid adic spaces, and by \cite[Prop. 9.9]{kim2025uniquenessfunctorialityigusastacks}, $\Yss_{S,]0,\infty[}$ is a good adic space\footnote{The cited result shows that $\Yss_{S,]0,\infty[}$ admits a basis of affinoid opens $\Spa(A,A^+)$ such that $A$ is $v$-complete. Since any étale map is locally finite étale and since $v$-completeness is stable under finite étale maps by \cite[Lemma 9.8(a)]{Kedlaya2020Sheafiness}, it follows that $\Yss_{S,]0,\infty[}$ is a good adic space.}. They are functorial in $S$ and therefore carry a natural Frobenius $\varphi$ which restricts to the usual Frobenius on $S \cong \{p =0\} \sub \Yss_{S}$. It acts properly discontinuously on $\Yss_{S,]0,\infty[}$ and we define the relative Fargues--Fontaine curve to be the adic space over $\Q_p$
\[ X_S = \Yss_{S,]0,\infty[}/\varphi^{\Z}.\]
It comes with a natural line bundle $\Os_{X_S}(1)$ and for a vector bundle $\Es$ on $X_S$ and $n\in \Z$, we let $\Es(n) = \Es \otimes_{\Os_{X_S}} \Os_{X_S}(1)^{\otimes n}$. 

For any untilt $S^{\sharp}$ of $S$ of characteristic $0$, there is an associated map $i_{S^{\sharp}}\colon S^{\sharp} \rightarrow \Yss_{S,]0,\infty[}$. In the affinoid case, it is induced by Fontaine's map
\[ \theta\colon W(A^+) \rightarrow A^{+,\sharp}.\]
We will denote by $\xi$ a distinguished element that generates the kernel of $\theta$. By \cite[Prop. II.1.18]{fargues2024geometrization}, the map $S^{\sharp} \rightarrow \Yss_{S,]0,\infty[}$ is a closed Cartier divisor in the sense of \cite[§5.3]{SW20}. Moreover, the composition with the projection $i\colon S\rightarrow X_{S}$ is still a closed Cartier divisor.

Let $G$ be any algebraic group over $\Q_p$. By \cite[Prop. 19.5.3]{SW20}, the prestack 
\begin{align*} 
&S\in \Perf_{\F_p} \mapsto \{ \text{étale }G \text{-torsors on }X_S\}
\end{align*}
is a small $v$-stack. Moreover, we have the following classification of $G$-bundles for the absolute relative curve. Let $\breve{\Q}_p = W(\cj{\F}_p)[\tfrac{1}{p}]$ and consider the Kottwitz set \cite[§1.7]{Kottwitz85}
\begin{align}
    B(G) = G(\breve{\Q}_p)/\simeq
\end{align} 
where $b,b' \in G(\breve{\Q}_p)$ are equivalent if there exists $c\in G(\breve{\Q}_p)$ with $
\varphi(c)bc^{-1} =b'$. It parametrizes isocrystals with $G$-structures. Moreover, by \cite[Thm. 1.1]{Viehmann2024}, there is a canonical bijection
\begin{align}\label{eq: from Kotwitz set to bundles}
B(G) \xrightarrow{\cong } \vert \Bun_G\restr{\Perf_{\cj{\F}_p}} \vert, \quad b \mapsto \Es^b.
\end{align}
Given an element $b\in G(\breve{\Q}_p)$, we let $\Es^b$ denote the corresponding $G$-torsor on the Fargues--Fontaine curve.

Taking a perfectoid space over $S$ to its associated relative Fargues--Fontaine curve defines a morphism of sites
\begin{align}\label{eq: the map tau} 
\tau \colon X_{S,v} \rightarrow S_v.
\end{align}
Given a $\Q_p$-local system $\V$ on $S_v$, we obtain a $v$-vector bundle $\tau^{*}\V \otimes_{\tau^*\Q_p} \Os_{X_S}$ on $X_S$, which we will abbreviate by $\V \otimes_{\Q_p} \Os_{X_S}$. Since it is trivialized by $X_{S'} \rightarrow X_S$, where $S' \rightarrow S$ is a $v$-cover trivializing $\V$, by the above descent result, $\V \otimes_{\Q_p} \Os_{X_S}$ is an étale vector bundle on $X_S$. Analogous constructions exist over $\Yss_{S,]0,\infty[}$ and $\Yss_{S}$. By \cite[§4.5]{kedlaya2019relative2}\cite[Thm. 22.3.1]{SW20}, the functor $\V \mapsto \V \otimes_{\Q_p} \Os_{X_S}$ defines an equivalence
\begin{align}\label{eq: equiv between ls and ss slope zero}
         \{\,\Q_p\text{-local systems on }S_v\,\}  \xrightarrow{\cong} \{\,\text{vector bundles of slope }0 \text{ on }X_{S,\et}\,\},
     \end{align}
with inverse given by $\Es \mapsto \tau_*\Es$.

Let $S$ be a perfectoid space over $\F_p$ with untilt $S^{\sharp}$ over $\Q_p$. Let $\Es,\Es_0$  be vector bundles on $X_S$. Recall \cite[§II.3]{fargues2024geometrization} that a modification between $\Es$ and $\Es_0$ at $S^{\sharp}$ is an isomorphism 
\[ \alpha\colon \Es \restr{X_S \backslash S^{\sharp}} \xrightarrow{\cong}\Es_0\restr{X_S \backslash S^{\sharp}}  \]
which is meromorphic along $S^{\sharp}$, i.e. locally on $S$, $\alpha$ extends to a morphism $\Es \rightarrow \Es_0(nS^{\sharp})$ along the inclusion $\Es_0 \sub \Es_0(nS^{\sharp})$, for some $n\gg 0$. We denote a modification by $\alpha\colon \Es \dashrightarrow \Es_0$. We will be particularly interested in modifications of a particularly simple type.
\begin{definition}\label{def: minuscule modification}
    A modification $\alpha\colon \Es \dashrightarrow \Es_0$ at $S^{\sharp}$ is said to be minuscule if $\alpha$ extends to a short exact sequence 
    % https://q.uiver.app/#q=WzAsNSxbMCwwLCIwIl0sWzEsMCwiXFxFcyJdLFsyLDAsIlxcRXNfMCJdLFszLDAsImlfe1Nee1xcc2hhcnB9LCp9RSJdLFs0LDAsIjAsIl0sWzAsMV0sWzEsMiwiXFxhbHBoYSJdLFsyLDNdLFszLDRdXQ==
\[\begin{tikzcd}
	0 & \Es & {\Es_0} & {i_{S^{\sharp},*}E} & {0,}
	\arrow[from=1-1, to=1-2]
	\arrow["\alpha", from=1-2, to=1-3]
	\arrow[from=1-3, to=1-4]
	\arrow[from=1-4, to=1-5]
\end{tikzcd}\]
for some vector bundle $E$ on $S_{\et}^{\sharp}$.
\end{definition}

\begin{example}
\begin{enumerate}
    \item The main example of a minuscule modification arises from the line bundle $\Os_{X_S}(1)$. For any untilt $S^{\sharp}$ of $S$ over $\Q_p$, there is a natural short exact sequence \cite[Prop. II.2.3]{fargues2024geometrization}
% https://q.uiver.app/#q=WzAsNSxbMCwwLCIwIl0sWzEsMCwiXFxRX3AoMSlcXG90aW1lcyBcXE9zX3tYX3tTfX0iXSxbMiwwLCIgXFxPc197WF97U319KDEpIl0sWzMsMCwiaV97U157XFxzaGFycH0sKn1cXE9zX3tTXntcXHNoYXJwfX0iXSxbNCwwLCIwLiJdLFswLDFdLFsxLDJdLFsyLDNdLFszLDRdXQ==
\begin{equation}\label{eq: sequence for OFF(1)}\begin{tikzcd}
	0 & {\Q_p(1)\otimes \Os_{X_{S}}} & { \Os_{X_{S}}(1)} & {i_{S^{\sharp},*}\Os_{S^{\sharp}}} & {0.}
	\arrow[from=1-1, to=1-2]
	\arrow[from=1-2, to=1-3]
	\arrow[from=1-3, to=1-4]
	\arrow[from=1-4, to=1-5]
\end{tikzcd}\end{equation}
    \item Any modification of rank $n$ vector bundles bounded (in the sense of \cite[§23.3]{SW20}) by a minuscule cocharacter $\mu$ of $\GL_n$ is minuscule. 
\end{enumerate}
\end{example}

We now reinterpret modifications of vector bundles in terms of lattices in period rings. Consider the Rham period sheaf $\B_{\dR}^+$ on $\Perf_{\Q_p,v}$. By \cite[Cor. 17.1.9]{SW20}, given an affinoid perfectoid space $S$ over $\Q_p$, finite locally free $\B_{\dR}^+$-modules on $S_v$ are equivalent to finite projective $\B_{\dR}^+(S)$-modules. In particular, since an analogous statement holds on the pro-étale site \cite[Thm. 6.5]{scholze2013padicHodge}, we conclude that, for a locally noetherian adic space $X$ over $\Q_p$, the $\B_{\dR}^+$-locally free sheaves on $X_v^{\diamondsuit}$ and on $X_{\proet}$ are equivalent. We will sometimes go back and forth between both categories without further comments.

Given an affinoid perfectoid space $S^{\sharp}$ with tilt $S$, the ring $\B_{\dR}^+(S^{\sharp})$ corepresent the formal neighborhood of the closed Cartier divisor $i\colon S^{\sharp} \rightarrow X_S$. Let $\Es_0$ be a vector bundle on $X_S$ with completed stalk $\Xi_0 = \Es_0 \otimes_{\Os_{X_S}} \B_{\dR}^+(S^{\sharp})$. Then using Beauville--Laszlo glueing, the functor taking a vector bundle to its completed stalk yields an equivalence \cite[Prop. 19.1.2]{SW20}
    \begin{align}\label{eq: from modif to lattices} \{\, \text{modifications } \alpha\colon \Es \dashrightarrow \Es_0 \text{ at }S^{\sharp}\,\}\xrightarrow{\cong}\{ \,\B_{\dR}^+\text{-lattices } \Xi \sub \Xi_0[\xi^{-1}] \text{ over }S^{\sharp}  \, \}.\end{align}
Of relevance to us are $\B_{\dR}^+$-lattices whose relative positions are particularly simple.
\begin{definition}\label{def: minuscule lattice}
    Let $S^{\sharp}$ be a perfectoid space over $\Q_p$ and let $\Xi_0$ be a finite locally free $\B_{\dR}^+$-module over $S^{\sharp}$. A $\B_{\dR}^+$-lattice $\Xi \sub \Xi_0[\xi^{-1}]$ is called minuscule if 
    \[ \Xi \sub \Xi_0 \sub \xi^{-1}\Xi.\]
\end{definition}
We claim that the minuscule modifications correspond to minuscule lattices under the equivalence (\ref{eq: from modif to lattices}). The only non-trivial part is showing that given a minuscule lattice $\Xi \sub \Xi_0$, the sheaf $E = \Xi_0/\Xi$ is a vector bundle on $S^{\sharp}$. By the Snake Lemma, it is equivalent to showing that $W= \Xi/\xi\Xi_0$ is a vector bundle, which is done in the following lemma.

\begin{lemma}\label{lemma: minuscule lattices vs flags}
    Let $\Xi_0$ be a finite locally free $\B_{\dR}^+$-module on a perfectoid space $S^{\sharp}$ over $\Q_p$ and let $D_0 = \Xi_0 \otimes_{\B_{\dR}^+,\theta}\Os_{S_v}$, a vector bundle. Then the assignement $ \Xi \mapsto \frac{\Xi}{\xi\Xi_0}$ defines a bijection
\begin{align*}
    \left\{\text{\begin{tabular}{l} {\parbox{4.2cm}{minuscule $\B_{\dR}^+$-lattices $\Xi \sub \Xi_0[\xi^{-1}]$ }}\end{tabular}}\right\}
         \xlongrightarrow{\cong} \left\{\text{\begin{tabular}{l} {\parbox{4.5cm}{locally direct summands $W\hookrightarrow D_0$}}\end{tabular}}\right\}.
    \end{align*} 
\end{lemma}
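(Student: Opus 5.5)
The map $\Xi \mapsto \Xi/\xi\Xi_0$ makes sense because a minuscule lattice satisfies $\xi\Xi_0 \sub \Xi \sub \Xi_0$. Hence $W = \Xi/\xi\Xi_0$ is a $\B_{\dR}^+$-submodule of $\Xi_0/\xi\Xi_0 = D_0$ killed by $\xi$, that is, an $\Os_{S_v}$-submodule of $D_0$.

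For the inverse, send a locally direct summand $W \hookrightarrow D_0$ to its preimage
\[ \Xi_W = \theta^{-1}(W) \sub \Xi_0 \]
under the reduction $\Xi_0 \twoheadrightarrow D_0$. By construction $\xi\Xi_0 \sub \Xi_W \sub \Xi_0$, and the two assignments are mutually inverse on the level of submodules. So the statement reduces to two local claims:
\begin{enumerate}
\item if $W$ is a locally direct summand, then $\Xi_W$ is finite locally free over $\B_{\dR}^+$, hence a minuscule lattice;
\item if $\Xi$ is a finite locally free lattice with $\xi\Xi_0 \sub \Xi \sub \Xi_0$, then $W$ is a locally direct summand of $D_0$.
\end{enumerate}

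Both claims are local on $S_v$ (or analytic-locally on $S^{\sharp}$), and by \cite[Cor. 17.1.9]{SW20} finite locally free $\B_{\dR}^+$-modules on an affinoid perfectoid $S^{\sharp}=\Spa(R,R^+)$ are the same as finite projective $B = \B_{\dR}^+(R)$-modules. We may therefore assume $\Xi_0 = B^n$ is free, with $D_0 = R^n$.

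\textbf{Step 1 (direction $W \Rightarrow \Xi$).} Localize further so that $W$ is free with a free complement, and choose a basis of $R^n$ adapted to $R^n = W \oplus W'$. Since $B \to R$ is surjective, we can lift this to a basis of $B^n$: a lift of a basis mod $\xi$ is a basis, because $B$ is $\xi$-adically complete and $\xi$ lies in its Jacobson radical. In this adapted basis,
\[ \Xi_W = B^{d} \oplus \xi B^{n-d}, \]
which is visibly free.

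\textbf{Step 2 (direction $\Xi \Rightarrow W$).} This is the main obstacle: showing that $W$ is projective and that $D_0/W$ is projective. Multiplication by $\xi$ identifies $\Xi_0/\Xi$ with $\xi\Xi_0/\xi\Xi$. This gives a short exact sequence of $R$-modules
\[ 0 \to W=\Xi/\xi\Xi_0 \to \Xi/\xi\Xi \to \Xi_0/\Xi \to 0, \]
and we also have
\[ 0 \to \Xi_0/\Xi \to D_0 \to \ldots \]
Here $\Xi/\xi\Xi$ is finite projective over $R$, since $\Xi$ is finite projective over $B$ and $\xi$ is a nonzerodivisor. Combining the two sequences, $Q = D_0/W \cong \Xi_0/\Xi$ sits in
\[ 0 \to W \to \Xi/\xi\Xi \to Q \to 0 \qquad\text{and}\qquad 0 \to W \to D_0 \to Q \to 0. \]
So $Q$ is finitely presented and has $\mathrm{Tor}$-dimension at most $1$. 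It remains to show that $Q$ is flat, hence projective.

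I would do this pointwise. For a point $x$ with residue field $(K,K^+)$, the base change to $\B_{\dR}^+(K)$, a DVR, is well behaved. There the elementary divisor theorem gives, for both sequences, ranks summing to $n$. This forces the fiber dimensions of $Q\otimes k(x)$ and of $W\otimes k(x)$ to be locally constant: they are upper semicontinuous, and their sum equals the constant $n$. A finitely presented module with locally constant fiber rank over the (reduced) perfectoid ring $R$ is projective, and then $W$ is a direct summand. If this pointwise argument runs into trouble, I would instead use $v$-descent of vector bundles \eqref{eq: etale and v vb equivalent for perfectoids}, reducing to a product of points where $\B_{\dR}^+$ is an explicit product-like ring.
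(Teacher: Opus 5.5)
Your outline can be completed, and for the hard direction it takes a genuinely different route from the paper. The paper argues with perfect complexes, following Anschütz--Le Bras:
\begin{itemize}
\item $W$ has the two-term resolution $0\to\xi\Xi_0\to\Xi\to W\to 0$ by finite projective $\B_{\dR}^+(R)$-modules, hence is pseudo-coherent over $R$;
\item at every point $\Spa(K,K^+)$ one has $W\otimes^{L}_{R}K=W\otimes^{L}_{\B_{\dR}^+(R)}\B_{\dR}^+(K)$, which is bounded;
\item a pointwise criterion from the Stacks project then makes $W$ perfect over $R$, and the lattice structure forces it to be finite projective.
\end{itemize}
For the easy direction the paper uses a projective-dimension remark instead of your adapted-basis lifting; both are fine. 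You replace the tor-amplitude criterion by a count of fiber dimensions, plus semicontinuity and the reducedness of the perfectoid ring $R$. This is more elementary, but it depends on $R$ being reduced, whereas the paper's criterion works over any base. Both rest on the same input: $\B_{\dR}^+(K)$ is a discrete valuation ring, and the chain $\xi\Xi_0\sub\Xi\sub\Xi_0$ survives base change to it.

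As written, Step 2 needs four repairs.

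\textbf{The first exact sequence is backwards.} Since $\xi\Xi\sub\xi\Xi_0\sub\Xi$, the correct one is
\[ 0\to \xi\Xi_0/\xi\Xi\cong \Xi_0/\Xi\to \Xi/\xi\Xi\to W\to 0. \]
So $Q$ is the submodule and $W$ the quotient. Likewise $Q$ is a quotient of $D_0$, not a submodule.

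\textbf{The claim that $Q$ has Tor-dimension at most $1$ does not follow and should be dropped.} The two sequences only relate the $\mathrm{Tor}$'s of $W$ and $Q$ in a $2$-periodic way. In fact no formal argument can give it. For $B=k[x,y]$, $\xi=xy$, $\Xi_0=B$, $\Xi=xB$, one gets $W\cong k[x]$ and $Q\cong k[y]$ over $R=k[x,y]/(xy)$, both of infinite Tor-dimension. The pointwise argument is what does the work.

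\textbf{Say why base change is ``well behaved''.} The maps $\xi\Xi_0\to\Xi\to\Xi_0$ stay injective after $-\otimes\B_{\dR}^+(K)$. They are maps of free modules of rank $n$ over a domain that become isomorphisms after inverting $\xi$. Hence $W\otimes_RK$ and $Q\otimes_RK$ are the two steps of a lattice chain over $\B_{\dR}^+(K)$, and their dimensions add up to $n$.

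\textbf{This only controls fiber ranks at supports of points of $\Spa(R,R^+)$.} The criterion you quote for reduced rings needs local constancy on all of $\Spec R$. To close this, write $w,q$ for the fiber dimensions of $W,Q$.
\begin{itemize}
\item Right exactness of $W\otimes\kappa(\mathfrak{p})\to\kappa(\mathfrak{p})^n\to Q\otimes\kappa(\mathfrak{p})\to0$ gives $w+q\ge n$ at every prime.
\item Every maximal ideal of the Tate ring $R$ is the support of a point of $\Spa(R,R^+)$: it is closed, and the adic spectrum of the nonzero Tate ring $R/\mathfrak{m}$ is nonempty. So $w+q=n$ at closed points.
\item Upper semicontinuity on $\Spec R$ gives $w(\mathfrak{p})\le w(\mathfrak{m})$ and $q(\mathfrak{p})\le q(\mathfrak{m})$ whenever $\mathfrak{p}\sub\mathfrak{m}$.
\end{itemize}
Hence $w+q=n$ everywhere, and both functions are locally constant on $\Spec R$. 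The finitely presented module $Q$ is then finite projective, and $W$ is a direct summand of $D_0$.
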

\begin{proof}
We may assume that $S=\Spa(R,R^+)$ is affinoid perfectoid. We may identity $D_0$ with its $S$-rational points $D_0(S) = \Xi_0(S)/\xi\Xi_0(S)$, a finite projective $R$-module, and similarly for $\Xi$ and $\Xi_0$. Let us first show that the inverse functor is well-defined. Given a split injection of $R$-modules $W\sub D_0$, we let $\Xi = q^{-1}(W) \sub \Xi_0$, where $q\colon \Xi_0 \rightarrow D_0$ is the natural projection. Then we have a short exact sequence
% https://q.uiver.app/#q=WzAsNSxbMCwwLCIwIl0sWzEsMCwiXFx4aVxcWGlfMCJdLFsyLDAsIlxcWGkiXSxbMywwLCJXIl0sWzQsMCwiMC4iXSxbMCwxXSxbMiwzXSxbMyw0XSxbMSwyXV0=
\[\begin{tikzcd}
	0 & {\xi\Xi_0} & \Xi & W & {0.}
	\arrow[from=1-1, to=1-2]
	\arrow[from=1-2, to=1-3]
	\arrow[from=1-3, to=1-4]
	\arrow[from=1-4, to=1-5]
\end{tikzcd}\]
Here, $W$ is a finite projective $R$-module and thus has projective dimension $\leq 1$ over $\BdR+(R)$, since $\xi$ is a non-zero divisor. We conclude that $\Xi$ is finite projective over $\BdR+(R)$. Conversely, given a minuscule $\BdR+(R)$ lattice $\Xi \sub \Xi_0$, we need to show that $W = \frac{\Xi(S)}{\xi\Xi_0(S)}$ is a finite projective module. We argue as in \cite[Remark 4.25]{anschutz2022prismaticdieudonnetheory}: the module $W$ is perfect as an $\BdR+(R)$ module, as $\xi$ is a non-zero divisor. In particular, it is pseudocoherent as a $\BdR+(R)$ module and thus also as an $R$-module. Let $\Spa(K,K^+) \rightarrow \Spa(R,R^+)$ be any point, then the derived tensor product
\[ W \otimes_{R}^{L} K = W \otimes_{\BdR+(R)}^{L} \BdR+(K)\]
is a perfect complex of $\BdR+(K)$-modules and is thus bounded. It now follows from \cite[\href{https://stacks.math.columbia.edu/tag/068W}{Tag 068W}]{stacks-project} that $W$ admits a finite resolution of finite projective $R$-modules. Since by construction it has projective dimension $1$ over $\BdR+(R)$, it follows that $W$ is a finite projective $R$-module, as required.
\end{proof}

\begin{example}
     The sequence (\ref{eq: sequence for OFF(1)}) gives rise to the following exact sequence over $S^{\sharp}$ after taking completed stalks
% https://q.uiver.app/#q=WzAsNSxbMCwwLCIwIl0sWzEsMCwiXFxRX3AoMSlcXG90aW1lc1xcQl97XFxkUn1eKyJdLFsyLDAsIlxceGleey0xfShcXFFfcCgxKVxcb3RpbWVzXFxCX3tcXGRSfV4rKSJdLFszLDAsIlxcT3Nfe1Nee1xcc2hhcnB9fSJdLFs0LDAsIjAuIl0sWzAsMV0sWzEsMl0sWzIsM10sWzMsNF1d
\begin{equation}\begin{tikzcd}
	0 & {\Q_p(1)\otimes\B_{\dR}^+} & {\xi^{-1}(\Q_p(1)\otimes\B_{\dR}^+)} & {\Os_{S^{\sharp}}} & {0.}
	\arrow[from=1-1, to=1-2]
	\arrow[from=1-2, to=1-3]
	\arrow[from=1-3, to=1-4]
	\arrow[from=1-4, to=1-5]
\end{tikzcd}\end{equation}
\end{example}

Finally, we will also need local shtukas \cite[Def. 23.1.1]{SW20}. Given a characteristic $p$ perfectoid space $S$ with an untilt $S^{\sharp}$ over $\Q_p$, a local shtuka with a leg at $S^{\sharp}$ is a pair $(\Ms, \varphi_{\Ms})$ consisting of a vector bundle $\Ms$ over $\Yss_{S}$ together with an isomorphism
\[ \varphi_{\Ms}\colon (\varphi^*\Ms)\restr{\Yss_{S} \backslash S^{\sharp}} \xrightarrow{\cong} \Ms\restr{\Yss_{S} \backslash S^{\sharp}} \]
that is meromorphic along $S^{\sharp}$. We say that a sequence of local shtukas is exact if the sequence formed by the underlying vector bundles is exact. We will again impose a minuscule condition.

\begin{definition}\label{def: minuscule shtuka}
    Let $S$ be a characteristic $p$ perfectoid space with untilt $S^{\sharp}$ over $\Q_p$. A local shtuka $(\Ms,\varphi_{\Ms})$ with a leg at $S^{\sharp}$ is said to be minuscule if
    \[ \Ms \sub \varphi_{\Ms}(\varphi^*\Ms) \sub \tfrac{1}{\xi}\Ms.\]
\end{definition}

\begin{proposition}{(\cite[Prop. 23.3.1]{SW20})}\label{prop: from modification to shtukas}
Let $S$ be a characteristic $p$ perfectoid space with untilt $S^{\sharp}$ over $\Q_p$. Then there is an exact equivalence of categories between
\begin{enumerate}
    \item local shtukas $(\Ms,\varphi_{\Ms})$ over $\Yss_S$ with a leg at $\varphi^{-1}(S^{\sharp})$, and
    \item tuples $(\Lb, \Es, \alpha)$ consisting in a $\Z_p$-local system $\Lb$ on $S$ (or equivalently, on $S^{\sharp}$), a vector bundle $\Es$ on $X_S$ and a modification $\alpha\colon \Lb \otimes_{\Z_p} \Os_{X_S} \dashrightarrow \Es$ at $S^{\sharp}$.
\end{enumerate}
    % \begin{align*}
    %  \left\{\text{\begin{tabular}{l} {\parbox{3.7cm}{$(\Lb, \Es, \alpha)$ modifications $\alpha\colon \Es \dashrightarrow \Es_0$ on $X_S$ at $S^{\sharp}$}}\end{tabular}}\right\}
    %      \xrightarrow{\cong} \left\{\text{\begin{tabular}{l} {\parbox{5.2cm}{local shtukas $(\Ms,\varphi_{\Ms})$ over $\Yss_S$ with a leg at $\varphi^{-1}(S^{\sharp})$}}\end{tabular}}\right\}.
    % \end{align*}
    Moreover, it restricts to an equivalence between minuscule objects on each side. In that case, if $\Lb \otimes_{\Z_p} \Os_{X_S} \xrightarrow{\alpha} \Es \rightarrow i_*E$ is a minuscule modification corresponding to the shtuka $(\Ms,\varphi_{\Ms})$, we have a natural identification of vector bundles on $S^{\sharp}$
    \[ E = \varphi_{\Ms}(\varphi^*\Ms)/\Ms. \]
\end{proposition}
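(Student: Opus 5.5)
The proof will be a reduction to the Beauville--Laszlo description (\ref{eq: from modif to lattices}) of modifications via $\B_{\dR}^+$-lattices, combined with the standard dictionary between $\varphi$-modules on $\Yss_S$ and vector bundles on $X_S$. We will first construct the functor from shtukas to tuples. Given $(\Ms,\varphi_{\Ms})$ with leg at $\varphi^{-1}(S^{\sharp})$, we will restrict $\Ms$ to $\Yss_{S,[r,\infty[}$ for $r$ large, i.e.\ a punctured neighbourhood of $p=0$ avoiding all the Frobenius translates $\varphi^{-n}(S^{\sharp})$, $n\geq 1$ (the leg sits at $\kappa$-finite radius). There $\varphi_{\Ms}$ is an honest isomorphism. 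By the theorem of Kedlaya--Liu / Scholze--Weinstein (the $\Yss_S$-version of (\ref{eq: equiv between ls and ss slope zero})), a $\varphi$-module on a neighbourhood of $p=0$ that extends over $\Yss_S$ corresponds to a $\Z_p$-local system $\Lb$ with $\Ms|_{[r,\infty[}\cong \Lb\otimes\Os$. Dually, we will restrict to the other end $\Yss_{S,]0,r']}$, spread it out by $\varphi$, and descend along $\varphi^{\Z}$ to obtain $\Es$ on $X_S$. The two descriptions agree away from the orbit of the leg, and comparing them across the single leg modulo $\varphi$ gives the meromorphic identification $\alpha\colon\Lb\otimes\Os_{X_S}\dashrightarrow\Es$ at $S^{\sharp}$.

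For the inverse functor, given $(\Lb,\Es,\alpha)$, we will glue $\Lb\otimes\Os_{\Yss_S}$ near $p=0$ with the pullback of $\Es$ to $\Yss_{S,]0,\infty[}$. The gluing uses $\alpha$ and its $\varphi$-translates, modifying the trivial bundle at $\varphi^{-n}(S^{\sharp})$ for $n\ge 1$ only, which produces a Frobenius that has a pole exactly along $\varphi^{-1}(S^{\sharp})$. Both functors are local in the $v$-topology by \cite[Prop. 19.5.3]{SW20} and the $v$-descent of vector bundles on $\Yss_S$, so we may reduce to $S$ affinoid perfectoid and check mutual inverseness there. Exactness is immediate, since both constructions are built from exact operations, namely restriction and Beauville--Laszlo gluing of completed stalks.

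For the minuscule part, we will compute completed stalks at $\varphi^{-1}(S^{\sharp})$, or equivalently, after applying $\varphi$, at $S^{\sharp}$. The containments $\Ms\subseteq\varphi_{\Ms}(\varphi^*\Ms)\subseteq\xi^{-1}\Ms$ translate under (\ref{eq: from modif to lattices}) into the containments of Definition \ref{def: minuscule lattice} for the lattice of $\Es$ relative to $\Lb\otimes\B_{\dR}^+$. Lemma \ref{lemma: minuscule lattices vs flags} then shows that the cokernel is a vector bundle on $S^{\sharp}$, so the modification is minuscule in the sense of Definition \ref{def: minuscule modification}. The identification $E=\varphi_{\Ms}(\varphi^*\Ms)/\Ms$ will follow from the same stalk computation, since both sheaves are supported at the leg and are read off from the same quotient of lattices.

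The main obstacle is the first step, namely producing $\Lb$ and checking that the shtuka really extends over the end $p=0$. I expect to invoke the Kedlaya--Liu/SW classification of $\varphi$-modules over $\Yss_{S,[r,\infty[}$, as in \cite[§22]{SW20}, rather than reprove it.
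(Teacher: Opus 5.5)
Your strategy is the one the paper uses, which reproduces the proof of \cite[Prop.~23.3.1]{SW20}: $\Lb$ comes from the end of $\Yss_S$ containing $\{p=0\}$ via \cite[Prop.~22.6.1]{SW20}, $\Es$ comes from the other end by descent along $\varphi^{\Z}$, the inverse glues $\Lb\otimes\Os$ to $\pi^*\Es$, and the minuscule condition is read off at the leg. Your completed-stalk treatment of the minuscule part via Lemma~\ref{lemma: minuscule lattices vs flags} is fine. Two things need fixing, however.

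First, your picture of the Frobenius translates is reversed, and this makes one step fail as written. In the paper's normalization ($\kappa(S^\sharp)=1$, $\kappa\circ\varphi=p\kappa$, as forced by its gluing along $\Yss_{S,[0,1[}\cup\Yss_{S,]p^{-1},\infty[}$ producing a leg at $\varphi^{-1}(S^\sharp)$), the locus $\{p=0\}$ is $\kappa=0$, so $\Yss_{S,[r,\infty[}$ is the \emph{far} end. Writing $\xi=p-[\varpi]u$ with $u$ a unit, on $\varphi^{-n}(S^\sharp)=V(\varphi^n(\xi))$ one has $|p|=|[\varpi]|^{p^n}$, i.e.\ $\kappa=p^{-n}$. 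Hence the negative translates accumulate at $\{p=0\}$. Only the positive translates $\varphi^n(S^\sharp)$, $n\ge 0$, escape towards $[\varpi]=0$, which is not in $\Yss_S$; compare Example~\ref{ex: examples of shtukas}(1), where $\{\mu=0\}=\bigcup_{n\ge0}\varphi^n(S^\sharp)$ misses $p=0$.

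In the forward direction this is harmless, because only the leg must be avoided. The map $\varphi_{\Ms}$ is an isomorphism over $\Yss_{S,[0,p^{-1}[}$, and that is all a $\varphi$-module structure on $\Ms|_{\Yss_{S,[0,1[}}$ uses. You do need this unpunctured region to apply \cite[Prop.~22.6.1]{SW20}.

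In the inverse direction it is not harmless. Gluing $\Lb\otimes\Os$ on $\Yss_{S,[0,1[}$ to $\pi^*\Es$ on $\Yss_{S,]p^{-1},\infty[}$ along $\iota$ (the overlap meets no $\varphi^n(S^\sharp)$) modifies $\Lb\otimes\Os_{\Yss_S}$ exactly at $\varphi^n(S^\sharp)$, $n\ge 0$. This is a locally finite family. Its $\varphi$-pullback is modified at $\varphi^n(S^\sharp)$, $n\ge -1$, which is why the single pole sits at $\varphi^{-1}(S^\sharp)$. Modifying at $\varphi^{-n}(S^\sharp)$, $n\ge 1$, as you write, fails in two ways. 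It is not locally finite near $\{p=0\}$, so it yields no vector bundle there. It also leaves the far end equal to $\Lb\otimes\Os$, so descent would return $\Lb\otimes\Os_{X_S}$ instead of $\Es$. Your first clause ("glue $\Lb\otimes\Os$ near $p=0$ with $\pi^*\Es$") is the correct construction; only the stated modification locus is wrong.

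Your extraction of $\Es$ from the far end is also the correct one. The paper's converse paragraph takes $\Es$ from $\Ms|_{\Yss_{S,]0,1[}}$, but that restriction is just $\Lb\otimes\Os$, so it must be read with $\Yss_{S,]p^{-1},\infty[}$ instead.

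Second, exactness is not ``immediate''. The passage from $\Ms|_{\Yss_{S,[0,1[}}$ to $\Lb$ is neither restriction nor Beauville--Laszlo gluing. It is the classification of \cite[Prop.~22.6.1]{SW20}, i.e.\ taking $\varphi$-invariants, which is a priori only left exact. This is the one point the paper actually argues. Exactness of the $\Es_\bullet$ follows by restricting to opens on which $\pi$ is an isomorphism. For the $\Lb_\bullet$, the paper passes to a product of points, where $\Lb_k(S)=\prod_i\Lb_k(C_i,C_i^+)$, and then to a single geometric point. There exactness follows from a rank count via Artin--Schreier--Witt theory \cite[Thm.~12.3.4]{SW20}.
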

\begin{proof}
    This follows from the proof of \cite[Prop. 23.3.1]{SW20} (see also \cite[Prop. 12.4.6]{SW20} for the case of a geometric point) and we reproduce it here for the reader's convenience. We may assume that $S$ is affinoid and we fix a pseudouniformizer $\varpi \in \Os^+(S)$ such that $S^{\sharp} \sub \Yss_{S,\{ 1\}}$. Let $\alpha \colon \Lb \otimes_{\Z_p}\Os_{X_S}\dashrightarrow \Es$ be a modification and consider the cover $\pi\colon \Yss_{S,]0,\infty[} \rightarrow X_S$. Let $\Ms = \pi^*\Es$, which come with a semilinear isomorphism $\varphi_{\Ms}\colon \varphi^*\Ms \rightarrow \Ms$, and let $(\Ms',\varphi_{\Ms'
    })$ be obtained similarly from $\Lb \otimes_{\Z_p}\Os_{X_S}$. Then $\Ms' \cong \Lb \otimes \Os_{\Yss_{S,]0,\infty[}}$ trivially spreads out to a $\varphi$-equivariant vector bundle on $\Yss_S$ and $\alpha$ gives rise to a Frobenius-equivariant isomorphism
    \[\iota\colon \Ms'\restr{\Yss_{S,]0,\infty[}\backslash \bigcup_{n \in \Z}\varphi^n(S)} \xrightarrow{\cong } \Ms\restr{\Yss_{S,]0,\infty[}\backslash \bigcup_{n \in \Z}\varphi^n(S)}\]
    that is meromorphic at each $\varphi^n(S)$. From this, we may view $\iota$ as a glueing data for the covering of $\Yss_S$ by the open subsets $\Yss_{S,[0,1[}$ and $\Yss_{S,]p^{-1},\infty[}$. This lets us spread $\Ms$ to a vector bundle on $\Yss_S$ and extend its Frobenius to a map $\varphi_{\Ms}\colon \varphi^*\Ms \restr{\Yss_{S}\backslash \varphi^{-1}(S)} \rightarrow \Ms \restr{\Yss_{S}\backslash \varphi^{-1}(S)}$. Hence $(\Ms,\varphi_{\Ms})$ promotes to a shtuka with one leg at $\varphi^{-1}(S)$. If $\alpha$ is assumed to be minusucule, the map $\iota$ extends to an injection over $\Yss_{S,]0,\infty[}$ and one easily deduces that $(\Ms,\varphi_{\Ms})$ is minuscule. The formula for $\varphi_{\Ms}(\varphi^*\Ms)/\Ms$ is left to the reader.
    
    Conversely, given a shtuka $(\Ms,\varphi_{\Ms})$ with one leg at $\varphi^{-1}(S)$, the restriction $\Ms\restr{\Yss_{S,]0,1[}}$ is $\varphi^{-1}$-equivariant and thus defines a vector bundle $\Es$ on $X_S$, using that the latter is covered by $\Yss_{S,]0,1[}$. On the other hand, $\Ms$ restricts to a $\varphi^{-1}$-equivariant bundle $\Ms'$ on $\Yss_{S,[0,1[}$. By \cite[Prop. 22.6.1]{SW20}, we have $\Ms' = \Lb \otimes \Os_{\Yss_{S,[0,1[}}$, for a $\Z_p$-local system $\Lb$ on $S$. By construction, $(\Ms,\varphi_\Ms)$ and $(\Ms',\varphi_{\Ms'})$ are identified away from $\bigcup_{n \in \Z} \varphi^n(S)$. This yields a modification $\alpha \colon \Lb \otimes_{\Z_p}\Os_{X_S}\dashrightarrow \Es$. Again, it can be seen to be minuscule as soon as $(\Ms,\varphi_{\Ms})$ is. 
    
    It remains to verify the claim about exactness. One direction is clear. For the converse, starting with an exact sequence of local shtukas $\Ms_1 \rightarrow \Ms_2 \rightarrow \Ms_2$, by restricting it to open subsets of $\Yss_{S,]0,\infty[}$ over which the projection $\pi$ is an isomorphism, we immediately see that the resulting sequence $\Es_1 \rightarrow \Es_2 \rightarrow \Es_2$ of vector bundles on $X_S$ is exact. It remains to see that the associated sequence $\Lb_1 \rightarrow \Lb_2 \rightarrow \Lb_3$ of $\Z_p$-local systems on $S$ is exact. This can be checked on $S$-points, where $S$ is a product of geometric points $\Spa(C_i,C_i^+)$. Then by Proposition \ref{prop: dualiazable pdiv gps over product of points} applied to $\Lb_k[\tfrac{1}{p}]/\Lb_k$ for $1 \leq k \leq 3$, we find that $\Lb_k(S) = \prod_{i \in I}\Lb_k(C_i,C^+)$, so that we may further reduce to the case $S=\Spa(C,C^+)$. In that case, the sequence is exact for dimension reasons, using Artin--Schreier--Witt theory, see \cite[Thm. 12.3.4]{SW20}.
\end{proof}

\begin{example}\label{ex: examples of shtukas}
    \begin{enumerate}
        \item The Breuil--Kisin twist: Let $S$ be a characteristic $p$ perfectoid space with untilt $S^{\sharp}$, and assume for simplicity that the latter lives over $\Q_p^{\cyc}$. Let $\epsilon = (1,\zeta_p, \zeta_{p^2},\ldots)$ be a system of compatible $p$th roots of unity and let $\mu = [\epsilon]-1$ and $\xi = \tfrac{\mu}{\varphi^{-1}(\mu)}$. Then $\varphi^{-n}(\xi) \vert \mu$ for all $n \geq 0$, and in fact the vanishing locus of $\mu$ in $\Yss_{S}$ agrees with the reunion of the positive Frobenius-translates of $S^{\sharp}$:
    \[ \{ \mu =0\} = \bigcup_{n \geq 0} \varphi^n(S^{\sharp}).\]
    The minuscule local shtuka corresponding to the sequence (\ref{eq: sequence for OFF(1)}) under Proposition \ref{prop: from modification to shtukas} then is
    \[ \Os_{\Yss_S}\{1 \} \coloneqq \tfrac{1}{\mu}(\Z_p(1) \otimes_{\Z_p} \Os_{\Yss_{S}}) \sub \Yss_S,\]
    with its natural Frobenius.
    \item Let $S^{\sharp}=\Spa(R,R^+)$ be affinoid perfectoid and let $(M,\varphi_M)$ be a Breuil--Kisin--Fargues module over $R^+$ \cite[Def. 4.22]{Bhatt2018}. Recall that this consists in a finite projective $A_{\inf}(R^+)$-module together with a $\varphi$-semilinear automorphism 
\[ \varphi_M\colon (\varphi^*M)[\tfrac{1}{\varphi(\xi)}] \rightarrow M[\tfrac{1}{\varphi(\xi)}].\]
It is called minuscule if
\[ M\sub \varphi_M(\varphi^*M) \sub \tfrac{1}{\varphi(\xi)}M.\]
Then one can associate to it a local shtuka $(\widetilde{M},\varphi_{\widetilde{M}})$ over $\Yss_S$ with one leg at $\varphi^{-1}(S^{\sharp})$, by pulling back along the map $\Yss_S \sub \Spa(A_{\inf}(A^+))$. Clearly $(\widetilde{M},\varphi_{\widetilde{M}})$ is minuscule if $(M,\varphi_M)$ is. The shtuka $\Os_{\Yss_S}\{1\}$ arises this way from the Breuil--Kisin--Fargues twist $\Ainf\{1 \}$ \cite[Ex. 4.24]{Bhatt2018}.
    \end{enumerate}
\end{example}

We record the following lemma about duality in this situation.
\begin{lemma}\label{lemma: dual of vector bundles and shtukas}
    Let $(\V \otimes_{\Q_p}\Os_{X_S} \xrightarrow{\alpha} \Es \rightarrow i_*E)$ be a minuscule modification at $S^{\sharp}$. 
    \begin{enumerate}
        \item Upon applying the twisted dual functor $(\cdot)^{\vee} \otimes_{\Os_{X_S}} \Os_{X_S}(1)$, we obtain a minuscule modification
    % https://q.uiver.app/#q=WzAsNSxbMCwwLCIwIl0sWzEsMCwiXFxWXntcXHZlZX0oMSlcXG90aW1lcyBcXE9zX3tYX1N9Il0sWzIsMCwiXFxFc157XFx2ZWV9XFxvdGltZXNfe1xcT3Nfe1hfU319XFxPc197WF9TfSgxKSJdLFszLDAsImlfKlxcb21lZ2Fee1xcdmVlfSJdLFs0LDAsIjAsIl0sWzAsMV0sWzEsMiwiXFxhbHBoYSciXSxbMiwzXSxbMyw0XV0=
\[\begin{tikzcd}
	0 & {\V^{\vee}(1)\otimes \Os_{X_S}} & {\Es^{\vee}\otimes_{\Os_{X_S}}\Os_{X_S}(1)} & {i_*\omega^{\vee}} & {0,}
	\arrow[from=1-1, to=1-2]
	\arrow["{\alpha'}", from=1-2, to=1-3]
	\arrow[from=1-3, to=1-4]
	\arrow[from=1-4, to=1-5]
\end{tikzcd}\]
where $\alpha'$ arises from the inclusion $\Es \sub \xi^{-1}(\V \otimes \Os_{X_S}) = \V(-1) \otimes \Os_{X_s}(1)$ upon taking twisted dual, and $\omega$ is a vector bundle on $S^{\sharp}$ such that $i_*\omega(-1)$ is the cokernel of this inclusion.
\item Assume that $\V$ admits a $\Z_p$-lattice $\Lb$ and let $(\Ms,\varphi_{\Ms})$ be the minuscule local shtuka corresponding to $(\Lb,\alpha)$ via Proposition \ref{prop: from modification to shtukas}. Then the shtuka associated with $(\Lb^{\vee}(1),\alpha')$ is $(\Ms,\varphi_{\Ms})^{\vee} \otimes_{\Os_{\Yss_S}}\Os_{\Yss_S}\{1\}$.
    \end{enumerate} 
\end{lemma}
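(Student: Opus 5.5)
For (1), the plan is to dualize the given minuscule modification directly on $X_S$. Since $\Os_{X_S}(1)$ is a line bundle and every term except $i_*E$ is a vector bundle, the twisted dual $(\cdot)^{\vee}\otimes\Os_{X_S}(1)$ is exact on the vector bundle part. The torsion term is handled by the local Ext computation $\mathcal{E}xt^1_{\Os_{X_S}}(i_*E,\Os_{X_S})=i_*(E^{\vee}\otimes \Os(S^{\sharp})|_{S^{\sharp}})$, which follows because $S^{\sharp}\subset X_S$ is a closed Cartier divisor. It is cleaner, though, to avoid Ext and work on completed stalks via Beauville--Laszlo (\ref{eq: from modif to lattices}).

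Using the sequence (\ref{eq: sequence for OFF(1)}), the inclusion $\Q_p(1)\otimes\Os_{X_S}\subset\Os_{X_S}(1)$ identifies $\Os_{X_S}(1)$ with $\xi^{-1}(\Q_p(1)\otimes\Os_{X_S})$ near $S^{\sharp}$. Therefore $\V(-1)\otimes\Os_{X_S}(1)$ is the modification $\xi^{-1}(\V\otimes\Os_{X_S})$, and minusculity of $\alpha$ gives the chain
\[\V\otimes\Os_{X_S}\subset\Es\subset\V(-1)\otimes\Os_{X_S}(1).\]
The second inclusion has cokernel supported at $S^{\sharp}$. By the snake lemma applied to $\xi^{-1}\V\otimes\B_{\dR}^+/\V\otimes\B_{\dR}^+\cong \V\otimes\Os_{S^\sharp}(-1)$, this cokernel is an extension quotient of a vector bundle, and Lemma \ref{lemma: minuscule lattices vs flags} shows it has the form $i_*\omega(-1)$ with $\omega$ a vector bundle.

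Applying $(\cdot)^\vee\otimes\Os_{X_S}(1)$ reverses the chain and gives
\[\V^\vee(1)\otimes\Os_{X_S}\subset\Es^\vee(1)\subset\V^\vee\otimes\Os_{X_S}(1).\]
In terms of completed stalks this is the dual lattice $\Xi^\vee$ sitting between $\Xi_0^\vee$ and $\xi^{-1}\Xi_0^\vee$. This sandwich is exactly minusculity. The cokernel of the first map is computed by dualizing the lattice sequence $0\to\Xi\to\xi^{-1}\Xi_0\to\omega(-1)\to0$, using that $\operatorname{Hom}_{\B_{\dR}^+}$ of a quotient killed by $\xi$ is computed by the $\xi$-adic Ext, which gives $\omega^\vee$. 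Twisting cancels the Tate twists, so the cokernel is $i_*\omega^\vee$. This twist bookkeeping is the main (mild) obstacle, and I would check it locally where $\V$ and $\omega$ are trivial by reducing to rank one, namely the sequence (\ref{eq: sequence for OFF(1)}) itself.

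For (2), the plan is to use the compatibility of Proposition \ref{prop: from modification to shtukas} with tensor products and duals. Its construction, pulling back along $\Yss_{S,]0,\infty[}\to X_S$ and gluing with $\Lb\otimes\Os_{\Yss_S}$ near $\{[\varpi]=0\}$, is visibly compatible with $\otimes$ and duals, because all steps (pullback, gluing along $\iota$) are tensor operations. Consequently the shtuka of the twisted dual modification is the dual of the shtuka of $(\Lb,\alpha)$ tensored with the shtuka attached to $\Z_p(1)\otimes\Os_{X_S}\dashrightarrow\Os_{X_S}(1)$. By Example \ref{ex: examples of shtukas}(1) that shtuka is $\Os_{\Yss_S}\{1\}$; this uses the cyclotomic assumption, which can be imposed $v$-locally and then removed by descent because both sides are canonical. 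The remaining point is that the $\Z_p$-lattices match, i.e. the lattice is $\Lb^\vee\otimes\Z_p(1)$, which is clear on the $\Yss_{S,[0,1[}$ part.
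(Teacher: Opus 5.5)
Your proposal is correct. For (1) it has the same skeleton as the paper's proof. Both use the snake lemma and Lemma~\ref{lemma: minuscule lattices vs flags} to realize $\Es\subset \V(-1)\otimes\Os_{X_S}(1)$ with cokernel $i_*\omega(-1)$. Both then dualize the sequence $0\to\Es\to\V(-1)\otimes\Os_{X_S}(1)\to i_*\omega(-1)\to 0$.

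The difference is in how the key term $\underline{\Ext}^1(i_*\omega(-1),\Os_{X_S}(1))\cong i_*\omega^{\vee}$ is identified. The paper stays global on $X_S$:
\begin{enumerate}
\item it rewrites this term as $\underline{\Ext}^1(i_*\omega(-1),\Os_{X_S})$ via the projection formula;
\item it checks the isomorphism after applying $\tau_*$, which is justified by Anschütz--Le Bras' full faithfulness of $R\tau_*$ on perfect complexes;
\item it reduces to the Yoneda computation $\Ext^1_{S_v}(\omega(-1),\Q_p)=\omega^{\vee}$ from Theorem~\ref{thm: a fargues thm for BC spaces}.
\end{enumerate}
You instead compute locally at the Cartier divisor $S^{\sharp}$. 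On completed stalks, the resolution $0\to\B_{\dR}^+\xrightarrow{\xi}\B_{\dR}^+\to\Os\to 0$ gives $\Hom(\omega(-1),\xi^{-2}\B_{\dR}^+(1)/\xi^{-1}\B_{\dR}^+(1))=\Hom(\omega(-1),\Os(-1))=\omega^{\vee}$. Your route is more elementary and makes both the canonicity of the identification and the cancellation of Tate twists transparent. The paper's route fits its Banach--Colmez viewpoint and reuses results it proves anyway, but it needs heavier input.

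For (2), which the paper leaves to the reader, your tensor-compatibility argument via Proposition~\ref{prop: from modification to shtukas} and Example~\ref{ex: examples of shtukas}(1) works. State one identity explicitly: away from $S^{\sharp}$ one has $\alpha'=(\alpha^{\vee})^{-1}\otimes\beta$, where $\beta\colon\Q_p(1)\otimes\Os_{X_S}\hookrightarrow\Os_{X_S}(1)$ is the modification (\ref{eq: sequence for OFF(1)}). This follows because the composite $\V\otimes\Os_{X_S}\xrightarrow{\alpha}\Es\hookrightarrow\V(-1)\otimes\Os_{X_S}(1)$ equals $\id_{\V(-1)}\otimes\beta$.
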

\begin{proof}
Using Beauville--Laszlo glueing (\ref{eq: from modif to lattices}) to each of the inclusion $\V \otimes \B_{\dR}^+ \sub \Xi \sub \xi^{-1}(\V \otimes \B_{\dR}^+)$, where $\Xi$ is the completed stalk of $\Es$ at $S^{\sharp}$, the sheaf $\Es$ fits in a commutative diagram
% https://q.uiver.app/#q=WzAsMTAsWzAsMCwiMCJdLFsxLDAsIlxcVlxcb3RpbWVzIFxcT3Nfe1hfU30iXSxbMiwwLCJcXEVzIl0sWzMsMCwiaV8qRSJdLFszLDEsImlfKlxcVigtMSlcXG90aW1lcyBcXE9zX1MiXSxbNCwwLCIwIl0sWzQsMSwiMCwiXSxbMiwxLCJcXFYoLTEpXFxvdGltZXNcXE9zX3tYX1N9KDEpIl0sWzAsMSwiMCJdLFsxLDEsIlxcVigtMSlcXG90aW1lc1xcUV9wKDEpIFxcb3RpbWVzIFxcT3Nfe1hfU30iXSxbMCwxXSxbMSwyLCJcXGFscGhhIl0sWzIsM10sWzMsNCwiZiJdLFszLDVdLFs0LDZdLFs3LDRdLFsyLDcsInUiLDJdLFsxLDksIj0iLDJdLFs4LDldLFs5LDddXQ==
\[\begin{tikzcd}
	0 & {\V\otimes \Os_{X_S}} & \Es & {i_*E} & 0 \\
	0 & {\V(-1)\otimes\Q_p(1) \otimes \Os_{X_S}} & {\V(-1)\otimes\Os_{X_S}(1)} & {i_*\V(-1)\otimes \Os_S} & {0,}
	\arrow[from=1-1, to=1-2]
	\arrow["\alpha", from=1-2, to=1-3]
	\arrow["{=}"', from=1-2, to=2-2]
	\arrow[from=1-3, to=1-4]
	\arrow["u"', from=1-3, to=2-3]
	\arrow[from=1-4, to=1-5]
	\arrow["f", from=1-4, to=2-4]
	\arrow[from=2-1, to=2-2]
	\arrow[from=2-2, to=2-3]
	\arrow[from=2-3, to=2-4]
	\arrow[from=2-4, to=2-5]
\end{tikzcd}\]
 where $u$ is a minuscule modification. By the Snake Lemma, $u$ has cokernel $i_*\omega(-1)$, where $\omega$ is the cokernel of $f(1)$, a vector bundle on $S^{\sharp}$. We now apply the functor $(\cdot)^{\vee} \otimes \Os_{X_{S}}(1)$ to the resulting exact sequence. This yields an exact sequence
% https://q.uiver.app/#q=WzAsNixbMiwwLCJcXHVuZGVyc2V0ez1cXFZee1xcdmVlfSgxKSBcXG90aW1lcyBcXE9zX3tYX3tTfX19e1xcdW5kZXJicmFjZXtcXHVuZGVybGluZXtcXEhvbX0oXFxWKC0xKVxcb3RpbWVzXFxPc197WF97U319KDEpLFxcT3Nfe1hfe1N9fSgxKSl9fSJdLFszLDAsIlxcdW5kZXJsaW5le1xcSG9tfShcXEVzLFxcT3Nfe1hfe1N9fSgxKSkiXSxbMSwwLCJcXHVuZGVyc2V0ez0wfXtcXHVuZGVyYnJhY2V7XFx1bmRlcmxpbmV7XFxIb219KGlfKlxcb21lZ2EoLTEpLFxcT3Nfe1hfe1N9fSgxKSl9fSJdLFswLDAsIjAiXSxbMSwxLCJcXHVuZGVybGluZXtcXEV4dH1eMShpXypcXG9tZWdhKC0xKSxcXE9zX3tYX3tTfX0oMSkpIl0sWzIsMSwiXFx1bmRlcnNldHs9MH17XFx1bmRlcmJyYWNle1xcdW5kZXJsaW5le1xcRXh0fV4xKFxcVigtMSlcXG90aW1lc1xcT3Nfe1hfe1N9fSgxKSxcXE9zX3tYX3tTfX0oMSkpfX0uIl0sWzAsMV0sWzIsMF0sWzMsMl0sWzEsNF0sWzQsNV1d
\[\begin{tikzcd}
	0 & {\underset{=0}{\underbrace{\underline{\Hom}(i_*\omega(-1),\Os_{X_{S}}(1))}}} & {\underset{=\V^{\vee}(1) \otimes \Os_{X_{S}}}{\underbrace{\underline{\Hom}(\V(-1)\otimes\Os_{X_{S}}(1),\Os_{X_{S}}(1))}}} & {\underline{\Hom}(\Es,\Os_{X_{S}}(1))} \\
	& {\underline{\Ext}^1(i_*\omega(-1),\Os_{X_{S}}(1))} & {\underset{=0}{\underbrace{\underline{\Ext}^1(\V(-1)\otimes\Os_{X_{S}}(1),\Os_{X_{S}}(1))}}.}
	\arrow[from=1-1, to=1-2]
	\arrow[from=1-2, to=1-3]
	\arrow[from=1-3, to=1-4]
	\arrow[from=1-4, to=2-2]
	\arrow[from=2-2, to=2-3]
\end{tikzcd}\]
We claim that
\[ \underline{\Ext}^1(i_*\omega(-1),\Os_{X_{S}}(1)) = i_*\omega^{\vee}.\]
First, we have
\[ \underline{\Ext}^1(i_*\omega(-1),\Os_{X_{S}}(1)) = \underline{\Ext}^1(i_*\omega(-1)\otimes_{\Os_{X_S}}\Os_{X_{S}}(-1),\Os_{X_S}) = \underline{\Ext}^1(i_*\omega(-1),\Os_{X_S}),\]
combining the fact that $ i^*\Os_{X_S}(-1)=\Os_S$ and the projection formula. By \cite[Cor. 3.10]{anschutz2021fourier}, we may check the above isomorphism after applying the functor $\tau_*$, in which case the claimed identity reduces to
\[ \underline{\Ext}_{S_v^{\sharp}}^1(\omega(-1),\Q_p) = \omega^{\vee}.\]
This follows from Theorem \ref{thm: a fargues thm for BC spaces}, by interpreting the above Ext-group as a Yoneda Ext-group. This concludes the proof of the first part. The second part of the statement is easy and left to the reader.
\end{proof}

\subsection{Effective Banach--Colmez spaces and vector bundles}
In this subsection, we connect the effective Banach--Colmez spaces introduced in Subsection \ref{subsect: effective BC spaces} with vector bundles on the Fargues--Fontaine curve. We will exclusively work with perfectoid spaces over $\Q_p$. Therefore, we shift the notations a bit.
\begin{notn}\label{notn: FF curve has variable in char 0}
Given a perfectoid space $S$ over $\Q_p$, we denote by $X_{S}$ the Fargues--Fontaine curve relative to the tilt $S^{\flat}$. Likewise, we let $\Yss_{S}$ denote the corresponding object associated with $S^{\flat}$.
\end{notn}

The main result of this section is the following.

\begin{thm}\label{thm: vb on FF associated to an pdiv groups}
    Let $S$ be a perfectoid space over $\Q_p$ and let $\Gs \rightarrow S$ be an analytic $p$-divisible group.
    \begin{enumerate}
        \item There exists a functorial sheaf of $\Os_{X_S}$-modules $\Es(\Gs)$ on the Fargues--Fontaine curve $X_S$, coming with a modification
        % https://q.uiver.app/#q=WzAsNSxbMCwwLCIwIl0sWzEsMCwiVF9wXFxHcyBcXG90aW1lc197XFxaX3B9IFxcT3Nfe1hfU30iXSxbMiwwLCJcXEVzKFxcR3MpIl0sWzMsMCwiaV8qXFxMaWUoXFxHcykiXSxbNCwwLCIwLiJdLFswLDFdLFsxLDJdLFsyLDNdLFszLDRdXQ==
\begin{equation}\label{eq: modification defining E(G)}\begin{tikzcd}
	0 & {T_p\Gs \otimes_{\Z_p} \Os_{X_S}} & {\Es(\Gs)} & {i_*\Lie(\Gs)} & {0.}
	\arrow[from=1-1, to=1-2]
	\arrow[from=1-2, to=1-3]
	\arrow[from=1-3, to=1-4]
	\arrow[from=1-4, to=1-5]
\end{tikzcd}\end{equation}
It recovers the exact sequence (\ref{eq: inv limit of log}) upon applying the functor $\tau_*$ (\ref{eq: the map tau}). Concretely, $\Es(\Gs)$ is defined to be the pullback
% https://q.uiver.app/#q=WzAsNCxbMCwwLCJcXEVzKFxcR3MpIl0sWzEsMCwiaV8qXFxMaWUoXFxHcykiXSxbMCwxLCJUX3BcXEdzKC0xKSBcXG90aW1lc197XFxaX3B9XFxPc197WF9TfSgxKSJdLFsxLDEsImlfKihUX3BcXEdzKC0xKSBcXG90aW1lc197XFxaX3B9XFxPc19TKS4iXSxbMCwxXSxbMiwzXSxbMSwzLCJpXypmX3tcXEdzfSJdLFswLDJdXQ==
\begin{equation}\label{eq: defining diagram for E(G)}\begin{tikzcd}
	{\Es(\Gs)} & {i_*\Lie(\Gs)} \\
	{T_p\Gs(-1) \otimes_{\Z_p}\Os_{X_S}(1)} & {i_*(T_p\Gs(-1) \otimes_{\Z_p}\Os_S).}
	\arrow[from=1-1, to=1-2]
	\arrow[from=1-1, to=2-1]
	\arrow["{i_*f_{\Gs}}", from=1-2, to=2-2]
	\arrow[from=2-1, to=2-2]
\end{tikzcd}\end{equation}
The formation of $\Es(\Gs)$ and of its modification is natural in $\Gs$ and in $S$.
\item The sheaf $\Es(\Gs)$ is functorial in $\widetilde{\Gs}$ and the functor $\Es(-)$ factors through a fully faithful exact functor $\Fs \mapsto \Es(\Fs)$ on effective Banach--Colmez spaces. 
\item The group $\Gs$ is dualizable if and only if $\Es(\Gs)$ is a vector bundle. In this case, we have
\[ \Es(\Gs^D) = \Es(\Gs)^{\vee}\otimes_{\Os_{X_S}}\Os_{X_S}(1).\]
\item If $S=\Spa(C)$ for a complete algebraically closed field $C/\Q_p$, the functor sending a $p$-divisible group $G$ over $\Os_C$ to $\Es(G_{\eta})$ recovers Scholze--Weinstein's functor \cite[Prop. 5.1.6]{scholze2013moduli}.
    \end{enumerate}
\end{thm}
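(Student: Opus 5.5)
I would define $\Es(\Gs)$ directly by the pullback (\ref{eq: defining diagram for E(G)}). Its bottom row is the twist by $T_p\Gs(-1)$ of the sequence (\ref{eq: sequence for OFF(1)}), so it is a surjection with kernel $T_p\Gs\otimes\Os_{X_S}$. Pulling back along $i_*f_{\Gs}$ then gives the exact sequence (\ref{eq: modification defining E(G)}) at once. Naturality in $\Gs$ and $S$ comes from the naturality of $f_{\Gs}$ (Theorem \ref{thm: extending Fargues' equivalence of categories}) and of (\ref{eq: sequence for OFF(1)}).

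To compare with (\ref{eq: inv limit of log}), I would apply $\tau_*$. Since $\tau_*\Os_{X_S}(1)=(\B_{\crys}^+)^{\varphi=p}$ and $\tau_*(\Q_p(1)\otimes\Os_{X_S})=\Q_p(1)$, and since $R^1\tau_*$ of a slope-zero bundle vanishes on strictly totally disconnected bases, $\tau_*$ of the pullback diagram becomes the pullback of $\Lie(\Gs)\to T_p\Gs(-1)\otimes\G_a$ along $T_p\Gs(-1)\otimes(\B_{\crys}^+)^{\varphi=p}\to T_p\Gs(-1)\otimes\G_a$. By Example \ref{ex: univ cover of unipotent units} and the diagram (\ref{eq: u f diagram for BC space}), this pullback is $\widetilde{\Gs}$; here one applies $\varprojlim_{[p]}$ to the cartesian square of Theorem \ref{thm: extending Fargues' equivalence of categories}.

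For (2), I would make the same construction for any presented effective Banach--Colmez space $\V\to\Fs\to E$ with $f_{\Fs}$, using $\V(-1)\otimes\Os_{X_S}(1)$. The point to prove is that $\Es(\Fs)$ does not depend on the presentation, and that
\[\Hom(\Fs,\Fs')=\Hom_{\Os_{X_S}}(\Es(\Fs),\Es(\Fs')).\]
From $\Es$ one recovers $\Fs=\tau_*\Es(\Fs)$ by the previous step, so faithfulness is clear. For fullness I would follow the route of Le Bras for $S=\Spa(C)$. Hom-sheaves between coherent sheaves of this type (extensions of $i_*$ vector bundles by slope-zero bundles) are computed by $\tau_*$. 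This uses $\Hom(i_*E,\V\otimes\Os)=0$, the vanishing of $\Ext^1(i_*E,\Os_{X_S})$ contributions computed as in Lemma \ref{lemma: dual of vector bundles and shtukas}, and the comparison with $\underline{\Ext}^1_{S_v}(E,\V)$ from Theorem \ref{thm: a fargues thm for BC spaces} together with Proposition \ref{Prop: extension from tate module to the whole BC space}. I expect this full faithfulness, without assuming that morphisms preserve presentations, to be the main obstacle. Local $v$-checks reduce it to strictly totally disconnected $S$, and there I would compare the long exact Ext sequences on both sides term by term. Exactness follows because pullbacks of the short exact sequences are exact.

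For (3), I would work $v$-locally. $\Es(\Gs)$ is a vector bundle if and only if its completed stalk $\Xi$ at $S$ is a minuscule $\B_{\dR}^+$-lattice in $T_p\Gs\otimes\B_{\dR}$. By Lemma \ref{lemma: minuscule lattices vs flags} this holds if and only if $\Xi/\xi(T_p\Gs(-1)\otimes\B_{\dR}^+(1))$, which is the image of $f_{\Gs}$, is a locally direct summand with $f_{\Gs}$ injective. On perfectoid $S$ this is exactly dualizability, by Example \ref{Examples: dualizable groups}(3). The duality formula is Lemma \ref{lemma: dual of vector bundles and shtukas}(1), after matching $g(1)^\vee$ with $\alpha'$. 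Finally, (4) follows because both constructions are the modification of $T_p\Gs\otimes\Os_{X_C}$ with cokernel $i_*\Lie$ determined by the Hodge--Tate map, and $f_{G_\eta}=\alpha_{G^D}[1/p]^\vee$ by Proposition \ref{prop: generic fibers of log p-divisible groups}.
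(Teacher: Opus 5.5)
Parts (1), (3) and (4) are fine and follow the paper. Your pullback is exactly the sheaf built in the essential-surjectivity step of Proposition \ref{prop: minuscule sheaves vs minuscule BC spaces}, and your $\tau_*$-computation is the one used there. Part (3) is Proposition \ref{prop: G dualizable iff E(G) vb}, proved as you sketch via Beauville--Laszlo, Lemma \ref{lemma: minuscule lattices vs flags} and Lemma \ref{lemma: dual of vector bundles and shtukas}.

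The genuine gap is in (2), at the step you flag yourself: full faithfulness with respect to \emph{all} homomorphisms of abelian $v$-sheaves. In your dévissage along $0\to\V\otimes\Os_{X_S}\to\Es\to i_*E\to 0$, the obstruction to extending a map defined on $\V\otimes\Os_{X_S}$ lies in $\Ext^1_{X_S}(i_*E,\Es')$. Its image in $\Ext^1_{X_S}(i_*E,i_*E')$ is in general the class of a module over the square-zero thickening of $i(S)$. For $\Es=\Os_{X_S}(1)$ it is $\Os_{X_S}(1)\otimes\Os_{X_S}/\xi^2$, whose $\tau_*$ is the extension $0\to\Os_S(1)\to\B_{\dR}^+/\xi^2\to\Os_S\to 0$. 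So you need such classes to stay nonzero in $\Ext^1_{S_v}(E,E')$, i.e. that $\B_{\dR}^+/\xi^2$ does not split as an extension of abelian $v$-sheaves. This cannot be avoided. A splitting, composed with $(\B_{\crys}^+)^{\varphi=p}\to\B_{\dR}^+/\xi^2$, would give a homomorphism $(\B_{\crys}^+)^{\varphi=p}\to\Os_S(1)$ that is nonzero on $\Q_p(1)$. Such a map is not of the form $c\,\theta$, so it is not induced by any element of $\Hom_{\Os_{X_S}}(\Os_{X_S}(1),i_*\Os_S(1))$.

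None of the results you invoke supplies this non-splitting:
\begin{itemize}
\item Theorem \ref{thm: a fargues thm for BC spaces} only concerns presentation-preserving maps and $\Ext^1_{S_v}(E,\V)$.
\item Proposition \ref{Prop: extension from tate module to the whole BC space} only concerns maps into $\V'\otimes\widetilde{\G_m\langle p^{\infty}\rangle}$.
\item Lemma \ref{lemma: dual of vector bundles and shtukas} computes $\underline{\Ext}^1(i_*\omega(-1),\Os_{X_S}(1))=i_*\omega^{\vee}$, which is \emph{nonzero}, so there is no vanishing of $\Ext^1(i_*E,\Os_{X_S})$ to exploit.
\end{itemize}
What is really needed is control of $\Ext^1_{S_v}(\G_a,\G_a)$. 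That goes through Breen--Deligne resolutions and Scholze's computation of the $v$-cohomology of $\Os$ on affine spaces. Saying you would follow Le Bras names this input but does not provide it in the relative setting.

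The paper sidesteps the computation in Lemma \ref{lemma: tau is fully faithful in most cases}. Effective sheaves are perfect complexes with $R^j\tau_*\Es=0$ for $j>0$. Anschütz--Le Bras' theorem that $R\tau_*\colon\Perf(X_S)\to D(S_v,\Q_p)$ is fully faithful then gives $\underline{\Hom}(\BC(\Es),\BC(\Es'))=\tau_*\underline{\Hom}_{\Os_{X_S}}(\Es,\Es')$ directly. Independence of the presentation then follows formally, because $\BC$ is an exact equivalence (Proposition \ref{prop: minuscule sheaves vs minuscule BC spaces}). So does exactness for morphisms that do not respect presentations, which your remark about pullbacks of short exact sequences does not cover. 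Finally, your faithfulness argument presupposes that $\Es(-)$ is already defined on non-presentation-preserving morphisms. It should instead be phrased as faithfulness of $\tau_*$ on effective sheaves.
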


To allow for some degenerate cases, we introduce a weaker notion of modifications.

\begin{definition}\label{def: effective sheaf}
    Let $S$ be a perfectoid space over $\Q_p$. An effective sheaf on the Fargues--Fontaine curve $X_S$ is a sheaf $\Es$ of $\Os_{X_S}$-modules fitting in a short exact sequence
    % https://q.uiver.app/#q=WzAsNSxbMCwwLCIwIl0sWzEsMCwiXFxWXFxvdGltZXNfe1xcUV9wfVxcT3Nfe1hfe1N9fSJdLFsyLDAsIlxcRXMiXSxbMywwLCJpXypFIl0sWzQsMCwiMCwiXSxbMCwxXSxbMSwyXSxbMiwzXSxbMyw0XV0=
\[\begin{tikzcd}
	0 & {\V\otimes_{\Q_p}\Os_{X_{S}}} & \Es & {i_*E} & {0,}
	\arrow[from=1-1, to=1-2]
	\arrow[from=1-2, to=1-3]
	\arrow[from=1-3, to=1-4]
	\arrow[from=1-4, to=1-5]
\end{tikzcd}\]
where $\V$ is a $\Q_p$-local system on $S$ and $E$ is a vector bundle on $S$. 
\end{definition}

% As both $\V \otimes_{\Q_p}\Os_{X_S}$ and $i_*E$ have slopes $\leq 0$, it follows that also $\Es$ has slopes $\leq 0$. 
Note that we always have the trivial example $\Es = i_*E$. In particular, $\Es$ need not be a vector bundle in general.

\begin{definition}
    Let $\Es$ be an effective sheaf on $X_S$. Its associated Banach--Colmez space is defined to be the following $v$-sheaf
\[ \BC(\Es)\colon \Perf_{S} \rightarrow \Ab , \, \quad T \mapsto H^0(X_{T},\Es\otimes_{\Os_{X_S}}\Os_{X_{T}}).\]
\end{definition}
\begin{remark}
    Given an effective sheaf $\Es$ on $X_S$, we will sometimes use the notation 
    \[ \BC(\Es) = \tau_*\Es,\]
    where $\tau$ is the map (\ref{eq: the map tau}). To make this rigorous, one would need to consider the associated $v$-sheaf $\Es_v = \Es \otimes_{\Os_{X_S}} \Os_{X_{S,v}}$ and apply the functor $\tau_*$ to this sheaf instead.
    %However, we do not know whether $\nu_*\Es_v = \Es$ for the natural map of sites $\nu\colon X_{S,v} \rightarrow X_{S,\an}$. This can be reduced to the case where $\Es$ is a vector bundle, which would hold if $X_S$ was a good adic space, see Definition \ref{def: good adic spaces}. 
    Alternatively, one can view any effective sheaf $\Es$ as a perfect complex on $X_S$, and these satisfy $v$-descent with respect to $S$ by \cite[Prop. 2.4]{anschutz2021fourier}. This is enough to define a left exact functor $\tau_*$ on effective sheaves, with values in $v$-sheaves on $\Perf_S$. We will use this notation without comments in what follows.
\end{remark}

\begin{example}
    Consider the line bundle $\Os_{X_{S}}(1)$ with its modification (\ref{eq: sequence for OFF(1)})
    % https://q.uiver.app/#q=WzAsNSxbMCwwLCIwIl0sWzEsMCwiXFxRX3AoMSlcXG90aW1lcyBcXE9zX3tYX3tTfX0iXSxbMiwwLCIgXFxPc197WF97U319KDEpIl0sWzMsMCwiaV8qXFxPc19TIl0sWzQsMCwiMC4iXSxbMCwxXSxbMSwyXSxbMiwzXSxbMyw0XV0=
\[\begin{tikzcd}
	0 & {\Q_p(1)\otimes \Os_{X_{S}}} & { \Os_{X_{S}}(1)} & {i_*\Os_S} & {0.}
	\arrow[from=1-1, to=1-2]
	\arrow[from=1-2, to=1-3]
	\arrow[from=1-3, to=1-4]
	\arrow[from=1-4, to=1-5]
\end{tikzcd}\]
Then by the proof of \cite[II.2.2]{fargues2024geometrization}, passing to Banach--Colmez spaces yields back the sheaf of Example \ref{ex: univ cover of unipotent units}
\[ \BC(\Os_{X_{S}}(1)) = (\B_{\crys}^+)^{\varphi =p},\]
together with its associated short exact sequence
% https://q.uiver.app/#q=WzAsNSxbMiwwLCIoXFxCX3tcXGNyeXN9XispXntcXHZhcnBoaT1wfSJdLFszLDAsIiBcXE9zX1MiXSxbNCwwLCIwLiJdLFswLDAsIjAiXSxbMSwwLCJcXFFfcCgxKSJdLFswLDEsIlxcdGhldGEiXSxbMSwyXSxbMyw0XSxbNCwwXV0=
\[\begin{tikzcd}
	0 & {\Q_p(1)} & {(\B_{\crys}^+)^{\varphi=p}} & { \Os_S} & {0.}
	\arrow[from=1-1, to=1-2]
	\arrow[from=1-2, to=1-3]
	\arrow["\theta", from=1-3, to=1-4]
	\arrow[from=1-4, to=1-5]
\end{tikzcd}\]
\end{example}

\begin{lemma}\label{lemma: tau is fully faithful in most cases}
Let $S$ be a perfectoid space over $\Q_p$.
    \begin{enumerate}
        \item Let $\Es$ be an effective sheaf on $X_{S}$, then the presheaf $\BC(\Es)$ is an effective Banach--Colmez space (cf. Definition \ref{def: minuscule BC space}). Moreover,
        \[ R^j\tau_*\Es = 0, \quad \fa j> 0.\]
        \item Let $\Es'$ be another effective sheaf on $X_{S}$. Then we have an isomorphism
    \[ \underline{\Hom}_S(\BC(\Es),\BC(\Es')) = \tau_*\underline{\Hom}_{\Os_{X_{S}}}(\Es,\Es').\]
    \end{enumerate}
\end{lemma}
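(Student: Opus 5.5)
The plan is to apply $R\tau_*$ to the defining sequence $0 \to \V\otimes\Os_{X_S} \to \Es \to i_*E \to 0$ and compute each term. For the left term, $\tau_*(\V\otimes\Os_{X_S}) = \V$ by the equivalence (\ref{eq: equiv between ls and ss slope zero}). For the higher direct images, the statement is $v$-local on $S$, so we may assume $S$ is strictly totally disconnected and $\V \cong \Q_p^{\oplus r}$. It then suffices to know that $R^j\tau_*\Os_{X_S}=0$ for $j>0$, i.e.\ that $H^1(X_T,\Os_{X_T})=0$ for affinoid perfectoid $T$, which is the standard vanishing for slope $0$ bundles \cite[Prop. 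II.2.5]{fargues2024geometrization}. For the right term, $\tau_*i_*E = E$, viewed as a sheaf on $S_v$. Moreover $R^j\tau_* i_*E = R^j\nu_*$ of the $v$-bundle $E\otimes\Os_{S_v}$, evaluated on perfectoid test objects. This vanishes for $j>0$ because $H^j_v(T,\Os)=0$ on affinoid perfectoid $T$, and $i$ is a closed immersion, so $i_*$ is exact. The long exact sequence then gives $0\to \V\to \BC(\Es)\to E\to 0$ together with $R^j\tau_*\Es=0$ for $j>0$. This proves (1): $\BC(\Es)$ is an effective Banach--Colmez space with a presentation.

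For (2), I will first build the natural map $\tau_*\underline{\Hom}(\Es,\Es')\to \underline{\Hom}(\BC(\Es),\BC(\Es'))$ by functoriality of $\tau_*$. Injectivity and surjectivity are both $v$-local, so I again reduce to $S$ strictly totally disconnected with trivial $\V,\V',E,E'$. The approach is to dévissage in both variables using the two presentations. On the Banach--Colmez side this gives exact sequences in Hom and Ext, and the required vanishing inputs are:
\begin{itemize}
\item $\underline{\Hom}(E,\V')=0$, since a vector bundle has no nonzero maps to a locally profinite sheaf;
\item $\underline{\Hom}(\V,\V') = \underline{\Hom}(\V\otimes\Os_X,\V'\otimes\Os_X)$, from the same slope-$0$ equivalence;
\item $\underline{\Hom}(E,E')$ as $v$-sheaves equals $\Os$-linear maps, by the argument in the proof of Theorem \ref{thm: a fargues thm for BC spaces};
\item $\underline{\Ext}^1_{S_v}(E,\V') = \underline{\Hom}(E,\V'(-1)\otimes\Os)$, from the Remark after Theorem \ref{thm: a fargues thm for BC spaces}.
\end{itemize}
On the curve side, the matching statements are:
\begin{itemize}
\item $\Hom(i_*E,\V'\otimes\Os_X)=0$, since the target is torsion-free;
\item $\Hom(\V\otimes\Os_X, i_*E') = \Hom(\V, E')$;
\item $\Hom(i_*E,i_*E')=\Hom(E,E')$;
\item $\Ext^1_{X}(i_*E,\V'\otimes\Os_X) = \Hom(E,\V'\otimes \Os_S(-1))$, computed via $\underline{\Ext}^1(i_*\Os_S,\Os_X)=i_*\Os_S(-1)$, i.e.\ the normal bundle of the Cartier divisor. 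This is the computation already done in the proof of Lemma \ref{lemma: dual of vector bundles and shtukas}.
\end{itemize}
One could also argue directly that both sides classify the tuple data of Theorem \ref{thm: a fargues thm for BC spaces}. The concrete route is a five-lemma comparison of the two four-term Hom/Ext ladders obtained by filtering $\Es$ and $\Es'$.

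The main obstacle is checking that the $\Ext^1$ identifications on the curve side and on the Banach--Colmez side are compatible under $\tau_*$. Concretely, the connecting maps must agree: the extension class of $\Es'$ in $\Ext^1_X(i_*E',\V'\otimes\Os_X)$ has to map to the extension class of $\BC(\Es')$, i.e.\ to the map $f$ of diagram (\ref{eq: u f diagram for BC space}). I would test this on the universal example $\Os_{X_S}(1)$ with its sequence (\ref{eq: sequence for OFF(1)}), whose $\tau_*$ is the sequence for $(\B_{\crys}^+)^{\varphi=p}$, and then argue by linearity and naturality. A subtle point is that morphisms of Banach--Colmez spaces need not respect presentations. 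This is why the comparison has to go through the derived $\underline{\Hom}$ and cannot work only at the level of presented objects. I would handle it by computing $\underline{\Hom}(\BC(\Es),\BC(\Es'))$ through both filtrations at once, using the $R\tau_*$-vanishing from (1) to identify $R\Hom$ on both sides.
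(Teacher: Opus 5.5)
Your part (1) is the paper's argument and is fine; the detour through $R^j\nu_*$ is unnecessary, since $R^j\tau_*(i_*E)$ is computed by $H^j(X_T,i_*E_T)=H^j(T,E_T)$, which vanishes for affinoid perfectoid $T$.

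\emph{The gap is in (2).}

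The compatibility of connecting maps that you single out is not the real issue. By (1), $R\tau_*$ sends every term of both presentations to a sheaf in degree $0$. So if you define the comparisons $\Ext^i_{X_S}(A,B)\to\Ext^i_{S_v}(\tau_*A,\tau_*B)$ by derived functoriality of $R\tau_*$, all your ladders commute automatically.

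What the five lemma actually needs is \emph{injectivity} of these comparisons on the $\Ext^1$-terms. Your list only covers $\Ext^1(i_*E,\V'\otimes\Os_{X_S})$, where your test on $\Os_{X_S}(1)$ plus $\Os(S)$-linearity is fine. Whichever variable you filter first, the d\'evissage also requires injectivity of $\Ext^1_{X_S}(i_*E,i_*E')\to\Ext^1_{S_v}(E,E')$. You need it to compute $\underline{\Hom}(\BC(\Es),E')$ for general $\Es$, or equivalently to get injectivity on $\Ext^1(i_*E,\Es')$. This source is not zero: $\Ext^1_{X_S}(i_*\Os_S,i_*\Os_S(1))\cong\Os(S)$, generated by the first-order thickening of the Cartier divisor $S\hookrightarrow X_S$, whose completed stalk is $\B_{\dR}^+/\xi^2$.

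Concretely, take $\Es=\Os_{X_S}(1)$ and $\Es'=i_*\Os_S$. The curve side is $\Os(S)$. On the Banach--Colmez side you must exclude homomorphisms $(\B_{\crys}^+)^{\varphi=p}\to\G_a$ that restrict to a nonzero map $\Q_p(1)\to\G_a$. The pushout of $0\to\Q_p(1)\to(\B_{\crys}^+)^{\varphi=p}\to\G_a\to0$ along $\Q_p(1)\hookrightarrow\G_a(1)$ is $0\to\G_a(1)\to\B_{\dR}^+/\xi^2\to\G_a\to0$. So you must show that this extension and all its nonzero $\Os(S)$-multiples are non-split as abelian $v$-sheaves. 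Equivalently, every homomorphism $(\B_{\crys}^+)^{\varphi=p}\to\G_a$ must be an $\Os(S)$-multiple of $\theta$.

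This is a non-formal statement, of the type of Fargues' computation of quasi-logarithms, and nothing in your proposal supplies it. The vanishing of $R^j\tau_*$ from (1) only identifies the objects $R\tau_*\Es=\BC(\Es)$, not the Hom and Ext groups between them.

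The paper does no d\'evissage. It invokes the theorem of Ansch\"utz--Le Bras \cite[Cor. 3.10]{anschutz2021fourier} that $R\tau_*\colon\Perf(X_S)\to D(S_v,\Q_p)$ is fully faithful. This gives $R\tau_*R\underline{\Hom}(\Es,\Es')\simeq R\underline{\Hom}(R\tau_*\Es,R\tau_*\Es')$, and the paper then takes degree-$0$ cohomology using (1). That full faithfulness is exactly where the torsion--torsion $\Ext^1$ comparison is hidden. To make your route work, you would have to prove that comparison separately, for instance the non-splitting of $\B_{\dR}^+/\xi^2$ and its multiples over a general perfectoid base.
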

\begin{proof}
    \begin{enumerate}
        \item %  Let us first show that $\BC(\Es)$ is a $v$-sheaf. Let $S' \rightarrow S$ be a $v$-cover and let $S'' = S' \times_S S'$. We need to show that 
% \[ \Es(X_S) = \Ker(H^0(\Es \otimes_{\Os_{X_S}}\Os_{X_{S'}}) \xrightarrow{p_1^*-p_2^*} H^0(\Es \otimes_{\Os_{X_S}}\Os_{X_{S''}}). \]
Let us fix an exact sequence
        % https://q.uiver.app/#q=WzAsNSxbMCwwLCIwIl0sWzEsMCwiXFxWXFxvdGltZXNfe1xcUV9wfVxcT3Nfe1hfe1N9fSJdLFsyLDAsIlxcRXMiXSxbMywwLCJpXypFIl0sWzQsMCwiMC4iXSxbMCwxXSxbMSwyXSxbMiwzXSxbMyw0XV0=
\[\begin{tikzcd}
	0 & {\V\otimes_{\Q_p}\Os_{X_{S}}} & \Es & {i_*E} & {0.}
	\arrow[from=1-1, to=1-2]
	\arrow[from=1-2, to=1-3]
	\arrow[from=1-3, to=1-4]
	\arrow[from=1-4, to=1-5]
\end{tikzcd}\]
We obtain a left exact sequence of $v$-sheaves
    % https://q.uiver.app/#q=WzAsNCxbMCwwLCIwIl0sWzEsMCwiXFxCQyhcXFZcXG90aW1lc197XFxRX3B9XFxPc197WF97U319KSJdLFsyLDAsIlxcQkMoXFxFcykiXSxbMywwLCJcXEJDKGlfKkUpLiJdLFswLDFdLFsxLDJdLFsyLDNdXQ==
\[\begin{tikzcd}
	0 & {\BC(\V\otimes_{\Q_p}\Os_{X_{S}})} & {\BC(\Es)} & {\BC(i_*E).}
	\arrow[from=1-1, to=1-2]
	\arrow[from=1-2, to=1-3]
	\arrow[from=1-3, to=1-4]
\end{tikzcd}\]
By the equivalence (\ref{eq: equiv between ls and ss slope zero}), there is an isomorphism $\BC(\V\otimes_{\Q_p} \Os_{X_{S}})= \V$. It is moreover clear that $H^0(X_{S},i_*E) = H^0(S,E)$. As the formation of the closed Cartier divisor $S \hookrightarrow X_S$ commutes with pullbacks \cite[Remark II.1.5]{fargues2024geometrization}, we find that $\BC(i_*E) = E$. Finally, by \cite[Prop. II.2.5(ii)]{fargues2024geometrization}, the sheaf $R^1\tau_*(\V\otimes \Os_{X_{S}})$ vanishes, so that the above sequence is also right-exact. Hence $\BC(\Es)$ is an effective Banach--Colmez space. Finally, we have $H^j(X_S,i_*E) = H^j(S,E)$, so that also $R^j\tau_*(i_*E)=0$, for all $j>0$. Since vector bundles on $X_S$ only have cohomology in range $[0,1]$, see the beggining of \cite[§II.2]{fargues2024geometrization}, we conclude that $R^j\tau_*\Es=0$, for all $j>0$, as required.
\item We use the following result of Anschütz--Le Bras \cite[Cor. 3.10]{anschutz2021fourier}, which says that
    \[ R\tau_*\colon \Perf(X_{S}) \rightarrow D(S_v,\Q_p)\]
    is fully faithful. In other words, we have a quasi-isomorphism
    \[R\tau_* R\underline{\Hom}_{\Os_{X_{S}}}(\Es,\Es') \simeq R\underline{\Hom}_{S}(R\tau_*\Es,R\tau_*\Es').\]
    The result follows by looking at the degree zero part of the above quasi-isomorphism, using that, by the first point,
    \[ R\tau_*\Es = \BC(\Es)\]
    and similarly for $\Es'$.
    \end{enumerate}
\end{proof}

We now reach the following.
\begin{proposition}\label{prop: minuscule sheaves vs minuscule BC spaces}
    Let $S$ be a perfectoid space over $\Q_p$. The functor $\BC(-)$ defines an exact equivalence of categories
    \begin{align*}
         \{\,\text{effective sheaves on } X_S\,\}  \xrightarrow{\cong} \{\,\text{effective Banach--Colmez spaces } \Fs\rightarrow S\,\}.
     \end{align*}
\end{proposition}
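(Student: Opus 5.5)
The functor $\BC(-)$ is well-defined by Lemma \ref{lemma: tau is fully faithful in most cases}(1), and fully faithful by Lemma \ref{lemma: tau is fully faithful in most cases}(2), applied on every $T\in\Perf_S$ and evaluated on global sections. Two things remain: essential surjectivity, and exactness in both directions.

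For essential surjectivity, start with an effective Banach--Colmez space $\Fs$ with a presentation $0\to\V\to\Fs\to E\to 0$. Theorem \ref{thm: a fargues thm for BC spaces} attaches to it a morphism $f_{\Fs}\colon E\otimes\Os_{S_v}\to\V(-1)\otimes\Os_{S_v}$. By (\ref{eq: etale and v vb equivalent for perfectoids}), this is a morphism of étale vector bundles on the perfectoid space $S$. Imitating (\ref{eq: defining diagram for E(G)}), define $\Es$ as the fiber product of
\[ \V(-1)\otimes_{\Q_p}\Os_{X_S}(1)\to i_*(\V(-1)\otimes\Os_S)\xleftarrow{i_*f_{\Fs}} i_*E.\]
The bottom map is a twist of (\ref{eq: sequence for OFF(1)}) by $\V(-1)$, so it is surjective with kernel $\V\otimes\Os_{X_S}$. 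Hence $\Es$ fits in $0\to\V\otimes\Os_{X_S}\to\Es\to i_*E\to 0$, and $\Es$ is effective.

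Next, apply $\tau_*$. It preserves fiber products, and by the example preceding Lemma \ref{lemma: tau is fully faithful in most cases} we have $\BC(\Os_{X_S}(1))=(\B_{\crys}^+)^{\varphi=p}$ together with its $\theta$-sequence. Therefore $\BC(\Es)$ is the pullback of $\V(-1)\otimes(\B_{\crys}^+)^{\varphi=p}\to\V(-1)\otimes\G_a$ along $f_{\Fs}$. By the diagram (\ref{eq: u f diagram for BC space}) and the five lemma, $u_{\Fs}$ identifies $\Fs$ with exactly this pullback, so $\BC(\Es)\cong\Fs$. The step needing care is that the fiber product of $\Os_{X_S}$-modules commutes with $\tau_*$. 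This holds because $\tau_*$ is left exact, and $R^1\tau_*$ vanishes on the kernel $\V\otimes\Os_{X_S}$, which also gives surjectivity onto $E$.

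For exactness, suppose $0\to\Es_1\to\Es_2\to\Es_3\to0$ is exact. Lemma \ref{lemma: tau is fully faithful in most cases}(1) gives $R^1\tau_*\Es_1=0$, so $\BC$ is exact.

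The converse is where I expect the main obstacle. Given an exact sequence of Banach--Colmez spaces $0\to\BC(\Es_1)\to\BC(\Es_2)\to\BC(\Es_3)\to0$, I would use the full faithfulness of $R\tau_*$ on perfect complexes (\cite[Cor. 3.10]{anschutz2021fourier}). First, form the complex $\Es_1\to\Es_2\to\Es_3$ in $\Perf(X_S)$. Since $R\tau_*=\tau_*$ on effective sheaves, its image under $R\tau_*$ is an exact sequence, hence a distinguished triangle in $D(S_v,\Q_p)$. Full faithfulness then lifts this triangle: $\Es_1\to\Es_2\to\Es_3$ is a distinguished triangle in $\Perf(X_S)$ whose third map is determined by its image. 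Finally, read off the long exact cohomology sequence; since all terms are sheaves concentrated in degree $0$, the sequence $0\to\Es_1\to\Es_2\to\Es_3\to0$ is short exact. The subtle point to check is that a triangle in $D(S_v,\Q_p)$ whose objects and maps come from $\Perf(X_S)$ actually reflects to a distinguished triangle there. I would handle this by comparing with the cone of $\Es_1\to\Es_2$: it maps to $\Es_3$, and the map becomes an isomorphism after $R\tau_*$, so by full faithfulness it is an isomorphism.
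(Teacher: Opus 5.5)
Your proof is correct and follows the paper's argument. Full faithfulness and exactness of $\BC(-)$ come from Lemma \ref{lemma: tau is fully faithful in most cases}, and essential surjectivity comes from the same fiber product of $\V(-1)\otimes\Os_{X_S}(1)$ and $i_*E$ over $\V(-1)\otimes i_*\Os_S$, compared with the diagram (\ref{eq: u f diagram for BC space}). Your extra cone argument, which uses full faithfulness (hence conservativity) of $R\tau_*$ on $\Perf(X_S)$ to show that the inverse functor also preserves short exact sequences, is sound and makes explicit a point the paper leaves implicit.
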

\begin{proof}
    The functor is fully faithful and exact, by Lemma \ref{lemma: tau is fully faithful in most cases}, hence it only remains to show essential surjectivity. Given an effective Banach--Colmez space $\V \rightarrow \Fs \rightarrow E$, we have a commutative diagram with exact rows, by Theorem \ref{thm: a fargues thm for BC spaces}
    % https://q.uiver.app/#q=WzAsMTAsWzIsMCwiXFxGcyJdLFszLDAsIkUiXSxbMiwxLCJcXFYoLTEpXFxvdGltZXMgKFxcQl97XFxjcnlzfV4rKV57XFx2YXJwaGk9cH0iXSxbMywxLCJcXFYoLTEpXFxvdGltZXMgXFxPc19TIl0sWzEsMCwiXFxWIl0sWzAsMCwiMCJdLFs0LDAsIjAiXSxbMSwxLCJcXFYoLTEpXFxvdGltZXMgXFxRKDEpIl0sWzQsMSwiMC4iXSxbMCwxLCIwIl0sWzAsMiwidSJdLFsxLDMsImYiXSxbMiwzXSxbMCwxXSxbNCwwXSxbNSw0XSxbMSw2XSxbMyw4XSxbNywyXSxbNCw3XSxbOSw3XV0=
\[\begin{tikzcd}
	0 & \V & \Fs & E & 0 \\
	0 & {\V(-1)\otimes \Q(1)} & {\V(-1)\otimes (\B_{\crys}^+)^{\varphi=p}} & {\V(-1)\otimes \Os_S} & {0.}
	\arrow[from=1-1, to=1-2]
	\arrow[from=1-2, to=1-3]
	\arrow[from=1-2, to=2-2]
	\arrow[from=1-3, to=1-4]
	\arrow["u", from=1-3, to=2-3]
	\arrow[from=1-4, to=1-5]
	\arrow["f", from=1-4, to=2-4]
	\arrow[from=2-1, to=2-2]
	\arrow[from=2-2, to=2-3]
	\arrow[from=2-3, to=2-4]
	\arrow[from=2-4, to=2-5]
\end{tikzcd}\]
We now define a sheaf $\Es$ on $X_{S}$ as the following pullback
% https://q.uiver.app/#q=WzAsNCxbMCwwLCJcXEVzIl0sWzEsMCwiaV8qRSJdLFswLDEsIlxcVigtMSlcXG90aW1lcyBcXE9zX3tYX3tTfX0oMSkiXSxbMSwxLCJcXFYoLTEpXFxvdGltZXMgaV8qXFxPc19TLCJdLFswLDJdLFsxLDMsImlfKmYiXSxbMiwzXSxbMCwxXV0=
\[\begin{tikzcd}
	\Es & {i_*E} \\
	{\V(-1)\otimes \Os_{X_{S}}(1)} & {\V(-1)\otimes i_*\Os_S,}
	\arrow[from=1-1, to=1-2]
	\arrow[from=1-1, to=2-1]
	\arrow["{i_*f}", from=1-2, to=2-2]
	\arrow[from=2-1, to=2-2]
\end{tikzcd}\]
where we use the projection formula to write
\[ i_*(\V(-1)\otimes_{\Q_p} \Os_S) = \V(-1)\otimes_{\Q_p}i_*\Os_S. \]
It then follows from the exactness of $\BC(-)$ on effective sheaves that we have an isomorphism $\BC(\Es)=\Fs$.
\end{proof}

\begin{proposition}\label{prop: G dualizable iff E(G) vb}
    Let $S$ be a perfectoid space over $\Q_p$ and let $\Es$ be an effective sheaf on $X_S$. Then the associated Banach--Colmez space $\Fs=\BC(\Es)$ is dualizable in the sense of Definition \ref{def: dualizable BC spaces} if and only if $\Es$ is a vector bundle. In that case, the Cartier dual is
     \[ \Fs^D = \BC(\Es^{\vee}\otimes_{\Os_{X_S}}\Os_{X_S}(1)).\]
\end{proposition}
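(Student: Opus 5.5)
Fix a presentation $0 \to \V\otimes\Os_{X_S} \to \Es \to i_*E \to 0$ of the effective sheaf, so that $\Fs=\BC(\Es)$ comes with the presentation $\V \to \Fs \to E$ (Lemma \ref{lemma: tau is fully faithful in most cases}). By the proof of Proposition \ref{prop: minuscule sheaves vs minuscule BC spaces} and full faithfulness of $\BC$, $\Es$ is canonically the pullback of $i_*f_{\Fs}\colon i_*E\to \V(-1)\otimes i_*\Os_S$ along $\V(-1)\otimes\Os_{X_S}(1)\to \V(-1)\otimes i_*\Os_S$. Write $K=\Ker(f_{\Fs})$ and $Q=\Coker(f_{\Fs})$. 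These are sheaves on $S$; since $S$ is perfectoid, étale and $v$-vector bundles agree by (\ref{eq: etale and v vb equivalent for perfectoids}), so dualizability of $\Fs$ just means that $f_{\Fs}$ is injective with locally free cokernel, i.e. that $f_{\Fs}$ is a locally direct summand.

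\emph{Step 1 (the pullback square).} From the pullback square, $\Es\to\V(-1)\otimes\Os_{X_S}(1)$ has kernel $i_*K$ and cokernel $i_*Q$. In particular, this gives an exact sequence
\[ 0 \to i_*K \to \Es \to \V(-1)\otimes\Os_{X_S}(1) \to i_*Q \to 0. \]

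\emph{Step 2 (if $f_{\Fs}$ is a direct summand, $\Es$ is a vector bundle).} Then $K=0$, so $\Es$ is the kernel of the surjection of the vector bundle $\V(-1)\otimes\Os_{X_S}(1)$ onto $i_*Q$ with $Q$ locally free. Dually, via (\ref{eq: from modif to lattices}) and Lemma \ref{lemma: minuscule lattices vs flags}, $\Es$ corresponds to the minuscule lattice $q^{-1}(\Image f)$. Hence $\Es$ is a vector bundle, since a minuscule modification of a vector bundle is a vector bundle.

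\emph{Step 3 (if $\Es$ is a vector bundle, $f_{\Fs}$ is a direct summand).} Suppose $\Es$ is a vector bundle. Then $i_*K$ is a subsheaf of a vector bundle supported on a Cartier divisor. Locally, $\xi$ is a nonzerodivisor on a vector bundle but kills $i_*K$, which forces $K=0$. Now $\Es\subseteq \V(-1)\otimes\Os_{X_S}(1)=\xi^{-1}(\V\otimes\Os_{X_S})$ contains $\V\otimes\Os_{X_S}$. Taking completed stalks at $S$, the lattice $\Xi$ of $\Es$ is a minuscule lattice relative to $\Xi_0=\xi^{-1}(\V\otimes\BdR+)$, in the sense that $\xi\Xi_0\subseteq\Xi\subseteq\Xi_0$. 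Lemma \ref{lemma: minuscule lattices vs flags} then shows that $\Xi/\xi\Xi_0=\Image(f_{\Fs})$ is a locally direct summand of $\V(-1)\otimes\Os_S$. Hence $Q$ is locally free and $f_{\Fs}$ is a locally direct summand. The main subtlety I expect is in this step: making the Beauville--Laszlo identification of $\Es$ with this lattice rigorous when $\Es$ is only known to be a vector bundle. I would localize on affinoid $S$ and use (\ref{eq: from modif to lattices}) applied to the modification $\Es\dashrightarrow\V(-1)\otimes\Os_{X_S}(1)$, which is an isomorphism away from $S$.

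\emph{Step 4 (the Cartier dual).} Apply Lemma \ref{lemma: dual of vector bundles and shtukas}(1) to the minuscule modification $\V\otimes\Os_{X_S}\to\Es\to i_*E$. This gives $0\to \V^\vee(1)\otimes\Os_{X_S}\to \Es^\vee(1)\to i_*\omega^\vee\to0$, where $\omega(-1)=\Coker$ of $f_{\Fs}$, matching Definition \ref{def: dualizable BC spaces}. It remains to check that the map $f$ attached to this presentation is $g^\vee(1)$. For this, I would identify the analogous pullback square for $\Es^\vee(1)$ with the twisted dual of the square from Step 1. Then $\BC(\Es^\vee(1))$ has presentation tuple $(\V^\vee(1),\omega^\vee,g^\vee(1))$, so it is $\Fs^D$ by Theorem \ref{thm: a fargues thm for BC spaces} and Proposition \ref{prop: minuscule sheaves vs minuscule BC spaces}.
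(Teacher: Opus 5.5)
Your proposal is correct and follows the same route as the paper. You identify $\Es$ with the pullback square from the proof of Proposition~\ref{prop: minuscule sheaves vs minuscule BC spaces}, use Beauville--Laszlo glueing and Lemma~\ref{lemma: minuscule lattices vs flags} to see that $\Es$ is a vector bundle exactly when $f_{\Fs}$ is a locally direct summand, and obtain $\Fs^D$ by applying $\BC(-)$ to the twisted dual modification of Lemma~\ref{lemma: dual of vector bundles and shtukas}. Your $\xi$-torsion argument forcing $\Ker(f_{\Fs})=0$ when $\Es$ is locally free spells out a point the paper leaves implicit. One small correction: the dual map should read $g(1)^{\vee}=g^{\vee}(-1)$, as in the paper's diagram, not $g^{\vee}(1)$.
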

\begin{remark}
    We compare our Cartier duality with \cite{anschutz2021fourier}, where Anschütz--Le Bras define a category of “very nice stacks” $\Fs$ of $\Q_p$-vector spaces and define a dual 
    \[ \Fs^{\star} = \underline{\RHom}_{S_v}(\Fs,[S/\underline{\Q}_p]). \] 
    The relation with the Fargues--Fontaine curve is as follows: If $\Fs \rightarrow S$ is a dualizable effective Banach--Colmez space, with corresponding vector bundle $\Es$ on the curve $X_S$, then by \cite[Cor. 3.10]{anschutz2021fourier},
    \[ \Fs^{\star} = R\tau_*(\Es^{\vee})[1].\]
\end{remark}
\begin{proof}
Let us set $\Fs=\BC(\Es)$. By the proof of Proposition \ref{prop: minuscule sheaves vs minuscule BC spaces}, the sheaf $\Es$ fits in a commutative diagram
% https://q.uiver.app/#q=WzAsMTAsWzAsMCwiMCJdLFsxLDAsIlxcVlxcb3RpbWVzIFxcT3Nfe1hfU30iXSxbMiwwLCJcXEVzIl0sWzMsMCwiaV8qRSJdLFszLDEsImlfKlxcVigtMSlcXG90aW1lcyBcXE9zX1MiXSxbNCwwLCIwIl0sWzQsMSwiMCwiXSxbMiwxLCJcXFYoLTEpXFxvdGltZXNcXE9zX3tYX1N9KDEpIl0sWzAsMSwiMCJdLFsxLDEsIlxcVigtMSlcXG90aW1lc1xcUV9wKDEpIFxcb3RpbWVzIFxcT3Nfe1hfU30iXSxbMCwxXSxbMSwyXSxbMiwzXSxbMyw0LCJpXypmIl0sWzMsNV0sWzQsNl0sWzcsNF0sWzIsNywidSIsMl0sWzEsOSwiPSIsMl0sWzgsOV0sWzksN11d
\[\begin{tikzcd}
	0 & {\V\otimes \Os_{X_S}} & \Es & {i_*E} & 0 \\
	0 & {\V(-1)\otimes\Q_p(1) \otimes \Os_{X_S}} & {\V(-1)\otimes\Os_{X_S}(1)} & {i_*\V(-1)\otimes \Os_S} & {0,}
	\arrow[from=1-1, to=1-2]
	\arrow[from=1-2, to=1-3]
	\arrow["{=}"', from=1-2, to=2-2]
	\arrow[from=1-3, to=1-4]
	\arrow["u"', from=1-3, to=2-3]
	\arrow[from=1-4, to=1-5]
	\arrow["{i_*f}", from=1-4, to=2-4]
	\arrow[from=2-1, to=2-2]
	\arrow[from=2-2, to=2-3]
	\arrow[from=2-3, to=2-4]
	\arrow[from=2-4, to=2-5]
\end{tikzcd}\]
where $\V$ is a $\Q_p$-local system, $E$ is a vector bundle on $S$ and $f=f_{\Fs}$ is the map of Theorem \ref{thm: a fargues thm for BC spaces}. By Beauville--Laszlo glueing (\ref{eq: from modif to lattices}) and Lemma \ref{lemma: minuscule lattices vs flags}, we obtain that $\Es$ is a vector bundle if and only if the map of $S$-vector bundles $f\colon E \rightarrow \V(-1) \otimes \Os_S$ is a locally direct summand. In that case, by the Snake lemma, the map $u$ is injective with cokernel $i_*\omega(-1)$. By Lemma \ref{lemma: dual of vector bundles and shtukas} and its proof, the vector bundle $\Es^{\vee}\otimes \Os(1)$ fits in a minuscule modification with kernel $\V^{\vee}(1)\otimes \Os_X$ and cokernel $i_*\omega^{\vee}$, which embedds in a commutative diagram
% https://q.uiver.app/#q=WzAsMTAsWzEsMCwiXFxWXntcXHZlZX0oMSlcXG90aW1lcyBcXE9zX3tYX1N9Il0sWzIsMCwiXFxFc157XFx2ZWV9XFxvdGltZXMgXFxPc197WF9TfSgxKSJdLFszLDAsImlfKlxcb21lZ2Fee1xcdmVlfSJdLFswLDAsIjAiXSxbNCwwLCIwIl0sWzMsMSwiXFxWXntcXHZlZX1cXG90aW1lcyBpXypcXE9zX1MiXSxbMiwxLCJcXFZee1xcdmVlfVxcb3RpbWVzIFxcT3Nfe1hfU30oMSkiXSxbMSwxLCJcXFZee1xcdmVlfVxcb3RpbWVzIFxcUV9wKDEpXFxvdGltZXMgXFxPc197WF9TfSJdLFswLDEsIjAiXSxbNCwxLCIwLiJdLFszLDBdLFsxLDJdLFswLDEsIlxcYWxwaGEnIl0sWzIsNF0sWzIsNSwiZ157XFx2ZWV9KC0xKSJdLFsxLDZdLFs2LDVdLFs3LDZdLFswLDcsIj0iXSxbOCw3XSxbNSw5XV0=
\[\begin{tikzcd}
	0 & {\V^{\vee}(1)\otimes \Os_{X_S}} & {\Es^{\vee}\otimes \Os_{X_S}(1)} & {i_*\omega^{\vee}} & 0 \\
	0 & {\V^{\vee}\otimes \Q_p(1)\otimes \Os_{X_S}} & {\V^{\vee}\otimes \Os_{X_S}(1)} & {\V^{\vee}\otimes i_*\Os_S} & {0.}
	\arrow[from=1-1, to=1-2]
	\arrow["{\alpha'}", from=1-2, to=1-3]
	\arrow["{=}", from=1-2, to=2-2]
	\arrow[from=1-3, to=1-4]
	\arrow[from=1-3, to=2-3]
	\arrow[from=1-4, to=1-5]
	\arrow["{g^{\vee}(-1)}", from=1-4, to=2-4]
	\arrow[from=2-1, to=2-2]
	\arrow[from=2-2, to=2-3]
	\arrow[from=2-3, to=2-4]
	\arrow[from=2-4, to=2-5]
\end{tikzcd}\]
Applying the functor $\BC(-)$ and comparing with the defining diagram for the Cartier dual $\Fs^D$, we find that $\BC(\Es^{\vee}\otimes \Os_{X_S}(1))=\Fs^D$, as required.
\end{proof}

We now prove Theorem \ref{thm: vb on FF associated to an pdiv groups}.
\begin{proof}[Proof of Theorem \ref{thm: vb on FF associated to an pdiv groups}]
Given an analytic $p$-divisible group $\Gs$, its universal cover $\widetilde{\Gs}$ is an effective Banach--Colmez space, coming with the exact sequence $V_p\Gs \rightarrow \widetilde{\Gs} \rightarrow \Lie(\Gs)$. By Proposition \ref{prop: minuscule sheaves vs minuscule BC spaces}, there exists a canonically defined effective sheaf $\Es(\Gs)$, together with an exact sequence $ V_p\Gs \otimes \Os_{X_S} \rightarrow \Es(\Gs) \rightarrow i_*\Lie(\Gs)$ that recovers the above sequence upon applying the functor $\BC(-)$. Explicitely, the proof of Proposition \ref{prop: minuscule sheaves vs minuscule BC spaces} shows that $\Es(\Gs)$ is the fiber product
% https://q.uiver.app/#q=WzAsNCxbMCwwLCJcXEVzKFxcR3MpIl0sWzEsMCwiaV8qXFxMaWUoXFxHcykiXSxbMSwxLCJUX3BcXEdzKC0xKVxcb3RpbWVzX3tcXFpfcH0gaV8qXFxPc19TLCJdLFswLDEsIlRfcFxcR3MoLTEpXFxvdGltZXNfe1xcWl9wfSBcXE9zX3tYX1N9KDEpIl0sWzAsMV0sWzEsMiwiaV8qZl97XFxHc30iXSxbMywyXSxbMCwzXV0=
\[\begin{tikzcd}
	{\Es(\Gs)} & {i_*\Lie(\Gs)} \\
	{T_p\Gs(-1)\otimes_{\Z_p} \Os_{X_S}(1)} & {T_p\Gs(-1)\otimes_{\Z_p} i_*\Os_S,}
	\arrow[from=1-1, to=1-2]
	\arrow[from=1-1, to=2-1]
	\arrow["{i_*f_{\Gs}}", from=1-2, to=2-2]
	\arrow[from=2-1, to=2-2]
\end{tikzcd}\]
where $f_{\Gs}$ is the map associated with $\Gs$ from Definition \ref{def: u and f}. This proves the first two points. The third point follows from Proposition \ref{prop: G dualizable iff E(G) vb} below. Finally, if $G$ is a $p$-divisible group over $\Os_C$, the vector bundle $\Es(G)$ of Scholze--Weinstein agrees with our construction of $
\Es(G_{\eta})$, by the explicit formula  above Proposition $6.2.2$ in \cite{scholze2013moduli}.
\end{proof}

We now show that a polarization on an analytic $p$-divisible group $\Gs$ induces a pairing on the vector bundle $\Es(\Gs)$.

\begin{lemma}\label{lemma: pairings on vb induced by polarization}
    Let $S$ be a perfectoid space and let $\Gs \rightarrow S$ be a dualizable analytic $p$-divisible group. Let $\lambda \colon \Gs \rightarrow \Gs^D$ be a quasi-polarization (cf. Definition \ref{def: polarization}). Then $\lambda$ induces a perfect pairing of vector bundles
    \[ e\colon \Es(\Gs) \times \Es(\Gs) \rightarrow \Os_{X_S}(1).\]
    Moreover, the map
    \[ \alpha\colon V_p\Gs \otimes \Os_{X_S} \rightarrow \Es(\Gs)\]
    from the modification (\ref{eq: modification defining E(G)}) is compatible with the pairings on $V_p\Gs$ and $\Es(\Gs)$, as in the following diagram
    % https://q.uiver.app/#q=WzAsNCxbMCwwLCIoVl9wXFxHcyBcXG90aW1lcyBcXE9zX3tYX1N9KSBcXHRpbWVzIChWX3BcXEdzIFxcb3RpbWVzIFxcT3Nfe1hfU30pIl0sWzEsMCwiXFxRX3AoMSlcXG90aW1lcyBcXE9zX3tYX1N9Il0sWzAsMSwiXFxFcyhcXEdzKVxcdGltZXMgXFxFcyhcXEdzKSJdLFsxLDEsIlxcT3Nfe1hfU30oMSkuIl0sWzAsMV0sWzAsMiwiXFxhbHBoYSBcXHRpbWVzIFxcYWxwaGEiLDIseyJzdHlsZSI6eyJ0YWlsIjp7Im5hbWUiOiJob29rIiwic2lkZSI6InRvcCJ9fX1dLFsyLDNdLFsxLDMsIiIsMCx7InN0eWxlIjp7InRhaWwiOnsibmFtZSI6Imhvb2siLCJzaWRlIjoidG9wIn19fV1d
\[\begin{tikzcd}
	{(V_p\Gs \otimes \Os_{X_S}) \times (V_p\Gs \otimes \Os_{X_S})} & {\Q_p(1)\otimes \Os_{X_S}} \\
	{\Es(\Gs)\times \Es(\Gs)} & {\Os_{X_S}(1).}
	\arrow[from=1-1, to=1-2]
	\arrow["{\alpha \times \alpha}"', hook, from=1-1, to=2-1]
	\arrow[hook, from=1-2, to=2-2]
	\arrow[from=2-1, to=2-2]
\end{tikzcd}\]
\end{lemma}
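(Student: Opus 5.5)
The plan is to transport $\lambda$ through the functor $\Es(-)$ and then use the duality formula of Theorem \ref{thm: vb on FF associated to an pdiv groups}(3). First, $\Gs$ is dualizable, so $\Es(\Gs)$ is a vector bundle and there is a canonical identification $\Es(\Gs^D) = \Es(\Gs)^{\vee}\otimes_{\Os_{X_S}}\Os_{X_S}(1)$.

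A quasi-isogeny $\lambda$ is $p^{-n}$ times an isogeny analytic-locally. An isogeny $h$ induces a map on universal covers $\widetilde{h}\colon\widetilde{\Gs}\to\widetilde{\Gs^D}$, and this map is an isomorphism because $[p^n]$ is invertible on $\widetilde{\Gs}$ and $\Ker(h)\sub\Gs[p^{\infty}]$. Hence $\widetilde{\lambda}$ is a well-defined isomorphism of effective Banach--Colmez spaces. By Theorem \ref{thm: vb on FF associated to an pdiv groups}(2) (equivalently Proposition \ref{prop: minuscule sheaves vs minuscule BC spaces}), $\widetilde{\lambda}$ yields an isomorphism $\Es(\lambda)\colon \Es(\Gs)\xrightarrow{\cong}\Es(\Gs^D)=\Es(\Gs)^{\vee}(1)$, which is compatible with the modifications because it lives over $V_p\lambda$ on the local systems. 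Currying $\Es(\lambda)$ gives a pairing $e\colon\Es(\Gs)\times\Es(\Gs)\to\Os_{X_S}(1)$, and $e$ is perfect since $\Es(\lambda)$ is an isomorphism. The pairing is alternating, corresponding to $\lambda^D=-\lambda$, provided the identification of Theorem \ref{thm: vb on FF associated to an pdiv groups}(3) is compatible with biduality $(\Gs^D)^D=\Gs$. I will verify this on the defining diagrams of Lemma \ref{lemma: dual of vector bundles and shtukas}.

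For the compatibility with $\alpha$, I unwind the identification $\Es(\Gs^D)=\Es(\Gs)^{\vee}(1)$ from the proof of Proposition \ref{prop: G dualizable iff E(G) vb}. The modification $\alpha'$ for $\Es(\Gs)^{\vee}(1)$ is obtained by twisted-dualizing the inclusion $u\colon\Es(\Gs)\hookrightarrow V_p\Gs(-1)\otimes\Os_{X_S}(1)$. As a result, the restriction of $\alpha'$ to $V_p\Gs^{\vee}(1)\otimes\Os_{X_S}$ is the twisted dual of $u\circ\alpha$, and $u\circ\alpha$ is the identity $V_p\Gs\otimes\Q_p(1)(-1)\otimes\Os\to V_p\Gs(-1)\otimes\Os(1)$ composed with $\Q_p(1)\otimes\Os_{X_S}\hookrightarrow\Os_{X_S}(1)$. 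Consequently, pairing $\alpha(x)$ with $\alpha(y)$ under $e$ equals the image in $\Os_{X_S}(1)$ of the local-system pairing $V_p\Gs\times V_p\Gs\to\Q_p(1)$ induced by $V_p\lambda$, which is exactly the claimed commutative square. The vertical injectivity of $\alpha\times\alpha$ and of $\Q_p(1)\otimes\Os\hookrightarrow\Os(1)$ comes from the sequences (\ref{eq: modification defining E(G)}) and (\ref{eq: sequence for OFF(1)}).

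I expect the main obstacle to be bookkeeping. The hard part is showing that the duality isomorphism of Theorem \ref{thm: vb on FF associated to an pdiv groups}(3) restricts on $V_p$ to the Weil-pairing identification $V_p\Gs^D=V_p\Gs^{\vee}(1)$, with correct signs, and that it is natural in $\Gs$ so that $\Es(\lambda)$ matches $V_p\lambda$. I would reduce this check to $S$ strictly totally disconnected, where all objects are trivialized and every map is explicit, and then descend. The descent is possible because both sides are vector bundles on $X_S$, which satisfy $v$-descent.
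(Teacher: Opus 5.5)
Your proposal is correct and follows essentially the paper's argument: $\lambda$ gives a presentation-preserving isomorphism $\widetilde{\Gs}\cong\widetilde{\Gs^D}$, hence an isomorphism of modifications $\Es(\Gs)\cong\Es(\Gs^D)=\Es(\Gs)^{\vee}\otimes\Os_{X_S}(1)$ under which $\alpha^D$ becomes the twisted dual of $u$. Currying, together with the observation that $u\circ\alpha$ is the inclusion induced by $\Q_p(1)\otimes\Os_{X_S}\hookrightarrow\Os_{X_S}(1)$, then gives $e(\alpha(x),y)=e'(x,u(y))$ and hence the square. The extra checks you anticipate (alternation, and matching the $V_p$-part with $V_p\Gs^{\vee}(1)$ after reducing to strictly totally disconnected $S$) are not needed for the statement: $T_p\Gs^D=T_p\Gs^{\vee}(1)$ holds by the definition of the Cartier dual, and the identification of $\alpha^D$ is read off directly from Proposition~\ref{prop: G dualizable iff E(G) vb} and Lemma~\ref{lemma: dual of vector bundles and shtukas}.
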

\begin{proof}
    The quasi-polarization $\lambda$ is immediately seen to induce a presentation-preserving isomorphism $\widetilde{\Gs} \cong \widetilde{\Gs}^D$ and an isomorphism of modifications
    % https://q.uiver.app/#q=WzAsMTAsWzEsMCwiVl9wXFxHc1xcb3RpbWVzIFxcT3Nfe1hfU30iXSxbMCwwLCIwIl0sWzIsMCwiXFxFcyhcXEdzKSJdLFszLDAsImlfKlxcTGllKFxcR3MpIl0sWzQsMCwiMCJdLFsxLDEsIlZfcFxcR3NeRFxcb3RpbWVzIFxcT3Nfe1hfU30iXSxbMCwxLCIwIl0sWzIsMSwiXFxFcyhcXEdzXkQpIl0sWzMsMSwiaV8qXFxMaWUoXFxHc15EKSJdLFs0LDEsIjAuIl0sWzEsMF0sWzAsMiwiXFxhbHBoYSJdLFsyLDNdLFszLDRdLFswLDUsIlxcY29uZyJdLFs2LDVdLFs1LDcsIlxcYWxwaGFeRCIsMl0sWzIsNywiXFxjb25nIl0sWzMsOCwiXFxjb25nIl0sWzcsOF0sWzgsOV1d
\[\begin{tikzcd}
	0 & {V_p\Gs\otimes \Os_{X_S}} & {\Es(\Gs)} & {i_*\Lie(\Gs)} & 0 \\
	0 & {V_p\Gs^D\otimes \Os_{X_S}} & {\Es(\Gs^D)} & {i_*\Lie(\Gs^D)} & {0.}
	\arrow[from=1-1, to=1-2]
	\arrow["\alpha", from=1-2, to=1-3]
	\arrow["\cong", from=1-2, to=2-2]
	\arrow[from=1-3, to=1-4]
	\arrow["\cong", from=1-3, to=2-3]
	\arrow[from=1-4, to=1-5]
	\arrow["\cong", from=1-4, to=2-4]
	\arrow[from=2-1, to=2-2]
	\arrow["{\alpha^D}"', from=2-2, to=2-3]
	\arrow[from=2-3, to=2-4]
	\arrow[from=2-4, to=2-5]
\end{tikzcd}\]
By Proposition \ref{prop: G dualizable iff E(G) vb} and Lemma \ref{lemma: dual of vector bundles and shtukas}, the map $\alpha^D$ is identified with
% https://q.uiver.app/#q=WzAsMixbMCwwLCJcXHVuZGVybGluZXtcXEhvbX0oVl9wXFxHcygtMSlcXG90aW1lcyBcXE9zX3tYX1N9KDEpLFxcT3Nfe1hfU30oMSkpIl0sWzEsMCwiXFx1bmRlcmxpbmV7XFxIb219KFxcRXMoXFxHcyksXFxPc197WF9TfSgxKSkuIl0sWzAsMSwidV4qIl1d
\[\begin{tikzcd}
	{\underline{\Hom}(V_p\Gs(-1)\otimes \Os_{X_S}(1),\Os_{X_S}(1))} & {\underline{\Hom}(\Es(\Gs),\Os_{X_S}(1)).}
	\arrow["{u^*}", from=1-1, to=1-2]
\end{tikzcd}\]
Here, $u$ is the left vertical map in (\ref{eq: defining diagram for E(G)}). By exchanging variables, the middle vertical map induces a pairing 
\[ e\colon \Es(\Gs) \times \Es(\Gs) \rightarrow \Os_{X_S}(1)\]
satisfying for any local sections $x\in V_p\Gs$ and $y\in \Es(\Gs)$, 
\[ e(\alpha(x),y) = e'(x,u(y)),\]
where 
\[ e'\colon (V_p\Gs \otimes \Os_{X_S}) \times (V_p\Gs(-1) \otimes \Os_{X_S}(1)) \rightarrow \Os_{X_S}(1)\]
is the unique $\Os_{X_S}$-bilinear pairing extending the pairing on $V_p\Gs$. Upon noticing that the composition
% https://q.uiver.app/#q=WzAsMyxbMCwwLCJWX3BcXEdzIFxcb3RpbWVzIFxcT3Nfe1hfU30iXSxbMSwwLCJcXEVzKFxcR3MpIl0sWzIsMCwiVl9wXFxHcygtMSkgXFxvdGltZXMgXFxPc197WF9TfSgxKSJdLFsxLDIsInUiXSxbMCwxLCJcXGFscGhhIl1d
\[\begin{tikzcd}
	{V_p\Gs \otimes \Os_{X_S}} & {\Es(\Gs)} & {V_p\Gs(-1) \otimes \Os_{X_S}(1)}
	\arrow["\alpha", from=1-1, to=1-2]
	\arrow["u", from=1-2, to=1-3]
\end{tikzcd}\]
agrees with the natural inclusion induced by $\Q_p(1)\otimes \Os_{X_S} \hookrightarrow \Os_{X_S}(1)$, we obtain the desired relation.
\end{proof}

Using $v$-descent, the constructions of this subsection may be extended to non-perfectoid bases.
\begin{cor}\label{cor: BdR+ lattice assoc to pdiv groups in general}
    Let $\Gs$ be an analytic $p$-divisible group over a good adic space $S/\Q_p$. Then there exists a functorial $\B_{\dR}^+$-module $\Xi(\Gs)$ on $S_v$ together with a short exact sequence on $S_v$
    % https://q.uiver.app/#q=WzAsNSxbMSwwLCJUX3BcXEdzIFxcb3RpbWVzX3tcXFpfcH1cXEJfe1xcZFJ9XisiXSxbMiwwLCJcXFhpKFxcR3MpIl0sWzMsMCwiXFxMaWUoXFxHcykiXSxbMCwwLCIwIl0sWzQsMCwiMC4iXSxbMywwXSxbMSwyXSxbMCwxXSxbMiw0XV0=
\[\begin{tikzcd}
	0 & {T_p\Gs \otimes_{\Z_p}\B_{\dR}^+} & {\Xi(\Gs)} & {\Lie(\Gs)} & {0.}
	\arrow[from=1-1, to=1-2]
	\arrow[from=1-2, to=1-3]
	\arrow[from=1-3, to=1-4]
	\arrow[from=1-4, to=1-5]
\end{tikzcd}\]
In particular, it induces a natural isomorphism
\begin{align}\label{eq: BdeR lattice of G iso to tate module}  \Xi(\Gs)[\xi^{-1}] \cong T_p\Gs \otimes_{\Z_p} \B_{\dR}.
    \end{align}
The sheaf $\Xi(\Gs)$ is functorial in $\widetilde{\Gs}$. If $\Gs$ is dualizable, $\Xi(\Gs)$ is a finite locally free $\B_{\dR}^+$-module.
\end{cor}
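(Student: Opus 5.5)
The plan is to define $\Xi(\Gs)$ directly by a pullback formula on $S_v$ that mirrors the defining diagram (\ref{eq: defining diagram for E(G)}) for $\Es(\Gs)$. This avoids having to descend objects from the Fargues--Fontaine curve. We use the map $f_{\Gs}\colon \Lie(\Gs)\otimes_{\Os_{S_{\et}}}\Os_{S_v}\rightarrow T_p\Gs(-1)\otimes_{\Z_p}\Os_{S_v}$ of Definition \ref{def: u and f} together with the surjection $\theta\colon \xi^{-1}(T_p\Gs\otimes_{\Z_p}\B_{\dR}^+)\rightarrow T_p\Gs(-1)\otimes_{\Z_p}\Os_{S_v}$, obtained from $\gr^{-1}\B_{\dR}=\Os_{S_v}(-1)$. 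We then set
\[ \Xi(\Gs) \coloneqq \xi^{-1}(T_p\Gs\otimes_{\Z_p}\B_{\dR}^+)\times_{T_p\Gs(-1)\otimes\Os_{S_v}} \big(\Lie(\Gs)\otimes\Os_{S_v}\big). \]
This fiber product is canonical: the twist $\xi^{-1}\B_{\dR}^+/\B_{\dR}^+\cong\Os_{S_v}(-1)$ does not depend on the choice of $\xi$. Since the bottom map is surjective with kernel $T_p\Gs\otimes\B_{\dR}^+$, the pullback has kernel $T_p\Gs\otimes\B_{\dR}^+$ and surjects onto $\Lie(\Gs)\otimes\Os_{S_v}$. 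This gives the short exact sequence in the statement. Inverting $\xi$ kills the $\xi$-torsion-free cokernel issue: since $\Lie(\Gs)$ is killed by $\xi$, we get $\Xi(\Gs)[\xi^{-1}]=T_p\Gs\otimes\B_{\dR}$, which is (\ref{eq: BdeR lattice of G iso to tate module}).

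Functoriality in $\Gs$ and compatibility with base change along $S'\rightarrow S$ both follow from the naturality of $f_{\Gs}$, which comes from Theorem \ref{thm: extending Fargues' equivalence of categories}. When $S$ is perfectoid, this construction agrees with the completed stalk of $\Es(\Gs)$ at $S$. Indeed, the completed stalk of $\Os_{X_S}(1)$ is $\xi^{-1}(\Q_p(1)\otimes\B_{\dR}^+)$ by the example following Lemma \ref{lemma: minuscule lattices vs flags}, and taking completed stalks is exact and commutes with the pullback (\ref{eq: defining diagram for E(G)}). This identification lets us import the remaining properties from Theorem \ref{thm: vb on FF associated to an pdiv groups}, since everything can be checked $v$-locally on perfectoid covers.

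For functoriality in $\widetilde{\Gs}$, we reduce to the perfectoid case. There, Theorem \ref{thm: vb on FF associated to an pdiv groups}(2) shows that $\Es(\Gs)$, and hence its completed stalk $\Xi(\Gs)$, is functorial in $\widetilde{\Gs}$. To pass to a general good $S$, we need these morphisms to be compatible with base change between perfectoid spaces over $S$, so that they glue to a morphism of $v$-sheaves on $S_v$; here we use that $\Xi(\Gs)$ is a $v$-sheaf by construction. This gluing is the main obstacle. The functoriality in $\widetilde{\Gs}$ does not preserve presentations, so it cannot come from the pullback formula above. Instead, we will use that the fully faithful functor of Proposition \ref{prop: minuscule sheaves vs minuscule BC spaces} is natural in $S$: $\BC$ commutes with pullback, and $\Es$ is recovered from $\BC(\Es)$ canonically. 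The morphism $\Es(\Gs)\rightarrow\Es(\Gs')$ induced by $\widetilde{\Gs}\rightarrow\widetilde{\Gs}'$ is then unique, hence compatible with base change, and descends.

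Finally, suppose $\Gs$ is dualizable. Then $v$-locally on a perfectoid cover, $f_{\Gs}$ is a locally direct summand, so Lemma \ref{lemma: minuscule lattices vs flags} shows that $\Xi(\Gs)$ is a minuscule lattice, which is finite locally free over $\B_{\dR}^+$. Since finite local freeness is a $v$-local condition, it follows that $\Xi(\Gs)$ is finite locally free on $S_v$.
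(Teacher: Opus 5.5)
Your proposal is correct, but it is organized differently from the paper's proof. The paper only defines $\Xi(\Gs)$ over perfectoid bases, as the completed stalk $\Es(\Gs)\otimes_{\Os_{X_S}}\B_{\dR}^+$. For a general good $S$ it then descends the module, its short exact sequence and its functoriality along a $v$-cover $S'\rightarrow S$ with $S'\times_S S'$ perfectoid. In that approach every property, including functoriality in $\widetilde{\Gs}$ and local freeness (from $\Es(\Gs)$ being a vector bundle), is inherited at once from Theorem \ref{thm: vb on FF associated to an pdiv groups}.

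You instead write $\Xi(\Gs)$ down globally on $S_v$ as the fiber product of $\xi^{-1}(T_p\Gs\otimes_{\Z_p}\B_{\dR}^+)$ and $\Lie(\Gs)\otimes\Os_{S_v}$ over $f_{\Gs}$. This makes the following immediate, with no descent:
\begin{itemize}
\item the exact sequence;
\item the isomorphism after inverting $\xi$ (the relevant point is that the cokernel $\Lie(\Gs)\otimes\Os_{S_v}$ is $\xi$-torsion, not that anything is torsion-free);
\item functoriality in $\Gs$ and base change;
\item local freeness in the dualizable case, directly from Lemma \ref{lemma: minuscule lattices vs flags}.
\end{itemize}
The curve is then needed only for functoriality in $\widetilde{\Gs}$. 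There your gluing-by-uniqueness argument via Proposition \ref{prop: minuscule sheaves vs minuscule BC spaces} is the same mechanism the paper's descent relies on.

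The one step you assert but should justify is that $-\otimes_{\Os_{X_S}}\B_{\dR}^+$ is exact on the modification sequence of $\Es(\Gs)$, which is what identifies your fiber product with the completed stalk. It holds because $\xi$ is a nonzerodivisor on $\B_{\dR}^+$ and $\Lie(\Gs)$ is locally a direct summand of a free $\Os_S$-module, so the relevant $\mathrm{Tor}_1$ vanishes. The paper uses this same fact implicitly when it asserts the short exact sequence over perfectoid bases.
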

\begin{proof}
    If $S$ is perfectoid, we let $\Xi(\Es) = \Es(\Gs) \otimes_{\Os_{X_{S}}} \B_{\dR}^+$. This construction clearly commutes with pullback along morphisms of perfectoid spaces $S' \rightarrow S$, using that the formation of $\Es(\Gs)$ commutes with pullback. In general, we fix a $v$-cover $S' \rightarrow S$ by a perfectoid space so that $S' \times_S S'$ is again perfectoid. Then $\Xi(\Gs_{S'})$ together with its short exact sequence is naturally endowed with a descent data to $S$, and thus descends to a $\B_{\dR}^+$-module $\Xi(\Gs)$ on $S$ that has the required properties.
\end{proof}

We conclude this subsection with the following explicit formula for the Cartier dual of an effective Banach--Colmez space.
\begin{proposition}\label{prop: explicit formula for cartier dual}
    Let $S$ be a locally spatial diamond over $\Q_p$ and let $\Fs \rightarrow S$ be a dualizable effective Banach--Colmez space. Then there is a natural isomorphism
        \[ \Fs^D = \underline{\Hom}_{S_v}(\Fs,(\B_{\crys}^+)^{\varphi =p}).\]
\end{proposition}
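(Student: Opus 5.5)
We want to show $\Fs^D = \underline{\Hom}_{S_v}(\Fs,(\B_{\crys}^+)^{\varphi=p})$ for a dualizable presented effective Banach--Colmez space $\V \rightarrow \Fs \rightarrow E$ over $S$. Our plan is to construct a natural map from $\Fs^D$ to the Hom-sheaf using the pairing $e_{\Fs^D}$, and then check that it is an isomorphism $v$-locally, comparing both sides through their presentations.

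\textbf{Step 1: the map.} By the corollary following Proposition \ref{Prop: extension from tate module to the whole BC space}, the dual $\Fs^D$, with presentation $\V^{\vee}(1) \rightarrow \Fs^D \rightarrow \omega^{\vee}$, carries a pairing
\[ e_{\Fs^D}\colon \Fs^D \times \V(-1)^{\vee}(1)\cdot(\text{twist}) \rightarrow (\B_{\crys}^+)^{\varphi=p}, \]
that is, a pairing with $(\V^{\vee}(1))^{\vee}(1) = \V$. Currying it gives
\[ \Fs^D \rightarrow \underline{\Hom}(\V,(\B_{\crys}^+)^{\varphi=p}), \]
and by Proposition \ref{Prop: extension from tate module to the whole BC space} (with $\V'=\Q_p$) the target equals $\underline{\Hom}(\Fs,(\B_{\crys}^+)^{\varphi=p})$. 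This produces the natural map $\Phi\colon \Fs^D \rightarrow \underline{\Hom}(\Fs,(\B_{\crys}^+)^{\varphi=p})$. Naturality follows from the uniqueness of the extended pairings.

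\textbf{Step 2: the Hom-sheaf via its presentation.} Apply $\underline{\Hom}(-,(\B_{\crys}^+)^{\varphi=p})$ to $0\rightarrow \V\rightarrow\Fs\rightarrow E\rightarrow 0$. We have $\underline{\Hom}(E,(\B_{\crys}^+)^{\varphi=p})=0$, as in the proof of Proposition \ref{Prop: extension from tate module to the whole BC space}. Next, $\underline{\Hom}(\V,(\B_{\crys}^+)^{\varphi=p}) = \V^{\vee}\otimes(\B_{\crys}^+)^{\varphi=p}$, which sits in the sequence $\V^{\vee}(1)\rightarrow \V^{\vee}\otimes(\B_{\crys}^+)^{\varphi=p}\xrightarrow{\theta}\V^{\vee}\otimes\Os_{S_v}$ from (\ref{eq: fund eq padic HT}). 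The identification with $\underline{\Hom}(\Fs,-)$ is the restriction along $\V\hookrightarrow\Fs$, and by the proof of Proposition \ref{Prop: extension from tate module to the whole BC space} this is an isomorphism. It therefore suffices to show that $\Phi$ composed with restriction,
\[ \Fs^D \rightarrow \V^{\vee}\otimes(\B_{\crys}^+)^{\varphi=p}, \]
is an isomorphism.

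\textbf{Step 3: the crux.} This composite is exactly $u_{\Fs^D}$ from the diagram (\ref{eq: u f diagram for BC space}) for $\Fs^D$: it is the identity on $\V^{\vee}(1)$, and on quotients it is $f_{\Fs^D}=g^{\vee}(1)\colon\omega^{\vee}\rightarrow \V^{\vee}\otimes\Os_{S_v}$. By the snake lemma, $u_{\Fs^D}$ is an isomorphism if and only if $f_{\Fs^D}$ is, and this fails whenever $E\neq 0$, since $f_{\Fs^D}$ is only injective. So $\Phi$ must be re-examined: the correct target is cut out by the condition that $\theta\circ h$ vanishes on $E$. Concretely, a homomorphism $h\colon\Fs\rightarrow(\B_{\crys}^+)^{\varphi=p}$ induces a map $E\rightarrow\Os_{S_v}$ on quotients, and the real content is that this derivative is automatically zero. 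We will check that $\theta\circ h$ restricted to $\V$ factors through $\V(-1)\otimes\Os\twoheadrightarrow\omega\otimes\Os(-1)$, i.e. that it lies in the image of $g^{\vee}(1)$. Since the sequence for $\Fs$ in (\ref{eq: u f diagram for BC space}) forces $\theta\circ h|_{\V}\circ f = D h|_E$, and $E$-valued sections are étale while the composite $E\rightarrow\Os_{S_v}$ must factor through $\underline{\Hom}(E,(\B_{\crys}^+)^{\varphi=p})=0$ after lifting, the condition should follow. This compatibility, where the image of $u_{\Fs^D}$ is identified with the subsheaf of those $h$ extending from $\V$, is the main obstacle. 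We will verify it $v$-locally on strictly totally disconnected $S$ with $\V$ trivial, computing explicitly via $\BC$ of vector bundles, using Proposition \ref{prop: G dualizable iff E(G) vb} and Lemma \ref{lemma: tau is fully faithful in most cases}:
\[ \underline{\Hom}(\BC(\Es),\BC(\Os(1)))=\BC(\Es^{\vee}(1))=\Fs^D. \]
This curve-theoretic computation in fact gives the isomorphism directly when $S$ is perfectoid, and the general locally spatial case then follows by $v$-descent, since both sides are $v$-sheaves.
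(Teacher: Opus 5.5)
Your closing paragraph is exactly the paper's proof, and it is complete by itself. For perfectoid $S$, Proposition~\ref{prop: G dualizable iff E(G) vb} gives $\Fs=\BC(\Es)$ with $\Es$ a vector bundle and $\Fs^D=\BC(\underline{\Hom}_{\Os_{X_S}}(\Es,\Os_{X_S}(1)))$. Lemma~\ref{lemma: tau is fully faithful in most cases}(2) rewrites this as $\underline{\Hom}_{S_v}(\BC(\Es),\BC(\Os_{X_S}(1)))=\underline{\Hom}_{S_v}(\Fs,(\B_{\crys}^+)^{\varphi=p})$. Since these identifications are compatible with pullback in $S$, they descend to any locally spatial diamond. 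You should make this the whole argument and discard Steps 1--3, which do not work as written.

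The trouble is the identification you import in Steps 1 and 2 from Proposition~\ref{Prop: extension from tate module to the whole BC space} with $\V'=\Q_p$: that restriction along $\V\hookrightarrow\Fs$ gives $\underline{\Hom}(\Fs,(\B_{\crys}^+)^{\varphi=p})\cong\V^{\vee}\otimes(\B_{\crys}^+)^{\varphi=p}$. This fails as soon as $E\neq 0$.

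Your own snake-lemma remark in Step 3 already shows it would contradict the statement being proved. Concretely, take $\Fs=(\B_{\crys}^+)^{\varphi=p}$ presented with $\V=\Q_p(1)$ and $E=\Os_S$. The left side is $\tau_*\underline{\Hom}(\Os_{X_S}(1),\Os_{X_S}(1))=\underline{\Q_p}$, while the right side is $\Q_p(-1)\otimes(\B_{\crys}^+)^{\varphi=p}$. Restriction to $\V$ is only injective (because $\underline{\Hom}(E,(\B_{\crys}^+)^{\varphi=p})=0$). Identifying its image with $u_{\Fs^D}(\Fs^D)=\theta^{-1}(\Image f_{\Fs^D})$ is the entire content of the proposition.

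So the map $\Phi$ of Step 1 is never actually constructed: currying $e_{\Fs^D}$ only lands in $\underline{\Hom}(\V,(\B_{\crys}^+)^{\varphi=p})$. The heuristics of Step 3 (``the derivative is automatically zero'', ``factor through $\underline{\Hom}(E,(\B_{\crys}^+)^{\varphi=p})=0$ after lifting'') establish neither inclusion:
\begin{itemize}
\item Showing $\theta\circ h|_{\V}\in\Image f_{\Fs^D}$ for every $h$ defined on all of $\Fs$ needs an input like ``every homomorphism $(\B_{\crys}^+)^{\varphi=p}\to\G_a$ is an $\Os_S$-multiple of $\theta$, hence kills $\Q_p(1)$''.
\item Showing that every element of $u_{\Fs^D}(\Fs^D)$ extends to $\Fs$ needs, in effect, a pairing $\Fs\times\Fs^D\to(\B_{\crys}^+)^{\varphi=p}$.
\end{itemize}
Both are most easily obtained on the curve: the first from Lemma~\ref{lemma: tau is fully faithful in most cases}(2), the second by applying $\BC$ to $\Es\times\Es^{\vee}(1)\to\Os_{X_S}(1)$. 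That is why the curve computation in your last paragraph is the right proof.
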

\begin{proof}
When $S$ is perfectoid, if $\Es$ is the corresponding vector bundle on $X_S$, we have, by Proposition \ref{prop: G dualizable iff E(G) vb}
\[ \Fs^D = \BC(\Es^D) = \BC(\underline{\Hom}_{\Os_{X_S}}(\Es,\Os_{X_S}(1)).\]
By Lemma \ref{lemma: tau is fully faithful in most cases}(2), this is equal to
\[ \underline{\Hom}_{S_v}(\BC(\Es),\BC(\Os_{X_S}(1))) = \underline{\Hom}_{S_v}(\Fs,(\B_{\crys}^+)^{\varphi=p}).\]
Since these isomorphisms are natural in $S$, they descend to an isomorphism over an arbitrary base $S$, as required.
\end{proof}

\subsection{Analytic $p$-divisible groups and local shtukas}\label{subsect: Analytic $p$-divisible groups and vector bundles}
We are now ready to define our Dieudonné functor. We start with Theorem \ref{thm: equivalence for dualizable groups on perfd} in the case where the base is perfectoid, and we conclude with Theorem \ref{thm: analytic Dieudonné theory in general} which deals with arbitrary bases.

\begin{thm}\label{thm: equivalence for dualizable groups on perfd}
    Let $S$ be a perfectoid space over $\Q_p$, then there exists canonical exact equivalences of categories between the following:
    \begin{enumerate}
        \item dualizable analytic $p$-divisible groups $\Gs \rightarrow S$,
        \item pairs $(\Lb,E)$, where $\Lb$ is a $\Z_p$-local system on $S$ and $E \hookrightarrow\Lb(-1)\otimes_{\Z_p} \Os_v$ is a locally direct summand,
        \item pairs $(\Lb,\Xi)$ consisting of a $\Z_p$-local system $\Lb$ and a minuscule $\B_{\dR}^+$-lattice $\Lb \otimes_{\Z_p} \B_{\dR}^+ \sub \Xi \sub \xi^{-1}(\Lb \otimes_{\Z_p}\B_{\dR}^+)$,
        \item tuples $(\Lb,\Es,\alpha)$ consisting of a $\Z_p$-local system $\Lb$, a vector bundle $\Es$ on $X_S$ and a minuscule modification $\alpha\colon \Lb\otimes_{\Z_p} \Os_{X_S} \dashrightarrow \Es$, and
        \item minuscule shtukas $(\Ms,\varphi_{\Ms})$ over $\mathcal{Y}_S$ with one leg at $\varphi^{-1}(S)$.
    \end{enumerate}
\end{thm}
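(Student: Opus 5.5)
The plan is to chain together equivalences already established, going $(1)\Leftrightarrow(2)\Leftrightarrow(3)\Leftrightarrow(4)\Leftrightarrow(5)$. In each step the $\Z_p$-local system $\Lb$ is carried along unchanged, and only the second datum is transformed.

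\emph{$(1)\Leftrightarrow(2)$.} We restrict Theorem \ref{thm: extending Fargues' equivalence of categories} to dualizable groups. Over a perfectoid $S$, étale and $v$-vector bundles agree by (\ref{eq: etale and v vb equivalent for perfectoids}). Hence a triple $(\Lb,E,f)$ with $f$ a locally direct summand is the same as a sub-$v$-vector bundle $E\hookrightarrow \Lb(-1)\otimes\Os_v$ that is locally a direct summand. Moreover, by Example \ref{Examples: dualizable groups}(3), dualizability is exactly this condition. On morphisms, the pair $(a,b)$ is determined by $b$, since $f$ is injective, and $a$ exists precisely when $b(-1)\otimes\id$ preserves $E$. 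So the morphisms in (2) should be defined as maps of local systems preserving the subbundles, and the restriction is then full.

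\emph{$(2)\Leftrightarrow(3)$.} We apply Lemma \ref{lemma: minuscule lattices vs flags} with $\Xi_0=\xi^{-1}(\Lb\otimes\B_{\dR}^+)$, twisted so that $D_0=\Xi_0/\xi\Xi_0\cong \Lb(-1)\otimes\Os_S$, using $\xi^{-1}\B_{\dR}^+/\B_{\dR}^+\cong\Os(-1)$. The subbundle $E$ then corresponds to the lattice $\Xi$ that is the preimage of $E$ under $\xi^{-1}(\Lb\otimes\B_{\dR}^+)\to \Lb(-1)\otimes\Os$. We need a little care with the indexing so that $\Lb\otimes\B_{\dR}^+\subseteq\Xi\subseteq\xi^{-1}(\Lb\otimes\B_{\dR}^+)$ holds with $\Xi/(\Lb\otimes\B_{\dR}^+)=E$. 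The lemma's correspondence is stated between $\Xi$ and $\xi\Xi_0$; replacing $\Xi_0$ by the appropriate lattice handles this.

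\emph{$(3)\Leftrightarrow(4)$ and $(4)\Leftrightarrow(5)$.} For the first, we use Beauville--Laszlo gluing (\ref{eq: from modif to lattices}) with $\Es_0=\Lb\otimes\Os_{X_S}$, reversing direction so that the modification is $\Lb\otimes\Os_{X_S}\hookrightarrow\Es$ with cokernel $i_*E$. This matches Definition \ref{def: minuscule modification} via the remark that minuscule modifications correspond to minuscule lattices. Compatibility with (1) comes from Theorem \ref{thm: vb on FF associated to an pdiv groups}: the pullback diagram (\ref{eq: defining diagram for E(G)}) shows that the completed stalk of $\Es(\Gs)$ is exactly the lattice from (2)/(3), so that the composite functor $(1)\to(4)$ is $\Gs\mapsto(T_p\Gs,\Es(\Gs),\alpha)$. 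The second equivalence is Proposition \ref{prop: from modification to shtukas}, including its minuscule refinement.

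It remains to check exactness. Each functor visibly preserves the sequences of local systems, and the remaining task is to compare sequences of Lie algebras, of lattices and of bundles on $X_S$. Exactness of $(4)\Leftrightarrow(5)$ is part of Proposition \ref{prop: from modification to shtukas}. For $(1)\Leftrightarrow(4)$, we use Proposition \ref{prop: minuscule sheaves vs minuscule BC spaces} together with the logarithm sequence: a sequence of groups is exact if and only if both the $T_p$ and $\Lie$ sequences are exact, by the snake lemma applied to the sequence (\ref{eq: logarithm exact sequence for pdiv groups}), and the modification (\ref{eq: modification defining E(G)}) then gives exactness of the bundles, and conversely via $\tau_*$. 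I expect the main obstacle to be this exactness bookkeeping, in particular the converse direction: from an exact sequence of bundles or shtukas, recovering exactness of the local systems and of the Lie algebras. For the local systems I would reuse the reduction to products of points in the proof of Proposition \ref{prop: from modification to shtukas}. For the Lie algebras, exactness then follows from the snake lemma applied to the modification sequences.
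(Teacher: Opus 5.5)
Your proposal is correct and follows the same chain as the paper: $(1)\Leftrightarrow(2)$ by restricting Theorem \ref{thm: extending Fargues' equivalence of categories} to dualizable groups, using that étale and $v$-vector bundles agree over a perfectoid $S$; $(2)\Leftrightarrow(3)$ by Lemma \ref{lemma: minuscule lattices vs flags}; $(3)\Leftrightarrow(4)$ by Beauville--Laszlo gluing; and $(4)\Leftrightarrow(5)$ by Proposition \ref{prop: from modification to shtukas}. Your additional checks go beyond what the paper writes but are sound: the exactness bookkeeping, and the identification of the composite $(1)\to(4)$ with $\Gs\mapsto(T_p\Gs,\Es(\Gs),\alpha)$ (and with your choice $\Xi_0=\xi^{-1}(\Lb\otimes\B_{\dR}^+)$ you already have $\xi\Xi_0=\Lb\otimes\B_{\dR}^+$, so no further re-indexing is needed).
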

\begin{remark}
    The reader is invited to compare the above theorem with Fargues' theorem \cite[Thm. 14.1.1]{SW20}. The main difference is that, in contrast with the case $S=\Spa(C,\Os_C)$, the above objects are no longer equivalent to minuscule Breuil--Kisin--Fargues modules over $\Ainf$. 
\end{remark}
\begin{proof}
    The first two categories are equivalent by Theorem \ref{thm: extending Fargues' equivalence of categories}. Here, we use that $S$ is perfectoid, so that étale and $v$-vector bundles agree. Hence, an analytic $p$-divisible group $\Gs$ is dualizable if and only if the map $f_{\Gs}$ is a locally direct summand. The equivalence between $(2)$ and $(3)$ follow from Lemma \ref{lemma: minuscule lattices vs flags}. It follows from Beauvilles--Laszlo glueing (\ref{eq: from modif to lattices}) that the third and fourth categories are also equivalent. Finally, it follows from Proposition \ref{prop: from modification to shtukas} that the fourth and fifth categories are equivalent. 
\end{proof}

We now prove an analogue of this result over arbitrary good adic spaces. For this, consider the category fibered in groupoid
\[ \Sht\colon S \in \Perf_{\Q_p} \mapsto \{\text{local shtukas } (\Ms,\varphi_{\Ms}) \text{ over } X_S \text{ with a leg at }\varphi^{-1}(S) \}.\]
We let $\Sht_{\min}$ denote the substack consisting of minuscule objects in the sense of Definition \ref{def: minuscule shtuka}. Then by \cite[Prop. 19.5.3]{SW20}, the prestacks $\Sht$ and $\Sht_{\min}$ are $v$-stacks, which can easily seen to be small by arguing as in the proof of \cite[III.1.3]{fargues2024geometrization}. Therefore, it makes sense to evaluate them on arbitrary locally spatial diamonds. 

Let $S$ be a good adic space over $\Q_p$. By Proposition \ref{prop: from modification to shtukas}, given an object $(\Ms,\varphi_{\Ms}) \in \Sht_{\min}(S)$, the rule
\[ T \in \Perf_S \mapsto \varphi_{\Ms_T}(\varphi^*\Ms_T)/\Ms_T\]
defines a $v$-vector bundle $E(\Ms)$ over $S$. We will abbreviate it as $E(\Ms) =\varphi_{\Ms}(\varphi^*\Ms)/\Ms$.
\begin{definition}\label{def: analytic Dieudonné crystal}
    Let $S$ be a good adic space over $\Q_p$. An analytic Dieudonné crystal over $S$ is a minuscule shtuka $(\Ms,\varphi_{\Ms}) \in \Sht_{\min}(S)$ such that the $v$-vector bundles $E(\Ms)$ and $E(\Ms^{\vee} \otimes_{\Os_{\Yss_S}} \Os_{\Yss_S}\{1\})$
        arise from étale vector bundles over $S$ (which are necessarily unique, by (\ref{eq: diamond functor ff on good adic spaces})).
\end{definition}

We obtain the following result.
\begin{thm}\label{thm: analytic Dieudonné theory in general}
    Let $S$ be a good adic space over $\Q_p$, then there is a canonical equivalence of categories 
    \begin{align*}
 \left\{\text{\begin{tabular}{l} {\parbox{4.4cm}{dualizable analytic $p$-divisible groups $\Gs$ over $S$}}\end{tabular}}\right\}
         \xlongrightarrow{\cong} \left\{\text{\begin{tabular}{l} {\parbox{4.1cm}{analytic Dieudonné crystal $(\Ms,\varphi_{\Ms})$ over $S$}}\end{tabular}}\right\}.
    \end{align*}
The functor and its inverse are exact and are compatible with Cartier duality in the following sense:
\[ (\Ms_{\Gs^D},\varphi_{\Ms_{\Gs^D}}) = (\Ms_{\Gs},\varphi_{\Ms_{\Gs}})^{\vee}\otimes_{\Os_{\Yss_S}} \Os_{\Yss_S}\{1\}.\]
\end{thm}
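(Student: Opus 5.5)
The plan is to reduce to the perfectoid case, Theorem \ref{thm: equivalence for dualizable groups on perfd}, by $v$-descent along a hypercover of $S$ by perfectoid spaces, keeping track of the étale-versus-$v$ discrepancy exactly as in the definition of analytic Dieudonné crystals.

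\textbf{Step 1: constructing the functor.} Let $\Gs \rightarrow S$ be dualizable, and pick a $v$-cover $S' \rightarrow S$ by a perfectoid space such that $S'' = S'\times_S S'$ is perfectoid. Theorem \ref{thm: equivalence for dualizable groups on perfd} then gives a minuscule shtuka $\Ms_{\Gs_{S'}}$. The equivalence is canonical and compatible with pullback along maps of perfectoid spaces: every step (Theorem \ref{thm: extending Fargues' equivalence of categories}, Lemma \ref{lemma: minuscule lattices vs flags}, Beauville--Laszlo, Proposition \ref{prop: from modification to shtukas}) commutes with base change. Hence $\Ms_{\Gs_{S'}}$ carries a descent datum. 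Since $\Sht_{\min}$ is a $v$-stack, this yields $\Ms_{\Gs}\in\Sht_{\min}(S)$, independent of choices.

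By Proposition \ref{prop: from modification to shtukas}, we have $E(\Ms_\Gs) = \Lie(\Gs)\otimes\Os_{S_v}$, which comes from an étale bundle. For the dual condition, I use Lemma \ref{lemma: dual of vector bundles and shtukas}(2) together with Proposition \ref{prop: G dualizable iff E(G) vb}. On perfectoids these identify $\Ms^\vee\otimes\Os\{1\}$ with the shtuka of $\Gs^D$. Therefore $E(\Ms^\vee\otimes\Os\{1\}) = \omega^\vee\otimes\Os_{S_v}$, where $\omega$ is the étale bundle in Definition \ref{def: dualizable}. This also gives the Cartier duality formula, first on perfectoids and then by descent.

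\textbf{Step 2: the quasi-inverse.} Given an analytic Dieudonné crystal, Theorem \ref{thm: equivalence for dualizable groups on perfd} over $S'$ produces the following data:
\begin{itemize}
\item a $\Z_p$-local system $\Lb$, which descends since $\Z_p$-local systems form a $v$-stack;
\item a $v$-vector bundle $E$ on $S$, which by hypothesis comes from a unique étale bundle;
\item an injection $f\colon E\otimes\Os_{S_v}\hookrightarrow \Lb(-1)\otimes\Os_{S_v}$ whose cokernel is $E(\Ms^\vee\otimes\Os\{1\})(-1)$, again étale by hypothesis.
\end{itemize}
Theorem \ref{thm: extending Fargues' equivalence of categories} then gives an analytic $p$-divisible group $\Gs$ over $S$. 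It is dualizable precisely because of the second condition in Definition \ref{def: analytic Dieudonné crystal}.

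\textbf{Step 3: full faithfulness.} Morphisms on the group side are pairs $(a,b)$ with $a$ a map of étale bundles. By full faithfulness of $\Vect(S_\et)\rightarrow\Vect(S_v)$ on good adic spaces, these are the same as $v$-descended pairs. On the shtuka side, morphisms descend because $\Sht_{\min}$ is a stack. So both Hom-sheaves agree with the descended perfectoid ones, and the two constructions are mutually inverse.

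\textbf{Step 4: exactness.} Exactness can be checked $v$-locally, where it follows from the exactness statements in Proposition \ref{prop: from modification to shtukas} and Theorem \ref{thm: equivalence for dualizable groups on perfd}.

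The main obstacle I expect is the identification in Step 1: checking that $E(\Ms^\vee\otimes\Os\{1\})$ is exactly the twisted $\omega^\vee$, and that the étaleness hypotheses match dualizability. This requires careful bookkeeping of Tate twists through Lemma \ref{lemma: dual of vector bundles and shtukas}, and of the leg at $\varphi^{-1}(S)$ rather than at $S$.
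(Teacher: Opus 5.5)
Your proposal is correct and follows essentially the same route as the paper. The paper likewise reduces via $v$-descent to Theorem~\ref{thm: equivalence for dualizable groups on perfd}, matches the two étaleness conditions of Definition~\ref{def: analytic Dieudonné crystal} with $\Lie(\Gs)$ and $\omega_{\Gs^D}$ being étale (i.e.\ with dualizability through Theorem~\ref{thm: extending Fargues' equivalence of categories}), and reads off the duality formula from Lemma~\ref{lemma: dual of vector bundles and shtukas} and Proposition~\ref{prop: G dualizable iff E(G) vb}. One small slip: in Step 2 the cokernel of $f$ is $E(\Ms^{\vee}\otimes\Os_{\Yss_S}\{1\})^{\vee}(-1)$, i.e.\ $\omega(-1)$, not $E(\Ms^{\vee}\otimes\Os_{\Yss_S}\{1\})(-1)$; this is harmless, since the dual of an étale bundle is étale.
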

\begin{remark}
    If $S$ is any locally spatial diamond over $\Q_p$, the above theorem and its proof extend without change to classify dualizable analytic $p$-divisible groups over $S$ in the sense of Remark \ref{rmk: p-divisible groups over LSD}.
\end{remark}
\begin{proof}
By Theorem \ref{thm: extending Fargues' equivalence of categories} and the definition of dualizability, it follows that dualizable analytic $p$-divisible groups over $S$ are equivalent to a functorial rule $\Gs$ associating to a perfectoid space $T \rightarrow S$ a dualizable analytic $p$-divisible group $\Gs_T$ over $T$, subject to the condition that the $v$-vector bundles $T \mapsto \Lie(\Gs_T)$ and $T \mapsto \omega_{\Gs_T^D}$ arise from étale vector bundles over $S$. The equivalence then follows directly from Theorem \ref{thm: equivalence for dualizable groups on perfd}. The formulas for the dual follow from Lemma \ref{lemma: dual of vector bundles and shtukas} and (the proof of) 
Proposition \ref{prop: G dualizable iff E(G) vb}.
\end{proof}

\subsection{The de Rham module}
We now define the de Rham module $D(\Gs)$ of an analytic $p$-divisible group $\Gs$, as well as its Hodge filtration.

\begin{definition}\label{def: Dieudonné module}
    Let $S$ be a good adic space over $\Q_p$ and let $\Gs \rightarrow S$ be an analytic $p$-divisible group. We define the de Rham module of $\Gs$ to be the $\Os_{S_v}$-module
    \[ D(\Gs) = \Xi(\Gs)\otimes_{\B_{\dR}^+,\theta} \Os_{S_v},\]
    where $\Xi(\Gs)$ is the $\B_{\dR}^+$-module associated to $\Gs$ (cf. Corollary \ref{cor: BdR+ lattice assoc to pdiv groups in general}).
\end{definition}
It follows by the above corollary that the sheaf $D(\Gs)$ is functorial in $\widetilde{\Gs}$ and that it is a $v$-vector bundle if $\Gs$ is dualizable. We stress that $D(\Gs)$ need not be an étale vector bundle. Later in Proposition \ref{prop: VpG de rham vs MG étale vb}, we show that for a dualizable $S$-group $\Gs$, where $S$ is smooth rigid space $S$ over a $p$-adic field, $D(\Gs)$ is an étale vector bundle if and only if $V_p\Gs$ is a de Rham local system.

Let $S$ be a perfectoid space over $\Q_p$. For any sheaf of $\Os_{X_{S}}$-modules $\Es$ on $X_{S}$, there is a canonical map $\Es \rightarrow i_*i^*\Es$. Here, $i^*$ stands for pullback of $\Os$-modules. If $\Es$ is an effective sheaf (cf. Definition \ref{def: effective sheaf}), this map induces a map between their associated Banach--Colmez spaces, natural in $\Es$
\begin{align}\label{eq: qlog in general} 
\qlog\colon \BC(\Es) \rightarrow \BC(i_*i^*\Es) = i^*\Es.\end{align}
This map is called the \emph{quasi-logarithm}.
% By descent, we obtain a map
% \begin{align}
%     \qlog\colon \Fs \rightarrow \BC(i_*i^*\Es) = i^*\Es.
% \end{align}
% we can glue this construction to arbitrary minuscule Banach---Colmez space $\Fs$ over a locally spatial diamond $S$

\begin{definition}
    Let $\Gs$ be an analytic $p$-divisible group over a good adic space $S/\Q_p$. The quasi-logarithm of $\Gs$ 
\begin{align} 
\qlog_{\Gs}\colon \widetilde{\Gs} \rightarrow D(\Gs),
\end{align}
is the map of $v$-sheaves obtained from (\ref{eq: qlog in general}) applied to the sheaf $\Es=\Es(\Gs)$ and using $v$-descent.
\end{definition}

We now check that the Dieudonné module has the expected properties.

\begin{lemma}\label{lemma: Hodge filtration on Dieudonné module}
Let $\Gs \rightarrow S$ be a dualizable analytic $p$-divisible group. The Dieudonné module carries a natural Hodge filtration
    % https://q.uiver.app/#q=WzAsNSxbMSwwLCJcXG9tZWdhX3tcXEdzXkR9Il0sWzIsMCwiRChcXEdzKSJdLFszLDAsIlxcTGllKFxcR3MpIl0sWzAsMCwiMCJdLFs0LDAsIjAuIl0sWzMsMF0sWzEsMl0sWzAsMV0sWzIsNF1d
\begin{equation}\label{eq: Hodge filtration}\begin{tikzcd}
	0 & {\omega_{\Gs^D}} & {D(\Gs)} & {\Lie(\Gs)} & {0.}
	\arrow[from=1-1, to=1-2]
	\arrow[from=1-2, to=1-3]
	\arrow[from=1-3, to=1-4]
	\arrow[from=1-4, to=1-5]
\end{tikzcd}\end{equation}
It fits in a commutative diagram of $v$-sheaves with exact rows
% https://q.uiver.app/#q=WzAsMTAsWzEsMCwiVl9wXFxHcyJdLFsyLDAsIlxcd2lkZXRpbGRle1xcR3N9Il0sWzMsMCwiXFxMaWUoXFxHcykiXSxbMCwwLCIwIl0sWzQsMCwiMCJdLFsxLDEsIlxcb21lZ2Ffe1xcR3NeRH0iXSxbMiwxLCJEKFxcR3MpIl0sWzMsMSwiXFxMaWUoXFxHcykiXSxbNCwxLCIwLiJdLFswLDEsIjAiXSxbMywwXSxbMSwyXSxbMCwxXSxbMiw0XSxbMSw2LCJcXHFsb2ciXSxbMCw1XSxbNSw2XSxbNiw3XSxbMiw3LCI9Il0sWzcsOF0sWzksNV1d
\begin{equation}\label{eq: hodge filtration and quasilogarithm}\begin{tikzcd}
	0 & {V_p\Gs} & {\widetilde{\Gs}} & {\Lie(\Gs)} & 0 \\
	0 & {\omega_{\Gs^D}} & {D(\Gs)} & {\Lie(\Gs)} & {0.}
	\arrow[from=1-1, to=1-2]
	\arrow[from=1-2, to=1-3]
	\arrow[from=1-2, to=2-2]
	\arrow[from=1-3, to=1-4]
	\arrow["\qlog", from=1-3, to=2-3]
	\arrow[from=1-4, to=1-5]
	\arrow["{=}", from=1-4, to=2-4]
	\arrow[from=2-1, to=2-2]
	\arrow[from=2-2, to=2-3]
	\arrow[from=2-3, to=2-4]
	\arrow[from=2-4, to=2-5]
\end{tikzcd}\end{equation}
Here, the left vertical map is the composition
\[ V_p\Gs \rightarrow V_p\Gs \otimes_{\Q_p} \Os_S\xrightarrow{f_{\Gs^D}^{\vee}} \omega_{\Gs^D},\]
where $f_{\Gs^D}$ is the map associated to $\Gs^D$ under Theorem \ref{thm: extending Fargues' equivalence of categories}.
\end{lemma}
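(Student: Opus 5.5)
The plan is to reduce to the case where $S$ is perfectoid and argue on the Fargues--Fontaine curve, then descend. Since $D(\Gs)$, $\qlog_{\Gs}$, $\Lie(\Gs)$ and $\omega_{\Gs^D}$ are all defined by $v$-descent from the perfectoid case, and the constructions are natural in $S$, it suffices to construct the filtration and the diagram for $S$ perfectoid, functorially in $S$. Uniqueness of the resulting maps then lets everything descend along a $v$-cover $S'\to S$ with $S'\times_S S'$ perfectoid. Over such $S$, Theorem \ref{thm: vb on FF associated to an pdiv groups} says that $\Es=\Es(\Gs)$ is a vector bundle with the minuscule modification (\ref{eq: modification defining E(G)}), and $D(\Gs)=i^*\Es$. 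This uses $\Xi(\Gs)=\Es\otimes\B_{\dR}^+$ and reduction along $\theta$.

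To get the Hodge filtration, apply $i^*$ (which is right exact) to $0\to T_p\Gs\otimes\Os_{X_S}\to\Es\to i_*\Lie(\Gs)\to0$. This gives
\[ T_p\Gs\otimes\Os_S \rightarrow i^*\Es \rightarrow i^*i_*\Lie(\Gs)=\Lie(\Gs)\rightarrow 0. \]
The image of the first map has to be identified with $\omega_{\Gs^D}$, and this map factors through $f_{\Gs^D}^{\vee}$. I would work with the lattice picture: $\Xi_0=T_p\Gs\otimes\B_{\dR}^+\subset\Xi\subset\xi^{-1}\Xi_0$. Then
\[ D(\Gs)=\Xi/\xi\Xi, \qquad \text{and the map is } \Xi_0/\xi\Xi_0\to\Xi/\xi\Xi. \]
Its kernel is $\xi\Xi/\xi\Xi_0\cong\Xi/\Xi_0(1)=\Lie(\Gs)(1)$, embedded via $f_{\Gs}(1)$. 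Its image is therefore $\Xi_0/\xi\Xi$, which is the cokernel of $f_{\Gs}(1)\colon\Lie(\Gs)(1)\hookrightarrow T_p\Gs\otimes\Os_S$. By the dualizability sequence (\ref{eq: HT filtration on Tate module}) this cokernel is $\omega_{\Gs^D}$, with the quotient map being $g(1)=f_{\Gs^D}^{\vee}$ by the definition of $\Gs^D$. The cokernel is $\Xi/\Xi_0=\Lie(\Gs)$, so the bottom row (\ref{eq: Hodge filtration}) is exact, and the composite $V_p\Gs\to\omega_{\Gs^D}$ has the stated form. Since $S$ is perfectoid, both pieces are vector bundles, and (\ref{eq: etale and v vb equivalent for perfectoids}) handles étale versus $v$.

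For the commutative diagram, apply $\BC(-)=\tau_*$ to the map of sequences from $0\to V_p\Gs\otimes\Os_{X_S}\to\Es\to i_*\Lie\to0$ to $i_*$ of the sequence just obtained, given by adjunction $\Es\to i_*i^*\Es$. By Lemma \ref{lemma: tau is fully faithful in most cases}, the top row becomes (\ref{eq: inv limit of log}) and the middle map becomes $\qlog$ by definition. On the right, the map is $i_*\Lie\to i_*i^*i_*\Lie$, which is the identity. On the left, the map $V_p\Gs\otimes\Os_{X_S}\to i_*i^*(V_p\Gs\otimes\Os_{X_S})\to i_*\omega_{\Gs^D}$ yields on $\tau_*$ the composite $V_p\Gs\to V_p\Gs\otimes\Os_S\to\omega_{\Gs^D}$.

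The main obstacle I expect is the careful identification of the image with $\omega_{\Gs^D}$ together with the precise Tate twists and signs, i.e. matching $\Xi_0/\xi\Xi$ with the cokernel of $f_{\Gs}(1)$ compatibly with the definition of $g$. Here I would use Lemma \ref{lemma: minuscule lattices vs flags} and the identification $\xi\B_{\dR}^+/\xi^2\B_{\dR}^+\cong\Os(1)$.
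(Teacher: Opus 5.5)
Your proposal is correct, but it identifies the kernel of $D(\Gs)\to\Lie(\Gs)$ with $\omega_{\Gs^D}$ by a different route than the paper. The commutative diagram comes from the unit $\Es(\Gs)\to i_*i^*\Es(\Gs)$, which is exactly how the paper defines and uses $\qlog$, so that part of your argument matches.

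The paper instead invokes the duality of Theorem \ref{thm: vb on FF associated to an pdiv groups}(3), in the form $\Es(\Gs)=\underline{\Hom}(\Es(\Gs^D),\Os_{X_S}(1))$. It rewrites $D(\Gs)=i^*\underline{\Hom}(\Es(\Gs^D),i_*\Os_S)$ and $\omega_{\Gs^D}=i^*\underline{\Hom}(i_*\Lie(\Gs^D),i_*\Os_S)$. It then applies $\underline{\Hom}(-,i_*\Os_S)$ to the pullback square defining $\Es(\Gs^D)$ to see that $V_p\Gs\otimes\Os_S\to D(\Gs)$ factors through $f_{\Gs^D}^{\vee}$. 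Finally it concludes with a rank count: $\omega_{\Gs^D}\to\Ker(D(\Gs)\to\Lie(\Gs))$ is a surjection of bundles of equal rank.

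You compute directly in the chain $\Xi_0\subset\Xi\subset\xi^{-1}\Xi_0$. There the kernel $\xi\Xi/\xi\Xi_0=\Lie(\Gs)(1)$ (embedded via $f_{\Gs}(1)$), the image $\Xi_0/\xi\Xi$ and the cokernel $\Xi/\Xi_0$ all come out explicitly. Exactness of the bottom row, including injectivity of $\omega_{\Gs^D}\to D(\Gs)$, therefore needs no rank argument. The quotient map is $g(1)=f_{\Gs^D}^{\vee}$ straight from Definition \ref{def: dualizable}.

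Your route is more elementary. It uses only the lattice description of $\Es(\Gs)$ and never the Cartier-duality results (Proposition \ref{prop: G dualizable iff E(G) vb}, Lemma \ref{lemma: dual of vector bundles and shtukas}). The twist bookkeeping you flagged as the main obstacle is canonical: it is governed by $\xi\B_{\dR}^+/\xi^2\B_{\dR}^+\cong\Os_S(1)$, and no signs arise.

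The paper's route buys the formula $D(\Gs)=i^*\underline{\Hom}(\Es(\Gs^D),i_*\Os_S)$. This is reused right away to identify $D(\Gs)$ with $\underline{\Hom}_S(\widetilde{\Gs^D},\G_a)$ and the Hodge filtration with the pullback along $\widetilde{\Gs^D}\twoheadrightarrow\Lie(\Gs^D)$.
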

\begin{proof}
By naturality of the quasi-logarithm, we have a commutative diagram with exact top row and right-exact bottom row
% https://q.uiver.app/#q=WzAsOSxbMSwwLCJWX3BcXEdzIl0sWzIsMCwiXFx3aWRldGlsZGV7XFxHc30iXSxbMywwLCJcXExpZShcXEdzKSJdLFswLDAsIjAiXSxbNCwwLCIwIl0sWzEsMSwiVl9wXFxHc1xcb3RpbWVzX3tcXFFfcH1cXEdfYSJdLFsyLDEsIkQoXFxHcykiXSxbMywxLCJcXExpZShcXEdzKSJdLFs0LDEsIjAuIl0sWzMsMF0sWzEsMl0sWzAsMV0sWzIsNF0sWzEsNiwiXFxxbG9nIl0sWzAsNSwiXFxxbG9nIl0sWzUsNl0sWzYsN10sWzIsNywiXFxxbG9nPVxcaWQiXSxbNyw4XV0=
\[\begin{tikzcd}
	0 & {V_p\Gs} & {\widetilde{\Gs}} & {\Lie(\Gs)} & 0 \\
	& {V_p\Gs\otimes_{\Q_p}\G_a} & {D(\Gs)} & {\Lie(\Gs)} & {0.}
	\arrow[from=1-1, to=1-2]
	\arrow[from=1-2, to=1-3]
	\arrow["\qlog", from=1-2, to=2-2]
	\arrow[from=1-3, to=1-4]
	\arrow["\qlog", from=1-3, to=2-3]
	\arrow[from=1-4, to=1-5]
	\arrow["{\qlog=\id}", from=1-4, to=2-4]
	\arrow[from=2-2, to=2-3]
	\arrow[from=2-3, to=2-4]
	\arrow[from=2-4, to=2-5]
\end{tikzcd}\]
We claim that the bottom left horizontal map factors as
% https://q.uiver.app/#q=WzAsMyxbMCwwLCJWX3BcXEdzXFxvdGltZXNfe1xcUV9wfVxcR19hIl0sWzEsMCwiRChcXEdzKSwiXSxbMCwxLCJcXG9tZWdhX3tcXEdzXkR9Il0sWzAsMV0sWzAsMiwiZl97XFxHc15EfV57XFx2ZWV9IiwyXSxbMiwxLCIiLDIseyJzdHlsZSI6eyJib2R5Ijp7Im5hbWUiOiJkYXNoZWQifX19XV0=
\[\begin{tikzcd}
	{V_p\Gs\otimes_{\Q_p}\G_a} & {D(\Gs),} \\
	{\omega_{\Gs^D}}
	\arrow[from=1-1, to=1-2]
	\arrow["{f_{\Gs^D}^{\vee}}"', from=1-1, to=2-1]
	\arrow[dashed, from=2-1, to=1-2]
\end{tikzcd}\]
where $f_{\Gs^D}$ is the map naturally associated with $\Gs^D$ (see Definition \ref{def: dualizable}). As $\rank D(\Gs) = \rank(\Es(\Gs)) = \rank(T_p\Gs)$, it will follow that $\omega_{\Gs^D} \rightarrow \Ker(D(\Gs) \rightarrow \Lie(\Gs))$ is a surjection between vector bundles of the same rank and thus an isomorphism. 

The claim is local, so we may assume that $S$ is perfectoid. We use Theorem \ref{thm: vb on FF associated to an pdiv groups}(3) to write the occurring objects in terms of the dual group $\Gs^D$. This yields
\begin{align*}
    D(\Gs) &= i^*\underline{\Hom}_{\Os_{X_{S}}}(\Es(\Gs^D),\Os_{X_{S}}(1))\\
    &= \underline{\Hom}_{\Os_S}(i^*\Es(\Gs^D),\underset{=\Os_S}{\underbrace{i^*\Os_{X_{S}}(1)}}) \\
    &= i^*\underline{\Hom}_{\Os_{X_{S}}}(\Es(\Gs^D),i_*\Os_S),
\end{align*}
and similarly
\[ \omega_{\Gs^D} = \underline{\Hom}_{\Os_S}(\Lie(\Gs^D),\Os_S) = i^*\underline{\Hom}_{\Os_{X_{S}}}(i_*\Lie(\Gs^D),i_*\Os_S)\]
Applying $\underline{\Hom}(-, i_*\Os_S)$ to the defining diagram (\ref{eq: defining diagram for E(G)}) for $\Es(\Gs^D)$ and abbreviating $V_p\Gs^D$ by $V^D$, we obtain the following commutative diagram with left exact rows
% https://q.uiver.app/#q=WzAsNCxbMCwwLCJcXHVuZGVybGluZXtcXEhvbX0oaV8qXFxMaWUoXFxHc15EKSwgaV8qXFxPc19TKSJdLFsxLDAsIlxcdW5kZXJsaW5le1xcSG9tfShcXEVzKFxcR3NeRCksIGlfKlxcT3NfUykiXSxbMCwxLCJcXHVuZGVybGluZXtcXEhvbX0oVl5EKC0xKVxcb3RpbWVzIGlfKlxcR19hLCBpXypcXE9zX1MpIl0sWzEsMSwiXFx1bmRlcmxpbmV7XFxIb219KFZeRCgtMSlcXG90aW1lcyBcXE9zX3tYX3tTfX0oMSksIGlfKlxcT3NfUykuIl0sWzIsMywiXFxjb25nIiwyXSxbMiwwLCJmXntcXHZlZX0iXSxbMywxLCJ1XntcXHZlZX0iLDJdLFswLDFdXQ==
\[\begin{tikzcd}
	{\underline{\Hom}(i_*\Lie(\Gs^D), i_*\Os_S)} & {\underline{\Hom}(\Es(\Gs^D), i_*\Os_S)} \\
	{\underline{\Hom}(V^D(-1)\otimes i_*\G_a, i_*\Os_S)} & {\underline{\Hom}(V^D(-1)\otimes \Os_{X_{S}}(1), i_*\Os_S).}
	\arrow[from=1-1, to=1-2]
	\arrow["{f^{\vee}}", from=2-1, to=1-1]
	\arrow["\cong"', from=2-1, to=2-2]
	\arrow["{u^{\vee}}"', from=2-2, to=1-2]
\end{tikzcd}\]
The bottom horizontal map is easily seen to be an isomorphism, using that $i^*\Os_{X_S}(1)=\Os_S$. After applying $i^*$, we obtain the commutative diagram
% https://q.uiver.app/#q=WzAsNCxbMCwwLCJcXG9tZWdhX3tcXEdzXkR9Il0sWzEsMCwiRChcXEdzKSJdLFswLDEsIlZfcFxcR3NcXG90aW1lcyBcXE9zX1MiXSxbMSwxLCJWX3BcXEdzXFxvdGltZXMgXFxPc19TLCJdLFswLDFdLFsyLDAsImZfe1xcR3NeRH1ee1xcdmVlfSJdLFsyLDMsIlxcY29uZyIsMl0sWzMsMV1d
\[\begin{tikzcd}
	{\omega_{\Gs^D}} & {D(\Gs)} \\
	{V_p\Gs\otimes \Os_S} & {V_p\Gs\otimes \Os_S,}
	\arrow[from=1-1, to=1-2]
	\arrow["{f_{\Gs^D}^{\vee}}", from=2-1, to=1-1]
	\arrow["\cong"', from=2-1, to=2-2]
	\arrow[from=2-2, to=1-2]
\end{tikzcd}\]
which yields the required factorization.
\end{proof}

Next, we give an explicit formula for the de Rham module.
\begin{proposition}\label{prop: simpler description of Dieudonné module}
Let $\Gs$ be a dualizable analytic $p$-divisible group over a good adic space $S$. We have a natural isomorphism of $v$-sheaves
    \[ D(\Gs) = \underline{\Hom}_S(\widetilde{\Gs^D},\G_a).\]
    Under this isomorphism, the Hodge filtration is identified with
\begin{align}
    \omega_{\Gs^D}= \underline{\Hom}_S(\Lie(\Gs^D),\G_a) \hookrightarrow  \underline{\Hom}_S(\widetilde{\Gs^D},\G_a)
\end{align}
arising from the surjection $\widetilde{\Gs^D} \twoheadrightarrow\Lie(\Gs^D)$.
\end{proposition}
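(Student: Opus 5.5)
The plan is to reduce to a perfectoid base and compute everything on the Fargues--Fontaine curve, then descend. Both sides are $v$-sheaves functorial in $S$: the left by Definition \ref{def: Dieudonné module} and Corollary \ref{cor: BdR+ lattice assoc to pdiv groups in general}, the right by definition. So it suffices to build a natural isomorphism for $S$ perfectoid, compatible with pullback. Let $\Es=\Es(\Gs)$. By Theorem \ref{thm: vb on FF associated to an pdiv groups}(3), $\Es(\Gs^D)=\Es^{\vee}\otimes\Os_{X_S}(1)$. As in the proof of Lemma \ref{lemma: Hodge filtration on Dieudonné module}, we have
\[ D(\Gs)=i^*\Es=\underline{\Hom}_{\Os_S}(i^*\Es(\Gs^D),\Os_S)=\tau_*\underline{\Hom}_{\Os_{X_S}}(\Es(\Gs^D),i_*\Os_S). \]
The last equality uses $i^*\Os_{X_S}(1)=\Os_S$, the adjunction $i^*\dashv i_*$, and $\tau_* i_*=\mathrm{id}$ on vector bundles over $S$.

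Next we apply Lemma \ref{lemma: tau is fully faithful in most cases}(2) to the two effective sheaves $\Es(\Gs^D)$ and $i_*\Os_S$. This gives
\[ \tau_*\underline{\Hom}(\Es(\Gs^D),i_*\Os_S)=\underline{\Hom}_S(\BC(\Es(\Gs^D)),\BC(i_*\Os_S))=\underline{\Hom}_S(\widetilde{\Gs^D},\G_a). \]
Here $\BC(\Es(\Gs^D))=\widetilde{\Gs^D}$ holds by construction of $\Es(-)$ in Theorem \ref{thm: vb on FF associated to an pdiv groups}(2), and $\BC(i_*\Os_S)=\G_a$ was shown in the proof of Lemma \ref{lemma: tau is fully faithful in most cases}. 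All these identifications are natural in $S$, because the formation of $\Es(\Gs)$, of the Cartier divisor $i$ and of $\BC$ commutes with base change. Hence the isomorphism descends along a perfectoid $v$-cover $S'\to S$ with $S'\times_S S'$ perfectoid, exactly as in Corollary \ref{cor: BdR+ lattice assoc to pdiv groups in general}.

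For the Hodge filtration, we apply the same chain of identifications to the surjection $\Es(\Gs^D)\twoheadrightarrow i_*\Lie(\Gs^D)$ from (\ref{eq: modification defining E(G)}). Lemma \ref{lemma: tau is fully faithful in most cases}(2) again gives
\[ \tau_*\underline{\Hom}(i_*\Lie(\Gs^D),i_*\Os_S)=\underline{\Hom}_S(\Lie(\Gs^D),\G_a)=\omega_{\Gs^D}. \]
Its inclusion into $\tau_*\underline{\Hom}(\Es(\Gs^D),i_*\Os_S)$ is precisely the top row of the diagram in the proof of Lemma \ref{lemma: Hodge filtration on Dieudonné module}, which was shown there to be the Hodge filtration. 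Under $\BC$, that inclusion becomes precomposition with $\widetilde{\Gs^D}\twoheadrightarrow \Lie(\Gs^D)$.

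The main obstacle is the step $\tau_*\underline{\Hom}_{\Os_{X_S}}(\Es(\Gs^D),i_*\Os_S)=i^*\Es(\Gs)$. It requires checking that $\underline{\Hom}(\Es',i_*\Os_S)=i_*\underline{\Hom}(i^*\Es',\Os_S)$ for a vector bundle $\Es'$, and that the twisted-dual identification of Theorem \ref{thm: vb on FF associated to an pdiv groups}(3) is compatible with $\Gs^{DD}=\Gs$. I would check this compatibility by restricting to the modification diagrams, using Lemma \ref{lemma: dual of vector bundles and shtukas}.
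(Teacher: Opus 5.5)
Your proposal is correct and is essentially the paper's proof. You reduce $v$-locally to perfectoid $S$, rewrite $D(\Gs)=i^*\Es(\Gs)$ as $\tau_*\underline{\Hom}_{\Os_{X_S}}(\Es(\Gs^D),i_*\Os_S)$ via the twisted-dual formula and the fact that this Hom-sheaf is supported on $i(S)$, and then conclude with Lemma \ref{lemma: tau is fully faithful in most cases}(2); your explicit handling of the Hodge filtration (through the top row of the diagram in the proof of Lemma \ref{lemma: Hodge filtration on Dieudonné module}) and of descent only spells out what the paper leaves implicit.
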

\begin{proof}
    We work $v$-locally and assume that $S$ is perfectoid. Let us write
    \[D(\Gs) = i^*\underline{\Hom}_{\Os_{X_{S}}}(\Es(\Gs^D),i_*\G_a) = \tau_*\underline{\Hom}_{\Os_{X_{S}}}(\Es(\Gs^D),i_*\G_a), \]
    where the first equality was shown in the proof of Lemma \ref{lemma: Hodge filtration on Dieudonné module} and the second comes from the fact that the $\underline{\Hom}$-sheaf under consideration is supported on $i(S)$. Hence the result follows from Lemma \ref{lemma: tau is fully faithful in most cases}(2).
\end{proof}
\begin{remark}
    When $S=\Spa(C)$, Proposition \ref{prop: simpler description of Dieudonné module} together with \cite[Thm. 3.3]{Farg22} imply that
    \[ D(\Gs) = \QHom(\Gs^D,\G_a),\]
    where the right-hand side is the group of quasi-logarithms of $\Gs^D$, see \cite[§8.2]{Farg19}.
\end{remark}
\subsection{Comparison with integral Dieudonné theory}
We compare our analytic Dieudonné theory with various integral constructions. 

\begin{proposition}\label{prop: comparison with integral variant}
Let $S/\Q_p$ be a good adic space and let $\Gs \rightarrow S$ be a dualizable analytic $p$-divisible group. Assume that $\Gs$ has good reduction, i.e. there exists a formal model $\Ss$ for $S$ flat over $\Z_p$ that locally admits a finitely generated ideal of definition, and a $p$-divisible group $\mathfrak{G}$ over $\Ss$ with adic generic fiber $\Gs$. Then there is a natural isomorphism
\[ D(\Gs) = D(\mathfrak{G})\otimes_{\Os_{\Ss}} \Os_S,\]
compatible with the Hodge filtrations. Here, $D(\mathfrak{G})$ is the Dieudonné crystal of $\mathfrak{G}$, i.e. the Lie algebra of the universal vector extension $\omega_{\mathfrak{G}^D} \rightarrow E(\mathfrak{G}) \rightarrow \mathfrak{G}$ \cite{Messing1972}.
\end{proposition}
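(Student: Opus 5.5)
We compare both sides through the description of $D(\Gs)$ as invariant differentials on the universal cover. By Proposition \ref{prop: simpler description of Dieudonné module}, $D(\Gs) = \underline{\Hom}_S(\widetilde{\Gs^D},\G_a)$, with Hodge filtration $\omega_{\Gs^D}=\underline{\Hom}_S(\Lie(\Gs^D),\G_a)$. By Proposition \ref{prop: groups of good reduction are dualizable}, $\Gs^D = (\mathfrak{G}^D)_{\eta}$. The strategy is therefore to produce a natural map
\[ D(\mathfrak{G})\otimes_{\Os_{\Ss}}\Os_S \longrightarrow \underline{\Hom}_S(\widetilde{(\mathfrak{G}^D)_\eta},\G_a)\]
compatible with the two Hodge filtrations, and then to check that it is an isomorphism. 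Both sides are $v$-sheaves compatible with base change, so the comparison can be made $v$-locally.

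For the construction, I would use the classical description of the Dieudonné crystal through the universal vector extension. By Messing, $D(\mathfrak{G})^\vee = \Lie(E(\mathfrak{G}^D))$ up to the standard duality. Equivalently, $D(\mathfrak{G})$ is the module of "quasi-logarithms", namely rigidified extensions of $\mathfrak{G}^D$ by $\G_a$. Rationally, a quasi-logarithm is a map $\widetilde{\mathfrak{G}^D}\to \G_a$ after inverting $p$. Concretely, an extension class $0\to\G_a\to \Es\to \mathfrak{G}^D\to 0$ together with the canonical lift through $\varprojlim_{[p]}$ gives a homomorphism $\widetilde{\mathfrak{G}^D}\to \Es\to \Lie(\Es)$ after using the logarithm on the generic fibre. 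This is the same argument as the quasi-logarithm of Fargues, and it works in families since the universal vector extension exists over $\Ss$. Passing to generic fibres and sheafifying yields the desired map. It restricts on $\omega_{\mathfrak{G}^D}$ to the map induced by $\widetilde{\Gs^D}\to\Lie(\Gs^D)$, because the subextensions pulled back from $\Lie$ are exactly the split ones. This gives compatibility with the Hodge filtrations.

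To conclude that the map is an isomorphism, I would use the five lemma on Hodge filtrations. Both sides are extensions of $\Lie(\Gs)$ by $\omega_{\Gs^D}$: on the analytic side this is Lemma \ref{lemma: Hodge filtration on Dieudonné module}, and on the integral side it is the Hodge filtration of $D(\mathfrak{G})$ after inverting $p$. The induced map on sub-objects is the identity. On the quotients, the map $\Lie(\mathfrak{G})[1/p]\to\Lie(\Gs)$ is the canonical identification, since the generic fibre of $\mathfrak{G}$ has Lie algebra $\Lie(\mathfrak{G})\otimes\Os_S$. This is seen via the logarithm constructed in the proof of Proposition \ref{prop: generic fibers of log p-divisible groups}.

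The main obstacle is making the quasi-logarithm map well defined and natural over a general formal model $\Ss$ rather than over $\Os_C$. If this becomes too technical, I would reduce $v$-locally to products of points $S=\Spa(A,A^+)$ with $A^+=\prod C_i^+$, and then to $\Spf(\Os_C)$. There the identity $D(\Gs)=\QHom(\Gs^D,\G_a)$ from \cite[Thm. 3.3]{Farg22}, combined with Fargues' comparison of quasi-logarithms with the Dieudonné crystal over $\Os_C$ \cite{Farg19}, gives the isomorphism on each point. Naturality in the point then lets the pointwise isomorphisms glue, with descent handled as in Corollary \ref{cor: BdR+ lattice assoc to pdiv groups in general}.
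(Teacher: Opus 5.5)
Your route differs from the paper's. You realise $D(\Gs)$ as $\underline{\Hom}_S(\widetilde{\Gs^D},\G_a)$, map $D(\mathfrak{G})=\Ext^{\natural}(\mathfrak{G}^D,\G_a)$ into it by quasi-logarithms of rigidified extensions, and conclude by the five lemma. As written, the step everything rests on is not justified.

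The check on $\omega_{\Gs^D}$ is fine: rigidified split extensions give, up to sign, $\widetilde{\Gs^D}\to\Lie(\Gs^D)\to\G_a$. The check on the quotient is not. In the description $D(\Gs)=\underline{\Hom}_S(\widetilde{\Gs^D},\G_a)$, the only handle on $D(\Gs)/\omega_{\Gs^D}$ is restriction to $V_p\Gs^D$. This lands in $\underline{\Hom}(V_p\Gs^D,\G_a)=V_p\Gs(-1)\otimes\Os_{S_v}$, where $\Lie(\Gs)$ sits via $f_{\Gs}$. So to know that your induced map $\Lie(\mathfrak{G})\otimes_{\Os_{\Ss}}\Os_S\to\Lie(\Gs)$ is the canonical identification (or even an isomorphism), you must compute the restriction of your integral quasi-logarithms to $T_p\mathfrak{G}^D$ and compare it with $f_{\Gs}$. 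That needs two inputs you never invoke:
\begin{itemize}
\item the integral fact that this restriction is the pairing of the class in $\Ext(\mathfrak{G}^D,\G_a)=\Lie(\mathfrak{G})$ with the Hodge--Tate map $\alpha_{\mathfrak{G}^D}$ (the dual form of $s\restr{T_p\mathfrak{G}}=\alpha_{\mathfrak{G}}$, cf. \cite[Sect.~3.2]{scholze2013moduli});
\item Proposition~\ref{prop: generic fibers of log p-divisible groups}(2), namely $f_{\Gs}=\alpha_{\mathfrak{G}^D}[\tfrac{1}{p}]^{\vee}$.
\end{itemize}
Saying that the generic fibre of $\mathfrak{G}$ has Lie algebra $\Lie(\mathfrak{G})\otimes\Os_S$ only shows a canonical identification exists, not that your map induces it.

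Your fallback has the same hole. Over a product of points, a collection of isomorphisms on the fibres need not come from a morphism over $S$ (compare the failure of fullness in Corollary~\ref{cor: pdiv groups on product of points}). You therefore need your global map, and then the fact that it specialises at each $\Os_C$-point to Fargues' identification, which is again this Hodge--Tate compatibility.

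Once you add these inputs you are essentially at the paper's proof. The paper's version is shorter because it works with $\mathfrak{G}$ itself rather than $\mathfrak{G}^D$. The canonical lift $s\colon\widetilde{\mathfrak{G}}\to E(\mathfrak{G})$ followed by the logarithm exhibits $D(\mathfrak{G})\otimes_{\Os_{\Ss}}\Os_S$, with its Hodge filtration, as the pushout of $0\to V_p\Gs\to\widetilde{\Gs}\to\Lie(\Gs)\to 0$ along $\alpha_{\mathfrak{G}}[\tfrac{1}{p}]$. The diagram (\ref{eq: hodge filtration and quasilogarithm}) exhibits $D(\Gs)$ as the pushout of the same sequence along $f_{\Gs^D}^{\vee}$. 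These two maps agree by Proposition~\ref{prop: generic fibers of log p-divisible groups}, so the two extensions are canonically isomorphic with matching filtrations. That argument needs neither crystalline duality $D(\mathfrak{G})\cong D(\mathfrak{G}^D)^{\vee}$, nor the rigidified-extension description, nor the $\underline{\Hom}$-description of $D(\Gs)$, nor a five-lemma step. Your dual version would give an explicit formula for the pairing $D(\mathfrak{G})\times\widetilde{\Gs^D}\to\G_a$, but only after the same Hodge--Tate compatibility is established.
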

\begin{proof}
There exists a canonical map $s\colon \widetilde{\mathfrak{G}} \rightarrow E(\mathfrak{G})$ \cite[§3.2]{scholze2013moduli}, realizing the universal vector extension as the pushforward of the $p$-adic universal cover $T_p\mathfrak{G} \rightarrow \widetilde{\mathfrak{G}} \rightarrow \mathfrak{G}$, so that the following diagram commutes
% https://q.uiver.app/#q=WzAsMTAsWzAsMSwiMCJdLFsxLDEsIlxcb21lZ2Ffe1xcbWF0aGZyYWt7R31eRH0iXSxbMiwxLCJFKFxcbWF0aGZyYWt7R30pIl0sWzMsMSwiXFxtYXRoZnJha3tHfSJdLFs0LDEsIjAuIl0sWzMsMCwiXFxtYXRoZnJha3tHfSJdLFsyLDAsIlxcd2lkZXRpbGRle1xcbWF0aGZyYWt7R319Il0sWzQsMCwiMCJdLFsxLDAsIlRfcFxcbWF0aGZyYWt7R30iXSxbMCwwLCIwIl0sWzAsMV0sWzEsMl0sWzIsM10sWzMsNF0sWzUsMywiPSJdLFs2LDIsInMiXSxbNiw1XSxbNSw3XSxbOSw4XSxbOCwxLCJcXGFscGhhX3tcXG1hdGhmcmFre0d9fSIsMl0sWzgsNl1d
\[\begin{tikzcd}
	0 & {T_p\mathfrak{G}} & {\widetilde{\mathfrak{G}}} & {\mathfrak{G}} & 0 \\
	0 & {\omega_{\mathfrak{G}^D}} & {E(\mathfrak{G})} & {\mathfrak{G}} & {0.}
	\arrow[from=1-1, to=1-2]
	\arrow[from=1-2, to=1-3]
	\arrow["{\alpha_{\mathfrak{G}}}"', from=1-2, to=2-2]
	\arrow[from=1-3, to=1-4]
	\arrow["s", from=1-3, to=2-3]
	\arrow[from=1-4, to=1-5]
	\arrow["{=}", from=1-4, to=2-4]
	\arrow[from=2-1, to=2-2]
	\arrow[from=2-2, to=2-3]
	\arrow[from=2-3, to=2-4]
	\arrow[from=2-4, to=2-5]
\end{tikzcd}\]
Here, $\alpha_{\mathfrak{G}}$ is the Hodge--Tate map of $\mathfrak{G}$, see Definition \ref{def: HT map of fargues}. We now apply the generic fiber functor $(-)_{\eta}$ of Scholze--Weinstein \cite[§2.2]{scholze2013moduli}. Let $\qlog_{\mathfrak{G}}$ denote the composition of $s_{\eta}\colon \widetilde{\mathfrak{G}}_{\eta} \rightarrow E(\mathfrak{G})_{\eta}$ with the logarithm map $\log\colon E(\mathfrak{G})_{\eta} \rightarrow D(\mathfrak{G})\otimes_{\Os_{\Ss}} \Os_S$. Then we obtain the following commutative diagram \cite[Lemma 3.2.5]{scholze2013moduli}, which realizes the bottom sequence as the pushout of the top sequence along the map $\alpha_{\mathfrak{G}}[\tfrac{1}{p}]$
% https://q.uiver.app/#q=WzAsMTAsWzAsMSwiMCJdLFsxLDEsIlxcb21lZ2Ffe1xcbWF0aGZyYWt7R31eRH1cXG90aW1lc197XFxPc197XFxTc319XFxPc19TIl0sWzIsMSwiRChcXG1hdGhmcmFre0d9KVxcb3RpbWVzX3tcXE9zX3tcXFNzfX1cXE9zX1MiXSxbMywxLCJcXExpZShcXG1hdGhmcmFre0d9KVxcb3RpbWVzX3tcXE9zX3tcXFNzfX1cXE9zX1MiXSxbNCwxLCIwLiJdLFszLDAsIlxcTGllKFxcbWF0aGZyYWt7R30pXFxvdGltZXNfe1xcT3Nfe1xcU3N9fVxcT3NfUyJdLFsyLDAsIlxcd2lkZXRpbGRle1xcbWF0aGZyYWt7R319X3tcXGV0YX0iXSxbNCwwLCIwIl0sWzEsMCwiVl9wXFxtYXRoZnJha3tHfV97XFxldGF9Il0sWzAsMCwiMCJdLFswLDFdLFsxLDJdLFsyLDNdLFszLDRdLFs1LDMsIj0iXSxbNiwyLCJcXHFsb2dfe1xcbWF0aGZyYWt7R319Il0sWzYsNV0sWzUsN10sWzgsNl0sWzksOF0sWzgsMSwiXFxhbHBoYV97XFxtYXRoZnJha3tHfX1bXFx0ZnJhY3sxfXtwfV0iLDJdXQ==
\[\begin{tikzcd}
	0 & {V_p\mathfrak{G}_{\eta}} & {\widetilde{\mathfrak{G}}_{\eta}} & {\Lie(\mathfrak{G})\otimes_{\Os_{\Ss}}\Os_S} & 0 \\
	0 & {\omega_{\mathfrak{G}^D}\otimes_{\Os_{\Ss}}\Os_S} & {D(\mathfrak{G})\otimes_{\Os_{\Ss}}\Os_S} & {\Lie(\mathfrak{G})\otimes_{\Os_{\Ss}}\Os_S} & {0.}
	\arrow[from=1-1, to=1-2]
	\arrow[from=1-2, to=1-3]
	\arrow["{\alpha_{\mathfrak{G}}[\tfrac{1}{p}]}"', from=1-2, to=2-2]
	\arrow[from=1-3, to=1-4]
	\arrow["{\qlog_{\mathfrak{G}}}", from=1-3, to=2-3]
	\arrow[from=1-4, to=1-5]
	\arrow["{=}", from=1-4, to=2-4]
	\arrow[from=2-1, to=2-2]
	\arrow[from=2-2, to=2-3]
	\arrow[from=2-3, to=2-4]
	\arrow[from=2-4, to=2-5]
\end{tikzcd}\]
By Proposition \ref{prop: generic fibers of log p-divisible groups}, the map $\alpha_{\mathfrak{G}}[\tfrac{1}{p}]$ coincides with $f_{\Gs^D}^{\vee}$. Comparing the above diagram to the pushout diagram (\ref{eq: hodge filtration and quasilogarithm}), we conclude that $D(\Gs)=D(\mathfrak{G}) \otimes_{\Os_{\Ss}}\Os_S$, as required.
\end{proof}

\begin{remark}\label{rmk: Universal vector extension for pdivisible groups}
    In the above proof, we are naturally led to consider the following analytic variant of the universal vector extension: For $\Gs$ a dualizable $p$-divisible group over $S$, define $E(\Gs)$ as the following pushout in abelian sheaves on $S_v$
    % https://q.uiver.app/#q=WzAsMTAsWzEsMCwiVF9wXFxHcyJdLFsyLDAsIlxcd2lkZXRpbGRle1xcR3N9Il0sWzMsMCwiXFxHcyJdLFswLDAsIjAiXSxbNCwwLCIwIl0sWzEsMSwiXFxvbWVnYV97XFxHc15EfSJdLFsyLDEsIkUoXFxHcykiXSxbMywxLCJcXEdzIl0sWzQsMSwiMC4iXSxbMCwxLCIwIl0sWzMsMF0sWzEsMl0sWzAsMV0sWzIsNF0sWzAsNSwiZl97XFxHc15EfV57XFx2ZWV9IiwyXSxbMSw2XSxbMiw3LCJcXGlkIl0sWzYsN10sWzcsOF0sWzUsNl0sWzksNV1d
\begin{equation}\label{eq: universal extension}\begin{tikzcd}
	0 & {T_p\Gs} & {\widetilde{\Gs}} & \Gs & 0 \\
	0 & {\omega_{\Gs^D}} & {E(\Gs)} & \Gs & {0.}
	\arrow[from=1-1, to=1-2]
	\arrow[from=1-2, to=1-3]
	\arrow["{f_{\Gs^D}^{\vee}}"', from=1-2, to=2-2]
	\arrow[from=1-3, to=1-4]
	\arrow[from=1-3, to=2-3]
	\arrow[from=1-4, to=1-5]
	\arrow["=", from=1-4, to=2-4]
	\arrow[from=2-1, to=2-2]
	\arrow[from=2-2, to=2-3]
	\arrow[from=2-3, to=2-4]
	\arrow[from=2-4, to=2-5]
\end{tikzcd}\end{equation}
When the de Rham module $D(\Gs)$ is an étale vector bundle, the sheaf $E(\Gs)$ can be shown to be representable by a smooth $S$-group, whose Lie algebra recovers $D(\Gs)$. If $\Gs=\mathfrak{G}_{\eta}$ has good reduction, the universal vector extensions of $\mathfrak{G}$ and $\Gs$ are compatible, in the sense that the following diagram commutes
% https://q.uiver.app/#q=WzAsMTAsWzAsMSwiMCJdLFsxLDEsIlxcb21lZ2Ffe1xcR3NeRH0iXSxbMiwxLCJFKFxcR3MpIl0sWzMsMSwiXFxHcyJdLFs0LDEsIjAuIl0sWzMsMCwiXFxtYXRoZnJha3tHfV97XFxldGF9Il0sWzIsMCwiRShcXG1hdGhmcmFre0d9KV97XFxldGF9Il0sWzQsMCwiMCJdLFsxLDAsIihcXG9tZWdhX3tcXG1hdGhmcmFre0d9XkR9KV97XFxldGF9Il0sWzAsMCwiMCJdLFswLDFdLFsxLDJdLFsyLDNdLFszLDRdLFs1LDMsIj0iXSxbNiwyLCIiLDAseyJzdHlsZSI6eyJ0YWlsIjp7Im5hbWUiOiJob29rIiwic2lkZSI6InRvcCJ9fX1dLFs2LDVdLFs1LDddLFs4LDZdLFs5LDhdLFs4LDEsIiIsMix7InN0eWxlIjp7InRhaWwiOnsibmFtZSI6Imhvb2siLCJzaWRlIjoidG9wIn19fV1d
\[\begin{tikzcd}
	0 & {(\omega_{\mathfrak{G}^D})_{\eta}} & {E(\mathfrak{G})_{\eta}} & {\mathfrak{G}_{\eta}} & 0 \\
	0 & {\omega_{\Gs^D}} & {E(\Gs)} & \Gs & {0.}
	\arrow[from=1-1, to=1-2]
	\arrow[from=1-2, to=1-3]
	\arrow[hook, from=1-2, to=2-2]
	\arrow[from=1-3, to=1-4]
	\arrow[hook, from=1-3, to=2-3]
	\arrow[from=1-4, to=1-5]
	\arrow["{=}", from=1-4, to=2-4]
	\arrow[from=2-1, to=2-2]
	\arrow[from=2-2, to=2-3]
	\arrow[from=2-3, to=2-4]
	\arrow[from=2-4, to=2-5]
\end{tikzcd}\]
If $S=\Spa(C)$ and $\Gs= A\langle p^{\infty} \rangle$, for an abeloid variety $A/C$, then $E(\Gs)$ is the topologically $p$-torsion subgroup of the universal vector extension $E_{\et}(A)$ of \cite[§4.1]{Heuer2023MannWerner}. Note however that the extension (\ref{eq: universal extension}) is not universal among extensions of $\Gs$ by vector bundles in the usual sense, i.e. given an étale vector bundle $V$ on $S$, the natural map
\[ \Hom(\omega_{\Gs^D},V) \rightarrow \Ext_{\et}^1(V,\Gs), \quad \varphi \mapsto \varphi_*(E(\Gs))\]
need not be an isomorphism. For example, taking $\Gs =\Q_p/\Z_p$ over $S=\Spa(C)$, the universal vector extension $E(\Gs)$ is the extension
% https://q.uiver.app/#q=WzAsNSxbMSwwLCJcXEdfYSJdLFsyLDAsIlxcdGZyYWN7XFxHX2EgXFxvcGx1cyBcXFFfcH17XFxaX3B9Il0sWzMsMCwiXFxRX3AvXFxaX3AiXSxbNCwwLCIwLCJdLFswLDAsIjAiXSxbMSwyXSxbMiwzXSxbMCwxXSxbNCwwXV0=
\[\begin{tikzcd}
	0 & {\G_a} & {\frac{\G_a \oplus \Q_p}{\Z_p}} & {\Q_p/\Z_p} & {0,}
	\arrow[from=1-1, to=1-2]
	\arrow[from=1-2, to=1-3]
	\arrow[from=1-3, to=1-4]
	\arrow[from=1-4, to=1-5]
\end{tikzcd}\]
where the quotient is via the diagonal embedding of $\Z_p$ and the left map is the inclusion in the left factor. The latter has a left inverse given by $(a,b) \mapsto a-b$, so that the above sequence is split. One proves similarly that any extension of $\Q_p/\Z_p$ by a vector bundle $V$ splits, while 
\[ \Hom(\omega_{\Gs^D},V) = V\]
is non-zero.
\end{remark}

Next, we compare our construction with the Dieudonné theory over integral perfectoid rings of Scholze--Weinstein \cite[Appendix to Lecture 17]{SW20} and Lau \cite{Lau_2018}. Recall that, given an integral perfectoid ring $R$, this is an equivalence between the categories of $p$-divisible groups $\mathfrak{G}$ over $R$ and minuscule Breuil--Kisin--Fargues modules over $R$. Assume from now on that $R$ is $p$-torsionfree and let $S=\Spa(R[\tfrac{1}{p}],\cj{R})$ be its adic generic fiber, an affinoid perfectoid space over $\Q_p$. Recall the functor of Example \ref{ex: examples of shtukas}(2)
\begin{align}\label{eq: vb from BKF module} (M,\varphi_M) \mapsto  (\widetilde{M},\varphi_{\widetilde{M}}), \end{align}
attaching to a (minuscule) Breuil--Kisin--Fargues module $(M,\varphi_M)$ over $R$ a (minuscule) shtuka $(\widetilde{M},\varphi_{\widetilde{M}})$ over $\mathcal{Y}_S$ with one leg at $\varphi^{-1}(S)$. We can now state our comparison result.
\begin{proposition}\label{prop: prismatic dieudonné versus E(G)}
    Let $R$ be a $p$-torsionfree integral perfectoid ring and let $S$ be the adic generic fiber of $\Spf(R)$. Let $\mathfrak{G}$ be a $p$-divisible group over $\Spf(R)$ with generic fiber $\Gs \rightarrow S$. Let $(M,\varphi_M)$ denote the Breuil--Kisin--Fargues module associated to $\mathfrak{G}$ over $\Ainf(R)$ and let $(\Ms(\Gs),\varphi_{\Ms(\Gs)})$ be the local shtuka over $\Yss_S$ associated with $\Gs$ under Theorem \ref{thm: equivalence for dualizable groups on perfd}. Then we have a canonical isomorphism
    \[ (\Ms(\Gs),\varphi_{\Ms(\Gs)}) = (\widetilde{M}, \varphi_{\widetilde{M}}).\]
\end{proposition}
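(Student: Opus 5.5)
By Theorem \ref{thm: equivalence for dualizable groups on perfd} and Proposition \ref{prop: from modification to shtukas}, a minuscule shtuka over $\Yss_S$ with leg at $\varphi^{-1}(S)$ is the same as a triple $(\Lb,\Es,\alpha)$. Under this dictionary, $(\Ms(\Gs),\varphi_{\Ms(\Gs)})$ corresponds to $(T_p\Gs,\Es(\Gs),\alpha)$ with the modification (\ref{eq: modification defining E(G)}). I therefore plan to show that the triple extracted from $(\widetilde{M},\varphi_{\widetilde{M}})$ is canonically isomorphic to $(T_p\Gs,\Es(\Gs),\alpha)$. Equivalently, through Beauville--Laszlo and Lemma \ref{lemma: minuscule lattices vs flags}, this amounts to identifying the pair $(T_p\Gs,\Xi(\Gs))$, and in the end the Hodge--Tate filtration $f_{\Gs}$. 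Both sides are compatible with pullback along maps of integral perfectoid rings. Moreover, minuscule BKF modules, $p$-divisible groups and shtukas all satisfy descent along suitable covers. So I first aim to construct a natural map and then check that it is an isomorphism.

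\emph{Step 1 (local systems).} The shtuka $\widetilde{M}$ restricted to $\Yss_{S,[0,1[}$ is $\varphi^{-1}$-equivariant, and its associated local system is the étale realization $(M\otimes W(R^\flat))^{\varphi=1}$ (up to the standard twist/duality convention of Scholze--Weinstein/Lau). By the properties of the Dieudonné functor over integral perfectoid rings, this is $T_p\mathfrak{G}$ on the generic fiber, i.e.\ $T_p\Gs$. This gives $\Lb\cong T_p\Gs$ naturally.

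\emph{Step 2 (the filtration).} The remaining datum is the sub-bundle $E\subset \Lb(-1)\otimes\Os_S$ from Theorem \ref{thm: equivalence for dualizable groups on perfd}(2). On the BKF side, $\varphi_{\widetilde M}(\varphi^*\widetilde M)/\widetilde M$ restricted to $S$ is $\Lie(\mathfrak{G})\otimes_R R[\tfrac1p]$, by the known description of the Hodge filtration in Dieudonné theory over perfectoid rings, $M/\varphi(\xi)$-reduction giving the Hodge filtration $\omega_{\mathfrak G^D}\subset D(\mathfrak G)\to \Lie\mathfrak G$. Its embedding into $T_p\Gs(-1)\otimes\Os_S$ is the dual of Fargues' Hodge--Tate map $\alpha_{\mathfrak{G}^D}$. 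By Proposition \ref{prop: generic fibers of log p-divisible groups}(2), this dual equals $f_{\Gs}$, so the two minuscule lattices, and hence the two triples, agree.

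\emph{Main obstacle.} The hard part is Step 2. One must verify that the modification attached to $\widetilde M$ really is cut out by $\alpha_{\mathfrak G^D}^\vee$, with matching sign and twist conventions, over a general $R$ rather than $\Os_C$. I would reduce to product-of-points (or to $R=\Os_C$) via $v$-descent: by Proposition \ref{prop: dualiazable pdiv gps over product of points}, morphisms of dualizable groups, and hence equalities of sub-bundles, can be checked pointwise on products of points. Over $\Os_C$ the statement is Fargues' theorem as in \cite[Thm. 14.1.1]{SW20} combined with Theorem \ref{thm: vb on FF associated to an pdiv groups}(4), which identifies $\Es(G_\eta)$ with Scholze--Weinstein's vector bundle. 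Naturality of all constructions then lets the pointwise isomorphisms glue to the canonical isomorphism of shtukas.
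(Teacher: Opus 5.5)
Your proposal is correct and follows essentially the paper's argument. You reduce by $v$-descent to products of points, then use Proposition~\ref{prop: dualiazable pdiv gps over product of points} and Corollary~\ref{cor: stack of pdiv gps is overconvergent} to pass to $R=\Os_C$; there Fargues' theorem \cite[Thm.~12.1.1, Thm.~14.1.1]{SW20} identifies the datum of $\widetilde{M}$ with $(T_p\Gs,\Lie(\Gs),\alpha_{\mathfrak{G}^D}^{\vee})$, and Proposition~\ref{prop: generic fibers of log p-divisible groups} gives $\alpha_{\mathfrak{G}^D}^{\vee}=f_{\Gs}$. The only difference is packaging: the paper turns $\widetilde{M}$ into a dualizable group $\Gs'$ via Theorem~\ref{thm: equivalence for dualizable groups on perfd} and glues isomorphisms $\Gs\cong\Gs'$, while you compare the triples directly. (A small slip: the \'etale realization lives over $W(R^\flat[1/\varpi])$, not over $W(R^\flat)=\Ainf(R)$.)
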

\begin{proof}
    Let $R^{\flat}$ be the tilt of $R$, a perfect ring, and let $\Spec(T) \rightarrow \Spec(R^{\flat})$ be a $v$-cover in the sense of \cite[Def. 2.1]{Bhatt2017}. This yields a $p$-torsionfree integral perfectoid ring $T^{\sharp} = W(R^{\flat})/(\xi_R)$ and a map $\Spf(T^{\sharp}) \rightarrow \Spf(R)$ which becomes a $v$-cover after passing to the adic generic fiber. By Remark \ref{remark: dualizability does not descend well}, dualizable analytic $p$-divisible groups over $S$ satisfy descent along $v$-covers of perfectoid spaces. Moreover, the minuscule shtuka $(\widetilde{M},\varphi_{\widetilde{M}})$ uniquely determines, by Theorem \ref{thm: equivalence for dualizable groups on perfd}, an analytic $p$-divisible group $\Gs'$ over $S$, functorially associated with $\mathfrak{G}$, with $\Ms(\Gs) = \widetilde{M}$. If we can show that, for a basis $\Bs$ of the $v$-topology on $S$, we have a collection of isomorphisms $\Gs \restr{Y_{\eta}} \cong \Gs'\restr{Y_{\eta}}$, for all $Y = \Spf(T^{\sharp})_{\eta}$ with $T\in \Bs$, compatible with pullbacks, this will imply that $\Gs$ and $\Gs'$ are isomorphic and conclude.     
    
    From this, we may assume that $R = \prod_{i\in I} R_i$ is a product of $p$-torsionfree perfectoid valuation rings with algebraically closed fraction fields, in which case $S=\Spa(R[p^{-1}],R)$ is a product of points. By Proposition \ref{prop: dualiazable pdiv gps over product of points}, a natural isomorphism $\Gs \cong \Gs'$ on $S$ is equivalent to a collection of isomorphism $\Gs_i \cong \Gs_i'$ on $\Spa(R_i[p^{-1}],R_i)$. Therefore, we may further assume that $R$ is a valuation ring with algebraically closed fraction field $C$. By Corollary \ref{cor: stack of pdiv gps is overconvergent}, any isomorphism $\Gs \cong \Gs'$ on $S=\Spa(C,R)$ is equivalent to an isomorphism over the open locus $\Spa(C,\Os_C)$. Hence, we may further assume that $R=\Os_C$. Consider the sub-vector space $E \sub T(-1)\otimes C$ corresponding to $\Gs'$ via Theorem \ref{thm: equivalence for dualizable groups on perfd}. By \cite[Thm. 12.1.1, Thm. 14.1.1]{SW20}, we have $T=T_p\Gs$, $E=\Lie(\Gs)$ and the inclusion is given by $\alpha_{\mathfrak{G}^D}^{\vee}$, the dual of the Hodge-Tate map of the Cartier dual. By Proposition \ref{prop: generic fibers of log p-divisible groups}, this map coincides with $f_{\Gs}$, so that $\Gs = \mathfrak{G}_{\eta}$ corresponds to the same tuple as $\Gs'$ under the equivalence of Theorem \ref{thm: extending Fargues' equivalence of categories}. We thus obtain a canonical isomorphism $\Gs' \cong \Gs$, uniquely determined by this identification. This concludes the proof.
\end{proof}
\begin{remark}
    Let $\mathfrak{G}$ be a $p$-divisible group over $R$ with generic fiber $\Gs$ over $S = \Spa(A,A^+)$ and let 
    \begin{align}\label{eq: map of minscule lattices} \alpha\colon T_p\Gs \otimes_{\Z_p} \B_{\dR}^+ \hookrightarrow \Xi(\Gs)\end{align}
    denote the corresponding minuscule $\B_{\dR}^+$-lattice under Theorem \ref{thm: equivalence for dualizable groups on perfd}. We can obtain $\alpha$ directly from the group $\mathfrak{G}$ as follows: Consider the natural map
    \[ v\colon T_p\mathfrak{G}(R) \otimes_{\Z_p} \Q_p/\Z_p \rightarrow \mathfrak{G}.\]
    By functoriality of the prismatic Dieudonné module, this induces a $\varphi$-equivariant morphism 
    \begin{align}\label{eq: explicit prismatic dieudonné map} \alpha'\colon T_p\mathfrak{G}(R) \otimes_{\Z_p} \Ainf(R) = M( T_p\mathfrak{G} \otimes_{\Z_p} \Q_p/\Z_p) \rightarrow M(\mathfrak{G}). \end{align}
    We claim that it induces the map (\ref{eq: map of minscule lattices}) after base change to $\B_{\dR}^+$. This follows from the following commutative diagram
    % https://q.uiver.app/#q=WzAsNCxbMCwwLCJUX3BcXEdzXFxvdGltZXNfe1xcWl9wfVxcQl97XFxkUn1eKyJdLFsxLDAsIlxcWGkoVF9wXFxHcyBcXG90aW1lcyBcXFFfcC9cXFpfcCkiXSxbMSwxLCJcXFhpKFxcR3MpLCJdLFswLDEsIlRfcFxcR3NcXG90aW1lc197XFxaX3B9XFxCX3tcXGRSfV4rIl0sWzAsMSwiPSJdLFsxLDIsIlxcYWxwaGEnXFxvdGltZXMgXFxpZCJdLFswLDMsIj0iXSxbMywyLCJcXGFscGhhIl1d
\[\begin{tikzcd}
	{T_p\Gs\otimes_{\Z_p}\B_{\dR}^+} & {\Xi(T_p\Gs \otimes \Q_p/\Z_p)} \\
	{T_p\Gs\otimes_{\Z_p}\B_{\dR}^+} & {\Xi(\Gs),}
	\arrow["{=}", from=1-1, to=1-2]
	\arrow["{=}", from=1-1, to=2-1]
	\arrow["{\alpha'\otimes \id}", from=1-2, to=2-2]
	\arrow["\alpha", from=2-1, to=2-2]
\end{tikzcd}\]
induced by the map $v$ using functoriality of the sheaf $\Xi(\Gs)$, and Proposition \ref{prop: prismatic dieudonné versus E(G)}.
\end{remark}

We may more generally compare our construction with the prismatic Dieudonné theory of Anschütz--Le Bras \cite{anschutz2022prismaticdieudonnetheory}. Let $R$ be a quasi-syntomic ring \cite{BhattScholze2022prisms}. For example, $R$ could be an integral perfectoid ring, or the $p$-completion of a smooth algebra over $\Os_K$, where $K$ is a $p$-adic field or a perfectoid field. The authors define \cite[Def. 4.5]{anschutz2022prismaticdieudonnetheory} a category $\DM^{\adm}(R)$ of admissible prismatic Dieudonné modules as certain $\varphi$-modules on the small quasi-syntomic site $(R)_{\qsyn}$, and a contravariant equivalence \cite[Thm. 4.74]{anschutz2022prismaticdieudonnetheory}
\begin{align} \BT(R) \cong \DM^{\adm}(R), \quad \mathfrak{G} \mapsto (\Ms_{\Prism}(\mathfrak{G}), \varphi_{\Ms_{\Prism}(\mathfrak{G})})^{\vee}.\end{align}
Assume that the adic generic fiber $S=\Spf(R)_{\eta}$ is a good adic space. We construct a natural contravariant functor 
    \begin{align}\label{eq: from admissible DM to analytic DC} 
    \widetilde{(\cdot)^{\vee}} \colon \DM^{\adm}(R) \rightarrow \{ \text{analytic Dieudonné crystals over }S
    \} \end{align}
as follows. Let $T=\Spa(A,A^+)$ be an affinoid perfectoid space over $S$ such that the structure map is the generic fiber of a map $\Spf(A^+) \rightarrow \Spf(R)$. Given a $p$-divisible group $\mathfrak{G}$ over $R$ with associated prismatic Dieudonné module $(\Ms_{\Prism},\varphi_{\Ms_{\Prism}})^{\vee}$ over $R$, extending scalars to $A^+$, passing to linear duals and taking global sections produces a minuscule Breuil--Kisin--Fargues module $(M,\varphi_M)$ which corresponds to $\mathfrak{G} \otimes_R A^+$ under Scholze--Weinstein's functor \cite[Prop. 4.47-48]{anschutz2022prismaticdieudonnetheory}. Moreover, using the admissibility condition in \cite[Remark. 1.14]{anschutz2022prismaticdieudonnetheory}, the $A^+$-module $\varphi_{M}(\varphi^*M)/M$ arises via scalar extension from a finite projective $R$-module, namely $\Lie(\mathfrak{G})$. We may pass to the associated minuscule shtuka $(\widetilde{M},\varphi_{\widetilde{M}})$ over $\Yss_T$. Upon noticing that perfectoid spaces $T \rightarrow S$ satisfying the above form a basis of the $v$-site of $S$, we obtain a well-defined object 
\[ (\widetilde{\Ms}_{\Prism},\varphi_{\widetilde{\Ms}_{\Prism}}) \in \Sht_{\min}(S),\]
satisfying the additional property that $\varphi_{\widetilde{\Ms}_{\Prism}}(\varphi^*\widetilde{\Ms}_{\Prism})/\widetilde{\Ms}_{\Prism}$ is a vector bundle arising from a finite projective $R$-module. By using the compatibility of the prismatic Dieudoné functor with Cartier duality \cite[Prop. 4.73]{anschutz2022prismaticdieudonnetheory}, we conclude that the same can be said of $\widetilde{\Ms}_{\Prism}^{\vee} \otimes \Os_{\Yss_S}\{1\}$. It follows that $(\widetilde{\Ms}_{\Prism},\varphi_{\widetilde{\Ms}_{\Prism}})$ is an analytic Dieudonné crystal over $S$ in the sense of Definition \ref{def: analytic Dieudonné crystal}. We then reach the following.
\begin{proposition}\label{prop: analytic DT vs prismatic DT}
    Let $R$ be a $p$-torsionfree quasi-syntomic ring, and assume that the adic generic fiber $S=\Spf(R)_{\eta}$ is a good adic space. Then  the functor (\ref{eq: from admissible DM to analytic DC}) is compatible with prismatic Dieudonné theory \cite[Thm. 4.74]{anschutz2022prismaticdieudonnetheory} and the equivalence of Theorem \ref{thm: analytic Dieudonné theory in general}, in the sense that the following diagram commutes
    % https://q.uiver.app/#q=WzAsNCxbMCwwLCJcXEJUKFIpIl0sWzEsMCwiXFxETV57XFxhZG19KFIpIl0sWzAsMSwiXFx7XFx0ZXh0e2FuYWx5dGljIH1wXFx0ZXh0ey1kaXZpc2libGUgZ3JvdXBzIG92ZXIgfVNcXH0iXSxbMSwxLCJcXHsgXFx0ZXh0e2FuYWx5dGljIERpZXVkb25uw6kgY3J5c3RhbHMgb3ZlciB9U1xcfS4iXSxbMCwxLCJcXGNvbmciXSxbMCwyLCIoXFxjZG90KV97XFxldGF9IiwyXSxbMSwzLCJcXHdpZGV0aWxkZXsoXFxjZG90KX1ee1xcdmVlfSJdLFsyLDMsIlxcY29uZyIsMl1d
\[\begin{tikzcd}
	{\BT(R)} & {\DM^{\adm}(R)} \\
	{\{\text{analytic }p\text{-divisible groups over }S\}} & {\{ \text{analytic Dieudonné crystals over }S\}.}
	\arrow["\cong", from=1-1, to=1-2]
	\arrow["{(\cdot)_{\eta}}"', from=1-1, to=2-1]
	\arrow["{\widetilde{(\cdot)}^{\vee}}", from=1-2, to=2-2]
	\arrow["\cong"', from=2-1, to=2-2]
\end{tikzcd}\]
\end{proposition}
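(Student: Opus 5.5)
The plan is to show that the two composite functors $\BT(R) \rightarrow \{\text{analytic Dieudonné crystals over }S\}$ are naturally isomorphic. Both are sheaves for the $v$-topology on $S$, via the stack $\Sht_{\min}$ and $v$-descent of dualizable groups along perfectoid covers. So I would build the isomorphism on a basis of the $v$-site and then check compatibility with pullbacks.

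Take the basis of affinoid perfectoid $T=\Spa(A,A^+)$ over $S$ whose structure map is the generic fiber of some $\Spf(A^+)\rightarrow \Spf(R)$; we may also take $A^+$ to be $p$-torsionfree integral perfectoid. By construction of (\ref{eq: from admissible DM to analytic DC}), the value of $\widetilde{(\Ms_{\Prism}(\mathfrak{G}))^{\vee}}$ on $T$ is the shtuka $(\widetilde{M},\varphi_{\widetilde{M}})$ attached to a minuscule Breuil--Kisin--Fargues module $M$. By \cite[Prop. 4.47-48]{anschutz2022prismaticdieudonnetheory}, $M$ corresponds to $\mathfrak{G}\otimes_R A^+$ under the Scholze--Weinstein--Lau functor.

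On the other side, the Dieudonné crystal of $\mathfrak{G}_\eta$ of Theorem \ref{thm: analytic Dieudonné theory in general} restricts on $T$ to $\Ms((\mathfrak{G}\otimes_R A^+)_\eta)$. Here I use that generic fibers commute with base change and that the equivalence of Theorem \ref{thm: equivalence for dualizable groups on perfd} is compatible with pullback. Proposition \ref{prop: prismatic dieudonné versus E(G)} applied to $A^+$ then gives a canonical isomorphism $\Ms((\mathfrak{G}\otimes_R A^+)_\eta)\cong \widetilde{M}$, which is the desired local isomorphism.

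The main obstacle is checking that these local isomorphisms are natural in $T$ and in $\mathfrak{G}$, so that they glue. They must agree on overlaps $T'\rightarrow T$ within the basis, including those not coming from maps of formal models. Proposition \ref{prop: prismatic dieudonné versus E(G)} characterizes its isomorphism as the unique one matching the Hodge--Tate triples $(T_p,\Lie,f)$. So I would check compatibility at the level of these triples, where naturality is clear by Theorem \ref{thm: extending Fargues' equivalence of categories}. Where needed, I would reduce via products of points (Proposition \ref{prop: dualiazable pdiv gps over product of points}) and overconvergence (Corollary \ref{cor: stack of pdiv gps is overconvergent}) to $\Os_C$-points, where everything is pinned down by \cite[Thm. 14.1.1]{SW20}. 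Naturality in $\mathfrak{G}$ follows the same way, since morphisms of dualizable groups are detected on Tate modules. Descent in $\Sht_{\min}$ then gives the global isomorphism, and hence the diagram commutes.
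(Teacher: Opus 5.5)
Your proposal is correct and follows the paper's argument. The paper's proof is the one-line observation that, on the basis of affinoid perfectoid $T=\Spa(A,A^+)$ used to define the functor (\ref{eq: from admissible DM to analytic DC}), the shtuka $\widetilde{M}$ attached via \cite[Prop. 4.47-48]{anschutz2022prismaticdieudonnetheory} to $\mathfrak{G}\otimes_R A^+$ is identified with $\Ms((\mathfrak{G}\otimes_R A^+)_\eta)$ by Proposition \ref{prop: prismatic dieudonné versus E(G)}. Your explicit gluing and naturality check just spells out what the paper leaves implicit. Your concern about transition maps not coming from formal models is vacuous: any map of affinoid perfectoid spaces over $S$ is induced by a map $A'^+\to A^+$ of $R$-algebras.
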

\begin{proof}
    This follows from the above discussion and Proposition \ref{prop: prismatic dieudonné versus E(G)}.
\end{proof}
\begin{remark}
    A similar compatibility result holds for the log prismatic Dieudonné theory of \cite{würthen2023logprismaticdieudonnetheory}\cite{inoue2025logprismaticdieudonnetheory} on $p$-adic semistable formal schemes and the adic generic fiber functor on log $p$-divisible groups of Proposition \ref{prop: generic fibers of log p-divisible groups}. We do not pursue this here.
\end{remark}

Next, we show that if $\Gs$ is an analytic $p$-divisible group over a perfectoid space $S = \Spa(R,R^+)$ of good reduction $\mathfrak{G} \rightarrow \Spf(R^+)$, then the vector bundle $\Es(\Gs)$ on $X_S$ only depends on the special fiber $\mathfrak{G} \otimes_{R^+} R^+/p$ up to isogeny. For this, we recall a construction of Zhang (\cite{zhang2023peltypeigusastackpadic}, below Proposition $8.5$) which attaches to a $p$-divisible group $G$ over $R^+/\varpi$ a vector bundle $\Es(G)$ on $X_S$. Here $\varpi\in R^+$ is any perfectoid pseudo-uniformizer. This is done as follows: Consider the rational crystalline Dieudonné module $M[\tfrac{1}{p}] \coloneqq M_{\crys}(G)[\tfrac{1}{p}]$. This is a finite free $\Bcr+(R^+/\varpi)$-module, where $\Bcr+(R^+/\varpi) = \Acrys(R^+/\varpi)[\tfrac{1}{p}]$. Therefore, the group
\[ \bigoplus_{d\geq 0} (M[\tfrac{1}{p}])^{\varphi =p^{d+1}}\]
is a graded module over the ring
\[ P = \bigoplus_{d\geq 0} \Bcr+(R^+/\varpi)^{\varphi =p^{d}}.\]
This thus defines a vector bundle on the \emph{algebraic Fargues--Fontaine curve}
\[ X_S^{\alg} = \Proj(P).\]
We now use the morphism of locally ringed space \cite[§7.1]{zhang2023peltypeigusastackpadic}
\[ X_S \rightarrow X_S^{\alg}\]
which allows to pull back this vector bundle to a vector bundle $\Es(G)$ on $X_S$.

\begin{cor}\label{cor: E(G) only depends on special fiber}
    Let $S=\Spa(R,R^+)$ be an affinoid perfectoid space over $\Q_p$. Let $G$ be a $p$-divisible group over $R^+/\varpi$ that admits a deformation $\mathfrak{G}$ over $R^+$, and let $\Gs$ be its adic generic fiber over $S$. Then we have a canonical isomorphism of vector bundle on $X_S$, natural in $S$
    \[ \Es(\Gs) \cong \Es(G).\]
\end{cor}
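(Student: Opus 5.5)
Both sides are built from the same crystalline object, the rational Dieudonné module $M_{\crys}(\mathfrak{G})[\tfrac{1}{p}]$ of the deformation, so the plan is to compare both of them to the shtuka of a Breuil--Kisin--Fargues module.

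\textbf{Step 1 (reduction to the integral shtuka).} By Proposition \ref{prop: prismatic dieudonné versus E(G)}, applied to the $p$-torsionfree integral perfectoid ring $R^+$, the minuscule shtuka $\Ms(\Gs)$ of Theorem \ref{thm: equivalence for dualizable groups on perfd} is identified with $\widetilde{M}$. Here $(M,\varphi_M)$ is the minuscule Breuil--Kisin--Fargues module of $\mathfrak{G}$ over $\Ainf(R^+)$. Under Proposition \ref{prop: from modification to shtukas}, $\Es(\Gs)$ is the vector bundle on $X_S$ obtained by restricting $\widetilde{M}$ to $\Yss_{S,]0,\infty[}$ and descending along $\varphi$. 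So it suffices to identify this $\varphi$-descended bundle with Zhang's bundle $\Es(G)$.

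\textbf{Step 2 (crystalline comparison).} By the Scholze--Weinstein--Lau theory, together with the compatibility of prismatic and crystalline Dieudonné modules, we have a $\varphi$-equivariant isomorphism
\[ M\otimes_{\Ainf(R^+)}\Acrys(R^+) \cong M_{\crys}(\mathfrak{G})(\Acrys(R^+)). \]
The right-hand side depends only on $\mathfrak{G}\otimes R^+/p$, and even only on $G$ over $R^+/\varpi$. The reason is that $\Acrys(R^+)=\Acrys(R^+/\varpi)$ is a PD-thickening of $R^+/\varpi$, and $M_{\crys}$ is a crystal. Inverting $p$ gives
\[ M\otimes\Bcr+(R^+)\cong M_{\crys}(G)[\tfrac{1}{p}] \]
as $\varphi$-modules over $\Bcr+(R^+/\varpi)=\Bcr+(R^+)$.

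\textbf{Step 3 (passing to the curve).} This is the main point. Any $\varphi$-module over $\Bcr+$ defines a vector bundle on $X_S$ in two ways:
\begin{itemize}
\item analytically, via the map $\Bcr+\to\Os(\Yss_{S,]0,\infty[})$ (here $\Bcr+$ is squeezed between $B^+$-type rings), followed by $\varphi$-descent;
\item algebraically, via $\Proj$ of the graded module $\bigoplus_d (M[\tfrac1p])^{\varphi=p^{d+1}}$ pulled back along $X_S\to X_S^{\alg}$.
\end{itemize}
These two constructions should agree. The strategy is to first reduce, by $v$-descent and naturality in $S$, to $S$ a product of points. Then it suffices to check that the natural comparison map is an isomorphism on geometric points $\Spa(C,\Os_C)$, where it is GAGA for the Fargues--Fontaine curve. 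The twist $\varphi=p^{d+1}$ rather than $p^d$ accounts for $\Os(1)$ and has to be matched with the normalization of the minuscule shtuka, whose leg is at $\varphi^{-1}(S)$. I expect this bookkeeping of twists and Frobenius normalizations to be the main obstacle. To settle it I would test the identification on $\mu_{p^\infty}$, where both sides must give $\Os_{X_S}(1)$.

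\textbf{Step 4 (naturality).} Naturality in $S$ follows because every identification above is functorial in $R^+$.
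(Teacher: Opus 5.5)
Your Steps 1 and 2 are the paper's opening moves. You reduce via Proposition~\ref{prop: prismatic dieudonné versus E(G)} to the bundle of the Breuil--Kisin--Fargues module $(M,\varphi_M)$, and you use $M\otimes_{\Ainf(R^+)}\Acrys(R^+)\cong M_{\crys}(G)$ from \cite[Thm. 17.5.2]{SW20}. Step 3 carries all the content, and as written it does not go through.

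\emph{The analytic construction is misstated.} The same slip occurs in Step 1.
\begin{itemize}
\item There is no map $\Bcr+\to\Os(\Yss_{S,]0,\infty[})$. Normalize $\kappa$ so that $S\subset\Yss_{S,\{1\}}$, as in the proof of Proposition~\ref{prop: from modification to shtukas}. Then elements $\sum_n a_n\xi^n/n!$ of $\Acrys$ converge only where $|\xi|\le|p|^{1/(p-1)}$, i.e.\ on $\Yss_{S,[1/(p-1),\infty[}$.
\item $\widetilde M$ cannot be $\varphi$-descended from all of $\Yss_{S,]0,\infty[}$, because the leg $\varphi^{-1}(S)$, at $\kappa=1/p$, lies in it.
\end{itemize}
This part is repairable. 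On $\Yss_{S,[r,\infty[}$ with $r\ge 1/(p-1)>1/p$, the shtuka is an honest $\varphi$-module, namely $N\otimes_{\Bcr+}\Os$ with $N=M\otimes_{\Ainf(R^+)}\Bcr+$ and Frobenius $\varphi_M$. Its $\varphi$-descent is $\Es(\Gs)$.

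\emph{The genuine gap is the sentence ``these two constructions should agree''.} Your proposed check is circular. Reducing by $v$-descent to products of points, and then to geometric points, presupposes that Zhang's bundle commutes with base change in $S$. That bundle is built by $\Proj$ from the $\varphi=p^{d+1}$-eigenspaces of $M_{\crys}(G)[\tfrac1p]$. Eigenspaces do not visibly commute with base change, and this naturality is part of what the corollary asserts. You also never construct the comparison map whose fibres you want to test.

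To complete your route you would need two inputs.
\begin{itemize}
\item Kedlaya--Liu's relative GAGA for $X_S\to X_S^{\alg}$ over affinoid perfectoid $S$. This recovers a vector bundle $\Es$ on $X_S$ from the graded module $\bigoplus_d H^0(X_S,\Es(d))$.
\item An actual computation $H^0(X_S,\Es(\Gs)(d))=N^{\varphi_M=p^d}$ for $d\gg0$. This is a Fargues--Fontaine type regularization statement; the case $N=\Bcr+$ is $(\Bcr+)^{\varphi=p^d}=H^0(X_S,\Os_{X_S}(d))$. It must be proved for your $N$ over a general affinoid perfectoid base, and your sketch does not supply it.
\end{itemize}

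The paper avoids this comparison by working at the de Rham point rather than on the crystalline side. Adapting \cite[Lemma 8.6]{zhang2023peltypeigusastackpadic}, it base-changes the Frobenius-equivariant map $T_p\mathfrak{G}(R^+)\otimes_{\Z_p}\Ainf(R^+)\to M$ of (\ref{eq: explicit prismatic dieudonné map}) to $\Acrys(R^+)$. This exhibits both $\Es(M,\varphi_M)$ and $\Es(G)$ as the modification of $T_p\Gs\otimes_{\Z_p}\Os_{X_S}$ at the same $\BdR+(R)$-lattice $M\otimes_{\Ainf(R^+)}\BdR+(R)$. Beauville--Laszlo glueing then concludes.

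The twist bookkeeping you single out is not the real obstacle. Under $M\otimes_{\Ainf(R^+)}\Acrys(R^+)\cong M_{\crys}(G)$, with the covariant conventions of \cite[Thm. 17.5.2]{SW20}, $\varphi_M$ corresponds to $p^{-1}$ times the crystalline Frobenius. So your Step 2 is $\varphi$-equivariant only after this rescaling, and the shift to $\varphi=p^{d+1}$ in Zhang's grading absorbs exactly that factor. For instance, the covariant Dieudonné module of $\mu_{p^\infty}$ has slope $0$, which gives $\Os_{X_S}(1)=\Es(\G_m\langle p^{\infty}\rangle)$, as you expect.
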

\begin{proof}
    Let $(M,\varphi_M)$ be the prismatic Dieudonné module of $\mathfrak{G}$ and let $\Es(M,\varphi_M)$ denote the resulting vector bundle on the Fargues--Fontaine curve. By Proposition \ref{prop: prismatic dieudonné versus E(G)}, it is enough to give a natural isomorphism
    \[ \Es(M,\varphi_M) \cong \Es(G).\]
    We adapt the proof of \cite[Lemma 8.6]{zhang2023peltypeigusastackpadic}. Consider the Frobenius-equivariant map (\ref{eq: explicit prismatic dieudonné map})
    \[ T_p\mathfrak{G}(R^+) \otimes_{\Z_p} \Ainf(R^+) \rightarrow M.\]
    After base-change to $\Acrys(R^+)$, we obtain a map
    \[ T_p\mathfrak{G}(R^+) \otimes_{\Z_p} \Acrys(R^+) \rightarrow M \otimes_{\Ainf(R^+)} \Acrys(R^+) = M_{\crys}(G),\]
    where we use \cite[Thm. 17.5.2]{SW20} for the last equality. From this, we obtain that both $\Es(M,\varphi_M)$ and $\Es(G)$ are the modification of the vector bundle $T_p\Gs \otimes_{\Z_p} \Os_{X_S}$ at the $\BdR+(R)$-lattice $M \otimes_{\Ainf(R^+)} \BdR+(R)$ along the isomorphism 
    \[ T_p\Gs \otimes_{\Q_p} \BdeR(R) = M \otimes_{\Ainf(R^+)} \BdeR(R)\]
    resulting from Proposition \ref{prop: prismatic dieudonné versus E(G)}. This concludes the proof.
\end{proof}

We derive the following.
\begin{cor}\label{cor: E(G) = E(D,phi) over padic field}
    Let $k$ be a perfect field of characteristic $p$. Let $G$ be a $p$-divisible group over $k$ with Dieudonné isocrystal $(D,\varphi_D)$ over $K=W(k)[\tfrac{1}{p}]$. Let $\mathfrak{G} \rightarrow \Spf(W(k))$ be a $p$-divisible group deforming $G$ and let $\Gs \rightarrow \Spa(K)$ be its generic fiber. Then we have a canonical isomorphism
    \[ \Es(\Gs) = \Es(D,\varphi_D) \in \Bun_{\GL_n}(\Spd(K)).\]
\end{cor}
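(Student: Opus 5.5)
The plan is to deduce this from Corollary \ref{cor: E(G) only depends on special fiber} by base change to a perfectoid cover, followed by descent. The objects $\Es(\Gs)$ and $\Es(D,\varphi_D)$ are both $v$-sheaf objects over $\Spd(K)$: the first is the functorial assignment $T\mapsto \Es(\Gs_T)$ on perfectoid $T\to\Spa(K)$, natural by Theorem \ref{thm: vb on FF associated to an pdiv groups}(1). The second is the usual $T\mapsto \Es(D,\varphi_D)_T$, obtained by descending the $\varphi$-module $D\otimes_K \Os_{\Yss_{T,]0,\infty[}}$ to $X_T$. So it is enough to produce isomorphisms $\Es(\Gs_T)\cong \Es(D,\varphi_D)_T$ on a basis of the $v$-site of $\Spa(K)$, natural in $T$. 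By \cite[Prop. 19.5.3]{SW20}, such isomorphisms then descend to an isomorphism in $\Bun_{\GL_n}(\Spd(K))$.

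I take the basis to be affinoid perfectoid $T=\Spa(R,R^+)$ over $K$. For each such $T$, base changing $\mathfrak{G}$ along $W(k)\to R^+$ gives a deformation $\mathfrak{G}_{R^+}$ over $R^+$ of $G_{R^+/\varpi}:=G\otimes_k R^+/\varpi$; here I use that $\varpi\mid p$, so $W(k)\to R^+/\varpi$ factors through $k$. Its generic fiber is $\Gs_T$. Corollary \ref{cor: E(G) only depends on special fiber} then gives $\Es(\Gs_T)\cong\Es(G_{R^+/\varpi})$, naturally in $T$. It remains to identify Zhang's bundle $\Es(G_{R^+/\varpi})$ with $\Es(D,\varphi_D)_T$, again naturally in $T$.

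For this identification I use crystalline base change. The rational crystalline Dieudonné module satisfies $M_{\crys}(G_{R^+/\varpi})[\tfrac1p] = D\otimes_{K}\Bcr+(R^+/\varpi)$ with Frobenius $\varphi_D\otimes\varphi$. This holds because $\Acrys(R^+/\varpi)$ is a PD-thickening of $R^+/\varpi$ over $W(k)$, and the Dieudonné crystal of $G$ evaluated there is $M_{\crys}(G)\otimes_{W(k)}\Acrys(R^+/\varpi)$. Therefore the graded $P$-module $\bigoplus_d (D\otimes\Bcr+)^{\varphi=p^{d+1}}$ is exactly the graded module on $X_T^{\alg}$ attached to the isocrystal $(D,\varphi_D)$, up to the fixed twist convention. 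I then check that its pullback along $X_T\to X_T^{\alg}$ is the bundle $\Es(D,\varphi_D)_T$ built from $\Yss_{T,]0,\infty[}$. This follows from the standard comparison of the algebraic and adic curves for isocrystals \cite[§7.1]{zhang2023peltypeigusastackpadic}, which is compatible with base change in $T$.

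The step I expect to be the main obstacle is making the twist and duality conventions match. Zhang's construction uses $\varphi=p^{d+1}$, and the paper's $\Es(\Gs)$ is a modification of $T_p\Gs\otimes\Os_{X_S}$ whose slopes lie in $[0,1]$. I must check that $(D,\varphi_D)$, normalized as the covariant or contravariant isocrystal as the statement intends, gives exactly $\Es(G)$ rather than its dual or a twist by $\Os(1)$. I would first verify this on the test cases $G=\mu_{p^\infty}$, where $\Es(\Gs)=\Os_{X}(1)$, and $G=\Q_p/\Z_p$, where $\Es(\Gs)=\Os_X$, and then fix the conventions accordingly. The remaining naturality in $T$ is formal, since every construction used commutes with base change.
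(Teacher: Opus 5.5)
Your proposal is correct and follows essentially the same route as the paper. For each affinoid perfectoid $S=\Spa(R,R^+)$ over $K$, the paper combines Corollary \ref{cor: E(G) only depends on special fiber} with crystalline base change, $M_{\crys}(G\otimes_k R^+/p)[\tfrac{1}{p}]=(D,\varphi_D)\otimes_K \B_{\crys}^+(R^+)$, and with the algebraic-curve description of $\Es(D,\varphi_D)$, for which it cites \cite[Thm. 13.5.4]{SW20} where you cite Zhang. The normalization issue you flag is left implicit in the paper: this is the shift between Zhang's grading by $\varphi=p^{d+1}$ and the usual one, together with the choice of covariant versus contravariant isocrystal. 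Your test cases $\mu_{p^{\infty}}$ and $\Q_p/\Z_p$ are a sensible way to pin down which isocrystal the statement means.
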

\begin{proof}
Let $S = \Spa(R,R^+)$ be any affinoid perfectoid space over $K$. We need to produce a canonical isomorphism of vector bundles on $X_S$, compatible with base-change in $S$
\[ \Es(\Gs_S) = \Es(D,\varphi_D).\]
In that case, the rational crystalline Dieudonné module of $G\otimes_k R^+/p$ is given by 
\[ M_{\crys}(G\otimes_k R^+/p)[\tfrac{1}{p}] = (D,\varphi) \otimes_K \Bcr+(R^+).\]
Therefore, it follows from the description of $\Es(D,\varphi_D)$ in terms of the algebraic curve, see \cite[Thm. 13.5.4]{SW20}, and Corollary \ref{cor: E(G) only depends on special fiber}.
\end{proof}

\subsection{Comparison with de Rham cohomology}\label{subsection: Comparison with de Rham cohomology}
We now compare our Dieudonné functor with $p$-adic cohomology theories. We will need Liu--Zhu's de Rham and Hodge--Tate functors $D_{\dR}$, $D_{\HT}$, whose definitions are recalled at (\ref{eq: DdR}) and (\ref{eq: DHT}) respectively.

\begin{proposition}\label{prop: VpG de rham vs MG étale vb}
    Let $S$ be a smooth rigid space over a $p$-adic field $K$ and let $\Gs \rightarrow S$ be a dualizable analytic $p$-divisible group. 
    \begin{enumerate}
        \item The de Rham module $D(\Gs)$ is an étale vector bundle if and only if $V_p\Gs$ is a de Rham $\Q_p$-local system. In that case, we have a canonical isomorphism
    \[ D_{\dR}(V_p\Gs) = D(\Gs),\]
    compatible with the filtrations\footnote{Beware that the filtration on $D_{\dR}(V_p\Gs)$ has range in $[-1,0]$, i.e. we have
    \[ \omega_{\Gs^D} = \Fil^0 \sub \Fil^{-1} = D_{\dR}(V_p\Gs),\]
    while we want the Hodge filtration on $D(\Gs)$ to have range in $[0,1]$.}.
    \item In general, we have a canonical isomorphism
    \[ D_{\HT}(V_p\Gs) = \omega_{\Gs^D} \oplus \Lie(\Gs),\]
    compatible with the gradings.
    \end{enumerate} 
\end{proposition}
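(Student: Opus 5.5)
The plan is to identify $\Xi(\Gs)$ with Scholze's lattice from Lemma \ref{lemma: B+ lattice of Scholze}, and to recover $D(\Gs)$ from it by $\nu_*$. By Corollary \ref{cor: BdR+ lattice assoc to pdiv groups in general} and Theorem \ref{thm: equivalence for dualizable groups on perfd}, after twisting, $\Xi(\Gs)$ is a minuscule $\B_{\dR}^+$-lattice
\[ T_p\Gs\otimes\B_{\dR}^+ \sub \Xi(\Gs)\sub \xi^{-1}(T_p\Gs\otimes \B_{\dR}^+). \]
It corresponds, via Lemma \ref{lemma: minuscule lattices vs flags}, to the locally direct summand $f_{\Gs}$. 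By Proposition \ref{prop: dualizable groups over smooth rig over padic field}, $V_p\Gs$ is Hodge--Tate with weights in $\{0,1\}$, and by Proposition \ref{prop: characterization of HT ls} the Hodge--Tate filtration on $V_p\Gs\otimes\Os_{S_{\proet}}$ is
\[ \Lie(\Gs)\otimes\Os(1)\sub V_p\Gs\otimes\Os. \]

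For part (2), apply $\nu_*(-\otimes\OC(i))$ to the short exact sequence
\[ 0\to \Lie(\Gs)\otimes\Os_v\to V_p\Gs(-1)\otimes\Os_v\to \omega_{\Gs^D}\otimes\Os_v(-1)\to 0, \]
twisted appropriately, exactly as in the second half of the proof of Proposition \ref{prop: characterization of HT ls}. The vanishing $R^m\nu_*(E\otimes\OC(n))=0$ for $n\neq 0$ (\cite[Prop. 6.16]{scholze2013padicHodge}) kills all but one graded piece in each degree. This gives $\gr^{-1}D_{\HT}=\Lie(\Gs)$ and $\gr^0D_{\HT}=\omega_{\Gs^D}$; the identification $\gr^0$ uses $\nu_*\OC=\Os_{S_{\et}}$. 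Summing the graded pieces yields the claim.

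For part (1), first suppose $V_p\Gs$ is de Rham. Lemma \ref{lemma: B+ lattice of Scholze} provides $\Xi=(D_{\dR}(V_p\Gs)\otimes\OBdR+)^{\nabla=0}$. Since the filtration on $D_{\dR}(V_p\Gs)$ has range $[-1,0]$ (weights $0,1$), part (2) of that lemma gives $V_p\Gs\otimes\B_{\dR}^+\sub\Xi\sub \xi^{-1}V_p\Gs\otimes\B_{\dR}^+$. Lemma \ref{lemma: BdR+-lattice induces HT filtration in arithmetic case} then says the induced flag in $V_p\Gs\otimes\Os$ is the Hodge--Tate filtration. Hence $\Xi$ and the rationalisation of $\Xi(\Gs)$ correspond to the same flag, so by Lemma \ref{lemma: minuscule lattices vs flags} (applied rationally) they are equal. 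Reducing along $\theta$ and using Lemma \ref{lemma: B+ lattice of Scholze}(1) gives
\[ D(\Gs)=\Xi(\Gs)\otimes_\theta\Os_{S_{\proet}}\cong D_{\dR}(V_p\Gs)\otimes\Os_{S_{\proet}}, \]
so $D(\Gs)$ is étale. Compatibility of filtrations follows by comparing $\omega_{\Gs^D}$ with $\Fil^0$ via Lemma \ref{lemma: B+ lattice of Scholze}(2) and the description of the Hodge filtration in Lemma \ref{lemma: Hodge filtration on Dieudonné module}.

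For the converse, suppose $D(\Gs)=D\otimes\Os_v$ with $D$ étale. I expect this direction to be the main obstacle. The plan is to produce a filtered bundle with integrable connection: equip $D$ with the filtration $\omega_{\Gs^D}\sub D$ (already étale) and show $D_{\dR}(V_p\Gs)$ has full rank. I would compute $\nu_*(V_p\Gs\otimes\OBdeR)$ through the $\xi$-adic filtration of $\OBdeR$, whose graded pieces are $\OC(i)$. The goal is to use $\Xi(\Gs)\otimes_{\B_{\dR}^+}\OBdR+$ to exhibit enough horizontal sections, namely to show the map $\nu_*(\Xi(\Gs)\otimes\OBdR+)\to D$ is surjective. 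This requires extending sections of $D$ to $\OBdR+$-valued sections compatibly with $\theta$, using the étaleness of $D$ and the Poincaré lemma. A rank count then gives de Rhamness. If this route stalls, I would instead argue locally on $S$, reducing to the base field case at classical points via rigidity of de Rhamness (Liu--Zhu), using that $D(\Gs)$ commutes with base change.
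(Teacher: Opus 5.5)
Your arguments for part (2) and for the implication ``$V_p\Gs$ de Rham $\Rightarrow$ $D(\Gs)$ étale'' are essentially the paper's: Scholze's lattice, Lemma~\ref{lemma: BdR+-lattice induces HT filtration in arithmetic case}, uniqueness of the minuscule lattice with a given flag, then reduction along $\theta$.

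The gap is the converse, ``$D(\Gs)$ étale $\Rightarrow$ $V_p\Gs$ de Rham''. There you only sketch intentions, and the mechanism you name is not the one that works.

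\begin{itemize}
\item Horizontal sections play no role. Since $D_{\dR}(V_p\Gs)=\nu_*(V_p\Gs\otimes_{\Q_p}\OBdeR)$, neither $\nabla=0$ nor the Poincaré lemma is involved; they belong to the inverse functor $\Fil^0(\Es\otimes\OBdeR)^{\nabla=0}$.
\item Filtering $V_p\Gs\otimes\OBdeR$ by $V_p\Gs\otimes\Fil^i\OBdeR$ does not close either. The graded pieces $V_p\Gs\otimes\OC(i)$ have non-zero $\nu_*$ for $i=-1,0$ (namely $\Lie(\Gs)$ and $\omega_{\Gs^D}$) and non-zero $R^1\nu_*$. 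You are left with a connecting map out of $\Lie(\Gs)$ whose vanishing is equivalent to de Rhamness, so this is circular.
\item The filtration of $\mathcal{O}\mathbb{B}_{\dR}^+$ by powers of $\ker\theta$ does not have graded pieces $\OC(i)$. Already $\gr^0=\Os_{S_{\proet}}$, and $\gr^1$ is the Faltings extension of $\Omega^1\otimes\Os_{S_{\proet}}$ by $\Os_{S_{\proet}}(1)$. So the vanishing $R^j\nu_*\OC(i)=0$ you quote in part (2) does not by itself give the surjectivity of $\nu_*(\Xi(\Gs)\otimes\mathcal{O}\mathbb{B}_{\dR}^+)\to D(\Gs)$ on which your plan hinges.
\end{itemize}

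The missing step combines your two ingredients: filter the lattice $\Xi(\Gs)$ against $\OBdeR$, not against $\mathcal{O}\mathbb{B}_{\dR}^+$. Since $\Xi(\Gs)[\xi^{-1}]=V_p\Gs\otimes\B_{\dR}$, one has $V_p\Gs\otimes_{\Q_p}\OBdeR=\Xi(\Gs)\otimes_{\B_{\dR}^+}\OBdeR$. The filtration by $\Xi(\Gs)\otimes_{\B_{\dR}^+}\Fil^i\OBdeR$ has graded pieces
\[ \big(\Xi(\Gs)/\xi\Xi(\Gs)\big)\otimes_{\Os_{S_{\proet}}}\OC(i)=D(\Gs)\otimes_{\Os_{S_{\et}}}\OC(i), \]
and this is exactly where the étaleness of $D(\Gs)$ is used. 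By the projection formula and $R^j\nu_*\OC(i)=0$ for $i\neq 0$ and all $j\geq 0$, only the piece $i=0$ survives. Then $\nu_*\OC=\Os_{S_{\et}}$ gives
\[ D_{\dR}(V_p\Gs)=\nu_*\big(\Xi(\Gs)\otimes_{\B_{\dR}^+}\OBdeR\big)=\nu_*\big(D(\Gs)\otimes_{\Os_{S_{\et}}}\OC\big)=D(\Gs). \]
Hence $\rank D_{\dR}(V_p\Gs)=\rank V_p\Gs$, and $V_p\Gs$ is de Rham.

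Your fallback via Liu--Zhu rigidity would reduce the question to classical points $\Spa(L)$. But there you still need precisely this computation, with Tate's vanishing $H^j(\Gal_L,C(i))=0$ for $i\neq 0$. As written, it does not close the gap.
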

\begin{proof}
    \begin{enumerate}
        \item Assume first that $D(\Gs)$ is an étale vector bundle. Let $\nu\colon X_{\proet} \rightarrow X_{\et}$ and consider
        \begin{align*} 
        D_{\dR}(V_p\Gs) = \nu_*(V_p\Gs \otimes_{\Q_p}\OBdeR) =\nu_*(\Xi(\Gs) \otimes_{\B_{\dR}^+}\OBdeR),
        \end{align*}
        where $\Xi(\Gs)$ is the lattice from Corollary \ref{cor: BdR+ lattice assoc to pdiv groups in general}. We equip $\Xi(\Gs) _{\B_{\dR}^+}\OBdeR$ with the filtration $\Xi(\Gs) \otimes_{\B_{\dR}^+} \Fil^i\OBdeR$, with graded pieces $\Xi(\Gs)/\xi\Xi(\Gs) \otimes_{\Os_{X_{\proet}}} \OC(i)$. By the projection formula, we have for all $i\neq 0$ and $j>0$
        \[ R^j\nu_*(\Xi(\Gs)/\xi\Xi(\Gs) \otimes_{\Os_{X_{\proet}}} \OC(i)) = D(\Gs) \otimes_{\Os_{X_{\et}}} R^j\nu_*\OC(i) = 0,\]
        where the last equality follows from \cite[Prop. 6.16]{scholze2011perfectoid}. We conclude that
        \[ \nu_*(\Xi(\Gs) \otimes_{\B_{\dR}^+}\OBdeR) = \nu_*(\Xi(\Gs)/\xi\Xi(\Gs) \otimes_{\Os_{X_{\proet}}}\OC) = D(\Gs).\]
        This shows that $\rank D_{\dR}(V_p\Gs) = \rank_{\Q_p} V_p\Gs$, so that $V_p\Gs$ is de Rham.

        Conversely, assume that $V_p\Gs$ is a de Rham local system, so that also $V_p\Gs(-1)$ is de Rham. Let $\Xi \sub V_p\Gs(-1) \otimes_{\Q_p} \B_{\dR}$ be the $\B_{\dR}^+$-lattice of Lemma \ref{lemma: B+ lattice of Scholze}. Then by Lemma \ref{lemma: BdR+-lattice induces HT filtration in arithmetic case}, there are inclusions
        \[ \xi(V_p\Gs(-1) \otimes_{\Z_p}\B_{\dR}^+) \sub \Xi \sub V_p\Gs(-1) \otimes_{\Z_p}\B_{\dR}^+\]
        with
        \[(\id\otimes\theta)(\Xi) = \Lie(\Gs) \sub V_p\Gs(-1)\otimes_{\Q_p} \Os_{S_v},\]
        where 
        \[ (\id \otimes\theta)\colon V_p\Gs(-1) \otimes_{\Z_p}\B_{\dR}^+ \rightarrow V_p\Gs(-1) \otimes_{\Z_p}\Os_v \]
        is the projection. But we also know from Corollary \ref{cor: BdR+ lattice assoc to pdiv groups in general} that $\Xi(\Gs)$ satisfies
        \[ V_p\Gs \otimes_{\Z_p}\B_{\dR}^+ \sub \Xi(\Gs) \sub \xi^{-1}(V_p\Gs \otimes_{\Z_p}\B_{\dR}^+)\]
        with 
        \[(\id\otimes\theta)(\Xi(\Gs)) = \Lie(\Gs) \sub V_p\Gs\otimes_{\Q_p} \Os_{S_v}(-1).\]
        Since there clearly is at most one lattice $\Xi(\Gs)$ satisfying these properties, it follows that 
        \[ \Xi(\Gs) = \Xi \otimes_{\B_{\dR}^+} \xi^{-1}(\Q_p(1)\otimes_{\Q_p} \B_{\dR}^+).\]
        In particular,
        \begin{align*} 
        D(\Gs) = \Xi(\Gs)\otimes_{\B_{\dR}^+,\theta} \Os_{S_v} = \Xi\otimes_{\B_{\dR}^+,\theta} \Os_{S_v} = D_{\dR}(V_p{\Gs}(-1)). 
        \end{align*} 
        This shows that $D(\Gs)$ is an étale vector bundle, as required. Finally, the last sentence follows from the fact that $D_{\dR}(V_p{\Gs}(-1)) = D_{\dR}(V_p\Gs)$, with a shift in the filtration index. 
        \item This follows from Proposition \ref{prop: characterization of HT ls} and the sequence (\ref{eq: HT filtration on Tate module}).
    \end{enumerate}
\end{proof}

\begin{cor}\label{cor: Dieudonné module recovers first de Rham cohomology}
    Let $X\rightarrow S$ be a proper smooth morphism of smooth rigid spaces over a $p$-adic field $K$. Consider the topologically $p$-torsion Picard variety $\PPic_{X/S,\et}\langle p^{\infty} \rangle$ of Corollary \ref{cor: ptop picard is dualizable} and let $\Hs \sub \PPic_{X/S,\et}\langle p^{\infty} \rangle$ be its maximal analytic $p$-divisible subgroup. Then we have a canonical isomorphism of vector bundles on $S_{\et}$
    \[ D(\Hs) = R^1\pi_{\dR,*}\Os_X,\]
    compatible with the Hodge filtrations on each side.
\end{cor}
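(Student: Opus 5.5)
The plan is to combine Proposition \ref{prop: VpG de rham vs MG étale vb}(1) with the description of $\Hs$ from Corollary \ref{cor: ptop picard is dualizable} and Theorem \ref{Thm: comparison theorem for analytic p-divisible groups}(2). By the latter, $T_p\Hs = R^1\pi_{v,*}\Z_p(1)/\torsion$, so $V_p\Hs = R^1\pi_{v,*}\Q_p(1)$. Moreover $\Hs$ is dualizable, with $\Lie(\Hs) = R^1\pi_{\et,*}\Os_X$, and $f_{\Hs}$ is the first map of the Hodge--Tate sequence (\ref{eq: contravariant HT seq}). Hence $\omega_{\Hs^D} = \pi_*\Omega^1_{X/S}$.

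The key input is that $R^1\pi_{v,*}\Q_p$ is a de Rham local system with
\[ D_{\dR}(R^1\pi_{v,*}\Q_p) = R^1\pi_{\dR,*}\Os_X \]
as filtered bundles with connection. This is the relative de Rham comparison theorem of Scholze \cite[Thm. 8.8]{scholze2013padicHodge}, which applies since $X\to S$ is proper smooth between smooth rigid spaces over the $p$-adic field $K$. Twisting by $\Q_p(1)$ keeps the local system de Rham, and $D_{\dR}$ of the twist is the same bundle with the filtration index shifted by one. Proposition \ref{prop: VpG de rham vs MG étale vb}(1) then shows that $D(\Hs)$ is an étale vector bundle and yields a canonical isomorphism $D(\Hs) = D_{\dR}(V_p\Hs) = R^1\pi_{\dR,*}\Os_X$, compatible with the filtrations up to that shift.

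It remains to match the Hodge filtrations. On $D(\Hs)$ the filtration is $\omega_{\Hs^D} = \pi_*\Omega^1_{X/S} \subset D(\Hs) \to R^1\pi_{\et,*}\Os_X$, by Lemma \ref{lemma: Hodge filtration on Dieudonné module}. On $R^1\pi_{\dR,*}\Os_X$ the Hodge filtration is $\Fil^1 = \pi_*\Omega^1_{X/S}$ with quotient $R^1\pi_*\Os_X$, using degeneration of Hodge--de Rham in the relative setting.

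To see that the identification respects these filtrations, I would use Lemma \ref{lemma: BdR+-lattice induces HT filtration in arithmetic case} together with Lemma \ref{lemma: comparison between the two HT filtrations}. These identify the graded pieces of the de Rham filtration with the Hodge--Tate filtration on $R^1\pi_{v,*}\Q_p\otimes\Os_v$, and that filtration is exactly the sequence (\ref{eq: contravariant HT seq}) defining $f_{\Hs}$. By the uniqueness of $\Xi(\Hs)$ established in the proof of Proposition \ref{prop: VpG de rham vs MG étale vb}, the subbundle $\Fil^0$ of $D_{\dR}(V_p\Hs)$ corresponds to $\omega_{\Hs^D}$.

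The main obstacle I expect is this bookkeeping of the filtrations and Tate twists. One must check that the Scholze lattice for $R^1\pi_{v,*}\Q_p$, twisted appropriately, gives the same minuscule lattice $\Xi(\Hs)$, and that the induced subbundle is $\pi_*\Omega^1$ rather than the other graded piece. I would verify this by the weight consideration: the Hodge--Tate weights of $V_p\Hs$ lie in $\{0,1\}$, $\Lie(\Hs)$ sits in weight $1$ via $f_{\Hs}$, and one compares the $[-1,0]$ indexing convention in the footnote of Proposition \ref{prop: VpG de rham vs MG étale vb}.
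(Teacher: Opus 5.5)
Your proposal is correct and is essentially the paper's proof. The paper also deduces the statement directly from the relative de Rham comparison theorem (Scholze's Thm.~8.8, together with Liu--Zhu's compatibility of $D_{\dR}$ with proper smooth pushforward) and from the proposition identifying $D(\Gs)$ with $D_{\dR}(V_p\Gs)$ when $V_p\Gs$ is de Rham, applied to $V_p\Hs=R^1\pi_{v,*}\Q_p(1)$.

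Your final paragraphs on matching the filtrations are redundant rather than wrong. That proposition already gives a filtered isomorphism, with the index convention $\omega_{\Hs^D}=\Fil^0\subset\Fil^{-1}$ from its footnote. The Tate twist shifts the de Rham filtration of $R^1\pi_{\dR,*}\Os_X$ by exactly one, so $\Fil^1=\pi_*\Omega^1_{X/S}$ lands in $\Fil^0$, which is $\omega_{\Hs^D}$.
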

We expect this result to extend to arbitrary non-archimedean fields $K$ over $\Q_p$. We show in Proposition \ref{prop: dRFF cohomology from ptop picard variety} that it holds for $S=\Spa(C)$, when $C$ is algebraically closed.
\begin{proof}
    This follows from the de Rham Comparison Theorem, \cite[Thm. 8.8]{scholze2013padicHodge}\cite[Thm. 3.9(iv)]{Liu2017} and Proposition \ref{prop: VpG de rham vs MG étale vb}. 
\end{proof}

We now fix a complete algebraically closed field $C$ over $\Q_p$. Given a proper smooth rigid space $Z$ over $C$, in Proposition \ref{prop: dRFF cohomology from ptop picard variety} below, we give a geometric interpretation of the vector bundle $\Es$ on the Fargues--Fontaine curve $X_C$ associated with the topologically $p$-torsion Picard variety of $Z$. For this, we briefly recall some refined $p$-adic cohomology theories.

In \cite[Thm. 13.1]{Bhatt2018}, Bhatt--Morrow--Scholze construct a cohomology theory
\[ H_{\crys}^n(Z/\BdR+)\]
with coefficients in finite free $\BdR+=\BdR+(C)$-modules. It comes with natural comparison isomorphisms with de Rham cohomology
    \begin{align}\label{eq: BdR+ cohomology deforms de Rham cohomology} 
    H_{\crys}^n(Z/\BdR+) \otimes_{\BdR+,\theta} C = H_{\dR}^n(Z/C),
    \end{align}
and with pro-étale cohomology
    \begin{align}\label{eq: generalized dR comparison theorem}  
    H_{\crys}^n(Z/\BdR+)[\xi^{-1}] = H_{\et}^n(Z,\Q_p) \otimes_{\Q_p} \BdeR. \end{align}

 If $Z$ arises as the pullback of a smooth proper rigid space $Z_0$ over a $p$-adic field $K \sub C$, the authors show moreover in \emph{loc. cit.} that
\[ H_{\crys}^n(Z/\BdR+) = H_{\dR}^n(Z_0/K)\otimes_K \BdR+.\]
This cohomology theory was later revisited in the work of Guo \cite{Guo2021cryscoh} and Bosco \cite{bosco2023padicproetalecohomologydrinfeld}\cite{bosco2023rationalpadichodgetheory} and given site-theoretic definitions applying to arbitrary rigid spaces over $C$.

Observe that (\ref{eq: generalized dR comparison theorem}) realises $H_{\et}^n(Z,\Q_p)\otimes \BdR+$ as a $\BdR+$-lattice inside of $H_{\crys}^n(Z/\BdR+)[\xi^{-1}]$. We show below that we can recognize the induced filtration under Proposition \ref{lemma: minuscule lattices vs flags} when $n=1$.

\begin{proposition}\label{prop: recognize the fils induced by BB map}
    Let $Z$ be a proper smooth rigid space over $C$.
    \begin{enumerate}
        \item There is an inclusion of $\BdR+$-lattice $H_{\crys}^1(Z/\BdR+) \sub H_{\et}^1(Z,\Q_p)\otimes_{\Q_p} \BdR+ \sub \tfrac{1}{\xi}H_{\crys}^1(Z/\BdR+)$, and the lattice $\xi (H_{\et}^1(Z,\Q_p)\otimes_{\Q_p} \BdR+)$ induces the Hodge filtration
        \[ \Fil_{\Hdg}^{1}\]
         on $H_{\dR}^1(Z/C) \cong  H_{\crys}^1(Z/\BdR+) \otimes_{\BdR+} C$ under the map of Proposition \ref{lemma: minuscule lattices vs flags}.
         \item Dually, the minuscule lattice $H_{\crys}^1(Z/\BdR+)$ induces the Hodge--Tate filtration
         \[ \Fil_{\HT}^{1}.\]
         on $H_{\et}^1(Z,\Q_p)\otimes_{\Q_p} C$. Here, $\Fil_{\HT}^{\bullet}$ is induced by the Hodge--Tate spectral sequence of Theorem \ref{thm: relative linear HT sequence}.
    \end{enumerate}
\end{proposition}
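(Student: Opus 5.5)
We compare both lattices inside $H^1_{\et}(Z,\Q_p)\otimes_{\Q_p}\BdeR$ via (\ref{eq: generalized dR comparison theorem}). The key input is a $\BdR+$-linear degeneration statement: the relative position of $\Lambda:=H^1_{\et}(Z,\Q_p)\otimes\BdR+$ and $\Xi:=H^1_{\crys}(Z/\BdR+)$ is governed by the Hodge and Hodge--Tate filtrations. Concretely, I would invoke the Bhatt--Morrow--Scholze description \cite[Thm. 13.1]{Bhatt2018} (or Bosco's refinement): the comparison identifies $\Fil^0(\Xi\otimes_{\BdR+}\BdeR)$, with $\Xi$ filtered by the Hodge filtration via $\Fil^i=\sum_j \xi^{i-j}\Fil^j_{\Hdg}$, with $\Lambda$. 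For $n=1$ the Hodge filtration has range $[0,1]$. Hence
\[ \Lambda=\Xi+\xi^{-1}\widehat{\Fil}^1, \]
where $\widehat{\Fil}^1$ is any lift of $\Fil^1_{\Hdg}\subset H^1_{\dR}(Z/C)$ to $\Xi$. This gives $\Xi\subset\Lambda\subset\xi^{-1}\Xi$ immediately, and then $\xi\Lambda=\xi\Xi+\widehat{\Fil}^1$, so
\[ \xi\Lambda/\xi\Xi=\Fil^1_{\Hdg}. \]
By Lemma \ref{lemma: minuscule lattices vs flags}, this is the first claim.

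As a safety net, if the filtered comparison is only available in the form with Tate twists or via a descent argument, I would reduce to $Z$ defined over a $p$-adic field by spreading out and approximation. In that case $\Xi=H^1_{\dR}(Z_0/K)\otimes_K\BdR+$, and the statement is exactly Lemma \ref{lemma: B+ lattice of Scholze}(2) applied to the de Rham local system $R^1\pi_*\Q_p$ over $\Spa(K)$, with filtration range $[0,1]$.

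For the second point, the inclusion $\Xi\subset\Lambda$ defines a subspace $\Xi/\xi\Lambda\subset\Lambda/\xi\Lambda=H^1_{\et}(Z,\Q_p)\otimes C$. By the first description, $\Xi/\xi\Lambda\cong\Xi/(\xi\Xi+\widehat{\Fil}^1)=\gr^0_{\Hdg}\otimes(\ldots)$ twisted appropriately. Its rank is therefore $\dim H^1(Z,\Os_Z)$, and the quotient $\Lambda/\Xi\cong\xi^{-1}\widehat{\Fil}^1/\ldots\cong H^0(Z,\Omega^1)(-1)$ after identifying $\xi^{-1}\BdR+/\BdR+\cong C(-1)$.

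It remains to show that this subspace coincides with $\Fil^1_{\HT}=H^1(Z,\Os)\otimes C$. I would follow the argument of Lemma \ref{lemma: BdR+-lattice induces HT filtration in arithmetic case}: compute the two-step filtration
\[ \frac{\xi^i\Xi\cap\Lambda}{\xi^i\Xi\cap\xi\Lambda}, \]
whose graded pieces are $H^1(Z,\Os)$ and $H^0(\Omega^1)(-1)$ up to twist. Uniqueness then follows from Proposition \ref{prop: characterization of HT ls} in the arithmetic case. Over $C$, uniqueness is not available, because there are no Galois invariants.

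This identification is the main obstacle. Over $C$, I would argue instead via compatibility of the BMS comparison with the Hodge--Tate comparison. Reducing the $\Ainf$-cohomology modulo $\xi$, the image of $\Xi$ corresponds to the image of $H^1(Z,\Os)\to H^1_{\et}\otimes C$, which is exactly the edge map of the Hodge--Tate spectral sequence \cite[§13]{Bhatt2018}. Alternatively, I would use spreading out again and reduce to the arithmetic case, where Lemma \ref{lemma: comparison between the two HT filtrations} and Lemma \ref{lemma: BdR+-lattice induces HT filtration in arithmetic case} apply directly.
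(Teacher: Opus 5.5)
Your part (1) is essentially the paper's argument. The filtered comparison $H^1_{\et}(Z,\Q_p)\otimes\B_{\dR}^+=\Fil^0(H^1_{\crys}(Z/\B_{\dR}^+)\otimes\BdeR)$ is Bosco's \cite[Thm. 7.4]{bosco2023rationalpadichodgetheory}, which the paper uses. When you take ``any lift of $\Fil^1_{\Hdg}$'', you are also assuming that Bosco's Hodge filtration on $H^1_{\crys}(Z/\B_{\dR}^+)$ reduces modulo $\xi$ to the Hodge filtration of $H^1_{\dR}(Z/C)$. The paper cites \cite[Prop. 5.8]{bosco2023rationalpadichodgetheory} together with \cite{Guo2021cryscoh} for this, and you should too.

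Your safety net does not work. A proper smooth rigid space over $C$ need not descend to any $p$-adic field. Spreading out \cite[Cor. 13.16]{Bhatt2018} only realizes $Z$ as a fiber $Y_s$, at a point $s\in S(C)$, of a proper smooth $\pi\colon Y\to S$ of smooth rigid spaces over $K$. No approximation argument transports an exact equality of $\B_{\dR}^+$-lattices to such a point.

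The genuine gap is in part (2). You correctly note that over $C$ the dimension count does not pin down the subspace $\Xi/\xi\Lambda\subset\Lambda/\xi\Lambda$, but neither of your two routes closes this.

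\emph{The $\Ainf$-route.} $\Ainf$-cohomology needs a smooth (or semistable) formal model, which a general $Z$ lacks. Even when a model exists, the $\theta$-specialization of $\Ainf$-cohomology (modulo $\xi$) is de Rham cohomology, while the Hodge--Tate spectral sequence comes from the $\tilde\theta$-specialization (modulo $\varphi(\xi)$). So ``the image of $\Xi$ is the Hodge--Tate edge map'' is not what reduction modulo $\xi$ gives you.

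\emph{The spreading-out route.} This is the paper's strategy, but you omit its key step. Lemma~\ref{lemma: BdR+-lattice induces HT filtration in arithmetic case} and Lemma~\ref{lemma: comparison between the two HT filtrations} work over the base $S$. They compute the relative position of Scholze's lattice $\Xi_S\subset R^1\pi_{v,*}\Q_p\otimes\B_{\dR}$ (Lemma~\ref{lemma: B+ lattice of Scholze}) and $R^1\pi_{v,*}\Q_p\otimes\B_{\dR}^+$. You still have to identify the fiber of this pair at $s$ with $(H^1_{\crys}(Z/\B_{\dR}^+),\,H^1_{\et}(Z,\Q_p)\otimes\B_{\dR}^+)$. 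The paper does this as follows.
\begin{enumerate}
\item Shrink to $S=\Spa(R)$ and lift $R\to C$ to $R\to\B_{\dR}^+$ by formal smoothness.
\item Use the isomorphism $H^1_{\dR}(Y/S)\otimes_R\B_{\dR}^+\cong H^1_{\crys}(Z/\B_{\dR}^+)$ from the proof of \cite[Thm. 13.19]{Bhatt2018}, and choose $\Xi_S(C)\cong H^1_{\dR}(Y/S)\otimes_R\B_{\dR}^+$ lifting the identity modulo $\xi$.
\item By Lemma~\ref{lemma: B+ lattice of Scholze}(2), the fiber of the étale lattice is a minuscule lattice over $H^1_{\crys}(Z/\B_{\dR}^+)$ inducing $\Fil^1_{\Hdg}$.
\item By part (1) together with Lemma~\ref{lemma: minuscule lattices vs flags} (a minuscule lattice is determined by the flag it induces), this fiber must coincide with $H^1_{\et}(Z,\Q_p)\otimes\B_{\dR}^+$.
\end{enumerate}
Using part (1) as the input to part (2) in this way is the missing idea. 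Only after this identification can the Hodge--Tate filtration be read off on $S$, where the uniqueness in Proposition~\ref{prop: characterization of HT ls} is available.
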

\begin{proof}
\begin{enumerate}
    \item Let $n \geq 0$ be arbitrary. We equip the $\BdR+$-module $H_{\crys}^n(Z/\BdR+)$ with its Hodge filtration in the sense of \cite[Def. 5.7]{bosco2023rationalpadichodgetheory}. Then by \cite[Thm. 7.4]{bosco2023rationalpadichodgetheory}, we have a natural filtered isomorphism
    \begin{align*}
    H_{\et}^n(Z,\Q_p)\otimes_{\Q_p}\BdR+ = \Fil^0(H_{\crys}^n(Z/\BdR+) \otimes_{\BdR+}\BdeR) = \sum_{i=0}^{n} \Fil^iH_{\crys}^n(Z/\BdR+)\xi^{-i}.\end{align*}
    Now by combining \cite[Prop. 5.8]{bosco2023rationalpadichodgetheory} and \cite[Thm. 1.2.7(i), Thm. 1.2.1(i)]{Guo2021cryscoh}, the Hodge filtration on $H_{\crys}^n(Z/\BdR+)$ reduces modulo $\xi$ to the Hodge filtration $\Fil_{\Hdg}^i$ on de Rham cohomology. When $n=1$, this immediately implies the desired equality.
    \item We will use the following spreading out argument of Conrad--Gabber, see \cite[Cor. 13.16]{Bhatt2018}: There exists a $p$-adic field $K$ and a proper smooth morphism $\pi\colon Y \rightarrow S$ of smooth rigid spaces over $K$ such that $X=Y \times_{S} \Spa(C)$, for some point $s\in S(C)$. Up to shrinking $S$, we may assume that $S=\Spa(R)$ is affinoid. By formal smoothness of $R$, the map $R \rightarrow C$ admits a continuous lift $\varphi\colon R \rightarrow \BdR+$. By the proof of \cite[Thm. 13.19]{Bhatt2018}, we then have a canonical isomorphism
    \[ H_{\dR}^n(Y/S)\otimes_{R,\varphi} \BdR+ \cong H_{\crys}^n(Z/\BdR+),\]
     lying over the base-change isomorphism for de Rham cohomology
      \[ H_{\dR}^n(Y/S)\otimes_{R} C\cong H_{\dR}^n(Z/C).\]
    Let $\Xi \sub R^n\pi_{v,* }\Q_p\otimes_{\Q_p} \B_{\dR}$ be the $\B_{\dR}^+$-lattice of Lemma \ref{lemma: B+ lattice of Scholze}, which comes with a canonical isomorphism
    \[ \Xi/\xi\Xi = H_{\dR}^n(Y/S).\]
    We fix a choice of isomorphism of $\BdR+$-modules
    \[ \Xi(C) \cong H_{\dR}^n(Y/S) \otimes_{R,\varphi} \BdR+\]
    lifting the identity on $H_{\dR}^n(Y/S) \otimes_{R}C$ and we consider the composition
    \[ \tau\colon \Xi(C)\cong H_{\dR}^n(Y/S) \otimes_{R,\varphi} \BdR+ \cong H_{\crys}^n(Z/\BdR+).\]
    By Lemma \ref{lemma: B+ lattice of Scholze}(2), the image of the $\BdR+$-lattice 
    \[ (R^n\pi_{v,*}\Q_p\otimes_{\Q_p} \B_{\dR}^+)(C) \sub \Xi(C)[\xi^{-1}] \overset{\tau[\xi^{-1}]}{\cong} H_{\crys}^n(Z/\BdR+)[\xi^{-1}]\]
    induces the Hodge filtration on $H_{\dR}^n(Z/C)$. When $n=1$, by Lemma \ref{lemma: minuscule lattices vs flags} and the first point, this implies that 
    \[ (R^1\pi_{v,*}\Q_p\otimes_{\Q_p} \B_{\dR}^+)(C) = H_v^1(Z,\Q_p)\otimes_{\Q_p} \BdR+ \]
    as $\BdR+$-lattices inside of $H_{\crys}^n(Z/\BdR+)[\xi^{-1}]$. In other words, the isomorphism $\tau$ is compatible with base-change for $\Q_p$-cohomology (up to some non-canonical isomorphism of $\BdR+$-modules).
    
From this, the desired interaction between $\BdR+$-lattices may be checked on $S$. In other words, it is enough to show that the $\B_{\dR}^+$-lattice $\Xi \sub R^1\pi_{v,*}\Q_p \otimes \B_{\dR}^+$ induces the filtration
    \[ \Fil_{\HT}^{1} \sub R^1\pi_{v,*}\Q_p \otimes \Os_{S_v}\]
    under the map of Proposition \ref{lemma: minuscule lattices vs flags}. This follows from Lemma \ref{lemma: BdR+-lattice induces HT filtration in arithmetic case}, using Lemma \ref{lemma: comparison between the two HT filtrations} to identify the Hodge--Tate filtration on $R^n\pi_{v,*}\Q_p \otimes \Os_v$. This concludes the proof.
\end{enumerate}
\end{proof}
\begin{remark}
    We saw in the proof of Proposition \ref{prop: recognize the fils induced by BB map} that, given a proper smooth morphism $Y\rightarrow S$ of smooth rigid spaces over a $p$-adic field, a point $s\in S(C)$ with corresponding fiber $Z=\pi^{-1}(s)$, then the first $\BdR+$-cohomology group $H_{\crys}^1(Z/\BdR+)$ of $Z$ is already defined over $S$. More, precisely, we proved that it arises from the $\B_{\dR}^+$-lattice
    \[  \Xi^1 \sub (R^1\pi_{v,*}\Q_p) \otimes_{\Q_p} \B_{\dR}\]
    of Lemma \ref{lemma: B+ lattice of Scholze}, up to some non-canonical isomorphism. We believe that this statement can be upgraded to a canonical isomorphism 
    \[ \Xi^n(C) = H_{\crys}^n(X/\BdR+),\]
    for arbitrary $n\geq 0$. Here, $\Xi^n$ is the analog of the $\B_{\dR}^+$-lattice $\Xi^1$ in the $n$th relative pro-étale cohomology of $Y/S$. We will not need such a generalization. 
\end{remark}

Let $Z$ be a proper smooth rigid space over $C$. By Beauville--Laszlo glueing, there is a unique vector bundle $H_{B}^n(Z)$ on the Fargues--Fontaine curve $X_C=X_C^{\FF}$ together with a natural isomorphism
\begin{align}
    H_{\FF}^n(Z) \otimes_{\Os_{X_C}}\BdR+ = H_{\crys}^n(Z/\BdR+)
\end{align}
and a modification of vector bundles
\begin{align}\label{eq: dRFF curve modifies etale coh}
\alpha\colon H_{\FF}^n(Z) \dashrightarrow H_{\et}^n(Z,\Q_p) \otimes_{\Q_p} \Os_{X_C}\end{align}
that recovers the isomorphism (\ref{eq: generalized dR comparison theorem}) under the equivalence (\ref{eq: from modif to lattices}). Bosco shows \cite[Def. 6.16]{bosco2023rationalpadichodgetheory} that the vector bundles $H_{\FF}^n(Z)$ extends to a cohomology theory $R\Gamma_{\FF}(Z)$, the \emph{Fargues--Fontaine cohomology}, applying to rigid spaces $Z$ over $C$. It takes value in solid quasi-coherent sheaves on the Fargues--Fontaine curve and deforms the de Rham cohomology $R\Gamma_{\dR}(Z/C)$. A related construction was also given by Le Bras--Vezzani in \cite{Le_Bras_vezzani_2023}.\\

We now record the consequences of Proposition \ref{prop: recognize the fils induced by BB map} for the case $n=1$ in the following proposition.
\begin{proposition}
    Let $Z$ be a proper smooth rigid space over $C$, for a complete algebraically closed $C/\Q_p$. Then the first de Rham--Fargues--Fontaine cohomology group $H_{\FF}^1(Z)$ fits in a minuscule modification of vector bundles on $X_C$
    % https://q.uiver.app/#q=WzAsNSxbMSwwLCJIX3tcXEZGfV4xKFopIl0sWzAsMCwiMCJdLFsyLDAsIkhfe1xcZXR9XjEoWixcXFFfcClcXG90aW1lcyBcXE9zX3tYX0N9Il0sWzMsMCwiaV8qSF4wKFosXFxPbWVnYV97Wi9DfV4xKSgtMSkiXSxbNCwwLCIwLCJdLFsxLDBdLFswLDJdLFsyLDNdLFszLDRdXQ==
\[\begin{tikzcd}
	0 & {H_{\FF}^1(Z)} & {H_{\et}^1(Z,\Q_p)\otimes \Os_{X_C}} & {i_*H^0(Z,\Omega_{Z/C}^1)(-1)} & {0,}
	\arrow[from=1-1, to=1-2]
	\arrow[from=1-2, to=1-3]
	\arrow[from=1-3, to=1-4]
	\arrow[from=1-4, to=1-5]
\end{tikzcd}\]
which extends to a commutative diagram with exact rows
% https://q.uiver.app/#q=WzAsNixbMCwwLCJIX3tcXGV0fV4xKFosXFxRX3AoMSkpXFxvdGltZXMgXFxPc197WF9DfSgtMSkiXSxbMSwwLCJIX3tcXGV0fV4xKFosXFxRX3ApXFxvdGltZXMgXFxPc197WF9DfSJdLFsyLDAsImlfKkhfe1xcZXR9XjEoWixcXFFfcClcXG90aW1lcyBDIl0sWzAsMSwiSF97XFxGRn1eMShaKSJdLFsxLDEsIkhfe1xcZXR9XjEoWixcXFFfcClcXG90aW1lcyBcXE9zX3tYX0N9Il0sWzIsMSwiaV8qSF4wKFosXFxPbWVnYV97Wi9DfV4xKSgtMSkuIl0sWzAsMV0sWzEsMl0sWzAsMywiIiwwLHsic3R5bGUiOnsidGFpbCI6eyJuYW1lIjoiaG9vayIsInNpZGUiOiJ0b3AifX19XSxbMyw0XSxbMiw1LCIiLDIseyJzdHlsZSI6eyJoZWFkIjp7Im5hbWUiOiJlcGkifX19XSxbNCw1XSxbMSw0LCI9IiwyXV0=
\begin{equation}\label{eq: diagram for dRFF coh 1}\begin{tikzcd}
	{H_{\et}^1(Z,\Q_p(1))\otimes \Os_{X_C}(-1)} & {H_{\et}^1(Z,\Q_p)\otimes \Os_{X_C}} & {i_*H_{\et}^1(Z,\Q_p)\otimes C} \\
	{H_{\FF}^1(Z)} & {H_{\et}^1(Z,\Q_p)\otimes \Os_{X_C}} & {i_*H^0(Z,\Omega_{Z/C}^1)(-1).}
	\arrow[from=1-1, to=1-2]
	\arrow[hook, from=1-1, to=2-1]
	\arrow[from=1-2, to=1-3]
	\arrow["{=}"', from=1-2, to=2-2]
	\arrow[two heads, from=1-3, to=2-3]
	\arrow[from=2-1, to=2-2]
	\arrow[from=2-2, to=2-3]
\end{tikzcd}\end{equation}
 Moreover, this diagram uniquely characterizes $H_{\FF}^1(Z)$ in the vector bundle $H_{\et}^1(Z,\Q_p)\otimes \Os_{X_C}$.
\end{proposition}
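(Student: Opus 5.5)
The plan is to translate the statement entirely into the language of $\B_{\dR}^+$-lattices at the point $i\colon \Spa(C)\to X_C$ and then apply Proposition \ref{prop: recognize the fils induced by BB map}. By construction, $H_{\FF}^1(Z)$ is the vector bundle obtained by Beauville--Laszlo glueing (\ref{eq: from modif to lattices}) from the trivial bundle $H_{\et}^1(Z,\Q_p)\otimes \Os_{X_C}$ along the lattice $H_{\crys}^1(Z/\BdR+)\sub H_{\et}^1(Z,\Q_p)\otimes\BdeR$. By Proposition \ref{prop: recognize the fils induced by BB map}(1), this lattice satisfies
\[ \xi\big(H_{\et}^1(Z,\Q_p)\otimes\BdR+\big)\sub H_{\crys}^1(Z/\BdR+)\sub H_{\et}^1(Z,\Q_p)\otimes\BdR+ .\]
This is exactly the minuscule condition of Definition \ref{def: minuscule lattice}, with $\Xi_0=H_{\et}^1(Z,\Q_p)\otimes\BdR+$ and the roles arranged so that the modification is an injection into $\Xi_0$. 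Under (\ref{eq: from modif to lattices}), the modification $\alpha$ of (\ref{eq: dRFF curve modifies etale coh}) is therefore minuscule: it extends to an injection $H_{\FF}^1(Z)\hookrightarrow H_{\et}^1(Z,\Q_p)\otimes\Os_{X_C}$ whose cokernel is $i_*$ of $\Xi_0/H_{\crys}^1(Z/\BdR+)$. By Lemma \ref{lemma: minuscule lattices vs flags}, this quotient is a $C$-vector space.

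To identify the cokernel, I would use Proposition \ref{prop: recognize the fils induced by BB map}(2): the image of $H_{\crys}^1(Z/\BdR+)$ in $\Xi_0/\xi\Xi_0=H_{\et}^1(Z,\Q_p)\otimes C$ is $\Fil_{\HT}^1$. By Example \ref{ex: HT sequence}, $\Fil_{\HT}^1=H^1(Z,\Os_Z)(1)$, up to the twist conventions there, and the quotient is $H^0(Z,\Omega^1_{Z/C})(-1)$. Hence
\[ \Xi_0/H_{\crys}^1(Z/\BdR+)\cong (H_{\et}^1\otimes C)/\Fil_{\HT}^1=H^0(Z,\Omega^1_{Z/C})(-1).\]
This gives the claimed short exact sequence. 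The point needing care is keeping track of the Tate twists. These are non-canonical over $C$ as $C$-vector spaces but canonical as Galois-type twists, so I would follow the conventions of Example \ref{ex: HT sequence} throughout.

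For the diagram (\ref{eq: diagram for dRFF coh 1}), the top row is the sequence (\ref{eq: sequence for OFF(1)}) tensored with $H_{\et}^1(Z,\Q_p(1))\otimes\Os_{X_C}(-1)$. Its completed stalk at $i$ is the lattice $\xi\Xi_0$. The inclusion $\xi\Xi_0\sub H_{\crys}^1(Z/\BdR+)$, again through (\ref{eq: from modif to lattices}), gives the left vertical injection compatible with the identity in the middle. The right vertical map is then induced on cokernels; it is surjective, being the projection $H_{\et}^1\otimes C\twoheadrightarrow (H_{\et}^1\otimes C)/\Fil_{\HT}^1$.

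Uniqueness follows from Lemma \ref{lemma: minuscule lattices vs flags} together with (\ref{eq: from modif to lattices}). A subsheaf $\Es$ of $H_{\et}^1\otimes\Os_{X_C}$ containing the image of the top-left term, with quotient $i_*$ of the given quotient of $H_{\et}^1\otimes C$, corresponds to a minuscule lattice between $\xi\Xi_0$ and $\Xi_0$. Such a lattice is determined by its image flag $\Fil_{\HT}^1$, so $\Es=H_{\FF}^1(Z)$. The main obstacle is already handled by Proposition \ref{prop: recognize the fils induced by BB map}; what remains is bookkeeping of the twists and of the identifications of completed stalks.
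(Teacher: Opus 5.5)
Your proposal is correct and follows the paper's own argument. Both use the minuscule inclusions $\xi(H_{\et}^1(Z,\Q_p)\otimes\B_{\dR}^+)\subseteq H_{\crys}^1(Z/\B_{\dR}^+)\subseteq H_{\et}^1(Z,\Q_p)\otimes\B_{\dR}^+$ and the identification of the image flag with $\Fil_{\HT}^1$ from Proposition~\ref{prop: recognize the fils induced by BB map}, Beauville--Laszlo glueing (\ref{eq: from modif to lattices}) for the diagram, and Lemma~\ref{lemma: minuscule lattices vs flags} for uniqueness. Two twist slips should be corrected:
\begin{itemize}
\item With the conventions of Example~\ref{ex: HT sequence}, one has $\Fil_{\HT}^1=H^1(Z,\Os_Z)$ with no twist.
\item The top row is (\ref{eq: sequence for OFF(1)}) tensored with $H_{\et}^1(Z,\Q_p)\otimes\Os_{X_C}(-1)$, not with $H_{\et}^1(Z,\Q_p(1))\otimes\Os_{X_C}(-1)$.
\end{itemize}
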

\begin{proof}
     By Proposition \ref{prop: recognize the fils induced by BB map}(2), we see that $H_{\crys}^1(Z/\BdR+) \sub H_{\et}^1(Z,\Q_p)\otimes \BdR+ \sub \xi^{-1}H_{\crys}^1(Z/\BdR+)$ and that
     \[ \frac{H_{\crys}^1(Z/\BdR+) \cap (H_{\et}^1(Z,\Q_p)\otimes \BdR+) }{H_{\crys}^1(Z/\BdR+) \cap \xi(H_{\et}^1(Z,\Q_p)\otimes \BdR+) } = \Fil_{\HT}^1 = H_{\et}^1(Z,\Os_Z).\]
     % \[ \frac{\xi( H_{\et}^1(Z,\Q_p)\otimes \BdR+) \cap H_{\crys}^1(Z/\BdR+)}{\xi (H_{\et}^1(Z,\Q_p)\otimes \BdR+) \cap \xi H_{\crys}^1(Z/\BdR+)} = \Fil_{\Hdg}^1 = H^0(Z,\Omega_{Z/C}).\]
     The commutative diagram in the statement is easily derived from this and the equivalence (\ref{eq: from modif to lattices}). The unicity part follows from Proposition \ref{lemma: minuscule lattices vs flags}.
\end{proof}

We are now finally ready to state our result.
\begin{proposition}\label{prop: dRFF cohomology from ptop picard variety}
    Let $Z$ be a proper smooth rigid space over $\Spa(C)$, for a complete and algebraically closed field $C/\Q_p$. Let $\PPic_{Z/C,\et}\langle p^{\infty} \rangle$ be the topologically $p$-torsion Picard variety and let $\Hs \sub \PPic_{Z/C}\langle p^{\infty} \rangle$ be its maximal analytic $p$-divisible subgroup, see Corollary \ref{cor: ptop picard is dualizable}. 
    \begin{enumerate}
        \item We have a canonical isomorphism
        \[ D(\Hs) = H_{\dR}^1(Z/C),\]
         compatible with the Hodge filtrations on each side.
        \item We have a canonical isomorphism of $\BdR+=\B_{\dR}^+(C)$-modules
        \[ \Xi(\Hs) = H_{\crys}^1(Z/\BdR+) \otimes_{\BdR+} \xi^{-1}(\Q_p(1)\otimes_{\Q_p}\BdR+).\]
        compatible with the isomorphisms (\ref{eq: BdeR lattice of G iso to tate module}) and (\ref{eq: generalized dR comparison theorem}).
        \item We have a canonical isomorphism of vector bundles on the Fargues--Fontaine curve $X_C$
        \[ \Es(\Hs) = H_{\FF}^1(Z)\otimes_{\Os_{X_C}} \Os_{X_C}(1),\]
        compatible with the modifications (\ref{eq: modification defining E(G)}) and (\ref{eq: dRFF curve modifies etale coh}).
    \end{enumerate}
\end{proposition}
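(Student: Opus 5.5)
The plan is to prove (2) first, since both (1) and (3) follow from it formally. By Corollary \ref{cor: ptop picard is dualizable} and Theorem \ref{Thm: comparison theorem for analytic p-divisible groups}(2), $\Hs$ is dualizable and is determined by the tuple
\[ \bigl(T_p\Hs = H^1_v(Z,\Z_p(1))/\tors,\ \Lie(\Hs)=H^1(Z,\Os_Z),\ f_{\Hs}\bigr),\]
where $f_{\Hs}$ is the inclusion of $\Fil^1_{\HT}=H^1(Z,\Os_Z)$ into $H^1_{\et}(Z,\Q_p)\otimes C$. This is the first map of the Hodge--Tate sequence (\ref{eq: contravariant HT seq}), written via $T_p\Hs(-1)=H^1_{\et}(Z,\Z_p)/\tors$. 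By Theorem \ref{thm: equivalence for dualizable groups on perfd} and Lemma \ref{lemma: minuscule lattices vs flags}, $\Xi(\Hs)$ is the unique minuscule lattice
\[ V_p\Hs\otimes\BdR+ \subseteq \Xi(\Hs)\subseteq \xi^{-1}\bigl(V_p\Hs\otimes\BdR+\bigr)\]
whose image in $V_p\Hs(-1)\otimes C$, after twisting by $\xi^{-1}(\Q_p(1)\otimes\BdR+)$, is $\Lie(\Hs)$. Equivalently, in untwisted form, it is the unique lattice $\Xi'$ with
\[ \xi\bigl(H^1_{\et}(Z,\Q_p)\otimes\BdR+\bigr)\subseteq \Xi' \subseteq H^1_{\et}(Z,\Q_p)\otimes\BdR+,\]
whose reduction modulo $\xi(H^1_{\et}\otimes \BdR+)$ is $\Fil^1_{\HT}\subseteq H^1_{\et}(Z,\Q_p)\otimes C$.

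The key input is Proposition \ref{prop: recognize the fils induced by BB map}(2). It says that, inside $H^1_{\et}(Z,\Q_p)\otimes\BdeR$, identified with $H^1_{\crys}(Z/\BdR+)[\xi^{-1}]$ via (\ref{eq: generalized dR comparison theorem}), the lattice $H^1_{\crys}(Z/\BdR+)$ is minuscule relative to $H^1_{\et}(Z,\Q_p)\otimes\BdR+$ and induces exactly $\Fil^1_{\HT}$. Comparing the two characterizations, the uniqueness in Lemma \ref{lemma: minuscule lattices vs flags} forces $\Xi' = H^1_{\crys}(Z/\BdR+)$. Tensoring back with $\xi^{-1}(\Q_p(1)\otimes\BdR+)$ then gives (2), and the construction is automatically compatible with (\ref{eq: BdeR lattice of G iso to tate module}) and (\ref{eq: generalized dR comparison theorem}).

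For (1), I reduce (2) modulo $\xi$. Since $\xi^{-1}(\Q_p(1)\otimes\BdR+)\otimes_\theta C\cong C$, this gives
\[ D(\Hs)=\Xi(\Hs)\otimes_{\theta}C\cong H^1_{\crys}(Z/\BdR+)\otimes_\theta C = H^1_{\dR}(Z/C)\]
by (\ref{eq: BdR+ cohomology deforms de Rham cohomology}). For the filtrations, Lemma \ref{lemma: Hodge filtration on Dieudonné module} identifies $\omega_{\Hs^D}\subseteq D(\Hs)$ with the image of $V_p\Hs\otimes\BdR+$ modulo $\xi\Xi(\Hs)$. Under the isomorphism, this image becomes the image of $\xi(H^1_{\et}\otimes\BdR+)$ in $H^1_{\dR}$, which is $\Fil^1_{\Hdg}$ by Proposition \ref{prop: recognize the fils induced by BB map}(1). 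As a consistency check, the quotient is $\Lie(\Hs)=H^1(Z,\Os_Z)=H^1_{\dR}/\Fil^1_{\Hdg}$, as it should be.

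For (3), I use Beauville--Laszlo gluing (\ref{eq: from modif to lattices}). $\Es(\Hs)$ is the modification of $T_p\Hs\otimes\Os_{X_C}$ at the lattice $\Xi(\Hs)$. On the other side, $H^1_{\FF}(Z)\otimes\Os_{X_C}(1)$ is the modification of $H^1_{\et}(Z,\Q_p(1))\otimes\Os_{X_C}$ at
\[ H^1_{\crys}(Z/\BdR+)\otimes\xi^{-1}\bigl(\Q_p(1)\otimes\BdR+\bigr),\]
using that the completed stalk of $\Os_{X_C}(1)$ at $\infty$ is $\xi^{-1}(\Q_p(1)\otimes\BdR+)$ compatibly with (\ref{eq: sequence for OFF(1)}). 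Part (2) identifies these two lattices, so the vector bundles and their modifications agree.

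The main obstacle is bookkeeping of Tate twists and of which lattice contains which. In particular, I must check that the twist by $\xi^{-1}(\Q_p(1)\otimes\BdR+)$ converts the lattice inclusion of Proposition \ref{prop: recognize the fils induced by BB map}(2) into the one required in Theorem \ref{thm: equivalence for dualizable groups on perfd}(3). I must also check that the identification $T_p\Hs(-1)=H^1_{\et}(Z,\Z_p)/\tors$ matches $f_{\Hs}$ with $\Fil^1_{\HT}$ rather than with its dual. Torsion in $H^1_{\et}(Z,\Z_p)$ is harmless after inverting $p$.
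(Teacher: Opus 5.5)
Your proposal is correct and is essentially the paper's argument read through the Beauville--Laszlo equivalence. The paper proves (3) first on $X_C$: it identifies $\Es(\Hs)\otimes\Os_{X_C}(-1)$ with the preimage of $H^1(Z,\Os_Z)=\Fil^1_{\HT}$ in $H^1_{\et}(Z,\Q_p)\otimes\Os_{X_C}$, matches it with the characterization of $H^1_{\FF}(Z)$ in (\ref{eq: diagram for dRFF coh 1}) (itself the curve-level form of Proposition \ref{prop: recognize the fils induced by BB map}(2)), and then obtains (2) and (1) by taking completed stalks and pulling back along $i$. You instead run the same comparison directly on $\B_{\dR}^+$-lattices and recover (3) by gluing; your explicit check of the Hodge filtrations in (1) via Lemma \ref{lemma: Hodge filtration on Dieudonné module} and Proposition \ref{prop: recognize the fils induced by BB map}(1) spells out a step the paper leaves implicit.
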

\begin{proof}
    We start with the third point. Consider the following diagram, obtained from (\ref{eq: defining diagram for E(G)}) by twisting by $\Os_{X_C}(-1)$
    % https://q.uiver.app/#q=WzAsMTAsWzAsMCwiMCJdLFsxLDAsIlZfcFxcSHMgXFxvdGltZXNfe1xcUV9wfSBcXE9zX3tYX0N9KC0xKSJdLFsyLDAsIlxcRXMoXFxIcylcXG90aW1lc197XFxPc197WF9DfX1cXE9zX3tYX0N9KC0xKSJdLFszLDAsImlfKlxcTGllKFxcSHMpIl0sWzQsMCwiMCJdLFsxLDEsIlZfcFxcSHNcXG90aW1lc197XFxRX3B9IFxcT3Nfe1hfQ30oLTEpIl0sWzAsMSwiMCJdLFsyLDEsIlZfcFxcSHMoLTEpIFxcb3RpbWVzX3tcXFpfcH1cXE9zX3tYX0N9Il0sWzMsMSwiaV8qKFZfcFxcR3MoLTEpIFxcb3RpbWVzX3tcXFFfcH1DKSJdLFs0LDEsIjAuIl0sWzAsMV0sWzEsMl0sWzIsM10sWzMsNF0sWzEsNSwiPSIsMl0sWzYsNV0sWzUsN10sWzcsOF0sWzMsOCwiaV8qZl97XFxIc30iLDAseyJzdHlsZSI6eyJ0YWlsIjp7Im5hbWUiOiJob29rIiwic2lkZSI6InRvcCJ9fX1dLFsyLDcsIiIsMCx7InN0eWxlIjp7InRhaWwiOnsibmFtZSI6Imhvb2siLCJzaWRlIjoidG9wIn19fV0sWzgsOV1d
\[\begin{tikzcd}
	0 & {V_p\Hs \otimes_{\Q_p} \Os_{X_C}(-1)} & {\Es(\Hs)\otimes_{\Os_{X_C}}\Os_{X_C}(-1)} & {i_*\Lie(\Hs)} & 0 \\
	0 & {V_p\Hs\otimes_{\Q_p} \Os_{X_C}(-1)} & {V_p\Hs(-1) \otimes_{\Z_p}\Os_{X_C}} & {i_*(V_p\Gs(-1) \otimes_{\Q_p}C)} & {0.}
	\arrow[from=1-1, to=1-2]
	\arrow[from=1-2, to=1-3]
	\arrow["{=}"', from=1-2, to=2-2]
	\arrow[from=1-3, to=1-4]
	\arrow[hook, from=1-3, to=2-3]
	\arrow[from=1-4, to=1-5]
	\arrow["{i_*f_{\Hs}}", hook, from=1-4, to=2-4]
	\arrow[from=2-1, to=2-2]
	\arrow[from=2-2, to=2-3]
	\arrow[from=2-3, to=2-4]
	\arrow[from=2-4, to=2-5]
\end{tikzcd}\]
After identifying
\[ \Lie(\Hs) = H_{\et}^1(Z,\Os_Z), \quad V_p\Hs = H_{\et}^1(Z,\Q_p(1)),\]
we see that $\Es(\Hs)\otimes_{\Os_{X_S}}\Os_{X_S}(-1)$ is the preimage of $H_{\et}^1(Z,\Os_Z) \sub H_{\et}^1(Z,\Q_p)\otimes C$ under the projection $H_{\et}^1(Z,\Q_p) \otimes_{\Q_p}\Os_{X_C} \twoheadrightarrow i_*(H_{\et}^1(Z,\Q_p)\otimes C)$. By comparing with (\ref{eq: diagram for dRFF coh 1}), we conclude that 
\[ \Es(\Hs)\otimes_{\Os_{X_S}}\Os_{X_S}(-1) = H_{\FF}^1(Z),\]
compatibly with the modifications. This proves the third point. The second (resp. first point) follow by passing to the completed stalks (resp. by pulling back along $i\colon \Spa(C) \rightarrow X_C$).
\end{proof}

\section{Analytic $p$-divisible groups and local Shimura varieties}\label{section: EL and PEL local Shimura varieties}
We now turn to our first application to moduli spaces. In this section, we give a moduli description of the local Shimura varieties of EL and PEL type in terms of analytic $p$-divisible groups, starting with the group $\GL_n$. 
\subsection{The $\GL_n$ case}\label{subsection: The GLn case}
 
We fix a $p$-adic field $E$ and a dualizable analytic $p$-divisible group $\Gs_0$ over $\Spa(E)$ such that $V_p\Gs_0$ is a de Rham Galois representation of $\Gal_E$. 
\begin{definition}\label{def: moduli of analytic pdiv gp with frame}
We define a presheaf $\Ms_{\Gs_0}$ on the category of good adic spaces as follows
\[  \Ms_{\Gs_0}\colon S\in \Adic_E \mapsto \{\,(\Gs,\beta)\,\}/\cong,\]
where $\Gs$ is an analytic $p$-divisible group over $S$ and $\beta\colon \widetilde{\Gs} \xrightarrow{\cong} \widetilde{\Gs}_0\times_{\Spa(E)}$ is an isomorphism of group diamonds over $S$.
\end{definition}

We will also consider variants that allow for level structures. We start with the definition of level structure on analytic $p$-divisible groups.
\begin{definition}
    Let $K\sub \GL_n(\Q_p)$ be a compact open subgroup. Let $\Gs$ be an analytic $p$-divisible group of height $n$ over a good adic space $S$. 
    \begin{enumerate}
        \item A $K$-level structure on $V_p\Gs$ is a $K$-orbit $\cj{\gamma}$ of isomorphisms $\gamma \colon \Q_p^{\oplus n}\rightarrow V_p\Gs$, i.e. a global section
    \[ \cj{\gamma} \in \Gamma(S,\underline{\Isom}_{S_v}(\Q_p^{\oplus n},V_p\Gs)/\underline{K}). \]
    \item Assume that $K\sub \GL_n(\Z_p)$. A $K$-level structure on $T_p\Gs$ is defined analogously as a $K$-orbit of isomorphisms $\Z_p^{\oplus n} \rightarrow T_p\Gs$.
    \end{enumerate}
\end{definition}
For example, consider the principal congruence subgroups
\[ K(N) = \Ker(\GL_n(\Z_p) \rightarrow \GL_n(\Z/p^N\Z)), \quad N\geq 0.\]
Then a $K(N)$-level structure on $T_p\Gs$ can be represented by a classical level-$N$ structure, that is, an isomorphism of finite étale adic groups
\[ (\Z/p^N\Z)^{\oplus n} \xrightarrow{\cong} \Gs[p^N].\]

\begin{definition}
    Let $K \sub \GL_n(\Q_p)$ be a compact open subgroup. We let $\Ms_{\Gs_0,K}$ denote the presheaf
    \[  \Ms_{\Gs_0,K}\colon S\in \Adic_E \mapsto \{\,(\Gs,\beta,\cj{\gamma}) \mid (\Gs,\beta)\in \Ms_{\Gs_0}(S), \, \cj{\gamma} \text{ a } K\text{-level structure on }V_p\Gs\,\}/\cong. \]
     Here, an isomorphism $(\Gs,\beta,\cj{\gamma}) \rightarrow (\Gs',\beta',\cj{\gamma}')$ is defined to be an quasi-isogeny $\varphi\colon \Gs \rightarrow \Gs'$ such that $\beta' \circ \widetilde{\varphi} = \beta$ and $\cj{\gamma}' \circ V_p\varphi = \cj{\gamma}$.
\end{definition}

If $K$ is contained in
$\GL_n(\Z_p)$, we have a more practical description of $\Ms_{\Gs_0,K}$: It sends a good adic space $S$ to the set of isomorphism classes $(\Gs,\beta,\cj{\gamma})$ where $(\Gs,\beta)\in \Ms_{\Gs_0}(S)$ and $\cj{\gamma}$ is a $K$-level structure on the integral Tate module $T_p\Gs$. An isomorphism $(\Gs,\beta,\cj{\gamma}) \rightarrow (\Gs',\beta',\cj{\gamma}')$ is now defined to be an isomorphism $\varphi\colon \Gs \rightarrow \Gs'$ such that $\beta' \circ \widetilde{\varphi} = \beta$ and $\cj{\gamma}' \circ T_p\varphi = \cj{\gamma}$.

 \begin{proposition}\label{prop: properties of local moduli}
 Let $\Gs_0$ and $K\sub \GL_n(\Q_p)$ be as above.
     \begin{enumerate}
         \item The presheaf $\Ms_{\Gs_0,K}$ is a small $v$-sheaf on $\Adic_{E}$.
         \item For any good adic space $S$ over $E$ and any $(\Gs,\beta) \in \Ms_{\Gs_0}(S)$, $\Gs$ is dualizable and we have $\height(\Gs) = \height(\Gs_0)$ and $\dim(\Gs) = \dim(\Gs_0)$. In particular, 
         \[ \Ms_{\Gs_0} = \Ms_{\Gs_0,\GL_n(\Z_p)}. \]
         If $S$ is a smooth rigid space over $E$, $T_p\Gs$ is a de Rham local system on $S$.
         \item Let $D_0=D(\Gs_0)$ and $d=\dim \Gs_0$. There is an étale Grothendieck--Messing period map
         \[ \pi_{\GM}\colon \Ms_{\Gs_0,K} \rightarrow \Fl_{D_0,d,E},\]
         where the target is the flag variety of $d$-dimensional quotients of $D_0$, viewed as a diamond over $E$. It is defined by sending a pair $(\Gs,\beta,\cj{\gamma})$ defined over $S \in \Adic_E$ to the surjection
         \[ D_0 \otimes_E \Os_S = D(\Gs_0\times_{\Spa(E)} S) \overset{\beta}{\cong} D(\Gs) \twoheadrightarrow \Lie(\Gs). \]
    \item The sheaf $\Ms_{\Gs_0,K}$ is representable by smooth rigid space over $E$.
     \end{enumerate}
 \end{proposition}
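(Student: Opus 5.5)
I would prove the four points in order, since each one feeds the next.

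For (1), I would use Theorem \ref{thm: extending Fargues' equivalence of categories}. Analytic $p$-divisible groups over a good adic space $S$ are the same as tuples $(\Lb,E,f)$ with $E$ étale. Descent for $\Lb$ and $f$ along $v$-covers is automatic. For $E$, the point is that on $\Ms_{\Gs_0}$ the pair $(\Gs,\beta)$ rigidifies the situation: $\beta$ identifies $\Lie(\Gs)=\widetilde{\Gs}/V_p\Gs$ with a quotient of $\Lie(\Gs_0)\otimes\Os_S$ by an image of $V_p\Gs$. The cleaner route, though, is via $D(\Gs)$; see (3). Since a $\Gs$ with a rigidification $\beta$ has no nontrivial automorphisms compatible with $\beta$, the presheaf is separated. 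Effectivity would be reduced to checking that the descended $v$-bundle $\Lie(\Gs)$ is étale, and I would use the period map for this: $\Lie(\Gs)$ is the pullback of the tautological étale quotient bundle on $\Fl_{D_0,d,E}$. Smallness follows from smallness of $\Sht_{\min}$ and of $\Fl$. Level structures are $v$-locally trivial torsors under $\underline{K}$ quotients, so adding $\cj{\gamma}$ preserves the sheaf property.

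For (2), I would work $v$-locally on a perfectoid $S$. The isomorphism $\beta$ preserves effective Banach--Colmez spaces, so by Proposition \ref{prop: minuscule sheaves vs minuscule BC spaces} it gives $\Es(\Gs)\cong\Es(\Gs_0)_S$. The latter is a vector bundle because $\Gs_0$ is dualizable, so Theorem \ref{thm: vb on FF associated to an pdiv groups}(3) makes $\Gs$ dualizable; dualizability descends by the remark on perfectoid descent. Rank and degree are read off from $\Es$: $\height=\rank \Es$ and $\dim=\deg$ via the modification (\ref{eq: modification defining E(G)}). The equality $\Ms_{\Gs_0}=\Ms_{\Gs_0,\GL_n(\Z_p)}$ then just says a $\GL_n(\Z_p)$-structure on $T_p\Gs$ exists $v$-locally and is unique as an orbit. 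For the de Rham claim, $D(\Gs)\cong D(\Gs_0)\otimes\Os_S$ via $\beta$ and Corollary \ref{cor: BdR+ lattice assoc to pdiv groups in general}, and this is étale; Proposition \ref{prop: VpG de rham vs MG étale vb} then shows $V_p\Gs$ is de Rham. Here I use that $D(\Gs_0)$ is an $E$-vector space because $V_p\Gs_0$ is de Rham.

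For (3), $D(-)$ is functorial in $\widetilde{\Gs}$, which makes the displayed composite well defined. Lemma \ref{lemma: Hodge filtration on Dieudonné module} gives the surjection onto $\Lie(\Gs)$, and it is an étale quotient of rank $d$. The map is invariant under quasi-isogenies because these induce isomorphisms on $\widetilde{\Gs}$. The substantive claim is that $\pi_{\GM}$ is étale; I would deduce this from (4).

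For (4), the main obstacle, I would describe fibres via Theorem \ref{thm: equivalence for dualizable groups on perfd}. A point of $\Ms_{\Gs_0,K}$ over $S$ amounts to a $\Q_p$-local system $V$ with level structure together with an identification of minuscule lattices inside $\Xi(\Gs_0)[\xi^{-1}]$. Equivalently, the Hodge filtration point $x\in\Fl$ determines $\Xi(\Gs)=\Xi(\Gs_0)_S$, and $V_p\Gs$ must be a $\Q_p$-local system $V$ with $V\otimes\B_{\dR}^+\subset\Xi\subset\xi^{-1}V\otimes\B_{\dR}^+$ whose modification of $\Es(\Gs_0)$ is trivial of slope $0$. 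This identifies $\Ms_{\Gs_0,K}$ with the Scholze--Weinstein local Shimura variety for $(\GL_n,b,\mu)$ at level $K$, where $b$ corresponds to $\Es(\Gs_0)$ and $\mu$ is minuscule. That space is a smooth rigid space, étale over the admissible locus of $\Fl$ by \cite[§24]{SW20}. Since both sides are $v$-sheaves on good adic spaces and the diamond functor is fully faithful (\ref{eq: diamond functor ff on good adic spaces}), representability follows. I expect the delicate part to be matching the functor of points on good, non-perfectoid $S$, that is, ensuring $\Lie(\Gs)$ is étale. This is exactly where the étale period map in (3) is needed, and I would handle it by pulling back the universal quotient bundle along $\pi_{\GM}$.
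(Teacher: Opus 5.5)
Your rank/degree argument for height and dimension is correct and more direct than the paper's route through good reduction and Fargues' quasi-isogenies. Your use of the descended map $S^{\diamondsuit}\to\Fl_{D_0,d,E}^{\diamondsuit}$, together with full faithfulness (\ref{eq: diamond functor ff on good adic spaces}), to see that $\Lie(\Gs)$ is étale is also valid.

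The gap is the dualizability claim in (2) for non-perfectoid $S$. You say dualizability ``descends by the remark on perfectoid descent''. Remark \ref{remark: dualizability does not descend well} says the opposite: dualizability does \emph{not} descend along $v$-covers of general good adic spaces (e.g.\ $\mu_{p^{\infty}}$ over $\Spa(\Q_p)$). From a perfectoid cover you only learn that $f_{\Gs}$ is a $v$-locally direct summand with cokernel $\omega\otimes\Os_{S_v}(-1)$ for some $v$-vector bundle $\omega$. The content of (2) is that $\omega$ comes from $S_{\et}$, and this must use $\beta$. The paper argues as follows. Via $\beta$, the descended de Rham module is $D\cong D(\Gs_0)\otimes_E\Os_S$, which is étale and sits in $0\to\omega\to D\to L\to 0$. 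Lemma \ref{lemma: quotient of étale vb is étale vb} then makes $\omega$ and $L$ étale. Within your framework you could instead pull back the whole tautological short exact sequence along $S\to\Fl_{D_0,d,E}$, not just the quotient. As written, however, the step fails. The de Rham claim inherits the gap, since Proposition \ref{prop: VpG de rham vs MG étale vb} assumes $\Gs$ is dualizable.

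A second problem: you derive étaleness in (3) from (4), and (4) from an identification with $\Sht_{\GL_n,b,\mu,K}$ ``where $b$ corresponds to $\Es(\Gs_0)$''. This presupposes $\Es(\Gs_0)\cong\Es^b$ as bundles over the base. Corollary \ref{cor: E(G) = E(D,phi) over padic field} gives this in the good-reduction setting of Theorem \ref{main thm: moduli of an pdiv vs local SV}. It does not hold for an arbitrary dualizable $\Gs_0$ over an arbitrary $p$-adic field $E$ with $V_p\Gs_0$ merely de Rham (e.g.\ semistable non-crystalline). Moreover, the Scholze--Weinstein spaces are defined over $\breve{E}$, not $E$, and in the paper that comparison is proved afterwards using (4).

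The remedy is already in your fibre description. For $h\colon S\to\Fl_{D_0,d,E}$ with $S$ strictly totally disconnected, the fibre of $\pi_{\GM}$ is $\GL_n(\Q_p)/K\times S^a\to S$. Here $S^a$ is the locus where the bundle obtained by modifying $\Es(\Gs_0)_S$ along $h$ is trivial, which is open by \cite[Thm. 22.6.2]{SW20}. This proves étaleness of $\pi_{\GM}$ directly, with no $b$ involved, and (4) then follows from \cite[Lemma 15.6]{scholze2022etale}. This is the paper's route.
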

 \begin{remark}
     In view of Remarks \ref{remark: an pdiv groups don't descent outside of perfd} and \ref{remark: dualizability does not descend well}, it may come as a surprise that $\Ms_{\Gs_0,K}$ satisfies the sheaf property along $v$-covers of arbitrary good adic spaces, not just perfectoid spaces. As a consequence, there is a universal family $\Gs_{\univ} \rightarrow \Ms_{\Gs_0,K}$, together with a de Rham local system $T_p\Gs_{\univ}$ on $\Ms_{\Gs_0,K}$.
 \end{remark}
%  \begin{remark}
%     If $K\sub \GL_n(\Q_p)$ is an arbitrary compact open subgroup, we may also define a moduli $\Ms_{\Gs_0,K}$, which sends $S$ to the set of tuples $(\Fs,\beta,\cj{\gamma})$, where $\Fs$ is an effective Banach--Colmez space, $\beta\colon \Fs \cong \Gs_0$ is a (not necessarily filtered) isomorphism and $\cj{\gamma}$ is a $K$-level structure on $V_p\Fs$, that is, a $K$-orbit of isomorphism $V_p\Fs \xrightarrow{\cong} \Q_p^{\oplus n}$. An analog of the Proposition holds for this moduli space. The proofs are easily seen to adapt -- alternatively, one can use that, for any compact open subgroup $K$, a conjugate of $K$ is contained in $\GL_n(\Z_p)$, which allows to reduce to this case.
% \end{remark}
 \begin{proof}
     \begin{enumerate}
\item Up to replacing $K$ by a conjugate subgroup, we may assume that $K\sub \GL_n(\Z_p)$. Let $S' \rightarrow S$ be a $v$-cover of good adic spaces and let $S'' = S'\times_S S'$ with projections $p_1,p_2\colon S'' \rightarrow S'$. Let $(\Gs',\beta',\cj{\gamma}') \in \Ms_{\Gs_0,K}(S')$ be given together with an isomorphism $\sigma\colon p_1^*(\Gs',\beta',\cj{\gamma}')\cong p_2^*(\Gs',\beta',\cj{\gamma}')$, automatically satisfying the cocycle conditions, as the category of such tuples is easily seen to be rigid. By descent, we obtain a tuple $(\Gs,\beta,\cj{\gamma})$, where $\Gs$ is a $v$-sheaf of abelian groups on $S$, $\beta$ is an isomorphism $\widetilde{\Gs} \cong \widetilde{\Gs}_0 \times_{\Spa(E)} S$ and $\cj{\gamma}$ is a $K$-level structure on $V_p\Gs$. We need to show that $\Gs$ is representable by an analytic $p$-divisible group.

We may assume without loss of generality that $S'$ is perfectoid. As an intermediary step, we show that $\Gs'$ is dualizable. It is equivalent to show that the effective Banach--Colmez space $
V_p\Gs'\rightarrow \widetilde{\Gs}' \rightarrow \Lie(\Gs')$ is dualizable. But by Proposition \ref{prop: G dualizable iff E(G) vb}, this only depends on the isomorphism class of the abelian sheaf $\widetilde{\Gs}'$, hence, it follows from the fact that $\Gs_0$ is dualizable. 

The tuple $(T_p\Gs', \Lie(\Gs'),f_{\Gs'})$ associated to $\Gs'$ clearly descends to a tuple $(\Lb,L,f)$ on $S$, consisting of a $\Z_p$-local system $\Lb$, a $v$-vector bundle $L$ and a map of $v$-vector bundle $f\colon L \rightarrow \Lb(-1)\otimes_{\Z_p} \Os_{S_v}$. The map $f$ is a $v$-locally direct summand since it holds true after base-change to $S'$. Consequently, if we let
\[ \omega = \Coker(f(1)\colon L(1) \rightarrow \Lb \otimes_{\Z_p} \Os_{S_v}),\]
then $\omega$ is a $v$-vector bundle. Consider now the sheaf $D$ obtained by descending the bundle $D(\Gs')$ along the $v$-cover $S' \rightarrow S$. It comes with a sequence of $v$-vector bundles
% https://q.uiver.app/#q=WzAsNSxbMSwwLCJcXG9tZWdhIl0sWzIsMCwiRCJdLFszLDAsIkwiXSxbMCwwLCIwIl0sWzQsMCwiMCwiXSxbMywwXSxbMCwxXSxbMiw0XSxbMSwyXV0=
\[\begin{tikzcd}
	0 & \omega & D & L & {0,}
	\arrow[from=1-1, to=1-2]
	\arrow[from=1-2, to=1-3]
	\arrow[from=1-3, to=1-4]
	\arrow[from=1-4, to=1-5]
\end{tikzcd}\]
obtained from descending the sequence (\ref{eq: Hodge filtration}). But since the de Rham module $D(\Gs')$ is functorial in the $p$-adic universal cover $\widetilde{\Gs}'$, the sheaf $D\cong D(\Gs_0)\otimes_E \Os_S$ is an étale vector bundle. By Lemma \ref{lemma: quotient of étale vb is étale vb} below, we conclude that $L$ and $\omega$ are étale vector bundles as well. By Theorem \ref{thm: extending Fargues' equivalence of categories}, the tuple $(\Lb,L,f)$ determines a dualizable analytic $p$-divisible group, which is easily seen to represent $\Gs$, as required.

\item That $\Gs$ is dualizable was already proven in the previous step. The height and dimension of $\Gs$ are locally constant on $S$, so that they can be read at a single point. Hence, we may assume that $S=\Spa(C,\Os_C)$. In that case, by Scholze--Weinstein's result, Theorem \ref{thm: Scholze Weinstein's result}, $\Gs$ and $\Gs_0$ have good reduction $\mathfrak{G}$ and $\mathfrak{G}_0$ over $\Os_C$. By \cite[Lemme 3.2, Théorème 3.3]{Farg22}, the isomorphism $\beta\colon\widetilde{\Gs} \cong \widetilde{\Gs}_0$ over $\Spa(C)$ uniquely arises from a quasi-isogeny $\mathfrak{G} \otimes_{\Os_C} \Os_C/p \rightarrow \mathfrak{G}_0 \otimes_{\Os_C} \Os_C/p$. Since the height and dimensions can be read on these groups, we win. Finally, as we saw in the proof of the first point, $D(\Gs) \cong D(\Gs_0)\otimes_E \Os_S$ is an étale vector bundle. Hence, if $S$ is a smooth rigid space over $E$, by Proposition \ref{prop: VpG de rham vs MG étale vb}, $T_p\Gs$ is a de Rham local system on $S$.
\item To see that the map $\pi_{\GM}$ is étale, we follow the strategy of the proof of \cite[Prop. 23.3.3]{SW20}. Fix a map $h\colon S \rightarrow \Fl_{D_0,d,E}$ from a strictly totally disconnected perfectoid space $S$, corresponding to a modification $\alpha\colon \Es \dashrightarrow \Es(\Gs_0)$ on the Fargues--Fontaine curve $X_S$, using Lemma \ref{lemma: minuscule lattices vs flags} and the equivalence (\ref{eq: from modif to lattices}). By the equivalence (\ref{eq: equiv between ls and ss slope zero}) and Theorem \ref{thm: equivalence for dualizable groups on perfd}, the fiber of $\pi_{\GM}$ over $h$ then identifies with the projection
\[ \GL_n(\Q_p)/K \times S^a \rightarrow S,\]
where $S^a\sub S$ is the locus of points $s\in S$ where $\Es\restr{X_s}$ is trivial. This is an open subspace, by \cite[Thm. 22.6.2]{SW20}, which shows that $\pi_{\GM}$ is étale.
\item This follows from the previous point and \cite[Lemma 15.6]{scholze2022etale}.
\end{enumerate}
 \end{proof}

 The following lemma was used in the above proof.
\begin{lemma}\label{lemma: quotient of étale vb is étale vb}
    Let $S$ be a good adic space over $\Q_p$ and assume we are given a short exact sequence of $v$-vector bundles on $S$
    % https://q.uiver.app/#q=WzAsNSxbMSwwLCJcXG9tZWdhIl0sWzIsMCwiRCJdLFszLDAsIkwiXSxbMCwwLCIwIl0sWzQsMCwiMC4iXSxbMywwXSxbMCwxXSxbMiw0XSxbMSwyXV0=
\[\begin{tikzcd}
	0 & \omega & D & L & {0.}
	\arrow[from=1-1, to=1-2]
	\arrow[from=1-2, to=1-3]
	\arrow[from=1-3, to=1-4]
	\arrow[from=1-4, to=1-5]
\end{tikzcd}\]
If $M$ is an étale vector bundle, then so are $\omega$ and $L$.
\end{lemma}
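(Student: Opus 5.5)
The hypothesis should read "if $D$ is an étale vector bundle" (the $M$ in the statement is a typo for $D$). The aim is to show that $\omega$ is an étale vector bundle; $L$ then follows from right exactness. The key input is that $S$ is good, so $\Os_{S_{\et}}=\nu_*\Os_{S_v}$. Consequently the functor (\ref{eq: from etale to v vb}) is fully faithful, with quasi-inverse on its essential image given by $\nu_*$. Set $D_{\et}:=\nu_*D$, an étale vector bundle with $D_{\et}\otimes\Os_{S_v}=D$, and $\omega_{\et}:=\nu_*\omega\subseteq D_{\et}$. The plan is to show that $\omega_{\et}$ is a locally direct summand of $D_{\et}$ and that $\omega_{\et}\otimes\Os_{S_v}\to\omega$ is an isomorphism.

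\begin{enumerate}
\item \textbf{Reduction and local model.} The question is étale local, so I assume $S=\Spa(A,A^+)$ affinoid and $D_{\et}$ free. Then $\omega\subseteq\Os_{S_v}^{r}$ is a $v$-locally direct summand of rank $k$.

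\item \textbf{Descent to étale-local data.} I view $\omega$ as a map $S^{\diamondsuit}\to\Gr(k,r)^{\diamondsuit}$. Here $\Gr(k,r)$ is the Grassmannian, a smooth rigid space over $\Q_p$. Since $\omega$ and its quotient are $v$-vector bundles, such a map is the same as a $v$-family of $k$-dimensional subspaces, by descent for $v$-vector bundles on perfectoid test objects. The fully faithful diamond functor (\ref{eq: diamond functor ff on good adic spaces}) then shows that this map comes from a genuine morphism of adic spaces $S\to\Gr(k,r)$. Strictly speaking, full faithfulness is stated for good adic spaces. The Grassmannian is a smooth rigid space, hence seminormal and good, and $S\times\Gr$ is good because goodness is stable under smooth maps. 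Alternatively, I use the reformulation $\Hom(S^{\diamondsuit},Y^{\diamondsuit})=\Hom(S,Y)$ for good $S$ and rigid $Y$ from \cite{gerth2024}.

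\item \textbf{Conclusion for $\omega$.} Pulling back the tautological subbundle along $S\to\Gr(k,r)$ gives an étale vector bundle $\omega'\subseteq D_{\et}$, locally a direct summand, with $\omega'\otimes\Os_{S_v}=\omega$. Hence $\omega$ is étale.

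\item \textbf{Conclusion for $L$.} Then $L=D/\omega$ is $(D_{\et}/\omega')\otimes\Os_{S_v}$, and $D_{\et}/\omega'$ is an étale vector bundle.
\end{enumerate}

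The main obstacle is step 2. One has to verify that a $v$-locally direct summand subbundle of a trivial bundle really defines a morphism of $v$-sheaves to $\Gr^{\diamondsuit}$. This holds because $\Gr^{\diamondsuit}$ represents rank-$k$ quotients on perfectoid test objects, where étale and $v$-vector bundles agree by (\ref{eq: etale and v vb equivalent for perfectoids}). One must also check that full faithfulness applies with a rigid target over $\Q_p$ and a good source.

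If step 2 fails, the fallback is a Plücker-coordinate argument. The Plücker coordinates of $\omega$ are sections of $\nu_*\Os_{S_v}=\Os_S$ that generate locally. On the rational opens where a given minor is invertible, the standard affine chart coordinates are again global sections of $\nu_*\Os_{S_v}=\Os_S$. These coordinates define $\omega'$ directly as the row span of an $A$-valued matrix of the form $(I\mid *)$, and this $\omega'$ is visibly a direct summand.
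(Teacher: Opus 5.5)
Your argument is correct, but it takes a different route from the paper's.

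\textbf{Your route.} You encode the subbundle $\omega\subseteq D$ as a compatible family of points of the Grassmannian $\Gr(k,r)$ on perfectoid test objects, where étale and $v$-vector bundles agree. You then invoke the full faithfulness (\ref{eq: diamond functor ff on good adic spaces}) of the diamond functor on good adic spaces. This applies because $\Gr(k,r)$ is smooth over $\Q_p$, hence good. It turns $S^{\diamondsuit}\to\Gr(k,r)^{\diamondsuit}$ into a morphism $S\to\Gr(k,r)$. Pulling back the tautological subbundle gives the étale model of $\omega$, and $L$ follows as the quotient.

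\textbf{The paper's route.} It proves first that $L$ is étale and gets $\omega$ by duality. Using the surjection $\nu_*D\otimes\Os_{S_v}\to L$, it reduces to showing that $\nu_*L$ is an étale vector bundle of the right rank. It proves this by hand, adapting Heuer's argument for line bundles:
\begin{enumerate}
\item It localizes so that $D$ is free.
\item It takes a pro-finite-étale perfectoid cover $\widetilde S=\varprojlim S_i$ on which $L$ is trivial, and describes $L(S)$ by a cocycle.
\item Near each point, it selects $d$ of the images of the standard generators of $D$ whose determinant does not vanish there.
\item It spreads this non-vanishing to a standard-étale neighbourhood that descends to some finite level $S_i$.
\end{enumerate}
In effect, the paper re-proves by an explicit approximation argument exactly the instance of full faithfulness that you use as a black box.

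\textbf{What each buys.} Your version is shorter. It also shows the lemma is a formal consequence of goodness: the same reasoning descends any $v$-family of points of a good target, for instance flags. The cost is reliance on the moduli description of $\Gr(k,r)$ and on the cited full-faithfulness result. The paper's version needs only approximation lemmas for étale sites of pro-finite-étale limits together with $\Os_{S_{\et}}=\nu_*\Os_{S_v}$.

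\textbf{Your Plücker fallback.} This is arguably the most elementary of the three arguments. One small correction: the Plücker coordinates are sections of the dual of the $v$-line bundle $\wedge^k\omega$, not of $\Os_{S_v}$ itself. So their non-vanishing loci are open subsets of $|S|$, not necessarily rational. The affine chart coordinates, being ratios, are genuine sections of $\Os_{S_v}=\nu^{*}\Os_S$ over those opens, which is all you need.
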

\begin{proof}
We adapt the proof of \cite[Prop. 3.2]{heuer2021line}. It suffices to show that $L$ is an étale vector bundle, by passing to duals. We have a commutative diagram of morphism of $v$-sheaves
% https://q.uiver.app/#q=WzAsNCxbMCwxLCJNIl0sWzEsMSwiTC4iXSxbMCwwLCJcXG51XypNXFxvdGltZXNfe1xcT3Nfe1xcZXR9fVxcT3NfdiJdLFsxLDAsIlxcbnVfKkxcXG90aW1lc197XFxPc197XFxldH19XFxPc192Il0sWzAsMSwiIiwwLHsic3R5bGUiOnsiaGVhZCI6eyJuYW1lIjoiZXBpIn19fV0sWzIsMCwiXFxjb25nIiwyXSxbMywxXSxbMiwzXV0=
\[\begin{tikzcd}
	{\nu_*M\otimes_{\Os_{\et}}\Os_v} & {\nu_*L\otimes_{\Os_{\et}}\Os_v} \\
	M & {L.}
	\arrow[from=1-1, to=1-2]
	\arrow["\cong"', from=1-1, to=2-1]
	\arrow[from=1-2, to=2-2]
	\arrow[two heads, from=2-1, to=2-2]
\end{tikzcd}\]
It follows that the right vertical map is surjective. To conclude, it remains to show that $\nu_*L$ is an étale vector bundle of rank $d$, where $d=\rank L$. This can be checked locally, so we may assume that $S=\Spa(A,A^+)$ is affinoid and that $M=\Os_S^{\oplus n}$ is free. It is enough to produce a standard-étale cover $\{ U_i \rightarrow S\}$ such that $L(U_i)$ is a finite free $\Os(U_i)$-module and $L(U_i)\otimes_{\Os(U_i)}\Os(U_{ij}) \cong L(U_{ij})$, for any $i,j$.

Consider a pro-finite-étale cover 
\[ \widetilde{S} = \varprojlim_i S_i \rightarrow S\] 
by an affinoid perfectoid space $\widetilde{S} = \Spa(\widetilde{A},\widetilde{A}^+)$, see e.g \cite[Lemma 15.3]{scholze2022etale}. Let $\pi$ denote the Galois group. As $\widetilde{S}$ is perfectoid, $L\restr{\widetilde{S}}$ is trivialized by standard-étale cover $\widetilde{S'} \rightarrow \widetilde{S}$. By \cite[Lemma 11.23]{scholze2022etale}, we have $\widetilde{S}_{\et, \qcqs} \cong \tworlim_i S_{i,\et,\qcqs}$, so that this cover arises via pullback from a standard-étale cover $S_i' \rightarrow S_i$. Up to replacing $S$ by $S_i'$, we may assume that $L\restr{\widetilde{S}}$ is trivial. Therefore, there exists a continuous group cocycle $c\colon \pi \rightarrow \GL_d(\widetilde{A})$ such that
\[ L(S) = \{\,t \in \widetilde{A}^{\oplus d} \mid \sigma^*(t) = c(\sigma)t, \, \fa \sigma \in \pi\,\}.\]
Let $t_1,\ldots, t_n\in L(S)$ denote the image of the generators $e_j\in M(S) \cong A^{\oplus n}$. Let $x\in \widetilde{S}$ with corresponding geometric point $\Spa(C,C^+) \rightarrow \widetilde{S}$. Then there exists distinct indices $1\leq j_1,\ldots, j_d  \leq n$ such that the vectors
\[ t_{j_1}(x),\ldots, t_{j_d}(x) \in C^{\oplus d} \]
form a basis of $L(C,C^+)$. Therefore, the map
\[ \det(t_{j_l})_{l=1}^{d}\colon \widetilde{S} \rightarrow \G_a\]
is non-vanishing at $x$. We may write
\[ \Spa(C,C^+) =\varprojlim_k \widetilde{U}_k\]
where the limit ranges over standard-étale neighborhood of $x$ in $\widetilde{S}$. By \cite[Lemma 3.13]{gerth2024}, we conclude that already the restriction
\[ \det(t_{j_l})\restr{\widetilde{U}_k}\colon \widetilde{U}_k \rightarrow \widetilde{S} \rightarrow \A^1
\]
has image in $\G_m \sub \A^1$, for some $k$. Using again \cite[Lemma 11.23]{scholze2022etale}, $\widetilde{U}_k$ comes via pullback from a standard-étale map $U_{k,i} \rightarrow S_i$ for some $i$. Then $t_{j_1},\ldots, t_{j_d}$ form a basis of the $\Os(U_{k,i})$-module $L(U_{k,i})$. Repeating this procedure at each point $x\in \widetilde{S}$ yields a standard-étale cover of $S$ with the desired properties, which concludes the proof.
    \end{proof}
% The tower of smooth rigid spaces $(\Ms_{\Gs_0,K})_{K \sub \GL_n(\Q_p)}$ of Proposition \ref{prop: properties of local moduli} carries a natural $\GL_n(\Q_p)$-action. % This means the following. For any inclusion $K\sub K'$ of compact open subgroups, there is a finite étale projection
% % \[ \Ms_{\Gs_0,K} \rightarrow \Ms_{\Gs_0,K'}, \quad (\Gs,\beta,\cj{\gamma}) \mapsto (\Gs,\beta,\cj{\gamma} K').\]
% % For any $g\in \GL_n(\Q_p)$, there is an isomorphism
% % \[  \Ms_{\Gs_0,K} \rightarrow \Ms_{\Gs_0,g^{-1}Kg}, \quad (\Gs,\beta,\cj{\gamma}) \mapsto (\Gs,\beta,\cj{\gamma} g).\]
% % Moreover, these isomorphisms satisfy the obvious compatibilities. 
% In particular, we may consider the moduli space at infinite level
% \[ \Ms_{\Gs,\infty} = \varprojlim_{K \sub \GL_n(\Q_p)} \Ms_{\Gs,K},\]
% viewed as a diamond. It has the following moduli interpretation: For any $S\in\Adic_E$
% \[ \Ms_{\Gs_0,\infty}(S) = \{(\Gs,\beta,\gamma) \mid (\Gs,\beta)\in \Ms_{\Gs_0}(S), \, \gamma\colon T_p\Gs \xrightarrow{\cong }\Z_p^{\oplus n} \, \}/\cong.\]

We now set up to relate the tower $(\Ms_{\Gs_0,K})_{K \sub \GL_n(\Q_p)}$ of moduli spaces from Definition \ref{def: moduli of analytic pdiv gp with frame} with the local Shimura varieties of Scholze--Weinstein \cite[Def. 24.1.3]{SW20}. Let $0 \leq d \leq n$ and let $\mu_d = (z, \ldots, z, 1,\ldots, 1)$ be the standard cocharacter of $\GL_n$ with $z$ appearing $d$ times. Let $b\in B(\GL_n,\mu_d^{-1})$ be an element of the Kottwitz set that is $\mu$-admissible (cf. \cite[Def. 24.1.1]{SW20}). To the data $(G,\mu_d,b)$, Scholze--Weinstein associate the local Shimura varieties $(\Sht_{\GL_n,b,\mu_d,K})_{K \sub \GL_n(\Q_p)}$, which are smooth rigid spaces over $\breve{\Q}_p$ with the following diamonds
\[ \Sht_{\GL_n,b,\mu_d,K}^{\diamondsuit}\colon S \in \Perf_{\breve{\Q}_p} \mapsto \{(\Lb,\alpha \colon \Lb \otimes_{\Z_p} \Os_{X_S} \dashrightarrow \Es^b) \}/\cong.\]
Here, $\Lb$ is a $\Z_p$-local system on $S$ and $\alpha $ is a modification on the Fargues--Fontaine curve $X_S$ whose relative position is given by $\mu_d$. Here, $\Es^b$ is the vector bundle on $X_S$ corresponding to the element $b$ under the equivalence (\ref{eq: from Kotwitz set to bundles}).

By Dieudonné theory, there exists a $p$-divisible group $G_0$ over $\cj{\F}_p$ of dimension $d$ and height $n$, unique up to isogeny, so that $b$ is the element of the Kottwitz set corresponding to the Dieudonné isocrystal of $G_0$. We fix any lift $\mathfrak{G}_0$ of $G_0$ to a $p$-divisible group over $\breve{\Z}_p$ and we let $\Gs_0$ denote its generic fiber, an analytic $p$-divisible group over $\breve{\Q}_p$. Such a lift always exists since the deformation problem is formally smooth. In this case, by Corollary \ref{cor: E(G) = E(D,phi) over padic field}, we have a canonical isomorphism 
\[ \Es(\Gs_0) = \Es^b \in \Bun_{\GL_n}(\Spd(\breve{\Q}_p)) \]
and we will identify these bundles in what follow. 

We arrive at the following theorem.

\begin{thm}\label{main thm: moduli of an pdiv vs local SV}
There exists a canonical isomorphism of smooth rigid spaces, for any compact open subgroup $K\sub \GL_n(\Q_p)$
\[ \Ms_{\Gs_0,K}\cong \Sht_{\GL_n,b,\mu_d,K},\]
compatible with the $G(\Q_p)$-actions and Grothendieck--Messing period maps. For $K=\GL_n(\Z_p)$ and $S \in \Perf_{\breve{\Q_p}}$, this isomorphism sends a pair $(\Gs,\beta\colon \widetilde{\Gs} \cong \widetilde{\Gs}_0) \in \Ms_{\Gs_0}(S)$ to
\[ T_p\Gs \otimes_{\Z_p} \Os_{X_S} \dashrightarrow \Es(\Gs) \overset{\beta}{\cong} \Es(\Gs_0),\]
where the left map is the modification given by Theorem \ref{thm: vb on FF associated to an pdiv groups} and the right map is the unique isomorphism induced by $\beta$ under the equivalence of Proposition \ref{prop: minuscule sheaves vs minuscule BC spaces}.
\end{thm}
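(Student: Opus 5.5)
Since both sides are smooth rigid spaces over $\breve{\Q}_p$, and good adic spaces embed fully faithfully into diamonds by (\ref{eq: diamond functor ff on good adic spaces}), I will build an isomorphism of $v$-sheaves on $\Perf_{\breve{\Q}_p}$. I will do this first for $K=\GL_n(\Z_p)$ and then pass to general $K$ by adding level structures on both sides. The starting point is Theorem \ref{thm: equivalence for dualizable groups on perfd}. By Proposition \ref{prop: properties of local moduli}(2), for $S$ perfectoid, every $(\Gs,\beta)\in\Ms_{\Gs_0}(S)$ has $\Gs$ dualizable of height $n$ and dimension $d$. Hence $\Gs$ corresponds to a triple $(T_p\Gs,\Es(\Gs),\alpha)$, where $\alpha\colon T_p\Gs\otimes\Os_{X_S}\dashrightarrow\Es(\Gs)$ is a minuscule modification with cokernel $i_*\Lie(\Gs)$ of rank $d$, so its relative position is $\mu_d$. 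Proposition \ref{prop: minuscule sheaves vs minuscule BC spaces} gives a fully faithful exact functor $\BC$, so $\beta\colon\widetilde{\Gs}\cong\widetilde{\Gs}_0\times S$ corresponds to a unique isomorphism $\Es(\Gs)\cong\Es(\Gs_0)_S$. Composing with the identification $\Es(\Gs_0)=\Es^b$ of Corollary \ref{cor: E(G) = E(D,phi) over padic field} produces the point $(T_p\Gs,\alpha)\in\Sht_{\GL_n,b,\mu_d,\GL_n(\Z_p)}(S)$ described in the statement. Naturality in $S$ follows from the base-change compatibility of $\Es(-)$ in Theorem \ref{thm: vb on FF associated to an pdiv groups}(1).

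For the inverse, start with $(\Lb,\alpha\colon\Lb\otimes\Os_{X_S}\dashrightarrow\Es^b)$ bounded by $\mu_d$. This modification is minuscule, so Theorem \ref{thm: equivalence for dualizable groups on perfd} (4)$\Rightarrow$(1) gives a dualizable $\Gs$ with $T_p\Gs=\Lb$ and $\Es(\Gs)\cong\Es^b=\Es(\Gs_0)_S$. Applying $\BC=\tau_*$ to this isomorphism yields $\beta\colon\widetilde{\Gs}\cong\widetilde{\Gs}_0\times S$. I then check that the two constructions are mutually inverse on isomorphism classes. On the $\Ms_{\Gs_0}$ side, an isomorphism $\varphi$ of pairs must satisfy $\beta'\circ\widetilde\varphi=\beta$. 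On the shtuka side, an isomorphism is an isomorphism of $\Lb$ compatible with the modifications into the fixed $\Es^b$. These two notions match through the fully faithful functors of Theorem \ref{thm: equivalence for dualizable groups on perfd} and Proposition \ref{prop: minuscule sheaves vs minuscule BC spaces}.

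For general $K$, I first conjugate so that $K\subseteq\GL_n(\Z_p)$. A $K$-level structure on $T_p\Gs$ then matches a $K$-level structure on $\Lb$, and Scholze--Weinstein's level-$K$ spaces are defined exactly this way. Compatibility with the $\GL_n(\Q_p)$-action is the step I expect to be the subtle one. That action changes the lattice inside $V_p\Gs$, so I must allow quasi-isogenies. The key point is that a quasi-isogeny $\Gs\to\Gs'$ induces an isomorphism of universal covers $\widetilde\Gs\cong\widetilde\Gs'$, and hence $\Es(\Gs)\cong\Es(\Gs')$. Lattice changes in $V_p\Gs$ therefore correspond exactly to changes of $\Lb$ inside $\Lb[1/p]$ with the same $\Es$, which is the Hecke action on $\Sht$. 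To match the rational description of $\Ms_{\Gs_0,K}$ with the integral one, I would use Corollary \ref{cor: quotients of finite etale subgroups}.

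Compatibility with the Grothendieck--Messing maps is checked on perfectoid $S$. There, $D(\Gs)=i^*\Es(\Gs)$ by Definition \ref{def: Dieudonné module}, the surjection $D(\Gs)\to\Lie(\Gs)$ is the cokernel of $\alpha$ restricted to $S$, and Scholze--Weinstein's period map sends $\alpha$ to the corresponding flag in $i^*\Es^b$ via Lemma \ref{lemma: minuscule lattices vs flags}. Finally, representability of both sides by smooth rigid spaces (Proposition \ref{prop: properties of local moduli}(4) and \cite{SW20}), together with full faithfulness of the diamond functor on good adic spaces, upgrades the isomorphism of diamonds to an isomorphism of rigid spaces. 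The remaining delicate point is that $\Ms_{\Gs_0,K}$ is defined on all good adic spaces and not only on perfectoids. This is handled by the $v$-sheaf property in Proposition \ref{prop: properties of local moduli}(1): both sides are $v$-sheaves represented by good adic spaces, so they are determined by their restrictions to $\Perf$.
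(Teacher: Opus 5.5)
Your proposal is correct and follows essentially the same route as the paper. You reduce to $K\subseteq\GL_n(\Z_p)$ and then to $K=\GL_n(\Z_p)$, match points on perfectoid $S$ via Theorem \ref{thm: equivalence for dualizable groups on perfd}, and pass between $\beta$ and the isomorphism $\Es(\Gs)\cong\Es^b$ using Proposition \ref{prop: minuscule sheaves vs minuscule BC spaces}. You then upgrade the resulting isomorphism of diamonds to one of smooth rigid spaces by full faithfulness of the diamond functor on good adic spaces. Your explicit checks of the Grothendieck--Messing compatibility and of the $\GL_n(\Q_p)$-action via quasi-isogenies fill in details that the paper's proof leaves implicit.
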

\begin{proof}
As both towers $(\Ms_{\Gs_0,K})_{K}$ and $(\Sht_{\GL_n,b,\mu_d,K})_{K}$ admit an action of $\GL_n(\Q_p)$, up to replacing $K$ by a conjugate subgroup, we may assume that $K\sub \GL_n(\Z_p)$. In this case, $\Sht_{\GL_n,b,\mu_d,K}$ can be seen to parametrize objects $(\Lb,\alpha,\cj{\gamma})$, where $(\Lb,\alpha) \in \Sht_{\GL_n,b,\mu_d,\GL_n(\Z_p)}$ and $\cj{\gamma}$ is a $K$-orbit of isomorphism $\Z_p^{\oplus n} \rightarrow \Lb$. A similar description holds on the other side, hence it is enough to treat the case $K=\GL_n(\Z_p)$. Let $S\in \Perf_{\breve{\Q}_p}$ and $(\Lb,\alpha) \in \Sht_{\GL_n,b,\mu_d,\GL_n(\Z_p)}(S)$. Let
\[ \iota \colon \Lb \otimes_{\Z_p} \B_{\dR}^+ \hookrightarrow \Xi^b[\tfrac{1}{\xi}]\]
denote the map obtained from $\alpha$ by passing to the completed stalk at $S \hookrightarrow X_S$. Then the condition that the modification $\alpha$ has relative position $\mu_d$ translates into the fact that, pointwise on $S$, $\iota$ becomes identified with the inclusion
\[ \bigoplus_{i=1}^{d} \BdR+(C)\xi^{r_i}e_i \sub \bigoplus_{i=1}^{n}\BdeR(C)e_i,\]
where $r_i$ is equal to $1$ if $i \leq d$ and is $0$ otherwise. It follows that $\iota$ is minuscule in the sense of Definition \ref{def: minuscule lattice} and that the functor in the statement is well-defined. To see that it is an equivalence, we use that, by Theorem \ref{thm: equivalence for dualizable groups on perfd}, the pair $(\Lb, \alpha)$ uniquely determines a dualizable analytic $p$-divisible group $\Gs \rightarrow S$ with Tate module $\Lb$ such that the modification $\alpha$ coincides with the sequence (\ref{eq: modification defining E(G)})
% https://q.uiver.app/#q=WzAsNSxbMCwwLCIwIl0sWzEsMCwiVF9wXFxHc1xcb3RpbWVzX3tcXFpfcH1cXE9zX3tYX1N9Il0sWzIsMCwiXFxFcyhcXEdzKSJdLFszLDAsImlfKlxcTGllKFxcR3MpIl0sWzQsMCwiMC4iXSxbMCwxXSxbMSwyXSxbMiwzXSxbMyw0XV0=
\[\begin{tikzcd}
	0 & {T_p\Gs\otimes_{\Z_p}\Os_{X_S}} & {\Es(\Gs)} & {i_*\Lie(\Gs)} & {0.}
	\arrow[from=1-1, to=1-2]
	\arrow[from=1-2, to=1-3]
	\arrow[from=1-3, to=1-4]
	\arrow[from=1-4, to=1-5]
\end{tikzcd}\]
Moreover, by Proposition \ref{prop: minuscule sheaves vs minuscule BC spaces}, the resulting isomorphism $\Es(\Gs) \cong \Es^b =\Es(\Gs_0)$ comes from a uniquely determined isomorphism $\beta\colon \widetilde{\Gs} \cong \widetilde{\Gs}_0$. Hence we obtain a well-defined object $(\Gs,\beta) \in \Ms_{\Gs_0}(S)$. It follows from Theorem \ref{thm: equivalence for dualizable groups on perfd} that isomorphisms of objects on each side correspond to each other, from which we derive that the map under consideration defines an isomorphism of diamonds. Since both sheaves are representable by smooth rigid spaces, it follows from the fully faithful embedding (\ref{eq: diamond functor ff on good adic spaces}) that this isomorphism arises uniquely from an isomorphism of adic spaces, which concludes.
\end{proof}
\begin{remark}\label{remark: unwrapping iso with rapoport-zink space}
    Let us consider the Rapoport--Zink space $\RZ_{G_0}$ associated with $G_0$. This is a smooth formal scheme over $\Spf(\breve{\Z}_p)$ representing the functor on $p$-complete $\breve{\Z}_p$-algebras
    \[ A  \longmapsto \{\, (G,\rho)\,\}/\cong, \]
    where $G$ is a $p$-divisible group over $A$ and $\rho\colon G \otimes_A A/p \dashrightarrow G_0 \otimes_{\cj{\F}_p}A/p$ is a quasi-isogeny. By \cite[Thm. 24.2.5]{SW20}, the adic generic fiber $(\RZ_{G_0})_{\eta}$ is canonically isomorphic to the local Shimura variety $\Sht_{\GL_n,b,\mu_d,\GL_n(\Z_p)}$. Combined with Theorem \ref{main thm: moduli of an pdiv vs local SV}, this yields an isomorphism of smooth rigid spaces
    \begin{align}\label{eq: moduli of an pdiv vs RZ space} 
    (\RZ_{G_0})_{\eta}\cong \Ms_{\Gs_0}.
    \end{align}
    This isomorphism can be read on functor of points as follow, for a good adic space $S=\Spa(R,R^+)$ over $\breve{\Q}_p$
    \[ (\mathfrak{G},\rho) \in \RZ_{G_0}(R^+) \longmapsto ( \mathfrak{G}_{\eta},\widetilde{\rho}_{\eta}) \in \Ms_{\Gs_0}(S).\]
    Here,
    \[ \widetilde{\rho}\colon \widetilde{\mathfrak{G}} \rightarrow \widetilde{\mathfrak{G}}_{0}\widehat{\otimes}_{\breve{\Z}_p} R^+\]
    is the unique isomorphism deforming $\varprojlim_{[p]}\rho$, using the crystal property of the universal cover $\widetilde{\mathfrak{G}}$. 
    \end{remark}
    \begin{remark}\label{remark: good reduction locally}
         The isomorphism (\ref{eq: moduli of an pdiv vs RZ space}) implies that, for any good adic space $S$ over $\breve{\Q}_p$ and any $(\Gs,\beta) \in \Ms_{\Gs_0}(S)$, the group $\Gs$ has good reduction analytic-locally on $\vert S \vert$. Note that if $S$ is a rigid space with smooth formal model $\Ss$ of $S$ over $\breve{\Z}_p$, the related question of the good reduction of $\Gs$ Zariski-locally on $\vert \Ss \vert$ was studied by Fargues in \cite[§5.3]{Farg22}.
    \end{remark}

\subsection{The EL and PEL case}\label{The EL and PEL case}
We now extend Theorem \ref{main thm: moduli of an pdiv vs local SV} to the EL and PEL case. We start with one of the following situations.
\begin{itemize}
    \item (EL case) Let $B$ be a semisimple $\Q_p$-algebra, and let $V$ be a finite left $B$-module.
    \item (PEL case) In addition to the above, we assume that $p\neq 2$, $B$ is equipped with an anti-involution $(\cdot)^{*} \colon B \rightarrow B$ and $V$ is endowed with a non-degenerate alternating pairing satisfying
    \[ (dv,v') = (v,d^*v'),\]
    for all $d\in B$, $v,v'\in V$.
\end{itemize}
% We also fix some integral structures. Let $\Os_B\sub B$ be a maximal order and let $\Lambda \sub V$ be a $\Os_B$-lattice. In the PEL case, we further assume that $\Os_B$ is stable under the involution and that $\Lambda$ is self-dual for the pairing on $V$.

In the EL case, we let $G=\GL_B(V)$, viewed as an algebraic group over $\Q_p$. In the PEL case, we let $G$ be the algebraic group of symplectic similitudes of $V$, which has the following $R$-rational points, for $R$ a $\Q_p$-algebra
\[ G(R) = \{\,g \in \GL_B(V\otimes_{\Q_p}R) \mid (gv,gv') = c(g)(v,v'), \, c(g)\in R^{\times}\}.\]
This defines the similitude character $c\colon G \rightarrow \G_m$.

We fix a conjugacy class $[\mu]$ of minuscule cocharacters $\mu\colon \G_m \rightarrow G_{\cj{\Q}_p}$ with only weights $0,1$. We let $E$ denote the field of definition of $[\mu ]$. We thus have a weight decomposition
\[ V\otimes_{\Q_p} E = V_0 \oplus V_1.\]
In the PEL case, we further assume that $c \circ \mu =\id_{\G_m}$. Equivalently, the $B$-linear subspaces $V_0,V_1$ are isotropic for the pairing on $V\otimes_{\Q_p} E$.

We now fix a $p$-divisible group $G_0$ over $\cj{\F}_p$ with an action $\iota\colon B \rightarrow \End^0(G_0) = \End(G_0)\otimes_{\Z_p}\Q_p$ such that there exists an isomorphism of $B\otimes_{\Q_p}\breve{\Q}_p$-modules
\[ D(G_0)\otimes_{\breve{\Z}_p} \breve{\Q}_p \cong V \otimes_{\Q_p} \breve{\Q}_p,\]
where $D(G_0)$ is the Dieudonné crystal of $G_0$. In the PEL case, we fix a quasi-polarization $\lambda_0\colon G_0 \rightarrow G_0^D$, compatible with the involution on $B$, and we ask for the above isomorphism to respect the pairings. We may identify the Frobenius on the left hand side with $b(\id\otimes \varphi)$ for some $b \in G(\breve{\Q}_p)$, where $\varphi$ is the Frobenius on $\breve{\Q}_p$. Then this yields a well-defined element $b$ in the Kottwitz set $B(G)$.

Let $\breve{E} = E \cdot \breve{\Q}_p$. We further assume that the group $G_0$ together with its $B$-action (and its quasi-polarization) admit a deformation $\mathfrak{G}_0$ over $\Spf(\Os_{\breve{E}})$ such that the Hodge filtration
% https://q.uiver.app/#q=WzAsNSxbMSwwLCJcXG9tZWdhX3tcXG1hdGhmcmFre0d9XzBeRH1bXFx0ZnJhY3sxfXtwfV0iXSxbMiwwLCJEKFxcbWF0aGZyYWt7R31fMClbXFx0ZnJhY3sxfXtwfV0iXSxbMywwLCJcXExpZShcXG1hdGhmcmFre0d9XzApW1xcdGZyYWN7MX17cH1dIl0sWzAsMCwiMCJdLFs0LDAsIjAiXSxbMywwXSxbMSwyXSxbMCwxXSxbMiw0XV0=
\[\begin{tikzcd}
	0 & {\omega_{\mathfrak{G}_0^D}[\tfrac{1}{p}]} & {D(\mathfrak{G}_0)[\tfrac{1}{p}]} & {\Lie(\mathfrak{G}_0)[\tfrac{1}{p}]} & 0
	\arrow[from=1-1, to=1-2]
	\arrow[from=1-2, to=1-3]
	\arrow[from=1-3, to=1-4]
	\arrow[from=1-4, to=1-5]
\end{tikzcd}\]
coincides with the weight decomposition\footnote{Here, we follow the convention of Scholze--Weinstein. Note that the weights $0$ and $1$ are interchanged in comparison with Rapoport--Zink's book, cf \cite[Above Def. 21.6.8]{SW20}.}
% https://q.uiver.app/#q=WzAsNSxbMSwwLCJWXzBcXG90aW1lc19FIFxcYnJldmV7RX0iXSxbMiwwLCJWIFxcb3RpbWVzX3tcXFFfcH0gXFxicmV2ZXtFfSJdLFszLDAsIlZfMVxcb3RpbWVzX0UgXFxicmV2ZXtFfSJdLFswLDAsIjAiXSxbNCwwLCIwLiJdLFszLDBdLFsxLDJdLFswLDFdLFsyLDRdXQ==
\[\begin{tikzcd}
	0 & {V_0\otimes_E \breve{E}} & {V \otimes_{\Q_p} \breve{E}} & {V_1\otimes_E \breve{E}} & {0.}
	\arrow[from=1-1, to=1-2]
	\arrow[from=1-2, to=1-3]
	\arrow[from=1-3, to=1-4]
	\arrow[from=1-4, to=1-5]
\end{tikzcd}\]
This ensures that $b$ is $\mu$-admissible, i.e. $b\in B(G,\mu^{-1})$, cf \cite[(3.19)]{rapoportzink96}\footnote{It is not clear whether $G_0$ always admits such a lift. However, by the rigidity of $p$-divisible groups up to quasi-isogeny, the universal cover $\widetilde{G}_0$ admits a unique deformation to a formal group scheme $\mathfrak{F}_0$ with $B$-action over $\Spf(\Os_{\breve{E}})$. With appropriate care, one can give a variant of Definition \ref{def: moduli of an pdiv gps in EL and PEL case} below purely in terms of the abelian sheaf $\mathcal{F}_0 = \mathfrak{F}_{0,\eta}$, after observing that this construction only depends on the universal cover $\widetilde{\Gs}_0$.}. 

We now let $\Gs_0 \rightarrow \Spa(\breve{E})$ denote the generic fiber of $\mathfrak{G}_0$, a dualizable analytic $p$-divisible group with a $B$-action $\iota_0 \colon B \rightarrow \End^0(\Gs_0)$ and, in the PEL case, a quasi-polarization $\lambda_0\colon \Gs_0 \rightarrow \Gs_0^D$ (cf. Definition \ref{def: polarization}).
% We denote by $\Ds = (B,V,\mu,b,\Gs_0,\iota_0 (,\lambda_0))$ denote all the data
% \begin{definition}
%     Let $S$ be a good adic space over $\breve{E}$. An analytic $p$-divisible group with $G$-structure is a tuple $(\Gs,\iota, (\lambda))$, where
%     \begin{enumerate}
%         \item $\Gs$ is a dualizable analytic $p$-divisible group,
%         \item $\iota\colon B \rightarrow \End^0(B)$ is a $\Q_p$-linear homomorphism
%          \item (In the PEL case) $\lambda\colon \Gs \rightarrow \Gs^D$ is a 
%     \end{enumerate}
% \end{definition}

We are now ready to define the moduli problem. 
\begin{definition}\label{def: moduli of an pdiv gps in EL and PEL case}
    Let $K \sub G(\Q_p)$ be a compact open subgroup. We define a moduli problem $\Ms_{\Gs_0,\iota_0,(\lambda_0),K}$, taking a good adic space $S\in \Adic_{\breve{E}}$ to the set of isomorphism classes of tuplets $(\Gs,\iota, (\lambda),\beta, \cj{\gamma})$, where
    \begin{itemize}
        \item $\Gs$ is a dualizable analytic $p$-divisible group over $S$,
        \item $\iota\colon B \rightarrow \End_S^0(\Gs)$ is a $\Q_p$-algebra homomorphism, 
         \item (in the PEL case) $\lambda \colon \Gs \rightarrow \Gs^D$ is a quasi-polarization, such that for $d\in B$, we have $\lambda^{-1} \circ \iota(d)^D \circ \lambda = \iota(d^*)$,
         \item $\beta\colon \widetilde{\Gs} \rightarrow \widetilde{\Gs}_0 \times_{\Spa(\breve{E})} S$ is a $B$-linear isomorphism. In the PEL case, we ask that $\beta^D \circ \widetilde{\lambda}_0 \circ \beta = c\widetilde{\lambda}$, for $c\in \underline{\Q_p^{\times}}(S)$,
         \item $\cj{\gamma}$ is a $K$-orbit of isomorphisms of $B$-local systems $V \xrightarrow{ \cong} V_p\Gs$ which is a symplectic similitude in the PEL case\footnote{To make this precise, we need to identify the targets $\Q_p\cong \Q_p(1)$ of the pairings. Such an isomorphism exists $v$-locally and is unique up to a factor in $\Q_p^{\times}$}.
    \end{itemize} 
   We further assume that the map $\iota$ satisfies the Kottwitz condition:
    \begin{align} \det(d; \Lie(\Gs)) = \det(d; V_1 \otimes_{E} \Os_S),\end{align}
    as polynomial functions in $d\in B$. An isomorphism $(\Gs,\iota,(\lambda),\beta, \cj{\gamma}) \rightarrow (\Gs',\iota', (\lambda'),\beta', \cj{\gamma}')$ is defined to be a $B$-linear quasi-isogeny $\varphi\colon \Gs \rightarrow \Gs'$ such that $\beta'\circ \widetilde{\varphi} = \beta$ and $V_p\varphi \circ \cj{\gamma} = \cj{\gamma}'$. In the PEL case, we also ask that $\varphi^D \circ \lambda' \circ \varphi = c\lambda$, for $c\in \underline{\Q_p^{\times}}(S)$.
\end{definition}

We can simplify the moduli description at the cost of introducing integral data. Let $\Os_B\sub B$ be a maximal order and let $\Lambda \sub V$ be an $\Os_B$-lattice. In the PEL case, we assume furthermore that $\Os_B$ is stable under the involution on $B$ and that $\Lambda$ is self-dual with respect to the pairing on $V$. Let us assume that $K\sub \Stab_{G(\Q_p)}(\Lambda)$. In that case, $\Ms_{\Gs_0,\iota_0,(\lambda_0),K}(S)$ parametrizes the tuples $(\Gs,\iota,(\lambda),\beta, \cj{\gamma})$, where 
\begin{itemize}
    \item $\Gs$ is an analytic $p$-divisible group over $S$,
    \item $\iota\colon \Os_B \rightarrow \End_S(\Gs)$ is a $\Z_p$-linear map, satisfying the Kottwitz condition,
    \item (PEL case) $\lambda \colon \Gs \rightarrow \Gs^D$ is a principal polarization, compatible with the involutions,
    \item $\beta\colon \widetilde{\Gs} \xrightarrow{\cong} \widetilde{\Gs}_0$ is a $B$-linear isomorphism. In the PEL case, we ask that $\beta^D \circ \widetilde{\lambda}_0 \circ \beta = c\widetilde{\lambda}$, for $c\in \underline{\Q_p^{\times}}(S)$, and
    \item $\cj{\gamma}$ is a $K$-orbit of isomorphism of $\Os_B$-local systems $\Lambda \xrightarrow{ \cong} T_p\Gs$, which is a symplectic similitude in the PEL case.
    \end{itemize}  
    An isomorphism $(\Gs,\iota,(\lambda),\beta, \cj{\gamma}) \rightarrow (\Gs',\iota',(\lambda'),\beta', \cj{\gamma}')$ is defined to be an isomorphism of $p$-divisible groups $\varphi\colon \Gs \rightarrow \Gs'$ that is compatible with the extra data. In the PEL case, the map $\varphi$ pulls back $\lambda'$ to $\lambda$ up to a factor in $\underline{\Z_p^{\times}}(S)$.

    It follows from Proposition \ref{prop: properties of local moduli}(1) that the presheaves $\Ms_{\Gs_0,\iota_0,(\lambda_0),K}$ are small $v$-sheaves on $\Adic_{\breve{E}}$. We also have a Grothendieck--Messing period map
    \begin{align} \pi_{\GM} \colon \Ms_{\Gs_0,\iota_0,(\lambda_0),K} \rightarrow \Fl_{\GM}^{\diamondsuit}, \end{align}
    where the right-hand side is the flag variety over $\Spa(\breve{E})$ whose $S = \Spa(R,R^+)$-rational points parametrizes $B$-equivariant quotients $V\otimes_{\Q_p} R \twoheadrightarrow L$ such that, locally on $S$, $L$ is isomorphic to $V_1 \otimes_{E} R$ as a $B\otimes_{\Q_p} R$-module. In the PEL case, we further ask that $L$ is isotropic. This admits an interpretation as a $\B_{\dR}^+$-affine Grassmannian. Namely, we have a natural isomorphism
    \[ \Fl_{\GM}^{\diamondsuit} \cong \Gr_{\Xi_0,\mu,\Spd(\breve{E})}, \]
    where the right-hand side is the $v$-sheaf parametrizing $B\otimes_{\Q_p} \B_{\dR}^+$-lattices $\Xi \sub \Xi_0[\xi^{-1}]$ of relative position $\mu$. Here, $\Xi_0 = \Xi_0(\Gs_0)$ is the $\BdR+$-lattice associated with $\Gs_0$ from Corollary \ref{cor: BdR+ lattice assoc to pdiv groups in general}. Explicitely, for $S \in \Perf_{\breve{E}}$, the $S$-rational points of this sheaf consist in a $B\otimes_{\Q_p} \B_{\dR}^+(S)$-lattice $\xi \Xi_0 \sub \Xi \sub \Xi_0$ such that the quotient $L = \Xi_0/\Xi$ is locally on $S$ isomorphic to $V_1 \otimes_E R$. Arguing as in the proof of Proposition \ref{prop: properties of local moduli}(3), we find that the fibers of $\pi_{\GM}$ are locally given by
    \[ G(\Q_p)/K \times S^a\rightarrow S,\]
    so that $\pi_{\GM}$ is étale. Consequently, we obtain a tower $(\Ms_{\Gs_0,\iota_0,(\lambda_0), K})_{ K \sub G(\Q_p)}$ of smooth rigid spaces over $\breve{E}$. 
    % The infinite level moduli
    % \[ \Ms_{\Gs_0,\iota_0,(\lambda_0), \infty} = \varprojlim_{K \sub G(\Q_p)} \Ms_{\Gs_0,\iota_0,(\lambda_0), K}\]
    % sends a good adic space $S$ to the set of quasi-isogeny classes of tuples $(\Gs,\iota, (\lambda),\beta,\gamma)$, where this time, $\gamma$ is an isomorphism $V \xrightarrow{\cong} V_p\Gs$ compatible with the $B$-actions. In the PEL case, $\gamma$ is required to identify the pairings up to a scalar in $\underline{\Q_p^{\times}}$ after an identification $\Q_p \cong \Q_p(1)$ has been fixed. In particular, any point on the infinite level space $\Ms_{\Gs_0,\iota_0,(\lambda_0), \infty}$ lives over $\Q_p^{\cyc}$.

    We can now state our theorem. We have a local Shimura data $(G,[\mu],b)$, to which Scholze--Weinstein associate a local Shimura variety $(\Sht_{G,[\mu],b,K})_{K \sub G(\Q_p)}$ \cite[Def. 24.1.3]{SW20}. We then have the following result.
    
    \begin{thm}\label{main thm: EL and PEL moduli}
    There is a canonical isomorphism of smooth rigid spaces
    \[ \Ms_{\Gs_0,\iota_0,(\lambda_0),K} \cong \Sht_{G,[\mu],b,K},\]
    compatible with $G(\Q_p)$-actions and with the Grothendieck--Messing period maps.
\end{thm}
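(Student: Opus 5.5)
The plan is to follow the proof of Theorem \ref{main thm: moduli of an pdiv vs local SV}: compare the two towers on $v$-sheaves over $\Perf_{\breve{E}}$, and then deduce the isomorphism of smooth rigid spaces from the fully faithful embedding (\ref{eq: diamond functor ff on good adic spaces}). Both sides are smooth rigid spaces: the left by the discussion preceding the theorem (étaleness of $\pi_{\GM}$ and \cite[Lemma 15.6]{scholze2022etale}), the right by \cite{SW20}. Both towers carry $G(\Q_p)$-actions, so after conjugating we may assume $K\sub \Stab_{G(\Q_p)}(\Lambda)$. We then use the integral description of $\Ms_{\Gs_0,\iota_0,(\lambda_0),K}$, and $K$-level structures on both sides reduce to $K$-orbits of trivializations $\Lambda\cong T_p\Gs$ (respectively $\Lambda\cong\Lb$). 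Recall that $\Sht_{G,[\mu],b,K}(S)$ is the set of pairs consisting of a pro-étale $\underline{K}$-torsor $\mathbb{P}$ and a modification $\mathbb{P}\times^{K}G\times X_S \dashrightarrow \Es^b$ bounded by $\mu$. Since $G$ is $\GL_B(V)$ or the similitude group of $V$, such a datum is equivalent to a $B$-module local system $\V$ with a $K$-orbit of isomorphisms $V\cong \V$ (symplectic similitudes in the PEL case), together with a $B$-linear modification $\V\otimes\Os_{X_S}\dashrightarrow \Es^b$ of type $\mu$, which in the PEL case is compatible with the pairings up to scalar. Here $\Es^b$ carries its $B$-action and, in the PEL case, its pairing into $\Os_{X_S}(1)$.

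The first step identifies $\Es^b$ with $\Es(\Gs_0)$ compatibly with all structures. By Corollary \ref{cor: E(G) = E(D,phi) over padic field}, applied over $W(\cj{\F}_p)$ and base-changed to $\breve{E}$, we get $\Es(\Gs_0)\cong\Es(D(G_0)[\tfrac1p],\varphi)=\Es^b$. By functoriality of $\Es(-)$ in $\widetilde{\Gs}$ (Theorem \ref{thm: vb on FF associated to an pdiv groups}(2)) the $B$-action transports, and Lemma \ref{lemma: pairings on vb induced by polarization} supplies the pairing on $\Es(\Gs_0)$ induced by $\lambda_0$. We must check that this pairing matches the one on $\Es^b$ coming from the Dieudonné pairing; this should follow from naturality of the isomorphism in Corollary \ref{cor: E(G) only depends on special fiber} with respect to Cartier duality.

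The second step constructs the map. Given $(\Gs,\iota,\lambda,\beta,\cj\gamma)$, we send it to the modification $T_p\Gs\otimes\Os_{X_S}\dashrightarrow\Es(\Gs)\overset{\beta}{\cong}\Es(\Gs_0)=\Es^b$ together with $\cj\gamma$. Compatibility with $B$ is clear by functoriality. Compatibility with the pairings is exactly the commutative square of Lemma \ref{lemma: pairings on vb induced by polarization}, combined with the condition $\beta^D\circ\widetilde\lambda_0\circ\beta=c\widetilde\lambda$. For the bound: pointwise the modification is minuscule with cokernel $i_*\Lie(\Gs)$. The Kottwitz condition says that $\Lie(\Gs)\cong V_1\otimes\Os_S$ as $B\otimes\Os_S$-modules locally (for semisimple $B$ the determinant condition pins down the isomorphism class, as in \cite[§3]{rapoportzink96}), so the relative position is $\mu$. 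In the PEL case we also need isotropy, which follows from $\Lie(\Gs)$ being its own orthogonal under the induced pairing, as noted after Definition \ref{def: polarization}.

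The third step constructs the inverse. Given $(\V,\alpha,\cj\gamma)$ with $\alpha$ bounded by $\mu$, the lattice $\Lambda$ gives $\Lb$, and $\alpha$ is minuscule by the pointwise argument in the proof of Theorem \ref{main thm: moduli of an pdiv vs local SV}. Theorem \ref{thm: equivalence for dualizable groups on perfd} then yields a dualizable $\Gs$, and the exactness and full faithfulness of that equivalence transport the $\Os_B$-action. Proposition \ref{prop: minuscule sheaves vs minuscule BC spaces} turns $\Es(\Gs)\cong\Es^b$ into a $B$-linear $\beta$. In the PEL case, the pairing on $\V$ and the compatibility of $\alpha$ with pairings give, via Lemma \ref{lemma: dual of vector bundles and shtukas} and Theorem \ref{thm: analytic Dieudonné theory in general}, an isomorphism $\Gs\cong\Gs^D$, i.e.\ a principal polarization. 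Antisymmetry and the relation $\beta^D\widetilde\lambda_0\beta=c\widetilde\lambda$ follow from full faithfulness of $\BC(-)$. The Kottwitz condition follows from the bound.

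The main obstacle is the PEL bookkeeping: showing that pairings on $\Es$ correspond bijectively to polarizations, with matching similitude factors, under the dictionary of Theorem \ref{thm: equivalence for dualizable groups on perfd}. The plan is to first express $\lambda$ as a morphism of modifications $(V_p\Gs,\alpha)\to(V_p\Gs^D,\alpha^D)$, using Proposition \ref{prop: G dualizable iff E(G) vb}, and then compare both sides through the twisted-dual description of Lemma \ref{lemma: dual of vector bundles and shtukas}. Finally, the isomorphisms on both sides match as in the $\GL_n$ case, compatibility with $\pi_{\GM}$ holds by construction via $\Xi(\Gs)$, and we descend from perfectoid test objects to rigid spaces by (\ref{eq: diamond functor ff on good adic spaces}).
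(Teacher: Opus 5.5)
\textbf{A gap in the opening reduction.} Your construction of the map and its inverse is sound: it goes through Theorem \ref{thm: equivalence for dualizable groups on perfd}, Proposition \ref{prop: minuscule sheaves vs minuscule BC spaces} and Lemma \ref{lemma: pairings on vb induced by polarization}, followed by (\ref{eq: diamond functor ff on good adic spaces}). The opening reduction is where it breaks. In the EL case it is harmless, since $K$ stabilizes some $\Os_B$-lattice and all $\Os_B$-lattices in $V$ are $G(\Q_p)$-conjugate. In the PEL case, however, it is false that every compact open $K\subseteq G(\Q_p)$ is conjugate into $\Stab_{G(\Q_p)}(\Lambda)$ for a fixed self-dual $\Lambda$. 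Take $B=\Q_p$, $V=\Q_p^4$ with the standard symplectic form and $\Lambda=\Z_p^4$. Then $\Stab_{G(\Q_p)}(\Lambda)=\mathrm{GSp}_4(\Z_p)$, but the paramodular subgroup of $\mathrm{GSp}_4(\Q_p)$ is a maximal compact open subgroup that is not conjugate to $\mathrm{GSp}_4(\Z_p)$.

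More fundamentally, the theorem is stated without any integral data, as the introduction stresses. So a self-dual $\Os_B$-lattice is not among your hypotheses, and neither is a principal polarization. There is a related problem with the $G(\Q_p)$-compatibility, which is part of the statement. It cannot be read off from the integral description attached to a single lattice, because $g\in G(\Q_p)$ sends level $K$ to level $g^{-1}Kg$, which in general no longer stabilizes $\Lambda$. Your proposal does not address equivariance at all.

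\textbf{How to repair it.} Follow the paper's route and work at infinite level. There $\gamma\colon V\xrightarrow{\cong}V_p\Gs$ is an honest $B$-linear isomorphism (a similitude in the PEL case), and $g$ acts by $\gamma\mapsto\gamma\circ g$. Forgetting the extra structure and using Theorem \ref{main thm: moduli of an pdiv vs local SV}, both $\Ms_{\Gs_0,\iota_0,(\lambda_0),\infty}$ and $\Sht_{G,\mu,b,\infty}$ become subsheaves of $\Ms_{\Gs_0,\infty}\cong\Sht_{\GL_n,\mu,b,\infty}$. It then suffices to show that these two subsheaves coincide. Equivariance is automatic, and level $K$ is recovered by quotienting by $\underline{K}$.

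Your three steps go through in this setting with three adjustments:
\begin{enumerate}
\item In the inverse direction, take $\Lb$ to be any $\Z_p$-lattice in $\V$; the choice is irrelevant because isomorphisms are quasi-isogenies.
\item Transport the $B$-action as an action by quasi-isogenies.
\item Expect a quasi-polarization rather than a principal one.
\end{enumerate}

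\textbf{What your approach adds.} Done this way, your argument is more explicit than the paper's on two points the paper treats briskly. First, the Kottwitz condition is what ensures the modification is bounded by $\mu$ as a cocharacter of $G$, and not merely by its image in $\GL_n$. Second, the identification $\Es^b=\Es(\Gs_0)$ must be checked to respect the $B$-actions and the pairings. For the second point, note that $\mathfrak{G}_0$ lives over $\Os_{\breve{E}}$, not over $W(\cj{\F}_p)$. So rather than base-changing from $W(\cj{\F}_p)$, you should invoke Corollary \ref{cor: E(G) only depends on special fiber} or the rigidity of $\widetilde{\Gs}_0$.
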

\begin{proof}
    It is enough to produce an isomorphism between the spaces at infinite level 
    \[ \Ms_{\Gs_0,\iota_0,(\lambda_0),\infty} \cong \Sht_{G,\mu,b,\infty}\]
that identifies the $G(\Q_p)$-actions. The embedding $i\colon G \hookrightarrow \GL_{\Q_p}(V)\cong \GL_n$ induces a cocharacter $i\mu\colon \G_m \rightarrow \GL_n$ and an element $i(b)\in B(\GL_n)$, that we simply name $\mu$ and $b$, and an embedding 
\[ \Sht_{G,\mu,b,\infty} \hookrightarrow \Sht_{\GL_n,\mu,b,\infty}. \]
Similarly, on the left-hand side, we have an embedding
\[ \Ms_{\Gs_0,\iota_0,(\lambda_0),\infty} \hookrightarrow \Ms_{\Gs_0,\infty},\]
obtained by forgetting the extra structure. It is enough to show that the two infinite level moduli spaces are equal as subsheaves of 
\[ \Ms_{\Gs_0,\infty} \cong \Sht_{\GL_n,\mu,b,\infty},\]
where the above is the isomorphism from Theorem \ref{main thm: moduli of an pdiv vs local SV}. Let 
\[ (\Gs,\iota,(\lambda),\beta,\gamma) \in \Ms_{\Gs_0,\iota_0,(\lambda_0),\infty}(S).\]
Here, $\gamma$ is an honest isomorphism $V \xrightarrow{\cong} V_p\Gs$ compatible with the $B$-actions. In the PEL case, $\gamma$ is required to identify the pairings up to a scalar in $\underline{\Q_p^{\times}}$ after an identification $\Q_p \cong \Q_p(1)$ has been fixed. The corresponding point of $\Sht_{\GL_n,\mu,b,\infty}(S)$ is given by the modification
\[ \alpha\colon V\otimes_{\Q_p}\Os_{X_S}\overset{\gamma }{\cong}V_p\Gs \otimes_{\Q_p} \Os_{X_S} \dashrightarrow \Es(\Gs) \overset{\beta}{\cong} \Es^b.\]
The $B$-action on $\Gs$ and the compatibility conditions on $\beta$ and $\gamma$ allow us to upgrade $\alpha$ to a modification of $B\otimes_{\Q_p} \Os_{X_S}$-modules. In the PEL case, let
\[e\colon \Es^b \times \Es^b\rightarrow \Os_{X_S}(1)\]
be the perfect pairing from Lemma \ref{lemma: pairings on vb induced by polarization} obtained by identifying $\Es^b = \Es(\Gs_0)$. Then $\alpha$ upgrades to a symplectic similitude of $B\otimes_{\Q_p}\Os_{X_S}$-bundles. This shows that $\alpha$ defines an element of $\Sht_{G,\mu,b,\infty}(S)$ and yields one inclusion of subsheaves. The reverse inclusion is showed similarly, using Theorem \ref{thm: equivalence for dualizable groups on perfd} to reconstruct the $B$-actions and quasi-polarizations on $\Gs$ from the extra structure on $V_p\Gs$ and $\Es(\Gs)$. This concludes the proof.
\end{proof}
\section{The moduli stack of analytic $p$-divisible groups}\label{section: moduli stack of an pdiv groups}
In this last section, we define small $v$-stacks of dualizable analytic $p$-divisible groups. We then show that they naturally are the target of the Hodge--Tate period maps on global PEL Shimura varieties.

\subsection{Definitions}
\begin{definition}
\begin{enumerate}
    \item We let $\Ns$ denote the prestack on perfectoid spaces over $\Q_p$ defined by
    \[ S \in \Perf_{\Q_p} \mapsto \{ \, \text{dualizable analytic }p\text{-divisible groups }\Gs \text{ over }S \, \}.\]
    \item For integers $0 \leq d \leq n$, we let $\Ns_{n,d}$ be the subfunctor of $\Ns$ consisting of those groups $\Gs$ of dimension $d$ and height $n$.
    \item For an element $b \in B(\GL_n, \mu_d^{-1})$, we let $\Ns_{n,d}^b \sub \Ns_{n,d}$ be the subfunctor given by those groups $\Gs$ such that, for any geometric point $\cj{s}\colon \Spa(C,C^+) \rightarrow S$, there exists an isomorphism $\Es(\Gs_{\cj{s}}) \cong \Es^b$ of vector bundle on the Fargues--Fontaine curve $X_{(C,C^+)}$.
\end{enumerate}
\end{definition}

\begin{proposition}\label{Prop: properties of moduli of an pdiv gps}
    \begin{enumerate}
        \item The prestacks $\Ns$, $\Ns_{d,n}$ and $\Ns_{d,n}^b$ are small $v$-stack. We have an open and closed decomposition
        \begin{align} 
        \Ns = \coprod_{0 \leq d \leq n} \Ns_{n,d},
        \end{align}
        and locally closed decompositions
        \begin{align}\label{eq: Newton decomposition}
        \Ns_{n,d} = \coprod_{b \in B(\GL_n,\mu_d^{-1})} \Ns_{n,d}^b.
        \end{align}
        \item We have a canonical isomorphism
        \begin{align}\label{eq: n,d moduli as quotient of grassmanian}
        \Ns_{n,d} = \Big[ \Gr(n,d)^{\diamondsuit}/\GL_n(\Z_p) \Big], 
        \end{align}
        where $\Gr(n,d)$ denotes the Grassmanian of $d$-dimensional subspaces of the $n$-dimensional affine space, considered as an adic space over $\Q_p$. In particular, the stack $\Ns_{n,d}$ (resp. $\Ns$) is proper (resp. partially proper) over $\Spd(\Q_p)$.
        \item The canonical map of $v$-stacks over $\Q_p$
        \begin{align*}
            \Ns_{n,d}& \rightarrow \Bun_{\GL_n},\\
            \Gs & \mapsto \Es(\Gs)
        \end{align*}
        comes from the Beauville--Laszlo morphism \cite[Prop. III.3.1]{fargues2024geometrization}
    \[ \BL\colon \Gr(n,d) \rightarrow \Bun_{\GL_n}\]
    under the isomorphism (\ref{eq: n,d moduli as quotient of grassmanian}). In particular, the locally closed decomposition (\ref{eq: Newton decomposition}) is induced from the Newton decomposition on $\Gr(n,d)$ (cf. \cite[Thm. 1.11]{ScholzeCariani2017}).
    \end{enumerate}
\end{proposition}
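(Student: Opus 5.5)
The plan is to deduce everything from Theorem \ref{thm: equivalence for dualizable groups on perfd}, which describes $\Ns(S)$ for perfectoid $S$ as pairs $(\Lb,E)$ with $\Lb$ a $\Z_p$-local system and $E\hookrightarrow \Lb(-1)\otimes\Os_S$ a locally direct summand. For (1), the $v$-stack property follows from descent of $\Z_p$-local systems and of vector bundles along $v$-covers of perfectoid spaces (\ref{eq: etale and v vb equivalent for perfectoids}). The condition of being a locally direct summand is $v$-local, as already used in Remark \ref{remark: dualizability does not descend well}. Height and dimension are locally constant functions on $|S|$, which gives the open and closed decomposition. Smallness will follow from (2), since a quotient of a diamond by a profinite group is small.

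For (2), I would construct a map $\Gr(n,d)^{\diamondsuit}\to \Ns_{n,d}$ by sending an $S$-point $E\subset \Os_S^n$ to the group attached to $(\Z_p^n, E, E\hookrightarrow \Z_p^n(-1)\otimes \Os_S)$. To avoid Tate twists here, one twists $E$, or equivalently uses the $\GL_n(\Z_p)$-equivariant identification after trivializing $\Z_p(1)$; it is cleaner to take $\Gr$ to parametrize subspaces of $\Lb(-1)\otimes\Os$ with $\Lb=\Z_p^n$. This map is $\GL_n(\Z_p)$-equivariant. The $\GL_n(\Z_p)$-torsor of trivializations $\Z_p^n\cong T_p\Gs$ is representable by a pro-étale cover. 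Pulling it back identifies $\Ns_{n,d}\times_{[\ast/\GL_n(\Z_p)]}\ast$ with $\Gr(n,d)^\diamondsuit$, which gives the quotient presentation. Properness of $\Ns_{n,d}$ over $\Spd\Q_p$ then follows because $\Gr(n,d)$ is proper and $[\ast/\underline{\GL_n(\Z_p)}]$ is proper, the group being profinite. Partial properness of $\Ns$ follows from the disjoint union decomposition.

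For (3), I would compare the two functors on a $\Gr$-point. By Theorem \ref{thm: vb on FF associated to an pdiv groups}, $\Es(\Gs)$ is the minuscule modification of $\Z_p^n\otimes\Os_{X_S}$ at $S$ whose $\B_{\dR}^+$-lattice corresponds to $E$ via Lemma \ref{lemma: minuscule lattices vs flags}. This is exactly the Beauville--Laszlo construction (\ref{eq: from modif to lattices}) under the embedding of $\Gr(n,d)$ into the $\B_{\dR}^+$-affine Grassmannian as the $\mu_d$-Schubert cell. One needs to match conventions, namely $\mu_d$ versus $\mu_d^{-1}$ and the twist. The map is constant on $\GL_n(\Z_p)$-orbits, so it descends to the quotient.

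The decomposition (\ref{eq: Newton decomposition}) is then the pullback of the locally closed Harder--Narasimhan/Newton stratification of $\Bun_{\GL_n}$ via (\ref{eq: from Kotwitz set to bundles}). The strata that occur are those $b$ in $B(\GL_n,\mu_d^{-1})$, since the modification has type $\mu_d$; this is the standard image result for Beauville--Laszlo on the Schubert cell. Local closedness is inherited from $|\Bun_{\GL_n}|$. I expect the main obstacle to be the bookkeeping of twists and signs in (3), and checking that the Newton strata are indexed precisely by $B(\GL_n,\mu_d^{-1})$, for which I would invoke \cite[Thm. 1.11]{ScholzeCariani2017} and the admissibility result of \cite{Viehmann2024}.
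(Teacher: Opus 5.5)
Your proposal is correct. For (2) and (3) it is the paper's argument. The presentation $\Ns_{n,d}\cong[\Gr(n,d)^{\diamondsuit}/\GL_n(\Z_p)]$ comes from Theorem~\ref{thm: extending Fargues' equivalence of categories}, together with the fact that over perfectoid bases dualizability means $f_{\Gs}$ is a locally direct summand. Part (3) comes from the explicit formula (\ref{eq: defining diagram for E(G)}), which identifies $\Es(\Gs)$ with the Beauville--Laszlo modification attached to the minuscule lattice; the Newton strata on $\Gr(n,d)$ are by definition pulled back from $\Bun_{\GL_n}$.

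The genuine difference is how you obtain smallness in (1).

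The paper proves smallness directly, before (2). Let $S=\varprojlim_i S_i$ be an $\omega_1$-cofiltered limit of affinoid perfectoid spaces. The paper shows that analytic $p$-divisible groups over $S$ form the $2$-colimit of those over the $S_i$. It does this using $S_{\et,\qcqs}=\tworlim_i S_{i,\et,\qcqs}$, both for étale vector bundles and for the $\Z/p^m$-local systems making up $T_p\Gs$. It also checks that dualizability over $S$ already holds over some $S_j$, and then argues as in \cite[Prop. III.1.3]{fargues2024geometrization}.

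You instead observe that the presentation in (2) needs only the $v$-stack property, and you read off smallness from the chart. The map $\Gr(n,d)^{\diamondsuit}\to\Ns_{n,d}$ is surjective, and $\Gr(n,d)^{\diamondsuit}\times_{\Ns_{n,d}}\Gr(n,d)^{\diamondsuit}\cong\underline{\GL_n(\Z_p)}\times\Gr(n,d)^{\diamondsuit}$ is a small $v$-sheaf.

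Your route is shorter and avoids the approximation argument. The paper's route yields an intrinsic limit-preservation property of the groupoid of analytic $p$-divisible groups, which holds without dualizability and independently of any choice of chart.

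You are also more explicit than the paper on two points. First, the Tate twist in (2): twisting $E$, or trivializing $T_p\Gs(-1)$ instead of $T_p\Gs$, makes the identification with $\Gr(n,d)$ canonical over $\Spd(\Q_p)$. Second, properness: you deduce it from properness of $\Gr(n,d)$ and of the classifying stack of a profinite group.
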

\begin{proof}
    By Remark \ref{remark: dualizability does not descend well}, $\Ns$ and $\Ns_{n,d}$ are $v$-stacks on $\Perf_{\Q_p}$, and it remains to see that they are small. Given any $\omega_1$-cofiltered inverse limit $S= \varprojlim_i S_i$ of affinoid perfectoid spaces $S_i$, we observe that we have
\[\{ \,\text{analytic }p\text{-divisible groups } \Gs \rightarrow S \, \} = \tworlim_i \{ \,\text{analytic }p\text{-divisible groups } \Gs \rightarrow S_i \, \}.\]
Indeed, by Theorem \ref{thm: extending Fargues' equivalence of categories}, it is enough to establish the corresponding statements for the stacks of étale vector bundles and $\Z_p$-local systems respectively. The case of étale vector bundles follows from the fact that by \cite[Prop. 11.23]{scholze2022etale}
\[ S_{\et, \qcqs} = \tworlim S_{i,\et,\qcqs}.\]
The case of local systems follows similarly from this equivalence of sites, by writing a $\Z_p$-local system $\Lb = \varprojlim_m \Lb/p^m$ as an inverse limit of étale $\Z/p^m\Z$-local systems and using that the inverse system $(S_i)_i$ is $\omega_1$-cofiltered. Moreover, if an analytic $p$-divisible group $\Gs_i \rightarrow S_i$ becomes dualizable after base-change to $S$, then it becomes so over $S_j$ for $j \gg i$. Indeed, over perfectoid spaces, dualizability is equivalent to $f_{\Gs}$ being a locally direct summand, whence it is clear. By arguing as in \cite[Prop. III.1.3]{fargues2024geometrization}, we conclude that $\Ns$ and $\Ns_{n,d}$ are small $v$-stacks.

By the semicontinuity of the Newton polygon \cite[Thm. 22.2.1]{SW20}, the $\Ns_{n,d}^b$ are locally closed in $\Ns_{n,d}$. The second point readily follows Theorem \ref{thm: extending Fargues' equivalence of categories} and the definition of dualizability. Finally, given a dualizable analytic $p$-divisible groups $\Gs \rightarrow S$, it follows from the explicit formula (\ref{eq: defining diagram for E(G)}) that the map $T_p\Gs \otimes \Os_{X_S} \dashrightarrow \Es(\Gs)$ is the unique modification of vector bundles on the relative Fargues--Fontaine curve spreading out the inclusion $T_p\Gs \otimes \B_{\dR}^+ \rightarrow \Xi$, where $\Xi$ is the $\B_{\dR}^+$-local system corresponding to the Hodge--Tate filtration on $T_p\Gs(-1) \otimes \Os_v$ under the Bialynicki--Birula equivalence (\ref{eq: from modif to lattices}). Since by construction \cite[Def. 3.5.6]{ScholzeCariani2017} the Newton stratification on $\Gr(n,d)$ is defined to be the pullback of the stratification $\Bun_{\GL_n} = \coprod_{ b \in B(\GL_n)} \{\Es^b\}$, the third point follows.
\end{proof}

\begin{example}
    Let us make Proposition \ref{Prop: properties of moduli of an pdiv gps} more explicit by considering the case $d=1$ and $b\in B(\GL_n,\mu_1^{-1})$ the basic element, corresponding to the isocrystal of a connected $p$-divisible groups over $\cj{\F}_p$ of height $n$. Let $\Gs \in \Ns_{n,1}(S)$, that is $\Gs$ is a dualizable analytic $p$-divisible group of dimension $1$ and height $n$. Then one can show that $\Gs$ is in $\Ns_{n,1}^b(S)$ if and only if it is connected, in the sense that, after pullback along any point $\Spa(C,C^+) \rightarrow S$, the group $\Gs$ is isomorphic to an open unit ball. On the other hand, the open Newton strata corresponding to $b$ on the Grassmanian is Drinfeld's upper half space
    \[ \Omega_{\Q_p}^{n} = \Pro_{\Q_p}^{n-1}\backslash \bigcup_{H} H, \]
    obtained as the complement in $\Pro_{\Q_p}^{n-1}$ of the union of all $\Q_p$-rational hyperplanes. Then Proposition \ref{Prop: properties of moduli of an pdiv gps} yields an isomorphism of small $v$-stacks
    \[ \Ns_{n,1}^b = \Big[ \Omega_{\Q_p}^{n}/\GL_n(\Z_p)\Big].\] 
    % In that case, there is exactly two elements $b,b'\in B(\GL_n,\mu_1^{-1})$, corresponding to isocrystal to $p$-divisible groups $G, G'$ over $\cj{\F_p}$ of height $n$, where $G$ is connected and $G' = \Q_p/\Z_p \oplus H$ for a connected $p$-divisible group $H$ of height $n-1$. For example, if $n=2$, this is the dichotomy of supersingular elliptic curve and 
\end{example}

We may relate our stacks with the generic fiber of the stack of $p$-divisible groups. For integers $0 \leq d \leq n$, we let $\BT$ (resp. $\BT_{n,d}$) denote the algebraic stack of $p$-divisible groups (resp. of dimension $d$ and height $n$). We let $\widehat{\BT}$, resp. $\widehat{\BT}_{n,d}$ denote their $p$-adic completion, obtained by restricting their domains to the category $\Nil_p$ of discrete rings $R$ such that $p$ is nilpotent in $R$. We view them as formal stacks over $\Spf(\Z_p)$.

\begin{definition}[\cite{Fargues2024}]
    The overconvergent, diamantine generic fiber $\widehat{\BT}_{\eta}^{\diamondsuit, \dagger}$ is the $v$-stack on $\Perf_{\Q_p}$ obtained as the stackification of
    \[ (R,R^+) \mapsto \widehat{\BT}(R^{\circ}).\]
    We define $\widehat{\BT}_{n,d,\eta}^{\diamondsuit, \dagger}$ in an analogous manner.
\end{definition}

The following is then an immediate consequence of Proposition \ref{prop: dualizable groups locally have good reduction on circ}.
\begin{proposition}\label{prop: anBT vs generic fiber of BT}
    The adic generic fiber functor defines isomorphisms of $v$-stacks
    \begin{align*} 
    \widehat{\BT}_{\eta}^{\diamondsuit, \dagger} \cong \Ns, \quad \widehat{\BT}_{n,d,\eta}^{\diamondsuit, \dagger} \cong \Ns_{n,d}.
    \end{align*}
\end{proposition}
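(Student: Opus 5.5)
The plan is to construct a morphism of prestacks $\widehat{\BT}^{\diamondsuit,\dagger}_{\eta,\mathrm{pre}} \to \Ns$ before stackification, extend it to the stackification, and then check it is fully faithful and essentially surjective $v$-locally.

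\emph{Step 1 (the functor).} For $(R,R^+)$ affinoid perfectoid over $\Q_p$ and $\mathfrak{G}\in\widehat{\BT}(R^{\circ})$, take the adic generic fiber $\mathfrak{G}_\eta$ over $\Spa(R,R^{\circ})$. By Proposition \ref{prop: groups of good reduction are dualizable}, this is a dualizable analytic $p$-divisible group. We then extend it to $\Spa(R,R^+)$ by overconvergence. Concretely, via Theorem \ref{thm: extending Fargues' equivalence of categories}, the invariants $(T_p,\Lie,f)$ on $\Spa(R,R^\circ)$ extend uniquely to $\Spa(R,R^+)$: étale vector bundles and local systems on the perfectoid space agree on these two loci, as in Corollary \ref{cor: stack of pdiv gps is overconvergent}, applied $v$-locally. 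This is functorial and compatible with pullback. Since $\Ns$ is a $v$-stack (Proposition \ref{Prop: properties of moduli of an pdiv gps}), the functor factors through the stackification. Height and dimension are preserved, so the construction restricts to the $(n,d)$-components.

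\emph{Step 2 (essential surjectivity).} This is immediate from Proposition \ref{prop: dualizable groups locally have good reduction on circ}(1): $v$-locally on $S$, any dualizable $\Gs$ comes from a $p$-divisible group over $A^{\circ}$. Since the source is a stackification, $v$-local preimages suffice.

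\emph{Step 3 (full faithfulness).} This is the main point. We must compare $v$-sheaves of isomorphisms. Faithfulness on each test object follows from Proposition \ref{prop: dualizable groups locally have good reduction on circ}(3), applied over $A^{\circ}$. For fullness, Proposition \ref{prop: dualizable groups locally have good reduction on circ}(2) says that a morphism $\Gs\to\Hs$ of generic fibers acquires good reduction over $A^{\circ}$ $v$-locally. Combined with faithfulness, these local integral morphisms glue in the stackification. The subtle point is that the integral model provided $v$-locally for the morphism might a priori be relative to different integral models of $\Gs,\Hs$ than the given ones. To handle this, I would reduce as in the proof of Proposition \ref{prop: dualizable groups locally have good reduction on circ} to a product of points $\prod C_i^+$, using \cite[Cor. 3.18]{zhang2023peltypeigusastackpadic} and Proposition \ref{prop: dualiazable pdiv gps over product of points}. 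This brings us to a single $\Spa(C,\Os_C)$, where full faithfulness of the generic fiber functor on $p$-divisible groups over $\Os_C$ (Theorem \ref{thm: Scholze Weinstein's result}(2)) gives the uniqueness needed. Overconvergence (replacing $C^+$ by $\Os_C$) is harmless because the source uses $R^{\circ}$ and the target is overconvergent by Corollary \ref{cor: stack of pdiv gps is overconvergent}.

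I expect the main obstacle to be this bookkeeping in Step 3: making sure that morphisms in the stackified source, which are only $v$-locally integral, correspond exactly to morphisms of analytic groups, including the compatibility of products of points with $R^{\circ}=\prod\Os_{C_i}$.
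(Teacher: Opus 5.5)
Your proposal is correct and is essentially the paper's argument: the paper states the result as an immediate consequence of Proposition~\ref{prop: dualizable groups locally have good reduction on circ}, and your Steps~2 and~3 unpack its three parts (local good reduction over $A^{\circ}$ of objects and of morphisms, and faithfulness), together with that proposition's own reduction to products of points $\prod_i \Os_{C_i}$ and Scholze--Weinstein full faithfulness (Theorem~\ref{thm: Scholze Weinstein's result}). Your extra care that a morphism extends with respect to the \emph{given} integral models is exactly what this reduction provides, so nothing is missing.
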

\begin{remark}
    Combined with Proposition \ref{Prop: properties of moduli of an pdiv gps}(2), this yields an isomorphism of $v$-stacks
    \begin{align} 
    \widehat{\BT}_{n,d,\eta}^{\diamondsuit, \dagger} \cong \Big[ \Gr(n,d)^{\diamondsuit}/\GL_n(\Z_p) \Big].
    \end{align}
    This was already proven by Fargues \cite[Prop. 3.1, Rem. 3.2]{Fargues2024}.
\end{remark} 

We now consider variants with additional structures. We fix a local EL data $(B,V,[\mu])$ or PEL data $(B,(\cdot)^{*},V,(\cdot,\cdot),[\mu])$ as in Subsection \ref{The EL and PEL case}. In particular, we have the corresponding algebraic group $G$ over $\Q_p$. We continue to denote by $E$ the field of definition of the conjugacy class $[\mu]$, a finite extension of $\Q_p$. Let $K\sub G(\Q_p)$ be a compact open subgroup.
\begin{definition}
\begin{enumerate}
    \item We define a pre-stack $\Ns_{G,\mu,K}$ on $\Perf_{E}$, sending a perfectoid space $S$ to the groupoid of analytic $p$-divisible groups with extra structure $(\Gs,\iota,(\lambda),\cj{\gamma})$ over $S$. These are the same as in Definition \ref{def: moduli of an pdiv gps in EL and PEL case} except that we do not impose the isomorphism $\beta$. Concretely:
    \begin{itemize}
        \item $\Gs$ is a dualizable analytic $p$-divisible group over $S$,
        \item $\iota\colon B \rightarrow \End_S^0(\Gs)$ is a $\Q_p$-algebra homomorphism, satisfying the Kottwitz condition,         
         \item (in the PEL case) $\lambda \colon \Gs \rightarrow \Gs^D$ is a quasi-polarization, such that for $d\in B$, we have $\lambda^{-1} \circ \iota(d)^D \circ \lambda = \iota(d^*)$, and
         \item $\cj{\gamma}$ is a $K$-orbit of isomorphisms of $B$-local systems $V \xrightarrow{ \cong} V_p\Gs$ which are required to be symplectic similitudes in the PEL case.
    \end{itemize} 
    The isomorphisms $(\Gs,\iota,(\lambda),\cj{\gamma}) \rightarrow (\Gs',\iota',(\lambda'),\cj{\gamma}')$ in this groupoid are $B$-linear quasi-isogenies $\varphi \colon \Gs \rightarrow \Gs'$ such that $V_p\varphi \circ \cj{\gamma} = \cj{\gamma}'$. In the PEL case, we ask that $\varphi^D \circ \lambda' \circ \varphi = c\lambda$, for some $c\in \underline{\Q_p^{\times}}(S)$. 
    \item For $b\in B(G,\mu^{-1})$, we define $\Ns_{G,\mu,K}^b\sub \Ns_{G,\mu,K}$ to be the subfunctor determined by those tuples $(\Gs,\iota, (\lambda),\cj{\gamma})$ such that, pointwise on $S$, there exists an isomorphism of vector bundles $\Es(\Gs) \cong \Es^b$ on the Fargues--Fontaine curve, respecting the extra structure.
\end{enumerate}
\end{definition}
Again, the functors $\Ns_{G,\mu,K}$ and $\Ns_{G,\mu,K}^b$ are small $v$-stacks and there is a locally closed stratification
\begin{align} \Ns_{G,\mu,K} = \coprod_{b \in B(G,\mu^{-1})} \Ns_{G,\mu,K}^b.\end{align}
By Theorem \ref{main thm: EL and PEL moduli}, there is a surjective map of $v$-stacks $\Ms_{G,\mu,b,K} \rightarrow \Ns_{G,\mu,K}^b$ from the corresponding local Shimura variety, for each $b$. Here we use that, by \cite[Lemma 12.5]{scholze2022etale}, the surjectivity can be checked pointwise. This surjection yields an isomorphism of small $v$-stacks
\begin{align} 
\Ns_{G,\mu,K}^b = \Big[\Ms_{G,\mu,b,K}/\widetilde{G}_b\Big],
\end{align}
where $\widetilde{G}_b$ is the $v$-sheaf of groups
\[ S \in \Perf_{\Q_p} \mapsto \Aut_{X_S}(\Es^b). \]
If $b$ is basic, the corresponding stratum $\Ns_{G,\mu,K}^b$ is open in $\Ns_{G,\mu,K}$ and $\widetilde{G}_b= \underline{G_b(\Q_p)}$ is a locally profinite sheaf, where $G_b$ is the corresponding inner form of $G$.

On the other hand, consider the flag variety $\Fl_{\HT}$ over $\Spa(E)$, whose $S$-points parametrize $B\otimes_{\Q_p}\Os_S$-equivariant (isotropic) quotients 
\[  V\otimes_{\Q_p}\Os_S \twoheadrightarrow \omega \]
such that, locally on $S$, $\omega$ is isomorphic to $V_0\otimes_E \Os_S$. By Theorem \ref{thm: extending Fargues' equivalence of categories}, such a quotient uniquely determines a dualizable analytic $p$-divisible group $\Gs$ with extra structure, together with an identification $V_p\Gs \cong V$, such that the above quotient map identifies with the surjection in the sequence
% https://q.uiver.app/#q=WzAsNSxbMCwwLCIwIl0sWzEsMCwiXFxMaWUoXFxHcykoMSkiXSxbMiwwLCJWX3BcXEdzXFxvdGltZXNfe1xcUV9wfSBcXE9zX3tTfSJdLFszLDAsIlxcb21lZ2Ffe1xcR3NeRH0iXSxbNCwwLCIwLiJdLFswLDFdLFsxLDJdLFsyLDNdLFszLDRdXQ==
\[\begin{tikzcd}
	0 & {\Lie(\Gs)(1)} & {V_p\Gs\otimes_{\Q_p} \Os_{S}} & {\omega_{\Gs^D}} & {0.}
	\arrow[from=1-1, to=1-2]
	\arrow[from=1-2, to=1-3]
	\arrow[from=1-3, to=1-4]
	\arrow[from=1-4, to=1-5]
\end{tikzcd}\]
Therefore, we obtain a map $\Fl_{\HT} \rightarrow \Ns_{G,\mu,K},$ which induces an isomorphism
\begin{align}\label{eq: stack of an pdiv as quotient of flag var} \Ns_{G,\mu,K} = \Big[ \Fl_{\HT}/K\Big].\end{align}
Under this isomorphism, the stratification on $\Ns_{G,\mu,K}$ is again induced by the Newton stratification on $\Fl_{\HT}$, cf. \cite[Thm. 1.11]{ScholzeCariani2017}.

\subsection{The Hodge--Tate period map}
We now present our reinterpretation of the Hodge--Tate period map on global Shimura varieties of PEL type. For this, we shift our notations. Let $(B,*,V,(\cdot,\cdot),h)$ be a global PEL datum \cite[§5]{kottwitz1992}. This consists in 
\begin{itemize}
    \item a semisimple $\Q$-algebra $B$,
    \item a positive anti-involution $*$ on $B$,
    \item a finite left $B$-module $V$,
    \item a non-degenerate alternating pairing $(\cdot,\cdot)$ on $V$ such that $(dv,v') = (v,d^*v')$ for $d\in B$, $v,v'\in V$. 
    \end{itemize}
    We let $G$ be the algebraic group over $\Q$ of symplectic similitudes of $V$
    \begin{itemize}
    \item a group homomorphism $h\colon \Sph \rightarrow G_{\R}$, where $\Sph$ is Deligne's torus, such that $h(z)^* = h(\cj{z})$, the symmetric real-valued form $(v,h(i)v')$ on $V_{\R}$ is positive-definite and the induced Hodge structure on $V$ is of type $(1,0), \, (0,1)$.
\end{itemize}
There is a corresponding Shimura datum $(G,X)$, a number field $E$, the reflex field, and a Shimura variety $(S_K)_{K \sub G(\A_f)}$, where $K$ ranges over compact open subgroups of the adelic points $G(\A_f)$ of $G$. It has the following moduli interpretation: For a scheme $T$ over $E$, $S_K(T)$ parametrizes the tuples $(\As,\iota,\lambda,\gamma)$, where
\begin{itemize}
    \item $\As \rightarrow T$ is an abelian scheme of dimension $g = \tfrac{1}{2}\dim_{\Q} V$,
    \item $\iota\colon B \rightarrow \End^0(\As) = \End(\As) \otimes \Q$ is a ring homomorphism, satisfying the Kottwitz condition \cite[§5]{kottwitz1992},
    \item $\lambda\colon \As \rightarrow \breve{\As}$ is a quasi-polarization, compatible with the involutions, and
    \item $\gamma$ is a $K$-orbit of symplectic similitude $V(\A_f) \xrightarrow{\cong} V\As$, where $V\As$ is the adelic Tate module of $\As$.
\end{itemize}
The moduli problem $S_K$ is representable by a Deligne--Mumford stack over $\Spec(E)$. If $K$ is small enough, it is even representable by a smooth quasi-projective $E$-scheme.

We now transport the situation back to $p$-adic geometry. We fix a prime ideal $\ip \sub \Os_E$ lying over $p$ and we let $E_{\ip}$ denote the completion of $E$ at $\ip$. We let 
\[ \Ss_K = (S_K \otimes_E E_{\ip})^{\an}\]
denote the analytification of the Shimura variety, a smooth rigid space over $E_{\ip}$. Its functor of points is the sheafification for the analytic topology of the presheaf
\[ \Spa(R,R^+) \in \Adic_{E_{\ip}} \mapsto S_K(R).\]
We will restrict our attention to level subgroups of the form $K = K_p K^p$, for subgroups $K_p \sub G(\Q_p)$ and $K^p\sub G(\A_f^p)$ with $K^p$ fixed. In that case, we can consider the Shimura variety at infinite level
\[ \Ss_{K^p} = \varprojlim_{K_p \sub G(\Q_p)} \Ss_{K_pK^p},\]
where this inverse limit is understood as a diamond. It is shown in \cite{Scholze2015torsion} that $\Ss_{K^p}$ is representable by a perfectoid space.

The global PEL datum and the choice of $\ip$ induce a local PEL datum $(B_p,(\cdot)^{*},V_p,(\cdot,\cdot),\mu)$. In particular, we obtain the algebraic group $G_p = G\otimes_{\Q} \Q_p$ of symplectic similitudes of $V_p$. We may consider again the associated flag variety $\Fl_{\HT}$ over $E_{\ip}$, whose $S$-rational points parametrize certain quotients of $V_p \otimes_{\Q_p} \Os_S$. In this situation, we have the Hodge--Tate period map
\begin{align} \pi_{\HT}\colon \Ss_{K^p} \rightarrow \Fl_{\HT},\end{align}
whose definition was first given in \cite{Scholze2015torsion} for the Siegel case and extended in  \cite{ScholzeCariani2017} to Shimura varieties of Hodge type. This is a morphism of diamonds which is defined as follows: Let $\As \rightarrow S_{K_pK^p}$ be the universal abelian scheme and denote by the same symbol its analytification over $\Ss_{K_pK^p}$ and its pullback to the infinite level variety $\Ss_{K^p}$. Then the map $\pi_{\HT}$ is the point in $\Fl_{\HT}(\Ss_{K^p})$ corresponding to the covariant Hodge--Tate sequence of $\As$
% https://q.uiver.app/#q=WzAsNSxbMCwwLCIwIl0sWzEsMCwiXFxMaWUoXFxBcykoMSkiXSxbMiwwLCJWX3BcXEFzXFxvdGltZXNfe1xcUV9wfSBcXE9zX3tcXFNzX3tLXnB9fSJdLFszLDAsIlxcb21lZ2Ffe1xcYnJldmV7QX19Il0sWzQsMCwiMC4iXSxbMCwxXSxbMSwyXSxbMiwzXSxbMyw0XV0=
\[\begin{tikzcd}
	0 & {\Lie(\As)(1)} & {V_p\As\otimes_{\Q_p} \Os_{\Ss_{K^p}}} & {\omega_{\breve{A}}} & {0}
	\arrow[from=1-1, to=1-2]
	\arrow[from=1-2, to=1-3]
	\arrow[from=1-3, to=1-4]
	\arrow[from=1-4, to=1-5]
\end{tikzcd}\]
under the identification $V_p\As \overset{\gamma}{\cong} V_p$ that arises on the infinite level Shimura variety $\Ss_{K^p}$.

The map $\pi_{\HT}$ is $G(\Q_p)$-equivariant, where $G(\Q_p)$ acts on the flag variety through its natural action on $V_p \otimes_{\Q_p} E_{\ip}$. By descent, we obtain a morphism of small $v$-stacks
\begin{align}\label{eq: descended HT period map} \pi_{\HT,K_p}\colon \Ss_{K_pK^p} \rightarrow \Big[ \Fl_{\HT}/K_p \Big].  \end{align}

Our result is then the following.
 \begin{thm}\label{thm: HT map reinterpreted}
     Under the isomorphism (\ref{eq: stack of an pdiv as quotient of flag var}), the Hodge--Tate period map $\pi_{\HT,K_p}$ sends an abelian variety $\As$ with extra structure to the topologically $p$-torsion subgroup of its analytification $\As^{\an}\langle p^{\infty} \rangle \sub \As^{\an}$.
 \end{thm}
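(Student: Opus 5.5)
The map $\pi_{\HT,K_p}$ is obtained by descent from a $G(\Q_p)$-equivariant map on the infinite level space. I will therefore first compare the two maps at infinite level, and then descend. Since both sides are small $v$-stacks, it suffices to work with perfectoid test objects.

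Concretely, fix $T\in\Perf_{E_{\ip}}$ with a map $T\to\Ss_{K_pK^p}$. After passing to a $v$-cover, this lifts to $T\to\Ss_{K^p}$, giving a tuple $(\As,\iota,\lambda,\gamma)$ over $T$. Here $\gamma\colon V_p\xrightarrow{\cong}V_p\As$ is an honest trivialization on the $p$-component.

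On the other side, consider the analytification $\As^{\an}\to T$. It is a relative abeloid variety, so by Corollary \ref{cor: HT sequences for abeloids} the group $\As^{\an}\langle p^{\infty}\rangle$ is a dualizable analytic $p$-divisible group. Its associated map $f$ is $(\HT_{\As/T})^{\vee}(-1)$, and the cokernel of $f(1)$ is $\omega_{\breve{\As}}$. This uses Theorem \ref{thm: two Hodge-Tate maps agree} and the identification $(R^1\pi_*\Os_{\As})^{\vee}=\omega_{\breve{\As}}$ of Lie algebras of the dual.

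The extra structure transfers as follows.
\begin{itemize}
\item The $B$-action $\iota$ induces an action on $\As^{\an}\langle p^{\infty}\rangle$ by functoriality of the topologically $p$-torsion subsheaf; an isogeny maps $\langle p^\infty\rangle$ to $\langle p^\infty\rangle$.
\item The Kottwitz condition is inherited because $\Lie(\As^{\an}\langle p^\infty\rangle)=\Lie(\As)$.
\item The quasi-polarization $\lambda$ induces $\As^{\an}\langle p^\infty\rangle\to\breve{\As}^{\an}\langle p^\infty\rangle\cong(\As^{\an}\langle p^\infty\rangle)^D$, using part (3) of Corollary \ref{cor: HT sequences for abeloids}.
\item The $K_p$-orbit of $\gamma$ gives the level structure on $V_p$ of this group, since $T_p\As^{\an}\langle p^\infty\rangle=T_p\As$.
\end{itemize}
This defines a morphism $\Ss_{K_pK^p}\to\Ns_{G_p,\mu,K_p}$, and I claim it is the map in the statement.

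Under the isomorphism (\ref{eq: stack of an pdiv as quotient of flag var}), the object $(\As^{\an}\langle p^\infty\rangle,\iota,\lambda,\gamma)$ at infinite level corresponds to the quotient $V_p\otimes\Os_T\overset{\gamma}{\cong} V_p\As\otimes\Os_T\twoheadrightarrow \omega_{(\As^{\an}\langle p^\infty\rangle)^D}$. This is the cokernel of $f(1)$, which by the above is the covariant Hodge--Tate sequence $\Lie(\As)(1)\hookrightarrow V_p\As\otimes\Os_T\twoheadrightarrow\omega_{\breve{\As}}$. That is exactly $\pi_{\HT}$. By Theorem \ref{thm: extending Fargues' equivalence of categories}, the group is determined up to unique isomorphism by this tuple, so the two infinite-level maps agree.

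Both constructions are $G(\Q_p)$-equivariant: changing $\gamma$ by $g$ acts the same way on both sides, and the identification is natural in $T$. Hence the equality descends along $\Ss_{K^p}\to\Ss_{K_pK^p}$ to the stated equality of morphisms of $v$-stacks.

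The main obstacle is checking that the extra structures match. In particular, the $B$-action and the polarization on $\As^{\an}\langle p^\infty\rangle$ must correspond under Theorem \ref{thm: extending Fargues' equivalence of categories} to the structures used to define $\Fl_{\HT}$ in (\ref{eq: stack of an pdiv as quotient of flag var}). The hardest piece is the compatibility of the Cartier duality $(\As\langle p^\infty\rangle)^D\cong\breve{\As}\langle p^\infty\rangle$ with the dual isogeny $\lambda^D$ and the Weil pairing on $V_p\As$. I would first verify this $v$-locally on products of points, using Proposition \ref{prop: dualiazable pdiv gps over product of points}. That reduces it to $(C,\Os_C)$, where Raynaud uniformization and the good reduction case of Proposition \ref{prop: generic fibers of log p-divisible groups} apply, as in the proof of Theorem \ref{thm: two Hodge-Tate maps agree}. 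The remaining checks are mechanical.
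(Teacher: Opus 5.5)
Your proposal is correct and is essentially the paper's argument: Theorem \ref{thm: two Hodge-Tate maps agree} identifies the tuple attached to $\As^{\an}\langle p^{\infty}\rangle$ with the covariant Hodge--Tate sequence, and Corollary \ref{cor: HT sequences for abeloids}(3) turns $\lambda$ into a map $\As^{\an}\langle p^{\infty}\rangle\to(\As^{\an}\langle p^{\infty}\rangle)^D$. The step you flag as hardest does not need a reduction to points or Raynaud uniformization. A morphism of dualizable groups is determined by its effect on Tate modules, so the paper simply applies the criterion stated after Definition \ref{def: polarization}: a perfect alternating pairing on $V_p\Gs$ for which $\Lie(\Gs)(1)$ is its own orthogonal. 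That criterion holds here because the $\lambda$-Weil pairing on $V_p\As$ preserves the Hodge--Tate sequence by functoriality.
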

 \begin{proof}
     Let $\As \rightarrow \Ss_{K_pK^p}$ denote the analytification of the universal abelian variety. By Theorem \ref{thm: two Hodge-Tate maps agree}, the topologically $p$-torsion subgroup $\As\langle p^{\infty} \rangle$ corresponds under the equivalence of Theorem \ref{thm: extending Fargues' equivalence of categories} to the tuple
     \[ (T_p\As,\Lie(\As), \Lie(\As) \hookrightarrow V_p\As(-1) \otimes_{\Q_p} \Os_{S_v}), \]
     where the above map is the inclusion in the covariant Hodge--Tate sequence of $\As$. Moreover, let $\lambda\colon \As \rightarrow \check{\As}$ be the universal quasi-polarization. Then the resulting symplectic pairing on $V_p\As$ preserves the Hodge--Tate sequence, by functoriality of the latter. Hence, the resulting map $\lambda\langle p^{\infty} \rangle \colon \As\langle p^{\infty} \rangle \rightarrow \check{\As}\langle p^{\infty} \rangle \cong \As\langle p^{\infty} \rangle^D$, where we use Corollary \ref{cor: HT sequences for abeloids}(3) for the last equality, is a quasi-polarization. This concludes the proof. 
 \end{proof}

  \medskip
 
\printbibliography[
%heading=bibintoc, %makes an entry in bibliography
title={References}
]
\end{document}